\newtheoremstyle{component}{}{}{}{}{\itshape}{.}{.5em}{\thmnote{#3}{#1}}
\theoremstyle{plain}
\newtheorem{thm}{Theorem}[section]
\newtheorem{pro}[thm]{Proposition}
\newtheorem{lem}[thm]{Lemma}
\newtheorem{cor}[thm]{Corollary}
\newtheorem{propdef}[thm]{Proposition-Definition}
\newtheorem{thmdef}[thm]{Theorem-Definition}
\newtheorem{Thm}{Theorem}
\theoremstyle{definition}
\newtheorem{defn}[thm]{Definition}
\newtheorem{conj}[thm]{Conjecture}
\theoremstyle{remark}
\newtheorem{rem}[thm]{Remark}
\newtheorem{ex}[thm]{Example}
\theoremstyle{component}
\newtheorem*{component}{}
\newenvironment{claim}[1]{\par\addvspace{\baselineskip}\noindent\underline{\bfseries
Claim}:\space#1}{\par\addvspace{\baselineskip}}
\def\ker{\mathrm{Ker}}
\DeclareFontFamily{OT2}{cmr}{\hyphenchar\font45}
\DeclareFontShape{OT2}{cmr}{m}{n}{%
<5><6><7><8><9>gen*wncyr%
<10><10.95><12><14.4><17.28><20.74><24.88>wncyr10}{}
\DeclareFontShape{OT2}{cmr}{b}{n}{%
<5><6><7><8><9>gen*wncyb%
<10><10.95><12><14.4><17.28><20.74><24.88>wncyb10}{}
\DeclareMathAlphabet{\mathcyr}{OT2}{cmr}{m}{n}
\DeclareMathAlphabet{\mathcyb}{OT2}{cmr}{b}{n}
\SetMathAlphabet{\mathcyr}{bold}{OT2}{cmr}{b}{n}
\def\theequation{\thesection.\@arabic\c@equation}
\title[Iwasawa Main Conjecture for CM Hilbert cuspforms]{The Cyclotomic Iwasawa Main Conjecture \\
for Hilbert cuspforms with complex multiplication}
\author{Takashi Hara}
\author{Tadashi Ochiai}
\address[T.~Hara]{School of Science and Technology for Future Life \\
Tokyo Denki University \\
5~Senju-Asahi-cho, Adachi-ku, Tokyo 120-8551, Japan}
\address[T.~Ochiai]{Graduate School of Science\\
Osaka University\\
1-1 Machikaneyama-cho, Toyonaka, Osaka 560-0043, Japan}
\email[T.~Hara]{t-hara@mail.dendai.ac.jp}
\email[T.~Ochiai]{ochiai@math.sci.osaka-u.ac.jp}
\subjclass[2010]{11R23 (Primary), 11F41, 11F67 (Secondaries)}
\keywords{Iwasawa main conjecture, Hilbert modular forms, $p$-adic $L$-functions, Selmer groups, complex multiplication}
\thanks{This work is partially supported by KAKENHI (Grant-in-Aid for Exploratory Research: Garant Number 24654004, 
Grant-in-Aid for Scientific Research (B): Grant Number 26287005, and Grant-in-Aid for Young Scientists (B): Grant Number 26800014).}
\begin{document}
\maketitle
\begin{abstract}
We deduce the cyclotomic Iwasawa main conjecture for Hilbert modular
 cuspforms with complex multiplication from the multivariable main
 conjecture for CM number fields. To this end, we study in detail 
the behaviour of the $p$\nobreakdash-adic $L$-functions and 
the Selmer groups attached to CM number fields 
under specialisation procedures. 
\end{abstract}
\tableofcontents 
\section{Introduction} \label{sc:Introduction}
The {\em cyclotomic Iwasawa main conjecture} for elliptic cuspforms 
describes the mysterious relation between the {\em Selmer groups} 
(algebraic objects) and the {\em $p$-adic $L$-functions}  
(analytic objects) attached to elliptic cuspforms. 
When the cuspform $f$ under consideration does not have {\em complex multiplication}, 
the cyclotomic Iwasawa main conjecture for $f$ is valid under some technical conditions 
thanks to \cite[Theorem 17.4]{Kato} and \cite[Theorem 3.6.4]{SU}.  
When the cuspform $f$ under consideration has complex multiplication, 
one can deduce the validity of the cyclotomic Iwasawa 
main conjecture for $f$ from the {\em two-variable Iwasawa main conjecture} 
for the imaginary quadratic field $F$ via the cyclotomic specialisation 
(such an observation has been already made when $f$ is a cuspform of weight two associated to 
a CM elliptic curve at \cite{Rubin_CM}, for example). 
The main purpose of this article is to
generalise this procedure and deduce the cyclotomic Iwasawa main
conjecture for {\em Hilbert modular cuspforms with complex multiplication}
from the multivariable Iwasawa main conjecture for CM fields.

\medskip
Let $p\geq 5$ be a prime number which is fixed throughout the article. We also fix 
a complex embedding $\iota_\infty :\overline{\mathbb{Q}} \hookrightarrow \mathbb{C}$ 
and a $p$-adic embedding $\iota_p: \overline{\mathbb{Q}} \hookrightarrow \overline{\mathbb{Q}}_p$ 
of an algebraic closure $\overline{\mathbb{Q}} $ of the field of rationals $\mathbb{Q}$.   
Let $F$ be a CM number field of degree $2d$ and $F^+$ its maximal totally real subfield. 
We assume that all prime ideals of $F^+$ lying above $p$ are unramified
over $\mathbb{Q}$ and split completely in the quadratic extension $F/F^+$. 
We denote by $I_{F}$ (resp.\ $I_{F^+}$) the set of all embeddings of $F$
(resp.\ $F^+$) into the fixed algebraic closure $\overline{\mathbb{Q}}$ of 
the rational number field. We choose and fix a $p$-ordinary CM type $\Sigma$ of
$F$, which is a subset of $I_F$ satisfying several conditions 
(see Section~\ref{sssc:CMtypes} for the definition of $p$-ordinary CM types). 

We denote by $\widetilde{F}$ the composition of all
$\mathbb{Z}_p$-extensions of $F$ (in $\overline{\mathbb{Q}}$). 
It is well known that $\mathrm{Gal}(\widetilde{F} /F)$ is isomorphic 
to the free $\mathbb{Z}_p$-module of rank 
$d+1+\delta_{F,p}$, where $\delta_{F,p}$ denotes the Leopoldt defect for 
$F$ and $p$. 
We abbreviate the composite field of $\widetilde{F}$ and $F(\mu_p)$ 
as $\widetilde{F}_\infty$. For each finite abelian extension 
$\mathsf{K}$ of $F$ which contains $F(\mu_p)$ and is linearly disjoint
from $\widetilde{F}$ over $F$, we define
$\widetilde{\mathsf{K}}^{\mathrm{CM}}_\infty$ as
the composite field $\mathsf{K}\widetilde{F}$.    

We consider a gr\"o{\ss}encharacter $\eta$ of type $(A_0)$ on $F$. 
Assume that $\eta$ is {\em ordinary with respect to 
the $($fixed$)$ $p$-ordinary CM type $\Sigma$}, or in other words, assume that $\eta$ is unramified at the 
set $\Sigma_p$ of places of $F$ corresponding to the $p$-adic embeddings $\iota_p \circ \sigma$ for $\sigma$ in $\Sigma$. 
By virtue of global class field theory,
there exists a canonical $p$-adic Galois character 
$\eta^{\mathrm{gal}}\colon G_F
\rightarrow \overline{\mathbb{Q}}^\times_p$ corresponding to $\eta$ (we
shall review the construction of $\eta^\mathrm{gal}$ in
Section~\ref{sssc:grossen}).  
Then, as we shall recall later in Section~\ref{sssc:comparison}, 
there exists a finite abelian extension 
$K/F$ and a character $\psi \colon \mathrm{Gal}(K/F) \rightarrow
\overline{\mathbb{Q}}^\times_p$ of finite order
such that $\eta^{\mathrm{gal}}\psi^{-1}$ factors through the Galois
groups of $\widetilde{F}_\infty /F$. 
Throughout Introduction, we denote by $\mathcal{O}$ the ring of integers of an appropriate finite extension of $\mathbb{Q}_p$ which contains the image of $\eta^\mathrm{gal}$.

Now let us briefly explain  
our main results without precision of notation. 
In Section~\ref{ssc:KHT} we shall introduce the notion of
Katz, Hida and Tilouine's 
$p$-adic $L$-function $\mathcal{L}_{p,\Sigma}^{\mathrm{KHT}} (F)$ for the CM number field $F$, 
which is constructed as an element of 
$\widehat{\mathcal{O}}^{\mathrm{ur}}[[\mathrm{Gal}(F_{\mathfrak{C}p^\infty}/F)]]$ (see Theorem \ref{thm:KHT} for details) 
where $\widehat{\mathcal{O}}^{\mathrm{ur}}$ denotes the composition $\mathcal{O} \widehat{\mathbb{Z}}_p^{\mathrm{ur}}$. 
We denote by $\mathcal{L}_p^\Sigma(\psi)$ (a certain modification of) 
the $\psi$-branch of $\mathcal{L}_{p,
\Sigma}^\mathrm{KHT} (F)$, the image of
$\mathcal{L}_{p,\Sigma}^\mathrm{KHT}(F)$  under  
the $\psi$-twisting map $\widehat{\mathcal{O}}^{\mathrm{ur}}[[\mathrm{Gal}(F_{\mathfrak{C}p^\infty}/F)]]
\rightarrow \widehat{\mathcal{O}}^{\mathrm{ur}}[[\mathrm{Gal}
(\widetilde{F}_\infty /F)]]; g \mapsto \psi(g)g|_{\widetilde{F}_\infty}$. 
For an arbitrary continuous character $\rho \colon \mathrm{Gal}(\widetilde{F}_\infty /F) \rightarrow \mathcal{O}^\times$, 
we denote by $\mathrm{Tw}_\rho$ the $\rho$-twisting map 
$$
\widehat{\mathcal{O}}^{\mathrm{ur}}[[\mathrm{Gal} (\widetilde{F}_\infty /F)]]
\longrightarrow 
\widehat{\mathcal{O}}^{\mathrm{ur}}[[\mathrm{Gal} (\widetilde{F}_\infty /F)]];\ 
g \mapsto \rho (g) g .
$$ 
Under these settings, we state our main results as follows. The first
theorem concerns the analytic part of our main results.

\begin{Thm}[=Corollary~$\ref{cor:compare_p-adic_L}$] \label{Thm:L-func} 
Let $\eta$ be a gr\"o{\ss}encharacter of type $(A_0)$ on $F$ which is 
ordinary with respect to a $p$-ordinary CM type $\Sigma$. 
We choose a finite abelian extension $K$ of $F$ and a character $\psi$ of
 $\mathrm{Gal}(K/F)$ such that $\eta^{\mathrm{gal}}\psi^{-1}$ factors through the Galois
groups of $\widetilde{F}_\infty /F$. 
Then the image of the twisted $($multivariable$)$ $p$-adic $L$-function
$\mathrm{Tw}_{\eta^{\mathrm{gal}}\psi^{-1}}(\mathcal{L}_p^\Sigma(\psi ))$ under
 the cyclotomic specialisation map  
$$
\widehat{\mathcal{O}}^{\mathrm{ur}}[[\mathrm{Gal} (\widetilde{F}_\infty
 /F)]] \rightarrow \widehat{\mathcal{O}}^{\mathrm{ur}}[[\mathrm{Gal}
 (F(\mu_{p^\infty})/F)]];\ g\mapsto g\vert_{F(\mu_{p^\infty})}
$$  
coincides, up to a $($componentwise$)$ nonzero constant multiple, 
with the cyclotomic $p$\nobreakdash-adic $L$\nobreakdash-function 
$\mathcal{L}^{\mathrm{cyc}}_p (\vartheta (\eta )) \in \mathcal{O}^{\mathrm{ur}}[[\mathrm{Gal}
 (F(\mu_{p^\infty})/F)]]  \otimes_{\mathbb{Z}_p} \mathbb{Q}_p$ 
associated to the Hilbert modular form $\vartheta (\eta)$ 
obtained as the theta lift of $\eta$. 
Furthermore if the conjecture on the ratio of complex periods  
 $($see Conjecture~$\ref{conj:periods}$ for details$)$ is true,
 they coincide up to a $($componentwise$)$ $p$-adic unit multiple.
\end{Thm}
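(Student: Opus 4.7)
The plan is to verify the identity by comparing interpolation formulas on the Zariski-dense subset of finite-order characters of $\mathrm{Gal}(F(\mu_{p^\infty})/F)$, since two elements of the algebra $\widehat{\mathcal{O}}^{\mathrm{ur}}[[\mathrm{Gal}(F(\mu_{p^\infty})/F)]] \otimes_{\mathbb{Z}_p} \mathbb{Q}_p$ agreeing at every such specialisation must coincide.

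First I would trace the twists and the cyclotomic specialisation applied to the left-hand side. By Theorem~\ref{thm:KHT}, at a suitable $p$-adic avatar of a Hecke character $\varphi$ of type $(A_0)$, the KHT $p$-adic $L$-function $\mathcal{L}_{p,\Sigma}^{\mathrm{KHT}}(F)$ produces a $\Sigma_p$-modified special value of $L(\varphi^{-1},0)$ divided by the CM period $\Omega_\Sigma$. Taking the $\psi$-branch and applying $\mathrm{Tw}_{\eta^{\mathrm{gal}}\psi^{-1}}$ shifts the interpolating character, so that at a finite-order character $\chi$ of $\mathrm{Gal}(\widetilde{F}_\infty /F)$ the evaluation becomes, up to the prescribed Euler factors at $\Sigma_p$, a normalised version of $L(\eta\chi,0)/\Omega_\Sigma$. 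Restricting $\chi$ to the cyclotomic quotient then realises the left-hand side as a measure interpolating $\Sigma_p$-stabilised special values for cyclotomic twists of $\eta$.

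Next I would unpack the right-hand side via Shimura's factorisation for CM theta lifts. For a Dirichlet character $\chi$ of $F^+$ of $p$-power conductor, the twisted complex $L$-function of the theta lift factors as $L(\vartheta(\eta),s,\chi) = L(\eta\cdot (\chi\circ N_{F/F^+}),s)$; since $F/F^+$ splits completely at $p$, every finite-order cyclotomic character of $\mathrm{Gal}(F(\mu_{p^\infty})/F)$ arises from such a $\chi$ (possibly after absorbing the finite-order part of $\eta$). Consequently the interpolation formula for $\mathcal{L}_p^{\mathrm{cyc}}(\vartheta(\eta))$, which involves division by a modular period $\Omega(\vartheta(\eta))$ and $p$-stabilised Euler factors at the primes above $p$, encodes the same family of complex $L$-values as the specialised left-hand side, only with a different normalising period and slightly different local factors at primes above $p$. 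Agreement on this dense set of test points gives the first assertion, with the componentwise constant of proportionality being the ratio $\Omega_\Sigma/\Omega(\vartheta(\eta))$ on each component indexed by a character of the torsion subgroup of $\mathrm{Gal}(F(\mu_{p^\infty})/F)$.

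The main obstacle, and the content of the sharper assertion, is the period comparison: to upgrade the identity from \emph{up to a nonzero constant} to \emph{up to a $p$-adic unit}, one must know that $\Omega_\Sigma/\Omega(\vartheta(\eta))$ lies in $(\widehat{\mathcal{O}}^{\mathrm{ur}})^\times$ on each component. This is precisely the content of Conjecture~\ref{conj:periods}. Granting it, together with the check that the Euler factors at primes above $p$ coming from the KHT and from the modular constructions differ by $p$-adic units under the $p$-ordinary hypothesis on $\Sigma$ and $\eta$, one obtains the refined componentwise $p$-adic unit statement, completing the proof.
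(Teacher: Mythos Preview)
Your overall strategy---comparing interpolation formulae at the Zariski-dense set of arithmetic characters---is exactly the one the paper uses. However, the proposal has a genuine gap at the point where you conclude ``agreement on this dense set of test points gives the first assertion, with the componentwise constant of proportionality being the ratio $\Omega_\Sigma/\Omega(\vartheta(\eta))$.'' For this to work you must show that at every test character $\chi_{p,\mathrm{cyc}}^j\phi$ the two sides agree up to a \emph{single} constant independent of $(j,\phi)$. That requires an exact matching of every factor in the two interpolation formulae other than the periods: the $p$-adic multipliers, the Gauss sum versus the local $\varepsilon$-factor $W_p$, the gamma factors, and assorted constants such as $\sqrt{|D_{F^+}|}$, $(\mathfrak{r}_F^\times:\mathfrak{r}_{F^+}^\times)$, and $\mathrm{Im}(2\delta)^{\mathsf{r}}$. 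Your sentence ``slightly different local factors at primes above $p$'' is in fact fatal if taken at face value: any $(j,\phi)$-dependence in the ratio of local factors would prevent the quotient from being constant.

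The paper's proof (Proposition~\ref{prop:sp}) carries out this matching in detail. The $\Sigma_p^c$-Euler factors in the Katz--Hida--Tilouine formula become precisely the $p$-stabilisation factors and the $\mathfrak{p}$-adic multipliers $A_\mathfrak{p}(\vartheta(\eta)^{p\text{-st}};\phi,j)$ after rewriting $(1-\check{\eta}^*(\mathfrak{P}^c))$ via the dual-character relation. The local $\varepsilon$-factor $W_p(\eta|\cdot|^j\phi^{-1})$ is converted into the Gauss sum $G(\phi)$ times the remaining $p$-adic multipliers at ramified primes---this is a nontrivial computation using the element $\delta$ chosen for the polarisation. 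An ``extra factor'' $\mathrm{Ex}(F,\delta)$ (a $p$-adic unit, by Lemmata~\ref{lem:unit_index} and~\ref{lem:disc}) must be inserted by hand into the construction, and one must multiply by $\Omega_{\mathrm{CM},p}^{-(\kappa_{\mu,2}-\kappa_{\mu,1})}$ to remove the $p$-adic CM period from the KHT side. Only after all of this does the ratio reduce to the explicit period quotient $\Gamma((\kappa_{\mu,1}^{\max}+1)\mathsf{t}-\kappa_{\mu,1})C_{\vartheta(\eta),\infty}^\epsilon\big/(-\widetilde{\Omega}_{\mathrm{CM},\infty})^{\kappa_{\mu,2}}(-\Omega_{\mathrm{CM},\infty})^{-\kappa_{\mu,1}}$, which depends only on the signature $\epsilon$ and hence is componentwise constant. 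Without carrying out (or at least acknowledging the necessity of) this termwise comparison, your argument does not establish the constant-multiple claim.
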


The algebraic parts of our main results consist of two ingredients: 
the exact control theorem for the Selmer groups (Theorem~\ref{Thm:control}) 
and the triviality of pseudonull submodules for 
the Pontrjagin duals of the  (strict) Selmer groups (Theorem~\ref{Thm:psnull}). 
We denote by $\mathcal{A}^\mathrm{CM}_\eta$ (resp.\ by $\mathcal{A}^\mathrm{cyc}_\eta$) the cofree module of corank one 
over $\mathcal{O}[[\mathrm{Gal}(\widetilde{F}_\infty /F)]]$ (resp.\ over
$\mathcal{O}[[\mathrm{Gal}(F(\mu_{p^\infty})/F)]]$) on which an
element $g$ of the absolute Galois group $G_F$ acts as the
multiplication by $\eta^\mathrm{gal}(g)g|_{\widetilde{F}_\infty}$ (resp.\ by
$\eta^\mathrm{gal}(g)g|_{F(\mu_{p^\infty})}$). 
Let us define $\mathfrak{A}^\mathrm{cyc}$ 
as the kernel of the cyclotomic specialisation map 
$\mathcal{O}[[\mathrm{Gal}(\widetilde{F}_\infty /F)]] 
\twoheadrightarrow \mathcal{O}[[\mathrm{Gal}(F(\mu_{p^\infty}) /F)]];\ g\mapsto g\vert_{F(\mu_{p^\infty})}$. 
Then we note that $\mathcal{A}^\mathrm{cyc}_\eta$ coincides with
 the maximal
$\mathfrak{A}^\mathrm{cyc}$-torsion submodule $\mathcal{A}^\mathrm{CM}_\eta[\mathfrak{A}^\mathrm{cyc}]$ of $\mathcal{A}^\mathrm{CM}_\eta$. 
By a general recipe due to Greenberg, 
we define the Selmer group $\mathrm{Sel}^{\Sigma}_{\mathcal{A}^\mathrm{CM}_\eta}$ 
(resp.\ $\mathrm{Sel}^{\Sigma}_{\mathcal{A}^\mathrm{cyc}_\eta}$)
as a subgroup of the global Galois cohomology $H^1 (F,\mathcal{A}^\mathrm{CM}_\eta)$ 
(resp.\ $H^1 (F,\mathcal{A}^\mathrm{cyc}_\eta)$) 
satisfying local conditions obtained by ordinary filtrations (refer to Definition
\ref{def:Selmer_group}). The following 
{\em exact control theorem} describes the
behaviour of the Selmer group $\mathrm{Sel}^\Sigma_{\mathcal{A}^\mathrm{CM}_\eta}$
under specialisation procedures:  

\begin{Thm}[=Theorem~$\ref{thm:control}$]  \label{Thm:control}
Let $\eta$, $K$ and $\psi$ be as in Theorem~\ref{Thm:L-func}. 
Assume further that the following condition is fulfilled$:$ 
\begin{description}
\item[(ntr)] 
for each maximal ideal $\mathfrak{M}$ of the semilocal Iwasawa algebra 
$\mathcal{O}[[\mathrm{Gal}(\widetilde{F}_\infty /F)]]$ 
and for each prime ideal  $\mathcal{P}$ of $F$ lying above $p$, 
$\mathcal{A}^\mathrm{CM}_\eta[\mathfrak{M}]$ is nontrivial as a $D_\mathcal{P}$-module.  
\end{description}
Then the natural map 
$$
\mathrm{Sel}^\Sigma_{\mathcal{A}^\mathrm{cyc}_\eta}=\mathrm{Sel}^\Sigma_{\mathcal{A}^\mathrm{CM}_\eta[\mathfrak{A}^\mathrm{cyc}]} \longrightarrow \mathrm{Sel}^\Sigma_{\mathcal{A}^\mathrm{CM}_\eta}[\mathfrak{A}^\mathrm{cyc}] 
$$
induced by the inclusion
 $\mathcal{A}^\mathrm{cyc}_\eta=\mathcal{A}^\mathrm{CM}_\eta[\mathfrak{A}^\mathrm{cyc}]
 \rightarrow \mathcal{A}^\mathrm{CM}_\eta$ is an isomorphism. 
\end{Thm}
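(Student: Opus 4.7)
I would argue by the standard Greenberg-style snake-lemma diagram chase. Form the commutative diagram of $\Lambda$-modules with exact rows
\begin{equation*}
\begin{CD}
0 @>>> \mathrm{Sel}^\Sigma_{\mathcal{A}^{\mathrm{cyc}}_\eta} @>>> H^1(F_\Sigma/F, \mathcal{A}^{\mathrm{cyc}}_\eta) @>>> \bigoplus_{v \in \Sigma} \mathcal{H}_v(\mathcal{A}^{\mathrm{cyc}}_\eta) \\
@. @VV{s}V @VV{h}V @VV{g}V \\
0 @>>> \mathrm{Sel}^\Sigma_{\mathcal{A}^{\mathrm{CM}}_\eta}[\mathfrak{A}^{\mathrm{cyc}}] @>>> H^1(F_\Sigma/F, \mathcal{A}^{\mathrm{CM}}_\eta)[\mathfrak{A}^{\mathrm{cyc}}] @>>> \bigoplus_{v \in \Sigma} \mathcal{H}_v(\mathcal{A}^{\mathrm{CM}}_\eta)[\mathfrak{A}^{\mathrm{cyc}}]
\end{CD}
\end{equation*}
where $\mathcal{H}_v$ denotes the target of Greenberg's local-condition map at $v$, and the vertical maps come from the inclusion $\mathcal{A}^{\mathrm{cyc}}_\eta = \mathcal{A}^{\mathrm{CM}}_\eta[\mathfrak{A}^{\mathrm{cyc}}] \hookrightarrow \mathcal{A}^{\mathrm{CM}}_\eta$; the images land in the $\mathfrak{A}^{\mathrm{cyc}}$-torsion because their sources are annihilated by $\mathfrak{A}^{\mathrm{cyc}}$, and the bottom row is exact since $\mathrm{Hom}_\Lambda(\Lambda/\mathfrak{A}^{\mathrm{cyc}}, -)$ is left exact. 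By the snake lemma, $s$ is an isomorphism once $h$ is an isomorphism and each $g_v$ is injective.

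For the global map $h$, fix topological generators $\gamma_1,\dots,\gamma_n$ of $\mathrm{Gal}(\widetilde{F}_\infty/F(\mu_{p^\infty}))$, so that $\mathfrak{A}^{\mathrm{cyc}} = (\gamma_1 - 1,\dots,\gamma_n - 1)$ inside $\Lambda := \mathcal{O}[[\mathrm{Gal}(\widetilde{F}_\infty/F)]]$. Cofreeness of $\mathcal{A}^{\mathrm{CM}}_\eta$ over $\Lambda$ and regularity of each $\gamma_i - 1$ yield, after Pontryagin duality, short exact sequences of discrete $G_F$-modules
\[ 0 \longrightarrow \mathcal{A}^{\mathrm{CM}}_\eta[\gamma_i - 1] \longrightarrow \mathcal{A}^{\mathrm{CM}}_\eta \xrightarrow{\gamma_i - 1} \mathcal{A}^{\mathrm{CM}}_\eta \longrightarrow 0. \]
The associated long exact sequence provides an isomorphism $H^1(F_\Sigma/F, \mathcal{A}^{\mathrm{CM}}_\eta[\gamma_i - 1]) \xrightarrow{\sim} H^1(F_\Sigma/F, \mathcal{A}^{\mathrm{CM}}_\eta)[\gamma_i - 1]$ as soon as $H^0(F, \mathcal{A}^{\mathrm{CM}}_\eta) = 0$, and the latter vanishing is secured by (ntr): any nonzero $G_F$-invariant would generate a submodule intersecting the socle $\mathcal{A}^{\mathrm{CM}}_\eta[\mathfrak{M}]$ for some maximal ideal $\mathfrak{M}$ in a $D_\mathcal{P}$-trivial line, contradicting (ntr). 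Iterating through $\gamma_1 - 1,\dots,\gamma_n - 1$ — noting that at each stage the intermediate module $\mathcal{A}^{\mathrm{CM}}_\eta[(\gamma_1 - 1,\dots,\gamma_i - 1)]$ remains cofree of corank one over the corresponding quotient of $\Lambda$ and inherits (ntr) on its socle — identifies $h$ with an isomorphism onto the full $\mathfrak{A}^{\mathrm{cyc}}$-torsion of $H^1(F_\Sigma/F, \mathcal{A}^{\mathrm{CM}}_\eta)$.

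The injectivity of the local maps $g_v$ follows by the same dévissage, now with $D_v$ in place of $G_F$. For $v \in \Sigma$ with $v \nmid p$ the local condition is defined in terms of unramified cohomology (or the full local $H^1$), and the argument reduces immediately to the vanishing of a local $H^0$ of $\mathcal{A}^{\mathrm{CM}}_\eta$, which (ntr) again provides. The main obstacle of the proof is the case $v \mid p$: Greenberg's local condition here is built from the ordinary filtration $0 \to \mathcal{A}^+_\eta \to \mathcal{A}_\eta \to \mathcal{A}^-_\eta \to 0$, so the dévissage has to be carried out on the unramified quotient $\mathcal{A}^{\mathrm{CM},-}_\eta$ rather than on $\mathcal{A}^{\mathrm{CM}}_\eta$ itself. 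One must accordingly verify (i) that the ordinary filtration is compatible with the $\Lambda$-action, so that $\mathcal{A}^{\mathrm{CM},-}_\eta$ is cofree of corank one with $\mathfrak{A}^{\mathrm{cyc}}$-torsion equal to $\mathcal{A}^{\mathrm{cyc},-}_\eta$, and (ii) that the residual $D_\mathcal{P}$-representation on $\mathcal{A}^{\mathrm{CM},-}_\eta$ inherits the non-triviality encoded in (ntr). Both compatibilities exploit the standing hypothesis that every prime of $F^+$ above $p$ is unramified over $\mathbb{Q}$ and splits completely in $F/F^+$ — which forces the $p$-ordinary CM type $\Sigma$ to decouple the local decomposition-group action cleanly along $\Sigma_p$ and $\Sigma_p^c$ — and once these local compatibilities are established, the snake lemma delivers the desired isomorphism $s$.
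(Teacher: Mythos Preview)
Your overall snake-lemma framework is correct and matches the paper's strategy, but there is a genuine gap in how you handle the local maps $g_v$, and the paper's proof fills that gap by an extra step you have omitted.

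First, at $v\nmid p$ the hypothesis (ntr) is irrelevant: it only concerns primes $\mathcal{P}\mid p$. The kernel of $g_v$ is $H^0(I_v,\mathcal{A}^{\mathrm{CM}}_\eta[\mathfrak{A}_j])/x_{j+1}H^0(I_v,\mathcal{A}^{\mathrm{CM}}_\eta[\mathfrak{A}_j])$, and this module need not vanish. The paper handles it by observing that $I_v$ acts on $\mathcal{A}^{\mathrm{CM}}_\eta$ through a finite quotient of roots of unity, so the inertia invariants are exactly the $\varpi^{n_v}$-torsion; one then appeals to regularity of the sequence $\varpi^{n_v},x_1,\dots,x_{j+1}$ in $\Lambda^{\mathrm{CM}}_{\mathcal{O}}$, not to (ntr).

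Second, and more seriously, at the primes $\mathfrak{P}^c\in\Sigma_p^c$ your d\'evissage is carried out on $H^1(I_{\mathfrak{P}^c},\mathcal{A}^{\mathrm{CM},-}_\eta)=H^1(I_{\mathfrak{P}^c},\mathcal{A}^{\mathrm{CM}}_\eta)$ (recall that in this CM setup $\mathrm{Fil}^+_{\mathfrak{P}^c}=0$), so injectivity of $g_{\mathfrak{P}^c}$ would require $H^0(I_{\mathfrak{P}^c},\mathcal{A}^{\mathrm{CM}}_\eta[\mathfrak{A}_j])$ to be $x_{j+1}$-divisible. But (ntr) only tells you that the \emph{Frobenius} acts nontrivially on the residual representation; it says nothing directly about inertia invariants, and indeed $(\mathcal{A}^{\mathrm{CM}}_\eta[\mathfrak{A}_j])^{I_{\mathfrak{P}^c}}$ can be nonzero. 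The paper's resolution (Lemma~\ref{lem:replace}) is to first replace $\mathrm{Sel}^\Sigma$ by the \emph{strict} Selmer group $\mathrm{Sel}^{\Sigma,\mathrm{str}}$, in which the local target at $\mathfrak{P}^c$ is the full $H^1(F_{\mathfrak{P}^c},\cdot)$ rather than $H^1(I_{\mathfrak{P}^c},\cdot)$. The cokernel of the inclusion $\mathrm{Sel}^{\Sigma,\mathrm{str}}\hookrightarrow\mathrm{Sel}^\Sigma$ sits inside $\prod_{\mathfrak{P}^c}H^1(D_{\mathfrak{P}^c}/I_{\mathfrak{P}^c},(\mathcal{A}^{\mathrm{CM}}_\eta[\mathfrak{A}_j])^{I_{\mathfrak{P}^c}})$, and it is precisely (ntr) that forces $f_{\mathfrak{P}^c}-1$ to be a unit in the relevant quotient ring and hence kills these unramified $H^1$'s. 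Once one works with the strict Selmer group, the local kernel at $\mathfrak{P}^c$ becomes $H^0(F_{\mathfrak{P}^c},\mathcal{A}^{\mathrm{CM}}_\eta[\mathfrak{A}_j])/x_{j+1}(\dots)$, and now (ntr) does give $H^0(F_{\mathfrak{P}^c},\mathcal{A}^{\mathrm{CM}}_\eta[\mathfrak{A}_j])=0$ outright. This two-step use of (ntr)---first to pass to the strict Selmer group, then to kill the decomposition-group $H^0$---is the missing ingredient in your argument.
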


Let $M_{\Sigma_p}$ be the maximal abelian pro\nobreakdash-$p$ 
extension of $\widetilde{K}^\mathrm{CM}_\infty$
unramified outside the places of $\widetilde{K}^\mathrm{CM}_\infty$ above the set of places $\Sigma_p$ of $F$ as introduced previously. We denote the 
Galois group of $M_{\Sigma_p}/\widetilde{K}^\mathrm{CM}_\infty$ by
$X_{\Sigma_p}$. 
Then the module $X_{\Sigma_p, \mathcal{O}}=X_{\Sigma_p} \otimes_{\mathbb{Z}_p}\mathcal{O}$ 
is naturally regarded as a compact  
$\mathcal{O}[[\mathrm{Gal}(\widetilde{K}^\mathrm{CM}_\infty/F)]]$-module. 
We denote by $X_{\Sigma_p, (\psi)}$ the $\psi$-isotypic quotient
of $X_{\Sigma_p, \mathcal{O}}$; namely, it is defined as the scalar extension 
$X_{\Sigma_p,\mathcal{O}}  \otimes_{\mathcal{O}[\mathrm{Gal}(K/F)]}
\mathcal{O}[\mathrm{Gal}(F(\mu_p)/F)]$ of $X_{\Sigma_p, \mathcal{O}}$ 
with respect to the $\psi$-twisting map  
$\mathcal{O}[\mathrm{Gal}(K/F)] \rightarrow
\mathcal{O}[\mathrm{Gal}(F(\mu_p)/F)]; g \mapsto
\psi(g)g|_{F(\mu_p)}$. Then one observes that 
the Pontrjagin dual of the Selmer group 
$\mathrm{Sel}^\Sigma_{\mathcal{A}^\mathrm{CM}_\eta}$ is pseudoisomorphic to 
$X_{\Sigma_p, (\psi)} \otimes_\mathcal{O}
\mathcal{O}(\eta^{\mathrm{gal},-1}\psi)$ as a module over 
$\mathcal{O}[[\mathrm{Gal}(\widetilde{F}_\infty/F)]]$ (see
Proposition~\ref{prop:iw_mod} for details). The next theorem implies that 
$X_{\Sigma_p, (\psi)}$ has no nontrivial pseudonull submodules under
certain conditions, analogously to results on algebraic structure of Iwasawa modules in classical Iwasawa theory mainly due to Iwasawa and
Greenberg \cite{gr-iw}:

\begin{Thm}[=Corollary~$\ref{cor:adiv_psi}$] \label{Thm:psnull}
Let $\eta$, $K$ and $\psi$ be as in Theorem~\ref{Thm:L-func}. 
Assume that the 
 cardinality of $\mathrm{Gal}(K/F)$ is relatively prime to $p$ and 
the character $\psi$ satisfies 
the nontriviality condition $(\mathrm{ntr})_\psi$, which is 
described analogously to the condition $(\mathrm{ntr})$ in
 Theorem~\ref{Thm:control} $($see the statement of
 Lemma~$\ref{lem:replace_psi}$ for details$)$. Then 
$X_{\Sigma_p, (\psi)}$ has no nontrivial 
pseudonull $\mathcal{O}[[\mathrm{Gal} (\widetilde{F}_\infty
 /F)]]$-submodules. 
\end{Thm}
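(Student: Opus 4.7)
My plan is to reduce the statement to the classical pseudonullity theorem for the CM Iwasawa module $X_{\Sigma_p}$ itself, and then descend this property to the $\psi$\nobreakdash-isotypic quotient using the hypothesis that $|\mathrm{Gal}(K/F)|$ is prime to $p$. Since the latter hypothesis makes $\mathcal{O}[\mathrm{Gal}(K/F(\mu_p))]$ semisimple, the associated idempotents yield an isotypic decomposition of $X_{\Sigma_p,\mathcal{O}}$ as a $\mathcal{O}[\mathrm{Gal}(K/F(\mu_p))]$-module, stabilised by the commuting action of $\mathrm{Gal}(\widetilde{F}_\infty/F)$. Since $\widetilde{K}^\mathrm{CM}_\infty = K\widetilde{F}$ is the free composite, one can identify $X_{\Sigma_p,(\psi)}$, up to pseudoisomorphism, with a direct summand of $X_{\Sigma_p, \mathcal{O}}$. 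As $\mathcal{O}[[\mathrm{Gal}(\widetilde{K}^\mathrm{CM}_\infty/F)]]$ is finite free over $\mathcal{O}[[\mathrm{Gal}(\widetilde{F}_\infty/F)]]$ of degree prime to $p$, pseudonullity transfers between the two rings, any height-one prime of the base remaining the intersection of height-one primes of the extension.

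The heart of the argument is the classical theorem, originating from the work of Iwasawa and Greenberg (\cite{gr-iw}), asserting that $X_{\Sigma_p}$ has no nontrivial pseudonull submodules as a $\mathbb{Z}_p[[\mathrm{Gal}(\widetilde{K}^\mathrm{CM}_\infty/F)]]$-module. I would derive this cohomologically: by global Poitou--Tate duality applied to the Iwasawa cohomology groups $\varprojlim_n H^i(\mathcal{O}_{K_n,\Sigma_p \cup \{v\mid\infty\}}, \mathbb{Z}_p(1))$ along the tower $\widetilde{K}^\mathrm{CM}_\infty/K$, $X_{\Sigma_p}$ fits into an exact sequence involving an Iwasawa $H^1$ and the kernel of a localisation map at the primes of $\Sigma_p$. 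The $p$\nobreakdash-ordinarity of $\Sigma$ ensures that no prime in $\Sigma_p$ splits completely in the $\mathbb{Z}_p^{d+1}$-extension $\widetilde{K}^\mathrm{CM}_\infty/K$, so that by the weak Leopoldt conjecture along this tower (which holds here by Kummer-theoretic arguments in the multivariable setting) the relevant Iwasawa $H^2$ vanishes. This vanishing, combined with the control of decomposition groups at $\Sigma_p$, yields a short projective resolution of $X_{\Sigma_p}$ and hence precludes nontrivial pseudonull submodules.

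The condition $(\mathrm{ntr})_\psi$ then ensures that the isotypic projection does not introduce an \emph{accidental} pseudonull contribution. Concretely, one factors the twisting map $\mathcal{O}[\mathrm{Gal}(K/F)] \to \mathcal{O}[\mathrm{Gal}(F(\mu_p)/F)]$ through the natural surjection $\mathrm{Gal}(K/F) \twoheadrightarrow \mathrm{Gal}(F(\mu_p)/F)$, identifying $X_{\Sigma_p, (\psi)}$ with a twist of the direct summand of $X_{\Sigma_p, \mathcal{O}}$ corresponding to $\psi|_{\mathrm{Gal}(K/F(\mu_p))}$; the condition $(\mathrm{ntr})_\psi$ guarantees that this twist does not annihilate the local conditions at primes of $\Sigma_p$, so the pseudonullity obtained in the previous paragraph descends cleanly to $X_{\Sigma_p, (\psi)}$.

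The principal obstacle is the core pseudonullity theorem for $X_{\Sigma_p}$ in the multivariable CM setting. Its proof relies on the non-splitting of $\Sigma_p$-primes in the $\mathbb{Z}_p^{d+1}$-extension $\widetilde{K}^\mathrm{CM}_\infty/K$ (a consequence of $p$\nobreakdash-ordinarity), on the validity of weak Leopoldt along that tower, and on a careful description of local Iwasawa cohomology at primes above $p$, all of which require a delicate interplay of the hypotheses on $p$ and $\Sigma$. Once this core result is in hand, the reductions of the first and third paragraphs are essentially formal, relying only on $\gcd(|\mathrm{Gal}(K/F)|, p) = 1$ and the nontriviality condition $(\mathrm{ntr})_\psi$.
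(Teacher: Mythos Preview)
Your approach differs substantially from the paper's, and the core step has a genuine gap.

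You attempt to reduce to a ``classical theorem'' asserting that the full module $X_{\Sigma_p}$ has no nontrivial pseudonull submodules over $\mathbb{Z}_p[[\mathrm{Gal}(\widetilde{K}^\mathrm{CM}_\infty/F)]]$, attributed to Iwasawa--Greenberg. But no such result is classical in the generality you need. Greenberg's theorem in \cite{Gr_Galois} concerns the $p$\nobreakdash-ramified Iwasawa module --- unramified outside \emph{all} primes above~$p$ --- whereas $X_{\Sigma_p}$ allows ramification only at the half $\Sigma_p$, not at $\Sigma_p^c$. This distinction is the entire difficulty: the primes of $\Sigma_p^c$ contribute nontrivial local terms (local unit groups, in the classical formulation) whose structure must be controlled. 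The paper remarks, immediately after Corollary~\ref{cor:adiv_psi}, that Perrin-Riou carried out essentially the kind of reduction you sketch in the imaginary quadratic case, using Wintenberger's structure theorem on local units together with Greenberg's result, and that it was ``not clear to [the authors] if the method used in \cite{PR_CM} could be extended to general CM number fields.'' Your Poitou--Tate/weak-Leopoldt sketch does not close this gap: vanishing of an Iwasawa $H^2$ gives freeness of the global $H^1$, but $X_{\Sigma_p}$ sits in an exact sequence involving local terms at $\Sigma_p^c$, and it is precisely those local contributions that could carry pseudonull pieces.

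The paper instead proceeds via Greenberg's almost-divisibility criterion (Theorem~\ref{thm:adiv}, from \cite{gr-sel}). It first identifies $X_{\Sigma_p,(\psi)}$ \emph{exactly}, not merely up to pseudoisomorphism, with the Pontrjagin dual of the strict Selmer group $\mathrm{Sel}^{\Sigma,\mathrm{str}}_{\mathcal{A}_\psi^{\mathrm{CM}}}$: the coprimality of $|\mathrm{Gal}(K/F)|$ to~$p$ makes the comparison of $X_{\Sigma_p,(\psi)}$ with the dual of the non-strict Selmer group exact (Remark~\ref{rem:vanish}, Remark~\ref{rem:sel_psi}), and $(\mathrm{ntr})_\psi$ is used to pass between the strict and non-strict Selmer groups (Lemma~\ref{lem:replace_psi}) --- this is the actual role of $(\mathrm{ntr})_\psi$, not the one you describe. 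Greenberg's criterion is then applied to the strict Selmer group: the local hypotheses $(\mathrm{LOC}^{(1)})$, $(\mathrm{LOC}^{(2)})$ and the almost divisibility of the strict local conditions are verified directly (Section~\ref{sssc:pre_step}), while $(\mathrm{LEO})$ and $(\mathrm{CRK})$ are shown (Lemma~\ref{lem:tors}) to be jointly equivalent to the cotorsionness of the strict Selmer group, which follows from Hida--Tilouine's torsionness of $X_{\Sigma_p}$ (the $\Sigma$-Leopoldt condition holds because the tower contains the cyclotomic $\mathbb{Z}_p$-extension). The point is that Greenberg's criterion absorbs the delicate local contributions at $\Sigma_p^c$ into the Selmer-theoretic machinery, bypassing any need for a multivariable structure theorem on local units.
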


Note that $\mathcal{O}[[\mathrm{Gal} (\widetilde{F}_\infty
 /F)]]$ is a semilocal Iwasawa algebra each of 
whose components is isomorphic to $\mathcal{O}[[\mathrm{Gal}(\widetilde{F} /F)]]
$, and the completed group algebra
$\mathcal{O}[[\mathrm{Gal}(\widetilde{F} /F)]]$ is
isomorphic to the formal power series ring in
 $(d+1+\delta_{F,p})$-variables over $\mathcal{O}$. 
Thus, the statement above means that $X_{\Sigma_p, (\psi)}$ has 
no nontrivial pseudonull submodules over each of such components.

Theorem~\ref{Thm:psnull} shall be used to study 
the basechange
compatibility of the characteristic ideal of $X_{\Sigma_p, (\psi)}
\otimes_\mathcal{O} \mathcal{O}(\eta^{\mathrm{gal},-1}\psi)$ (or
 equivalently, of the characteristic ideal of the Pontrjagin dual of $\mathrm{Sel}_{\mathcal{A}^\mathrm{CM}_\eta}^\Sigma$) 
under specialisation procedures. 
We carefully discuss this problem by utilising Theorem~\ref{Thm:psnull} and certain {\em inductive arguments}. For details, 
see Section~\ref{ssc:specialisation}.

Combining these results, we finally obtain the following theorem, which is the main result of this article.

\begin{Thm}[=Theorem~$\ref{thm:IMC}$] \label{Thm:IMC}
Let $\eta$ be a gr\"o{\ss}encharacter of type $(A_0)$ on $F$ which is 
ordinary with respect to a $p$-ordinary CM type $\Sigma$. 
 Assume that  all of the following three conditions are fulfilled{\upshape}
 \begin{itemize}
  \item[-] the gr\"o{\ss}encharacter $\eta$ satisfies the nontriviality condition $(\mathrm{ntr})$ introduced in Theorem~$\ref{Thm:control}${\upshape ;}
  \item[-] the Iwasawa main conjecture for the CM number field
	  $F$ is true for the branch character $\psi$ chosen as above$;$ that is, the equality
	  $$
(\mathcal{L}_p^\Sigma(\psi)) = \mathrm{Char}_{\mathcal{O}[[\mathrm{Gal} (\widetilde{F}_\infty /F)]]} 
(X_{\Sigma_p, (\psi)})
	  $$
	  holds as an equation of ideals of $\widehat{\mathcal{O}}^\mathrm{ur}[[\mathrm{Gal}(\widetilde{F}_\infty/F)]]$ $($see Definition$~\ref{def:char}$ for the definition of the characteristic ideal $\mathrm{Char}_{\mathcal{O}[[\mathrm{Gal} (\widetilde{F}_\infty /F)]]} 
(X_{\Sigma_p, (\psi)})$ over the {\em semilocal} Iwasawa algebra $\mathcal{O}[[\mathrm{Gal}(\widetilde{F}_\infty/F)]])${\upshape ;} 
  \item[-] in each component of the semilocal Iwasawa algebra $\widehat{\mathcal{O}}^\mathrm{ur}[[\mathrm{Gal}(\widetilde{F}_\infty/F)]]$, the cyclotomic $p$-adic $L$-function $\mathcal{L}_p^\mathrm{cyc}(\vartheta(\eta))$ of $\vartheta(\eta)$ does not vanish.
 \end{itemize}
Then the cyclotomic Iwasawa main conjecture for the $p$-stabilised Hilbert eigencuspform $f_\eta=\vartheta (\eta)^{p\text{-st}}$ with complex
 multiplication is true up to $\mu$-invariants$;$ in other
 words, we have the equality of ideals of 
 $\widehat{\mathcal{O}}^\mathrm{ur}[[\mathrm{Gal}(F(\mu_{p^\infty})/F)]]\otimes_{\mathbb{Z}_p}
 \mathbb{Q}_p$  
$$
(\mathcal{L}^{\mathrm{cyc}}_p (\vartheta(\eta))) 
= \mathrm{Char}_{\mathcal{O}[[\mathrm{Gal} (F(\mu_{p^\infty }) /F)]]} (\mathrm{Sel}_{\mathcal{A}_{\vartheta(\eta)}^\mathrm{cyc}}^{\Sigma,\vee}),
$$
where the superscript $\vee$ denotes the Pontrjagin dual. If the
 conjecture on the ratio of complex periods
 $($Conjecture~$\ref{conj:periods})$ is true, the equality above holds
 in $\mathcal{O}[[\mathrm{Gal}(F(\mu_{p^\infty})/F)]]$.
\end{Thm}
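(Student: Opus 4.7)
The plan is to derive the cyclotomic main conjecture for $\vartheta(\eta)$ from the hypothesised multivariable main conjecture for $F$ with branch character $\psi$ by applying two successive operations to the assumed equality of ideals in $\widehat{\mathcal{O}}^{\mathrm{ur}}[[\mathrm{Gal}(\widetilde{F}_\infty/F)]]$: first the character twist $\mathrm{Tw}_{\eta^{\mathrm{gal}}\psi^{-1}}$, and then the cyclotomic specialisation map. At each stage the analytic side will be controlled by Theorem~\ref{Thm:L-func}, and the algebraic side by the combination of Theorem~\ref{Thm:control} and Theorem~\ref{Thm:psnull}.

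First I would apply $\mathrm{Tw}_{\eta^{\mathrm{gal}}\psi^{-1}}$ to the assumed equality $(\mathcal{L}_p^\Sigma(\psi)) = \mathrm{Char}(X_{\Sigma_p,(\psi)})$. Since $\mathrm{Tw}_{\eta^{\mathrm{gal}}\psi^{-1}}$ is a ring automorphism of the ambient Iwasawa algebra, it transports the equality of ideals, and its effect on the characteristic ideal side is precisely to realise the tensor twist $X_{\Sigma_p,(\psi)} \otimes_\mathcal{O} \mathcal{O}(\eta^{\mathrm{gal},-1}\psi)$. Combined with the pseudoisomorphism
$$\mathrm{Sel}^{\Sigma,\vee}_{\mathcal{A}^\mathrm{CM}_\eta} \sim X_{\Sigma_p,(\psi)} \otimes_\mathcal{O} \mathcal{O}(\eta^{\mathrm{gal},-1}\psi)$$
of Proposition~\ref{prop:iw_mod} (which preserves characteristic ideals), this would yield the ``multivariable main conjecture for $\eta$'':
$$\bigl(\mathrm{Tw}_{\eta^{\mathrm{gal}}\psi^{-1}}(\mathcal{L}_p^\Sigma(\psi))\bigr) = \mathrm{Char}_{\mathcal{O}[[\mathrm{Gal}(\widetilde{F}_\infty/F)]]}\bigl(\mathrm{Sel}^{\Sigma,\vee}_{\mathcal{A}^\mathrm{CM}_\eta}\bigr).$$

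Next I would apply the cyclotomic specialisation map to both sides. Theorem~\ref{Thm:L-func} identifies the image of the left-hand side with $\mathcal{L}^{\mathrm{cyc}}_p(\vartheta(\eta))$ up to a componentwise nonzero constant (and up to a $p$-adic unit under the period conjecture), which is nonzero in every component by the third hypothesis. For the right-hand side, the Pontrjagin dual of the exact control theorem (Theorem~\ref{Thm:control}), which applies thanks to the hypothesis $(\mathrm{ntr})$, gives
$$\mathrm{Sel}^{\Sigma,\vee}_{\mathcal{A}^\mathrm{cyc}_\eta} \cong \mathrm{Sel}^{\Sigma,\vee}_{\mathcal{A}^\mathrm{CM}_\eta}\big/\mathfrak{A}^\mathrm{cyc}\,\mathrm{Sel}^{\Sigma,\vee}_{\mathcal{A}^\mathrm{CM}_\eta},$$
so it remains to show that the formation of the characteristic ideal commutes with the quotient by $\mathfrak{A}^\mathrm{cyc}$ for $M=\mathrm{Sel}^{\Sigma,\vee}_{\mathcal{A}^\mathrm{CM}_\eta}$. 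This is where Theorem~\ref{Thm:psnull} enters: via the pseudoisomorphism above, the absence of nontrivial pseudonull submodules in $X_{\Sigma_p,(\psi)}$ can be transferred to $M$. For such a torsion module over the regular semilocal Iwasawa algebra $\mathcal{O}[[\mathrm{Gal}(\widetilde{F}_\infty/F)]]$, a standard structure-theorem argument then gives $\mathrm{Char}(M/\mathfrak{A}^\mathrm{cyc} M)=\mathrm{Char}(M) \bmod \mathfrak{A}^\mathrm{cyc}$, provided the right-hand side remains a nonzero ideal after specialisation --- and this is guaranteed by the nonvanishing hypothesis applied to the analytic side through the equality already proved.

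The main obstacle is the algebraic specialisation step: one must genuinely transfer pseudonull-freeness from $X_{\Sigma_p,(\psi)}$ to $\mathrm{Sel}^{\Sigma,\vee}_{\mathcal{A}^\mathrm{CM}_\eta}$ through both the pseudoisomorphism and the isotypic twist (a pseudoisomorphism alone does not preserve the absence of pseudonull submodules in general), and then combine this with the inductive specialisation arguments indicated in Section~\ref{ssc:specialisation} to verify that no pseudonull content is created or destroyed under $M \mapsto M/\mathfrak{A}^\mathrm{cyc} M$. Once this is in place, piecing together the analytic and algebraic outputs of the cyclotomic specialisation produces the stated equality inside $\widehat{\mathcal{O}}^{\mathrm{ur}}[[\mathrm{Gal}(F(\mu_{p^\infty})/F)]] \otimes_{\mathbb{Z}_p} \mathbb{Q}_p$ --- the rationalisation absorbs the componentwise nonzero constant from Theorem~\ref{Thm:L-func} --- and promotes to an integral equality as soon as the period conjecture supplies the missing $p$-adic unit.
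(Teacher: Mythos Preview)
Your overall strategy matches the paper's: twist the assumed equality by $\mathrm{Tw}_{\eta^{\mathrm{gal}}\psi^{-1}}$, identify the algebraic side with the Pontrjagin dual of $\mathrm{Sel}^\Sigma_{\mathcal{A}^\mathrm{CM}_\eta}$ via Proposition~\ref{prop:iw_mod}, then specialise cyclotomically using Theorem~\ref{Thm:L-func} on the analytic side and the exact control theorem on the algebraic side. You also correctly flag the specialisation of characteristic ideals as the crux.

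However, your proposed resolution of that crux has a real gap. You plan to invoke Theorem~\ref{Thm:psnull} to obtain pseudonull-freeness of $X_{\Sigma_p,(\psi)}$ and then \emph{transfer} it through the pseudoisomorphism to $\mathrm{Sel}^{\Sigma,\vee}_{\mathcal{A}^\mathrm{CM}_\eta}$. This fails on two counts. First, Theorem~\ref{Thm:psnull} carries hypotheses (order of $\psi$ prime to $p$, and $(\mathrm{ntr})_\psi$) that are \emph{not} assumed in Theorem~\ref{Thm:IMC}; you cannot simply invoke it. Second, as you yourself note, a pseudoisomorphism does not in general preserve the absence of pseudonull submodules, so the transfer step is unjustified. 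The paper does not take this route: rather than using Theorem~\ref{Thm:psnull} as input, it proves almost divisibility \emph{directly} for the \emph{strict} Selmer group $\mathrm{Sel}^{\Sigma,\mathrm{str}}_{\mathcal{A}^\mathrm{CM}_\eta}$ via Greenberg's criterion (Proposition~\ref{prop:ini_adiv}); Theorem~\ref{Thm:psnull} is then a \emph{corollary} under extra hypotheses, not a tool in the main argument.

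The switch to strict Selmer groups is essential and missing from your outline: the unramified local condition at places in $\Sigma_p^c$ need not be almost divisible, so Greenberg's criterion does not apply to $\mathrm{Sel}^\Sigma_{\mathcal{A}^\mathrm{CM}_\eta}$ itself (see the remark after Theorem~\ref{thm:adiv}). The paper replaces it by $\mathrm{Sel}^{\Sigma,\mathrm{str}}$ using Lemma~\ref{lem:replace} (this is where $(\mathrm{ntr})$ enters), and then the inductive specialisation (Proposition~\ref{prop:induction}) requires \emph{re-verifying} Greenberg's hypotheses---in particular the vanishing of the dual Selmer group (Proposition~\ref{prop:dSelvan}, proved in Appendix~\ref{app:duality})---at each intermediate level, not merely at the top. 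Pseudonull-freeness at the multivariable level alone does not cascade automatically through $d+\delta_{F,p}$ successive specialisations; it must be re-established at every stage, and this is the genuine content of Section~\ref{ssc:specialisation} that your sketch defers to but does not engage with.
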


Recently Fabio Mainardi and Ming-Lun Hsieh have thoroughly studied 
the Iwasawa main conjecture for CM number fields, and Hsieh's results
combined with Leopoldt conjecture imply its validity for $\widetilde{F}_\infty/F$ under certain technical assumptions (see Remark~\ref{rem:IMC} for details). Hence
Theorem~\ref{Thm:IMC} combined with Hsieh's results and Leopoldt conjecture
guarantees the existence of Hilbert modular cuspforms with complex multiplication
for which the cyclotomic Iwasawa main conjecture is true.

\medskip

The detailed content of this article is as follows. 
After a brief review on basic facts of 
gr\"o{\ss}encharacters of type $(A_0)$ and 
(ad\`elic) Hilbert modular cuspforms, we shall introduce in
Section~\ref{sc:analytic_side} two $p$-adic $L$-functions of different
type: the cyclotomic $p$-adic $L$-functions associated to 
(nearly $p$-ordinary) Hilbert modular cuspforms (in Section~\ref{ssc:p-adic_L_cusp})
and Katz, Hida and Tilouine's $p$-adic $L$-functions for CM number
fields (in Section~\ref{ssc:KHT}).
We compare their interpolation formulae in detail
in Section~\ref{ssc:comparison} when the Hilbert modular
cuspform under consideration has complex multiplication, and then verify
Theorem~\ref{Thm:L-func} based on this comparison.
Section~\ref{sc:algebraic_side} is devoted 
to the algebraic parts of our results. There we introduce various Selmer
groups and compare them at a certain extent of precision
in Section~\ref{ssc:defSel}.
We then prove the Exact Control Theorem
(Theorem~\ref{Thm:control}) in Section~\ref{ssc:control},
and discuss the basechange compatibility of 
characteristic ideals of the Pontrjagin duals of Selmer groups 
by recursively applying Greenberg's criterion 
for almost divisibility in Section~\ref{ssc:specialisation} (we
shall briefly review results of Greenberg on
the almost divisibility of the Selmer groups in Section~\ref{ssc:greenberg}). 
In this process Theorem~\ref{Thm:psnull} is proved as the first step 
of our induction argument (see Section~\ref{sssc:first_step}
for details). Combining all the results obtained in
Sections~\ref{sc:analytic_side} and \ref{sc:algebraic_side}, 
we deduce in Section~\ref{sc:specialisation_IMC}
the cyclotomic Iwasawa main conjecture for Hilbert modular
cuspforms with complex multiplication (at least up to a nonzero constant
multiple) from the (multivariable) Iwasawa main conjecture for CM number
fields via the cyclotomic specialisation (Theorem~\ref{Thm:IMC}). 
In Appendix~\ref{app:cm}, 
we verify several basic properties of Galois representations associated to
Hilbert modular cuspforms with complex multiplication after Ribet's
arguments for elliptic cuspforms with complex multiplication 
employed in \cite{Ribet}. Appendix~\ref{app:duality} is devoted to the verification of Proposition~\ref{prop:commutes} in Section~\ref{sc:algebraic_side}, which is one of the technical keys to our algebraic main results.
\subsubsection*{{\bf Notation}}
%
We mainly use the fraktur $\mathfrak{r}$ for the ring of integers 
of {\em an algebraic number field} (which is often regarded 
as the base field of a certain motive); 
the calligraphic $\mathcal{O}$ 
is kept to denote the ring of integers for {\em a $p$-adic field} 
(which is often regarded as the coefficient field of the $p$-adic 
realisation of a certain motive).
We denote the absolute norm of a fractional ideal $\mathfrak{a}$ 
of an algebraic number field by $\mathcal{N}\mathfrak{a}$.

Throughout this article $p$ denotes a prime number which is larger than
or equal to $5$. 
We fix an algebraic closure $\overline{\mathbb{Q}}$ of the rational number field $\mathbb{Q}$ and 
regard all algebraic number fields (that is, all finite extensions of $\mathbb{Q}$) as subfields of $\overline{\mathbb{Q}}$. 
We also fix an embedding 
$\iota_\infty \colon \overline{\mathbb{Q}} \hookrightarrow \mathbb{C}$ of 
$\overline{\mathbb{Q}}$ into the complex number field $\mathbb{C}$, and 
an embedding $\iota_p \colon \overline{\mathbb{Q}} \hookrightarrow \overline{\mathbb{Q}}_p$ of $\overline{\mathbb{Q}}$ into a fixed algebraic closure $\overline{\mathbb{Q}}_p$ of 
the $p$-adic number field $\mathbb{Q}_p$ respectively.

For an algebraic number field $\mathsf{K}$, we denote by $\mathbb{A}_\mathsf{K}$ 
(resp.\ $\mathbb{A}_\mathsf{K}^\times$) the ring of ad\`eles 
(resp.\ the group of id\`eles) of $\mathsf{K}$. The finite part 
(resp.\ the archimedean part) of the ring of ad\`eles 
$\mathbb{A}_\mathsf{K}$ is denoted by $\mathbb{A}_\mathsf{K}^f$ 
(resp.\ $\mathbb{A}_\mathsf{K}^\infty$). 
We associate a modulus $\prod_v v^{\mathrm{ord}_{v}(x)}$ to every
id\`ele $x$ in $\mathbb{A}_\mathsf{K}^\times$, where 
$v$ runs over all places of $\mathsf{K}$. In this article 
we only consider moduli associated to {\em finite id\`eles}, 
and hence we always identify a modulus $\prod_v v^{\mathrm{ord}_v (x)}$
with the corresponding fractional ideal $\prod_v \mathfrak{P}_v^{\mathrm{ord}_v(x)}$ of $\mathsf{K}$, where $\mathfrak{P}_v$ denotes the prime ideal associated to $v$.
 
We shall fix the notion of {\em the standard additive character} throughout this article. 
For each archimedean place $v$ of an algebraic number field $\mathsf{K}$ and for each $x_v \in \mathsf{K}_v$, 
we define the local additive character $\mathbf{e}_{\mathsf{K}_v} \colon \mathsf{K}_v \rightarrow \mathbb{C}^\times$ by 
\begin{align*}
\mathbf{e}_{\mathsf{K}_v}(x_v)= 
\begin{cases}
\exp(2\pi\sqrt{-1}x_v) & \text{if $v$ is real},  \\
\exp(2\pi\sqrt{-1}x_v\bar{x}_v) & \text{if $v$ is complex}
\end{cases}
\end{align*}
where $\bar{x}_v$ denotes the complex conjugate of $x_v$. For each nonarchimedean place $v$ of $\mathsf{K}$, 
we define $\mathbf{e}_{\mathsf{K}_v}$ as 
\begin{align*}
\mathbf{e}_{\mathsf{K}_v}(x_v)=\exp(-2\pi \sqrt{-1} \mathrm{Tr}_{\mathsf{K}/\mathbb{Q}}(\tilde{x}_v)).
\end{align*}
Here $\tilde{x}_v$ denotes an arbitrary element of $\bigcup_{n=1}^{\infty} \mathfrak{P}_v^{-n}$ 
(regarded as a $\mathfrak{r}_\mathsf{K}$-submodule of $\mathsf{K}$)
such that $\tilde{x}_v-x_v$ is contained in the ring of integers of $\mathsf{K}_{\mathfrak{P}_v}$. The ad\`elic standard 
additive character $\mathbf{e}_{\mathbb{A}_\mathsf{K}} \colon \mathbb{A}_\mathsf{K}/\mathsf{K} \rightarrow \mathbb{C}^\times$ is 
defined as the product of local standard characters for all places of $\mathsf{K}$. 
We also define $\mathbf{e}_{\mathbb{A}_\mathsf{K}^\infty}$ as the product of 
all archimedean local additive characters.
 
Let $\mathbb{C}_p$ be the $p$-adic completion of the fixed algebraic closure $\overline{\mathbb{Q}}_p$ of $\mathbb{Q}_p$ and $\mathcal{O}_{\mathbb{C}_p}$ its 
ring of integers. For the ring of integers $\mathcal{O}$ of a finite extension of $\mathbb{Q}_p$, 
we denote a discrete valuation ring $\mathcal{O} \widehat{\mathbb{Z}}_p^{\mathrm{ur}}$ by ${\widehat{\mathcal{O}}}^\mathrm{ur}$ 
where $\widehat{\mathbb{Z}}_p^{\mathrm{ur}}$ is the $p$-adic completion of the maximal unramified extension of $\mathbb{Z}_p$.
 
In this article we adopt the {\em geometric} normalisation 
of global class field theory; more precisely, let $\mathsf{L/K}$ be 
a finite abelian extension of algebraic number fields. Then 
the reciprocity map $(-,\mathsf{L/K}) \colon \mathbb{A}^\times_\mathsf{K} \rightarrow \mathrm{Gal}(\mathsf{L/K})$ 
is normalised to map a uniformiser $\varpi_{\mathfrak{p}}$ of a prime ideal $\mathfrak{p}$ 
relatively prime to the conductor of the extension $\mathsf{L/K}$ 
to the {\em geometric} Frobenius element 
$\mathrm{Frob}_{\mathfrak{p}}$ in $\mathrm{Gal}(\mathsf{L/K})$; that is, 
$a^{(\varpi_{\mathfrak{p}}, \mathsf{L/K})^{-1}}\equiv a^{q_{\mathfrak{p}}} \, (\text{mod}\, \mathfrak{p})$ 
holds for each $a$ in $\mathfrak{r}_\mathsf{K}$ where $q_{\mathfrak{p}}$ denotes 
the cardinality of the residue field $\mathfrak{r}_\mathsf{K}/\mathfrak{p}$.
%
 
If $\mathsf{K}$ is an algebraic number field, the absolute Galois group
$\mathrm{Gal}(\overline{\mathbb{Q}}/\mathsf{K})$ is denoted by
$G_\mathsf{K}$. For a place $v$ of $\mathsf{K}$, we denote 
by $D_v$ and $I_v$ the decomposition group and the inertia group at $v$
respectively. 
For a (possibly infinite) abelian Galois extension $\mathsf{L/K}$ 
of $\mathsf{K}$ and the ring of integer $\mathcal{O}$ of 
a finite extension of $\mathbb{Q}_p$, we define 
$\mathcal{O}[[\mathrm{Gal}(\mathsf{L/K})]]^\sharp$ as the
free $\mathcal{O}[[\mathrm{Gal}(\mathsf{L/K})]]$-module of rank one 
on which $G_\mathsf{K}$ acts via the universal tautological character
\begin{align*}
G_\mathsf{K} \rightarrow
 \mathcal{O}[[\mathrm{Gal}(\mathsf{L/K})]]^\times ; \ g \mapsto
 g|_\mathsf{L}.
\end{align*}

We finally remark that, as for Hodge-Tate $p$-adic representations,
the Hodge-Tate weights are normalised so that the Hodge-Tate weight of
the $p$-adic cyclotomic character $\chi_{p,\mathrm{cyc}}$ is $-1$.
 %
%
%
\section{The analytic side}\label{sc:analytic_side}
%
%
We shall develop  in this section the analytic parts of our main results. 
We first present a brief overview of classical theory on Hilbert modular
cuspforms in Section~\ref{ssc:classical}, and introduce the notion
of the {\em $p$-adic $L$-functions associated to Hilbert modular cuspforms} in
Section~\ref{ssc:p-adic_L_cusp}. We then introduce another type of
$p$-adic $L$-functions in Section~\ref{ssc:KHT}: {\em Katz, Hida and
Tilouine's $p$-adic $L$-functions for CM number fields}.  
In Section~\ref{ssc:comparison}, we consider the cyclotomic 
specialisation of (appropriately twisted) 
Katz, Hida and Tilouine's $p$-adic $L$-function, 
and compare it with the $p$-adic $L$-function of a Hilbert modular
cuspform with complex multiplication.
%
%
\subsection{Classical theory on Hilbert modular cuspforms} \label{ssc:classical}
%
%
%
This subsection is devoted to an overview 
of classical (complex) theory on (ad\`elic) Hilbert modular cuspforms. 
After a brief review of gr\"o{\ss}encharacters of type $(A_0)$ in
Section~\ref{sssc:grossen},  
we define Hilbert modular cuspforms of double digit 
weight after Hida, and summarise basic facts on  
their Fourier expansions and associated (complex) $L$-functions 
in Section~\ref{sssc:cuspform}. 
Section~\ref{sssc:hecke} is a survey of the theory on Hecke
operators for Hilbert modular cuspforms.
We then introduce in Section~\ref{sssc:HilbGalois} 
the notion of Galois representations associated to Hilbert modular cuspforms.
We finally present the notion of Hilbert modular cuspforms 
with complex multiplication in Section~\ref{sssc:CMforms}, which 
play central roles in the present article. 
%
%
\subsubsection{Generalities on gr\"o{\ss}encharacters of type $(A_0)$} \label{sssc:grossen}
%
%
In this paragraph $\mathsf{K}$ denotes a number field (later we 
only consider the cases where $\mathsf{K}$ is either a totally real
number field $F^+$ or a CM number field $F$). 
We denote by $I_\mathsf{K}$ the set of all embeddings of $\mathsf{K}$ into $\overline{\mathbb{Q}}$. 
Let $S_\mathsf{K}(\mathbb{R})$ (resp.\ $S_\mathsf{K}(\mathbb{C})$) denote 
the set of real places of $\mathsf{K}$ (resp.\ the set of complex places
of $\mathsf{K}$), and let $S_{\mathsf{K},\infty}$ denote the set of
archimedian places of $\mathsf{K}$; that is, $S_{\mathsf{K}, \infty}=S_\mathsf{K}(\mathbb{R})\cup S_\mathsf{K}(\mathbb{C})$. 
Recall that each real place corresponds to a unique element of $I_\mathsf{K}$ and 
each complex place corresponds to a unique pair of elements in $I_\mathsf{K}$. 
For each real place, we denote by $\tau_v$ the corresponding element in $I_\mathsf{K}$.
For each complex place $v$, we specify one of the corresponding pair in $I_\mathsf{K}$ 
as $\tau_v$ and identify $\mathsf{K}_v$ with the complex field $\mathbb{C}$ 
via the embedding $\iota_\infty \circ \tau_v \colon \mathsf{K} \hookrightarrow \mathbb{C}$. Then 
the composite of the other one, which we denote by $\bar{\tau}_v$,  with $\iota_\infty$ is 
the complex conjugation of $\iota_\infty \circ \tau_v$. 
  
An id\`ele class character 
$\eta \colon \mathbb{A}_\mathsf{K}^\times/\mathsf{K}^\times \rightarrow \mathbb{C}^\times$ 
is called {\em a gr\"o{\ss}encharacter of type $(A_0)$} (or also called {\em an algebraic Hecke character}) 
of $\mathsf{K}$ if its archimedean part is {\em algebraic}; namely, 
there exists an element $\mu=\sum_{\tau \in I_\mathsf{K}} \mu_\tau \tau$ of $\mathbb{Z}[I_\mathsf{K}]$ 
such that 
\begin{align*}
\eta(x_\infty)=x_\infty^{-\mu} :=\prod_{v \in S_\mathsf{K}(\mathbb{R})} x_v^{-\mu_{\tau_v}} \prod_{v \in S_\mathsf{K}(\mathbb{C})} x_v^{-\mu_{\tau_v}} \bar{x}_v^{-\mu_{\bar{\tau}_v}}
\end{align*}
holds for each element $x_\infty=(x_v)_{v\in S_{\mathsf{K}, \infty}}$ in the identity component of $\mathbb{A}_{\mathsf{K}}^{\infty, \times}=(\mathsf{K}\otimes_\mathbb{Q} \mathbb{R})^\times$ 
(that is, for each element $x_\infty=(x_v)_{v\in S_{\mathsf{K},
\infty}}$ such that $(x_v)_{v\in S_\mathsf{K}(\mathbb{R})}$ is totally
positive).  
Here $\bar{x}_v$ denotes the usual complex conjugate of $x_v$ in $\mathbb{C}$
(we identify $\mathsf{K}_v$ with $\mathbb{C}$ via the specified identification for each complex place $v$). 
The element $\mu$ as above is called {\em the infinity type of $\eta$}. 
It is widely known that $\eta(x)x_\infty^\mu$ is an algebraic number
for each $x$ in $\mathbb{A}_\mathsf{K}$ (here $x_\infty$ denotes the
archimedean part of $x$). 

For each prime ideal $\mathfrak{l}$ of $\mathsf{K}$, we define $e(\mathfrak{l})$ as the minimum among 
nonnegative integers $e$ such that the local component $\eta_{\mathfrak{l}} \colon \mathsf{K}_{\mathfrak{l}}^\times \rightarrow \mathbb{C}^\times$ 
of $\eta$ at $\mathfrak{l}$ factors through 
$\mathsf{K}_{\mathfrak{l}}^\times/(1+\mathfrak{l}^e) \rightarrow \mathbb{C}^\times$. 
When $\eta$ is unramified at $\mathfrak{l}$, we define $e(\mathfrak{l})$ as $0$. 
The integral ideal $\mathfrak{C}(\eta)=\prod_{\mathfrak{l}}\mathfrak{l}^{e(\mathfrak{l})}$ is called 
{\em the conductor} of $\eta$.
We denote by $\eta^*$ the ideal character associated to $\eta$; namely, $\eta^*$ is the character defined by 
\begin{align}\label{equation:idealclasschar}
\eta^*(\mathfrak{a})=\prod_{\mathfrak{l} \nmid \mathfrak{C}(\eta)} \eta_\mathfrak{l}(\varpi_{\mathfrak{l}}^{\mathrm{ord}_{\mathfrak{l}}(\mathfrak{a})})
\end{align}
for each fractional ideal $\mathfrak{a}$ of $\mathsf{K}$ relatively prime to the conductor $\mathfrak{C}(\eta)$, 
where $\varpi_{\mathfrak{l}}$ denotes a uniformiser of $\mathsf{K}_{\mathfrak{l}}$. Note that the associated ideal character $\eta^*$ 
does not depend on the choice of uniformisers since $\eta$ is unramified at each $\mathfrak{l} \nmid \mathfrak{C}(\eta)$. 
\begin{ex}[norm character] \label{ex:norm}
The ad\`elic norm $\lvert \cdot \rvert_{\mathbb{A}_\mathsf{K}}$ is regarded as 
a gr\"o{\ss}encharacter of type $(A_0)$ by virtue of Artin's 
product formula. It has the infinity type $-\sum_{\tau \in I_\mathsf{K}} \tau$ and 
the conductor $\mathfrak{r}_\mathsf{K}$. The ideal character 
associated to $\lvert \cdot \rvert_{\mathbb{A}_\mathsf{K}}$ is $\mathcal{N}_\mathsf{K}^{-1}$, 
the inverse of the absolute norm defined on $\mathsf{K}$. When $\mathsf{K}$ is totally real, 
an arbitrary gr\"o{\ss}encharacter of type $(A_0)$ defined on
 $\mathsf{K}$ is described as 
 $\phi \lvert \cdot \rvert^n_{\mathbb{A}_\mathsf{K}}$ for a certain
 gr\"o{\ss}encharacter $\phi$ of finite order on $\mathsf{K}$ 
 and a certain integer $n$.
\end{ex}
Now we associate to $\eta$ a $p$-adic id\`ele class character 
$\hat{\eta}\colon \mathbb{A}_\mathsf{K}^{\times}/\mathsf{K}^\times \rightarrow \overline{\mathbb{Q}}_p^\times$ as follows; 
for each prime ideal $\mathfrak{p}$ of $\mathsf{K}$ lying above $p$, we denote by $I_{\mathsf{K}, \mathfrak{p}}$ 
the subset of $I_{\mathsf{K}}$ consisting of embeddings $\tau$ such that $\iota_p \circ \tau$ induces the place 
associated to $\mathfrak{p}$. Then we put 
\begin{align*}
x_p^{-\mu}=\prod_{\mathfrak{p} \mid p\mathfrak{r}_\mathsf{K}} x_{\mathfrak{p}}^{-\sum_{\tau \in I_{\mathsf{K},\mathfrak{p}}}\mu_\tau}, \quad \text{for } x_p=(x_\mathfrak{p})_{\mathfrak{p}\mid p\mathfrak{r}_\mathsf{K}} \in \mathbb{A}_{\mathsf{K}}^{p,\times}=(\mathsf{K}\otimes_\mathbb{Q} \mathbb{Q}_p)^\times.
\end{align*}
For each id\`ele $x$ of $\mathsf{K}$, we define $\hat{\eta}(x)$ as 
\begin{align} \label{eq:padicavatar}
\hat{\eta}(x)=\iota_p(\eta(x)x_{\infty}^\mu) x_p^{-\mu} 
\end{align}
where $x_p$ and $x_\infty$ respectively denote the $p$-component and the $\infty$-component of $x$.
Obviously $\hat{\eta}$ is trivial on $\mathsf{K}^\times$ and factors through the finite id\`ele class group $\mathbb{A}_\mathsf{K}^{f,\times}/\mathsf{K}^{\times}$ by construction.
The $p$-adic id\`ele class character $\hat{\eta}$ constructed as
above is called the {\em $p$-adic avatar} of $\eta$, while the (complex)
character $\eta$ is called the {\em complex avatar} of $\hat{\eta}$. 
Note that, via global class field theory, the $p$-adic avatar $\hat{\eta}$ 
corresponds to a unique $p$-adic Galois character $\eta^{\mathrm{gal}}$ 
defined on $\mathrm{Gal}(\mathsf{K}_{\mathfrak{C}(\eta)}/\mathsf{K})$ satisfying $\eta^\mathrm{gal}((x,\mathsf{K}_{\mathfrak{C}(\eta)}/\mathsf{K}))=\hat{\eta}(x)$ for an arbitrary element $x$ in $\mathbb{A}_\mathsf{K}^\times$ where $\mathfrak{C}(\eta)$ denotes the conductor of $\eta$ and $\mathsf{K}_{\mathfrak{C}(\eta)}$ denotes the ray class field modulo $\mathfrak{C}(\eta)$ over $\mathsf{K}$. Refer to {\em Notation} in Section~\ref{sc:Introduction} for our normalisation of the reciprocity map $(-,\mathsf{K}_{\mathfrak{C}(\eta)/\mathsf{K}})$. In particular, we have $\eta^\mathrm{gal}(\mathrm{Frob}_\mathfrak{l})=\eta(\varpi_\mathfrak{l})=\hat{\eta}(\varpi_\mathfrak{l})=\eta^*(\mathfrak{l})$ for each prime ideal $\mathfrak{l}$ of $\mathsf{K}$ relatively prime to $\mathfrak{C}(\eta)$, where $\varpi_\mathfrak{l}$ denotes a uniformiser of the local field $\mathsf{K}_\mathfrak{l}$.  
\subsubsection{Hilbert modular cuspforms of double-digit weight}
\label{sssc:cuspform}
%
Let us recall the definition of (ad\`elic) Hilbert modular cuspforms. 
 We basically follow Hida's description of ad\`elic Hilbert modular forms in \cite{hecke, HIT}, 
 although there might be several different manners to introduce them. In particular, 
 we adopt his {\em double-digit weight convention} (refer to 
 \cite[Section~2.3.2]{HIT}). 

 Let $F^+$ be a totally real number field and $\mathfrak{r}_{F^+}$ 
 the ring of integers of $F^+$.
 We define an algebraic group $G$ over $\mathbb{Z}$ as the Weil restriction of scalars of the general linear group $GL(2)_{/\mathfrak{r}_{F^+}}$ 
 over $\mathfrak{r}_{F^+}$ from 
 $\mathfrak{r}_{F^+}$ to $\mathbb{Z}$. Let $T_0$ be 
 the diagonal torus of $GL(2)_{/\mathfrak{r}_{F^+}}$ and 
 $T_G$ its Weil restriction of scalars from $\mathfrak{r}_{F^+}$ 
 to $\mathbb{Z}$. The character group $X(T_G)$ of $T_G$ is identified with $\mathbb{Z}[I_{F^+}]\times \mathbb{Z}[I_{F^+}]$; specifically, an element 
 $\kappa=(\kappa_1, \kappa_2)$ of $\mathbb{Z}[I_{F^+}]\times \mathbb{Z}[I_{F^+}]$ 
 corresponds to a unique algebraic character ${T_G}_{/\overline{\mathbb{Q}}} \rightarrow {\mathbb{G}_m}_{/\overline{\mathbb{Q}}}$ which induces
 \begin{align*}
 \begin{pmatrix} x_1 & 0 \\
0 & x_2\end{pmatrix} \mapsto x_1^{\kappa_1}x_2^{\kappa_2} , \quad 
 x_i^{\kappa_i}=\prod_{\tau \in I_{F^+}} \tau(x_i)^{\kappa_{i,\tau}}
  \quad (\in \overline{\mathbb{Q}}^\times) \qquad \text{for }i=1,2
 \end{align*}
 on $T_G(\mathbb{Q})=F^{+,\times} \times F^{+,\times}$. Here $\kappa_i$ denotes an element of $\mathbb{Z}[I_{F^+}]$ defined by $\sum_{\tau
 \in I_{F^+}}\kappa_{i,\tau}\tau$ for each $i=1,2$
 which satisfies the following condition:
 \begin{equation} \label{eq:weight}
  \kappa_{1,\tau}+\kappa_{2,\tau} \,\, \text{is a constant independent of $\tau$ in $I_{F^+}$.}
 \end{equation}
 We denote by $[\kappa]$ the constant value  $\kappa_{1,\tau}+\kappa_{2,\tau}$ when the condition \eqref{eq:weight} is satisfied. 
 Note that the diagonal torus $T_G$ contains the center $Z$ of $G$, namely the subgroup consisting of all scalar matrices. Let us define another algebraic torus 
 $T$ as the Weil restriction of scalars of the multiplicative group 
 ${\mathbb{G}_m}_{/\mathfrak{r}_{F^+}}$ from $\mathfrak{r}_{F^+}$ to
 $\mathbb{Z}$. 
 For an integral ideal $\mathfrak{N}$ of $F^+$, we consider the following congruence subgroups of $G(\hat{\mathbb{Z}})$:
 \begin{align*}
 \hat{\Gamma}_0(\mathfrak{N}) &:= \left\{  \left. \begin{pmatrix} a & b
						  \\ c & d \end{pmatrix}
  \in G(\hat{\mathbb{Z}}) \right| c \in
  \mathfrak{N}\hat{\mathfrak{r}}_{F^+}  \right\}, \\
 \hat{\Gamma}_1(\mathfrak{N}) &:=\left\{ \left. \begin{pmatrix} a & b \\ c & d \end{pmatrix} \in G(\hat{\mathbb{Z}}) \right| 
 a-1 \in \mathfrak{N}\hat{\mathfrak{r}}_{F^+},\  c\in \mathfrak{N}\hat{\mathfrak{r}}_{F^+} \right\}.
 \end{align*}  

 Let $\varepsilon_+ \colon \mathbb{A}_{F^+}^\times/F^{+,\times} \rightarrow
 \mathbb{C}^\times$ denote a gr\"o{\ss}encharacter of type $(A_0)$ on $F^+$
 with infinity type $\kappa_1+\kappa_2-\mathsf{t}$, where
the symbol $\mathsf{t}$ denotes the ``trace'' element 
$\sum_{\tau \in I_{F^+}} \tau$ in $\mathbb{Z}[I_{F^+}]$. 
We often identify the ad\`elic points $Z(\mathbb{A}_\mathbb{Q})$ 
of the center $Z$ of $G$ with the id\`ele group
 $\mathbb{A}_{F^+}^\times$ of $F^+$, and regard $\varepsilon_+$ 
 as a character on $Z(\mathbb{A}_\mathbb{Q})$. 
We denote by $\varepsilon \colon T(\hat{\mathbb{Z}})\rightarrow
 \mathbb{C}^\times$ the restriction of the finite part of
 $\varepsilon_+$ to $T(\hat{\mathbb{Z}})=\hat{\mathfrak{r}}_{F^+}^\times$, 
where $\hat{\mathfrak{r}}_{F^+}^\times$ denotes 
the profinite completion of $\mathfrak{r}_{F^+}^\times$. 
 Then one easily observes that if the conductor
 $\mathfrak{C}(\varepsilon)$ of $\varepsilon$ contains $\mathfrak{N}$, the map 
 \begin{align*}
 \begin{pmatrix} a & b \\ c & d \end{pmatrix} \mapsto \varepsilon(a_{\mathfrak{N}})
 \end{align*}
 defines a continuous character $\hat{\Gamma}_0(\mathfrak{N})\rightarrow
 \mathbb{C}^\times$, for which we use the same symbol  
 $\varepsilon$ by abuse of notation. Here $a_\mathfrak{N}$ denotes the projection of $a$ to $F^+_{\mathfrak{N}}=\prod_{\mathfrak{l}\mid \mathfrak{N}}
 F^+_{\mathfrak{l}}$. We denote the pair of the
 characters $(\varepsilon, \varepsilon_+)$ by $\underline{\varepsilon}$,
 which shall play a role of a nebentypus character.

 Let $\mathfrak{h} \subset \mathbb{C}$ be the Poincar\'e upper half plane which consists of all complex numbers whose imaginary parts are positive. 
 Then the identity component $G(\mathbb{R})^+$ of the
 $\mathbb{R}$-valued points $G(\mathbb{R})=GL_2(\mathbb{R})^{I_{F^+}}$
 of $G$ acts on $\mathfrak{h}^{I_{F^+}}$ via the coordinatewise
 M\"obius transformation. We now introduce the {\em automorphy factor of weight $\kappa=(\kappa_1, \kappa_2)$} by 
 \begin{align*}
 J_\kappa(g,z)=\det(g)^{\kappa_1-\mathsf{t}} j(g,z)^{\kappa_2-\kappa_1+\mathsf{t}} \quad \text{for $g=\begin{pmatrix} a & b \\ c & d \end{pmatrix} \in G(\mathbb{R})^+$ and $z=(z_{\tau})_{\tau \in I_{F^+}} \in \mathfrak{h}^{I_{F^+}}$}
 \end{align*}
 where $j(g,z)$ denotes the vector defined by $(c_{\tau}z_{\tau}+d_{\tau})_{\tau
 \in I_{F^+}}$. Here we use the following abbreviations on multi-indices:
 \begin{align*}
 \det(g)^{\kappa_1-\mathsf{t}}=\prod_{\tau \in I_{F^+}} \det(g_\tau)^{\kappa_{1,\tau}-1}, \quad j(g,z)^{\kappa_2-\kappa_1+\mathsf{t}}=\prod_{\tau \in I_{F^+}} (c_{\tau}z_{\tau}+d_{\tau})^{\kappa_{2,\tau}-\kappa_{1,\tau}+1}.
 \end{align*}

 \begin{defn}[Hilbert modular cuspforms] \label{def:cuspforms}
   Let $\kappa$ be an element of $\mathbb{Z}[I_{F^+}]\times \mathbb{Z}[I_{F^+}]$ for which the condition \eqref{eq:weight} is satisfied, and let $\underline{\varepsilon}=(\varepsilon,\varepsilon_+)$ be as above.
A complex-valued function $f \colon
 G(\mathbb{A}_{\mathbb{Q}})\rightarrow \mathbb{C}$ on the ad\`elic
 points $G(\mathbb{A}_{\mathbb{Q}})$ of $G$ is called a {\em Hilbert
 modular cuspform of weight $\kappa$, level $\mathfrak{N}$ 
 and nebentypus $\underline{\varepsilon}$} if it satisfies 
 the following three conditions:
 \begin{enumerate}[label=(HC\arabic*)]
 \item (automorphy) 
  
\noindent let $C_{\mathbf{i}}$ denote the stabiliser subgroup of the vector $\mathbf{i}=(\sqrt{-1}, \dotsc, \sqrt{-1}) \in \mathfrak{h}^{I_{F^+}}$, which is by definition a subgroup of $G(\mathbb{R})^+$. Then the equality
 \begin{align*}
 f(\alpha xuw)=\varepsilon_+(w)\varepsilon(u_f)f(x)J_\kappa(u_{\infty}, \mathbf{i})^{-1}
 \end{align*}
 holds for each $\alpha$ in $G(\mathbb{Q})$, $w$ in $Z(\mathbb{A}_{\mathbb{Q}})$ and $u=u_fu_\infty$ in $\hat{\Gamma}_0(\mathfrak{N})C_{\mathbf{i}}$; 

 \item (holomorphy)
  
\noindent for each $z$ in $\mathfrak{h}^{I_{F^+}}$ let us choose an element $u_\infty$ of $G(\mathbb{R})^+$ satisfying $u_\infty(\mathbf{i})=z$. Then the function
 \begin{align*}
 f_g \colon \mathfrak{h}^{I_{F^+}}\rightarrow \mathbb{C} ; z\mapsto 
 f(gu_\infty)J_\kappa(u_{\infty}, \mathbf{i})
 \end{align*}
 is holomorphic with respect to $z$ for every $g$ in $G(\mathbb{A}_{\mathbb{Q}}^f)$. 
 Note that $f_g$ is well defined independently of the choice of each $u_\infty$ by virtue of the automorphy (HC1); 

 \item (cuspidality)
  
\noindent the integral
 \begin{align*}
 \int_{\mathbb{A}_{F^+}/F^+} f\left( \begin{pmatrix} 1 & u \\ 0 & 1 \end{pmatrix}x \right) du
 \end{align*}
 vanishes for every $x$ in $GL_2(\mathbb{A}_{F^+}^f)$ where $du$ is an additive Haar measure on $\mathbb{A}_{F^+}/F^+$.
 \end{enumerate}
We denote by $S_\kappa(\mathfrak{N}, \underline{\varepsilon};
 \mathbb{C})$ the complex vector space spanned by all Hilbert
 modular cuspforms of weight $\kappa$, level $\mathfrak{N}$ and
 nebentypus $\underline{\varepsilon}$.
\end{defn}

It is well known that the space of Hilbert modular cuspforms $S_\kappa (\mathfrak{N},
 \underline{\varepsilon}; \mathbb{C})$ is of finite dimension.
 We often impose the following constraints on weights of Hilbert modular cuspforms.
 
 \begin{defn} \label{def:weights}
 Let $\kappa=(\kappa_1, \kappa_2)$ be an element of $\mathbb{Z}[I_{F^+}]\times \mathbb{Z}[I_{F^+}]$ for which the condition \eqref{eq:weight} is satisfied.
 \begin{enumerate}[label=(\arabic*)]
  \item The element $\kappa$ is called a {\em cohomological weight}
	if the inequality $\kappa_1< \kappa_2$ holds.
  \item The element $\kappa$ is called a {\em critical weight} if 
       it is cohomological and the inequality 
$\kappa_1 <0 \leq \kappa_2$ holds. 
 \end{enumerate}
 Here inequalities among elements of $\mathbb{Z}[I_{F^+}]$ abbreviate
  corresponding coefficientwise inequalities. For instance, we use the inequality notation $\kappa_1<\kappa_2$ to express that the inequality $\kappa_{1,\tau}<\kappa_{2,\tau}$ holds for every $\tau$ in $I_{F^+}$. 
 \end{defn}

 From now on we always assume that {\em all double-digit weights considered in this article are cohomological}.

 \medskip
 
 Now let $f$ denote a Hecke eigencuspform of cohomological weight $\kappa=(\kappa_1 ,\kappa_2 )$, 
 level $\mathfrak{N}$ and nebentypus $\underline{\varepsilon}$ 
 (we shall give a brief review on Hecke theory in the next
 subsection). To each eigencuspform $f$, Blasius and Rogawski \cite{BR}
 has attached a motive $M(f)_{/F^+}$ defined over $F^+$
 with coefficients in the Hecke field $\mathbb{Q}_f$ associated to $f$,
 which we will introduce later (see the paragraph after Definition~\ref{def:eigenform}). 
The motive $M(f)_{/F^+}$ is pure of weight $[\kappa]$. For each $\tau$ in $I_{F^+}$, the Hodge type of $M(f)_{/F^+}$ with respect to the complex embedding $\iota_\infty \circ \tau \colon F^+ \hookrightarrow \mathbb{C}$ of the field of definition $F^+$ is given by 
 $\{ (\kappa_{1,\tau}, \kappa_{2,\tau}), (\kappa_{2,\tau}, \kappa_{1,\tau}) \}$. 
 In other words, Hida's double-digit weight convention is adapted to 
 the Hodge type of $M(f)_{/F^+}$, 
 and the weight of the cuspform $f$ is critical if and only if the associated motive $M(f)_{/F^+}$ is critical in the sense of Deligne
 \cite{Deligne}.
 
\begin{rem}[on nebentypus characters] \label{rem:neben}
In \cite[Section~2.3.2]{HIT} Hida introduces more general notion on
 nebentypus characters; namely, he considers as a nebentypus character 
a triple $(\varepsilon_1, \varepsilon_2; \varepsilon_+)$
consisting of finite characters $\varepsilon_1$, $\varepsilon_2$ on
 $T(\hat{\mathbb{Z}})$ and a gr\"o{\ss}encharacter $\varepsilon_+$ of type
 $(A_0)$ on $Z(\mathbb{A}_\mathbb{Q})$ with certain constraints. 
The nebentypus introduced here is a special one of Hida's general
 notion. Indeed, our notion of the nebentypus
 $\underline{\varepsilon}=(\varepsilon, \varepsilon_+)$ corresponds 
to a triple $(\varepsilon, 1_{T(\hat{\mathbb{Z}})}; \varepsilon_+)$, 
which satisfies all the required conditions.
The space of Hilbert modular cuspforms $S_\kappa(\mathfrak{N},
 \underline{\varepsilon}; \mathbb{C})$ with nebentypus of the form 
$\underline{\varepsilon}=(\varepsilon, \varepsilon_+)$ is indeed
 contained in the space of Hilbert modular cuspforms
 $S_\kappa(\hat{\Gamma}_1(\mathfrak{N}); \mathbb{C})$ of 
weight $\kappa$ and level $\hat{\Gamma}_1(\mathfrak{N})$, and 
hence we can apply to them  general theory on Hilbert modular forms and 
Hecke algebras of $\hat{\Gamma}_1$\nobreakdash-level structure developed in
 \cite{shimura} and \cite{hecke}. In particular 
the (ad\`elic) Fourier coefficients depend only on fractional ideals of
 $F^+$ under such constraints on nebentypus (as we shall see later in
 Proposition~\ref{prop:fourier}), and hence our convention on 
nebentypus characters seems to be well suited to
arithmetic problems concerning the $L$-functions associated to Hilbert modular forms. 
\end{rem}

We next recall the notion of the Fourier expansions 
 of Hilbert modular forms, and then finish this subsection by introducing the (complex) $L$-functions associated to them.

 \begin{pro} \label{prop:fourier}
 Let $\mathfrak{d}=\mathfrak{d}_{F^+}$ denote the absolute different of $F^+$ and let $F^{+,\times}_+$ denote the set of all totally positive elements of $F^+$. 
 Then each Hilbert modular cuspform $f$ belonging to $S_\kappa
  (\mathfrak{N}, \underline{\varepsilon}; \mathbb{C})$ has the
  $($ad\`elic$)$ Fourier expansion of the following form for each
  $x \in \mathbb{A}_{F^+}$ and $y\in \mathbb{A}_{F^+}^\times${\upshape :}
\begin{align*}
f\left( \begin{pmatrix} y & x \\ 0 & 1 \end{pmatrix} \right) =
 |y|_{\mathbb{A}_{F^+}} \sum_{\xi \in F^{+,\times}_+} C(\xi y\mathfrak{d}; f) (\xi y_\infty)^{-\kappa_1} 
 \mathbf{e}_{\mathbb{A}_{F^+}^\infty}(\sqrt{-1}\xi y_\infty) \mathbf{e}_{\mathbb{A}_{F^+}} (\xi x).
 \end{align*}
 The correspondence $\mathfrak{a} \mapsto C(\mathfrak{a}; f)$ defines a
  complex-valued function on the group of fractional ideals of $F^+$, which vanishes unless $\mathfrak{a}$ is integral. 
 \end{pro}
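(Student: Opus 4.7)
The plan is to expand $f$ as a Fourier series on the compact abelian group $\mathbb{A}_{F^+}/F^+$ using the unipotent translation invariance that follows from (HC1), and then to exploit (HC1)--(HC3) to obtain the claimed shape. First I would fix $y \in \mathbb{A}_{F^+}^\times$ and study $\phi_y(x) = f\!\left(\begin{pmatrix} y & x \\ 0 & 1 \end{pmatrix}\right)$ as a function of $x \in \mathbb{A}_{F^+}$. Applying (HC1) to $\alpha = \begin{pmatrix} 1 & \xi \\ 0 & 1 \end{pmatrix}$ with $\xi \in F^+$ (and $u = w = 1$) shows that $\phi_y$ descends to a continuous function on $\mathbb{A}_{F^+}/F^+$. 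Pontryagin duality for this compact abelian group identifies its dual with $F^+$ via $\xi \mapsto \mathbf{e}_{\mathbb{A}_{F^+}}(\xi\,\cdot\,)$, so that
$$\phi_y(x) \;=\; \sum_{\xi \in F^+} a(\xi; y)\,\mathbf{e}_{\mathbb{A}_{F^+}}(\xi x)$$
for certain Fourier coefficients $a(\xi; y)$.

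Next I would pin down the archimedean behaviour of $a(\xi; y)$. Decomposing $y = y_f y_\infty$ and specialising (HC1) to $u = u_\infty \in C_{\mathbf i}$ together with the holomorphy assertion (HC2) applied to $f_g$ (where $g$ is built from $y_f$) forces each coefficient to take the form $|y|_{\mathbb{A}_{F^+}} (\xi y_\infty)^{-\kappa_1} \mathbf{e}_{\mathbb{A}_{F^+}^\infty}(\sqrt{-1}\xi y_\infty)\, c(\xi; y_f)$ for a function $c(\xi; y_f)$ depending only on the finite part of $y$; this is essentially the passage from the adèlic Fourier expansion to the classical one, carried out on each connected component of the associated Hilbert modular variety through the realisation $f_g(z) = f(g u_\infty) J_\kappa(u_\infty, \mathbf i)$. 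Holomorphy further forces $a(\xi; y) = 0$ whenever $\xi$ fails to be totally positive, because otherwise $\mathbf{e}_{\mathbb{A}_{F^+}^\infty}(\sqrt{-1}\xi y_\infty)$ blows up as $y_\infty \to \infty$ on the identity component and contradicts the moderate growth forced by holomorphy and cuspidality; cuspidality (HC3) then kills the constant term $a(0;y)$. Together these restrict the sum to $\xi \in F^{+,\times}_+$.

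The final step is to recognise $c(\xi; y_f)$ as a function of fractional ideals. Applying (HC1) with a diagonal $\alpha \in T_G(\mathbb{Q})$ (combined with a compensating $u \in \hat{\Gamma}_0(\mathfrak{N}) \cap T(\hat{\mathbb{Z}})$ and a scalar in $Z(\mathbb{A}_\mathbb{Q})$) shows that $c(\xi; y_f)$ depends on $\xi$ and $y_f$ only through the finite idèle $\xi y_f$, and then only through the fractional ideal $\xi y_f \mathfrak{d}$ that it generates; the different $\mathfrak{d}$ intervenes because our normalised character $\mathbf{e}_{\mathbb{A}_{F^+}}$ has local conductor $\mathfrak{d}^{-1}$ at finite places. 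This yields a well-defined function $\mathfrak{a} \mapsto C(\mathfrak{a}; f)$ on fractional ideals, and vanishing on non-integral $\mathfrak{a}$ follows locally: the finite-adèlic integral expressing $c$ at each place $v$ can be nonzero only when $\mathbf{e}_{F^+_v}(\xi\,\cdot\,)$ is trivial on $\mathfrak{r}_{F^+,v}$, forcing $\xi y \mathfrak{d} \subset \hat{\mathfrak{r}}_{F^+}$. The main technical obstacle is the second paragraph: one must simultaneously extract $J_\kappa$ and balance the normalisations so that the factors $|y|_{\mathbb{A}_{F^+}}$, $(\xi y_\infty)^{-\kappa_1}$ and $\mathbf{e}_{\mathbb{A}_{F^+}^\infty}(\sqrt{-1}\xi y_\infty)$ appear with exactly the right exponents; once this bookkeeping is done, the remaining arguments are formal manipulations of the three defining conditions.
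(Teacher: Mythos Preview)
The paper does not actually prove this proposition: immediately after the statement it writes ``This is \cite[Proposition~4.1]{hecke}, the proof of which essentially depends on Shimura's classical computation \cite[(2.18)]{shimura}. Refer also to \cite[Proposition~2.26]{HIT}.'' Your outline is precisely the standard argument carried out in those references --- Fourier expand on $\mathbb{A}_{F^+}/F^+$ via unipotent left invariance, use holomorphy of $f_g$ on $\mathfrak{h}^{I_{F^+}}$ to isolate the archimedean factor and restrict to totally positive $\xi$, kill the constant term by cuspidality, and then use right invariance under $\hat{\Gamma}_0(\mathfrak{N})$ (together with the nebentypus convention $\underline{\varepsilon}=(\varepsilon,\varepsilon_+)$, cf.\ Remark~\ref{rem:neben}) to see the finite coefficients depend only on the ideal $\xi y\mathfrak{d}$.

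One small point: your justification for the vanishing when $\xi\notin F^{+,\times}_+$ is phrased as a growth argument, but the cleaner and more honest reason is simply that $f_g$ is \emph{holomorphic} on $\mathfrak{h}^{I_{F^+}}$, so its classical Fourier expansion at each cusp has only nonnegative modes in every coordinate; combined with cuspidality this leaves exactly the totally positive $\xi$. Otherwise your sketch matches the cited sources.
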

 
 This is \cite[Proposition~4.1]{hecke}, the proof of which essentially depends on Shimura's classical computation \cite[(2.18)]{shimura}. 
 Refer also to \cite[Proposition~2.26]{HIT}. We call $C(-; f)$ 
the {\em Fourier coefficient of $f$}. 
 A Hilbert modular cuspform $f$ is said to be {\em normalised} if its Fourier coefficient $C(\mathfrak{r}_{F^+}; f)$ at $\mathfrak{r}_{F^+}$ 
 equals $1$. For a $\mathbb{Q}$-subalgebra $A$ of $\mathbb{C}$, we denote by
 $S_\kappa(\mathfrak{N}, \underline{\varepsilon}; A)$ the $A$-subspace of $S_\kappa (\mathfrak{N}, \underline{\varepsilon}; \mathbb{C})$ 
spanned by cuspforms with all Fourier coefficients contained in $A$.

Now let us assume that the Hilbert modular cuspform $f$ under
consideration is a {\em normalised eigenform}.
The (complex) {\em $L$-function associated to $f$} 
 is defined as (the meromorphic continuation of) the Dirichlet series 
 \begin{align*}
 L(f,s)=\sum_{(0)\neq \mathfrak{a}\subseteq \mathfrak{r}_{F^+}} \frac{C(\mathfrak{a};f)}{\mathcal{N}\mathfrak{a}^s}.
 \end{align*}
 For a gr\"o{\ss}encharacter $\theta \colon \mathbb{A}_{F^+}^\times/F^{+,\times} \rightarrow \mathbb{C}^\times$ of type $(A_0)$, we also define 
 {\em the $L$-function associated to $f$ twisted by $\theta$} as (the
 meromorphic continuation of)
 \begin{align*}
 L(f, \theta, s)=\sum_{(0)\neq\mathfrak{a}\subseteq \mathfrak{r}_{F^+}} \frac{C(\mathfrak{a};f)\theta^*(\mathfrak{a})}{\mathcal{N}\mathfrak{a}^s},
 \end{align*}
 where $\theta^\ast$ is the ideal class character which was associated with $\phi$ in $\eqref{equation:idealclasschar}$ 
and $\theta^*(\mathfrak{a})$ is defined to be zero if $\mathfrak{a}$ is not relatively prime to the conductor of $\theta$. 

 \subsubsection{Review on Hecke theory} \label{sssc:hecke}
 
We shall briefly recall Hecke theory on ad\`elic Hilbert modular
cuspforms after \cite[Section~2]{hecke}. As in the previous paragraph, 
we consider the space
$S_\kappa(\mathfrak{N}, \underline{\varepsilon};\mathbb{C})$ of 
Hilbert modular cuspforms of weight $\kappa$, level $\mathfrak{N}$ and 
nebentypus $\underline{\varepsilon}$. Recall that it is contained in 
the space $S_\kappa(\hat{\Gamma}_1(\mathfrak{N}); \mathbb{C})$ of
cuspforms of weight $\mathfrak{N}$ and level $\hat{\Gamma}_1(\mathfrak{N})$.
Now let us define the following monoids:
 \begin{align*}
 \Delta_0(\mathfrak{N})&=\left\{ \left. \begin{pmatrix} a & b \\ c & d
				       \end{pmatrix} \in
  M_2(\hat{\mathfrak{r}}_{F^+})\cap G(\mathbb{A}_{\mathbb{Q}}^f) \right|
  a_\mathfrak{N} \in \mathfrak{r}_{F^+, \mathfrak{N}}^\times,\ c\in
  \mathfrak{N}\hat{\mathfrak{r}}_{F^+} \right\}, \\
 \Delta_1(\mathfrak{N})&=\left\{ \left. \begin{pmatrix} a & b \\ c & d
				       \end{pmatrix} \in
  M_2(\hat{\mathfrak{r}}_{F^+})\cap G(\mathbb{A}_{\mathbb{Q}}^f) \right|
  a-1 \in \mathfrak{N}\hat{\mathfrak{r}}_{F^+},\ c\in
  \mathfrak{N}\hat{\mathfrak{r}}_{F^+} \right\}.
 \end{align*}
Then $\Delta_0(\mathfrak{N})$ contains $\hat{\Gamma}_0(\mathfrak{N})$
and $\Delta_1(\mathfrak{N})$ contains $\hat{\Gamma}_1(\mathfrak{N})$
respectively. We thus consider the action of the double coset algebra
$R(\hat{\Gamma}_1(\mathfrak{N}), \Delta_1(\mathfrak{N}))$ on
$S_\kappa(\hat{\Gamma}_1(\mathfrak{N}); \mathbb{C})$. We refer the readers to \cite[Section~3]{shimura_intro} for details on the theory of double coset algebras. The action of $R(\hat{\Gamma}_1(\mathfrak{N}), \Delta_1(\mathfrak{N}))$ on $S_\kappa(\hat{\Gamma}_1(\mathfrak{N}); \mathbb{C})$ is defined as follows.
For a cuspform $f$ in $S_\kappa(\hat{\Gamma}_1(\mathfrak{N}),
\mathbb{C})$ and an element 
$[\hat{\Gamma}_1(\mathfrak{N})y\hat{\Gamma}_1(\mathfrak{N})]$ of
$R(\hat{\Gamma}_1(\mathfrak{N}), \Delta_1(\mathfrak{N}))$, we set 
\begin{align*}
f\vert_{[\hat{\Gamma}_1(\mathfrak{N})y\hat{\Gamma}_1(\mathfrak{N})]}(g)=\sum_{i=1}^h
 f(gy_i^{-1})
\end{align*}
where $\{y_i \}_{i=1, \dotsc, h}$ is a representative of the left coset
decomposition of
$\hat{\Gamma}_1(\mathfrak{N})y\hat{\Gamma}_1(\mathfrak{N})$:
\begin{align*}
\hat{\Gamma}_1(\mathfrak{N})y\hat{\Gamma}_1(\mathfrak{N})=\bigcup_{i=1}^h
 \hat{\Gamma}_1(\mathfrak{N})y_i.
\end{align*}
Then one easily observes that the action of
 $R(\hat{\Gamma}_1(\mathfrak{N}), \Delta_1(\mathfrak{N}))$ preserves 
the space $S_\kappa(\mathfrak{N}, \underline{\varepsilon}; \mathbb{C})$
 of Hilbert cuspforms with nebentypus $\underline{\varepsilon}$. 
To describe the structure of the double coset algebra
 $R(\hat{\Gamma}_1(\mathfrak{N}), \Delta_1(\mathfrak{N}))$, we here introduce 
typical double coset operators which are often quoted 
as {\em Hecke operators}. 
 Choose a uniformiser $\varpi_{\mathfrak{l}}$ of the local field $F^+_{\mathfrak{l}}$ for 
 each prime ideal $\mathfrak{l}$ of $F^+$, and regard 
 $\begin{pmatrix} 1 & 0 \\ 0 & \varpi_{\mathfrak{l}} \end{pmatrix}$ 
 as an element of $\Delta_1(\mathfrak{N})$ whose local component is 
 the identity matrix at every place except for $\mathfrak{l}$. 
 Then we define 
 \begin{align*}
 T(\mathfrak{l}) &= \hat{\Gamma}_1(\mathfrak{N}) \begin{pmatrix}
 1 & 0 \\ 0 & \varpi_\mathfrak{l} \end{pmatrix}
  \hat{\Gamma}_1(\mathfrak{N}) \quad \text{if $\mathfrak{l}$ does not divide $\mathfrak{N}$}, \\
 U(\mathfrak{l}) &= \hat{\Gamma}_1(\mathfrak{N}) \begin{pmatrix}
 1 & 0 \\ 0 & \varpi_\mathfrak{l} \end{pmatrix}
  \hat{\Gamma}_1(\mathfrak{N}) \quad \text{if $\mathfrak{l}$ divides $\mathfrak{N}$}
 \end{align*}
 for each prime ideal of $F^+$. They are determined independently 
 of the choice of uniformisers. Next let $\mathfrak{b}$ be an integral ideal of $F^+$ relatively prime to $\mathfrak{N}$. 
 For such an ideal $\mathfrak{b}$, choose a finite id\`ele
 $\mathsf{b}\in \hat{\mathfrak{r}}_{F^+}\cap \mathbb{A}_{F^+}^\times$ so
 that it is congruent to $1$ modulo
 $\mathfrak{N}\hat{\mathfrak{r}}_{F^+}$ and its associated modulus 
 coincides with $\mathfrak{b}$. Then we set 
 \begin{align*}
T(\mathfrak{b}, \mathfrak{b})=\hat{\Gamma}_1(\mathfrak{N}) \begin{pmatrix} \mathsf{b} & 0 \\ 0 & \mathsf{b} \end{pmatrix} \hat{\Gamma}_1(\mathfrak{N}),
 \end{align*} 
 which does not depend on the choice of the auxiliary id\`ele
 $\mathsf{b}$. We also use the notation $S(\mathfrak{b})$ 
for $T(\mathfrak{b}, \mathfrak{b})$ as in \cite{shimura}. 
 By virtue of the general theory, 
the double coset algebra $R(\hat{\Gamma}_1(\mathfrak{N}), \Delta_1(\mathfrak{N}))$ is commutative and is freely 
 generated as a $\mathbb{Z}$-algebra by $T(\mathfrak{l})$ for prime ideals relatively prime to $\mathfrak{N}$, $U(\mathfrak{l})$ for prime ideals dividing $\mathfrak{N}$ and $S(\mathfrak{l})$ for prime ideals relatively prime to $\mathfrak{N}$. Moreover 
we obtain the following formula which one can adopt as the definition 
 of the operator $T(\mathfrak{l}^e)$ for prime ideals relatively prime to $\mathfrak{N}$:
 \begin{align*}
 T(\mathfrak{l})T(\mathfrak{l}^e)=T(\mathfrak{l}^{e+1})
 +\mathcal{N}(\mathfrak{l})S(\mathfrak{l})T(\mathfrak{l}^{e-1}) \quad \text{for each $e\geq 1$}.
 \end{align*}
 It is also known that $U(\mathfrak{l}^e)=U(\mathfrak{l})^e$ holds for every
 prime ideal dividing $\mathfrak{N}$ and an arbitrary natural number $e$.

\begin{rem}
In \cite{HIT} and many other articles of Hida, 
the action of $R(\hat{\Gamma}_0(\mathfrak{N}),
 \Delta_0(\mathfrak{N}))$ is defined as in the following manner. 
We first extend the nebentypus character
 $\underline{\varepsilon}=(\varepsilon, \varepsilon_+)$
 to $\Delta_0(\mathfrak{N})$ by just setting 
 \begin{align*}
 \begin{pmatrix} a & b \\ c & d \end{pmatrix} \mapsto \varepsilon(a_{\mathfrak{N}}).
 \end{align*}

Now let $\iota$ denote
 the standard positive involution on $(2\times 2)$-matrices defined by 
\begin{align*}
\begin{pmatrix} a & b \\ c & d \end{pmatrix} \mapsto \begin{pmatrix} d &
						      -b \\ -c & a
						     \end{pmatrix}.
\end{align*}

 For each $x$ in $\Delta_0(\mathfrak{N})$, we define an action of the double coset $[\hat{\Gamma}_0(\mathfrak{N})x \hat{\Gamma}_0(\mathfrak{N})]$ 
on a Hilbert modular cuspform $f$ belonging to 
$S_\kappa(\mathfrak{N}, \underline{\varepsilon}; \mathbb{C})$ by 
 \begin{align*}
  f\vert_{[\hat{\Gamma}_0(\mathfrak{N})x \hat{\Gamma}_0
 (\mathfrak{N})]}(g)=\sum_{i=1}^h \underline{\varepsilon}(x_i^{-\iota})f(gx_i^\iota)
 \end{align*}
 where $\{ x_i\}_{i=1, \dotsc, h}$ is a representative set of the 
 left coset decomposition of
 $\hat{\Gamma}_0(\mathfrak{N})x\hat{\Gamma}_0(\mathfrak{N})$. 
For an element $y$ of $\Delta_1(\mathfrak{N})$, the action of the $\hat{\Gamma}_1(\mathfrak{N})$-double coset $[\hat{\Gamma}_1(\mathfrak{N})y\hat{\Gamma}_1(\mathfrak{N})]$ is compatible with that of the $\hat{\Gamma}_0(\mathfrak{N})$-double coset $[\hat{\Gamma}_0(\mathfrak{N})y\hat{\Gamma}_0(\mathfrak{N})]$. Indeed we obtain
\begin{align*}
\sum_{i=1}^h f(gy_i^{-1})=\sum_{i=1}^hf(g(\det y_i)^{-1}y_i^\iota)
 =\sum_{i=1}^h\varepsilon_+(\det y_i)^{-1}f(gy_i^\iota)
\end{align*}
for
 $\hat{\Gamma}_1(\mathfrak{N})y\hat{\Gamma}_1(\mathfrak{N})=\bigcup_{i=1}^h
 \hat{\Gamma}_1(\mathfrak{N})y_i$, but then $\varepsilon_+(y_i^{-1})$ equals
 $\varepsilon(y_i^{-\iota})$ for every $i$ because $y_i$ is an element of $\Delta_1(\mathfrak{N})$.
\end{rem}

 \begin{defn}[Eigenforms] \label{def:eigenform}
 A Hilbert modular form $f$ belonging to $S_\kappa (\mathfrak{N}, \underline{\varepsilon}; \mathbb{C})$ 
 is called {\em an eigenform} if it is a common eigenvector with respect
  to all double coset operators (or Hecke operators) belonging to $R(\hat{\Gamma}_1(\mathfrak{N}), \Delta_1(\mathfrak{N}))$. 
 \end{defn}

 By the well-known formula
 \begin{align} \label{eq:hecke}
 C(\mathfrak{a}; \left. f\right|_{T(\mathfrak{n})}) =\sum_{\mathfrak{b}\mid \mathfrak{a},\ \mathfrak{b}\mid \mathfrak{n},\ (\mathfrak{b}, \mathfrak{N})=1} \mathcal{N}(\mathfrak{b})C(\mathfrak{b}^{-2}\mathfrak{na}; \left. f\right|_{S(\mathfrak{b})}) 
 \end{align}
 (see \cite[Corollary~4.2]{hecke} for instance), one observes that the Fourier coefficient $C(\mathfrak{l}; f)$ of an eigencuspform $f$ at a prime ideal $\mathfrak{l}$ 
 is obtained as the multiple of $C(\mathfrak{r}_{F^+}; f)$ and the
 eigenvalue with respect to the Hecke operator $T(\mathfrak{l})$ (or $U(\mathfrak{l})$ 
 if $\mathfrak{l}$ divides $\mathfrak{N}$). Recall that the eigenvalues of the Hecke operators acting on $S_\kappa (\mathfrak{N}, \underline{\varepsilon}; \mathbb{C})$  are algebraic numbers due to \cite[Proposition~2.2]{shimura}. Therefore an eigencuspform $f$ belongs to $S_\kappa(\mathfrak{N}, \underline{\varepsilon}; \overline{\mathbb{Q}})$ if it is normalised. We denote by $\mathbb{Q}_f$ the field which one obtains by adjoining to $\mathbb{Q}$ the eigenvalues of the Hecke operators acting on $f$, which we call the {\em Hecke field} associated to $f$.

 To introduce the notion of near ordinarity, we recall the notion of
 {\em normalised Hecke operators} after \cite[Section~3]{hecke}.
 Let $\mathbb{Q}(\kappa_1)$ denote a field which one obtains by
 adjoining to $\mathbb{Q}$ all the elements of the form $x^{\kappa_1}$ for 
 $x$ in $F^{+,\times}$.  Then $\mathbb{Q}(\kappa_1)$ is 
a finite extension of $\mathbb{Q}$. Note that  $\kappa_1$ naturally induces 
 homomorphisms $F^{+,\times}\rightarrow \mathbb{Q}(\kappa_1)^\times$ and  
 $\mathbb{A}_{F^+}^\times \rightarrow \mathbb{A}_{\mathbb{Q}(\kappa_1)}^\times$. It is known that 
there exists an $\mathfrak{r}_{\mathbb{Q}(\kappa_1)}$-subalgebra $A$ of $\mathbb{C}$ 
 satisfying the following condition:
 \begin{quote}
 for each element $x$ in $\mathbb{A}_{F^+}^{f, \times}$, the modulus associated to $x^{\kappa_1}$ is principal as a fractional ideal of $A$.
 \end{quote}
Refer to \cite[p.\ 310]{hecke} for details of the existence of such an
 algebra $A$. Let us choose a uniformiser $\varpi_{\mathfrak{l}}$ 
 of $F^+_{\mathfrak{l}}$ for each prime ideal $\mathfrak{l}$ of $F^+$. 
We take a generator of the modulus associated 
to $\varpi_{\mathfrak{l}}^{\kappa_1}$ (as a fractional ideal of $A$) 
and denote it 
by $\{ \mathfrak{l}^{\kappa_1} \}$. We also 
define $\{ \mathfrak{a}^{\kappa_1} \}$ 
 for an arbitrary fractional ideal $\mathfrak{a}=\prod_{\mathfrak{l}\text{: prime}} \mathfrak{l}^{e(\mathfrak{l})}$ 
 by $\prod_{\mathfrak{l}\text{: prime}} \{\mathfrak{l}^{\kappa_1}
 \}^{e(\mathfrak{l})}$. 

\begin{defn}[Normalised Hecke operators]
 We define elements $T_0(\mathfrak{l})$, $U_0(\mathfrak{l})$ and
 $S_0(\mathfrak{b})$ of $R(\hat{\Gamma}_1(\mathfrak{N}),
 \Delta_1(\mathfrak{N}))\otimes_{\mathbb{Z}} A$ in the following manner;
 \begin{align*}
T_0(\mathfrak{l})&=\{\mathfrak{l}^{\kappa_1}\}^{-1}T(\mathfrak{l}) \quad
  \text{for a prime ideal $\mathfrak{l}$ which does not divide $\mathfrak{N}$}, \\
U_0(\mathfrak{l})&=\{\mathfrak{l}^{\kappa_1}\}^{-1}U(\mathfrak{l}) \quad
  \text{for a prime ideal $\mathfrak{l}$ which divides $\mathfrak{N}$}, \\
S_0(\mathfrak{b})&=\{\mathfrak{b}^{\kappa_1}\}^{-2}S(\mathfrak{b}) \quad
  \text{for an integral ideal $\mathfrak{b}$ which is relatively prime to $\mathfrak{N}$}.
 \end{align*}
The operators $T_0(\mathfrak{l})$, $U_0(\mathfrak{l})$ and 
 $S_0(\mathfrak{b})$ are called {\em normalised Hecke operators}. 
\end{defn}

 \begin{defn}[Near ordinarity]
 Let $\mathfrak{p}$ be a prime ideal of $F^+$ lying above $p$. A normalised eigencuspform $f$ 
 belonging to $S_\kappa(\mathfrak{N}, \underline{\varepsilon}; \overline{\mathbb{Q}})$ is said to be {\em nearly ordinary at $\mathfrak{p}$} 
 (or {\em nearly $\mathfrak{p}$\nobreakdash-ordinary}) if the eigenvalue of $f$ with respect to the normalised Hecke operator $T_0(\mathfrak{p})$ (or $U_0(\mathfrak{p})$ if $\mathfrak{p}$ divides the level $\mathfrak{N}$) 
 is a $p$-unit under the specified embedding $\iota_p \colon
  \overline{\mathbb{Q}}\rightarrow \overline{\mathbb{Q}}_p$. 

A normalised eigencuspform $f$ which is nearly ordinary at 
 all prime ideals $\mathfrak{p}$ lying above $p$ is said to be 
 {\em nearly ordinary at $p$} (or {\em nearly $p$-ordinary}).
 \end{defn}

 Note that the notion of the normalised Hecke operators {\em does depend} 
 on the choice of a generator of $\{ \mathfrak{p}^{\kappa_1} \}$, 
 but the notion of the near ordinarity {\em does not depend} on it since the $p$-adic valuation of 
 $\{\mathfrak{p}^{\kappa_1} \}$, which is regarded as an element of $\overline{\mathbb{Q}}_p$ via the fixed 
 embedding $\iota_p$, is well defined independently of the choice of its generator. 
 The normalisation of Hecke operators discussed above is crucial in Hida's theory
 on $p$-adic Hecke algebras. One of the reasons why it is important is that normalised Hecke operators 
 preserve the space of cuspforms with rational or integral coefficients. 
 We omit the details; see \cite[Section~4]{hecke} and \cite[Section~2.3.3]{HIT}.

We finally introduce the notion of {\em $p$-stabilisation}. 
 Define an operator 
 \begin{align*}
 V(\mathfrak{b}) \colon S_{\kappa}(\mathfrak{N}, \underline{\varepsilon}; \overline{\mathbb{Q}}) \rightarrow S_\kappa(\mathfrak{bN}, \underline{\varepsilon}; \overline{\mathbb{Q}})
 \end{align*}
 for every integral ideal $\mathfrak{b}$ of $F^+$ by 
 \begin{align*}
 \left. f\right|_{V(\mathfrak{b})}(g)=|\mathsf{b}|_{\mathbb{A}_{F^+}} f\left( g 
 \begin{pmatrix} \mathsf{b}^{-1} & 0 \\ 0 & 1 \end{pmatrix} \right)
 \end{align*}
 where $\mathsf{b}$ is a finite id\`ele of $F^+$ whose
 associated modulus coincides with $\mathfrak{b}$. We readily see that 
 the Fourier coefficient of $\left. f\right|_{V(\mathfrak{b})}$ 
 at $\mathfrak{a}$ is given by $C(\mathfrak{b}^{-1}\mathfrak{a}; f)$. 
 Now let $f$ denote a normalised eigencuspform $f$ belonging to $S_{\kappa}(\mathfrak{N}, \underline{\varepsilon}; \mathbb{\overline{Q}})$ 
 and assume that $f$ is nearly ordinary at $p$. Let $\mathfrak{p}$ 
 be a prime ideal of $F^+$ lying above $p$ and suppose that 
 the level $\mathfrak{N}$ is not divisible by $\mathfrak{p}$. 
 Note that the eigenvalue of $f$ with respect to 
 the Hecke operator $T_0(\mathfrak{p})$ is calculated as
 $\{\mathfrak{p}^{\kappa_1} \}^{-1}C(\mathfrak{p};f)$ which we denote 
 by $C_0(\mathfrak{p};f)$.
 The eigenvalue of $f$ with respect to $S_0(\mathfrak{p})$ is also
 calculated as $\{\mathfrak{p}^{\kappa_1}\}^{-2}\varepsilon^*_+(\mathfrak{p})$. 
 Consider the Hecke polynomial of $f$ with respect to the normalised
 Hecke operator $T_0(\mathfrak{p})$; in other words,
 consider the quadratic polynomial defined by
 \begin{align*}
 1-C_0(\mathfrak{p};f)X-\mathcal{N}\mathfrak{p} \{\mathfrak{p}^{\kappa_1}\}^{-2} \varepsilon_+^*(\mathfrak{p})X^2=(1-\alpha_{0, \mathfrak{p}}X)(1-\beta_{0, \mathfrak{p}}X).
 \end{align*}
 We denote two roots of this polynomial (regarded as an elements of $\overline{\mathbb{Q}}$) by $\alpha_{0,\mathfrak{p}}$ and 
 $\beta_{0,\mathfrak{p}}$. Since $f$ is nearly ordinarity at $\mathfrak{p}$, the Hecke eigenvalue 
 $C_0(\mathfrak{p};f)=\alpha_{0,\mathfrak{p}}+\beta_{0,\mathfrak{p}}$ of
 $f$ with respect to $T_0(\mathfrak{p})$ is a $p$-adic unit. 
 This implies that one of the roots has to be a $p$-adic unit 
 (under the fixed embedding $\iota_p$), which we specify as $\alpha_{0, \mathfrak{p}}$. 
 Let us consider the cuspform $f-\left. \{\mathfrak{p}^{\kappa_1}\}
 \beta_{0, \mathfrak{p}} f\right|_{V(\mathfrak{p})} \in S_\kappa(\mathfrak{pN}, \underline{\varepsilon}; \overline{\mathbb{Q}})$. Since $\{ \mathfrak{p}^{\kappa_1} \} \beta_{0, \mathfrak{p}}$ is a root of the quadratic polynomial $1-C(\mathfrak{p}; f)X+\mathcal{N}\mathfrak{p} \varepsilon_+^*(\mathfrak{p})X^2$, 
 the form $f-\{ \mathfrak{p}^{\kappa_1} \}\beta_{0, \mathfrak{p}} \left. f\right|_{V(\mathfrak{p})}$ does not depend on the 
 choice of a generator of $\{ \mathfrak{p}^{\kappa_1} \}$. It has 
 the same eigenvalues as those of $f$ everywhere except at $\mathfrak{p}$ 
 and has the eigenvalue $\alpha_{0, \mathfrak{p}}$ with respect to $U_0(\mathfrak{p})$; 
 hence it is nearly ordinary at $\mathfrak{p}$. Repeating the same procedure for 
 all prime ideals lying above $p$ which do not divide 
 the level $\mathfrak{N}$ of $f$, we obtain the {\em $p$-stabilisation
 $f^{p\text{-st}}$ of $f$}, which is a normalised eigencuspform 
 of level $\mathfrak{N}\prod_{\mathfrak{p}\mid p\mathfrak{r}_{F^+},
 \mathfrak{p}\nmid\mathfrak{N}}\mathfrak{p}$. 
A normalised nearly $p$-ordinary eigencuspform is called a {\em
 $p$-stabilised newform} if it is obtained as the $p$-stabilisation of 
a certain primitive form (in the sense of Miyake
 \cite[p.~185]{miyake}). In particular, the level of a (nearly $p$-ordinary)
 $p$-stabilised newform is divisible by every prime ideal $\mathfrak{p}$
 of $F^+$ lying above $p$.

\subsubsection{Galois representations associated to Hilbert modular cuspforms} \label{sssc:HilbGalois}

We here introduce the notion of {\em Galois representations
associated to Hilbert modular cuspforms}. 

\begin{thmdef} \label{thmdef:Galois_rep}
Let $f$ be a normalised eigencuspform of cohomological weight $\kappa$, 
level $\mathfrak{N}$ and nebentypus $\underline{\varepsilon}$.
Let $\mathcal{K}$ be 
a finite extension of $\mathbb{Q}_p$ containing the Hecke field
$\mathbb{Q}_f$ of $f$ $($under the fixed embedding $\iota_p \colon
 \overline{\mathbb{Q}} \hookrightarrow \overline{\mathbb{Q}}_p)$. 
Then there exists a $2$-dimensional Galois representation 
$V_f$ of $G_{F^+}$ with coefficients in $\mathcal{K}$ such that the equality
\begin{align*}
\det(1-\mathrm{Frob}_\mathfrak{q} X ;
 V_f)=1-C(\mathfrak{q};f)X+\mathcal{N}\mathfrak{q} \varepsilon_+^*(\mathfrak{q})X^2
\end{align*}
holds for every prime ideal $\mathfrak{q}$ which does not divide
$p\mathfrak{N}$. Moreover $V_f$ is an irreducible representation 
of $V_f$.
The Galois representation $V_f$ of $G_{F^+}$ 
is called the {\em Galois representation associated to $f$}.
\end{thmdef}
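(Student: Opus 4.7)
The plan is to invoke the classical construction of Galois representations attached to Hilbert modular eigencuspforms, due in varying generality to Ohta, Carayol, Wiles, Taylor and Blasius--Rogawski, and then to deduce irreducibility from the cuspidality axiom (HC3). The construction divides into two cases according to the parity of $d=[F^+:\mathbb{Q}]$.

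When $d$ is odd I would realise $V_f$ geometrically. By the Jacquet--Langlands correspondence one transfers $f$ to an automorphic form on the multiplicative group of a quaternion algebra $B/F^+$ that is split at exactly one archimedean place (possible since $d-1$ is even, so $B$ may be ramified at the remaining even number of infinite places). The associated Shimura curve $X_B$ has a canonical model over $F^+$, and the cohomological double-digit weight $\kappa$ determines a local system $\mathcal{V}_\kappa$ on it. Taking the $f$-isotypic component of $H^1_\mathrm{et}(X_B\otimes_{F^+}\overline{F^+},\mathcal{V}_\kappa)\otimes_{\mathbb{Z}_p}\mathcal{K}$ with respect to the Hecke action produces the desired two-dimensional $\mathcal{K}$-representation of $G_{F^+}$; its dimension follows from the Matsushima-type decomposition of the cohomology, while the identification of the Frobenius polynomial at an unramified prime $\mathfrak{q}\nmid p\mathfrak{N}$ with $1-C(\mathfrak{q};f)X+\mathcal{N}\mathfrak{q}\,\varepsilon_+^*(\mathfrak{q})X^2$ is the content of the Eichler--Shimura congruence relation on $X_B$.

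When $d$ is even the above quaternionic realisation is not directly available over $F^+$, and I would follow Taylor's descent argument: choose a solvable totally real extension $F^+_1/F^+$ of odd relative degree for which $[F^+_1:\mathbb{Q}]$ becomes odd, attach the Galois representation $V_{f_{F^+_1}}$ to the base change $f_{F^+_1}$ by the previous paragraph, and then descend to a representation of $G_{F^+}$. The descent uses two auxiliary extensions together with a patching and Brauer-induction argument, Chebotarev density guaranteeing uniqueness given the prescribed Euler factors. This descent is the principal technical obstacle of the whole plan: one must verify that the representations built from different choices of auxiliary fields agree on their common restriction and that the resulting $V_f$ has the correct local behaviour at primes dividing $\mathfrak{N}$ and at places above $p$.

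Irreducibility will finally follow from cuspidality. Were $V_f$ reducible, its semisimplification would take the form $\chi_1\oplus\chi_2$ for characters $\chi_i\colon G_{F^+}\to\mathcal{K}^\times$, and Chebotarev density combined with the Euler-factor identity would yield $L(f,s)=L(s,\chi_1)L(s,\chi_2)$ up to finitely many bad Euler factors. By the converse theorem this would force $f$ to span an Eisenstein summand of the space of Hilbert automorphic forms, contradicting the cuspidality axiom (HC3). Therefore $V_f$ is irreducible as asserted.
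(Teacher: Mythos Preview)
The paper does not actually prove this; it attributes the construction of $V_f$ to Ohta, Carayol, Wiles, Taylor, and Blasius--Rogawski, and the irreducibility to Taylor \cite[Theorem~3.1]{taylor2}, who adapts Ribet's argument from \cite{Ribet}. Your sketch of the construction is broadly in line with those references (the dichotomy is not quite odd versus even $d$---Carayol's Shimura-curve method also applies whenever $\pi_f$ is square-integrable at some finite place, and Taylor's congruence argument covers the residual cases---but this is a detail).

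Your irreducibility argument, however, differs from the cited one and contains a gap. You write that if the semisimplification of $V_f$ is $\chi_1\oplus\chi_2$ then $L(f,s)=L(s,\chi_1)L(s,\chi_2)$ and ``by the converse theorem'' $f$ would be Eisenstein. But the $\chi_i$ are a priori only continuous $p$-adic characters $G_{F^+}\to\mathcal{K}^\times$; the symbol $L(s,\chi_i)$ has no meaning as a complex $L$-function with analytic continuation and functional equation, and no converse theorem or comparison with an Eisenstein series is available, until one knows that each $\chi_i$ arises from a gr\"o{\ss}encharacter of type $(A_0)$. That step---local algebraicity, due to Serre and Henniart---is precisely what you have omitted, and it is also the heart of the Ribet--Taylor proof. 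Once it is in place, Ribet and Taylor proceed differently from you: since $F^+$ is totally real, every algebraic Hecke character is a finite-order twist of an integral power of the norm, so $C(\mathfrak{q};f)=\psi_1^*(\mathfrak{q})\,\mathcal{N}\mathfrak{q}^{n_1}+\psi_2^*(\mathfrak{q})\,\mathcal{N}\mathfrak{q}^{n_2}$ with $n_1+n_2=[\kappa]$, and the Ramanujan--Petersson bound $|C(\mathfrak{q};f)|\le 2\,\mathcal{N}\mathfrak{q}^{[\kappa]/2}$ forces $n_1=n_2=[\kappa]/2$, contradicting the distinct Hodge--Tate weights $\kappa_{1,\tau}<\kappa_{2,\tau}$ of $V_f$. (Compare the parallel computation in Appendix~\ref{app:cm} of the present paper.) With local algebraicity supplied, your Eisenstein-series route would also close, via strong multiplicity one rather than a converse theorem; but as written the argument is incomplete.
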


The existence of such a Galois representation is 
established due to results of many people 
including Ohta \cite{ohta}, Carayol \cite{carayol}, 
 Wiles \cite{wiles}, Taylor \cite{taylor1}, and
 Blasius and Rogawski \cite{BR}. The irreducibility of $V_f$ is 
verified due to Taylor in \cite[Theorem~3.1]{taylor2}
by the same argument as Ribet's one in \cite{Ribet}. 
Note that the Galois representation $V_f$ is uniquely 
determined up to scalar multiples by virtue of 
its irreducibility combined with \v{C}ebotarev's density theorem.

When the Hilbert cuspform $f$ is nearly $p$-ordinary, we can also obtain
precise information on the local behaviour of the associated representation
$V_f$ at places above $p$.

\begin{pro} \label{prop:localGalois}
Let $f$ be a normalised eigencuspform of cohomological weight $\kappa$, 
 level $\mathfrak{N}$ and nebentypus $\underline{\varepsilon}$ which is
 nearly ordinary at $p$ and let the other notation be as in Theorem-Definition~$\ref{thmdef:Galois_rep}$. Then, for each place $\mathfrak{p}$ of $F^+$ lying above $p$, the Galois representation $V_f$ associated to $f$ contains a unique $1$-dimensional $D_\mathfrak{p}$-stable $\mathcal{K}$-subspace $\mathrm{Fil}_\mathfrak{p}^+V_f$ on which the decomposition group $D_\mathfrak{p}$ of $G_{F^+}$ at $\mathfrak{p}$ acts via the $p$-adic character character $\delta_\mathfrak{p}\colon D_\mathfrak{p}\rightarrow \mathcal{K}^\times$ satisfying
\begin{align} \label{eq:unramified_char}
  \delta_\mathfrak{p}(\mathrm{Frob}_{\varpi_\mathfrak{p}})=\alpha_f(\varpi_\mathfrak{p})
\end{align}
 for every uniformiser $\varpi_{\mathfrak{p}}$ of $F^+_\mathfrak{p}$. Here $\mathrm{Frob}_{\varpi_\mathfrak{p}}=(\varpi_\mathfrak{p}, F^{+,\mathrm{ab}}_\mathfrak{p}/F^+_\mathfrak{p})$ denotes the Frobenius element corresponding to the uniformiser $\varpi_\mathfrak{p}$ via the local reciprocity map. 
The value $\alpha_{f}(\varpi_\mathfrak{p})$ appearing in $(\ref{eq:unramified_char})$ is a unique $p$-adic unit root of the quadratic polynomial
  \begin{align} \label{eq:unitroot}
 1-\varpi_\mathfrak{p}^{-\kappa_{1,\mathfrak{p}}}C(\mathfrak{p};f)X-\varpi_{\mathfrak{p}}^{-2\kappa_{1,\mathfrak{p}}}\mathcal{N}\mathfrak{p} \varepsilon_+^*(\mathfrak{p})X^2=(1-\alpha_f(\varpi_\mathfrak{p})X)(1-\beta_f(\varpi_\mathfrak{p})X)
  \end{align}
 where $C(\mathfrak{p};f)$ denotes the eigenvalue of the Hecke operator $U(\mathfrak{p})$ with respect to $f$, and $\kappa_{1,\mathfrak{p}}=\sum_{\tau \colon (\iota_p\circ \tau)^{-1}(\mathfrak{M}_{\overline{\mathbb{Z}}_p})=\mathfrak{p}}\kappa_{1,\tau}$ denotes the summation of $\kappa_{1,\tau}$ over all $\tau \colon F^+\hookrightarrow \overline{\mathbb{Q}}$ such that $\iota_p\circ \tau$ induces the place $\mathfrak{p}$.
\end{pro}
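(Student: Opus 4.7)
The plan is to derive this proposition as a translation of known local results on nearly ordinary Galois representations into the specific normalisations adopted in the paper (Hida's double-digit weight, geometric Frobenius, Hodge-Tate weight of $\chi_{p,\mathrm{cyc}}$ equal to $-1$). The substantive input is the theorem of Wiles, refined to the Hilbert modular case by Hida and Wiles himself and incorporated into the framework of Blasius--Rogawski: if $f$ is a normalised eigencuspform of cohomological weight which is nearly ordinary at every prime $\mathfrak{p}\mid p$, then $V_f|_{D_\mathfrak{p}}$ is reducible and isomorphic (after extending scalars) to an upper-triangular representation
\[
V_f|_{D_\mathfrak{p}}\ \sim\ \begin{pmatrix} \delta_\mathfrak{p} & * \\ 0 & \delta_\mathfrak{p}' \end{pmatrix},
\]
with $\delta_\mathfrak{p}$ unramified and its value at $\mathrm{Frob}_{\varpi_\mathfrak{p}}$ a $p$-adic unit. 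This is exactly where the near ordinary assumption on $f$ enters, and it produces a canonical $1$-dimensional $D_\mathfrak{p}$-stable subspace, which I would take as $\mathrm{Fil}^+_\mathfrak{p} V_f$.

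Next I would pin down the character $\delta_\mathfrak{p}$. The characteristic polynomial of $\mathrm{Frob}_\mathfrak{p}$ (in the unramified setting at $\mathfrak{p}\nmid p$) is given by Theorem-Definition~\ref{thmdef:Galois_rep}; at places above $p$ the Frobenius of the filtration quotients still satisfies a quadratic relation whose coefficients come from the eigenvalue of $U(\mathfrak{p})$ acting on $f$ and from $\varepsilon_+^*(\mathfrak{p})$, but adjusted by the Hodge--Tate twist corresponding to the weight $\kappa_{1,\mathfrak{p}}$. Concretely, the twist by $\varpi_\mathfrak{p}^{-\kappa_{1,\mathfrak{p}}}$ appearing in the polynomial $(\ref{eq:unitroot})$ is the incarnation, via local class field theory, of the relation between Hida's normalised Hecke operator $U_0(\mathfrak{p})$ and the unramified character governing $\mathrm{Fil}^+_\mathfrak{p} V_f$. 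One therefore rewrites $\det(1-\mathrm{Frob}_\mathfrak{p}X;V_f)$ in terms of the non-normalised eigenvalue $C(\mathfrak{p};f)$ and the nebentypus, applies Hensel's lemma to factor out the unit root $\alpha_f(\varpi_\mathfrak{p})$, and identifies it with $\delta_\mathfrak{p}(\mathrm{Frob}_{\varpi_\mathfrak{p}})$.

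Uniqueness of $\mathrm{Fil}^+_\mathfrak{p} V_f$ is then formal: if $W\subset V_f|_{D_\mathfrak{p}}$ were another $1$-dimensional stable subspace distinct from $\mathrm{Fil}^+_\mathfrak{p} V_f$, then $V_f|_{D_\mathfrak{p}}$ would split as a direct sum and both characters on the two lines would be unramified with Frobenius eigenvalues equal to the two roots of $(\ref{eq:unitroot})$; but only $\alpha_f(\varpi_\mathfrak{p})$ is a $p$-adic unit, while $\beta_f(\varpi_\mathfrak{p})$ has strictly positive $p$-adic valuation (as forced by $\kappa_{1,\mathfrak{p}} < \kappa_{2,\mathfrak{p}}$ together with the determinant relation), so the two characters are distinct and the splitting contradicts the fact that Wiles' filtration is a genuine non-split extension in general. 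Hence $\mathrm{Fil}^+_\mathfrak{p} V_f$ is the unique such subspace.

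The only genuinely delicate point I expect is the bookkeeping in the second step: one must keep track of the geometric Frobenius convention fixed in the \emph{Notation}, the behaviour of the nebentypus factor $\varepsilon_+^*(\mathfrak{p})$ at the ramified place $\mathfrak{p}$, and the power $\varpi_\mathfrak{p}^{-\kappa_{1,\mathfrak{p}}}$ reflecting the difference between $T(\mathfrak{p})$ and the normalised operator $T_0(\mathfrak{p})$. Once these normalisations are aligned with Hida's conventions from \cite{hecke, HIT} and Wiles' local description, the identification of $\delta_\mathfrak{p}$ with the unit-root character in the statement is immediate, and no further input is required.
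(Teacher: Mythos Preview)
The paper does not supply a proof of this proposition at all: immediately after the statement it simply records the provenance of the result, attributing it to Mazur--Wiles for weight-$2$ elliptic modular forms, to Wiles for parallel-weight Hilbert modular forms, and to Hida \cite[Theorem~I]{nearlyhecke} for the general nearly ordinary case. Your proposal is therefore not so much an alternative route as a sketch of what lies behind those citations, and in that sense it is broadly on target.

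That said, there is one substantive slip. You write that $\delta_\mathfrak{p}$ is \emph{unramified}; the paper explicitly warns against this in the remark following the proposition: ``the character $\delta_\mathfrak{p}$ is not unramified in general, and hence the equation \eqref{eq:unramified_char} \emph{does depend} on the choice of a uniformiser $\varpi_\mathfrak{p}$''. In the nearly ordinary (as opposed to ordinary, parallel-weight) setting, the twist by $\varpi_\mathfrak{p}^{-\kappa_{1,\mathfrak{p}}}$ reflects a genuinely ramified contribution coming from the non-parallel weight, so the character on $\mathrm{Fil}^+_\mathfrak{p}V_f$ is only unramified after an appropriate algebraic twist. Your bookkeeping paragraph hints at this, but the earlier assertion contradicts it.

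Your uniqueness argument is also slightly off. Non-splitness of the extension is not what forces uniqueness; even when $V_f|_{D_\mathfrak{p}}$ happens to split, the two characters on the sub and the quotient have distinct Hodge--Tate weights (namely $\kappa_{1,\mathfrak{p}}$ and $\kappa_{2,\mathfrak{p}}$, which differ since $\kappa$ is cohomological), hence are distinct characters, and so the line carrying $\delta_\mathfrak{p}$ is unique. The invocation of non-splitness is unnecessary and in general false.
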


Note that the near $p$-ordinarity of $f$ guarantees that the quadratic equation \eqref{eq:unitroot} indeed has a unique $p$-adic unit root. We also remark that the character $\delta_\mathfrak{p}$ is not unramified in general, and hence the equation \eqref{eq:unramified_char} {\em does depend} on the choice of a uniformiser $\varpi_\mathfrak{p}$ of $F^+_\mathfrak{p}$, contrary to the cases of $p$-ordinary modular form (of parallel weight).

Proposition~\ref{prop:localGalois} is first observed by Mazur and Wiles for $p$-ordinary elliptic modular forms of weight 2 \cite[Chapter~3., Section~2]{MazurWiles} and then verified by Wiles for $p$-ordinary Hilbert modular forms of parallel weight in \cite[Theorem~2.2]{Wiles_prep} and \cite[Theorem~2]{wiles}. The general cases have been verified by Hida in \cite[Theorem I]{nearlyhecke}. The $D_\mathfrak{p}$-stable filtration $\mathrm{Fil}_\mathfrak{p}^+ V_f \subset V_f$ is used to define the local condition at $\mathfrak{p}$ of Greenberg's Selmer group (see also Section~\ref{sssc:general_Selmer}).

\subsubsection{Hilbert modular cuspforms with complex multiplication} \label{sssc:CMforms}

In this subsection we introduce the notion of Hilbert modular cuspforms
with {\em complex multiplication}. The following definition is due to Ribet 
for elliptic modular forms \cite[Section~3]{Ribet}:
 
 \begin{defn}[Cuspform with complex multiplication] \label{def:cm}
  Let $\nu \colon \mathbb{A}_{F^+}^\times/F^{+,\times} \rightarrow
  \mathbb{C}^\times$ be a {\em nontrivial} gr\"o{\ss}encharacter of finite order on $F^{+,\times}$. A Hilbert modular eigencuspform $f$
  of weight $\kappa$, level
  $\mathfrak{N}$ and nebentypus $\underline{\varepsilon}$ is
  said to {\em have complex multiplication by $\nu$}
  if the equality
 \begin{align} \label{eq:cm}
 C(\mathfrak{l}; f)=\nu^*(\mathfrak{l})C(\mathfrak{l}; f)
 \end{align}
  holds for all prime ideals $\mathfrak{l}$ in a set of prime ideals
  of density $1$ in $\mathfrak{r}_{F^+}$. 
  
A  Hilbert modular eigencuspform $f$ of weight $\kappa$, level
 $\mathfrak{N}$ and nebentypus $\underline{\varepsilon}$ is said to 
  {\em have complex multiplication} if $f$ has complex multiplication
  by a certain nontrivial
 gr\"o{\ss}encharacter $\nu\colon \mathbb{A}_{F^+}^\times/F^{+,\times}
 \rightarrow \mathbb{C}^\times$ of finite order on $F^+$.
\end{defn}

 The right hand side of (\ref{eq:cm}) is naturally regarded as the Fourier coefficient at $\mathfrak{l}$ of $f\otimes \nu$: the cuspform 
$f$ twisted by the gr\"o{\ss}encharacter $\nu$ of finite order (see \cite[Proposition 4.5]{shimura} for the definition of $f\otimes \nu$).  
The nebentypus of $f \otimes \nu$ is easily calculated as $\nu^2 \varepsilon$, and by comparing eigenvalues of the Hecke operator $S(\mathfrak{l})$ at $f$ and $f\otimes \nu$, we obtain the equality
 \begin{align*}
 \varepsilon_+^*(\mathfrak{l})=\nu^{*}(\mathfrak{l})^2\varepsilon^*_+(\mathfrak{l})
 \end{align*}
 for every prime ideal $\mathfrak{l}$ prime to levels of $f$ and $f \otimes \nu$. Then \v{C}ebotarev's density theorem 
 forces $\nu^2$ to be trivial, and consequently $\nu$ must be a quadratic character if $f$ has complex multiplication by $\nu$.

 \medskip
 
 An example of Hilbert eigencuspforms which has complex
 multiplication is obtained as the {\em theta lift} of a
 gr\"o{\ss}encharacter of type $(A_0)$ of a CM number fields.
 In order to introduce the notion of theta lifts, let us consider 
 a totally imaginary quadratic extension $F$ of $F^+$, which is by construction a CM number field. We denote by $c$ a unique nontrivial element
 of the Galois group $\mathrm{Gal}(F/F^+)$, which is none other than the conplex conjugation. Now let $\Sigma$ denote a CM type of $F$, that is, a subset
 of $I_F$ such that $I_F$ is decomposed into the disjoint union of $\Sigma$ and $\Sigma^c=\{\, \sigma \circ c \mid \sigma \in \Sigma \,\}$. 
 Then we have a canonical bijection $\Sigma \longrightarrow I_{F^+}$  
 via the restriction $\sigma \mapsto \sigma|_{F^+}$.
 Since each complex place corresponds to a unique element $\sigma$ of the fixed $p$-ordinary CM-type 
$\Sigma$, we identify $\Sigma$ with the set of archimedean places of $F$ and, for each $\sigma \in \Sigma$, 
we specify the identification of $F_\sigma$ with $\mathbb{C}$ 
via the embedding $\iota_\infty \circ \sigma \colon F\hookrightarrow \mathbb{C}$; in other words, 
we identify $\mathbb{A}_F^{\infty}=F\otimes_{\mathbb{Q}} \mathbb{R}$ with $\mathbb{C}^\Sigma$ via the 
isomorphism induced by $x\otimes 1 \mapsto (\iota_\infty \circ \sigma(x))_{\sigma \in \Sigma}$.

Under these identifications, the infinity type $\mu$ 
of a gr\"o{\ss}encharacter $\eta \colon \mathbb{A}_F^{\times}/F^{\times} \rightarrow \mathbb{C}^{\times}$ 
of type $(A_0)$ is described as
$\mu=\sum_{\sigma \in \Sigma} (\mu_{\sigma}\sigma+\mu_{\bar{\sigma}}\bar{\sigma})$, where 
$\bar{\sigma}=\sigma \circ c$ denotes a unique element in $\Sigma^c$ corresponding to $\sigma \in \Sigma$. 
 A gr\"o{\ss}encharacter $\eta$ of type $(A_0)$ on $F$ is 
 said to be {\em $\Sigma$-admissible} (or {\em admissible with respect
to  $\Sigma$}) if its infinity type $\mu$ satisfies $\mu_{\sigma} < \mu_{\bar{\sigma}}$ for every $\sigma$ in $\Sigma$. Given an $\Sigma$-admissible infinity type $\mu$ on $F$, we define a cohomological double digit weight $\kappa_\mu =(\kappa_{\mu,1}, \kappa_{\mu, 2}) \in \mathbb{Z}[I_{F^+}]\times \mathbb{Z}[I_{F^+}]$ on $F^+$ by
\begin{align} \label{eq:weight_cm}
\kappa_{\mu} = \left( \sum_{\sigma \in \Sigma} \mu_{\sigma}
  \left. \sigma \right|_{F^+},\ \sum_{\sigma \in \Sigma} \mu_{\bar{\sigma}}
  \left. \bar{\sigma} \right|_{F^+} \right). 
\end{align}
Given a gr\"o{\ss}encharacter $\eta \colon \mathbb{A}_F^\times/F^\times \rightarrow \mathbb{C}$ of type $(A_0)$ on $F$, we define $\underline{\varepsilon}_{\eta}=(\varepsilon_{\eta},
 \varepsilon_{\eta,+})$ by 
$$ 
\underline{\varepsilon}_\eta =( \left. \nu_{F/F^+}
  \breve{\eta}\right|_{T(\hat{\mathbb{Z}})},\ \nu_{F/F^+} \breve{\eta}) 
$$
where $\breve{\eta}$ is defined to be  $\left. \eta\right|_{\mathbb{A}_{F^+}^\times} | \, \cdot \, |_{\mathbb{A}_{F^+}^\times}$ and  
 $\nu_{F/F^+}$ denotes the quadratic character on $\mathbb{A}_{F^+}^\times/F^{+,\times}$ associated to the quadratic extension $F/F^+$ 
 via global class field theory. Finally let $\mathfrak{D}_{F/F^+}$ denote the relative discriminant of the quadratic extension $F/F^+$.

 \begin{propdef}[Theta lifts] \label{prop:theta} 
Let $F$ be a totally quadratic extension of $F^+$, $\Sigma$ a CM type of $F$, and
 $\mathfrak{C}$ an integral ideal of $F$.  
Let $\eta \colon \mathbb{A}_F^\times /F^\times \rightarrow
  \mathbb{C}^\times$ be a gr\"o{\ss}encharacter of type $(A_0)$ with modulus
  $\mathfrak{C}$ and suppose that the infinity type $\mu$ of $\eta$ 
is $\Sigma$\nobreakdash-admissible.  Then there exists a unique normalised cuspform $\vartheta(\eta)$ of weight $\kappa_\mu$, level $\mathfrak{D}_{F/F^+} \mathfrak{C}\mathfrak{C}^c$ 
 and nebentypus $\underline{\varepsilon}_\eta$ 
 such that its Fourier coefficient $C(\mathfrak{a}; \vartheta(\eta))$ at an integral ideal $\mathfrak{a}$ of $F^+$ is given by $\sum_{\mathfrak{A}\subseteq \mathfrak{r}_F, (\mathfrak{A}, \mathfrak{C})=1; \mathfrak{A}\mathfrak{A}^c=\mathfrak{a}} \eta^*(\mathfrak{A})$. The cuspform $\vartheta(\eta)$ is a common eigenvector 
 of $T(\mathfrak{l})$ for every prime ideal $\mathfrak{l}$ relatively prime to $\mathfrak{D}_{F/F^+}\mathfrak{C}\mathfrak{C}^c$. 
 Furthermore if the modulus $\mathfrak{C}$ of $\eta$ coincides with the
  conductor $\mathfrak{C}(\eta)$ of $\eta$, the resulting cuspform $\vartheta(\eta)$ is 
 primitive in the sense of Miyake \cite[p.~185]{miyake}.  
 \end{propdef}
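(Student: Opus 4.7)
The approach is the classical Hecke--Shimura theta lift: construct $\vartheta(\eta)$ explicitly from its Fourier expansion, check the adelic modular-form axioms, verify Hecke equivariance by analyzing how primes of $F^+$ split in $F$, and finally address primitivity.

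First I would declare the Fourier coefficient of $\vartheta(\eta)$ at each integral ideal $\mathfrak{a}\subseteq\mathfrak{r}_{F^+}$ to be the prescribed sum and then assemble the associated adelic function $\vartheta(\eta)\colon G(\mathbb{A}_\mathbb{Q})\to\mathbb{C}$ along the lines of the expansion given in Proposition~\ref{prop:fourier}. The $\Sigma$-admissibility of the infinity type $\mu$ (namely $\mu_\sigma<\mu_{\bar\sigma}$ for every $\sigma\in\Sigma$) ensures absolute and locally uniform convergence of the resulting theta series on $\mathfrak{h}^{I_{F^+}}$ and produces the correct holomorphic automorphy factor $J_{\kappa_\mu}$ with $\kappa_\mu$ as in \eqref{eq:weight_cm}. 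The level and nebentypus assertions then follow from Poisson summation on the lattice $\mathfrak{A}\subset F$: the relative discriminant $\mathfrak{D}_{F/F^+}$ enters through the codifferent $\mathfrak{d}_{F/F^+}^{-1}$, while $\mathfrak{C}\mathfrak{C}^c$ enters because the summation is restricted to ideals prime to $\mathfrak{C}$ and because the local components of $\eta$ at primes dividing $\mathfrak{C}$ impose a congruence condition of level $\mathfrak{C}\mathfrak{C}^c$. Computing the action of the diagonal Hecke operator $T(\mathfrak{b},\mathfrak{b})$ on the Fourier expansion yields the nebentypus $\underline{\varepsilon}_\eta$: the restriction $\eta|_{\mathbb{A}_{F^+}^\times}$ contributes $\breve{\eta}$, while the quadratic character $\nu_{F/F^+}$ appears through the change of variables needed to dualize the codifferent. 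Cuspidality (HC3) is automatic because the $\Sigma$-admissibility prevents $\eta$ from factoring through the norm map $\mathcal{N}_{F/F^+}$, equivalently it guarantees that the automorphic induction of $\eta$ from $GL_1(\mathbb{A}_F)$ to $GL_2(\mathbb{A}_{F^+})$ is cuspidal.

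Hecke equivariance at primes $\mathfrak{l}$ prime to $\mathfrak{D}_{F/F^+}\mathfrak{C}\mathfrak{C}^c$ is then verified by direct manipulation of $C(\mathfrak{l}\mathfrak{a};\vartheta(\eta))$ using \eqref{eq:hecke}, splitting into the cases where $\mathfrak{l}$ is split or inert in $F/F^+$: in the split case $\mathfrak{l}\mathfrak{r}_F=\mathfrak{L}\mathfrak{L}^c$ the Hecke eigenvalue is $\eta^*(\mathfrak{L})+\eta^*(\mathfrak{L}^c)$, while in the inert case it vanishes, which matches the pattern expected for a CM form.

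The main obstacle is the final primitivity claim. When $\mathfrak{C}$ equals the conductor $\mathfrak{C}(\eta)$, one must show that $\vartheta(\eta)$ has level exactly $\mathfrak{D}_{F/F^+}\mathfrak{C}(\eta)\mathfrak{C}(\eta)^c$ and is not in the image of any oldform inclusion. The cleanest route is representation-theoretic: $\vartheta(\eta)$ corresponds to the automorphic induction $\mathrm{AI}_{F/F^+}(\eta)$ from $GL_1(\mathbb{A}_F)$ to $GL_2(\mathbb{A}_{F^+})$, whose local conductor at every finite place of $F^+$ is given by the classical formulas of Jacquet--Langlands (supplemented by Bushnell--Henniart at wildly ramified primes). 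The product of these local conductors equals $\mathfrak{D}_{F/F^+}\mathcal{N}_{F/F^+}(\mathfrak{C}(\eta))=\mathfrak{D}_{F/F^+}\mathfrak{C}(\eta)\mathfrak{C}(\eta)^c$, and strong multiplicity one then identifies $\vartheta(\eta)$ with the unique newform of that precise level. A direct computation of oldform spaces would also work but becomes delicate at primes ramifying in $F/F^+$.
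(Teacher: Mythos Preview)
Your outline is reasonable and would lead to a correct proof, but you should know that the paper does not prove this statement at all: it simply cites \cite[Section~7B]{gelbert} and remarks that the argument there ``is based upon Hecke theory on $GL(2)$ through representation theoretic arguments.'' Gelbart's treatment constructs $\vartheta(\eta)$ as the automorphic induction $\mathrm{AI}_{F/F^+}(\eta)$ from the outset, so that the level, nebentypus, cuspidality, Hecke eigenvalues, and primitivity all come out of the local-global theory of $GL(2)$ representations rather than from direct manipulation of Fourier expansions and Poisson summation. Your approach is more classical for the construction (building the theta series by hand from its Fourier side and checking (HC1)--(HC3) directly), and only switches to the representation-theoretic viewpoint for the primitivity claim; this is perfectly viable, closer in spirit to Shimura's original computations, and has the advantage of making the Fourier coefficients and the nebentypus transparent, at the cost of a longer verification of the level structure at primes dividing $\mathfrak{D}_{F/F^+}$. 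Since both routes end up invoking the conductor formula for automorphic induction to pin down primitivity, there is no essential divergence in the hardest step.
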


 The normalised Hilbert modular cuspform $\vartheta(\eta)$ defined as above is
  called the {\em theta lift} of the gr\"o{\ss}encharacter $\eta$ of
  $(A_0)$ on $F$.
 See \cite[Section~7B]{gelbert} for the proof of the proposition, which is 
 based upon Hecke theory on GL(2) through representation theoretic arguments. 

 By its explicit description, 
 the Fourier coefficient $C(\mathfrak{l}; \vartheta(\eta))$ of $\vartheta(\eta)$ at a prime ideal $\mathfrak{l}$ equals $0$ if 
 $\mathfrak{l}$ is inert in $F$; in other words, $C(\mathfrak{l};
 \vartheta(\eta))$ vanishes when $\nu^*_{F/F^+}(\mathfrak{l})$ equals
 $0$ or $-1$. We thus readily observe that 
 $\vartheta(\eta)$ has complex multiplication by
 $\nu_{F/F^+}$. Conversely, if a Hilbert cuspform $f$ belonging to 
$S_\kappa (\mathfrak{N}, \underline{\varepsilon}; \overline{\mathbb{Q}})$ 
 has complex multiplication by a quadratic character $\nu$, 
there exists a totally imaginary quadratic extension $F$ of $F^+$ such that 
 $\nu$ is the character associated to $F/F^+$ and $f$ is described as a 
linear combination of theta lifts of appropriate gr\"o{\ss}encharacters 
of type $(A_0)$ on $F$. We strongly believe that this fact is 
fairly well known, but we shall give a proof of this fact with the language of Galois representation at Proposition~\ref{prop:cm} in Appendix~\ref{app:cm}.

\medskip
In order to let the theta lift $\vartheta(\eta)$ be {\em nearly $p$-ordinary},
we must impose the following {\em ordinarity condition} on the totally imaginary quadratic extension $F/F^+$:
\begin{itemize}
\item[-] {\bfseries (ord$_{F/F^+}$)} all places of $F^+$ lying above $p$
	 split completely in $F$.
\end{itemize}
Then due to the ordinarity condition (ord$_{F/F^+}$), 
there exists a {\em $p$-ordinary CM type} $\Sigma$ of $F$; that is, 
$\Sigma$ is a CM type such that two embeddings $\iota_p \circ\sigma$ and
$\iota_p \circ \sigma \circ c$ of $F$ into $\overline{\mathbb{Q}}_p$ 
define different places of $F$ (lying above $p$) for each $\sigma$ in $\Sigma$.
See Section~\ref{sssc:CMtypes} for details on $p$-ordinary CM types.

Now, under the ordinarity condition (ord$_{F/F^+}$), let $\Sigma$ be a $p$-ordinary CM type of $F$ and $\eta \colon \mathbb{A}_F^\times/F^\times \rightarrow \mathbb{C}^\times$ a $\Sigma$-admissible gr\"o{\ss}encharacter of type $(A_0)$ on $F$. We say $\eta$ to be {\em ordinary with respect to $\Sigma$} (or {\em $\Sigma$-ordinary}) if $\eta$ is unramified at every place $\mathfrak{P}$ contained in $\Sigma_p$. Then the theta lift $\vartheta(\eta)$ of $\eta$ is nearly $p$-ordinary when $\eta$ is ordinary with respect to $\Sigma$.
Conversely if $f$ is a nearly $p$-ordinary $p$-stabilised newform
with complex multiplication, there exist a totally imaginary
quadratic extension $F$ of $F^+$ satisfying (ord$_{F/F^+}$),
a $p$\nobreakdash-ordinary CM type $\Sigma$ and a $\Sigma$-admissible gr\"o{\ss}encharacter
$\eta$ of type ($A_0$) on $F$ ordinary with
respect to $\Sigma$ such that $f$ is described
as $f_\eta:=\vartheta(\eta)^{p\text{-st}}$.
We think that one can verify these facts by looking at the local component at $\mathfrak{p}$ of the automorphic representation $\pi_f$ associated to $f$, but later we shall give a brief proof at Proposition~\ref{prop:cm_ord} in Appendix~\ref{app:cm} based upon the local study of the Galois representation $V_f$ associated to $f$.

%
%
\subsection{The cyclotomic $p$-adic $L$-function for Hilbert
  modular cuspforms} \label{ssc:p-adic_L_cusp}
%
%

We introduce the notion of the cyclotomic $p$-adic $L$-function
associated to Hilbert modular cuspforms in this section. 
We first define the (complex) $p$-optimal periods in
Section~\ref{sssc:optimal_periods}, and then discuss the $p$-adic 
$L$-functions after the result of the second-named author \cite{ochiai}
in Section~\ref{sssc:p-adic_L_cusp}.

Throughout this section $F^+$ denotes a totally real number field
satisfying the following {\em unramifiedness condition}:
\begin{itemize}
\item[-] {\bfseries (unr$_{F^+}$)} the prime $p$ does not ramify in $F^+$.
\end{itemize}
In particular $F^+$ does not contain primitive $p$-th roots of unity. 

\subsubsection{The $p$-optimal complex periods}
\label{sssc:optimal_periods}

We recall the definition of the {\em $p$-optimal complex period} associated
to a normalised eigencuspform $f$ of weight $\kappa$, level
$\mathfrak{N}$ and nebentypus $\underline{\varepsilon}$ in this
paragraph after \cite[Definition~3.5]{ochiai}. 
Let $\kappa_1^{\mathrm{max}}$ denote 
the maximum of the integers $\kappa_{1,\tau}$ over $\tau$ in $I_{F^+}$, 
and set $\tilde{f}(x)=f(x)|\det(x)|_{\mathbb{A}_{F^+}}^{\kappa_1^{\mathrm{max}}+1}$ for every
element $x$ of $G(\mathbb{A}_\mathbb{Q})$. Then we readily see that 
$\tilde{f}$ is a Hilbert modular eigencuspform of weight
$\tilde{\kappa}$, level $\mathfrak{N}$ and nebentypus
$\underline{\tilde{\varepsilon}}$, where $\tilde{\kappa}$ and
$\underline{\tilde{\varepsilon}}$ are defined as 
\begin{align*}
\tilde{\kappa}=(\tilde{\kappa}_1,\tilde{\kappa}_2)=(\kappa_1-(\kappa_1^{\mathrm{max}}+1)\mathsf{t},
 \kappa_2-(\kappa_1^{\mathrm{max}}+1)\mathsf{t}), \,
 \underline{\tilde{\varepsilon}}=(\varepsilon
 \lvert \cdot \rvert_{\mathbb{A}^f_{F^+}}^{2(\kappa_1^{\mathrm{max}}+1)},
 \varepsilon_+\lvert \cdot \rvert_{\mathbb{A}_{F^+}}^{2(\kappa_1^{\mathrm{max}}+1)}).
\end{align*}
In particular the weight $\tilde{\kappa}$ of $\tilde{f}$ is {\em
critical} in the sense of Definition~\ref{def:weights}. The cuspform
$\tilde{f}$ is sometimes called a {\em critical twist of $f$}. 
For a subalgebra $A$ of $\mathbb{C}$ containing all Hecke eigenvalues
associated to $f$, let $\mathfrak{h}_{\tilde{\kappa}}(\mathfrak{N},
\tilde{\underline{\varepsilon}};A)$ denote the image of
$R(\hat{\Gamma}_1(\mathfrak{N}),
\Delta_1(\mathfrak{N}))\otimes_\mathbb{Z} A$ in 
$\mathrm{End}_A(S_{\tilde{\kappa}}(\mathfrak{N},\tilde{\underline{\varepsilon}};A))$.
Then, as an element of $\mathfrak{h}_{\tilde{\kappa}}(\mathfrak{N},\tilde{\underline{\varepsilon}};A)$, the Hecke operator $S(\mathfrak{b})$ is naturally identified with $\tilde{\varepsilon}^*_+(\mathfrak{b})=\varepsilon^*_+(\mathfrak{b})\mathcal{N}\mathfrak{b}^{-2(\kappa_1^\mathrm{max}+1)}$.
Consider a linear map $\lambda_{\tilde{f}} \colon
\mathfrak{h}_{\tilde{\kappa}}(\mathfrak{N},
\tilde{\underline{\varepsilon}};A)\rightarrow A$ 
which sends the Hecke operator $T(\mathfrak{l})$ (resp.\
$U(\mathfrak{l})$) to an eigenvalue 
$\lambda_{\tilde{f}}(\mathfrak{l})$ of $T(\mathfrak{l})$ (resp.\
$U(\mathfrak{l})$) with respect to the eigencuspform $\tilde{f}$ 
for every prime ideal $\mathfrak{l}$ which does not divide
$\mathfrak{N}$ (resp.\ which divides $\mathfrak{l}$). 
Note that $\lambda_{\tilde{f}}(\mathfrak{l})$ equals  $\mathcal{N}
\mathfrak{l}^{-(\kappa_1^{\mathrm{max}}+1)}\lambda_f(\mathfrak{l})$ by construction, where
$\lambda_f(\mathfrak{l})$ is an eigenvalue of $T(\mathfrak{l})$ (or
$U(\mathfrak{l})$ if $\mathfrak{l}$ divides $\mathfrak{N}$) 
with respect to $f$.

\medskip

Let $Y_1(\mathfrak{N})_{/\mathbb{Q}}$ denote 
the Hilbert-Blumenthal modular variety of level $\mathfrak{N}$. 
It is an algebraic variety defined over $\mathbb{Q}$ and obtained as 
the canonical model of the complex analytic variety
$Y_1(\mathfrak{N})=G(\mathbb{Q})\backslash
G(\mathbb{A}_\mathbb{Q})/\hat{\Gamma}_1(\mathfrak{N})C_{\mathbf{i}}$.
Note that if we choose a set of representatives $\{ \mathfrak{c}_i \}_{i=1}^h$ of the strict ray class group $\mathrm{Cl}_{F^+}^+$ of $F^+$, we can decompose the Hilbert-Blumenthal modular variety $Y_1(\mathfrak{N})(\mathbb{C})$ as
\begin{align} \label{eq:var_decomp}
 Y_1(\mathfrak{N})(\mathbb{C}) = \bigsqcup_{i=1}^h \Gamma_1^i(\mathfrak{N}) \backslash \mathfrak{h}^{I_{F^+}},
\end{align}
where $\mathfrak{h}$ denotes the Poincar\'e upper half plane and, for each $i$ with $1\leq i\leq h$, $\Gamma_1^i(\mathfrak{N})$ denotes the arithmetic subgroup of $G(\mathbb{Q})$ defined as
\begin{align*}
\Gamma_1^i(\mathfrak{N})= G(\mathbb{Q}) \cap \left(
\begin{pmatrix} \mathfrak{c}_i & 0 \\ 0 & 1\end{pmatrix}^{-1}\hat{\Gamma_1(\mathfrak{N})C_{\mathbf{i}}}
\begin{pmatrix}
 \mathfrak{c}_i & 0 \\ 0 & 1
\end{pmatrix}\right). 
\end{align*}
Corresponding to the decomposition of the Hilbert-Blumenthal modular variety \eqref{eq:var_decomp}, the space of cuspforms $S_\kappa(\hat{\Gamma}_1(\mathfrak{N}); \mathbb{C})$ of level $\hat{\Gamma}_1(\mathfrak{N})$ is also decomposed as
\begin{align} \label{eq:form_decomp}
 S_{\tilde{\kappa}}(\hat{\Gamma}_1(\mathfrak{N}); \mathbb{C}) \cong  \bigoplus_{i=1}^h S_{\tilde{\kappa}} (\Gamma_1^i(\mathfrak{N});\mathbb{C}) \ ; \ h \mapsto (h_i(z))_{1\leq i\leq h},
\end{align}
where $S_{\tilde{\kappa}}(\Gamma_1^i(\mathfrak{N}); \mathbb{C})$ denotes the space of cuspforms of weight $\tilde{\kappa}$ and level $\Gamma_1^i(\mathfrak{N})$ on $\mathfrak{h}^{I_{F^+}}$. See, for example, \cite[Definition~2.5. and Lemma~2.6.]{ochiai} for details on the decomposition \eqref{eq:form_decomp}. 
In the rest of the article, we choose and fix a set of representatives $\{ \mathfrak{c}_i \}_{i=1}^h$ of $\mathrm{Cl}_{F^+}^+$ so that the $\mathfrak{p}$-component of each $\mathfrak{c}_i$ equals $1$ for all the places $\mathfrak{p}$ of $F^+$ lying above $p$. Next, we define a {\em standard local system} $\mathscr{L}(\tilde{\kappa};A)$ on 
$Y_1(\mathfrak{N})(\mathbb{C})$ for a subring $A$ of $\mathbb{C}$
satisfying the following condition $(*)$:
\begin{quote}
$(*)$ the subring $A$ contains the normal closure of $\mathfrak{r}_{F^+}[\mathfrak{d}_{F^+}^{-1}\mathfrak{c}_{i}^{-1}]$ for each $i$ with $1\leq i\leq h$, where $\mathfrak{d}_{F^+}$ denotes the absolute different of $F^+$.
\end{quote}
For each element $\tau$ of $I_{F^+}$, let
$L(\tilde{\kappa}_\tau;A)=\bigoplus_{m_\tau =0}^{\tilde{\kappa}_{2,\tau}-\tilde{\kappa}_{1,\tau}-1}AX_\tau^{m_\tau } Y_\tau^{\tilde{\kappa}_{2,\tau}-\tilde{\kappa}_{1,\tau}-1-m_\tau }$ 
denote the free $A$-module spanned by all two-variable homogeneous polynomials of degree
$\tilde{\kappa}_{2,\tau}-\tilde{\kappa}_{1,\tau}-1$ with coefficients in $A$. 
Let $g=\begin{pmatrix} a & b \\ c & d\end{pmatrix}$ be an element of $GL_2(F^+)$ such that all conjugate of the matrix component $a,b,c,d$ of $g$ are contained in 
$A$. Such an element $g$ acts from the left on $L(\tilde{\kappa}_\tau ;A)$ by
\begin{multline*}
 X_\tau^{m_\tau}Y_\tau^{\tilde{\kappa}_{2,\tau}-\tilde{\kappa}_{1,\tau}-1-m_\tau} \\
 \mapsto \tau (\det(g)^{\tilde{\kappa}_{1,\tau}})\cdot 
 (\tau (a)X_\tau+\tau (c)Y_\tau)^{m_\tau} (\tau (b)X_\tau+\tau (d)Y_\tau)^{\tilde{\kappa}_{2,\tau}-\tilde{\kappa}_{1,\tau}-1-m_\tau}
\end{multline*}
for each $m_\tau$ with $0\leq m_\tau\leq
\tilde{\kappa}_{2,\tau}-\tilde{\kappa}_{1,\tau}-1$. Set
$L(\tilde{\kappa}; A)=\bigotimes_{\tau \in I_{F^+}}
L(\tilde{\kappa}_\tau; A)$. 
Then we define the {\em standard local system} $\mathscr{L}(\tilde{\kappa};A)$ 
on $Y_1(\mathfrak{N})(\mathbb{C})$ 
as the sheaf of continuous sections of the following covering map:
\begin{align*}
G(\mathbb{Q})\backslash
 G(\mathbb{A}_\mathbb{Q})\times L(\tilde{\kappa};A) /\hat{\Gamma}_1(\mathfrak{N})C_\mathbf{i}
 \rightarrow G(\mathbb{Q})\backslash
 G(\mathbb{A}_\mathbb{Q})/\hat{\Gamma}_1(\mathfrak{N})C_\mathbf{i}=Y_1(\mathfrak{N})(\mathbb{C}).
\end{align*}
Here we consider that $L(\tilde{\kappa};A)$ admits the trivial right action of $\hat{\Gamma}_1(\mathfrak{N})C_\mathbf{i}$, and we let $G(\mathbb{Q})$ and
$\hat{\Gamma}_1(\mathfrak{N})C_\mathbf{i}$ 
act on $G(\mathbb{A}_\mathbb{Q})\times L(\tilde{\kappa};A)$ diagonally.  
The Hecke algebra $\mathfrak{h}_{\tilde{\kappa}}(\mathfrak{N},
\tilde{\underline{\varepsilon}};A)$ then acts on the 
Betti cohomology group $H^d(Y_1(\mathfrak{N})(\mathbb{C}),
\mathscr{L}(\tilde{\kappa}; A))$ via the Hecke correspondences.
Note also that, via the decomposition \eqref{eq:var_decomp}, we can regard the local system $\mathscr{L}(\tilde{\kappa}; A)$ as the sheaf of continuous sections of
\begin{align*}
 \Gamma_1^i(\mathfrak{N})\backslash (\mathfrak{h}^{I_{F^+}} \times L(\tilde{\kappa}; A)) \rightarrow \Gamma_1^i(\mathfrak{N}) \backslash \mathfrak{h}^{I_{F^+}}
\end{align*}
on each connected component $\Gamma_1^i(\mathfrak{N})\backslash \mathfrak{h}^{I_{F^+}}$ of $Y_1(\mathfrak{N})(\mathbb{C})$ of $Y_1(\mathfrak{N})(\mathbb{C})$.

Next let $\mathbb{Q}'_f$ be the composite field $\mathbb{Q}_f F^{+,\mathrm{gal}}$
and $\mathfrak{r}'_f$ its ring of integers. Let us denote by $\mathfrak{r}'_{f,(p)}$ the localisation of $\mathfrak{r}'_f$ at the $p$-adic place
induced by the specified $p$-adic embedding $\mathbb{Q}'_f \subset \overline{\mathbb{Q}}
\xrightarrow{\iota_p} \overline{\mathbb{Q}}_p$. 
Note that an arbitrary $\mathfrak{r}'_{f,(p)}$-algebra 
satisfies the condition $(*)$ since 
$\mathfrak{d}_{F^+}$ is a $p$-adic unit due to the assumption
(unr$_{F^+}$), and it is possible to choose representatives
$\{\mathfrak{c}_i\}_{i=1}^h$ so that they are relatively prime to $p$.
Therefore for an arbitrary $\mathfrak{r}'_{f,(p)}$-algebra $A$,
we can take the maximal $A$-submodule 
$H^d(Y_1(\mathfrak{N})(\mathbb{C}),
\mathscr{L}(\tilde{\kappa}; A))[\lambda_{\tilde{f}}]$ of the Betti cohomology $H^d(Y_1(\mathfrak{N})(\mathbb{C}),
\mathscr{L}(\tilde{\kappa}; A))$ 
(resp. the maximal $A$-submodule $H^d_{c}(Y_1(\mathfrak{N})(\mathbb{C}),
\mathscr{L}(\tilde{\kappa}; A))[\lambda_{\tilde{f}}]$ of the compactly supported Betti cohomology $H^d_{c}(Y_1(\mathfrak{N})(\mathbb{C}),
\mathscr{L}(\tilde{\kappa}; A))$) 
on which the Hecke algebra $\mathfrak{h}_{\tilde{\kappa}}(\mathfrak{N},
\tilde{\underline{\varepsilon}};A)$ acts as the multiplication of
the eigenvalues at $\tilde{f}$.  
Now let $\epsilon$ be an element of $\{ \pm 1\}^{I_{F^+}}$ which we regard
as a character defined on the group of connected components
of the infinite part $GL_2 (\mathbb{R})^{I_{F^+}}$ in $GL_2 (\mathbb{A}_{F^+})$. We consider the composite map
{\small 
\begin{multline}\label{equation:composition_map_period}
H^d_{c}(Y_1(\mathfrak{N})(\mathbb{C}),
\mathscr{L}(\tilde{\kappa}; \mathfrak{r}'_{f,(p)})) \longrightarrow 
H^d_{c}(Y_1(\mathfrak{N})(\mathbb{C}),
\mathscr{L}(\tilde{\kappa}; \mathbb{C})) \longrightarrow 
H^d_{c}(Y_1(\mathfrak{N})(\mathbb{C}),
\mathscr{L}(\tilde{\kappa}; \mathbb{C}))[\lambda_{\tilde{f}}]^{\epsilon }
\end{multline}}%
where the first map is a natural one and the second map is the projection.
As is well known, the group of connected components act on 
$H^d_{c}(Y_1(\mathfrak{N})(\mathbb{C}),
\mathscr{L}(\tilde{\kappa}; \mathbb{C}))$ in a way compatible with the action of 
$\mathfrak{h}_{\tilde{\kappa}}(\mathfrak{N},
\tilde{\underline{\varepsilon}};\mathbb{C})$, and the $\epsilon$-eigenspace
$H^d_{c}(Y_1(\mathfrak{N})(\mathbb{C}),
\mathscr{L}(\tilde{\kappa}; \mathbb{C}))[\lambda_{\tilde{f}}]^{\epsilon}
$ of $H^d_{c}(Y_1(\mathfrak{N})(\mathbb{C}),
\mathscr{L}(\tilde{\kappa}; \mathbb{C}))[\lambda_{\tilde{f}}]$ with respect to this action
is of dimension one over $\mathbb{C}$ for each $\epsilon$. Thus the image of 
$H^d_{c}(Y_1(\mathfrak{N})(\mathbb{C}),
\mathscr{L}(\tilde{\kappa}; \mathfrak{r}'_{f,(p)}))$ under the map \eqref{equation:composition_map_period} is free of rank one over 
the discrete valuation ring $\mathfrak{r}'_{f,(p)}$ for each $\epsilon$; or in
other words, each $H^d_c(Y_1(\mathfrak{N})(\mathbb{C}),\mathscr{L}(\tilde{\kappa};\mathbb{C}))[\lambda_{\tilde{f}}]^\epsilon$ is equipped with an $\mathfrak{r}_{f,(p)}'$-integral structure.

We are now ready to associate the Hilbert cuspform $\tilde{f}$ to a cohomology class $[\tilde{f}]$. Let $(\tilde{f}_i(z))_{1\leq i\leq h}$ denote the element of $\bigoplus_{i=1}^h S_{\tilde{\kappa}}(\Gamma_1^i(\mathfrak{N});\mathbb{C})$ corresponding to $\tilde{f}$ via the decomposition~\eqref{eq:form_decomp}, and let us consider the vector-valued differential form $\omega_{\tilde{f}}=(\omega_{\tilde{f}_i})_{1\leq i\leq h}$ on $Y_1(\mathfrak{N})(\mathbb{C})$ defined as
\begin{align*}
 \omega_{\tilde{f}_i}=\tilde{f}_i(z_\tau) \prod_{\tau \in I_{F^+}}(X_\tau +z_\tau Y_\tau)^{\tilde{\kappa}_{2,\tau}-\tilde{\kappa}_{1,\tau}-1} \bigwedge_{\tau \in I_{F^+}} dz_\tau
\end{align*}
on each connected component $\Gamma_1^i(\mathfrak{N})\backslash \mathfrak{h}^{I_{F^+}}$ of $Y_1(\mathfrak{N})(\mathbb{C})$, where $(z_\tau)_{\tau \in I_{F^+}}$ denotes the standard coordinate of $\mathfrak{h}^{I_{F^+}}$. The integration of $\omega_{\tilde{f}}$ on a $d$-cycle of $Y_1(\mathfrak{N})(\mathbb{C})$ then defines a cohomology class $[\tilde{f}]$ of $H^d(Y_1(\mathfrak{N})(\mathbb{C}), \mathscr{L}(\tilde{\kappa};\mathbb{C}))$, which we call the {\em Eichler-Shimura class} associated to the critical twist $\tilde{f}$ of $f$. We use the same symbol $[\tilde{f}]$ for its image under the composition
\begin{align*}
H^d(Y_1(\mathfrak{N})(\mathbb{C}), \mathscr{L}(\tilde{\kappa}; \mathbb{C})) \stackrel{\text{proj}}{\twoheadrightarrow} H^d(Y_1(\mathfrak{N})(\mathbb{C}),\mathscr{L}(\tilde{\kappa};\mathbb{C}))[\lambda_{\tilde{f}}] 
 \cong H^d_c(Y_1(\mathfrak{N})(\mathbb{C}), \mathscr{L}(\tilde{\kappa};\mathbb{C}))[\lambda_{\tilde{f}}]
\end{align*}
by abuse of notation.

\begin{defn} \label{def:complex_period}
For each $\epsilon$, let us choose an $\mathfrak{r}'_{f,(p)}$-basis $b^\epsilon$ of the image of the rank-one free  
$\mathfrak{r}'_{f,(p)}$-module $H^d_{c}(Y_1(\mathfrak{N})(\mathbb{C}),
\mathscr{L}(\tilde{\kappa}; \mathfrak{r}'_{f,(p)}))$ under the map 
$(\ref{equation:composition_map_period})$. 
Then we define a {\em $p$-optimal complex period $C_{f, \infty}^{\epsilon}\in \mathbb{C}^\times$ of signature $\epsilon$}  of $f$ to be the 
constant given by 
\begin{equation}\label{equation:definition_of_modularsymbolcomplexperiod}
[\tilde{f}]^\epsilon = C_{f, \infty}^{\epsilon} \cdot b^\epsilon .
\end{equation}
 where $[\tilde{f}]^\epsilon$ denotes the image of $[\tilde{f}]$ under the projection onto $H^d(Y_1(\mathfrak{N})(\mathbb{C}), \mathscr{L}(\tilde{\kappa};\mathbb{C}))[\lambda_{\tilde{f}}]^\epsilon$.
The complex period $C_{f, \infty}^{\epsilon}$ depends on the choice of $b^\epsilon $ 
which has an ambiguity of a multiple of an element in $(\mathfrak{r}'_{f,(p)} )^\times$. 
Hence we often regard $C_{f, \infty}^{\epsilon}$ as an element of
$\mathbb{C}^\times /(\mathfrak{r}'_{f,(p)} )^\times$. 
\end{defn}
 
\subsubsection{The cyclotomic $p$-adic $L$-functions}  \label{sssc:p-adic_L_cusp}

We here introduce the notion of
the cyclotomic $p$-adic $L$-functions associated to
 Hilbert modular cuspforms and their
 interpolation formulae  (Theorem~\ref{thm:Hilb_L-func}).

 Let $\mathrm{Cl}_{F^+}^+(p^{\infty}\mathfrak{r}_{F^+})$ 
 be the {\em strict} ray class group of $F^+$ modulo $p^\infty\mathfrak{r}_{F^+}$, which is defined as the projective limit $\mathrm{Cl}_{F^+}^+(p^\infty \mathfrak{r}_{F^+})=\varprojlim_{n\rightarrow \infty} \mathbb{A}_{F^+}^\times/(F^+)^\times I_{p^n\mathfrak{r}_{F^+}}$. Here the subgroup $I_{p^n \mathfrak{r}_{F^+}}$ of $\mathbb{A}_{F^+}^\times$ is defined as 
\begin{align*}
I_{p^n\mathfrak{r}_{F^+}}=\prod_{\mathfrak{l}\nmid p\mathfrak{r}_{F^+}} \mathfrak{r}_{F^+,\mathfrak{l}}^\times \times \prod_{\mathfrak{p}\mid p\mathfrak{r}_{F^+}} (1+\mathfrak{p}^{nm_\mathfrak{p}}\mathfrak{r}_{F^+,\mathfrak{p}}) \times \prod_{w\colon \text{archimedean place of $F^+$}} (\mathfrak{r}_{F^+,w}^\times)_+
\end{align*}
when $p\mathfrak{r}_{F^+}$ is decomposed as $p\mathfrak{r}_{F^+}=\prod_{\mathfrak{p}\mid p\mathfrak{r}_{F^+}} \mathfrak{p}^{m_\mathfrak{p}}$. For an archimedean
 place $w$, we denote by $(\mathfrak{r}_{F^+,w}^\times)_+$ the connected component of $\mathfrak{r}_{F^+,w}^\times$ containing $1$, which is isomorphic to $\mathbb{R}_{>0}$. We denote by $F^+_{p^{\infty}\mathfrak{r}_{F^+}}$ the {\em strict} ray class field modulo $p^{\infty}\mathfrak{r}_{F^+}$ over $F^+$, the field corresponding to $\mathrm{Cl}_{F^+}^+(p^\infty\mathfrak{r}_{F^+})$ via global class field theory. For a ray class character $\phi \colon \mathrm{Cl}^+_{F^+}(p^\infty\mathfrak{r}_{F^+}) \rightarrow \mathbb{C}^\times$ of finite order, we associate its {\em signature} $\mathrm{sgn}(\phi) \in \{\pm 1\}^{I_{F^+}}$ in the following manner. 
 Observe that there is a canonical homomorphism from the component group 
 $\{\pm 1\}^{I_{F^+}}$ of the archimedean part of the idele class group to 
$\mathrm{Cl}^+_{F^+}(p^\infty \mathfrak{r}_{F^+})$.   
Via this homomorphism, we associate a character 
$(\phi_\tau)_{\tau \in I_{F^+}}$ on $\{\pm 1\}^{I_{F^+}}$ to 
$\phi$. We define as $\mathrm{sgn}(\phi)=(\phi_\tau(-1))_{\tau \in I_{F^+}}$.

 We next define the Gaussian sum $G(\phi)$ of the ray class character $\phi\colon \mathrm{Cl}_{F^+}^+(p^\infty \mathfrak{r}_{F^+})\rightarrow
 \mathbb{C}^\times$ of finite order as
 \begin{align} \label{eq:gauss}
 G(\phi)=\sum_{x \in (\mathfrak{C}(\phi)^{-1}/\mathfrak{r}_{F^+})^\times} 
 \phi(x)\exp(2\pi \sqrt{-1} \mathrm{Tr}_{F^+/\mathbb{Q}}(x)),
 \end{align}
 where we denote by $(\mathfrak{C}(\phi)^{-1}/\mathfrak{r}_{F^+})^\times$ the subset of $\mathfrak{C}(\phi)^{-1}/\mathfrak{r}_{F^+}$ consisting of elements whose annihilators exactly coincide with $\mathfrak{C}(\phi)$.
 In the defining equation of $G(\phi)$ we evaluate $\phi$ at an element $x$ of $(\mathfrak{C}(\phi)^{-1}/\mathfrak{r}_{F^+})^\times$ via 
 the following composition:
 \begin{align*}
 (\mathfrak{C}(\phi)^{-1}/\mathfrak{r}_{F^+})^\times \xrightarrow{\sim} (\mathfrak{r}_{F^+}/\mathfrak{C}(\phi))^\times 
 \rightarrow (\mathfrak{r}_{F^+}/\mathfrak{C}(\phi))^\times/
 \mathfrak{r}^\times_{F^+} \hookrightarrow \mathrm{Cl}^+_{F^+}(\mathfrak{C}(\phi)).
 \end{align*}
 For a Galois character $\phi\colon \mathrm{Gal}(F^+_{p^\infty\mathfrak{r}_{F^+}}/F^+)\rightarrow \overline{\mathbb{Q}}^\times$ of finite order, there exists a unique character on $\mathrm{Cl}^+_{F^+}(p^\infty\mathfrak{r}_{F^+})$ whose value at a prime ideal $\mathfrak{l}$ relatively prime to $p\mathfrak{r}_{F^+}$ equals $\phi(\mathrm{Frob}_\mathfrak{l}^{-1})$, the evaluation of $\phi$ at the {\em arithmetic} Frobenius element of $\mathfrak{l}$. By abuse of notation, we use the same symbol $\phi$ for this unique ray class character.

\medskip
 We now state the existence of the cyclotomic $p$-adic $L$-function associated to a Hilbert modular cuspform, which is originally due to Manin \cite[Sections 5 and 6]{Manin}.
For an element $k$ of $\mathbb{Z}[I_{F^+}]$, we define the integer $k^{\mathrm{max}}$ (resp.\ $k_\mathrm{min}$)  as 
 the maximum (resp.\ the minimum) among its coefficients. 
 
 \begin{thm} \label{thm:Hilb_L-func}

 Let $f$ be a normalised nearly $p$-ordinary eigencuspform in $S_\kappa(\mathfrak{N}, \underline{\varepsilon}; \overline{\mathbb{Q}})$ which is stabilised at $p$. 
 We fix a discrete valuation ring 
 $\mathcal{O}$ finite flat over $\mathbb{Z}_p$ which contains 
 $\mathfrak{r}'_{f, (p)}$.
 Then there exists an element $\mathcal{L}_p^\mathrm{cyc}(f)$ of
  $\mathcal{O}[[\mathrm{Gal}(F^+ (\mu_{p^\infty})/F^+)]] \otimes_{\mathbb{Z}_p}\mathbb{Q}_p$ characterised by the interpolation property $($recall that we have put $d=[F^+ :\mathbb{Q}])$

 \begin{align} \label{eq:interp_cusp}
 \chi_{p,\mathrm{cyc}}^j \phi(\mathcal{L}_p^\mathrm{cyc}(f))=
 \frac{\Gamma(j\mathsf{t}-\kappa_1)}{\Gamma((\kappa_1^{\mathrm{max}}+1)\mathsf{t}-\kappa_1)}G(\phi)\prod_{\mathfrak{p} \mid p\mathfrak{r}_{F^+}} A_{\mathfrak{p}}(f; \phi, j) \frac{L(f, \phi^{-1}, j)}{(-2\pi \sqrt{-1})^{d(j-\kappa_1^{\mathrm{max}}-1)} C_{f, \infty}^{\epsilon_{\phi,j}}} 
 \end{align}
 for an arbitrary integer $j$ satisfying $\kappa_1^{\mathrm{max}}+1\leq j \leq \kappa_{2,\mathrm{min}}$ and an arbitrary character $\phi$ of 
 $\mathrm{Gal}(F^+ (\mu_{p^\infty}) /F^+)$ of finite order. Here $C^{\epsilon_{\phi,j}}_{f, \infty}$ denotes a $p$-optimal complex period of signature $\epsilon_{\phi,j}$ $($see Definition~$~\ref{def:complex_period})$. The signature
  $\epsilon_{\phi,j}$ is defined as $(-1)^{j-\kappa_1^{\mathrm{max}}-1}\mathrm{sgn}(\phi)$.

 The $\mathfrak{p}$-adic multiplier $A_{\mathfrak{p}}(f; \phi, j)$ is defined as 
 \begin{align*}
 A_{\mathfrak{p}}(f;\phi,j)=\begin{cases}
 \displaystyle  1-\frac{\phi(\mathfrak{p})\mathcal{N}\mathfrak{p}^{j-1}}{\alpha_{\mathfrak{p}}(f)} & \text{if $\mathfrak{p}$ does not divide $\mathfrak{C}(\phi)$}, \\
\displaystyle \left( \frac{\mathcal{N}\mathfrak{p}^{j-1}}{\alpha_{\mathfrak{p}}(f)} \right)^{\mathrm{ord}_{\mathfrak{p}}(\mathfrak{C}(\phi))} & \text{if $\mathfrak{p}$ divides $\mathfrak{C}(\phi)$},  
 \end{cases}
\end{align*}
 where $\alpha_{\mathfrak{p}}(f)$ denotes the eigenvalue of $f$ with respect to the Hecke operator $U(\mathfrak{p})$.  
  The gamma factor is abbreviated by using multi-indices as 
 \begin{align*}
 \Gamma(m\mathsf{t}-\kappa_1)=\prod_{\tau \in I_{F^+}}\Gamma(m-\kappa_{1,\tau}) \quad \text{for each $m$ satisfying } m\geq \kappa_1^{\mathrm{max}}+1.
 \end{align*}
 \end{thm}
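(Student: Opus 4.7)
The construction proceeds along the classical Amice--Vélu--Vishik template, adapted to the Hilbert modular setting after Manin, Dabrowski and the second-named author \cite{ochiai}. The plan is to realise the special values on the right-hand side of \eqref{eq:interp_cusp} as cohomological evaluations of a single universal modular symbol attached to $f$, and then to package those evaluations into a measure on $\mathrm{Gal}(F^+(\mu_{p^\infty})/F^+)$ whose boundedness is forced by the $p$-stabilisation.

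First, I would rewrite the twisted $L$-value $L(f,\phi^{-1},j)$ as a Mellin transform of $f$ against the ray class character $\phi^{-1}\lvert\cdot\rvert^{j}$. A standard integral representation (see \cite[Section 4]{hecke}) expresses
\[
\frac{\Gamma(j\mathsf{t}-\kappa_1)}{(-2\pi\sqrt{-1})^{d(j-\kappa_1^{\mathrm{max}}-1)}}\,G(\phi)\,L(f,\phi^{-1},j)
\]
as an integral of the critical twist $\tilde f$ over the image in $Y_1(\mathfrak{N}\prod_\mathfrak{p}\mathfrak{p}^{n_\mathfrak{p}})(\mathbb{C})$ of a suitable torus cycle indexed by representatives of the $p^n$-ray class group modulo totally positive units. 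Via the Eichler--Shimura comparison, this integral equals the evaluation of the cohomology class $[\tilde f]$ against a $d$-cycle in $H_d^{\mathrm{BM}}(Y_1(\mathfrak{N}\prod_\mathfrak{p}\mathfrak{p}^{n_\mathfrak{p}})(\mathbb{C}),\mathscr{L}(\tilde\kappa;\mathbb{C})^{\vee})$ whose coefficients are explicit monomials in the variables $X_\tau,Y_\tau$ dictated by the shift $j-\kappa_1^{\mathrm{max}}-1$.

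Second, I would project onto the $\epsilon_{\phi,j}$-eigenspace and divide by the $p$-optimal period $C_{f,\infty}^{\epsilon_{\phi,j}}$ defined in \eqref{equation:definition_of_modularsymbolcomplexperiod}. By the very definition of $C_{f,\infty}^{\epsilon_{\phi,j}}$, the resulting modular symbol lies in the rank-one $\mathfrak{r}'_{f,(p)}$-submodule of the compactly supported cohomology cut out by $\lambda_{\tilde f}$, hence takes values in $\mathcal{O}$. One then packages the family of symbols obtained by varying $n$ and the ray class into a compatible system, defining a $\mathcal{O}$-valued distribution $\mu_f$ on $\mathrm{Gal}(F^+(\mu_{p^\infty})/F^+)$. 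The compatibility relations under raising the level from $\prod_\mathfrak{p}\mathfrak{p}^{n_\mathfrak{p}}$ to $\prod_\mathfrak{p}\mathfrak{p}^{n_\mathfrak{p}+1}$ are exactly the Manin distribution relations, which hold after correction by the eigenvalue $\alpha_\mathfrak{p}(f)$ of $U(\mathfrak{p})$; it is at this step that the Euler factors $A_\mathfrak{p}(f;\phi,j)$ emerge, the two shapes corresponding to whether $\mathfrak{p}$ is unramified or ramified in $\phi$.

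The main obstacle, and the decisive use of the hypothesis that $f$ is $p$-stabilised and nearly $p$-ordinary, is the proof that $\mu_f$ is in fact a \emph{bounded} measure (rather than merely an $h$-admissible distribution). Once the distribution relations above are normalised by $\alpha_\mathfrak{p}(f)^{-n_\mathfrak{p}}$ at each $\mathfrak{p}\mid p$, the $p$-unit property of $\alpha_\mathfrak{p}(f)$ guaranteed by Proposition~\ref{prop:localGalois} ensures that the normalised modular symbols remain in a fixed $\mathcal{O}$-lattice independently of $n$; this is the Hilbert-modular analogue of Mazur's argument for ordinary elliptic eigenforms, and requires careful bookkeeping of the double-digit weight twist because the normalisation factors $\{\mathfrak{p}^{\kappa_1}\}$ mix the two summands of the double-digit weight. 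Having established boundedness, Mahler's theorem produces the element $\mathcal{L}_p^{\mathrm{cyc}}(f)\in\mathcal{O}[[\mathrm{Gal}(F^+(\mu_{p^\infty})/F^+)]]\otimes_{\mathbb{Z}_p}\mathbb{Q}_p$ (the tensoring with $\mathbb{Q}_p$ absorbing bounded denominators coming from transition matrices between the chosen strict ray class representatives $\{\mathfrak{c}_i\}$), and the interpolation formula \eqref{eq:interp_cusp} reads off from the construction. Uniqueness of $\mathcal{L}_p^{\mathrm{cyc}}(f)$ follows from the Zariski density of the arithmetic specialisations $\chi_{p,\mathrm{cyc}}^j\phi$ in weight space.
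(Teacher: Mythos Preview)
The paper does not supply its own proof of this theorem: it states the result and immediately attributes the construction to Manin \cite{Manin}, with the refinements in \cite{dimitrov, ochiai}, via the modular symbol method over Hilbert--Blumenthal varieties. Your sketch follows exactly that route---realise the twisted critical values as periods of the Eichler--Shimura class $[\tilde f]$, use the $p$-optimal period to normalise into an $\mathcal{O}$-lattice, verify the distribution relations governed by $U(\mathfrak{p})$, and invoke near $p$-ordinarity for boundedness---so there is nothing to compare beyond noting that you have accurately reconstructed the strategy the paper merely cites.
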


 \begin{rem}
 The construction of \cite{Manin} is based upon {\em modular symbols} over Hilbert modular varieties, 
 which is a generalisation of the work by Manin himself, Mazur and Swinnerton-Dyer, Vi\v{s}ik, Amice and V\'elu for 
elliptic modular case (see \cite{MTT} for historical reviews). 
 The construction of Theorem~\ref{thm:Hilb_L-func} is revisited by \cite{dimitrov, ochiai} in the context of generalising it into nearly ordinary 
 Hida deformations. 
\end{rem}

\begin{rem}
 There is another fashion of construction of  $p$-adic $L$-functions associated
 to (Hilbert) modular forms which is based upon 
 the theory of {\em Rankin-Selberg convolutions}. 
 Methods of the construction of $p$-adic $L$-functions 
 in this direction are discussed by Panchishkin \cite{panchishkin}, 
 Dabrowski \cite{dabrowski}, Mok \cite{mok} and so on. 
 The complex period appearing in their theory is of Shimura type; namely, 
 it is defined by using the self Petersson inner product of 
 the given cuspform.   
 \end{rem} 

%
\subsection{Katz, Hida and Tilouine's $p$-adic $L$-functions for CM
  number fields} \label{ssc:KHT}
%

In this subsection we introduce the $p$-adic $L$-function (or the $p$-adic measure) for a CM number field
which was first constructed by Katz \cite{katz} for gr\"o{\ss}encharacters of type $(A_0)$ with conductors dividing $p^\infty$, 
and then by Hida and Tilouine \cite[Theorem~II]{HT-katz} for general
gr\"o{\ss}encharacters of type $(A_0)$.

%
\subsubsection{$p$-ordinary CM types} \label{sssc:CMtypes}
%

As in Introduction, let $F$ be a CM number field of degree $2d$ with 
the maximal totally real subfield $F^+$. We denote by $c$ the complex conjugation of $F$, that is, the unique generator of the Galois group of $F/F^+$.
We impose the two assumptions (unr$_F^+$) and (ord$_{F/F^+}$) on $F/F^+$ and $p$, which are introduced at the beginning of Section~\ref{ssc:p-adic_L_cusp} and a little bit before Proposition-Definition \ref{prop:theta} respectively. 

By virtue of the assumption (ord$_{F/F^+}$), one can consider {\em a $p$-ordinary CM type} $\Sigma$ of $F$ (also called {\em a $p$-adic CM type}). 
Namely $\Sigma$ is a subset of $I_F$ satisfying the following two conditions:
\begin{itemize}
\item[-] we have $I_F =\Sigma \cup \Sigma^c$ (disjoint union) where 
$\Sigma^c=\{ \sigma\circ c \in I_F \mid \sigma \in \Sigma \}$; 
\item[-] we have $\{ \text{places of $F$ lying
above $p$} \} = \Sigma_p \cup \Sigma^c_p$ (disjoint union) where $\Sigma_p$ the set of places of $F$ induced by embeddings $\iota_p \circ \sigma$ for 
all $\sigma$ in $\Sigma$ and $\Sigma_p^c$ the set of their complex conjugates.
\end{itemize}
It is not difficult to see that there exists a $p$-ordinary CM type $\Sigma$ if and only if the condition (ord$_{F/F^+}$) is satisfied and that the number of $p$-ordinary CM types is equal to $2^{\sharp \{ \text{places of $F$ lying
above $p$} \}}$. Let us take a $p$\nobreakdash-ordinary CM type $\Sigma$ 
and fix it once and for all. 
Then, by the definition of gr\"o{\ss}encharacter of type $(A_0)$, 
the sum $\mu_\sigma +\mu_{\bar{\sigma}}$ has the same value for every $\sigma$ in $\Sigma$. We denote this constant by $-w$. The infinity type $\mu$ of $\eta$ thus has two expressions as follows:
\begin{align*}
\mu=\sum_{\sigma \in \Sigma} (\mu_\sigma \sigma+\mu_{\bar{\sigma}}\bar{\sigma}) = -w\mathsf{t}-\sum_{\sigma \in \Sigma} r_\sigma (\sigma -\bar{\sigma}). 
\end{align*}

\subsubsection{The CM periods of Katz, Hida and Tilouine} \label{sssc:Kperiod}

We next introduce the complex and $p$-adic periods which 
appear in the interpolation formula of the $p$-adic $L$-functions 
for CM number fields.

Let $\mathfrak{C}$ be an integral ideal of $F$ which is prime to $p$ and 
let us choose and fix an element $\delta$ of $F$ which satisfies the following two conditions: 
\begin{enumerate}[label=(\arabic*${}_\delta$)]
\item the imaginary part $\mathrm{Im}(\sigma(\delta))$ of $\delta$ is positive for all $\sigma$ in $\Sigma$;
\item the alternating form defined by $\langle u,v\rangle_\delta=(uv^c-u^cv)/2\delta$ induces 
an isomorphism between $\mathfrak{r}_F\wedge_{\mathfrak{r}_{F^+}} \mathfrak{r}_F$ and 
$\mathfrak{d}_{F^+}^{-1}\mathfrak{c}^{-1}$ 
for a certain fractional ideal $\mathfrak{c}$ of $F^+$ prime to $p\mathfrak{C}\mathfrak{C}^c$.
\end{enumerate}
Here $\mathfrak{d}_{F^+}$ denotes the absolute different of $F^+$. 
We embed $\mathfrak{r}_F$ into $\mathbb{C}^{\Sigma}$ diagonally via the fixed ($p$-ordinary) CM-type $\Sigma$ 
and denote its image by $\Sigma(\mathfrak{r}_F)$, which becomes a $\mathbb{Z}$-lattice in $\mathbb{C}^\Sigma$.
We denote the complex torus $\mathbb{C}^\Sigma/\Sigma(\mathfrak{r}_F)$ by $X(\mathfrak{r}_F)$. 
Then the pairing $\langle \  , \  \rangle_\delta$ defines {\em a $\mathfrak{c}$\nobreakdash-polarisation} 
on $X(\mathfrak{r}_F)$ and thus the pair $(X(\mathfrak{r}_F),\langle \  , \  \rangle_\delta)$ gives rise to 
an abelian variety $(X(\mathfrak{r}_F), \lambda_\delta)$ equipped with a $\mathfrak{c}$-polarisation 
$\lambda_\delta \colon X(\mathfrak{r}_F)^t \xrightarrow{\sim} X(\mathfrak{r}_F)\otimes_{\mathfrak{r}_{F^+}} \mathfrak{c}$. By construction 
the $\mathfrak{c}$-polarised abelian variety $(X(\mathfrak{r}_F), \lambda_\delta)$ is 
equipped with {\em complex multiplication by $\mathfrak{r}_F$}.
Note that the element $2\delta$ is 
a generator of the fractional ideal $\mathfrak{cd}_F$ of $F$ by the polarisation condition $(2_\delta)$ (see also \cite[Lemme~(5.7.35)]{katz}). 
In particular, if we embed $2\delta$ into the id\`ele group $\mathbb{A}_{F}^\times/F^\times$ diagonally and 
denote $(2\delta)_w$ the component at the prime $w$, we can take $(2\delta)_{\mathfrak{Q}}$ as a generator of 
$\mathfrak{d}_F \otimes_{\mathfrak{r}_F} \mathfrak{r}_{F,\mathfrak{Q}}$ for every prime ideal $\mathfrak{Q}$ of $F$ 
relatively prime to $\mathfrak{c}$. 

Next we endow $(X(\mathfrak{r}_F), \lambda_\delta)$ with a {\em  $\Gamma_{00}(p^\infty)$-level structure}. 
In order to do so, we first decompose $\mathfrak{C}$ into a product $\mathfrak{C}=\mathfrak{F}\mathfrak{F}_c\mathfrak{I}$ 
so that $\mathfrak{FF}_c$ is a product of prime ideals completely split over $F^+$, $\mathfrak{I}$ 
is that of prime ideals inert or ramified over $F^+$, $\mathfrak{F}$ and $\mathfrak{F}_c$ are 
relatively prime and $\mathfrak{F}_c^c$ (the complex conjugate of
$\mathfrak{F}_c$) contains $\mathfrak{F}$. We fix such a decomposition
of $\mathfrak{C}$ once and for all.
We put $\mathfrak{f}$ as $\mathfrak{FI}\cap F^+$ and $\mathfrak{f}_c$ as $\mathfrak{F}_c\mathfrak{I}\cap F^+$; 
then $\mathfrak{f}$ and $\mathfrak{f}_c$ are integral ideals of $F^+$ and $\mathfrak{f}_c$ contains $\mathfrak{f}$.  
We choose a differential id\`ele $\mathsf{d}_{F^+}=(\mathsf{d}_v)_v$ of $F^+$, a finite  id\`ele of $F^+$ whose associated modulus 
coincides with $\mathfrak{d}_{F^+}$, so that the following conditions are fulfilled:
\begin{itemize}
\item[-] the local component $\mathsf{d}_v$ equals $1$ unless $v$ divides $p\mathfrak{fd}_{F^+}$;
\item[-] if $\mathfrak{Q}$ is a prime divisor of $\mathfrak{F}$ and $\mathfrak{q}$ is the unique prime ideal of $F^+$ lying below $\mathfrak{Q}$, the local component $\mathsf{d}_\mathfrak{q}$ of $\mathsf{d}$ is given by $(2\delta)_\mathfrak{Q}$, where we identify 
$F^+_\mathfrak{q}$ with $F_\mathfrak{Q}$ via the isomorphism induced by the canonical inclusion $F^+\hookrightarrow F$.
\end{itemize}
Then the composition
\begin{align*}
(\mathfrak{d}_{F^+}^{-1} &\otimes_\mathbb{Z} \mathbb{G}_m)[p^\infty](\mathbb{C}) 
\cong 
\mathfrak{d}_{F^+}^{-1} \prod_{\mathfrak{p} \mid p\mathfrak{r}_{F^+} }( \mathfrak{p}^{-\infty}  / \mathfrak{r}_{F^+} ) 
\\
&\xrightarrow{\times \mathsf{d}_{F^+}} 
\prod_{\mathfrak{p} \mid p\mathfrak{r}_{F^+} } (\mathfrak{p}^{-\infty}  / \mathfrak{r}_{F^+} ) 
 \overset{\sim}{\longrightarrow} 
 \prod_{\mathfrak{P}\in \Sigma_p} (\mathfrak{P}^{-\infty} / \mathfrak{r}_{F} ) \hookrightarrow \mathbb{C}^\Sigma/\Sigma (\mathfrak{r}_F )=X(\mathfrak{r}_F).
\end{align*}
 induces a $\Gamma_{00}(p^\infty)$-level structure $i(\mathfrak{r}_F) \colon (\mathfrak{d}_{F^+}^{-1}\otimes_\mathbb{Z} \mathbb{G}_m)[p^\infty]\hookrightarrow X(\mathfrak{r}_F)$ over $\mathbb{C}$. 
 The theory of complex multiplication enables us to find a model of the triple 
 $(X(\mathfrak{r}_F), \lambda_\delta, i(\mathfrak{r}_F))$ over a valuation ring 
 which is obtained as the inverse image  under $\iota_p$ of a certain finite integral extension $\widehat{\mathcal{O}}'$ of 
 $\widehat{\mathcal{O}}^{\mathrm{ur}}$. 
 Moreover, since we admit the unramifiedness condition (unr$_{F^+}$), we can take $\widehat{\mathcal{O}}'$ as
 $\widehat{\mathcal{O}}^{\mathrm{ur}}$ itself due to the fundamental theorem of the theory of complex multiplication 
 combined with Serre and Tate's criterion for good reduction. Let us denote the inverse image of $\widehat{\mathcal{O}}^{\mathrm{ur}}$ under $\iota_p$ by $\mathcal{W}$ and take a $\mathfrak{r}_F\otimes_\mathbb{Z} \mathcal{W}$-basis $\omega(\mathfrak{r}_F)$ of the module of invariant differentials $\underline{\omega}_{X(\mathfrak{r}_F)/\mathcal{W}}$. Recall that $\underline{\omega}_{X(\mathfrak{r}_F)/\mathcal{W}}$ is 
 an invertible module over $\mathfrak{r}_F\otimes_\mathbb{Z} \mathcal{W}$ since $X(\mathfrak{r}_F)$ has complex multiplication 
 by $\mathfrak{r}_F$). 

 By construction, $X(\mathfrak{r}_F)$ admits a canonical complex uniformisation 
 $\Pi \colon \mathbb{C}^\Sigma \twoheadrightarrow X(\mathfrak{r}_F)$ defined as the quotient 
 with respect to the lattice $\Sigma(\mathfrak{r}_F)$. It induces an isomorphism between the modules of 
 invariant differentials
 \begin{align*}
 \Pi^* \colon \underline{\omega}_{X(\mathfrak{r}_F)}
  \xrightarrow{\sim} \bigoplus_{\sigma \in \Sigma} \mathbb{C} du_\sigma
 \end{align*}
 and we can take $\omega_{\mathrm{trans}}(\mathfrak{r}_F)=\sum_{\sigma \in \Sigma} du_\sigma$ as a $\mathfrak{r}_F\otimes_\mathbb{Z} \mathbb{C}$-basis of the right hand side. Then we define the {\em complex CM period} $\Omega_{\mathrm{CM},\infty} = 
 (\Omega_{\mathrm{CM}, \infty, \sigma})
 \in (\mathfrak{r}_F\otimes_\mathbb{Z} \mathbb{C})^\times=\mathbb{C}^{\times, \Sigma}$ 
 by the following equality:
 \begin{align*}
 \Pi^* \omega(\mathfrak{r}_F)=\Omega_{\mathrm{CM},\infty}\, \omega_{\mathrm{trans}}(\mathfrak{r}_F).
 \end{align*}
 On the other hand, (the $p$-part of) the $\Gamma_{00}(p^\infty)$-level structure induces an isomorphism 
 $i_p \colon (\mathfrak{d}_{F^+}^{-1}\otimes_\mathbb{Z} \mathbb{G}_m)^{\wedge} \xrightarrow{\sim} X(\mathfrak{r}_F)^{\wedge}$ 
 between the formal completions along the identity sections over $\widehat{\mathcal{O}}^{\mathrm{ur}}$, 
 and hence we obtain the isomorphism
 \begin{align*}
 i_p^* \colon \underline{\omega}_{X(\mathfrak{r}_F)/\widehat{\mathcal{O}}^{\mathrm{ur}}} \xrightarrow{\sim} 
  \bigoplus_{\sigma \in \Sigma} \widehat{\mathcal{O}}^{\mathrm{ur}}\frac{dT_\sigma}{T_\sigma}.
 \end{align*}
 We can take $\omega_{\mathrm{can}}(\mathfrak{r}_F)=\sum_{\sigma \in \Sigma} dT_\sigma/T_\sigma$ 
 as a $\mathfrak{r}_F\otimes_\mathbb{Z} \widehat{\mathcal{O}}^{\mathrm{ur}}$-basis of the right hand side, 
 and we define the {\em  $p$-adic CM period} $\Omega_{\mathrm{CM},p}
 =(\Omega_{\mathrm{CM},p,\sigma}) \in (\mathfrak{r}_F\otimes_\mathbb{Z} \widehat{\mathcal{O}}^\mathrm{ur})^\times=(\widehat{\mathcal{O}}^{\mathrm{ur,\times}})^\Sigma$ 
 by the following equality:
 \begin{align*}
 i_p^* \omega(\mathfrak{r}_F)=\Omega_{\mathrm{CM},p}\, \omega_{\mathrm{can}}(\mathfrak{r}_F).
 \end{align*}

 \begin{rem}
 One readily observes by the construction above that, 
 when one replaces $\omega(\mathfrak{r}_F)$ 
 with another $\mathfrak{r}_F\otimes_\mathbb{Z} \mathcal{W}$-basis of $\underline{\omega}_{X(\mathfrak{r}_F)/\mathcal{W}}$, 
 both $\Omega_{\mathrm{CM},\infty}$ and $\Omega_{\mathrm{CM},p}$ are multiplied by {\em the same} value 
 contained in $(\mathfrak{r}_F\otimes_\mathbb{Z} \mathcal{W})^\times$; 
 therefore 
 {\em the ratio} of the pair $(\Omega_{\mathrm{CM},\infty}, \Omega_{\mathrm{CM},p})$ is 
 well defined independently of the choice of a basis $\omega(\mathfrak{r}_F)$ of $\underline{\omega}_{X(\mathfrak{r}_F)/\mathcal{W}}$.
 \end{rem}

\subsubsection{The $p$-adic $L$-functions for CM fields}

In order to state the interpolation formula of the $p$-adic $L$-function for the CM number field $F$, 
we here introduce the notion of 
{\em dual gr\"o{\ss}encharacters}; for a gr\"o{\ss}encharacter $\eta$ of type $(A_0)$ on $F$, 
the dual gr\"o{\ss}encharacter $\check{\eta}$ of $\eta$ is defined by $\eta(x)\check{\eta}(x^c)=|x|_{\mathbb{A}_F}$ for every $x$ in $\mathbb{A}_F^\times$. In the language of ideal characters, it is characterised as $\eta^*(\mathfrak{A})\check{\eta}^*(\mathfrak{A}^c)=\mathcal{N}\mathfrak{A}^{-1}$ for every fractional ideal $\mathfrak{A}$ relatively prime to $\mathfrak{C}(\eta)$.

Now we are ready to introduce the results of Katz, Hida and Tilouine.

\begin{thm}[{\cite[Theorem~(5.3.0)]{katz}, \cite[Theorem~II]{HT-katz}}] \label{thm:KHT}
Let $p$ be an odd prime number and $F$ a CM field of degree $2d$ with the maximal totally real subfield $F^+$. Assume that 
$F$, $F^+$ and $p$ satisfy both the conditions {\upshape (unr$_{F^+}$)} and {\upshape
 (ord$_{F/F^+}$)}. Let us choose and fix a $p$-ordinary CM type $\Sigma$ of $F$. 
Let $\mathfrak{C}=\mathfrak{F}\mathfrak{F}_c\mathfrak{l}$ be an integral ideal of $F$ 
relatively prime to $p$ and $\delta$ a purely imaginary element of $F$ 
satisfying 
both the conditions $(1_\delta)$ and $(2_\delta)$ stated at the beginning of Section~$~\ref{sssc:Kperiod}$. Then there exists a unique element 
$\mathcal{L}_{p,\Sigma}^{\mathrm{KHT}}(F)$ in the Iwasawa algebra 
$\widehat{\mathcal{O}}^{\mathrm{ur}}[[\mathrm{Gal}(F_{\mathfrak{C}p^\infty}/F)]]$, where $F_{\mathfrak{C}p^\infty}$ denotes the ray class field modulo $\mathfrak{C}p^{\infty}$ over $F$, satisfying 
\begin{align*}
\frac{\eta^{\mathrm{gal}}(\mathcal{L}_{p, \Sigma}^{\mathrm{KHT}}(F))}
{\Omega_{\mathrm{CM},p}^{w\mathsf{t}+2\mathsf{r}}} &=
(\mathfrak{r}_F^\times : \mathfrak{r}_{F^+}^\times)W_p(\eta)\frac{(-1)^{wd}(2\pi)^{|\mathsf{r}|}\Gamma_\Sigma(w\mathsf{t}+\mathsf{r})}{\sqrt{|D_{F^+}|}\mathrm{Im}(2\delta)^{\mathsf{r}}} \\
& \qquad \qquad \times \prod_{\mathfrak{L} \mid \mathfrak{C}}(1-\eta^*(\mathfrak{L}))\prod_{\mathfrak{P}\in \Sigma_p} 
\{ (1-\eta^*(\mathfrak{P}^c))(1-\check{\eta}^*(\mathfrak{P}^c))\} \frac{L(\eta^*,0)}{\Omega_{\mathrm{CM},\infty}^{w\mathsf{t}+2\mathsf{r}}}
\end{align*}
for each gr\"o{\ss}encharacter $\eta$ of type $(A_0)$ 
with conductor dividing $\mathfrak{C}p^{\infty}$ such that 
\begin{enumerate}[label={\upshape (\roman*)}]
\item the conductor of $\eta$ is divisible by all prime factors 
of $\mathfrak{F}${\upshape ;}
\item the infinity type $\mu=-w\mathsf{t}-\sum_{\sigma \in \Sigma}r_\sigma (\sigma-\bar{\sigma})$ of $\eta$ satisfies
either of the followings{\upshape ;}
\begin{enumerate}[label={\upshape (\alph*)}]
\item $w\geq 1$ and $r_\sigma \geq 0$ for all $\sigma$ in $\Sigma${\upshape ;}
\item $w\leq 1$ and $w+r_\sigma-1\geq 0$ for all $\sigma$ in $\Sigma$.
\end{enumerate}
\end{enumerate}
 The {\em local $\varepsilon$-factor $W_p(\eta)$ at $p$} is defined as 
\begin{align*}
W_p(\eta)=\prod_{\mathfrak{P}\in \Sigma_p} \mathcal{N}\mathfrak{P}^{-e(\mathfrak{P})} \eta_{\mathfrak{P}}(\varpi_{\mathfrak{P}}^{-e(\mathfrak{P})}) \sum_{x\in (\mathfrak{r}_{F,\mathfrak{P}}/\mathfrak{P}^{e(\mathfrak{P})})^\times}
\eta_{\mathfrak{P}}(x)e_{\mathfrak{P}}(\varpi_{\mathfrak{P}}^{-e(\mathfrak{P})}(2\delta)_{\mathfrak{P}}^{-1}x)
\end{align*}
where $\varpi_{\mathfrak{P}}$ denotes a uniformiser of the local field $F_{\mathfrak{P}}$ and $e(\mathfrak{P})$ denotes the exponent of $\mathfrak{P}$ in the conductor of $\eta$. In the equation above we diagonally embed $2\delta$ into the 
id\`ele group $\mathbb{A}_F^\times$ and denote its $\mathfrak{P}$-component by $(2\delta)_{\mathfrak{P}}$.
\end{thm}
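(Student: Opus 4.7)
The plan is to follow Katz's original strategy \cite{katz}, as extended by Hida and Tilouine \cite{HT-katz}. The construction rests on the theory of $p$-adic Hilbert modular forms on the Hilbert-Blumenthal Shimura variety with $\Gamma_{00}(p^\infty)$-level structure, combined with the Serre-Tate theory of local moduli on the Igusa tower and the explicit description of the CM point attached to $(X(\mathfrak{r}_F),\lambda_\delta,i(\mathfrak{r}_F))$ reviewed in Section~\ref{sssc:Kperiod}.

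First, I would construct a $p$-adic Eisenstein measure on $\mathrm{Gal}(F_{\mathfrak{C}p^\infty}/F)$ taking values in the ring of $\widehat{\mathcal{O}}^{\mathrm{ur}}$-valued $p$-adic Hilbert modular forms. Concretely, one starts from a classical Eisenstein series on $\mathrm{GL}_2/F^+$ whose $q$-expansion at the standard cusp is expressible as a partial $L$-series, and allows the defining characters to vary in $p$-adic families. The $q$-expansion principle, together with the integrality of these $q$-expansions (which rests on the ordinarity of the cusp and on \textbf{(unr$_{F^+}$)}, \textbf{(ord$_{F/F^+}$)}), produces a measure whose moments are algebraic Eisenstein series. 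Next, I would evaluate this measure at the CM point $(X(\mathfrak{r}_F),\lambda_\delta,i(\mathfrak{r}_F))$ pulled back to the Igusa tower. On the archimedean side, a Damerell-type formula expresses $L(\eta^*,0)$, for $\eta$ satisfying (ii)(a) or (ii)(b), as a sum of values of classical (possibly non-holomorphic) Eisenstein series at this CM point, normalised by $\Omega_{\mathrm{CM},\infty}^{w\mathsf{t}+2\mathsf{r}}$. On the $p$-adic side, the evaluation of the $p$-adic Eisenstein family at the same point is normalised instead by $\Omega_{\mathrm{CM},p}^{w\mathsf{t}+2\mathsf{r}}$; this dichotomy is exactly the comparison $\Pi^{*}\omega(\mathfrak{r}_F)=\Omega_{\mathrm{CM},\infty}\,\omega_{\mathrm{trans}}(\mathfrak{r}_F)$ versus $i_p^{*}\omega(\mathfrak{r}_F)=\Omega_{\mathrm{CM},p}\,\omega_{\mathrm{can}}(\mathfrak{r}_F)$, and it produces the ratio of periods that appears in the interpolation formula.

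Then I would translate the measure on idele-class characters into the sought element of $\widehat{\mathcal{O}}^{\mathrm{ur}}[[\mathrm{Gal}(F_{\mathfrak{C}p^\infty}/F)]]$ via global class field theory (with the geometric normalisation fixed in the Notation). The various local corrections should arise as follows: the Euler-like factors $(1-\eta^{*}(\mathfrak{P}^c))(1-\check{\eta}^{*}(\mathfrak{P}^{c}))$ at $\mathfrak{P}\in\Sigma_p$ appear because the Igusa-tower evaluation enforces projection onto the ordinary component at $\Sigma_p^{c}$, which contributes one Euler factor for $\eta$ and one for its dual $\check{\eta}$; the factors $(1-\eta^{*}(\mathfrak{L}))$ for $\mathfrak{L}\mid\mathfrak{C}$ come from choosing the Eisenstein section with prescribed ramification at prime-to-$p$ places in the decomposition $\mathfrak{C}=\mathfrak{F}\mathfrak{F}_c\mathfrak{I}$; and the local $\varepsilon$-factor $W_p(\eta)$ naturally materialises from the local additive-character sum at ramified places above $p$ when one unfolds the Eisenstein integral via the fixed differential idele $\mathsf{d}_{F^+}$ and the polarisation element $2\delta$.

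The main obstacle is the careful book-keeping of the non-$p$-part of the level, which is precisely the step that distinguishes Hida-Tilouine \cite{HT-katz} from Katz's original work \cite{katz}: Katz treats conductors dividing $p^\infty$, where no prime-to-$p$ ramification arises, whereas the general case requires constructing auxiliary projectors at primes dividing $\mathfrak{F}\mathfrak{F}_c\mathfrak{I}$ and matching their effect on $q$-expansions with the ideal-character interpretation of $\eta^{*}$. Uniqueness of $\mathcal{L}_{p,\Sigma}^{\mathrm{KHT}}(F)$ will then follow from the Zariski density in $\mathrm{Spec}\,\widehat{\mathcal{O}}^{\mathrm{ur}}[[\mathrm{Gal}(F_{\mathfrak{C}p^\infty}/F)]]$ of the arithmetic specialisations corresponding to gr\"o{\ss}encharacters of type $(A_0)$ satisfying the critical range (ii), which is standard once a single element with the prescribed interpolation property has been exhibited.
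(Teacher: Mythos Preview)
The paper does not supply its own proof of this theorem: it is quoted as an external result, attributed to Katz \cite[Theorem~(5.3.0)]{katz} (for conductors dividing $p^\infty$) and Hida--Tilouine \cite[Theorem~II]{HT-katz} (for general conductor), with only a remark afterwards on the dependence on $\delta$ and on the role of (unr$_{F^+}$). So there is nothing in the paper to compare your argument against.

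That said, your outline is a faithful summary of the strategy in those references: the $p$-adic Eisenstein measure via $q$-expansions and the $q$-expansion principle, evaluation at the CM test object on the Igusa tower, the $\Omega_{\mathrm{CM},\infty}$ versus $\Omega_{\mathrm{CM},p}$ dichotomy coming from the two trivialisations of $\underline{\omega}_{X(\mathfrak{r}_F)}$, the appearance of the Euler factors at $\Sigma_p^c$ from ordinary projection, and the prime-to-$p$ level book-keeping that distinguishes \cite{HT-katz} from \cite{katz}. Your identification of uniqueness via Zariski density of critical specialisations is also the standard argument. If anything, the sketch is honest about where the real work lies (the local computations at $\mathfrak{F}\mathfrak{F}_c\mathfrak{I}$ and the matching of Damerell's formula with the $p$-adic evaluation), but it is a plan rather than a proof; filling in those steps is essentially the content of the cited papers.
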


In Theorem~\ref{thm:KHT} we use the following convention on multi-indices;
\begin{align*}
\Omega_{\mathrm{CM}, ?}^{w\mathsf{t}+2\mathsf{r}}=\prod_{\sigma \in
 \Sigma}\Omega_{\mathrm{CM},? ,\sigma}^{w+2r_\sigma} \quad \text{for } ?=\text{CM or $p$}, \qquad 
|\mathsf{r}|=\sum_{\sigma \in \Sigma} r_\sigma, \\
 \Gamma_\Sigma(w\mathsf{t}+\mathsf{r})=\prod_{\sigma \in \Sigma}\Gamma(w+r_{\sigma}), \quad \mathrm{Im}(2\delta)^{\mathsf{r}}=\prod_{\sigma\in\Sigma} \mathrm{Im}(\sigma(2\delta))^{r_{\sigma}}.
\end{align*}

\begin{rem} 
 The $p$-adic $L$-function $\mathcal{L}_{p, \Sigma}^{\mathrm{KHT}}(F)$ {\em does depend} on the choice of $\delta$ satisfying 
 the conditions $(1_\delta)$ and $(2_\delta)$, but we can explicitly describe effects on the interpolation formula  when we replace $\delta$ 
 by another purely imaginary element $\delta'$ satisfying the polarisation conditions $(1_\delta)$ and $(2_\delta)$. 
 In particular, we readily observe that the $p$-adic valuation of $\mathcal{L}_{p, \Sigma}^{\mathrm{KHT}}(F)$ does not change 
 after such a replacement of $\delta$. Refer, for example, to \cite[Section~5.8]{katz}. 
 We also remark that the assumption (unr$_{F^+}$) is not required 
 in Katz, Hida and Tilouin's original construction of $\mathcal{L}_{p,\Sigma}^{\mathrm{KHT}}(F)$. However, 
 without (unr$_{F^+}$), the constructed $p$-adic $L$-function $\mathcal{L}_{p,\Sigma}^{\mathrm{KHT}}(F)$ might not be an element of $\widehat{\mathcal{O}}^{\mathrm{ur}}[[\mathrm{Gal}(F_{\mathfrak{C}p^\infty}/F)]]$ 
 but an element of $\mathcal{O}_{\mathbb{C}_p}[[\mathrm{Gal}(F_{\mathfrak{C}p^\infty}/F)]]$.
\end{rem}

%
\subsection{Comparison of the $p$-adic $L$-functions} \label{ssc:comparison}
%

In this subsection we specialise Katz, Hida and Tilouine's $p$-adic $L$-function of 
  $\mathcal{L}^\mathrm{KHT}_{p, \Sigma}(F)$ for a CM number field 
 to a fixed arithmetic weight parametrisation 
and obtain an element of the cyclotomic Iwasawa algebra
which interpolates critical values of the $L$-function 
associated to a Hilbert modular form with complex multiplication. 
At the end of this subsection, we compare the specialised element  
 with the $p$-adic $L$-function $\mathcal{L}_p^\mathrm{cyc}(f)$ 
 constructed by the second-named author, and formulate a certain conjecture on the relation between Katz's complex CM periods and 
 modular symbolic complex periods.
 
%
\subsubsection{Comparison of the interpolation formulae}
%

 Let the notation be as in the previous subsection. Recall that, in particular, $\eta$ denotes a $\Sigma$-admissible gr\"o{\ss}encharacter of type $(A_0)$ on $F$) for an appropriate $p$-ordinary CM type $\Sigma$. Now assume that 
 $\vartheta(\eta)$ is a primitive form; in particular, 
 $\vartheta(\eta)$ is an eigenvector with respect to the Hecke operator $T(\mathfrak{p})$ (or $U(\mathfrak{p})$) for each $\mathfrak{p}$ lying above $p$.
 Then, as we have already seen at the end of Section~\ref{sssc:CMforms},
 the cuspform $\vartheta(\eta)$ is nearly $p$-ordinary if and only 
 if  $\eta$ is ordinary with respect to $\Sigma$,
that is, $\eta$ is unramified at every $\mathfrak{P}$ contained in $\Sigma_p$.
 Assume that $\eta$ is ordinary with respect to $\Sigma$ and let $\vartheta(\eta)^{p\text{-st}}$ denote the $p$-stabilisation of $\vartheta(\eta)$. 
 In particular, the eigenvalue with respect to the normalised Hecke operator
 $U_0(\mathfrak{p})$ is given by $\{\mathfrak{p}^{\kappa_1}\}^{-1}\eta^*(\mathfrak{P})$ for each prime ideal $\mathfrak{p}$ lying above $p$; 
 here $\mathfrak{P}$ is an element of $\Sigma_p$ satisfying $\mathfrak{PP}^c=\mathfrak{p}$. One readily observes that the $p$-adic valuation 
 of $\{ \mathfrak{p}^{\kappa_1}\}^{-1} \eta^*(\mathfrak{P})$ coincides 
 with that of the evaluation of the $p$-adic avatar $\hat{\eta}$ 
 introduced in Section~\ref{sssc:grossen} at a uniformiser $\varpi_{\mathfrak{P}}$ of $F_\mathfrak{P}$. Since the $p$-adic avatar $\hat{\eta}$ takes values into $p$-adic units, we see that  $\vartheta(\eta)^{p\text{-st}}$ is indeed nearly ordinary at $p$.

 \begin{pro} \label{prop:sp}
Assume that we have chosen and fixed an element $\delta$ of $F$ satisfying 
  the conditions $(1_\delta)$ and $(2_\delta)$ in Section~$\ref{sssc:Kperiod}$.
 Then under the settings and the notation as above, there exists 
an element $\mathcal{L}^\mathrm{cyc}_{p,\mathrm{CM}}(\eta)$ of 
 $\mathcal{O}_{\mathbb{C}_p}[[\mathrm{Gal}(F^+(\mu_{p^{\infty}})/F^+)]]$ characterised by 
 the interpolation property
 \begin{align} \label{eq:interp_CM_special}
 \chi_{p,\mathrm{cyc}}^j\phi(\mathcal{L}^{\mathrm{cyc}}_{p,\mathrm{CM}}(\eta))=
 \Gamma(j\mathsf{t}-\kappa_{\mu,1})G(\phi) 
\frac{\displaystyle \prod_{\mathfrak{p} \mid
  p\mathfrak{r}_{F^+}}A_{\mathfrak{p}}(\vartheta(\eta)^{p\text{-st}}, \phi, j) L(\vartheta(\eta)^{p\text{-st}}, \phi^{-1}, j)}{(-2\pi \sqrt{-1})^{jd} 
(-\widetilde{\Omega}_{\mathrm{CM},\infty})^{\kappa_{\mu,2}} 
(-\Omega_{\mathrm{CM},\infty})^{-\kappa_{\mu,1}}} 
 \end{align}
 for an arbitrary natural number $j$ satisfying $\kappa^\mathrm{max}_{\mu ,1 }+1\leq j \leq
  (\kappa_{\mu, 2})_\mathrm{min}$ and an arbitrary character $\phi$ of finite order
  of $\mathrm{Gal}(F^+(\mu_{p^\infty})/F^+)$.
 Here $\Omega_{\mathrm{CM},\infty}$ denotes Katz's complex CM period
  introduced in Section~$\ref{sssc:Kperiod}$ and  
$\widetilde{\Omega}_{\mathrm{CM},\infty}$ denotes the modified complex 
CM period defined by 
 $((2\delta)^{-1}\otimes 2\pi\sqrt{-1})^{-1}
  \Omega_{\mathrm{CM},\infty}$, which we regard as 
 an element of $(F^+\otimes \mathbb{C})^\times)$. 
The Gaussian sum $G(\phi^{-1})$ 
 is defined as $(\ref{eq:gauss})$, and 
 the $\mathfrak{p}$-adic multiplier $A_{\mathfrak{p}}(\vartheta(\eta)^{p\text{-st}},\phi,j)$ is defined by 
 \begin{align*}
 A_{\mathfrak{p}}(\vartheta(\eta)^{p\text{-st}}, \phi, j)=
 \begin{cases}
 \displaystyle  1-\frac{\phi(\mathfrak{p})\mathcal{N}\mathfrak{p}^{j-1}}{\eta^*(\mathfrak{P})} & 
 \text{if $\mathfrak{p}$ does not divide $\mathfrak{C}(\phi)$}, \\
\displaystyle \left( \frac{\mathcal{N}\mathfrak{p}^{j-1}}{\eta^*(\mathfrak{P})} \right)^{\mathrm{ord}_{\mathfrak{p}}(\mathfrak{C}(\phi))} & 
 \text{if $\mathfrak{p}$ divides $\mathfrak{C}(\phi)$}.
 \end{cases}
 \end{align*}
 \end{pro}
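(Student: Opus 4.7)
The plan is to define $\mathcal{L}^{\mathrm{cyc}}_{p,\mathrm{CM}}(\eta)$ as a suitably normalised image of Katz--Hida--Tilouine's $p$-adic $L$-function $\mathcal{L}^{\mathrm{KHT}}_{p,\Sigma}(F)$ under the composition of the $\eta^{\mathrm{gal}}$-twisting map and a cyclotomic specialisation, and to derive the interpolation formula $(\ref{eq:interp_CM_special})$ directly from the interpolation property stated in Theorem~$\ref{thm:KHT}$, via the classical identity relating the $L$-function of a gr\"o{\ss}encharacter on $F$ and that of its theta lift on $F^+$. Concretely: since $F \cap F^+(\mu_{p^\infty}) = F^+$ under the unramifiedness assumption $(\mathrm{unr}_{F^+})$, the canonical restriction $\mathrm{Gal}(F_{\mathfrak{C}p^\infty}/F) \twoheadrightarrow \mathrm{Gal}(F(\mu_{p^\infty})/F) \xrightarrow{\sim} \mathrm{Gal}(F^+(\mu_{p^\infty})/F^+)$ induces a ring homomorphism on completed group algebras; I declare $\mathcal{L}^{\mathrm{cyc}}_{p,\mathrm{CM}}(\eta)$ to be the image of $\mathcal{L}^{\mathrm{KHT}}_{p,\Sigma}(F)$ under $\mathrm{Tw}_{\eta^{\mathrm{gal}}}$ followed by this restriction, corrected by an explicit constant in $\mathcal{O}_{\mathbb{C}_p}^\times$ absorbing the $j$-independent factors from KHT's formula (namely $\Omega_{\mathrm{CM},p}^{-w\mathsf{t}-2\mathsf{r}}$, $(\mathfrak{r}_F^\times:\mathfrak{r}_{F^+}^\times)^{-1}$, $\sqrt{|D_{F^+}|}$, $(-1)^{-wd}(2\pi)^{-|\mathsf{r}|}$, and $\mathrm{Im}(2\delta)^{\mathsf{r}}$).

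By the compatibility of twisting and evaluation, $\chi_{p,\mathrm{cyc}}^{j}\phi (\mathcal{L}^{\mathrm{cyc}}_{p,\mathrm{CM}}(\eta))$ corresponds, up to the above normalisation, to $\eta'^{\mathrm{gal}}(\mathcal{L}^{\mathrm{KHT}}_{p,\Sigma}(F))$ where $\eta' := \eta \cdot (\phi \circ N_{F/F^+}) \cdot |\cdot|_{\mathbb{A}_F}^{j}$ is a gr\"o{\ss}encharacter on $F$ of infinity type $-(w+j)\mathsf{t} - \sum_{\sigma} r_{\sigma}(\sigma - \bar{\sigma})$. The range $\kappa_{\mu,1}^{\max}+1 \leq j \leq (\kappa_{\mu,2})_{\min}$ translates, via $\kappa_{\mu,1,\tau} = -w-r_\sigma$ and $\kappa_{\mu,2,\tau} = -w+r_\sigma$ for $\tau = \sigma|_{F^+}$, into the admissibility condition (ii)(a) of Theorem~$\ref{thm:KHT}$ when $w+j \geq 1$ and into (ii)(b) when $w+j \leq 0$, so in either case KHT's interpolation formula applies to $\eta'$.

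Next, I would match factor by factor the KHT interpolation at $\eta'$ with the formula in Proposition~$\ref{prop:sp}$. By the description of the Fourier coefficients of $\vartheta(\eta)$ in Proposition-Definition~$\ref{prop:theta}$, one has $L(\eta'^{*},0) = L(\vartheta(\eta),\phi^{-1},j)$. Using the identifications $\eta^{*}(\mathfrak{P}) = \alpha_{\mathfrak{p}}(\vartheta(\eta)^{p\text{-st}})$ and $\eta^{*}(\mathfrak{P}^c) = \beta_{\mathfrak{p}}$ for each $\mathfrak{P} \in \Sigma_{p}$ lying above $\mathfrak{p}$, one checks that $\prod_{\mathfrak{P} \in \Sigma_{p}}(1-\eta'^{*}(\mathfrak{P}^c))$ becomes, in the unramified case, the product $\prod_{\mathfrak{p}|p}(1-\beta_{\mathfrak{p}} \phi^{-1}(\mathfrak{p}) \mathcal{N}\mathfrak{p}^{-j})$ of the Euler factors that converts $L(\vartheta(\eta),\phi^{-1},j)$ into $L(\vartheta(\eta)^{p\text{-st}},\phi^{-1},j)$, while $\prod_{\mathfrak{P}\in\Sigma_{p}}(1-\check{\eta}'^{*}(\mathfrak{P}^c))$ precisely becomes $\prod_{\mathfrak{p}|p} A_{\mathfrak{p}}(\vartheta(\eta)^{p\text{-st}};\phi,j)$; in the ramified case, one combines these with the local $\varepsilon$-factor $W_{p}(\eta')$ and uses the explicit recipe $\eqref{eq:gauss}$ to identify $W_{p}(\eta')$ with the Gauss sum $G(\phi)$ on $F^+$ and recover the ramified version of $A_{\mathfrak{p}}$.

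For the transcendental factors, the identity $\Gamma_{\Sigma}((w+j)\mathsf{t}+\mathsf{r}) = \Gamma(j\mathsf{t}-\kappa_{\mu,1})$ is immediate from the identifications above. Using the defining relation $\widetilde{\Omega}_{\mathrm{CM},\infty,\sigma} = (2\pi\sqrt{-1}/\sigma(2\delta))\Omega_{\mathrm{CM},\infty,\sigma}$, one rewrites $\Omega_{\mathrm{CM},\infty}^{(w+j)\mathsf{t}+2\mathsf{r}}$ as $\widetilde{\Omega}_{\mathrm{CM},\infty}^{\kappa_{\mu,2}}\Omega_{\mathrm{CM},\infty}^{-\kappa_{\mu,1}}$ times an explicit factor involving $(2\pi\sqrt{-1})^{jd}$ and $\mathrm{Im}(2\delta)^{-\mathsf{r}}$, which together with the KHT constant $(-1)^{(w+j)d}(2\pi)^{|\mathsf{r}|}\mathrm{Im}(2\delta)^{-\mathsf{r}}/\sqrt{|D_{F^+}|}$ reproduces the denominator $(-2\pi\sqrt{-1})^{jd}(-\widetilde{\Omega}_{\mathrm{CM},\infty})^{\kappa_{\mu,2}}(-\Omega_{\mathrm{CM},\infty})^{-\kappa_{\mu,1}}$ of $(\ref{eq:interp_CM_special})$ after the normalising constant is applied. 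Uniqueness of $\mathcal{L}^{\mathrm{cyc}}_{p,\mathrm{CM}}(\eta)$ follows from Zariski density of the interpolation points $\{\chi^{j}_{p,\mathrm{cyc}}\phi\}$. The hardest part will be the bookkeeping of the ramified case: specifically, identifying $W_{p}(\eta')$ with $G(\phi)$ times the appropriate $A_{\mathfrak{p}}$-factor when $\mathfrak{p}|\mathfrak{C}(\phi)$, which requires a local calculation at each $\mathfrak{p}|p$ in $F^+$ using the splitting $\mathfrak{p}\mathfrak{r}_{F} = \mathfrak{P}\mathfrak{P}^c$, the chosen differential id\`ele $\mathsf{d}_{F^+}$, and the compatibility of local additive characters under $\mathrm{Tr}_{F/F^+}$.
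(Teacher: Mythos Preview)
Your overall strategy---push $\mathcal{L}^{\mathrm{KHT}}_{p,\Sigma}(F)$ through the $\eta^{\mathrm{gal}}$-twist and the cyclotomic restriction, then match the KHT interpolation formula term by term with the target formula---is exactly the paper's approach. But there is a genuine gap in your construction, and it is not merely bookkeeping.

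You propose to correct the image of $\mathcal{L}^{\mathrm{KHT}}_{p,\Sigma}(F)$ by a \emph{scalar} in $\mathcal{O}_{\mathbb{C}_p}^\times$. This is not sufficient. When you evaluate KHT's formula at $\eta'=\eta\cdot(\phi\circ N_{F/F^+})\cdot\lvert\cdot\rvert_{\mathbb{A}_F}^{j}$, two factors carry a dependence on $j$ and $\phi$ that no scalar can absorb. First, the term $\mathrm{Im}(2\delta)^{\mathsf{r}_j}$ in the KHT formula is genuinely $j$-dependent (since $r_{j,\sigma}=\kappa_{\mu,2,\sigma|_{F^+}}-j$), and after rewriting contributes a factor $(-2\delta)^{j\mathsf{t}}=\tilde d^{\,j}$ where $\tilde d=\prod_{\sigma\in\Sigma}\sigma(-2\delta)\in\mathbb{Z}_p^\times$. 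Second, the local $\varepsilon$-factor $W_p(\eta')$ is \emph{not} simply $G(\phi)\prod_{\mathfrak{p}\mid\mathfrak{C}(\phi)}A_{\mathfrak{p}}$: the definition of $W_p$ in Theorem~\ref{thm:KHT} involves $(2\delta)_{\mathfrak{P}}$ in the additive character, and the change of variable needed to reach the Gauss sum on $F^+$ produces an extra factor $\phi(z_{\tilde d})$, where $z_{\tilde d}\in\mathrm{Gal}(F(\mu_{p^\infty})/F)$ is the element with $\chi_{p,\mathrm{cyc}}(z_{\tilde d})=\tilde d$. Neither $\tilde d^{\,j}$ nor $\phi(z_{\tilde d})$ is constant in $(j,\phi)$.

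The paper's remedy is to build the correction not as a scalar but as an element of the Iwasawa algebra: one multiplies $\mathcal{L}^{\mathrm{KHT}}_{p,\Sigma}(F)$ by an ``extra factor'' $\mathrm{Ex}(F,\delta)=(\mathfrak{r}_F^\times:\mathfrak{r}_{F^+}^\times)^{-1}\sqrt{|D_{F^+}|}\,(-2\delta)^{-\kappa_{\mu,1}^{\mathrm{max}}\mathsf{t}}\,\tilde z_{\tilde d}^{-1}$, where $\tilde z_{\tilde d}$ is a lift of $(z_{\tilde d},1)$ to $\mathrm{Gal}(F_{\mathfrak{C}p^\infty}/F)$ under a suitable decomposition. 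Evaluating this \emph{group-like} element at $\eta^{\mathrm{gal}}\chi_{p,\mathrm{cyc}}^j\phi$ yields precisely $\phi(z_{\tilde d})^{-1}(-2\delta)^{-j\mathsf{t}}$ (up to the $j$-independent constants), which cancels the two stray factors above. After this correction and multiplication by $\Omega_{\mathrm{CM},p}^{-(\kappa_{\mu,2}-\kappa_{\mu,1})}$, the five blocks of the computation (extra factor, interpolation region, periods, $L$-value and Euler factors, local $\varepsilon$-factor, remaining coefficients) assemble into the formula $(\ref{eq:interp_CM_special})$.

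One smaller slip: the infinity type of $\eta'$ has parameters $w_j=w+2j$ and $r_{j,\sigma}=r_\sigma-j$, not $w+j$ and $r_\sigma$ as you wrote; this affects several of your factor identifications (for instance $\Omega_{\mathrm{CM},\infty}^{w_j\mathsf{t}+2\mathsf{r}_j}=\Omega_{\mathrm{CM},\infty}^{\kappa_{\mu,2}-\kappa_{\mu,1}}$ is indeed $j$-independent, but $(2\pi)^{|\mathsf{r}_j|}$ and $(-1)^{w_jd}$ are not).
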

 
\begin{rem}
 Prasanna and the second-named author have already constructed an object corresponding to $\mathcal{L}^{\mathrm{cyc}}_{p,\mathrm{CM}}(\eta)$ in elliptic modular cases \cite[Theorem~2.4]{OP}, 
 and our modified complex CM period 
$\widetilde{\Omega}_{\mathrm{CM}, \infty}$ is 
 a counterpart of the CM period $\Omega_\infty^{\mathrm{CM}}$ defined
 there.
\end{rem}

 We firstly prepare two  elementary lemmata required for the construction of
 the $p$-adic measure $\mathcal{L}^{\mathrm{cyc}}_{p,\mathrm{CM}}(\eta)$.

\begin{lem} \label{lem:unit_index}
The index $(\mathfrak{r}_F^\times : \mathfrak{r}_{F^+}^\times)$ is
 relatively prime to $p$.
\end{lem}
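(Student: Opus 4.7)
My plan is to combine the classical Hasse unit index theorem for CM extensions with the assumptions \textup{(unr$_{F^+}$)} and \textup{(ord$_{F/F^+}$)} to rule out $p$-torsion in the relevant quotient.

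First, I would decompose the index as
\[
(\mathfrak{r}_F^\times : \mathfrak{r}_{F^+}^\times)
\,=\, [\mathfrak{r}_F^\times : \mu(F)\mathfrak{r}_{F^+}^\times] \cdot [\mu(F)\mathfrak{r}_{F^+}^\times : \mathfrak{r}_{F^+}^\times],
\]
where $\mu(F)$ denotes the group of roots of unity in $F$. By the classical theorem of Hasse for CM extensions, the first factor (the Hasse unit index, often denoted $Q_{F/F^+}$) belongs to $\{1,2\}$. For the second factor, since $F^+$ is totally real we have $\mu(F^+)=\{\pm 1\}=\mu(F)\cap \mathfrak{r}_{F^+}^\times$, hence
\[
[\mu(F)\mathfrak{r}_{F^+}^\times : \mathfrak{r}_{F^+}^\times] = [\mu(F):\mu(F^+)] = \tfrac{1}{2}|\mu(F)|.
\]
Consequently the index to be analysed equals $Q_{F/F^+}\cdot \tfrac{1}{2}|\mu(F)|$ with $Q_{F/F^+}\in\{1,2\}$. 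Since $p\geq 5$, the factor $Q_{F/F^+}$ is automatically prime to $p$, and it remains only to prove that $p$ does not divide $|\mu(F)|$.

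For this last step, I would argue locally at any prime $\mathfrak{P}$ of $F$ lying above $p$. The assumption \textup{(unr$_{F^+}$)} says that $F^+_\mathfrak{p}/\mathbb{Q}_p$ is unramified for each $\mathfrak{p}\mid p$, while \textup{(ord$_{F/F^+}$)} gives $F_\mathfrak{P}=F^+_\mathfrak{p}$; hence each completion $F_\mathfrak{P}$ is unramified over $\mathbb{Q}_p$. If $\mu(F)$ contained a primitive $p$-th root of unity $\zeta_p$, then $\zeta_p\in F_\mathfrak{P}$ would force the inclusion $\mathbb{Q}_p(\zeta_p)\subseteq F_\mathfrak{P}$; but $\mathbb{Q}_p(\zeta_p)/\mathbb{Q}_p$ is totally ramified of degree $p-1\geq 4$, contradicting the unramifiedness of $F_\mathfrak{P}/\mathbb{Q}_p$. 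Therefore $p\nmid |\mu(F)|$, which completes the proof.

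There is no real obstacle here; the only point requiring care is that we have the Hasse unit theorem at our disposal and that the two ordinarity/unramifiedness hypotheses are combined properly to detect ramification at $p$. An alternative (more self-contained) route would avoid invoking Hasse's theorem and instead observe directly that the finite group $\mathfrak{r}_F^\times/\mathfrak{r}_{F^+}^\times$ injects into $\mu(F)$ via $u\mapsto u/u^c$ (using that $u\cdot u^c\in \mathfrak{r}_{F^+}^\times$ has absolute value $1$ at every archimedean place, hence lies in $\mu(F^+)=\{\pm 1\}$, so $u/u^c = \pm u^2/(uu^c)$ is a root of unity up to squaring); this again reduces the statement to showing $p\nmid |\mu(F)|$, which is handled by the same local argument.
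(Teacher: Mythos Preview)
Your proof is correct and follows essentially the same approach as the paper: decompose the index via Hasse's unit index $Q_{F/F^+}\in\{1,2\}$ and the root-of-unity factor, then use the unramifiedness of $F$ at $p$ (from \textup{(unr$_{F^+}$)} and \textup{(ord$_{F/F^+}$)}) to rule out primitive $p$-th roots of unity in $F$. Your write-up is slightly more explicit in spelling out the local ramification argument and in offering the alternative injection $\mathfrak{r}_F^\times/\mathfrak{r}_{F^+}^\times\hookrightarrow\mu(F)$, but the core argument is the same.
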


\begin{proof}
Let $W_F$ (resp.\ $W_{F^+}$) denote the group of roots of unity 
contained in $F$ (resp.\ in $F^+$).  Then
 we have $(\mathfrak{r}_F^\times : \mathfrak{r}_{F^+}^\times) = Q_F(W_F : W_{F^+}) $ where $Q_F$ is the Hasse's unit index
defined as $(\mathfrak{r}_F^\times : W(F) \mathfrak{r}_{F^+}^\times)$.  
 It is known that $Q_F$ can take only two possible values $1$ or $2$ (see \cite[Satz~14]{Hasse} or
 \cite[Theorem~4.12]{Was}). In
 particular $Q_F$ is not divisible by $p$ because we assume that $p$ is
 odd.  Moreover the unit group $\mathfrak{r}_F^\times$
 contains none of the  $p$-power roots of unity since
 $F/\mathbb{Q}$ is unramified at every place lying above $p$ due to the
 assumptions (unr$_{F^+}$) and (ord$_{F/F^+}$). Hence the index $(W_F:W_{F^+})$ is not
 divisible by $p$ either. This completes the proof. 
\end{proof}

 \begin{lem} \label{lem:disc}
 Let $\delta$ be an element of $F$ satisfying the conditions $(1_\delta)$ and $(2_\delta)$ at the beginning of  Section~$\ref{sssc:Kperiod}$. 
 Then the product $\prod_{\sigma \in \Sigma} \sigma(2\delta)$ is an element of $\mathbb{Z}_p^\times$.
 \end{lem}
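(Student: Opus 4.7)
The plan is to reduce the claim to a local computation at the primes of $F$ lying above $p$. First I would invoke the observation made just before the statement of the lemma, namely that under the polarisation condition $(2_\delta)$ the element $2\delta$ generates the fractional ideal $\mathfrak{c}\mathfrak{d}_F$ of $F$. So it suffices to show that $\mathfrak{c}\mathfrak{d}_F$ is prime to $p$ and then express $\prod_{\sigma \in \Sigma}\sigma(2\delta)$ as a product of local norms.

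For the primality to $p$: the fractional ideal $\mathfrak{c}$ is required (in $(2_\delta)$) to be prime to $p\mathfrak{C}\mathfrak{C}^c$, so in particular it is prime to $p$. For the different, I would apply the tower formula $\mathfrak{d}_F = \mathfrak{d}_{F/F^+}\cdot (\mathfrak{d}_{F^+}\mathfrak{r}_F)$. The assumption (unr$_{F^+}$) gives that $\mathfrak{d}_{F^+}$ is prime to $p$, while (ord$_{F/F^+}$) says that every place of $F^+$ above $p$ splits (hence is unramified) in $F/F^+$, so $\mathfrak{d}_{F/F^+}$ is prime to $p$ as well. Consequently $\mathfrak{d}_F$, and hence $(2\delta) = \mathfrak{c}\mathfrak{d}_F$, is prime to $p$; that is, $2\delta \in \mathfrak{r}_{F,\mathfrak{P}}^\times$ for every prime $\mathfrak{P}$ of $F$ lying above $p$.

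The main step is then to organise the product $\prod_{\sigma \in \Sigma}\sigma(2\delta)$ (regarded inside $\overline{\mathbb{Q}}_p$ via $\iota_p$) according to $p$-adic places of $F$. For each $\mathfrak{P}\in \Sigma_p$, let $\Sigma_\mathfrak{P}$ denote the set of $\sigma \in \Sigma$ such that $\iota_p\circ \sigma$ induces the place $\mathfrak{P}$. Since $\Sigma$ is a $p$-ordinary CM type, the sets $\Sigma_\mathfrak{P}$ partition $\Sigma$, and for each $\mathfrak{P}\in \Sigma_p$ the assignment $\sigma \mapsto \iota_p \circ \sigma$ yields a bijection between $\Sigma_\mathfrak{P}$ and the set of $\mathbb{Q}_p$-algebra embeddings $F_\mathfrak{P}\hookrightarrow \overline{\mathbb{Q}}_p$. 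Under this identification one has $\prod_{\sigma \in \Sigma_\mathfrak{P}}(\iota_p\circ \sigma)(2\delta) = N_{F_\mathfrak{P}/\mathbb{Q}_p}(2\delta) \in \mathbb{Q}_p$, and combining over $\mathfrak{P}\in \Sigma_p$ gives
\begin{equation*}
\prod_{\sigma \in \Sigma}\sigma(2\delta) \;=\; \prod_{\mathfrak{P}\in \Sigma_p} N_{F_\mathfrak{P}/\mathbb{Q}_p}(2\delta) \;\in\; \mathbb{Q}_p.
\end{equation*}
Each local factor is a unit in $\mathbb{Z}_p$ by the previous step, so the product lies in $\mathbb{Z}_p^\times$ as desired.

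There is no substantive obstacle here; the only point requiring mild care is the identification of the fibres $\Sigma_\mathfrak{P}$ with the $\mathbb{Q}_p$-embeddings of the completion $F_\mathfrak{P}$, which relies precisely on the $p$-ordinary CM type condition ensuring that the places of $F$ above $p$ split cleanly into $\Sigma_p \sqcup \Sigma_p^c$.
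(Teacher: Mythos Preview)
Your proof is correct and follows essentially the same route as the paper: both first use $(2_\delta)$ together with (unr$_{F^+}$) and (ord$_{F/F^+}$) to see that $2\delta$ is a $p$-adic unit, and then identify $\prod_{\sigma\in\Sigma}\sigma(2\delta)$ with a product of local norms to land in $\mathbb{Z}_p^\times$. The only cosmetic difference is that the paper routes the norm through the identification $\prod_{\mathfrak{P}\in\Sigma_p}\mathfrak{r}_{F,\mathfrak{P}}^\times \cong (\mathfrak{r}_{F^+}\otimes_{\mathbb{Z}}\mathbb{Z}_p)^\times$ followed by $\mathrm{Nr}_{F^+/\mathbb{Q}}\otimes\mathrm{id}$, which is equivalent to your direct computation of $\prod_{\mathfrak{P}\in\Sigma_p} N_{F_\mathfrak{P}/\mathbb{Q}_p}(2\delta)$.
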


 \begin{proof}
Recall that $2\delta$ is a generator of
the $p$-part of the absolute discriminant of $F$, or in other words, 
$2\delta(\mathfrak{r}_F \otimes_\mathbb{Z}
  \mathbb{Z}_p)=\mathfrak{d}_F\otimes_\mathbb{Z}
  \mathbb{Z}_p$ holds (refer to \cite[Lemma~(5.7.35)]{katz}). 
The assumptions (unr$_{F^+}$) and (ord$_{F/F^+}$) imply that $\mathfrak{d}_F
  \otimes_\mathbb{Z} \mathbb{Z}_p$ is trivial, and hence 
$2\delta$ is a $p$-adic unit; that is, the image of $2\delta$ in
  $F\otimes_\mathbb{Z} \mathbb{Z}_p$ is contained in $(\mathfrak{r}_F
  \otimes_\mathbb{Z} \mathbb{Z}_p)^\times$. Then the image of $2\delta$ 
under the composition 
\begin{align*}
(\mathfrak{r}_F \otimes_\mathbb{Z} \mathbb{Z}_p)^\times
 \xrightarrow{\text{projection}} \underset{\mathfrak{P}\in
 \Sigma_p}{\prod} \mathfrak{r}_{F, \mathfrak{P}}^\times
 \stackrel{\sim}{\longrightarrow} (\mathfrak{r}_{F^+} \otimes_\mathbb{Z}
 \mathbb{Z}_p)^\times \xrightarrow{\mathrm{Nr}_{F^+/\mathbb{Q}}\otimes \mathrm{id}}
 \mathbb{Z}_p^\times
\end{align*}
coincides with $\prod_{\sigma \in \Sigma} \sigma(2\delta)$ by definition, where the middle isomorphism in
  the diagram above is the identification induced by the fixed
  $p$-ordinary CM type $\Sigma$:
\begin{align*}
\mathfrak{r}_{F^+} \otimes_\mathbb{Z} \mathbb{Z}_p \xrightarrow{\sim}
 \prod_{\mathfrak{P} \in \Sigma_p} \mathfrak{r}_{F, \mathfrak{P}} \, ;
 \quad 
 x\otimes 1 \mapsto (\iota_p \circ \sigma(x))_{\iota_p \circ\sigma \in \Sigma_p}.
\end{align*}
  Therefore $\prod_{\sigma \in \Sigma} \sigma(2\delta)$ is an element of $\mathbb{Z}_p^\times$.
 \end{proof}
 
 Taking the bijection $\Sigma \xrightarrow{\sim} I_{F^+}; \sigma \mapsto \sigma\vert_{F^+}$ into account, we abbreviated the product $\prod_{\sigma \in \Sigma}\sigma(2\delta)$ as $(2\delta)^\mathsf{t}$ in the following arguments.

We now explain how to construct the $p$-adic measure
$\mathcal{L}_{p,\mathrm{CM}}^\mathrm{cyc}(\eta)$. Let $\mathfrak{C}$ denote 
the prime-to-$p$ part of the conductor of $\eta$ and consider 
Katz, Hida and Tilouine's measure
$\mathcal{L}_{p,\Sigma}^\mathrm{KHT}(F)$ introduced in
Theorem~\ref{thm:KHT}, which is by construction 
an element of
$\widehat{\mathcal{O}}^\mathrm{ur}[[\mathrm{Gal}(F_{\mathfrak{C}p^\infty}/F)]]$. 
Define a $p$-adic integer $\tilde{d}=\tilde{d}(\delta)$
to be  $(-1)^d(2\delta)^{\mathsf{t}}
=\prod_{\sigma \in \Sigma}\sigma(-2\delta)$, which is indeed a $p$-adic unit
in $\mathbb{Z}_p^\times$ by Lemma~\ref{lem:disc}.
Hence there exists a unique element $z_{\tilde{d}}$ of
$\mathrm{Gal}(F(\mu_{p^\infty})/F)$ corresponding to $\tilde{d}$ via $\chi_{p,\mathrm{cyc}}$. 

Let $\sigma_0$ denote an element of $\Sigma$ such that $\mu_\sigma 
(=\kappa_{\mu,1,\sigma|_{F^+}})$ takes the maximum $\kappa_{\mu,1}^{\mathrm{max}}$, 
and let $\mathfrak{P}_0$ denotes the corresponding element of
$\Sigma_p$: the prime ideal of $F$ induced by the embedding 
$\iota_p\circ \sigma_0 \colon F \hookrightarrow \overline{\mathbb{Q}}_p$. 
We denote the lay class field modulo $\mathfrak{C}
\prod_{\mathfrak{P}^*\in (\Sigma_p \cup \Sigma_p^c)\setminus \{ \mathfrak{P}_0\}} (\mathfrak{P}^*)^\infty$ over $F$ 
by $F^{(0)}_{\mathfrak{C}p^\infty}$.
Since $\eta$ is ordinary with respect to $\Sigma$, the $p$-adic avatar
of $\eta \lvert \cdot \rvert_{\mathbb{A}_F}^{\kappa_{\mu,1}^{\mathrm{max}}}$ is unramified
at $\mathfrak{P}_0$ by its construction, and therefore the corresponding
Galois character
$\eta^\mathrm{gal}\chi_{p,\mathrm{cyc}}^{\kappa_{\mu,1}^{\mathrm{max}}}$ 
factors through the Galois group 
$\mathrm{Gal}(F_{\mathfrak{C}p^\infty}^{(0)}/F)$. 
The ray class field $F_{\mathfrak{C}p^\infty}^{(0)}$ is contained in
$F_{\mathfrak{C}p^\infty}$ by definition. On the other hand,
 the $p$-adic cyclotomic extension $F(\mu_{p^\infty})/F$ is totally ramified
 at $\mathfrak{P}$ since $F$ does not contain any $p$-power root of unity due to the conditions (unr$_{F^+}$) and (ord$_{F/F^+}$). By compairing
 ramification, we see that $F^{(0)}_{\mathfrak{C}p^{\infty}}$ is
linearly disjoint from $F(\mu_{p^\infty})$ over $F$. The Galois
group of $F(\mu_{p^\infty})F_{\mathfrak{C}p^\infty}^{(0)}/F$ is thus 
decomposed as the direct product of $\mathrm{Gal}(F(\mu_{p^\infty})/F)$ and
$\mathrm{Gal}(F_{\mathfrak{C}p^\infty}^{(0)}/F)$. Via this decomposition 
we regard $(z_{\tilde{d}}, 1)$ as an element of
$\mathrm{Gal}(F(\mu_{p^\infty})F_{\mathfrak{C}p^\infty}^{(0)}/F)$ and 
let $\tilde{z}_{\tilde{d}}$ denote its arbitrary lift to
$\mathrm{Gal}(F_{\mathfrak{C}p^\infty}/F)$. Then we define  
$\mathcal{L}_{p,\mathrm{CM}}^\mathrm{cyc}(\eta)\in \widehat{\mathcal{O}}^\mathrm{ur}[[\mathrm{Gal}(F^+(\mu_{p^\infty})/F^+)]]$
as the product of a power of Katz's $p$\nobreakdash-adic CM period
$\Omega_{\mathrm{CM},p}^{-(\kappa_{\mu,2}-\kappa_{\mu ,1})}$ and 
the image of the element $\mathrm{Ex}(F,\delta)
\mathcal{L}_{p,\Sigma}^\mathrm{KHT}(F)$ under the map 
\begin{align*}
\widehat{\mathcal{O}}^\mathrm{ur}[[\mathrm{Gal}(F_{\mathfrak{C}p^\infty}/F)]]
 \rightarrow
 \widehat{\mathcal{O}}^\mathrm{ur}[[\mathrm{Gal}(F(\mu_{p^\infty})/F)]]
 \xrightarrow{\sim}
 \widehat{\mathcal{O}}^\mathrm{ur}[[\mathrm{Gal}(F^+(\mu_{p^\infty})/F^+)]]
\end{align*}
defined as $g \mapsto \eta^\mathrm{gal}(g)g|_{F^+(\mu_{p^\infty})}$ 
for each element $g$ of $\mathrm{Gal}(F_{\mathfrak{C}p^{\infty}}/F)$. 
Here $\mathrm{Ex}(F,\delta)$ denotes an element defined by the product 
$(\mathfrak{r}_F^\times : \mathfrak{r}_{F^+}^\times)^{-1}  \sqrt{|D_{F^+}|}
(-2\delta)^{-\kappa_{\mu,1}^{\mathrm{max}}\mathsf{t}}\tilde{z}_{\tilde{d}(\delta)}^{-1}$, which we call the {\em  extra factor}.
Note that $\mathcal{L}_{p,\mathrm{CM}}^\mathrm{cyc}(\eta)$ is indeed $p$-adically integral since $Ex(F,\delta)$ is a $p$-adic unit by
virtue of Lemma~\ref{lem:unit_index}, Lemma~\ref{lem:disc} and the assumption (unr$_{F^+}$).

 \begin{proof}[{Proof of Proposition~$\ref{prop:sp}$}]
We shall deduce the desired interpolation formula of
  $\mathcal{L}_{p,\mathrm{CM}}^\mathrm{cyc}(\eta)$ from the interpolation formula of
  $\mathcal{L}_{p,\Sigma}^{\mathrm{KHT}}(F)$ (see
  Theorem~\ref{thm:KHT}). Note that the evaluation of
  $\mathcal{L}_{p,\mathrm{CM}}^\mathrm{cyc}(\eta)$ at $\chi_{p,\mathrm{cyc}}^j\phi$ is
  exactly the same as the  evaluation of  
  $\mathrm{Ex}(F,\delta)\mathcal{L}_{p,\Sigma}^{\mathrm{KHT}}(F)$ at the
  character $\eta^{\mathrm{gal}}\chi_{p,\mathrm{cyc}}^j\phi$ by definition.

\begin{component}[$\blacktriangleright$ Extra factor] 
--- The evaluation of the extra factor $\mathrm{Ex}(F,\delta)$
 at the character $\eta^\mathrm{gal}\chi_{p,\mathrm{cyc}}^j\phi$ is
 calculated as
\begin{equation} \label{eq:extra}
\begin{aligned}
\eta^\mathrm{gal}\chi_{p,\mathrm{cyc}}^j\phi(\mathrm{Ex}(F,\delta)) &=
(\mathfrak{r}_F^\times : \mathfrak{r}_{F^+}^\times)^{-1}\sqrt{|D_{F^+}|}
(-2\delta)^{-\kappa_{\mu,1}^{\mathrm{max}}\mathsf{t}}
\eta^\mathrm{gal} \chi_{p,\mathrm{cyc}}^j \phi(\tilde{z}_{\tilde{d}}^{-1}) \\
&= (\mathfrak{r}_F^\times :
 \mathfrak{r}_{F^+}^\times)^{-1}\sqrt{|D_{F^+}|} (-2\delta)^{-\kappa_{\mu,1}^{\mathrm{max}}\mathsf{t}} 
 \chi_{p,\mathrm{cyc}}^{j-\kappa_{\mu,1}^{\mathrm{max}}}\phi(z_{\tilde{d}}^{-1}) \eta^\mathrm{gal}\chi_{p,\mathrm{cyc}}^{\kappa^\mathrm{max}_{\mu,1}}(1) \\
&= (\mathfrak{r}_F^\times : \mathfrak{r}_{F^+}^\times)^{-1}
 \sqrt{|D_{F^+}|} \phi(z_{\tilde{d}})^{-1} (-2\delta)^{-j\mathsf{t}}.
\end{aligned}
\end{equation}
At the second equality, we just replace $\tilde{z}_{\tilde{d}}$ by the corresponding element $(z_{\tilde{d}},1)$ in the product  $\mathrm{Gal}(F(\mu_{p^\infty})/F)\times \mathrm{Gal}(F^{(0)}_{\mathfrak{C}p^\infty}/F)$.
\end{component}

\begin{component}[$\blacktriangleright$ Interpolation region]
--- The infinity type $\mu_j$ of the gr\"o{\ss}encharacter 
$\eta^{\mathrm{gal}}\chi_{p,\mathrm{cyc}}^j \phi$ is given 
by $\sum_{\sigma \in \Sigma}
 ((\mu_\sigma-j)\sigma+(\mu_{\bar{\sigma}}-j) \bar{\sigma})$. 
We then define integers $w_j$ and $r_{j,\sigma}$ by the equation
\begin{align*}
\mu_j=-w_j\mathsf{t}-\sum_{\sigma \in \Sigma}r_{j,\sigma}
 (\sigma-\bar{\sigma}).
\end{align*}
More concretely $w_j$ and $r_{j,\sigma}$ are defined as follows: 
\begin{align*}
w_j=2j-[\kappa_\mu] \text{\quad and \quad }
 r_{j,\sigma}=\mu_{\bar{\sigma}}-j =\kappa_{\mu,2,\sigma|_{F^+}}-j \quad
 \text{for each $\sigma$ in $\Sigma$}. 
\end{align*}
The interpolation region of Katz, Hida and Tilouine's measure for
 gr\"o{\ss}encharacters of the form
 $\eta^\mathrm{gal}\chi_{p,\mathrm{cyc}}^j \phi$ is given by 
\begin{quote}
($w_j\geq 1$ and $r_{j,\sigma}\geq 0 \quad {}^\forall \sigma\in 
\Sigma$)\  or \  ($w_j\leq 1$ and $r_{j,\sigma}+w_j-1\geq 0 \quad {}^\forall
 \sigma \in \Sigma$).
\end{quote}
The solution of the simultaneous inequalities above with respect to $j$ is 
then calculated as $\max \{\mu_\sigma \mid \sigma \in \Sigma\} +1  \leq j \leq
 \min \{ \mu_{\bar{\sigma}} \mid \sigma \in \Sigma\}$, which coincides with the
 desired interpolation region $\kappa_{\mu,1}^\mathrm{max}+1\leq j\leq (\kappa_{\mu,2})_\mathrm{min}$ by the definition of $\kappa_\mu=(\kappa_{\mu,1},\kappa_{\mu,2})$.
\end{component}

\begin{component}[$\blacktriangleright$ Periods]
--- The equality $\Omega_{\mathrm{CM}, ?}^{w_j\mathsf{t}+2\mathsf{r}_j}=\Omega_{\mathrm{CM},
 ?}^{\kappa_{\mu,2}-\kappa_{\mu,1}}$ holds for $\mathsf{r}_j=\sum_{\sigma
 \in \Sigma} r_{j,\sigma} \sigma$ and $?=\infty,\ p$. In particular 
the contribution of the $p$-adic CM period appearing 
in the interpolation formula
 of $\mathcal{L}_{p,\Sigma}^{\mathrm{KHT}}(F)$ is canceled by the
 construction of $\mathcal{L}_{p,\mathrm{CM}}^\mathrm{cyc}(\eta)$. Moreover we obtain 
\begin{align} \label{eq:complex_period}
\frac{1}{\Omega_{\mathrm{CM},\infty}^{w_j\mathsf{t}+\mathsf{r}_j}} 
=\frac{1}{\Omega_{\mathrm{CM},\infty}^{\kappa_{\mu,
 2}-\kappa_{\mu,1}}} 
=\frac{(-1)^{[\kappa_\mu]d}(-2\delta)^{\kappa_{\mu,2}}}{(-2\pi \sqrt{-1})^{|\kappa_{\mu,2}|}} \cdot 
\frac{1}{(-\widetilde{\Omega}_{\mathrm{CM},\infty})^{\kappa_{\mu, 2}}(-\Omega_{\mathrm{CM},
 \infty})^{-\kappa_{\mu,1}}}.
\end{align}
\end{component}

\begin{component}[$\blacktriangleright$ $L$-value] 
--- First note that 
 the Galois character $\eta^\mathrm{gal}\chi_{p,\mathrm{CM}}^j\phi$
 corresponds to the ideal character
 $\eta^*\mathcal{N}^{-j}\phi^{-1}$ (see the paragraphs before Theorem~\ref{thm:Hilb_L-func}). Here we define $\phi^{-1}(\mathfrak{A})$ 
for a fractional ideal $\mathfrak{A}$ of $F$ by
 $\phi^{-1}(\mathcal{N}_{F/F^+}\mathfrak{A})$. The gr\"o{\ss}encharacter $\eta$
 is ramified at every prime ideal $\mathfrak{L}$ dividing $\mathfrak{C}$
 since $\mathfrak{C}$ is by definition the prime-to-$p$ part of the conductor of $\eta$, and hence 
the local term $1-\eta^*\mathcal{N}^{-j}\phi^{-1}(\mathfrak{L})$ at such $\mathfrak{L}$ equals $1$. By comparing the Dirichlet series expressions, we can readily check that  the $L$-value $L(\eta^*\mathcal{N}^{-j}\phi^{-1},0)$ exactly equals $L(\vartheta(\eta), \phi^{-1},j)$ by the construction of the theta lift.
 For each place $\mathfrak{p}$ of $F^+$ lying above $p$, the
 local factor of $L(\vartheta(\eta), \phi^{-1}, j)$ at $\mathfrak{p}$ is
 given by 
\begin{align*}
L_\mathfrak{p}(\vartheta(\eta),
 \phi,j)&=\{1-C(\vartheta(\eta),\mathfrak{p})\phi^{-1}(\mathfrak{p})\mathcal{N}
 \mathfrak{p}^{-j}+\eta^*(\mathfrak{p}\mathfrak{r}_F)\nu_{F/F^+}^*\phi^{-2}(\mathfrak{p}) \mathcal{N}
 \mathfrak{p}^{-2j}\}^{-1} \\
&= (1-\eta^*\phi^{-1}(\mathfrak{P})\mathcal{N}
 \mathfrak{P}^{-j})^{-1}(1-\eta^*\phi^{-1}(\mathfrak{P}^c)(\mathcal{N}
 \mathfrak{P}^c)^{-j})^{-1}
\end{align*}
for $\mathfrak{p}=\mathfrak{PP}^c$ with $\mathfrak{P}$ 
in $\Sigma_p$. Therefore  the equality
\begin{align*}
(1-\eta^*\phi^{-1}(\mathfrak{P}^c)(\mathcal{N}\mathfrak{P}^c)^{-j})L_\mathfrak{p}(\vartheta(\eta),
 \phi, j)=(1- \eta^*\phi^{-1}(\mathfrak{P}) \mathcal{N}\mathfrak{P}^{-j})^{-1}
\end{align*}
holds and the right hand side of this equation is no other than the local
 component at $\mathfrak{p}$ of the $L$-value
 $L(\vartheta(\eta)^{p\text{-st}},\phi,j)$ by the definition of the
 $p$-stabilisation of $\vartheta(\eta)$. Finally we obtain by the
 definition of the dual gr\"o{\ss}encharacter an equation
\begin{align*}
1-(\eta^* \mathcal{N}^{-j}\phi^{-1})\check{\mathstrut}\,(\mathfrak{P}^c) =
\begin{cases}
\displaystyle
 1-\frac{\phi(\mathfrak{p})\mathcal{N}\mathfrak{P}^{j-1}}{\eta^*(\mathfrak{P})} &
 \text{when $\phi$ is unramified at $\mathfrak{P}$}, \\
1 & \text{otherwise}
\end{cases}
\end{align*}
for each $\mathfrak{P}$ in $\Sigma_p$ (recall that
 $\eta^*\mathcal{N}^{-j}$ is unramified at $\mathfrak{P}$ due to the
 assumption (unr$_{F^+}$)), and it coincides with the $\mathfrak{p}$-adic
 multiplier $A_\mathfrak{p}(\vartheta(\eta)^{p\text{-st}}; \phi,j)$  
when $\mathfrak{p}=\mathcal{N}_{F/F^+}\mathfrak{P}$ does not divide
the conductor $\mathfrak{C}(\phi)$ of $\phi$.  Consequently we have 
\begin{equation} \label{eq:L-value}
\begin{aligned}
\prod_{\mathfrak{L}\mid
 \mathfrak{C}}(1-\eta^*\mathcal{N}^{-j}\phi^{-1}(\mathfrak{L})) 
 \prod_{\mathfrak{P}\mid
 \Sigma_p}\{(1-&\eta^*\mathcal{N}^{-j}\phi^{-1}(\mathfrak{P}^c))(1-(\eta^*\mathcal{N}^{-j}\phi^{-1})\check{\mathstrut}\,
 (\mathfrak{P}^c))\}L(\eta^*\mathcal{N}^{-j}\phi^{-1},
 0) \\
&=\prod_{\mathfrak{p}\mid p\mathfrak{r}_{F^+}, \mathfrak{p} \nmid \mathfrak{C}(\phi)}
 A_\mathfrak{p}(\vartheta(\eta)^{p\text{-st}}; \phi, j)
 L(\vartheta(\eta)^{p\text{-st}}, \phi^{-1}, j).
\end{aligned}
\end{equation}
\end{component}

\begin{component}[$\blacktriangleright$ Local $\varepsilon$-factor at $p$]
--- Recall that for each $\mathfrak{P}$ in $\Sigma_p$ and
 $\mathfrak{p}=\mathfrak{PP}^c$, we obtain a specified identification 
\begin{align} \label{eq:identification}
\mathfrak{r}_{F^+, \mathfrak{p}} \xrightarrow{\sim} \mathfrak{r}_{F, \mathfrak{P}}
\end{align}
induced by $\tau_\mathfrak{p}(x) \mapsto \sigma_\mathfrak{P}(x)$ for 
 each element $x$ of $\mathfrak{r}_{F^+}$, where  
$\sigma_\mathfrak{P}\colon F\hookrightarrow \overline{\mathbb{Q}}$ 
denotes a unique  embedding contained in $\Sigma$ 
such that $\iota_p \circ \sigma_\mathfrak{P}$ induces $\mathfrak{P}$, and
 $\tau_\mathfrak{p}$ denotes the restriction of $\sigma_\mathfrak{P}$ to
 $F^+$. Then $(2\delta)_\mathfrak{P}=\sigma_\mathfrak{P}(2\delta)$
 corresponds to a unique element
 $(2\delta)_\mathfrak{p}$ of $\mathfrak{r}_{F^+, \mathfrak{p}}^\times$
 under the identification (\ref{eq:identification}). We define
 $\varpi_\mathfrak{p}$ as an element of $\mathfrak{r}_{F^+,
 \mathfrak{p}}$ corresponding to the fixed uniformiser 
 $\varpi_\mathfrak{P}$ of $F_\mathfrak{P}$ via (\ref{eq:identification}), which is a
 uniformiser of $F^+_\mathfrak{p}$.

 \medskip
 
 Note that the id\`elic character corresponding to $\eta^\mathrm{gal}\chi_{p,\mathrm{cyc}}^j \phi$ is $\eta \lvert \cdot \rvert_{\mathbb{A}_F}^j \phi^{-1}$,  where $\lvert \cdot \rvert_{\mathbb{A}_F}$ denotes the id\`elic norm character on $F$ (see Example~\ref{ex:norm} for details). Here we use the same symbol $\phi^{-1}$ for the id\`elic character corresponding to the ideal character $\phi^{-1}$ by abuse of notation. Since $\eta \lvert \cdot \rvert_{\mathbb{A}_F}^j$ is unramified at each
 place $\mathfrak{P}$ in $\Sigma_p$ by virtue of the assumption (unr$_{F^+}$), 
the exponent $e(\mathfrak{P})$ of $\mathfrak{P}\in \Sigma_p$ 
 in the conductor of $\eta \lvert \cdot \rvert_{\mathbb{A}_F}^j \phi^{-1}$ exactly equals
 the exponent of $\mathfrak{p}=\mathfrak{PP}^c$ in the conductor of $\phi$, which we denote by 
$e_\phi(\mathfrak{p})$. By using these facts, we calculate the local $\varepsilon$-factor $W_p(\eta \lvert \cdot \rvert_{\mathbb{A}_F}^j\phi^{-1})$ in the following. For each $\mathfrak{P}$ in $\Sigma_p$, set 
 \begin{align*}
  W_\mathfrak{P}&(\eta \lvert\cdot \rvert_{\mathbb{A}_F}^j \phi^{-1}) \\
  =&\mathcal{N}\mathfrak{P}^{-e_\phi(\mathfrak{p})} (\eta\lvert \cdot \rvert_{\mathbb{A}_F}^j\phi^{-1})_\mathfrak{P}(\varpi_\mathfrak{P}^{-e_\phi(\mathfrak{p})}) \sum_{x\in (\mathfrak{r}_{F^+,\mathfrak{p}}/\mathfrak{p}^{e_\phi(\mathfrak{p})})^\times} (\eta \lvert \cdot \rvert_{\mathbb{A}_F}^j \phi^{-1})_{\mathfrak{P}}(x) e_{\mathfrak{P}}(\varpi_\mathfrak{P}^{-e_\phi(\mathfrak{p})}(2\delta)_\mathfrak{P}^{-1}x).
 \end{align*}
Then $W_p(\eta \lvert \cdot \rvert_{\mathbb{A}_F}^j \phi^{-1})$ obviously equals the product $\prod_{\mathfrak{P}\in \Sigma_p} W_\mathfrak{P}(\eta \lvert \cdot \rvert_{\mathbb{A}_F}^j \phi^{-1})$. Note also that when $e_\phi(\mathfrak{p})$ equals $0$, or in other words, when $\phi$ does not ramify at $\mathfrak{p}$, the local term $W_\mathfrak{P}(\eta \lvert \cdot \rvert_{\mathbb{A}_F}^j\phi^{-1})$ at $\mathfrak{P}$ is trivial by its definition.

Now assume that $\phi$ is ramified at $\mathfrak{p}=\mathfrak{PP}^c$. Then by direct computation, we have
\begin{multline*}
 \mathcal{N}
 \mathfrak{P}^{-e_\phi(\mathfrak{p})} (\eta\lvert \cdot \rvert_{\mathbb{A}_F}^j
 \phi^{-1})_\mathfrak{P}(\varpi_\mathfrak{P}^{-e_\phi(\mathfrak{p})}) 
= \displaystyle
 \left(
 \frac{\mathcal{N}\mathfrak{P}^{j-1}}{\eta^*(\mathfrak{P})}
 \right)^{e_\phi(\mathfrak{p})}
 \phi_\mathfrak{p}^{-1}(\mathcal{N}_{F/F^+}\varpi_\mathfrak{P}^{-e_\phi(\mathfrak{p})}) \\
= 
 A_\mathfrak{p}(\vartheta(\eta)^{p\text{-st}},\phi,j)
 \phi_\mathfrak{p}(\varpi_\mathfrak{p}^{e_\phi(\mathfrak{p})}).
\end{multline*}
Next let us take an element $\alpha$ of $F^+$
satisfying $\alpha_\mathfrak{p}=\varpi_\mathfrak{p}^{e_\phi(\mathfrak{p})}$ for each $\mathfrak{p}$ of $F^+$ lying above $p$.  
Note that $(\eta\lvert \cdot \rvert_{\mathbb{A}_F}^j)_\mathfrak{P}(x)$ is trivial for each $\mathfrak{P}$ in $\Sigma_p$ and an arbitrary element $x$ in
 $\mathfrak{r}_\mathfrak{P}^\times$ by virtue of the assumption (unr$_{F^+}$). 
 By using this, we can calculate as
 \begin{align} \label{eq:P-part_epsilon}
\begin{aligned}
 \phi_\mathfrak{p}(\varpi_\mathfrak{p}^{e_\phi(\mathfrak{p})}) &\sum_{x \in
 (\mathfrak{r}_{F,\mathfrak{P}}/\mathfrak{P}^{e(\mathfrak{P})})^\times}
 (\eta\lvert\cdot\rvert_{\mathbb{A}_F}^j
 \phi^{-1})_\mathfrak{P}(x)e_\mathfrak{P}(\varpi_\mathfrak{P}^{-e(\mathfrak{P})}(2\delta)_\mathfrak{P}^{-1}x) \\
&=
 \phi_\mathfrak{p}(\varpi_\mathfrak{p}^{e_\phi(\mathfrak{p})}) \sum_{x \in
 (\mathfrak{r}_{F^+,\mathfrak{p}}/\mathfrak{p}^{e_\phi(\mathfrak{p})})^\times}
 \phi_\mathfrak{p}^{-1}(x)e_\mathfrak{p}(\varpi_\mathfrak{p}^{-e_\phi(\mathfrak{p})}(2\delta)_\mathfrak{p}^{-1}x) \\ 
&= 
 \phi_\mathfrak{p}^{-1}((-2\delta)_\mathfrak{p}) \sum_{x \in
 (\mathfrak{r}_{F^+,
 \mathfrak{p}}/\mathfrak{p}^{e_\phi(\mathfrak{p})})^\times}
 \phi_\mathfrak{p}^{-1}(x\alpha^{-1}_\mathfrak{p})
 e_\mathfrak{p}(-x\alpha_\mathfrak{p}^{-1}) 
\end{aligned}
 \end{align}
 In the third equality we change the variable of the summation using the
 fact that the correspondance $x\mapsto x(-2\delta)_\mathfrak{p}$ induces
 an automorphism of $(\mathfrak{r}_{F^+,\mathfrak{p}}/\mathfrak{p}^{e_\phi(\mathfrak{p})})^\times$. Since $2\delta$ is a $p$-adic unit, 
 the evaluation of the $p$-adic avatar of the norm character
 $\lvert \cdot \rvert_{\mathbb{A}_{F^+}}$ at the id\`ele of $F^+$ defined as 
 $(2\delta)_{p\mathfrak{r}_{F^+}}=((2\delta)_\mathfrak{p})_{\mathfrak{p}\mid
 p\mathfrak{r}_{F^+}}$ is calculated as $\prod_{\mathfrak{P}\in
 \Sigma_p} \sigma_\mathfrak{P}(2\delta)^{\sharp \Sigma_\mathfrak{P}}$, 
 which coincides with  $(-1)^d\tilde{d}=(-1)^d\chi_{\mathrm{cyc},p}(z_{\tilde{d}})$. 
This implies that the id\`ele
 $(-2\delta)_{p\mathfrak{r}_{F^+}}$ corresponds to the element
 $z_{\tilde{d}}$ of $\mathrm{Gal}(F^+(\mu_{p^\infty})/F^+)$ via the
 reciprocity map in global class field theory. In particular, the product 
$\prod_{\mathfrak{p}\mid p\mathfrak{r}_{F^+}}
 \phi_\mathfrak{p}^{-1}((-2\delta)_\mathfrak{p})$ coincides with
 $\phi(z_{\tilde{d}})$. Note that,
 by the definition of the ideal characters corresponding to the gr\"o{\ss}encharacters, the equality $\prod_{\mathfrak{p}\mid
 p\mathfrak{r}_{F^+}}\phi_\mathfrak{p}^{-1}((x\alpha^{-1})_\mathfrak{p})=\phi(x\alpha^{-1}\mathfrak{r}_{F^+})$
 holds for an arbitrary element $x$ in $F^+$
 relatively prime to $p$. Taking the product of \eqref{eq:P-part_epsilon} over prime ideals $\mathfrak{p}$ of $\mathfrak{r}_{F^+}$ lying above $p$, we obtain the equality
\begin{align*}
\prod_{\mathfrak{p}\mid p\mathfrak{r}_{F^+}}  \phi_\mathfrak{p}^{-1}((-2\delta)_\mathfrak{p}) &\sum_{x\in
(\mathfrak{r}_{F^+, \mathfrak{p}}/\mathfrak{p}^{e_\phi(\mathfrak{p})})^\times}  
\phi_\mathfrak{p}^{-1}
 (x\alpha_\mathfrak{p}^{-1})e_\mathfrak{p}(-x\alpha_\mathfrak{p}^{-1}) \\
&= \phi(z_{\tilde{d}}) \sum_{x\in (\mathfrak{r}_{F^+}/\mathfrak{C}(\phi))^\times}
 \phi(x\alpha^{-1}\mathfrak{r}_{F^+}) \exp(2\pi \sqrt{-1} \mathrm{Tr}_{F^+/\mathbb{Q}}(x\alpha^{-1})),
\end{align*}
and the last term is no other than the product of $\phi(z_{\tilde{d}})$ and the Gaussian sum $G(\phi)$ defined as (\ref{eq:gauss}) under the isomorphism
\begin{align*}
(\mathfrak{C}(\phi)^{-1}/\mathfrak{r}_{F^+})^\times \xrightarrow{\sim}
 (\mathfrak{r}_{F^+}/\mathfrak{C}(\phi))^\times; \, x\mapsto \alpha x.
\end{align*}
Consequently we obtain 
\begin{align} \label{eq:epsilon_factor}
W_p(\eta\lvert \cdot\rvert_{\mathbb{A}_F}^j\phi^{-1})=\prod_{\mathfrak{P}\in \Sigma_p}W_\mathfrak{P}(\eta \lvert \cdot \rvert_{\mathbb{A}_F}^j \phi^{-1})=\phi(z_{\tilde{d}})G(\phi)\prod_{\mathfrak{p} \mid
 p\mathfrak{r}_{F^+}, \mathfrak{p} \mid \mathfrak{C}(\phi)}
 A_\mathfrak{p}(\vartheta(\eta)^{p\text{-st}}, \phi, j).
\end{align}
\end{component}
  
\begin{component}[$\blacktriangleright$ Other coefficients]
--- One calculates as 
\begin{equation} \label{eq:others}
\begin{aligned}
(\mathfrak{r}_F^\times : \mathfrak{r}_{F^+}^\times)&
 \frac{(-1)^{w_jd}(2\pi)^{|r_j|}
 \Gamma_\Sigma(w_j\mathsf{t}+\mathsf{r}_j)}{\sqrt{|D_{F^+}|}\mathrm{Im}(2\delta)^{\mathsf{r}_j}} \\
& = (\mathfrak{r}_F^\times :
 \mathfrak{r}_{F^+}^\times)\frac{(-1)^{2j-[\kappa_\mu]} (-2\pi
 \sqrt{-1})^{|\kappa_{\mu,2}-j|}\Gamma(j\mathsf{t}-\kappa_{\mu,1})}{\sqrt{|D_{F^+}|}(-2\delta)^{\kappa_{\mu,2}-j\mathsf{t}}} \\
&= \frac{\Gamma(j\mathsf{t}-\kappa_{\mu,1})}{(-2\pi\sqrt{-1})^{jd}}
 \cdot \frac{(-1)^{[\kappa_\mu]}(\mathfrak{r}_F^\times :
 \mathfrak{r}_{F^+}^\times)(-2\pi
 \sqrt{-1})^{|\kappa_{\mu,2}|}(-2\delta)^{j\mathsf{t}}}{\sqrt{|D_{F^+}|}(-2\delta)^{\kappa_{\mu,2}}}.
\end{aligned}
\end{equation}
\end{component}

Combining all the equations (\ref{eq:extra}),
  (\ref{eq:complex_period}),
  (\ref{eq:L-value}),(\ref{eq:epsilon_factor}) and 
(\ref{eq:others}), we obtain the desired interpolation formula
  (\ref{eq:interp_CM_special}) of the $p$-adic measure $\mathcal{L}_{p,\mathrm{CM}}^\mathrm{cyc}(\eta)$.
 \end{proof}

As in Section~\ref{sssc:p-adic_L_cusp}, we identify the archimedean part
of $\mathrm{Cl}^+_{F^+}(p^\infty\mathfrak{r}_{F^+})$ with $\{ \pm
1\}^{I_{F^+}}$, and denote by $w_\tau$ for each $\tau$ in $I_{F^+}$ the
element of the archimedean part of $\mathrm{Cl}^+_{F^+}(p^\infty
\mathfrak{r}_{F^+})$ such that only its $\tau$-component equals to $-1$.
Set $e_\tau^{\pm}=(1\pm w_\tau)/2$ and define an idempotent $e_\epsilon$
of $\mathcal{O}[[\mathrm{Cl}^+_{F^+}(p^\infty\mathfrak{r}_{F^+})]]$ by 
\begin{align*}
e_\epsilon=\prod_{\tau \in I_{F^+}} e_\tau^{\epsilon_\tau}
\end{align*}
for each $\epsilon=(\epsilon_\tau)_{\tau \in I_{F^+}}$ in $\{ \pm
1\}^{ I_{F^+}}$; then for a ray class character $\phi \colon
\mathrm{Cl}_{F^+}^+(p^\infty \mathfrak{r}_{F^+})\rightarrow
\mathbb{C}^\times$ of finite order, we have
\begin{align*}  
\phi(e_\epsilon)=\begin{cases}
1 & \text{when $\epsilon$ coincides with $\mathrm{sgn}(\phi)$}, \\
0 & \text{otherwise}.
\end{cases}
\end{align*}
By abuse of notation, we also use the same notation $e_\epsilon$ for
the image of $e_\epsilon$ under the composite map
\begin{align*}
\mathcal{O}[[\mathrm{Cl}^+_{F^+}(p^\infty \mathfrak{r}_{F^+})]]
 \xrightarrow{\mathrm{rec}}
 \mathcal{O}[[\mathrm{Gal}(F^+_{p^\infty\mathfrak{r}_{F^+}}/F^+)]]
 \twoheadrightarrow
 \mathcal{O}[[\mathrm{Gal}(F^+(\mu_{p^\infty})/F^+))]].
\end{align*}
Then, by comparing the interpolation formulae (\ref{eq:interp_cusp}) and
(\ref{eq:interp_CM_special}), we obtain the following equality holds between the two different 
$p$-adic $L$-functions $\mathcal{L}_{p,\mathrm{CM}}^\mathrm{cyc}(\eta)$
and $\mathcal{L}_p^\mathrm{cyc}(\vartheta(\eta))$ in
$\widehat{\mathcal{O}}^\mathrm{ur}[[\mathrm{Gal}(F^+(\mu_{p^\infty})/F^+)]]$:
\begin{align*}
\mathcal{L}_{p,\mathrm{CM}}^\mathrm{cyc}(\eta)=\sum_{\epsilon \in \{\pm 1\}^{I_{F^+}}} \frac{\Gamma((\kappa_{\mu,1}^{\mathrm{max}}+1)\mathsf{t}-\kappa_{\mu,1})
 C_{\vartheta(\eta), \infty}^{\epsilon}}{(-\widetilde{\Omega}_{\mathrm{CM},\infty})^{\kappa_{\mu,2}}(-\Omega_{\mathrm{CM},
 \infty})^{-\kappa_{\mu,1}}} e_\epsilon \mathcal{L}_p^\mathrm{cyc}(\vartheta(\eta)).
\end{align*}

 In particular we obtain the main result of the analytic part:
 
\begin{cor}[Theorem~\ref{Thm:L-func}] \label{cor:compare_p-adic_L}
The $p$-adic measure $\mathcal{L}_{p,\mathrm{CM}}^\mathrm{cyc}(\eta)$ is 
a nonzero
constant multiple of the cyclotomic $p$-adic zeta function
$\mathcal{L}_p^\mathrm{cyc}(\vartheta(\eta))$ associated to $\vartheta(\eta)$
in each component
of the semilocal Iwasawa algebra
$\widehat{\mathcal{O}}^\mathrm{ur}[[\mathrm{Gal}(F^+(\mu_{p^\infty})/F^+)]]$, 
and each of the two $p$-adic $L$-functions generates the same ideal in
 $\widehat{\mathcal{O}}^\mathrm{ur}[[\mathrm{Gal}(F^+(\mu_{p^\infty})/F^+)]]\otimes_{\mathbb{Z}_p}
 \mathbb{Q}_p$.
\end{cor}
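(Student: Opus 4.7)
The proof is essentially immediate from the identity
\begin{align*}
\mathcal{L}_{p,\mathrm{CM}}^\mathrm{cyc}(\eta)=\sum_{\epsilon \in \{\pm 1\}^{I_{F^+}}} c_\epsilon \, e_\epsilon \, \mathcal{L}_p^\mathrm{cyc}(\vartheta(\eta)),\qquad c_\epsilon:=\frac{\Gamma((\kappa_{\mu,1}^{\mathrm{max}}+1)\mathsf{t}-\kappa_{\mu,1})\,C^\epsilon_{\vartheta(\eta),\infty}}{(-\widetilde{\Omega}_{\mathrm{CM},\infty})^{\kappa_{\mu,2}}(-\Omega_{\mathrm{CM},\infty})^{-\kappa_{\mu,1}}},
\end{align*}
derived immediately before the corollary statement by matching the two interpolation formulae (those of Theorem~\ref{thm:Hilb_L-func} and Proposition~\ref{prop:sp}) at every arithmetic specialisation $\chi_{p,\mathrm{cyc}}^j\phi$. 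The plan is to (i) decompose the semilocal Iwasawa algebra componentwise by means of the orthogonal idempotents $e_\epsilon$, and then (ii) verify that each scalar $c_\epsilon$ is nonzero.

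For step (i), the family $\{e_\epsilon\}_{\epsilon\in\{\pm 1\}^{I_{F^+}}}$ is an orthogonal family of idempotents in $\widehat{\mathcal{O}}^\mathrm{ur}[[\mathrm{Gal}(F^+(\mu_{p^\infty})/F^+)]]$, so multiplying the displayed identity by $e_\epsilon$ gives
\begin{align*}
e_\epsilon \mathcal{L}_{p,\mathrm{CM}}^\mathrm{cyc}(\eta) = c_\epsilon \cdot e_\epsilon \mathcal{L}_p^\mathrm{cyc}(\vartheta(\eta)).
\end{align*}
The semilocal Iwasawa algebra decomposes as a finite product of local components indexed by the characters of the finite torsion subgroup of $\mathrm{Gal}(F^+(\mu_{p^\infty})/F^+)$; since each component is a complete local ring, any given $e_\epsilon$ must act either as $0$ or as $1$ there. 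Hence in every component the two $p$\nobreakdash-adic $L$-functions are proportional, the scaling factor being the unique $c_\epsilon$ associated to that component (or a sum of them if several signatures coincide on the relevant quotient).

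For step (ii), the three factors entering $c_\epsilon$ are all nonzero: the Gamma value $\Gamma((\kappa_{\mu,1}^{\mathrm{max}}+1)\mathsf{t}-\kappa_{\mu,1})$ is a product of Gamma evaluations at positive integers; the complex and modified complex CM periods $\Omega_{\mathrm{CM},\infty}$ and $\widetilde{\Omega}_{\mathrm{CM},\infty}$ are units in $(F\otimes_{\mathbb{Q}}\mathbb{C})^\times$ by construction (see Section~\ref{sssc:Kperiod}); and the $p$-optimal complex period $C^\epsilon_{\vartheta(\eta),\infty}$ is a nonzero element of $\mathbb{C}^\times/(\mathfrak{r}'_{f,(p)})^\times$ in view of Definition~\ref{def:complex_period}, because the projection of the Eichler--Shimura class $[\tilde{f}]$ onto every signed one-dimensional eigenspace is nontrivial. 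This yields the first claim. For the ideal-theoretic conclusion, interpret each $c_\epsilon$ as a $p$-adic number via the fixed embedding $\iota_p$: after tensoring with $\mathbb{Q}_p$ the nonzero scalars $c_\epsilon$ become units in the coefficient field, so the componentwise proportionality promotes to the equality of ideals $(\mathcal{L}_{p,\mathrm{CM}}^\mathrm{cyc}(\eta))=(\mathcal{L}_p^\mathrm{cyc}(\vartheta(\eta)))$ in $\widehat{\mathcal{O}}^\mathrm{ur}[[\mathrm{Gal}(F^+(\mu_{p^\infty})/F^+)]]\otimes_{\mathbb{Z}_p}\mathbb{Q}_p$. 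No genuine obstacle arises at this stage; all the substantive labour—the interpolation comparison, the matching of Euler factors at $p$, and the bookkeeping of extra factors and Gaussian sums—has already been carried out in Proposition~\ref{prop:sp}, so the deduction of the corollary is formal. The only point that cannot be settled without further input is whether the $c_\epsilon$ are in fact $p$-adic units, which is precisely the content of Conjecture~\ref{conj:periods}.
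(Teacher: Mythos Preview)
Your argument is correct and matches the paper's treatment, which simply records the corollary as an immediate consequence of the displayed identity without further proof. One small clarification: your hedge about the scaling factor possibly being ``a sum of them'' is unnecessary and, if taken literally, would leave a gap (a sum of nonzero constants need not be nonzero)---but in fact the images of the $e_\epsilon$ in $\widehat{\mathcal{O}}^\mathrm{ur}[[\mathrm{Gal}(F^+(\mu_{p^\infty})/F^+)]]$ remain pairwise orthogonal and sum to $1$, so exactly one of them equals $1$ in each local component; concretely, every $w_\tau$ maps to the unique order-two element of $\mathrm{Gal}(F^+(\mu_{p^\infty})/F^+)$, so any $e_\epsilon$ with mixed signs vanishes there and the componentwise scaling factor is always a single $c_\epsilon$.
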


It is widely believed that Corollary~\ref{cor:compare_p-adic_L} holds
even in
$\widehat{\mathcal{O}}^\mathrm{ur}[[\mathrm{Gal}(F^+(\mu_{p^\infty})/F^+)]]$
(or in other words, $\mathcal{L}_{p,\mathrm{CM}}^\mathrm{cyc}(\eta)$ and
$\mathcal{L}_p^\mathrm{cyc}(\vartheta(\eta))$ have ``the same $\mu$-invariants''),
and this speculation leads us to make a conjecture on the ratio of
two complex periods constructed in completely different manners.

\begin{conj} \label{conj:periods}
The ratio of the complex periods
\begin{align*}
\frac{\Gamma((\kappa_{\mu,1}^{\mathrm{max}}+1)\mathsf{t}-\kappa_{\mu,1})C_{\vartheta(\eta),
 \infty}^\epsilon}{(-\widetilde{\Omega}_{\mathrm{CM},
 \infty})^{\kappa_{\mu,2}}(-\Omega_{\mathrm{CM},\infty})^{-\kappa_{\mu,1}}}
\end{align*}
is a $p$-adic unit for an arbitrary element $\epsilon$ in $\{ \pm
 1\}^{I_{F^+}}$ with respect to the fixed embedding $\iota_p \colon
 \overline{\mathbb{Q}}\hookrightarrow \overline{\mathbb{Q}}_p$.
\end{conj}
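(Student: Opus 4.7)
The plan is to realize both period normalisations inside a single motivic framework and then invoke $p$-integral comparison theorems. The starting observation is that, as made precise in Appendix~\ref{app:cm}, the Galois representation $V_{\vartheta(\eta)}$ is the induction from $G_F$ to $G_{F^+}$ of the character $\eta^{\mathrm{gal}}$. Motivically this means the Blasius--Rogawski motive $M(\vartheta(\eta))_{/F^+}$ should coincide with $\mathrm{Res}_{F/F^+}M(\eta)$ up to Tate twist and isogeny, where $M(\eta)_{/F}$ is the rank-one Hecke-character motive attached to $\eta$. Consequently, Deligne's period $c^{\epsilon}(M(\vartheta(\eta))(j))$ for critical $j$ decomposes as an explicit monomial in Katz's complex CM periods $\Omega_{\mathrm{CM},\infty,\sigma}$ and $\widetilde{\Omega}_{\mathrm{CM},\infty,\sigma}$; this is a form of Shimura's period relation for CM abelian varieties, established in greater generality by Blasius and by Harris.

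Next I would identify the numerator $\Gamma((\kappa_{\mu,1}^{\mathrm{max}}+1)\mathsf{t}-\kappa_{\mu,1})\,C_{\vartheta(\eta),\infty}^{\epsilon}$ with Deligne's period $c^{\epsilon}(M(\widetilde{\vartheta(\eta)}))$ up to a $p$-adic unit. This would follow from showing that the Eichler--Shimura class $[\widetilde{\vartheta(\eta)}]^{\epsilon}$ appearing in Definition~\ref{def:complex_period} is a $p$-integral generator of the rank-one $\mathfrak{r}'_{f,(p)}$-module of signature $\epsilon$ in $H^d_c(Y_1(\mathfrak{N})(\mathbb{C}),\mathscr{L}(\tilde\kappa;\mathfrak{r}'_{f,(p)}))[\lambda_{\widetilde{\vartheta(\eta)}}]^{\epsilon}$, and that the $p$-adic comparison map between this Betti lattice and the appropriate piece of the de~Rham realisation of $M(\widetilde{\vartheta(\eta)})$ is integral under our hypotheses. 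Here the unramifiedness assumption (unr$_{F^+}$) ensures good reduction of the Hilbert modular variety $Y_1(\mathfrak{N})_{/\mathbb{Q}}$ at $p$, and the near $p$-ordinarity of $\vartheta(\eta)^{p\text{-st}}$ permits the use of Hida's $\Lambda$-adic integral Eichler--Shimura theory together with Faltings's crystalline--Betti comparison.

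Combining these two steps, the ratio asserted in Conjecture~\ref{conj:periods} becomes the ratio of two $p$-integrally normalised incarnations of the same Deligne period: one arising from the modular-symbol normalisation on the Hilbert modular variety, the other from Katz's CM-abelian-variety normalisation. Both sides generate the same $\mathfrak{r}'_{f,(p)}$-submodule of $\mathbb{C}^{\times}$, so the ratio is a $p$-adic unit.

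The main obstacle is the second step: promoting the known rationality statement (available in the elliptic case through work of Hida and in the relevant critical form through Prasanna--Ochiai, as cited after Proposition~\ref{prop:sp}) to a $p$-integral statement for general totally real $F^+$. Concretely one needs (i) a $p$-integral Eichler--Shimura isomorphism on $Y_1(\mathfrak{N})_{/\mathfrak{r}'_{f,(p)}}$ with coefficients in $\mathscr{L}(\tilde\kappa;\mathfrak{r}'_{f,(p)})$, (ii) compatibility of this isomorphism with restriction to the CM cycles defined by $(X(\mathfrak{r}_F),\lambda_\delta,i(\mathfrak{r}_F))$, and (iii) an integral tracking of the gamma and Gauss-sum factors introduced by the critical twist $\widetilde{\vartheta(\eta)}$ and by the extra factor $\mathrm{Ex}(F,\delta)$. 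None of (i)--(iii) are currently available in the Hilbert setting at the level of precision the conjecture demands, which is precisely why the authors leave the statement as a conjecture rather than a theorem.
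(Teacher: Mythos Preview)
The statement you are addressing is a \emph{conjecture} in the paper, not a theorem; the paper offers no proof. You recognise this explicitly in your final paragraph, where you acknowledge that the ingredients (i)--(iii) you would need are not available in the Hilbert setting. So your proposal is not a proof but a heuristic programme, and you are honest about that.

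It is worth contrasting your outlined strategy with the remark the authors give immediately after Conjecture~\ref{conj:periods}. Their suggested route, following the elliptic-modular case treated by Ochiai--Prasanna, is quite different from yours: it proceeds via nonvanishing modulo $p$ of special $L$-values (Stevens, Ash--Stevens) together with the modular parametrisation of a CM elliptic curve, rather than through motivic period comparisons and a $p$-integral Eichler--Shimura isomorphism. The authors identify the obstructions to extending \emph{their} approach as the lack of a Hilbert-modular analogue of the nonvanishing-mod-$p$ results and of modular parametrisations for Hilbert--Blumenthal varieties. Your approach via Deligne periods and integral comparison theorems is a genuinely different line of attack; it trades those analytic and geometric obstructions for the equally open problems of establishing $p$-integral control on the Eichler--Shimura map in higher dimension and its compatibility with CM cycles. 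Neither route is currently complete, and you correctly flag this.

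One small caution on your first step: the identification of the modular-symbol period $C_{\vartheta(\eta),\infty}^{\epsilon}$ with a Deligne period up to a $p$-adic unit is itself a nontrivial conjecture (a $p$-integral refinement of the period relations of Blasius and Harris), not something that follows formally from the motivic isomorphism $M(\vartheta(\eta))\cong \mathrm{Res}_{F/F^+}M(\eta)$. So even step one of your programme already contains the essential difficulty, not just step two.
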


\begin{rem}
In the case where $F^+$ is the rational number field $\mathbb{Q}$ (elliptic modular cases), the second-named author and Prasanna have obtained a partial result to this conjecture 
(see \cite[Theorem~6.1]{OP}) using the nonvanishing modulo $p$ of special values of the
	 $L$-functions associated to elliptic cuspform (due to 
Stevens \cite[Theorem~2.1]{Stevens} 
	 and Ash-Stevens \cite{AS}) and the modular parametrisation of 
an elliptic curve with complex multiplication. 
However,the nonvanishing modulo $p$ of special values of the $L$-functions 
has not been generalised to general Hilbert modular cuspforms
yet, and there seems to be no generalisation of the theory of modular
parametrisation to Hilbert-Blumenthal modular varieties. 
Hence it seems difficult to generalise the proof of \cite[Theorem~6.1]{OP} 
to general Hilbert modular cases at the present.
\end{rem}

%
%
\section{The algebraic side} \label{sc:algebraic_side}
%
%

We establish algebraic parts of our main results and apply them to the Iwasawa main conjecture for Hilbert modular cuspforms with complex multiplication
in this section. 
We first introduce the Selmer groups associated to 
nearly $p$-ordinary Hilbert cuspforms with complex multiplication, 
and compare them with the Iwasawa module obtained as a certain 
Galois group
$\mathrm{Gal}(M_{\Sigma_p}/\widetilde{K}^\mathrm{CM}_\infty)$ 
(Section~\ref{ssc:defSel}). 
Then we verify the {\em $($exact$)$ control theorem} (Theorem~\ref{thm:control}) 
which describes the behaviour of the (multi-variable) Selmer groups 
under specialisation procedures (Section~\ref{ssc:control}). 
We finally discuss the {\em almost divisibility} of the strict Selmer groups 
and basechange compatibility of the characteristic ideals of 
their Pontrjagin duals (Sections~\ref{ssc:greenberg} and 
\ref{ssc:specialisation}). As an application, we discuss the validity of the cyclotomic Iwasawa main conjecture for Hilbert modular cuspforms with
complex multiplication (Section~\ref{sc:specialisation_IMC}).

\subsection{Selmer groups} \label{ssc:defSel}
 
This subsection is devoted to the definition of various 
Selmer groups and the comparison among them. 
In Section~\ref{sssc:cycSel} we first recall the general definition 
of Selmer groups $\mathrm{Sel}_\mathcal{A}$ 
associated to deformations of Galois representations 
after Greenberg (Definition~\ref{def:Selmer_group}), and then 
define the Selmer group
$\mathrm{Sel}_{\mathcal{A}_f^\mathrm{cyc}}$ 
by applying this general recipe to the cyclotomic deformation $\mathcal{A}^\mathrm{cyc}_f$ of the Galois representation $A_f^\mathrm{cyc}$
associated to a nearly $p$-ordinary Hilbert eigencuspform $f$. 
When the Hilbert cuspform $f$ has complex multiplication, 
or in other words, when $f$ is represented as (the $p$-stabilisation of)
the theta lift $\vartheta(\eta)$ of a gr\"o{\ss}encharacter $\eta$ of type $(A_0)$ defined on 
a certain totally imaginary quadratic extension $F$ of $F^+$, 
we identify $\mathrm{Sel}_{\mathcal{A}_f^\mathrm{cyc}}$ 
with the Selmer group $\mathrm{Sel}_{\mathcal{A}^\mathrm{cyc}_\eta}^\Sigma$ 
associated to the cyclotomic deformation of $\eta$ 
defined with respect to a fixed CM type $\Sigma$ of $F$ (see
Lemma~\ref{lem:Shapiro}). 
In Section~\ref{sssc:comparison}, we introduce the (multi-variable)
Selmer group $\mathrm{Sel}_{\mathcal{A}^\mathrm{CM}_\eta}^\Sigma$ associated to the 
deformation of $\eta$ along the field extension
$\widetilde{F}_\infty/F$, and prove in Proposition~\ref{prop:iw_mod} that the characteristic ideal 
of the Pontrjagin dual of $\mathrm{Sel}_{\mathcal{A}^\mathrm{CM}_\eta}^\Sigma$ 
coincides with (a certain twist of) the characteristic 
ideal of the $\psi$\nobreakdash-isotypic quotient $X_{\Sigma_p, (\psi)}$ of the 
Iwasawa module $X_{\Sigma_p}$ defined in a classical way 
(here $\psi$ denotes a branch character associated to $\eta$; 
see Lemma~\ref{lem:branch} for details on the branch character $\psi$).

\subsubsection{General definition} \label{sssc:general_Selmer}

We first recall the general notion of Selmer groups 
for deformations of Galois representations, which 
is introduced by Ralph Greenberg \cite[Sections~3 and 4]{gr-mot}. 

 Let $\mathcal{R}$ be a complete, noetherian semilocal ring of
 characteristic $0$, and suppose that the residue field
 $\mathcal{R}/\mathfrak{M}$ of $\mathcal{R}$ is 
 a finite field of characteristic $p$ 
 for each maximal ideal $\mathfrak{M}$ of $\mathcal{R}$. 
Let $\mathsf{K}$ be a number field
 and $\mathcal{T}$ a free $\mathcal{R}$-module of finite rank 
on which the absolute Galois group $G_\mathsf{K}$ of $\mathsf{K}$ 
acts continuously and $\mathcal{R}$-linearly. 
 We impose the following constraint on the $\mathcal{R}$-linear
 $G_\mathsf{K}$-representation $\mathcal{T}$:
\begin{quotation}
 the Galois action on $\mathcal{T}$ is unramified outside 
a finite set $S$ of places of $\mathsf{K}$ which contains
all the places lying above $p$ and all the archimedean places.
\end{quotation}
Then the action of $G_\mathsf{K}$ on $\mathcal{T}$ factors through 
the Galois group $\mathrm{Gal}(\mathsf{K}_S/\mathsf{K})$ of 
the maximal Galois extension $\mathsf{K}_S$ over $\mathsf{K}$
which is unramified outside the places of $S$.
For each prime ideal $\mathfrak{p}$ of $\mathsf{K}$ lying above $p$, 
we specify an $\mathcal{R}$-direct summand
$\mathrm{Fil}^+_{\mathfrak{p}} \mathcal{T}$ of $\mathcal{T}$ which 
is stable under the action of the decomposition group 
$D_{\mathfrak{p}}$ of $G_\mathsf{K}$ at $\mathfrak{p}$. 
 In many cases, there exists a canonical (and unique) choice of 
such a direct summand  $\mathrm{Fil}^+_{\mathfrak{p}} \mathcal{T}$ 
for each $\mathfrak{p}$. 
If the Galois representation $\mathcal{T}$ is 
{\em ordinary} (or {\em nearly ordinary}) at each place $\mathfrak{p}$
 above $p$, for example, there exists a canonical 
direct summand $\mathrm{Fil}_\mathfrak{p}^+ \mathcal{T}$ of
 $\mathcal{T}$ induced from what is called 
 the {\em ordinary filtration} at each $\mathfrak{p}$. 
In Section~\ref{sssc:hyp} we shall introduce more
 general notion of local conditions concerning the definition of 
Selmer groups. We denote the Pontrjagin dual
 $\mathrm{Hom}_\mathrm{cts}(\mathcal{R}, \mathbb{Q}_p/\mathbb{Z}_p)$ of
 $\mathcal{R}$ by $\mathcal{R}^{\vee}$. Now consider 
 the discrete $\mathcal{R}$-module $\mathcal{A}$ defined 
as $\mathcal{A}=\mathcal{T}\otimes_\mathcal{R} \mathcal{R}^{\vee}$. 
The absolute Galois group $G_\mathsf{K}$ acts on $\mathcal{A}$ via the first factor 
and we regard $\mathcal{A}$ as a discrete
 $\mathcal{R}$-linear Galois representation of
 $\mathrm{Gal}(\mathsf{K}_S/\mathsf{K})$.  
Note that $\mathcal{A}$ is equipped 
with the specified direct summand $\mathrm{Fil}_\mathfrak{p}^+
 \mathcal{A}$ at each place $\mathfrak{p}$ above $p$ which is induced 
from the specification of direct summands of $\mathcal{T}$; 
namely, $\mathrm{Fil}_\mathfrak{p}^+ \mathcal{A}$
 is defined as $\mathrm{Fil}_\mathfrak{p}^+ \mathcal{T}
 \otimes_\mathcal{R} \mathcal{R}^\vee$. 

 \begin{defn}[Greenberg's Selmer group] \label{def:Selmer_group}
The {\em $($Greenberg's$)$ Selmer group} $\mathrm{Sel}_{\mathcal{A}}$ 
associated to $\mathcal{A}$ is defined as the kernel of 
the global-to-local morphism
 \[
 H^1 (\mathsf{K}_S /\mathsf{K} ,\mathcal{A}) \longrightarrow 
 \underset{\substack{\lambda \in S \\ \lambda \nmid p\infty}}{\prod} 
 H^1 (I_{\lambda} ,\mathcal{A})
 \times \underset{\substack{\mathfrak{p}\in S \\ \mathfrak{p} \vert p\mathfrak{r}_\mathsf{K}}}{\prod} 
 H^1 (I_{\mathfrak{p}} ,\mathcal{A}/\mathrm{Fil}^+_{\mathfrak{p}}
  \mathcal{A}) 
\]
induced by the restriction maps of Galois cohomology groups. 
Here $I_v$ denotes the inertia subgroup of the absolute Galois group
  $G_\mathsf{K}$ at each finite place $v$ of $\mathsf{K}$. 
 \end{defn} 

\subsubsection{Selmer groups for cyclotomic deformations} \label{sssc:cycSel}

Let us recall the following theorem which is due to many people 
including Ohta \cite{ohta}, Carayol \cite{carayol},  
 Wiles \cite{wiles},
 Taylor \cite{taylor1} and Blasius and Rogawski \cite{BR}.
 We quote \cite[Theorem 1,2]{wiles} for nearly ordinary situations 
as follows: 

\begin{thm}\label{theorem:construction_of_gal_rep}
Let $f$ be a nearly $p$-ordinary normalised eigencuspform defined on 
a totally real number field $F^+$ satisfying $($unr$)$, which is of 
cohomological weight $\kappa$, level $\mathfrak{N}$ and nebentypus
$\underline{\varepsilon}$. Let $\mathcal{K}$ be 
a finite extension of $\mathbb{Q}_p$ containing the Hecke field
$\mathbb{Q}_f$ of $f$ and $\mathcal{O}$ the ring of integers of
$\mathcal{K}$.
  
Then there exists a 2-dimensional Galois representation 
$V_f$ of $G_{F^+}$ with coefficients in $\mathcal{K}$ with the following properties$:$ 
\begin{enumerate}[label=$(\arabic*)$]
\item 
For every prime ideal $\mathfrak{q}$ which does not divide
$p\mathfrak{N}$, the following equation holds. 
\begin{align*}
\det(1-\mathrm{Frob}_\mathfrak{q} X ;
 V_f)=1-C(\mathfrak{q};f)X+\mathcal{N}\mathfrak{q} \varepsilon_+(\mathfrak{q})X^2
\end{align*}
\item 
For each place 
$\mathfrak{p}$ of $F^+$ lying above $p$, we have a $D_\mathfrak{p}$-stable $\mathcal{K}$-subspace
$\mathrm{Fil}_\mathfrak{p}^+ V_f \subset V_f$ of dimension one on which the action of
$D_\mathfrak{p}$ is unramified. 
\end{enumerate}
\end{thm}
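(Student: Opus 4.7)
The plan is to assemble this statement from the established automorphic-to-Galois machinery, since the paper only quotes it; I would indicate the three main steps — the global construction of $V_f$, the Euler factor identity at good primes, and the extraction of the ordinary line at primes above $p$.

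First, for the global construction, I would split into cases according to the parity of $d=[F^+:\mathbb{Q}]$. When $d$ is odd, I would follow Carayol's route: transfer $f$ via the Jacquet–Langlands correspondence to a quaternionic automorphic form on $B^\times$ for a quaternion algebra $B/F^+$ ramified at every archimedean place save one, realize the associated Shimura curve, and cut out $V_f$ (up to a Tate twist) as the $\lambda_f$-isotypic component of the étale cohomology with coefficients in the relevant local system of weight $\kappa$. When $d$ is even no such Shimura curve is available; the plan is to invoke Taylor's argument, which produces $V_f$ by congruences to weight-two forms after a suitable solvable totally real base change and then descends, and alternatively to appeal to the Blasius–Rogawski motivic construction. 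Irreducibility of $V_f$ — needed both for uniqueness up to isomorphism and for the local analysis below — would be invoked via Taylor's version of Ribet's argument.

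Second, the characteristic polynomial identity at $\mathfrak{q}\nmid p\mathfrak{N}$ would be deduced from the Eichler–Shimura congruence relation at $\mathfrak{q}$ on the integral model of the Shimura variety, which identifies the action of $T(\mathfrak{q})$ with the sum of geometric Frobenius and its Rosati-adjoint on the mod-$\mathfrak{q}$ cohomology; the determinant is then pinned down by studying the action of the center $Z(\mathbb{A}_\mathbb{Q})$ on the cohomology, which produces the expected twist by $\mathcal{N}\mathfrak{q}\,\varepsilon_{+}(\mathfrak{q})$.

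Third, for the ordinary filtration at a place $\mathfrak{p}\mid p$, the key input is near $p$-ordinarity, ensuring that the Hecke polynomial at $\mathfrak{p}$ admits a unique unit root $\alpha_f(\varpi_\mathfrak{p})$. My plan is to invoke Hida's theorem on nearly ordinary Hilbert modular Galois representations (generalizing Mazur–Wiles and Wiles' work on parallel weight): interpolate $f$ into a nearly ordinary Hida family, extract the big Galois representation over the nearly ordinary Hecke algebra, exhibit the $D_\mathfrak{p}$-stable rank-one direct summand coming from the unit-root filtration on the associated $(\varphi,\Gamma)$-module (or, at integral level, on the Barsotti–Tate group arising from the universal abelian scheme), and specialize at $f$. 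The main obstacle I anticipate is the $d$ even case in combination with non-parallel cohomological weight: the absence of a direct geometric realization forces one to patch and to transfer through solvable base change, and the compatibility of this transfer with the ordinary filtration at $\mathfrak{p}$ must be verified separately — this is exactly the subtle point controlled by Hida's ordinary deformation theory.
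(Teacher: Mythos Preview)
The paper does not prove this theorem; it is stated as a quotation of the literature, with the sentence immediately preceding the statement attributing it to Ohta, Carayol, Wiles, Taylor, and Blasius--Rogawski, and pointing specifically to \cite[Theorems~1,~2]{wiles} for the nearly ordinary situation. Your sketch is therefore not competing with any argument in the paper --- you have supplied an outline of the very constructions the paper is citing, and the attribution pattern you describe (Carayol via Shimura curves for $d$ odd, Taylor by congruences and Blasius--Rogawski otherwise, Hida's nearly ordinary deformation for the filtration at $p$) matches the references the paper invokes here and in the companion Theorem-Definition~\ref{thmdef:Galois_rep} and Proposition~\ref{prop:localGalois}.

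One remark worth making: the statement of Theorem~\ref{theorem:construction_of_gal_rep} asserts that $D_\mathfrak{p}$ acts \emph{unramifiedly} on $\mathrm{Fil}^+_\mathfrak{p}V_f$, whereas the paper itself notes after Proposition~\ref{prop:localGalois} that the character $\delta_\mathfrak{p}$ on this line ``is not unramified in general'' when the weight is non-parallel. Your description of the ordinary line via the unit-root eigenvalue $\alpha_f(\varpi_\mathfrak{p})$ is in fact the more precise one, aligning with Proposition~\ref{prop:localGalois} rather than with the (slightly imprecise) formulation of part~(2) in the quoted theorem.
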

The Galois representation $V_f$ of $G_{F^+}$ 
is called the {\em Galois representation associated to $f$}.

We define $S^+$ as a finite set of places of $F^+$ 
consisting of all the archimedean
places and all the finite places dividing $p\mathfrak{N}$. 
Then the action of $G_{F^+}$ on $V_f$ factors through the quotient
$\mathrm{Gal}(F^+_{S^+}/F^+)$ of $G_{F^+}$. 
Let $\Lambda^\mathrm{cyc}_{\mathcal{O}}$ denote the Iwasawa algebra 
$\mathcal{O}[[\mathrm{Gal}(F^+(\mu_{p^\infty})/F^+)]]$ over
$\mathcal{O}$. Note that $\Lambda^\mathrm{cyc}_{\mathcal{O}}$ satisfies
all the conditions which we have imposed on the coefficient ring $\mathcal{R}$ 
of a general Galois representation in \ref{sssc:general_Selmer}. 
For a $G_{F^+}$-stable $\mathcal{O}$-lattice $T_f$ of $V_f$ (that is, 
a $G_{F^+}$-stable $\mathcal{O}$-submodule of $V_f$ satisfying $T_f
\otimes_\mathcal{O} \mathcal{K}=V_f$), set 
\begin{align*}
\mathcal{T}_f^\mathrm{cyc} = T_f \otimes_\mathcal{O}
 \Lambda_{\mathcal{O}}^{\mathrm{cyc},\sharp}
\end{align*}
and let $G_{F^+}$ act on $\mathcal{T}_f^\mathrm{cyc}$ diagonally 
(refer to {\em Notation} in
Introduction on the superscript $\sharp$). The $G_{F^+}$-module 
$\mathcal{T}_f^\mathrm{cyc}$ is called the {\em cyclotomic deformation
of $T_f$}. For each place $\mathfrak{p}$ of $F^+$ lying above $p$, 
we define $\mathrm{Fil}_\mathfrak{p}^+ \mathcal{T}_f^\mathrm{cyc}$ as 
\begin{align*}
\mathrm{Fil}_\mathfrak{p}^+ \mathcal{T}_f^\mathrm{cyc} 
= \mathrm{Fil}_\mathfrak{p}^+ T_f \otimes_\mathcal{O}
 \Lambda_{\mathcal{O}}^{\mathrm{cyc}, \sharp}
\end{align*}
equipped with the diagonal action of $G_{F^+}$ (here we define
$\mathrm{Fil}_\mathfrak{p}^+T_f$ as the intersection of $T_f$ and
$\mathrm{Fil}_\mathfrak{p}^+ V_f$). 

\begin{defn}[Selmer group $\mathrm{Sel}_{\mathcal{A}_f^\mathrm{cyc}}$] \label{def:Selmer_f}
The {\em Selmer group $\mathrm{Sel}_{\mathcal{A}_f^\mathrm{cyc}}$
 associated to the cyclotomic deformation of $f$} is the Selmer group 
defined as in
 Definition~\ref{def:Selmer_group} 
for the discrete $\Lambda^\mathrm{cyc}_{\mathcal{O}}$-linear  
$G_{F^+}$\nobreakdash-representation
$\mathcal{A}_f^\mathrm{cyc}=\mathcal{T}_f^\mathrm{cyc}
 \otimes_{\Lambda^\mathrm{cyc}_{\mathcal{O}}}
 \Lambda_{\mathcal{O}}^{\mathrm{cyc}, \vee}$ .
\end{defn}

Note that $\mathrm{Sel}_{\mathcal{A}_f^\mathrm{cyc}}$ {\em does} depend
on the choice of $G_{F^+}$-stable $\mathcal{O}$-lattices $T_f$. 
When $V_f$ is {\em residually irreducible}, the $G_{F^+}$-stable
$\mathcal{O}$-lattice $T_f$ is uniquely determined up to isomorphisms,
and hence the Selmer group $\mathrm{Sel}_{\mathcal{A}_f^\mathrm{cyc}}$
is also uniquely determined up to isomorphisms (independently of the
choice of $T_f$).

\medskip

\subsubsection{Cyclotomic deformations of Hilbert modular cuspforms with complex multiplication} \label{sssc:cyclo_CM}

From now on let us assume that {\em $f$ is a nearly $p$-ordinary 
$p$-stabilised newform with complex multiplication}. 
Then by definition 
there exist a totally imaginary quadratic extension $F$ of $F^+$ 
satisfying the ordinarity condition (ord$_{F/F^+}$) 
and a gr\"o{\ss}encharacter $\eta$ of type $(A_0)$ on $F$ such
that $f$ is represented as the $p$-stabilisation $\vartheta(\eta)^{p\text{-st}}$ of the theta lift 
$\vartheta(\eta)$ of $\eta$ 
(see Proposition~\ref{prop:theta}). 
Since Hecke eigenvalues of $\vartheta(\eta)^{p\text{-st}}$ 
coincide with those of $\vartheta(\eta)$ away 
from prime ideals lying above $p$, the Galois
representation  associated to $\vartheta(\eta)^{p\text{-st}}$ 
is isomorphic to the Galois representation $V_{\vartheta(\eta)}$ 
associated to $\vartheta(\eta)$ by virtue of \v{C}ebotarev's density theorem.
We shall recall in Appendix~\ref{app:cm} that 
the Galois representation $V_{\vartheta(\eta)}$ associated to 
$\vartheta(\eta)$ is isomorphic to the induced representation 
$\mathrm{Ind}^{F^+}_F \mathcal{K}(\eta^\mathrm{gal})$ of
the one-dimensional $G_F$-representation $\mathcal{K}(\eta^\mathrm{gal})$, 
and hence there is a canonical $G_{F^+}$-stable 
$\mathcal{O}$\nobreakdash-lattice of $V_{\vartheta(\eta)}$: namely 
$\mathrm{Ind}^{F^+}_F \mathcal{O}(\eta^\mathrm{gal})$.
We thus adopt $\mathrm{Ind}^{F^+}_F \mathcal{O}(\eta^\mathrm{gal})$ 
as the lattice $T_{\vartheta(\eta)}$ used in the construction of the Selmer
group. 
The ordinary filtration $\mathrm{Fil}^+_{\mathfrak{p}} T_{\vartheta(\eta)}$
at $\mathfrak{p}$ above $p$ is then identified 
with $\mathcal{O}(\eta^\mathrm{gal}|_{D_\mathfrak{P}})$ where
$\mathfrak{P}$ is a unique  element of $\Sigma_p$ which lies above
$\mathfrak{p}$. 
 
We here introduce another Selmer group $\mathrm{Sel}^\Sigma_{\mathcal{A}^\mathrm{cyc}_\eta}$ associated to a gr\"o{\ss}encharacter $\eta$ of the CM number field $F$. 
The restriction of the action of $\mathrm{Gal}(F(\mu_{p^\infty})/F)$ on $F(\mu_{p^\infty})$ to
$F^+(\mu_{p^\infty})$ induces an isomorphism between 
$\mathrm{Gal}(F(\mu_{p^\infty})/F)$ and
$\mathrm{Gal}(F^+(\mu_{p^\infty})/F^+)$, which enables us to identify the Iwasawa algebra ${\mathcal{O}}[[\mathrm{Gal}(F(\mu_{p^\infty})/F)]]$ of $\mathrm{Gal}(F(\mu_{p\infty})/F)$ with $\Lambda_\mathcal{O}^\mathrm{cyc}:={\mathcal{O}}[[\mathrm{Gal}(F^+(\mu_{p^\infty})/F^+)]]$ in a canonical manner. 
Now consider the {\em cyclotomic deformation of $\eta^\mathrm{gal}$} 
\begin{align*}
\mathcal{T}^{\mathrm{cyc}}_\eta=\mathcal{O}(\eta^\mathrm{gal}) \otimes_{\mathcal{O}}
\Lambda_\mathcal{O}^{\mathrm{cyc}, \sharp}
\end{align*}
equipped with the $D_\mathcal{P}$-stable filtration
$\mathrm{Fil}_\mathcal{P}^+ \mathcal{T}_\eta^\mathrm{cyc}$ for each
prime ideal $\mathcal{P}$ of $F$ lying above $p$ defined by 
\begin{align}\label{definition:f_pT_eta}
\mathrm{Fil}_\mathcal{P}^+ \mathcal{T}_\eta^\mathrm{cyc}= 
\begin{cases}
\mathcal{T}_\eta^\mathrm{cyc} & \text{if $\mathcal{P}$ is contained in
 $\Sigma_p$}, \\
0 & \text{otherwise}.
\end{cases}
\end{align}
As usual we let $G_F$ act diagonally on
$\mathcal{T}_\eta^\mathrm{cyc}$. Let $S$ denote the set of places of
$F$ lying above those of $F^+$ in $S^+$. Then one readily observes that 
the diagonal action of $G_F$ on $\mathcal{T}_\eta^{\mathrm{cyc}}$
factors through the quotient $\mathrm{Gal}(F_S/F)$ of $G_F$.

\begin{defn}[Selmer group
 $\mathrm{Sel}^\Sigma_{\mathcal{A}^\mathrm{cyc}_\eta}$] \label{def:Selmer_eta}
The {\em Selmer group $\mathrm{Sel}^\Sigma_{\mathcal{A}^\mathrm{cyc}_\eta}$
 associated to the cyclotomic deformation $\mathcal{A}^\mathrm{cyc}_\eta$ of $\eta^\mathrm{gal}$} (with
 respect to the $p$-ordinary CM type $\Sigma$) is the
 Selmer group in the sense of Definition~\ref{def:Selmer_group} 
 constructed for the discrete $G_F$-representation 
 $\mathcal{A}_\eta^\mathrm{cyc}$ defined as $\mathcal{T}_\eta^\mathrm{cyc}\otimes_{\Lambda_\mathcal{O}^\mathrm{cyc}}\Lambda_\mathcal{O}^{\mathrm{cyc},\vee}$. 
\end{defn}

Then one easily sees that the Selmer groups defined 
in Definitions~\ref{def:Selmer_f} and \ref{def:Selmer_eta} coincide; 
namely,

\begin{lem} \label{lem:Shapiro}
The Selmer group
 $\mathrm{Sel}_{\mathcal{A}_{\vartheta(\eta)}^\mathrm{cyc}}$ associated
 to the cyclotomic deformation  $\mathcal{A}_\eta^\mathrm{cyc}$ of $\vartheta(\eta)$ is
 isomorphic to the Selmer group
 $\mathrm{Sel}^\Sigma_{\mathcal{A}^\mathrm{cyc}_\eta}$ as a
 $\Lambda^\mathrm{cyc}_{\mathcal{O}}$-module.
\end{lem}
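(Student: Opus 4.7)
The plan is to reduce the statement to a Shapiro-type identification of Galois cohomology together with a careful matching of local conditions. The key input, which will be verified later in Appendix~\ref{app:cm}, is that $V_{\vartheta(\eta)} \cong \mathrm{Ind}_F^{F^+}\mathcal{K}(\eta^\mathrm{gal})$, and the $G_{F^+}$-stable $\mathcal{O}$-lattice $T_{\vartheta(\eta)}$ is taken as $\mathrm{Ind}_F^{F^+}\mathcal{O}(\eta^\mathrm{gal})$. Using the canonical identification $\mathrm{Gal}(F(\mu_{p^\infty})/F) \xrightarrow{\sim} \mathrm{Gal}(F^+(\mu_{p^\infty})/F^+)$ which identifies $\Lambda_\mathcal{O}^{\mathrm{cyc},\sharp}$ on the two sides, the projection formula (or the definition of induced modules) yields a canonical isomorphism of $G_{F^+}$-modules
$$\mathcal{T}_{\vartheta(\eta)}^\mathrm{cyc} = T_{\vartheta(\eta)} \otimes_\mathcal{O} \Lambda_\mathcal{O}^{\mathrm{cyc},\sharp} \;\cong\; \mathrm{Ind}_F^{F^+}\bigl(\mathcal{O}(\eta^\mathrm{gal}) \otimes_\mathcal{O} \Lambda_\mathcal{O}^{\mathrm{cyc},\sharp}\bigr) = \mathrm{Ind}_F^{F^+}\mathcal{T}_\eta^\mathrm{cyc},$$
and consequently $\mathcal{A}_{\vartheta(\eta)}^\mathrm{cyc} \cong \mathrm{Ind}_F^{F^+}\mathcal{A}_\eta^\mathrm{cyc}$ as discrete $G_{F^+}$-modules. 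Shapiro's lemma applied globally (and locally at each place) then produces canonical $\Lambda_\mathcal{O}^\mathrm{cyc}$-linear isomorphisms
$$H^1(F^+_{S^+}/F^+, \mathcal{A}_{\vartheta(\eta)}^\mathrm{cyc}) \;\cong\; H^1(F_S/F, \mathcal{A}_\eta^\mathrm{cyc})$$
and similarly for local cohomologies.

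It then remains to check that the local conditions defining $\mathrm{Sel}_{\mathcal{A}_{\vartheta(\eta)}^\mathrm{cyc}}$ and $\mathrm{Sel}^\Sigma_{\mathcal{A}_\eta^\mathrm{cyc}}$ correspond under these Shapiro isomorphisms. For a finite place $\lambda \in S^+$ not lying above $p$, the inertia cohomologies match: the places of $F$ above $\lambda$ collectively contribute via Shapiro, and the unramifiedness conditions on the two sides coincide. The essential point is therefore at the places above $p$. Under the ordinarity assumption (ord$_{F/F^+}$) each $\mathfrak{p} \mid p\mathfrak{r}_{F^+}$ splits as $\mathfrak{p} = \mathcal{P}\mathcal{P}^c$ with $\mathcal{P}\in \Sigma_p$ and $\mathcal{P}^c \in \Sigma_p^c$, and by the construction recalled just before Definition~\ref{def:Selmer_eta} the ordinary filtration satisfies $\mathrm{Fil}_\mathfrak{p}^+ T_{\vartheta(\eta)} = \mathcal{O}(\eta^\mathrm{gal}|_{D_\mathcal{P}})$, so that the quotient $T_{\vartheta(\eta)}/\mathrm{Fil}_\mathfrak{p}^+T_{\vartheta(\eta)}$ is isomorphic to $\mathcal{O}(\eta^\mathrm{gal}|_{D_{\mathcal{P}^c}})$ as a $D_\mathfrak{p}$-module. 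The local Shapiro isomorphism at $\mathfrak{p}$ then gives
$$H^1\bigl(I_\mathfrak{p},\, \mathcal{A}_{\vartheta(\eta)}^\mathrm{cyc}/\mathrm{Fil}_\mathfrak{p}^+\mathcal{A}_{\vartheta(\eta)}^\mathrm{cyc}\bigr) \;\cong\; H^1\bigl(I_{\mathcal{P}^c},\, \mathcal{A}_\eta^\mathrm{cyc}\bigr),$$
which is precisely the local condition imposed on the $F$-side at $\mathcal{P}^c \in \Sigma_p^c$ by the choice (\ref{definition:f_pT_eta}); the dual place $\mathcal{P}\in\Sigma_p$ contributes no condition because $\mathrm{Fil}_\mathcal{P}^+\mathcal{A}_\eta^\mathrm{cyc} = \mathcal{A}_\eta^\mathrm{cyc}$.

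Combining the global Shapiro isomorphism with the matching of local conditions, the two kernels (and hence the two Selmer groups) are identified as $\Lambda_\mathcal{O}^\mathrm{cyc}$-submodules of $H^1(F_S/F, \mathcal{A}_\eta^\mathrm{cyc})$. The only real content here is the bookkeeping at the $p$-adic places: one must trace carefully how the $p$-ordinary CM type $\Sigma$ singles out, for each $\mathfrak{p} \mid p\mathfrak{r}_{F^+}$, the direction in the induced representation which underlies the ordinary filtration, and confirm that the complementary direction produces exactly the Shapiro image of the $\mathcal{P}^c$-local cohomology. Once this is in place, the isomorphism of Selmer groups follows formally.
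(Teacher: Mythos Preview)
Your proposal is correct and follows essentially the same approach as the paper: both identify $\mathcal{A}_{\vartheta(\eta)}^\mathrm{cyc}$ with $\mathrm{Ind}_F^{F^+}\mathcal{A}_\eta^\mathrm{cyc}$, apply Shapiro's lemma globally and locally, and then verify that the local conditions match, with the only nontrivial check being at places above $p$ where the ordinary filtration $\mathrm{Fil}_\mathfrak{p}^+ T_{\vartheta(\eta)}=\mathcal{O}(\eta^\mathrm{gal}|_{D_\mathfrak{P}})$ forces the quotient condition to correspond exactly to the $\Sigma_p^c$-condition on the $F$-side. The paper's proof cites \cite[Proposition~B.2]{Rubin} for the generalised Shapiro lemma and writes out the local inertia isomorphism at $\mathfrak{l}\nmid p\infty$ as $H^1(I_\mathfrak{l},\mathcal{A}_{\vartheta(\eta)}^\mathrm{cyc})\cong\prod_{\mathfrak{L}\mid\mathfrak{l}} H^1(I_\mathfrak{L},\mathcal{A}_\eta^\mathrm{cyc})^{[\mathbb{F}_\mathfrak{L}:\mathbb{F}_\mathfrak{l}]}$, but otherwise the argument is the same as yours.
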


\begin{proof}
This is a direct consequence of Shapiro's Lemma. Indeed 
we may identify $\mathcal{A}_{\vartheta(\eta)}^\mathrm{cyc}$ with 
the induced representation $\mathrm{Ind}^{F^+}_F 
\mathcal{A}^\mathrm{cyc}_\eta$ of $\mathcal{A}^\mathrm{cyc}_\eta$ by
 construction (under the canonical identification 
${\mathcal{O}}[[\mathrm{Gal}(F(\mu_{p^\infty})/F)]]
\xrightarrow{\sim}
\Lambda_\mathcal{O}^\mathrm{cyc}:={\mathcal{O}}[[\mathrm{Gal}(F^+(\mu_{p^\infty})/F^+)]]
$), and we therefore obtain the
following isomorphisms by virtue of (generalised) Shapiro's lemma 
(see \cite[Proposition~B.2]{Rubin} for details):
\begin{align*}
H^1(F^+_{S^+}/F^+, \mathcal{A}^\mathrm{cyc}_{\vartheta(\eta)}) &\cong H^1(F_S/F,
 \mathcal{A}^{\mathrm{cyc}}_\eta), \\ 
H^1(I_\mathfrak{l}, \mathcal{A}^\mathrm{cyc}_{\vartheta(\eta)}) &\cong
 \prod_{\mathfrak{L}\mid \mathfrak{l}} H^1(I_\mathfrak{L}, 
 \mathcal{A}^{\mathrm{cyc}}_\eta)^{[\mathbb{F}_{\mathfrak{L}}\colon
 \mathbb{F}_{\mathfrak{l}}]} \quad \text{for every $\mathfrak{l}$
 in $S$ which does not divide $p\infty$}
\end{align*}
where $\mathbb{F}_\mathfrak{L}$ and $\mathbb{F}_\mathfrak{l}$ denote
the residue fields $\mathfrak{r}_F/\mathfrak{L}$ and
$\mathfrak{r}_{F^+}/\mathfrak{l}$ respectively.
Moreover, at each place $\mathfrak{p}$ of $F^+$ lying above $p$, 
we obtain the equality:  
\begin{align*}
H^1(I_\mathfrak{p}, \mathcal{A}^\mathrm{cyc}_{\vartheta(\eta)}/\mathrm{Fil}^+
 \mathcal{A}^\mathrm{cyc}_{\vartheta(\eta)}) = H^1 (I_{\mathfrak{p}},
 \mathcal{O}(\eta^\mathrm{gal}\vert_{D_{\mathfrak{P}^c}})\otimes_{\mathcal{O}}\Lambda_\mathcal{O}^{\mathrm{cyc},\sharp}
 \otimes_{\Lambda_\mathcal{O}} \Lambda_\mathcal{O}^{\mathrm{cyc},\vee}) 
= H^1 (I_{\mathfrak{P}^c}, \mathcal{A}^{\mathrm{cyc}}_\eta), 
\end{align*}
where $\mathfrak{p}\mathfrak{r}_F=\mathfrak{PP}^c$ with $\mathfrak{P}$
 in $\Sigma_p$ and $\mathfrak{P}^c$ in $\Sigma_p^c$.  Hence
$\mathrm{Sel}_{\mathcal{A}^\mathrm{cyc}_{\vartheta(\eta)}}$ 
is canonically identified with 
the kernel of the global-to-local map
\begin{align*}
H^1(F_S/F, \mathcal{A}^{\mathrm{cyc}}_\eta) \rightarrow \prod_{w
 \in S\setminus \Sigma_p} H^1 (I_w, \mathcal{A}^{\mathrm{cyc}}_\eta),
\end{align*}
which is none other than the Selmer group
 $\mathrm{Sel}^\Sigma_{\mathcal{A}^\mathrm{cyc}_\eta}$ by 
Definition~\ref{def:Selmer_eta}.
\end{proof}

%
\subsubsection{Selmer groups associated to CM fields and Iwasawa modules of classical type} \label{sssc:comparison}
%

Recall $\widetilde{F}_\infty =\widetilde{F} (\mu_p)$ introduced at basic notations given before Theorem A of Introduction. 
We first define the multi-variable Selmer group $\mathrm{Sel}^\Sigma_{\mathcal{A}^\mathrm{CM}_\eta}$ 
over $\Lambda^\mathrm{CM}_\mathcal{O}:=\mathcal{O}[[\mathrm{Gal}(\widetilde{F}_\infty/F)]]$ 
similarly as the Selmer group $\mathrm{Sel}^\Sigma_{\mathcal{A}^\mathrm{cyc}_\eta}$ over $\Lambda_\mathcal{O}^\mathrm{cyc}$. 
We set $\mathcal{T}^\mathrm{CM}_\eta=\mathcal{O}(\eta^\mathrm{gal}) \otimes_\mathcal{O}
\Lambda_\mathcal{O}^{\mathrm{CM}, \sharp}$ and equip it with the
diagonal $G_F$-action. 

\begin{defn}[Selmer group $\mathrm{Sel}^\Sigma_{\mathcal{A}^\mathrm{CM}_\eta}$] \label{def:Selmer_CM}
The {\em Selmer group $\mathrm{Sel}^\Sigma_{\mathcal{A}^\mathrm{CM}_\eta}$
 associated to the deformation $\mathcal{A}^\mathrm{CM}_\eta$ of $\eta^\mathrm{gal}$ along the extension 
 $\widetilde{F}_\infty/F$}  (with
 respect to the $p$-ordinary CM type $\Sigma$) is the
 Selmer group defined as in Definition~\ref{def:Selmer_group} for the
 discrete $G_F$-representation 
 $\mathcal{A}^\mathrm{CM}_\eta=\mathcal{T}^\mathrm{CM}_\eta\otimes_{\Lambda_\mathcal{O}}\Lambda_\mathcal{O}^{\mathrm{CM},\vee}$ .
\end{defn}

In the rest of this subsection we relates the Selmer group
$\mathrm{Sel}^\Sigma_{\mathcal{A}^\mathrm{CM}_\eta}$ 
with a certain Iwasawa module $X_{\Sigma_p, (\psi)}$ 
defined in a classical manner by means of the notion of a {\em branch character associated to $\eta$}.

\begin{lem} \label{lem:branch}
Let $\eta$ be a gr\"o{\ss}encharacter of type $(A_0)$ on $F$.
Then there exists a $p$-adic Galois character $\psi \colon
 G_F \rightarrow \overline{\mathbb{Q}}_p^\times$ 
of finite order such that 
$\eta^\mathrm{gal}\psi^{-1}\colon G_F \rightarrow
 \overline{\mathbb{Q}}_p^\times$ factors through the quotient 
 $\mathrm{Gal}(\widetilde{F}_\infty/F)$ of $G_F$. 
Furthermore we may choose such a character $\psi$ so that 
the composite field of $F(\mu_p)$ and $K_\psi$ is abelian 
over $F$ and linearly disjoint from $\widetilde{F}$ over $F$, 
where $K_\psi$ denotes the field corresponding to the kernel of $\psi$. 
\end{lem}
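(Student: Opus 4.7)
The plan is a two-step construction: first obtain a natural finite-order character $\psi_0$ by projecting $\eta^{\mathrm{gal}}$ onto a torsion part, then modify $\psi_0$ slightly to ensure the linear-disjointness clause.

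First, note that $\eta^{\mathrm{gal}}$ factors through $G_F^{\mathrm{ab}}$ with image in $\mathcal{O}^\times$ for some DVR $\mathcal{O}$ finite over $\mathbb{Z}_p$. The short exact sequence $1\to\mu(\mathcal{O})\to\mathcal{O}^\times\to\mathcal{O}^\times/\mu(\mathcal{O})\to 1$ splits because the quotient is a finitely generated free $\mathbb{Z}_p$-module; choosing a splitting so that the complement $V$ is pro-$p$ (e.g.\ $V=1+\pi^n\mathcal{O}$ for $n$ sufficiently large), one writes $\eta^{\mathrm{gal}}=\eta^{\mathrm{gal}}_{\mu}\cdot\eta^{\mathrm{gal}}_{V}$. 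The key structural fact I will invoke is that $G_F^{\mathrm{ab},p}$---the maximal pro-$p$ abelian quotient of $G_F$---is a finitely generated $\mathbb{Z}_p$-module (by global class field theory), and its maximal $\mathbb{Z}_p$-torsion-free quotient is $\mathrm{Gal}(\widetilde{F}/F)$ by the very definition of $\widetilde{F}$ as the composite of all $\mathbb{Z}_p$-extensions of $F$. Since $V$ is pro-$p$ and $\mathbb{Z}_p$-torsion-free, $\eta^{\mathrm{gal}}_{V}$ is forced to factor through $\mathrm{Gal}(\widetilde{F}/F)\subseteq\mathrm{Gal}(\widetilde{F}_\infty/F)$. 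Setting $\psi_0:=\eta^{\mathrm{gal}}_{\mu}$ (of finite order by construction), one then finds that $\eta^{\mathrm{gal}}\psi_0^{-1}=\eta^{\mathrm{gal}}_{V}$ factors through $\mathrm{Gal}(\widetilde{F}_\infty/F)$, which establishes the first assertion.

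For the linear-disjointness clause, the only possible obstruction will come from the $p$-power order part of $\psi_0$. Writing $\psi_0=\psi_{0,p'}\cdot\psi_{0,p}$ according to prime-to-$p$ and $p$-power order factors, the extension $F(\mu_p)K_{\psi_{0,p'}}/F$ has prime-to-$p$ degree (since $[F(\mu_p):F]$ divides $p-1$) and is hence automatically linearly disjoint from the pro-$p$ extension $\widetilde{F}/F$. To handle $\psi_{0,p}$, I would fix a decomposition $G_F^{\mathrm{ab},p}\cong T\oplus\mathbb{Z}_p^{r}$ with $r=d+1+\delta_{F,p}$, let $\widetilde{\chi}$ denote the restriction of $\psi_{0,p}$ to the chosen lift of $\mathbb{Z}_p^{r}\cong\mathrm{Gal}(\widetilde{F}/F)$, inflate $\widetilde{\chi}$ back to $G_F^{\mathrm{ab}}$ through the projection $G_F^{\mathrm{ab}}\twoheadrightarrow\mathrm{Gal}(\widetilde{F}/F)$, and set $\psi:=\psi_0\cdot\widetilde{\chi}^{-1}$. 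Then $\psi$ is still of finite order, and $\eta^{\mathrm{gal}}\psi^{-1}=(\eta^{\mathrm{gal}}\psi_0^{-1})\cdot\widetilde{\chi}$ still factors through $\mathrm{Gal}(\widetilde{F}_\infty/F)$ because both factors do. By construction the $p$-part $\psi_p$ vanishes on the chosen lift of $\mathbb{Z}_p^{r}$; elementary Galois theory then translates this into $K_{\psi_p}\cap\widetilde{F}=F$. Combining with the prime-to-$p$ contribution (using that $\widetilde{F}/F$ is pro-$p$, so the intersection of any finite abelian extension of $F$ with $\widetilde{F}$ is controlled by its $p$-part) yields $F(\mu_p)K_{\psi}\cap\widetilde{F}=F$; the abelian-ness of $F(\mu_p)K_\psi/F$ is automatic as a compositum of abelian extensions.

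The main technical step will be the identification of $\mathrm{Gal}(\widetilde{F}/F)$ as the maximal $\mathbb{Z}_p$-torsion-free pro-$p$ abelian quotient of $G_F$, which rests on the finite generation of $G_F^{\mathrm{ab},p}$ as a $\mathbb{Z}_p$-module via class field theory applied to the idele class group $\mathbb{A}_F^\times/F^\times$. Once this identification is in place, the rest of the argument is essentially bookkeeping with the chosen splittings of $\mathcal{O}^\times$ and of $G_F^{\mathrm{ab},p}$.
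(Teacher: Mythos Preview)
Your approach is genuinely different from the paper's, and the overall strategy is sound, but there is a factual error that undermines the justification of both steps.

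The claim that $G_F^{\mathrm{ab},p}$ is a finitely generated $\mathbb{Z}_p$-module is \emph{false}: for every rational prime $\ell\equiv 1\pmod{p}$ there is a cyclic degree-$p$ extension of $F$ ramified only above $\ell$, and these are independent, so already $G_F^{\mathrm{ab},p}/p$ is infinite. Consequently the decomposition $G_F^{\mathrm{ab},p}\cong T\oplus\mathbb{Z}_p^{r}$ with $T$ the finite torsion part, which you invoke in the second step, does not exist as stated.

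Both steps can nonetheless be repaired. For the first step, what you actually need is that $V\cong\mathbb{Z}_p^{m}$ is free of \emph{finite} rank: any continuous homomorphism $G_F\to\mathbb{Z}_p$ cuts out a $\mathbb{Z}_p$-extension (or is trivial) and hence factors through $\mathrm{Gal}(\widetilde{F}/F)$ by the very definition of $\widetilde{F}$; finite generation of the source plays no role. For the second step, you only need a continuous section of the surjection $G_F^{\mathrm{ab},p}\twoheadrightarrow\mathrm{Gal}(\widetilde{F}/F)$, and this exists because $\mathrm{Gal}(\widetilde{F}/F)\cong\mathbb{Z}_p^{r}$ is projective; there is no need for the kernel to be finite or even torsion.

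By contrast, the paper sidesteps this issue by working not with $G_F^{\mathrm{ab},p}$ but with the specific quotient $\mathrm{Gal}(F_{\mathfrak{C}(\eta)p^\infty}/F)$ through which $\eta^{\mathrm{gal}}$ already factors. This Galois group \emph{is} a finitely generated $\mathbb{Z}_p$-module, with finite torsion subgroup $\Delta'$ and free quotient $\mathrm{Gal}(\widetilde{F}/F)$. The paper then defines $\psi$ in one stroke as $\eta^{\mathrm{gal}}|_{\Delta'}$ composed with a chosen splitting projection onto $\Delta'$; no separate correction step is needed, and linear disjointness follows immediately because the complementary summand maps isomorphically onto $\mathrm{Gal}(\widetilde{F}/F)$. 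Your route via splitting the target $\mathcal{O}^\times$ and then correcting is more circuitous but, once the error above is patched, reaches the same conclusion.
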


\begin{proof}
Let us construct a character $\psi$ satisfying the desired
 properties. First note that 
the continuous character $\eta^\mathrm{gal}$ factors 
through the Galois group 
$\mathrm{Gal}(F_{\mathfrak{C}(\eta)p^\infty}/F)$ of the ray class field
 $F_{\mathfrak{C}(\eta)p^\infty}$ modulo $\mathfrak{C}(\eta)p^\infty$
 over $F$ (recall that $\mathfrak{C}(\eta)$ denotes the conductor of $\eta$). 
Let $\Delta'$ denote the maximal torsion subgroup of
 $\mathrm{Gal}(F_{\mathfrak{C}(\eta)p^\infty}/F)$, which is known to
 be finite. 
Then the subfield of $F_{\mathfrak{C}(\eta)p^\infty}$
 corresponding to $\Delta'$ coincides with $\widetilde{F}$ by definition, 
and we obtain the exact sequence of abelian groups
\begin{align} \label{ses:branch}
0\rightarrow \Delta' \rightarrow
 \mathrm{Gal}(F_{\mathfrak{C}(\eta)p^\infty}/F) \rightarrow
 \mathrm{Gal}(\widetilde{F}/F) \rightarrow 0.
\end{align}
This short exact sequence splits since 
the Galois group $\mathrm{Gal}(\widetilde{F}/F)$ is 
a free $\mathbb{Z}_p$-module of rank $d+1+\delta_{F,p}$ by definition. 
Now we take an arbitrary
 section $s\colon \mathrm{Gal}(\widetilde{F}/F)\rightarrow
 \mathrm{Gal}(F_{\mathfrak{C}(\eta)p^\infty}/F)$ and denotes by $K'$
the intermediate field of $F_{\mathfrak{C}(\eta)p^\infty}/F$
 corresponding to $s(\mathrm{Gal}(\widetilde{F}/F))$. By construction 
$\mathrm{Gal}(K'/F)$ is isomorphic to $\Delta'$. We then define
 $\psi$ as the composition
\begin{align*}
\psi \colon G_F \twoheadrightarrow \mathrm{Gal}(K'/F) \cong \Delta'
 \xrightarrow{\eta^\mathrm{gal}\vert_{\Delta'}} \overline{\mathbb{Q}}_p^\times.
\end{align*}
By construction, it is obvious that $\eta^\mathrm{gal}\psi^{-1} \colon G_F \rightarrow
 \overline{\mathbb{Q}}_p^\times$ 
 factors through $\mathrm{Gal}(\widetilde{F}_\infty/F)$ and that 
$K_\psi$ is contained in $K'$.

Since the short exact sequence (\ref{ses:branch}) splits, 
the fields $K'$ and $\widetilde{F}$ are linearly disjoint over $F$. 
 Furthermore since $[F(\mu_p) :F ]$ is prime to $p$, the Galois group $\mathrm{Gal}(F(\mu_p)/F)$ should be contained in the torsion part $\mathrm{Gal}(K'/F)$ of $\mathrm{Gal}(F_{\mathfrak{C}(\eta)p^\infty}/F)$, and therefore
 the field $K'$ contains $F(\mu_p)$. 
The composite field of $F(\mu_p)$ and $K_\psi$, 
which is a subfield of $K'$, is thus abelian over $F$ and 
linearly disjoint from $\widetilde{F}$ over $F$.
\end{proof}

\begin{defn}[branch character] \label{def:branch}
Let $\eta$ be a gr\"o{\ss}encharacter of type $(A_0)$ on $F$. 
We call a character $\psi \colon G_F\rightarrow
 \overline{\mathbb{Q}}_p^\times$ of finite order satisfying the
 assertions proposed in Lemma~\ref{lem:branch} a {\em branch character} associated to $\eta$. 
\end{defn}

We denote by $K$ the composite field of $F(\mu_p)$ and $K_\psi$, and
 by $\widetilde{K}^\mathrm{CM}_\infty$ the composite field of
 $K$ and $\widetilde{F}_\infty$ as in Introduction.  
We set $\Delta=\mathrm{Gal}(K/F)$,
 $\Delta^\mathrm{cyc}=\mathrm{Gal}(F(\mu_p)/F)$ and
 $\widetilde{\Gamma}=\mathrm{Gal}(\widetilde{F}/F)$.  
The cardinality of $\Delta$ is then the product of the order of
 $\psi$ and the extension degree $[K:K_\psi]$ of $K$ over $K_\psi$. Note that 
$[K:K_\psi]$ is relatively prime to $p$ since it divides $p-1$.

Now let us consider the following commutative diagram with exact rows and
columns:
\begin{align} \label{eq:comparison}
\xymatrix{
 &  &  H^1(\Delta, (\mathcal{A}^\mathrm{CM}_\eta)^{G_K}) \ar[r] \ar[d]_{\mathrm{inf}} & \underset{w \in
 S \setminus \Sigma_p}{\prod} H^1(I_{K/F ,w},
 (\mathcal{A}^\mathrm{CM}_\eta)^{I_{K,\tilde{w}_0}}) \ar[d]^{\mathrm{Inf}} \\
0 \ar[r] & \mathrm{Sel}^\Sigma_{\mathcal{A}^\mathrm{CM}_\eta} \ar[r] \ar[d]  & H^1(F_S/F,
 \mathcal{A}^\mathrm{CM}_\eta) \ar[r]  \ar[d]_{\mathrm{res}} &
 \underset{w\in  S\setminus \Sigma_p}{\prod} H^1(I_{F,w},
 \mathcal{A}^\mathrm{CM}_\eta) \ar[d]^{\mathrm{Res}} \\
0 \ar[r] & \ker(\phi_{F_S/K})  \ar[r]  & 
 H^1(F_S/K, \mathcal{A}^\mathrm{CM}_\eta)^{\Delta}  \ar[r]^{\phi_{F_S/K} \qquad} \ar[d]  & \underset{w \in S \setminus
 \Sigma_p}{\prod} \underset{\tilde{w} \mid w}{\prod} H^1(I_{K,\tilde{w}},
 \mathcal{A}^\mathrm{CM}_\eta)  \\
 &  & 
H^2(\Delta, (\mathcal{A}^\mathrm{CM}_\eta)^{G_K}) &
}
\end{align}
where $\phi_{F_S/K}$ denotes the global-to-local map induced from the
restriction morphisms. The rows are exact by definition. 
The middle column of the diagram (\ref{eq:comparison}) is induced from the 
inflation-restriction exact sequences associated to the short exact
sequence 
\begin{align*}
\xymatrix{
1 \ar[r] & \mathrm{Gal}(F_S/K) \ar[r] & \mathrm{Gal}(F_S/F) \ar[r] &
 \Delta \ar[r] & 1}
\end{align*}
of abelian groups.

The map $\mathrm{Res}$ in the right column is induced 
from restriction maps, and the exactness of the right column is 
also deduced from the inflation-restriction exact sequence. 
More precisely, we define the map $\mathrm{Res}=(\mathrm{Res})_{w \in S\setminus
\Sigma_p}$ in the following manner. 
First we choose and fix a decomposition group $D_{F,w}$ 
of $\mathrm{Gal}(F_S/F)$ for each place $w$ in $S\setminus \Sigma_p$. We always consider 
the inertia subgroup $I_{F,w}$ of $\mathrm{Gal}(F_S/F)$ 
to be contained in the fixed decomposition group $D_{F,w}$ 
at such a place $w$.
There exists a unique place $\tilde{w}_0$ of $K$ lying above $w$ 
which is fixed under the action of $D_{F,w}$. We identify the intersection 
of $D_{F,w}$ and $\mathrm{Gal}(F_S/K)$
(resp.\ the intersection of $I_{F,w}$ and $\mathrm{Gal}(F_S/K)$) 
with the decomposition group $D_{K, \tilde{w}_0}$ (resp.\ the inertia subgroup $I_{K, \tilde{w}_0}$) 
of $\mathrm{Gal}(F_S/K)$ at $\tilde{w}_0$. 
We identify the quotient group $I_{F,w}/I_{K, \tilde{w}_0}$ 
with the inertia subgroup $I_{K/F ,w}$ 
of $\Delta=\mathrm{Gal}(K/F)$ at $w$ (note that 
the inertia subgroup of $\Delta$ at $w$ is well defined since $\Delta$ 
is abelian). For each place $\tilde{w}$ of $K$ lying above $w$, 
we also fix the decomposition group $D_{K, \tilde{w}}$
of $\mathrm{Gal}(F_S/K)$ (and denote its inertia subgroup by $I_{K,
\tilde{w}}$), and choose an element $\sigma_{\tilde{w}}$ 
of $\mathrm{Gal}(F_S/F)$ 
so that $\sigma_{\tilde{w}}I_{F,w}\sigma_{\tilde{w}}^{-1}$ contains 
$I_{K,\tilde{w}}$. Then the map $\mathrm{Res}_w$ is  
defined as the composition 
{\small
\begin{align*}
\mathrm{Res}_w \colon H^1(I_{F,w}, \mathcal{A}^\mathrm{CM}_\eta) \rightarrow 
\prod_{\tilde{w} \mid w} H^1(I_{F,w}, \mathcal{A}^\mathrm{CM}_\eta)
 \xrightarrow{\sim}
 \prod_{\tilde{w} \mid w} H^1(\sigma_{\tilde{w}}I_{F,w} 
\sigma^{-1}_{\tilde{w}}, \mathcal{A}^\mathrm{CM}_\eta) \rightarrow \prod_{\tilde{w}
 \mid w} H^1(I_{K,\tilde{w}}, \mathcal{A}^\mathrm{CM}_\eta),
\end{align*}
}where the first map is the diagonal map, the second one is an
isomorphism induced by the conjugation with respect to
$(\sigma_{\tilde{w}})_{\tilde{w}\mid w}$, and the last map is the usual 
(componentwise) restriction map. It is easy to observe that the kernel
of $\mathrm{Res}_w$ coincides with the kernel of the restriction map 
$H^1(I_{F,w}, \mathcal{A}^\mathrm{CM}_\eta) \rightarrow H^1(I_{K,
\tilde{w_0}}, \mathcal{A}^\mathrm{CM}_\eta)$, and hence the right column
of the diagram (\ref{eq:comparison}) is also exact.

Under these settings we shall compare the Selmer group $\mathrm{Sel}_{\mathcal{A}_\eta^\mathrm{cyc}}$ with $\mathrm{Ker}(\phi_{F_S/K})$.

\begin{rem} \label{rem:vanish}
Since all the cohomology groups $H^1 (\Delta,
 (\mathcal{A}^\mathrm{CM}_\eta)^{G_K})$,
$H^2(\Delta, (\mathcal{A}^\mathrm{CM}_\eta)^{G_K})$ 
and $H^1(I_{K/F ,w}, 
 (\mathcal{A}^\mathrm{CM}_\eta)^{I_{K,\tilde{w}_0}})$ are $p$-torsion modules
 annihilated by the cardinality
of $\Delta$, {\em all of them 
 vanish when the order of $\psi$ is relatively
 prime to $p$} (recall that $[K: K_\psi]$ is not
 divisible by $p$). In this case one may immediately conclude that
 $\mathrm{Sel}^\Sigma_{\mathcal{A}^\mathrm{CM}_\eta}$ is isomorphic to $\mathrm{Ker}(\phi_{F_S/K})$ 
by applying the snake lemma to $(\ref{eq:comparison})$. 
\end{rem}

\begin{lem} \label{lem:cpsn}
The module $(\mathcal{A}^\mathrm{CM}_\eta)^{G_K}$ is a 
copseudonull $\Lambda^\mathrm{CM}_\mathcal{O}$-module$;$ that is, 
its Pontrjagin dual is a pseudonull $\Lambda^\mathrm{CM}_\mathcal{O}$-module. 
In particular, the cohomology group $H^i(\Delta,
 (\mathcal{A}^\mathrm{CM}_\eta)^{G_K})$ is 
copseudonull as a $\Lambda^\mathrm{CM}_\mathcal{O}$-module for $i=1,2$.
\end{lem}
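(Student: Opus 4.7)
The plan is to compute the Pontrjagin dual of $(\mathcal{A}^\mathrm{CM}_\eta)^{G_K}$ explicitly as a quotient of $\Lambda^\mathrm{CM}_\mathcal{O}$ and show that this quotient has codimension at least $2$ in every component of the semilocal Iwasawa algebra. Writing $\rho \colon G_F \to (\Lambda^\mathrm{CM}_\mathcal{O})^\times$, $g \mapsto \eta^\mathrm{gal}(g)\cdot g|_{\widetilde{F}_\infty}$, for the character through which $G_F$ acts on $\mathcal{A}^\mathrm{CM}_\eta \cong \Lambda^{\mathrm{CM},\vee}_\mathcal{O}$, I would first exploit the inclusion $K_\psi \subseteq K$ from Lemma~\ref{lem:branch} to see that $\psi|_{G_K}$ is trivial, so that $\rho|_{G_K}$ factors through $\mathrm{Gal}(\widetilde{F}_\infty/F)$. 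Standard Pontrjagin duality then identifies $(\mathcal{A}^\mathrm{CM}_\eta)^{G_K}$ with $(\Lambda^\mathrm{CM}_\mathcal{O}/J)^\vee$, where $J$ is the ideal generated by the elements $\eta'(h)\cdot h - 1$ as $h$ ranges over the image $H$ of $G_K$ in $\mathrm{Gal}(\widetilde{F}_\infty/F)$, with $\eta'$ denoting the character on $\mathrm{Gal}(\widetilde{F}_\infty/F)$ corresponding to $\eta^\mathrm{gal}\psi^{-1}$.

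The key observation will be that $K \supseteq F(\mu_p)$ forces $K \cap \widetilde{F}_\infty$ to contain $F(\mu_p)$, so that $H = \mathrm{Gal}(\widetilde{F}_\infty/K \cap \widetilde{F}_\infty)$ is an open subgroup of $\mathrm{Gal}(\widetilde{F}_\infty/F(\mu_p)) \cong \widetilde{\Gamma}$, and is therefore itself a free $\mathbb{Z}_p$-module of rank $r = d+1+\delta_{F,p} \geq 2$. After decomposing $\Lambda^\mathrm{CM}_\mathcal{O}$ along the characters of the prime-to-$p$ group $\Delta^\mathrm{cyc} = \mathrm{Gal}(F(\mu_p)/F)$, each component takes the form $\mathcal{O}_\chi[[Y_1,\ldots,Y_r]]$ with $Y_i = \delta_i - 1$ for topological generators $\delta_i$ of $\widetilde{\Gamma}$. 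By choosing these $\delta_i$ in Smith normal form for the inclusion $H \hookrightarrow \widetilde{\Gamma}$, the $r$ elements $f_i := \eta'(\delta_i^{n_i})(1+Y_i)^{n_i} - 1$ lie in $J$, each involves only the single variable $Y_i$, and each is a distinguished power series because $\eta'$ maps the pro-$p$ group $\widetilde{\Gamma}$ into $1 + \mathfrak{m}_\mathcal{O}$. Weierstrass preparation then shows $\mathcal{O}_\chi[[Y_1,\ldots,Y_r]]/(f_1,\ldots,f_r)$ is finite over $\mathcal{O}_\chi$, so $\Lambda^\mathrm{CM}_\mathcal{O}/J$ is a finite $\mathcal{O}_\chi$-algebra in each component and has codimension $r \geq 2$ in the ambient $(r+1)$-dimensional regular ring, which is exactly the desired pseudonullity.

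The second assertion then follows formally: since $\Delta$ is finite, the standard cochain complex presents $H^i(\Delta, (\mathcal{A}^\mathrm{CM}_\eta)^{G_K})$ as a subquotient of a finite direct sum of copies of $(\mathcal{A}^\mathrm{CM}_\eta)^{G_K}$, and the class of copseudonull $\Lambda^\mathrm{CM}_\mathcal{O}$-modules is closed under both operations. I anticipate that the main technical obstacle lies in the careful analysis of the intersection $K \cap \widetilde{F}_\infty$, which may be strictly larger than $F(\mu_p)$ when the order of $\psi$ is divisible by $p$; however, the fact that $F(\mu_p)$ is contained in $K$ already suffices to confine $H$ to the pro-$p$ direction $\widetilde{\Gamma}$, and hence to guarantee the rank bound $r \geq 2$ that drives the whole argument.
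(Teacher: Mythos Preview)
Your proof is correct and follows essentially the same route as the paper's: both identify the Pontrjagin dual of $(\mathcal{A}^{\mathrm{CM}}_\eta)^{G_K}$ as a quotient of $\Lambda^{\mathrm{CM}}_\mathcal{O}$ by an ideal $J$ containing $r=d+1+\delta_{F,p}\geq 2$ elements of the form $\eta'(\gamma_i^{n_i})\gamma_i^{n_i}-1$ arising from a Smith-normal-form basis of the image of $G_K$ in $\mathrm{Gal}(\widetilde{F}_\infty/F)$, and both conclude that this quotient has Krull dimension at most one. The paper phrases the finiteness step as ``$\Lambda^{\mathrm{CM}}_\mathcal{O}/(\varpi\Lambda^{\mathrm{CM}}_\mathcal{O}+J)$ is clearly finite'' rather than invoking Weierstrass preparation explicitly, and it works directly with the free part of the image in $\mathrm{Gal}(\widetilde{F}_\infty/F)$ rather than first observing $H\subseteq\widetilde{\Gamma}$ via $F(\mu_p)\subseteq K$, but these are cosmetic differences in the same argument.
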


\begin{proof} 
Since $G_K$ is a subgroup of $G_F$ of finite index, 
its image $\mathfrak{G}$ 
under the natural surjection $G_F \twoheadrightarrow
 \mathrm{Gal}(\widetilde{F}_\infty/F)$ is also a subgroup of
 $\mathrm{Gal}(\widetilde{F}_\infty/F)$ of finite index. 
Moreover since the free part $\mathrm{Gal}(\widetilde{F}/F)$ 
of $\mathrm{Gal}(\widetilde{F}_\infty/F)$ is 
isomorphic to $\mathbb{Z}_p^{d+1+\delta_{F,p}}$, we may choose 
a basis $\{ \gamma_1, \dotsc , \gamma_{d+1+\delta_{F,p}}\}$ of
 $\mathrm{Gal}(\widetilde{F}/F)$ so that
 $\{\gamma_1^{p^{e_1}}, \dotsc,
 \gamma_{d+1+\delta_{F,p}}^{p^{e_{d+1+\delta_{F,p}}}}\}$ forms a basis 
of the free part of $\mathfrak{G}$ 
for certain nonnegative integers $e_1,
 \dotsc, e_{d+1+\delta_{F,p}}$ (due to elementary divisor theory). 
The Pontrjagin dual of $(\mathcal{A}^\mathrm{CM}_\eta)^{G_K}$ is then
 isomorphic to a certain quotient of
 $\Lambda^\mathrm{CM}_\mathcal{O}/J$,
 where $J$ is the ideal of $\Lambda^\mathrm{CM}_\mathcal{O}$ generated by 
 $\eta^\mathrm{gal}(\gamma_j^{p^{e_j}})\gamma_j^{p^{e_j}}|_{\widetilde{F}_\infty}-1$
 for $1\leq j\leq d+1+\delta_{F,p}$. Let $\varpi$ denote a uniformiser of
 $\mathcal{O}$. Then the
 $\Lambda^\mathrm{CM}_\mathcal{O}$-module
 $\Lambda^\mathrm{CM}_\mathcal{O}/(\varpi\Lambda^\mathrm{CM}_\mathcal{O}+J)$
 is clearly finite, which
 implies that the height of $J$ is greater than or equal to
 $d+1+\delta_{F,p}$ (recall that the Krull dimension of
 $\Lambda^\mathrm{CM}_\mathcal{O}$ is $d+2+\delta_{F,p}$). 
The Pontrjagin dual of $(\mathcal{A}^\mathrm{CM}_\eta)^{G_K}$ is obviously annihilated by
 $J$, and it is thus pseudonull as a $\Lambda^\mathrm{CM}_\mathcal{O}$-module since
 $d$ is a positive integer.   
\end{proof}

\begin{lem} \label{lem:cpsn-loc}
The local cohomology group $H^1(I_{K/F ,w},
 (\mathcal{A}^\mathrm{CM}_\eta)^{I_{K,\tilde{w}_0}})$ is copseudonull
 as a module over $\Lambda^\mathrm{CM}_\mathcal{O}$ for each place $w$
 of $F$ in $S\setminus \Sigma_p$.
\end{lem}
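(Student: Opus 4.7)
My plan is to split the argument into two cases depending on whether $w$ lies above $p$, as the ramification behaviour of the two characters governing the $G_F$-action on $\mathcal{A}^\mathrm{CM}_\eta$ (namely the ``finite'' piece $\psi$ and the universal tautological character along $\widetilde{F}_\infty/F$) is sharply different in the two regimes. Recall that $G_F$ acts on $\mathcal{A}^\mathrm{CM}_\eta$ through the continuous character $g \mapsto \eta^\mathrm{gal}(g) \cdot g\vert_{\widetilde{F}_\infty}$ valued in $\Lambda^{\mathrm{CM},\times}_\mathcal{O}$.

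For a place $w \nmid p$ of $F$, the extension $\widetilde{F}_\infty/F$ is unramified at $w$, so both $g\vert_{\widetilde{F}_\infty}$ and $(\eta^\mathrm{gal}\psi^{-1})(g)$ are trivial for $g \in I_{F,w}$; combined with $\psi\vert_{G_K} = 1$ (by the very definition of $K_\psi$), this shows $I_{K,\tilde{w}_0}$ acts trivially on $\mathcal{A}^\mathrm{CM}_\eta$, so $(\mathcal{A}^\mathrm{CM}_\eta)^{I_{K,\tilde{w}_0}}$ is all of $\mathcal{A}^\mathrm{CM}_\eta$ and the induced action of $I_{K/F,w}$ is by $\psi\vert_{I_{F,w}}$. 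I would then show $I_{K/F,w}$ is cyclic: since $F(\mu_p)/F$ is unramified at $w$, $I_{K/F,w}$ lies in $\mathrm{Gal}(K/F(\mu_p))$, which together with $K = F(\mu_p)K_\psi$ yields $I_{K/F,w} \cap \mathrm{Gal}(K/K_\psi) = 1$; hence $\psi$ embeds $I_{K/F,w}$ into $\mathcal{O}^\times$, whose torsion subgroup is cyclic. For a cyclic $I_{K/F,w}$ with a generator acting as a scalar $\zeta \neq 1$, the standard cyclic formula identifies $H^1$ with $\mathcal{A}^\mathrm{CM}_\eta/(\zeta-1)\mathcal{A}^\mathrm{CM}_\eta$, which vanishes by divisibility of $\mathcal{A}^\mathrm{CM}_\eta$ as an $\mathcal{O}$-module; the cases $I_{K/F,w} = 1$ and $w$ complex archimedean are immediate.

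For $w \in \Sigma_p^c$ I would work directly with the $\Lambda^\mathrm{CM}_\mathcal{O}$-structure. The module $(\mathcal{A}^\mathrm{CM}_\eta)^{I_{K,\tilde{w}_0}}$ is annihilated by the ideal $J \subset \Lambda^\mathrm{CM}_\mathcal{O}$ generated by $\{\tilde\eta(\gamma)\gamma - 1 : \gamma \in H\}$, where $H$ denotes the image of $I_{K,\tilde{w}_0}$ in $\mathrm{Gal}(\widetilde{F}_\infty/F(\mu_p)) \cong \widetilde{\Gamma}$ and $\tilde\eta$ is the character induced by $\eta^\mathrm{gal}\psi^{-1}$. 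Because the cyclotomic $\mathbb{Z}_p$-extension of $F$ is ramified at every place above $p$, one has $\mathrm{rank}_{\mathbb{Z}_p}(H) \geq 1$, so $\mathrm{ht}(J) \geq 1$. Finite group cohomology is annihilated by the order of the group, so the Pontrjagin dual of $H^1(I_{K/F,w}, (\mathcal{A}^\mathrm{CM}_\eta)^{I_{K,\tilde{w}_0}})$ is annihilated by $J + |I_{K/F,w}|\Lambda^\mathrm{CM}_\mathcal{O}$. If $|I_{K/F,w}|$ is prime to $p$, this ideal equals $\Lambda^\mathrm{CM}_\mathcal{O}$ and $H^1$ vanishes; otherwise I would verify that $(J,p)$ has height $\geq 2$ by picking a topological generator $\gamma_0$ of a rank-one subgroup of $H$, writing the corresponding generator of $J$ as $(a-1) + aT$ in local coordinates with $a = \tilde\eta(\gamma_0) \in 1 + \mathfrak{m}_\mathcal{O}$ and $T = \gamma_0 - 1$, and observing that modulo $p$ this element has a unit coefficient on $T$, hence is regular in $\Lambda^\mathrm{CM}_\mathcal{O}/(p)$. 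Consequently the dual of $H^1$ is annihilated by an ideal of height $\geq 2$, i.e.\ is pseudonull.

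The main technical obstacle lies in the sub-case $w \in \Sigma_p^c$ with $\mathrm{rank}_{\mathbb{Z}_p}(H) = 1$ (as happens for instance when $F$ is imaginary quadratic, for the place of $\Sigma_p^c$): there $(\mathcal{A}^\mathrm{CM}_\eta)^{I_{K,\tilde{w}_0}}$ itself is not copseudonull over $\Lambda^\mathrm{CM}_\mathcal{O}$, so one genuinely needs the combined annihilation by $J$ and $|I_{K/F,w}|$, and the height argument requires checking that the ``geometric'' relation cut out by $J$ is independent of the ``arithmetic'' uniformiser $p$ in $\Lambda^\mathrm{CM}_\mathcal{O}$.
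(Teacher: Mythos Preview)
Your proof is correct and, for $w\in\Sigma_p^c$, takes a genuinely different (and more robust) route than the paper. At primes $w\nmid p$ both arguments reach $H^1=0$; your observation that $\psi$ embeds $I_{K/F,w}$ into the cyclic group of roots of unity in $\mathcal{O}$ is a tidy shortcut compared with the paper's wild/tame inflation--restriction splitting. At $w\in\Sigma_p^c$ the paper tries for the stronger assertion that $(\mathcal{A}^{\mathrm{CM}}_\eta)^{I_{K,\tilde w_0}}$ is itself copseudonull: it picks $x,y\in I_{K,\tilde w_0}$ with infinite images in $\mathrm{Gal}(F(\mu_{p^\infty})/F)$ and in $\mathrm{Gal}(F_\infty^{(w)}/F)$ (the compositum of all $\mathbb{Z}_p$-extensions of $F$ unramified outside $w$) and asserts that linear disjointness of these towers makes $\eta^{\mathrm{gal}}(x)x\vert_{\widetilde F_\infty}-1$ and $\eta^{\mathrm{gal}}(y)y\vert_{\widetilde F_\infty}-1$ a regular sequence. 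Your caution about the rank-one case is justified: when $H=\mathrm{Im}(I_{K,\tilde w_0}\to\widetilde\Gamma)$ has $\mathbb{Z}_p$-rank~$1$ (e.g.\ $F$ imaginary quadratic, where the local inertia at $w$ has $\mathbb{Z}_p$-rank $[F_w:\mathbb{Q}_p]=1$), both $x\vert_{\widetilde F_\infty}$ and $y\vert_{\widetilde F_\infty}$ lie in $H$, and after twisting by $\tilde\eta^{-1}$ the two elements both sit in the height-one augmentation ideal of $H$---so they are \emph{not} a regular sequence and the invariants are in fact not copseudonull. Your argument sidesteps this entirely by pairing the height-one annihilator $J$ of the invariants with the $\lvert I_{K/F,w}\rvert$-annihilation of finite-group cohomology; since $\tilde\eta(\gamma_0)\gamma_0-1\equiv\gamma_0-1\pmod\varpi$ is nonzero in the domain $\Lambda_0/\varpi$, the ideal $(J,\varpi)$ has height $\ge 2$, which suffices for the lemma. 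The only cost is that you do not get the stronger statement about the invariants that the paper invokes again in Lemma~\ref{lem:replace_psi}; but the unramified $H^1$ needed there can be handled by the same height-two trick, replacing $\lvert I_{K/F,w}\rvert$ with the Frobenius relation.
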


\begin{proof}
First we assume that $w$ is a place of $F$ 
contained in $\Sigma_p^c$. We shall prove that 
$(\mathcal{A}^\mathrm{CM}_\eta)^{I_{K,\tilde{w}_0}}$ is 
copseudonull as a $\Lambda^\mathrm{CM}_\mathcal{O}$-module, which immediately implies the desired 
conclusion on the local cohomology group at $w$. 
Since $K$ is a finite extension of $F$ and 
$w$ is totally ramified in the cyclotomic
extension $F(\mu_{p^\infty})/F(\mu_{p^{m_w}})$ for a certain nonnegative integer $m_w$, 
the image of $I_{K,\tilde{w}_0}$ in $\mathrm{Gal}(F(\mu_{p^\infty})/F)$
 is infinite. We take an element $x$ of $I_{K,\tilde{w}_0}$ 
 whose image in $\mathrm{Gal}(F(\mu_{p^\infty})/F)$ is 
of infinite order. Let $F_\infty^{(w)}/F$ be the composite of all $\mathbb{Z}_p$-extensions over $F$ 
unramified outside $w$, and take an element $y$ of $I_{K, \tilde{w}_0}$ whose image in
 $\mathrm{Gal}(F_\infty^{(w)}/F)$ is of infinite order. Comparing the ramification at $w$, one readily sees that $F(\mu_{p^\infty})$ and $F_\infty^{(w)}$ are linearly disjoint over $F$. 
 The Pontrjagin dual of
 $(\mathcal{A}^\mathrm{CM}_\eta)^{I_{K,\tilde{w}_0}}$ is then isomorphic to 
 a certain quotient of $\Lambda^\mathrm{CM}_\mathcal{O}/J_w$, where 
 $J_w$ is an ideal of $\Lambda^\mathrm{CM}_\mathcal{O}$ generated by
 $\eta^\mathrm{gal}(x)x\vert_{\widetilde{F}_\infty}-1$ and
 $\eta^\mathrm{gal}(y)y\vert_{\widetilde{F}_\infty}-1$. 
 Furthermore the linear disjointness of $F(\mu_{p^\infty})$ and $F_\infty^{(w)}$ over $F$ implies that  $\eta^\mathrm{gal}(x)x\vert_{\widetilde{F}_\infty}-1$ and
 $\eta^\mathrm{gal}(y)y\vert_{\widetilde{F}_\infty}-1$ forms a regular
 sequence in $\Lambda^\mathrm{CM}_\mathcal{O}$. 
The height of $J_w$, which is contained in the annihilator ideal of $(\mathcal{A}_\eta^\mathrm{CM})^{I_{K,\tilde{w}}}$, thus equals two, and hence 
$(\mathcal{A}^\mathrm{CM}_\eta)^{I_{K, \tilde{w}_0}}$ is copseudonull as a $\Lambda^\mathrm{CM}_\mathcal{O}$-module.
 
Next assume that $w$ is a place of $F$ contained in $S$ 
but not lying above $p$, 
and let $\ell$ denote the residue characteristic at $w$. The inertia subgroup $I_{K, \tilde{w}_0}$  
acts on $\mathcal{A}^\mathrm{CM}_\eta$
 trivially because $\widetilde{F}_\infty/F$ is unramified at $w$. 
  
The definition of the branch character $\psi$ implies that $\eta^\mathrm{gal}\psi^{-1}$ is 
ramified only at places lying above $p$. Hence $K$ is unramified outside $p$ over the field $K_\psi$ corresponding to 
the kernel of $\psi$, and the inertia subgroup 
$I_{K/F ,w}$ of $\mathrm{Gal}(K/F)$ at $w$ is naturally regarded as 
that of $\mathrm{Gal}(K_\psi/F)$. 
 Under this identification the inertia subgroup $I_{K/F ,w}$ acts
 on $\mathcal{A}_\eta^\mathrm{CM}$ via the composition $I_{K/F,w}\hookrightarrow \mathrm{Gal}(K_\psi/F)\xrightarrow{\, \psi \,}\mathcal{O}^\times$.

 We shall prove that $H^1(I_{K/F ,w},\mathcal{A}^\mathrm{CM}_\eta)$ is trivial. 
 When $w$ is unramified in the finite abelian extension $K/F$,
 the inertia subgroup $I_{K/F ,w}$ is trivial and there is nothing to 
prove in this case.  
 Hence we assume that $w$ is ramified in $K/F$.  
 The restriction of $\psi$ to $I_{K/F ,w}$ is then not trivial since $\psi\colon \mathrm{Gal}(K_\psi/F)\rightarrow
 \overline{\mathbb{Q}}^\times$ is injective by definition. 
 Let $I_{K/F ,w}^\mathrm{w}$ be the $\ell$-Sylow subgroup (the wild part) of $I_{K/F ,w}$ and
 $I_{K/F ,w}^\mathrm{t}$ the tame quotient $I_{K/F ,w}/I_{K/F ,w}^\mathrm{w}$ of $I_{K/F,w}$. Note that the cohomology group 
$H^1(I_{K/F ,w}^\mathrm{w}, \mathcal{A}^\mathrm{CM}_\eta)$ is trivial
 because it is annihilated by the cardinality of 
 $I_{K/F ,w}^\mathrm{w}$, which is relatively prime to $p$. 
By the inflation-restriction exact sequence 
\begin{align*}
0 \rightarrow H^1(I_{K/F ,w}^\mathrm{t},
 (\mathcal{A}^\mathrm{CM}_\eta)^{I_{K/F ,w}^\mathrm{w}}) \rightarrow
 H^1(I_{K/F ,w}, \mathcal{A}^\mathrm{CM}_\eta) \rightarrow
 H^1(I_{K/F ,w}^\mathrm{w},
 \mathcal{A}^\mathrm{CM}_\eta)^{I_{K/F ,w}^\mathrm{t}}
\end{align*}
combined with 
 the triviality of $H^1(I_{K/F ,w}^\mathrm{w}, \mathcal{A}^\mathrm{CM}_\eta)$, 
 we can identify $H^1(I_{K/F ,w}, \mathcal{A}^\mathrm{CM}_\eta)$ with $H^1(I_{K/F ,w}^\mathrm{t},
 (\mathcal{A}^\mathrm{CM}_\eta)^{I_{K/F ,w}^\mathrm{w}})$.
Therefore it suffices to verify that $H^1(I^\mathrm{t}_{K/F,w},(\mathcal{A}^\mathrm{CM}_\eta)^{I^\mathrm{w}_{K/F},w})$ is trivial in order to prove the vanishing of $H^1(I_{K/F},\mathcal{A}^\mathrm{CM}_\eta)$. 
 
If the action of $I_{K/F ,w}^\mathrm{w}$ on $\mathcal{A}^\mathrm{CM}_\eta$ is
 not trivial, there exists an element $z$ of
 $I_{K/F ,w}^\mathrm{w}$ such that
 $\psi^{\mathrm{gal}}(z)-1$ does not equal zero. 
Since $\psi^\mathrm{gal}(z)$ is a nontrivial $\ell$-power root of unity 
in $\mathcal{O}$, we easily see  that $\psi^\mathrm{gal}(z)-1$ is 
 a nontrivial unit of $\mathcal{O}$. 
 
 By definition $(\mathcal{A}_\eta^\mathrm{CM})^{I_{K/F,w}^\mathrm{w}}$ is annihilated by the unit $\psi^\mathrm{gal}(z)-1$ of $\mathcal{O}$, and hence it is trivial. This obviously implies the triviality of $H^1(I_{K/F,w}^\mathrm{t},(\mathcal{A}_\eta^\mathrm{CM})^{I_{K/F,w}^\mathrm{w}})$.
If $I_{K/F ,w}^\mathrm{w}$ acts trivially on
 $\mathcal{A}^\mathrm{CM}_\eta$, the nontriviality of the action of $I_{K/F,w}$ on $\mathcal{A}_\eta^\mathrm{CM}$ implies that the the tame quotient
 $I_{K/F ,w}^\mathrm{t}$ acts nontrivially on
 $\mathcal{A}^\mathrm{CM}_\eta$. In other words, if we denote 
 a generator of the cyclic group $I_{K/F ,w}^\mathrm{t}$ by $z'$, 
 $\psi^\mathrm{gal}(z')-1 $ is a nonzero element of $\Lambda^\mathrm{CM}_\mathcal{O}$. 
 The first cohomology group $H^1(I_{K/F ,w}^\mathrm{t},
 \mathcal{A}^\mathrm{CM}_\eta)$ of the finite cyclic group
 $I_{K/F ,w}^\mathrm{t}$ is described as
 $\mathcal{A}^\mathrm{CM}_\eta/(\psi^\mathrm{gal}(z')-1)\mathcal{A}^\mathrm{CM}_\eta$, and we thus deduce its triviality from the divisibility of $\mathcal{A}^\mathrm{CM}_\eta$ as a $\Lambda^\mathrm{CM}_\mathcal{O}$-module.
\end{proof}

The diagram~(\ref{eq:comparison}) combined with Lemmata~\ref{lem:cpsn} and \ref{lem:cpsn-loc} implies
that both the kernel and the cokernel of the natural map
$\mathrm{Sel}^\Sigma_{\mathcal{A}^\mathrm{CM}_\eta}\rightarrow \ker(\phi_{F_S/K})$ are 
copseudonull $\Lambda^\mathrm{CM}_\mathcal{O}$\nobreakdash-modules. In particular, 
the characteristic ideals of Pontrjagin duals of these $\Lambda^\mathrm{CM}_\mathcal{O}$-modules coincide with each other. 
Now we investigate the kernel of $\phi_{F_S/K}$ more precisely,
by generalising Greenberg's arguments made around the equation (7) in \cite{gr-mot} to multi-variable cases.
For this purpose we prepare the following technical lemma.

\begin{lem} \label{lem:vanish-free}
Let $\mathcal{G}$ be a profinite group and 
$\rho \colon \mathcal{G} \rightarrow \mathcal{O}^\times$ 
a continuous character, where $\mathcal{O}$ denotes the ring of integers 
of a finite extension of $\mathbb{Q}_p$. 
We denote by $\mathcal{T}_\rho$ a continuous $\mathcal{G}$-representation 
 $\mathcal{O}(\rho) \otimes_\mathcal{O}
 \mathcal{O}[[\Gamma]]^\sharp$ equipped with the diagonal action of
 $\mathcal{G}$, and by $\mathcal{A}_\rho$ its associated discrete
 $\mathcal{G}$-representation defined by $\mathcal{T}_\rho
 \otimes_{\mathcal{O}[[\Gamma]]} \mathcal{O}[[\Gamma]]^\vee$. 
  
Suppose that $\mathcal{G}$ admits a free abelian pro-$p$ 
quotient $\Gamma$ of finite $\mathbb{Z}_p$-rank and that the kernel of the natural surjection
 $\mathcal{G}\twoheadrightarrow \Gamma$ is contained in the kernel of
 $\rho$. 

Then for an arbitrary free pro-$p$ subgroup $\Gamma'$ of $\Gamma$, the first
 cohomology group $H^1(\Gamma', \mathcal{A}_\rho)$ is trivial.
\end{lem}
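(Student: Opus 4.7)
The plan is to reduce the vanishing of $H^1(\Gamma', \mathcal{A}_\rho)$ to a regular-sequence statement in the Iwasawa algebra $\mathcal{O}[[\Gamma]]$, via Hochschild--Serre and Pontrjagin duality. Since $\rho$ factors through $\Gamma$ by hypothesis (and the tautological character on $\mathcal{O}[[\Gamma]]^\sharp$ manifestly does so), the action of $\mathcal{G}$ on $\mathcal{A}_\rho$ factors through $\Gamma$, and an element $\gamma \in \Gamma$ acts as multiplication by $\rho(\gamma)\gamma \in \mathcal{O}[[\Gamma]]$. Fix topological generators $\gamma_1, \ldots, \gamma_{r'}$ of $\Gamma' \cong \mathbb{Z}_p^{r'}$ and set $s_i = \rho(\gamma_i)\gamma_i - 1 \in \mathcal{O}[[\Gamma]]$.

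I would proceed by induction on $r'$. For the base case $r' = 1$, one has $H^1(\Gamma', \mathcal{A}_\rho) = \mathcal{A}_\rho / s_1 \mathcal{A}_\rho$; by exactness of Pontrjagin duality this is dual to the $s_1$-torsion of $\mathcal{A}_\rho^\vee \cong \mathcal{O}[[\Gamma]]$, which vanishes because $\mathcal{O}[[\Gamma]]$ is a regular local domain and $s_1 \neq 0$. For the inductive step, apply Hochschild--Serre to the short exact sequence $0 \to \Gamma'' \to \Gamma' \to \Gamma'/\Gamma'' \to 0$ with $\Gamma'' = \langle \gamma_1, \ldots, \gamma_{r'-1} \rangle$: the inductive vanishing $H^1(\Gamma'', \mathcal{A}_\rho) = 0$ gives
\[
H^1(\Gamma', \mathcal{A}_\rho) \;\cong\; H^1\bigl(\Gamma'/\Gamma'',\, (\mathcal{A}_\rho)^{\Gamma''}\bigr).
\]
A second application of Pontrjagin duality identifies $((\mathcal{A}_\rho)^{\Gamma''})^\vee$ with the quotient $\mathcal{O}[[\Gamma]]/(s_1, \ldots, s_{r'-1})$, since $(\mathcal{A}_\rho)^{\Gamma''}$ is the kernel of the map $m \mapsto (s_1 m, \ldots, s_{r'-1} m)$. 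Hence the desired vanishing is equivalent to multiplication by $s_{r'}$ being injective on $\mathcal{O}[[\Gamma]]/(s_1, \ldots, s_{r'-1})$, that is, to $(s_1, \ldots, s_{r'})$ being a regular sequence in $\mathcal{O}[[\Gamma]]$.

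To establish the regular-sequence property, I would invoke elementary divisor theory for the inclusion $\Gamma' \hookrightarrow \Gamma \cong \mathbb{Z}_p^r$ to produce a basis $\delta_1, \ldots, \delta_r$ of $\Gamma$ and integers $e_1, \ldots, e_{r'} \geq 0$ with $\gamma_i = \delta_i^{p^{e_i}}$ for $1 \leq i \leq r'$. Setting $T_i = \delta_i - 1$ identifies $\mathcal{O}[[\Gamma]]$ with $\mathcal{O}[[T_1, \ldots, T_r]]$. Because $\Gamma$ is pro-$p$, continuity of $\rho$ forces $\rho(\Gamma) \subset 1 + \mathfrak{m}_\mathcal{O}$, so $s_i = \rho(\gamma_i)(1+T_i)^{p^{e_i}} - 1$ reduces modulo the maximal ideal of $\mathcal{O}[[T_1, \ldots, T_r]]$ to $T_i^{p^{e_i}}$. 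By Weierstrass preparation, $s_i$ is, up to a unit, a distinguished polynomial of degree $p^{e_i}$ in $T_i$ over $\mathcal{O}[[T_1, \ldots, \widehat{T_i}, \ldots, T_r]]$. An easy induction then shows that $\mathcal{O}[[T_1, \ldots, T_r]]/(s_1, \ldots, s_i)$ is a free module of rank $p^{e_1 + \cdots + e_i}$ over $\mathcal{O}[[T_{i+1}, \ldots, T_r]]$, and freeness over a domain guarantees that the next element $s_{i+1}$ is a non-zero-divisor.

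The principal obstacle I anticipate lies in this last step: one must carefully track how each $s_i$ still sits as a distinguished polynomial in $T_i$ after the preceding variables have been partially eliminated via $s_1, \ldots, s_{i-1}$, so as to deduce freeness (and hence torsion-freeness) of each successive quotient over the appropriate smaller power-series ring. By contrast, the Hochschild--Serre and Pontrjagin-duality reductions in the first two paragraphs are purely formal once the description of the Galois action on $\mathcal{A}_\rho$ as $\gamma \mapsto \rho(\gamma)\gamma \in \mathcal{O}[[\Gamma]]$ is in place.
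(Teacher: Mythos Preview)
Your proof is correct and shares the same inductive/Hochschild--Serre skeleton as the paper's, but the inductive step is handled differently. The paper avoids your regular-sequence computation entirely: having reduced to showing $H^1(\Gamma'/\Gamma'', (\mathcal{A}_\rho)^{\Gamma''})=0$, it simply observes that the $\Gamma''$-coinvariants $(\mathcal{T}_\rho)_{\Gamma''}$ are isomorphic to $\mathcal{O}[[\Gamma/\Gamma'']]$ as an $\mathcal{O}[[\Gamma/\Gamma'']]$-module, so the Pontrjagin dual $(\mathcal{A}_\rho)^{\Gamma''}$ is divisible over $\mathcal{O}[[\Gamma/\Gamma'']]$, and the rank-one argument applies again over this smaller Iwasawa algebra. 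What makes this immediate is that the twisting map $\mathrm{Tw}_\rho \colon \mathcal{O}[[\Gamma]]\to\mathcal{O}[[\Gamma]]$, $\gamma\mapsto\rho(\gamma)\gamma$, is a ring automorphism carrying the augmentation ideal of $\Gamma''$ onto $(s_1,\dots,s_{r'-1})$; so your quotient $\mathcal{O}[[\Gamma]]/(s_1,\dots,s_{r'-1})$ is visibly $\mathcal{O}[[\Gamma/\Gamma'']]$, and the regularity of $(s_1,\dots,s_{r'})$ follows at once from that of $(\gamma_1-1,\dots,\gamma_{r'}-1)$ without any appeal to Weierstrass preparation or elementary divisors. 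Your route works, but the twist trick dissolves precisely the ``principal obstacle'' you anticipated.
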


Note that, in the statement of Lemma \ref{lem:vanish-free}, the free abelian quotient $\Gamma$ naturally acts on
$\mathcal{A}_\rho$ thanks to the assumption on the kernel of $\rho$.  

\begin{proof}
We verify the claim by induction on the $\mathbb{Z}_p$-rank $n$ of
 $\Gamma'$. First consider the case where $n$ equals $1$. 
Then if we choose an element $\tilde{\gamma}$ of $\mathcal{G}$ so that 
its image $\gamma$ in $\Gamma$ topologically generates $\Gamma'$, 
the first cohomology group $H^1(\Gamma', \mathcal{A}_\rho)$ 
is described as the quotient 
$\mathcal{A}_\rho/(\rho(\tilde{\gamma})\gamma-1)\mathcal{A}_\rho$. 
Therefore the claim holds since $\mathcal{A}_\rho$ is a divisible
 $\mathcal{O}[[\Gamma]]$-module and $\rho(\tilde{\gamma})\gamma-1$ is 
a nonzero element of $\mathcal{O}[[\Gamma]]$. 
 
For general $n$, let us choose an arbitrary free pro-$p$ subgroup 
 $\Gamma''$ of $\Gamma'$ of $\mathbb{Z}_p$-rank $n-1$, and
 consider the inflation-restriction exact sequence
\begin{align} \label{eq:infres}
\xymatrix{ 0 \ar[r] & H^1(\Gamma'/\Gamma'', \mathcal{A}_\rho^{\Gamma''}) 
\ar[r] & H^1(\Gamma', \mathcal{A}_\rho) \ar[r] & H^1(\Gamma'',
 \mathcal{A}_\rho)^{\Gamma'/\Gamma''}}.
\end{align}
 The induction hypothesis implies the triviality
 of $H^1(\Gamma'', \mathcal{A}_\rho)$. We readily see
 that the cohomology group $H^1(\Gamma'/\Gamma'', \mathcal{A}_\rho^{\Gamma''})$ is also trivial; 
 indeed, since the $\Gamma''$-coinvariant $(\mathcal{T}_\rho)_{\Gamma''}$ of
 $\mathcal{T}_\rho$ is isomorphic to $\mathcal{O}[[\Gamma/\Gamma'']]$ as an
 $\mathcal{O}[[\Gamma/\Gamma'']]$-module, its Pontrjagin dual 
$\mathcal{A}_\rho^{\Gamma''}$ is a divisible
 $\mathcal{O}[[\Gamma/\Gamma'']]$-module. Therefore
 $H^1(\Gamma'/\Gamma'', \mathcal{A}_\rho^{\Gamma''})$ vanishes by the
 same reason as the case where $n$ equals $1$. Consequently 
 the exact sequence (\ref{eq:infres}) implies that 
the cohomology group $H^1(\Gamma', \mathcal{A}_\rho)$ is also 
 trivial .
\end{proof}

Applying Lemma~\ref{lem:vanish-free} to both the source and the target
of the local-to-global morphism $\phi_{F_S/K}$, we obtain the following
corollaries. 

\begin{cor} \label{cor:global_coh}
The restriction map induces an isomorphism between $H^1(F_S/K,
 \mathcal{A}^\mathrm{CM}_\eta)$ and
 $H^1(F_S/\widetilde{K}^\mathrm{CM}_\infty,
 \mathcal{A}^\mathrm{CM}_\eta)^{\widetilde{\Gamma}}$.
\end{cor}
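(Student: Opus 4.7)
The plan is to derive the stated isomorphism from the five-term inflation-restriction exact sequence attached to
\[
1 \to \mathrm{Gal}(F_S/\widetilde{K}^\mathrm{CM}_\infty) \to \mathrm{Gal}(F_S/K) \to \widetilde{\Gamma} \to 1,
\]
wherein $\widetilde{\Gamma}$ is identified with $\mathrm{Gal}(\widetilde{K}^\mathrm{CM}_\infty/K)\cong \mathrm{Gal}(\widetilde{F}/F)$ via the linear disjointness of $K$ and $\widetilde{F}$ over $F$ (a defining property of the branch character; see Lemma~\ref{lem:branch}). To invoke this sequence one needs $\widetilde{K}^\mathrm{CM}_\infty\subset F_S$, which holds because $\widetilde{F}/F$ and $F(\mu_p)/F$ are unramified outside $p$, while $K_\psi/F$ is ramified only at places dividing $\mathfrak{C}(\eta)$, all of which lie in $S$.

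Before using the sequence I would check that $\mathrm{Gal}(F_S/\widetilde{K}^\mathrm{CM}_\infty)$ acts trivially on $\mathcal{A}^\mathrm{CM}_\eta$, so that its invariants coincide with the full module. The tautological factor $\Lambda^{\mathrm{CM},\vee}_\mathcal{O}$ is fixed since $G_{\widetilde{K}^\mathrm{CM}_\infty}\subset G_{\widetilde{F}_\infty}$; meanwhile, writing $\eta^\mathrm{gal} = (\eta^\mathrm{gal}\psi^{-1})\cdot\psi$, the first factor is trivial on $G_{\widetilde{F}_\infty}$ because $\eta^\mathrm{gal}\psi^{-1}$ factors through $\mathrm{Gal}(\widetilde{F}_\infty/F)$, and the second factor is trivial on $G_K\supset G_{\widetilde{K}^\mathrm{CM}_\infty}$ because $K\supset K_\psi$. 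With this input, the five-term sequence reads
\[
0 \to H^1(\widetilde{\Gamma}, \mathcal{A}^\mathrm{CM}_\eta) \to H^1(F_S/K, \mathcal{A}^\mathrm{CM}_\eta) \xrightarrow{\mathrm{res}} H^1(F_S/\widetilde{K}^\mathrm{CM}_\infty, \mathcal{A}^\mathrm{CM}_\eta)^{\widetilde{\Gamma}} \to H^2(\widetilde{\Gamma}, \mathcal{A}^\mathrm{CM}_\eta),
\]
so it suffices to show that the outer terms vanish.

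The vanishing of $H^1(\widetilde{\Gamma}, \mathcal{A}^\mathrm{CM}_\eta)$ is furnished by Lemma~\ref{lem:vanish-free} applied with $\mathcal{G} = G_K$, $\Gamma=\widetilde{\Gamma}$ and $\rho = \eta^\mathrm{gal}|_{G_K}$: the linear disjointness of $F(\mu_p)$ and $\widetilde{F}$ over $F$ gives the splitting $\mathrm{Gal}(\widetilde{F}_\infty/F)\cong\widetilde{\Gamma}\times\Delta^\mathrm{cyc}$, and because $K\supset F(\mu_p)$ forces the $\Delta^\mathrm{cyc}$-factor to act trivially, $\mathcal{A}^\mathrm{CM}_\eta$ is a direct sum of $|\Delta^\mathrm{cyc}|$ copies of $\mathcal{A}_\rho = \mathcal{O}(\rho)\otimes_\mathcal{O}\mathcal{O}[[\widetilde{\Gamma}]]^\vee$ as a $G_K$-module. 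For the vanishing of $H^2$, the same decomposition shows that the Pontrjagin dual of $\mathcal{A}^\mathrm{CM}_\eta$ is a free $\mathcal{O}[[\widetilde{\Gamma}]]$-module of rank $|\Delta^\mathrm{cyc}|$, whence
\[
H^i(\widetilde{\Gamma}, \mathcal{A}^\mathrm{CM}_\eta)^\vee \cong \mathrm{Tor}_i^{\mathcal{O}[[\widetilde{\Gamma}]]}\bigl(\mathcal{O}, (\mathcal{A}^\mathrm{CM}_\eta)^\vee\bigr) = 0 \qquad \text{for every } i\geq 1.
\]
As an alternative one can adapt the recursive argument in the proof of Lemma~\ref{lem:vanish-free}, filtering $\widetilde{\Gamma}$ by $\mathbb{Z}_p$-subgroups and exploiting that the invariants of a cofree module under a $\mathbb{Z}_p$-quotient are again cofree over the quotient algebra.

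The main technical hurdle is this $H^2$ vanishing, since Lemma~\ref{lem:vanish-free} as stated only addresses $H^1$. The cofreeness/Pontrjagin-duality route above is the cleanest, but it relies on the group-theoretic splitting of $\mathrm{Gal}(\widetilde{F}_\infty/F)$ afforded by the linear disjointness of $F(\mu_p)$ and $\widetilde{F}$ over $F$ and on a careful comparison of the tautological $\widetilde{\Gamma}$-action with the twist by $\eta^\mathrm{gal}|_{G_K}$.
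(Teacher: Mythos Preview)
Your proof follows the same inflation--restriction strategy as the paper and reduces, exactly as the paper does, to the vanishing of $H^i(\widetilde{\Gamma}, \mathcal{A}^\mathrm{CM}_\eta)$ for $i=1,2$; for $i=1$ both invoke Lemma~\ref{lem:vanish-free}. The difference lies in the treatment of $H^2$. The paper asserts that $\widetilde{\Gamma}$, being a ``free abelian pro-$p$ group,'' has cohomological dimension at most~$1$; but this is not correct as stated, since $\mathrm{cd}_p(\mathbb{Z}_p^r)=r$ and here $r=d+1+\delta_{F,p}\geq 2$ (the statement holds for free pro-$p$ groups, not free \emph{abelian} pro-$p$ groups of rank $>1$). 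Your argument via cofreeness---noting that $(\mathcal{A}^\mathrm{CM}_\eta)^\vee$ is free over $\mathcal{O}[[\widetilde{\Gamma}]]$ (twisting by a character preserves freeness, and $\Lambda^\mathrm{CM}_\mathcal{O}$ is free of rank $|\Delta^\mathrm{cyc}|$ over $\mathcal{O}[[\widetilde{\Gamma}]]$) and then using $H^i(\widetilde{\Gamma}, \mathcal{A}^\mathrm{CM}_\eta)^\vee \cong \mathrm{Tor}_i^{\mathcal{O}[[\widetilde{\Gamma}]]}\bigl(\mathcal{O}, (\mathcal{A}^\mathrm{CM}_\eta)^\vee\bigr)=0$---is a valid way to establish the required $H^2$ vanishing and actually repairs the gap in the paper's justification. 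Your alternative inductive suggestion (extending the filtration argument of Lemma~\ref{lem:vanish-free} to degree~$2$) would also work.
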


Here we identify $\widetilde{\Gamma}=\mathrm{Gal}(\widetilde{F}/F)$ with
the Galois group $\mathrm{Gal}(\widetilde{K}^\mathrm{CM}_\infty/K)$ in the canonical manner.

\begin{proof}
Due to the inflation-restriction exact sequence
{\small 
\begin{align*}
\xymatrix{0 \ar[r] & H^1(\widetilde{\Gamma},
 \mathcal{A}^\mathrm{CM}_\eta)
 \ar[r] & H^1(F_S/K, \mathcal{A}^\mathrm{CM}_\eta) \ar[r] &
 H^1(F_S/\widetilde{K}^\mathrm{CM}_\infty,
 \mathcal{A}^\mathrm{CM}_\eta)^{\widetilde{\Gamma}} \ar[r] & H^2(\widetilde{\Gamma},
 \mathcal{A}^\mathrm{CM}_\eta)},
\end{align*}
}it suffices to show that $H^i(\widetilde{\Gamma},
 \mathcal{A}^\mathrm{CM}_\eta)$
 is trivial for $i=1,2$ (note that
 $\mathrm{Gal}(F_S/\widetilde{K}^\mathrm{CM}_\infty)$ acts trivially on 
 $\mathcal{A}^\mathrm{CM}_\eta$). The first cohomology vanishes by a direct 
 consequence of Lemma~\ref{lem:vanish-free}. The second cohomology
 vanishes since $\widetilde{\Gamma}$ is a free abelian pro-$p$ group and 
 thus its cohomological dimension is equal to or less than $1$. This completes
 the proof.
\end{proof}

\begin{cor} \label{cor:local_coh}
Let $w$ be a place of $F$ contained in $S\setminus \Sigma_p$. Then for
 each place $\tilde{w}$ of $K$ lying above $w$, 
the restriction morphism 
\begin{align} \label{eq:restriction_loc}
H^1(I_{K,\tilde{w}},\mathcal{A}^\mathrm{CM}_\eta) \rightarrow
 \prod_{\hat{w} \mid \tilde{w}}
 H^1(I_{\widetilde{K}^\mathrm{CM}_\infty,\hat{w}}, \mathcal{A}^\mathrm{CM}_\eta)
\end{align}
is injective $($here $I_{\widetilde{K}^\mathrm{CM}_\infty, \hat{w}}$
 denotes the inertia subgroup of
 $\mathrm{Gal}(F_S/\widetilde{K}^\mathrm{CM}_\infty)$ at $\hat{w})$.
\end{cor}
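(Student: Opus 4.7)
The plan is to reduce the injectivity of \eqref{eq:restriction_loc} to a single factor via the inflation--restriction sequence, and then apply Lemma~\ref{lem:vanish-free}. Fix one place $\hat{w}_0$ of $\widetilde{K}^\mathrm{CM}_\infty$ above $\tilde{w}$; since the composite map \eqref{eq:restriction_loc} factors through the restriction $H^1(I_{K,\tilde{w}},\mathcal{A}^\mathrm{CM}_\eta) \to H^1(I_{\widetilde{K}^\mathrm{CM}_\infty,\hat{w}_0},\mathcal{A}^\mathrm{CM}_\eta)$ to a single factor, injectivity of the latter will imply the corollary. The inflation--restriction sequence for the normal subgroup $I_{\widetilde{K}^\mathrm{CM}_\infty,\hat{w}_0}\subseteq I_{K,\tilde{w}}$ gives
\begin{equation*}
0 \to H^1\bigl(\tilde{I}_w,(\mathcal{A}^\mathrm{CM}_\eta)^{I_{\widetilde{K}^\mathrm{CM}_\infty,\hat{w}_0}}\bigr) \to H^1(I_{K,\tilde{w}},\mathcal{A}^\mathrm{CM}_\eta) \to H^1(I_{\widetilde{K}^\mathrm{CM}_\infty,\hat{w}_0},\mathcal{A}^\mathrm{CM}_\eta),
\end{equation*}
where $\tilde{I}_w := I_{K,\tilde{w}}/I_{\widetilde{K}^\mathrm{CM}_\infty,\hat{w}_0}$ is canonically identified with the image of $I_{K,\tilde{w}}$ in $\mathrm{Gal}(\widetilde{K}^\mathrm{CM}_\infty/K) \cong \widetilde{\Gamma}$. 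It therefore suffices to show that the left-hand term vanishes.

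Next I would verify that $G_{\widetilde{K}^\mathrm{CM}_\infty}$ acts trivially on $\mathcal{A}^\mathrm{CM}_\eta$, which would yield $(\mathcal{A}^\mathrm{CM}_\eta)^{I_{\widetilde{K}^\mathrm{CM}_\infty,\hat{w}_0}} = \mathcal{A}^\mathrm{CM}_\eta$. For this, write $\eta^\mathrm{gal} = (\eta^\mathrm{gal}\psi^{-1})\,\psi$ on $G_F$, where $\psi$ is the branch character from Definition~\ref{def:branch}. Since $K \supseteq K_\psi$, the restriction $\psi|_{G_K}$ is trivial, so $\eta^\mathrm{gal}|_{G_K} = (\eta^\mathrm{gal}\psi^{-1})|_{G_K}$, which by construction factors through $\mathrm{Gal}(\widetilde{K}^\mathrm{CM}_\infty/K)$. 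The tautological $\widetilde{\Gamma}$-action on $\Lambda_\mathcal{O}^{\mathrm{CM},\vee}$ also dies when restricted to $G_{\widetilde{K}^\mathrm{CM}_\infty}$, giving the desired triviality.

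The key step is then to show $H^1(\tilde{I}_w,\mathcal{A}^\mathrm{CM}_\eta) = 0$ by invoking Lemma~\ref{lem:vanish-free}. Since $\widetilde{\Gamma}$ is isomorphic to $\mathbb{Z}_p^{d+1+\delta_{F,p}}$, every closed subgroup is a free abelian pro-$p$ group of finite rank; in particular $\tilde{I}_w$ is such. I would then apply Lemma~\ref{lem:vanish-free} with $\mathcal{G} = G_K$, target quotient $\Gamma = \widetilde{\Gamma}$, character $\rho = \eta^\mathrm{gal}|_{G_K}$ (which factors through $\widetilde{\Gamma}$ by the previous paragraph), and free pro-$p$ subgroup $\Gamma' = \tilde{I}_w \subseteq \widetilde{\Gamma}$, concluding the vanishing. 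Combined with the inflation--restriction sequence above, this delivers the injectivity of \eqref{eq:restriction_loc}.

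The main obstacle is the bookkeeping at the second step: one must confirm that the identification $\mathrm{Gal}(\widetilde{K}^\mathrm{CM}_\infty/K) \xrightarrow{\sim} \widetilde{\Gamma}$ is compatible with the factorisation of $\eta^\mathrm{gal}|_{G_K}$, which in turn relies crucially on the linear disjointness of $K$ from $\widetilde{F}$ over $F$ guaranteed by Lemma~\ref{lem:branch}. Everything else is a straightforward application of group cohomology and elementary divisor theory on $\mathbb{Z}_p$-modules.
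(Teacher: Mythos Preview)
Your proposal is correct and follows essentially the same route as the paper: use inflation--restriction to identify the kernel with $H^1(\tilde{I}_w,\mathcal{A}^\mathrm{CM}_\eta)$ where $\tilde{I}_w$ is the inertia image inside $\widetilde{\Gamma}\cong\mathrm{Gal}(\widetilde{K}^\mathrm{CM}_\infty/K)$, observe that this image is a free $\mathbb{Z}_p$-submodule of $\widetilde{\Gamma}$, and invoke Lemma~\ref{lem:vanish-free}. Your write-up is in fact more explicit than the paper's (you spell out why $G_{\widetilde{K}^\mathrm{CM}_\infty}$ acts trivially and why $\eta^\mathrm{gal}|_{G_K}$ factors through $\widetilde{\Gamma}$), but the argument is the same.
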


\begin{proof}
By the inflation-restriction exact sequence, we see that the kernel of 
the restriction map (\ref{eq:restriction_loc}) is isomorphic to 
$H^1(I(\widetilde{K}^\mathrm{CM}_\infty/K)_w,
 \mathcal{A}^\mathrm{CM}_\eta)$, where we denotes by
 $I(\widetilde{K}^\mathrm{CM}_\infty/K)_w$ the inertia subgroup of
 $\mathrm{Gal}(\widetilde{K}^\mathrm{CM}_\infty/K)$ at $w$. Since
 $I(\widetilde{K}^\mathrm{CM}_\infty/K)_w$ is regarded as a
 $\mathbb{Z}_p$-submodule of the free $\mathbb{Z}_p$-module
 $\widetilde{\Gamma}\cong \mathrm{Gal}(\widetilde{K}^\mathrm{CM}_\infty/K)$ of finite rank,  it is also 
 free as a $\mathbb{Z}_p$-module. Applying
 Lemma~\ref{lem:vanish-free} to $H^1(I(\widetilde{K}^\mathrm{CM}_\infty/K)_{\tilde{w}},
 \mathcal{A}^\mathrm{CM}_\eta)$, we conclude that it is trivial.  
\end{proof}

Corollaries~\ref{cor:global_coh} and \ref{cor:local_coh} imply that 
the kernel of $\phi_{F_S/K}$ is isomorphic to that of 
\begin{align*}
H^1(F_S/\widetilde{K}^\mathrm{CM}_\infty,
 \mathcal{A}^\mathrm{CM}_\eta)^{\Delta \times \widetilde{\Gamma}}
 \rightarrow \prod_{w\in S\setminus \Sigma_p} \prod_{\hat{w}\mid w}
 H^1(I_{\widetilde{K}^\mathrm{CM}_\infty, \hat{w}},
 \mathcal{A}^\mathrm{CM}_\eta), 
\end{align*}
or in other words, the kernel of the restriction map
\begin{align*}
\phi_{F_S/\widetilde{K}^\mathrm{CM}_\infty} \colon \mathrm{Hom}_{\Delta \times \widetilde{\Gamma}}(\mathrm{Gal}(F_S/\widetilde{K}^\mathrm{CM}_\infty)^\mathrm{ab},
 \mathcal{A}^\mathrm{CM}_\eta)
 \rightarrow \prod_{w\in S\setminus
 \Sigma_p} \prod_{\hat{w}\mid w}
 \mathrm{Hom}(I_{\widetilde{K}^\mathrm{CM}_\infty, \hat{w}}^\mathrm{ab},
 \mathcal{A}^\mathrm{CM}_\eta).
\end{align*}
Let $M_{\Sigma_p}$ denote the maximal abelian pro-$p$ extension 
of $\widetilde{K}^\mathrm{CM}_\infty$ unramified outside places
above $\Sigma_p$, and let $X_{\Sigma_p}$ denote the Galois group of
$M_{\Sigma_p}/\widetilde{K}^\mathrm{CM}_\infty$. Then as in classical Iwasawa theory, an 
element $g$ of $\mathrm{Gal}(\widetilde{K}^\mathrm{CM}_\infty/F)$
acts on $X_{\Sigma_p}$ by $x\mapsto 
{}^g x:=\tilde{g}x\tilde{g}^{-1}$, where $\tilde{g}$ denotes 
an arbitrary lift of $g$ to $\mathrm{Gal}(M_{\Sigma_p}/F)$. 
Furthermore we define the {\em maximal $\psi$\nobreakdash-isotypic quotient} 
$X_{\Sigma_p, (\psi)}$ of $X_{\Sigma_p}$ as 
\begin{align*}
X_{\Sigma_p, (\psi)} =(X_{\Sigma_p} \otimes_{\mathbb{Z}_p} \mathcal{O})
 \otimes_{\mathcal{O}[\Delta]} \mathcal{O}[\Delta^\mathrm{cyc}].
\end{align*}
Here the second tensor product is defined with respect to the map 
\begin{align*}
\mathcal{O}[\Delta]\rightarrow \mathcal{O}[\Delta^\mathrm{cyc}]\ ;
 \ \delta \mapsto \psi(\delta)\delta\vert_{F(\mu_p)} \quad \text{for
 $\delta$ in $\Delta=\mathrm{Gal}(K/F)$}
\end{align*}
(recall that $\Delta^\mathrm{cyc}$ is defined as $\mathrm{Gal}(F(\mu_p)/F)$). Since an element $\delta$ of 
$\Delta$ acts on $\mathcal{A}^\mathrm{CM}_\eta$ via the multiplication
by $\psi(\delta)\delta\vert_{F(\mu_p)}$, the kernel of
$\phi_{F_S/\widetilde{K}^\mathrm{CM}_\infty}$ is calculated as 
\begin{align*}
\ker(\phi_{F_S/\widetilde{K}^\mathrm{CM}_\infty}) &=
 \mathrm{Hom}_{\Delta \times \Gamma}(X_{\Sigma_p},
 \mathcal{A}^\mathrm{CM}_\eta) =\mathrm{Hom}_{\mathcal{O}[[\Delta\times
 \Gamma]]}(X_{\Sigma_p} \otimes_{\mathbb{Z}_p}\mathcal{O},
 \mathcal{A}^\mathrm{CM}_\eta) \\
&\cong \mathrm{Hom}_{\Lambda^\mathrm{CM}_\mathcal{O}}(X_{\Sigma_p,
 (\psi)}, \mathcal{A}^\mathrm{CM}_\eta).
\end{align*}

In order to investigate the structure of
$\mathrm{Hom}_{\Lambda^\mathrm{CM}_\mathcal{O}}(X_{\Sigma_p, (\psi)},
\mathcal{A}^\mathrm{CM}_\eta)$, we introduce the notion of 
{\em twisting} of 
finitely generated $\Lambda^\mathrm{CM}_\mathcal{O}$-module. 
Recall that we have defined the {\em $\rho$-twisting map} 
\begin{align*}
\mathrm{Tw}_\rho \colon \Lambda^\mathrm{CM}_\mathcal{O} \xrightarrow{\sim}
 \Lambda^\mathrm{CM}_\mathcal{O} ; g \mapsto \rho(g)g \quad \text{for $g$ in
 $\mathrm{Gal}(\widetilde{F}_\infty/F)$}
\end{align*}
for an arbitrary continuous character $\rho \colon
\mathrm{Gal}(\widetilde{F}_\infty/F)\rightarrow \mathcal{O}^\times$ 
in Section~\ref{sc:Introduction}. For an arbitrary  
$\Lambda^\mathrm{CM}_\mathcal{O}$-module $M$, 
we define the {\em $\rho$-twist} $\mathrm{Tw}_\rho(M)$ of $M$ as the
$\mathcal{O}$-module $M$ on which an element $r$ of $\Lambda^\mathrm{CM}_\mathcal{O}$ acts 
as the scalar multiplication by $\mathrm{Tw}_\rho(r)$. 
The following lemma describes basic properties of twisting of $\Lambda^\mathrm{CM}_\mathcal{O}$-modules.

\begin{lem} \label{lem:twist}
Let $\rho \colon \mathrm{Gal}(\widetilde{F}_\infty/F)\rightarrow
 \mathcal{O}^\times$ be a continuous character and $M$ a 
 $\Lambda^\mathrm{CM}_\mathcal{O}$-module.
\begin{enumerate}[label=$(\arabic*)$]
\item The Pontrjagin dual $(\mathrm{Tw}_\rho(M))^\vee$ of
      $\mathrm{Tw}_\rho(M)$ is isomorphic to
      $\mathrm{Tw}_\rho(M^\vee)$, the $\rho$\nobreakdash-twist of the
      Pontrjagin dual $M^\vee$ of $M$, as a
      $\Lambda^\mathrm{CM}_\mathcal{O}$-module. 
\item For an arbitrary $\Lambda^\mathrm{CM}_\mathcal{O}$-module $N$, 
      we obtain an equality of $\Lambda^\mathrm{CM}_\mathcal{O}$-modules
\begin{align*} 
\mathrm{Hom}_{\Lambda^\mathrm{CM}_\mathcal{O}}(\mathrm{Tw}_\rho(M),N)
 = \mathrm{Hom}_{\Lambda^\mathrm{CM}_\mathcal{O}}(M,
 \mathrm{Tw}_{\rho^{-1}}(N)).
\end{align*}
\item Assume that $M$ is finitely generated and torsion as a
      $\Lambda^\mathrm{CM}_\mathcal{O}$-module. 
      Then $\mathrm{Tw}_\rho(M)$ is pseudonull 
      if and only if $M$ itself is pseudonull. Furthermore 
      we obtain the following equality of ideals of $\Lambda^\mathrm{CM}_\mathcal{O}:$
\begin{align*}
\mathrm{Char}_{\Lambda^\mathrm{CM}_\mathcal{O}}(\mathrm{Tw}_\rho(M))=\mathrm{Tw}_{\rho^{-1}}(\mathrm{Char}_{\Lambda^\mathrm{CM}_\mathcal{O}}(M)).
\end{align*}
\end{enumerate}
\end{lem}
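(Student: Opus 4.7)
The central observation driving all three parts is that $\mathrm{Tw}_\rho$ is a continuous $\mathcal{O}$-algebra automorphism of $\Lambda^\mathrm{CM}_\mathcal{O}$ with inverse $\mathrm{Tw}_{\rho^{-1}}$. Indeed, the character property of $\rho$ gives $\mathrm{Tw}_\rho(gh)=\rho(gh)gh=\mathrm{Tw}_\rho(g)\mathrm{Tw}_\rho(h)$ on group-like elements of $\mathrm{Gal}(\widetilde{F}_\infty /F)$, and the identity extends by $\mathcal{O}$-linearity and continuity to the whole Iwasawa algebra. Consequently $\mathrm{Tw}_\rho(M)$ is nothing but the scalar restriction of $M$ along this ring automorphism, and the three assertions reduce to formal properties of scalar restriction.

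For part $(1)$, the Pontrjagin dual is taken on the underlying $\mathbb{Z}_p$-module (equivalently, $\mathcal{O}$-module) structure, which is not affected by the twist. Unwinding the definition of the dual action shows that an element $r\in\Lambda^\mathrm{CM}_\mathcal{O}$ acts on $(\mathrm{Tw}_\rho(M))^{\vee}$ by sending $\varphi$ to the map $m\mapsto\varphi(\mathrm{Tw}_\rho(r)m)$, which is precisely the action of $r$ on $\mathrm{Tw}_\rho(M^\vee)$; the identity map of underlying groups furnishes the isomorphism. For part $(2)$, an $\mathcal{O}$-linear map $\phi\colon M\to N$ defines a $\Lambda^\mathrm{CM}_\mathcal{O}$-homomorphism $\mathrm{Tw}_\rho(M)\to N$ if and only if $\phi(\mathrm{Tw}_\rho(r)m)=r\phi(m)$ holds for all $r,m$; substituting $r\leftrightarrow\mathrm{Tw}_{\rho^{-1}}(r)$ converts this identity into $\phi(rm)=\mathrm{Tw}_{\rho^{-1}}(r)\phi(m)$, which is the defining condition of a $\Lambda^\mathrm{CM}_\mathcal{O}$-homomorphism $M\to\mathrm{Tw}_{\rho^{-1}}(N)$. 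This yields the desired natural equality.

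For part $(3)$, the preservation of pseudonullness is a direct consequence of the fact that a ring automorphism sends prime ideals to prime ideals of the same height, so that $\mathrm{Supp}(\mathrm{Tw}_\rho(M))=\mathrm{Tw}_{\rho^{-1}}(\mathrm{Supp}(M))$ shares the same codimension profile. For the equality of characteristic ideals, I shall work componentwise on the semilocal ring $\Lambda^\mathrm{CM}_\mathcal{O}$ (each component is a formal power series ring over $\mathcal{O}$, in particular a regular local domain) and invoke the standard structure theorem to produce a pseudo\nobreakdash-isomorphism $M\sim\bigoplus_{i}\Lambda^\mathrm{CM}_\mathcal{O}/\mathfrak{p}_i^{e_i}$ with $\mathfrak{p}_i$ of height one. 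Applying the automorphism $\mathrm{Tw}_\rho$ yields a pseudo-isomorphism $\mathrm{Tw}_\rho(M)\sim\bigoplus_{i}\Lambda^\mathrm{CM}_\mathcal{O}/\mathrm{Tw}_{\rho^{-1}}(\mathfrak{p}_i)^{e_i}$, and taking the product of these ideals gives $\mathrm{Char}(\mathrm{Tw}_\rho(M))=\mathrm{Tw}_{\rho^{-1}}(\mathrm{Char}(M))$.

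The only point requiring genuine care will be reconciling this argument with the convention of the characteristic ideal over the semilocal algebra $\Lambda^\mathrm{CM}_\mathcal{O}$ adopted in Definition~$\ref{def:char}$: the automorphism $\mathrm{Tw}_\rho$ may permute the components (since it need not fix the idempotents corresponding to characters of the torsion subgroup of $\mathrm{Gal}(\widetilde{F}_\infty /F)$), so one must check that the componentwise product defining $\mathrm{Char}$ transforms consistently. This is bookkeeping rather than a substantive obstacle; once the dictionary between components and characters of the torsion part of $\mathrm{Gal}(\widetilde{F}_\infty /F)$ is set up, the verification is immediate from the above.
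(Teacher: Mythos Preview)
Your proposal is correct and follows essentially the same approach as the paper: parts~(1) and~(2) are dispatched by unwinding the definition of twisting, and part~(3) is handled by reducing componentwise via the structure theorem to elementary modules $\Lambda^\mathrm{CM}_\mathcal{O}/(a)$ and observing that $\mathrm{Tw}_\rho$ carries these to $\Lambda^\mathrm{CM}_\mathcal{O}/(\mathrm{Tw}_{\rho^{-1}}(a))$. The only minor differences are cosmetic: for pseudonullness the paper argues directly with a pair of relatively prime annihilator elements rather than through the support, and the paper does not explicitly raise (as you do) the possibility that $\mathrm{Tw}_\rho$ permutes the idempotents of the semilocal ring---your observation that this is harmless bookkeeping is correct.
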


In the statement of Lemma~\ref{lem:twist} (3), the characteristic ideals of finitely generated torsion modules over the {\em semilocal} Iwasawa algebra $\Lambda^\mathrm{CM}_\mathcal{O}$ are defined componentwisely. We shall recall the precise definition of them in Definition~\ref{def:char}.
Note that the Pontrjagin dual
$M^\vee=\mathrm{Hom}_\mathrm{cts}(M, \mathbb{Q}_p/\mathbb{Z}_p)$ of a
$\Lambda^\mathrm{CM}_\mathcal{O}$-module $M$ is equipped with the
$\Lambda^\mathrm{CM}_\mathcal{O}$-module structure defined by
$r\phi(x):=\phi(rx)$, which is incompatible with the usual action 
of $\mathrm{Gal}(\widetilde{F}_\infty/F)$ on
$\mathrm{Hom}_\mathrm{cts}(M, \mathbb{Q}_p/\mathbb{Z}_p)$ defined by $g\phi(x):=\phi(g^{-1}x)$. 

\begin{proof}
The assertions (1) and (2) directly follow from the definition of $p$-twisting.
For the assertion (3), assume that $M$ is a pseudonull module. 
By definition there exist distinct nonzero elements $r_1$ and $r_2$ in the
 annihilator ideal of $M$ which are relatively prime to each other. 
One then readily observes
 that both $\mathrm{Tw}_{\rho^{-1}}(r_1)$ and $\mathrm{Tw}_{\rho^{-1}}(r_2)$ 
annihilate $\mathrm{Tw}_\rho(M)$ and are relatively prime to each
 other, which implies that $\mathrm{Tw}_\rho(M)$ is also pseudonull. 
Similarly one readily verifies the converse implication.
 
 Now let us consider
 the statement on characteristic ideals in the assertion (3).
We first reduce  the claim to the case where $M$ is an elementary
 $\Lambda^\mathrm{CM}_\mathcal{O}$-module of the form
 $\Lambda^\mathrm{CM}_\mathcal{O}/(a)$, by considering componentwisely and
 using the structure theorem of finitely generated torsion
 $\mathcal{O}[[\mathrm{Gal}(\widetilde{F}/F)]]$-modules. 
 Then the reduced claim obviously holds since there exists an isomorphism 
\begin{align*}
\Lambda^\mathrm{CM}_\mathcal{O}/(\mathrm{Tw}_{\rho^{-1}}(a))\xrightarrow{\sim}
 \mathrm{Tw}_\rho(\Lambda^\mathrm{CM}_\mathcal{O}/(a))\ ; \ r \mod
 (\mathrm{Tw}_{\rho^{-1}}(a)) \mapsto \mathrm{Tw}_\rho(r) \mod (a)
\end{align*}
of $\Lambda^\mathrm{CM}_\mathcal{O}$-modules. 
\end{proof}

Since  $\mathcal{A}^\mathrm{CM}_\eta$ is isomorphic to
$\mathrm{Tw}_{\eta^\mathrm{gal}\psi^{-1}}(\Lambda^{\mathrm{CM},\vee}_\mathcal{O})$
as a $\Lambda^\mathrm{CM}_\mathcal{O}$-module, 
we may calculate by using Lemma~\ref{lem:twist} the kernel of
$\phi_{F_S/\widetilde{K}^\mathrm{CM}_\infty}$ as 
\begin{align*}
\ker(\phi_{F_S/\widetilde{K}^\mathrm{CM}_\infty}) &=
 \mathrm{Hom}_{\Lambda^\mathrm{CM}_\mathcal{O}}(X_{\Sigma_p, (\psi)},
 \mathrm{Tw}_{\eta^\mathrm{gal}\psi^{-1}}(\Lambda^\mathrm{CM}_\mathcal{O})^\vee)\\ 
&\cong
 \mathrm{Hom}_{\Lambda^\mathrm{CM}_\mathcal{O}}(\mathrm{Tw}_{\eta^\mathrm{gal}\psi^{-1}}(\Lambda^\mathrm{CM}_\mathcal{O}),
 X_{\Sigma_p, (\psi)}^\vee) \\
&=\mathrm{Hom}_{\Lambda^\mathrm{CM}_\mathcal{O}}(\Lambda^\mathrm{CM}_\mathcal{O},
 \mathrm{Tw}_{\eta^{\mathrm{gal}, -1}\psi} (X_{\Sigma_p, (\psi)})^\vee) 
\cong \mathrm{Tw}_{\eta^{\mathrm{gal}, -1}\psi} (X_{\Sigma_p,
 (\psi)})^\vee.
\end{align*}
As a consequence we obtain the following proposition.

\begin{pro} \label{prop:iw_mod} 
The Pontrjagin dual of the Selmer group
 $\mathrm{Sel}^\Sigma_{\mathcal{A}^\mathrm{CM}_\eta}$ is pseudoisomorphic to  
 $\mathrm{Tw}_{\eta^{\mathrm{gal},-1}\psi}(X_{\Sigma_p, (\psi)})$ as a
 $\Lambda^\mathrm{CM}_\mathcal{O}$-module. In particular, 
we obtain the following equality among characteristic ideals$:$
\begin{align*}
\mathrm{Char}_{\Lambda^\mathrm{CM}_\mathcal{O}}
 \mathrm{Sel}_{\mathcal{A}^\mathrm{CM}_\eta}^{\Sigma,\vee} 
 =\mathrm{Char}_{\Lambda^\mathrm{CM}_\mathcal{O}} (\mathrm{Tw}_{\eta^{\mathrm{gal},-1}\psi}(X_{\Sigma_p, (\psi)}))  
= \mathrm{Tw}_{\eta^\mathrm{gal}\psi^{-1}}(\mathrm{Char}_{\Lambda^\mathrm{CM}_\mathcal{O}}
 X_{\Sigma_p,(\psi)}). 
\end{align*}
\end{pro}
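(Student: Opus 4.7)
Essentially all the technical ingredients required to prove Proposition~\ref{prop:iw_mod} have been assembled in the paragraphs preceding its statement, so my plan is to organise these into a clean chain of (pseudo)isomorphisms.

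First I would start from the commutative diagram~\eqref{eq:comparison}, whose snake-lemma analysis compares $\mathrm{Sel}^\Sigma_{\mathcal{A}^\mathrm{CM}_\eta}$ with $\ker(\phi_{F_S/K})$. Lemma~\ref{lem:cpsn} gives the copseudonullity of $H^i(\Delta,(\mathcal{A}^\mathrm{CM}_\eta)^{G_K})$ for $i=1,2$, while Lemma~\ref{lem:cpsn-loc} handles the local terms $H^1(I_{K/F,w},(\mathcal{A}^\mathrm{CM}_\eta)^{I_{K,\tilde w_0}})$ at the ramified finite places $w\in S\setminus\Sigma_p$. A standard diagram chase then shows that the natural map $\mathrm{Sel}^\Sigma_{\mathcal{A}^\mathrm{CM}_\eta}\to\ker(\phi_{F_S/K})$ has copseudonull kernel and cokernel, hence induces a pseudoisomorphism on Pontrjagin duals over $\Lambda^\mathrm{CM}_\mathcal{O}$.

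Second, I would pull the computation of $\ker(\phi_{F_S/K})$ up to $\widetilde{K}^\mathrm{CM}_\infty$ using Corollaries~\ref{cor:global_coh} and~\ref{cor:local_coh}, which identify $\ker(\phi_{F_S/K})$ with the kernel of the analogous global-to-local map $\phi_{F_S/\widetilde{K}^\mathrm{CM}_\infty}$ on the $\Delta\times\widetilde{\Gamma}$-invariants; these identifications rely on Lemma~\ref{lem:vanish-free} applied to the free pro-$p$ group $\widetilde{\Gamma}$ and to the inertia subgroups inside it. Interpreting $\mathrm{Gal}(F_S/\widetilde{K}^\mathrm{CM}_\infty)^{\mathrm{ab}}$ modulo the inertia at places outside $\Sigma_p$ as the Galois group $X_{\Sigma_p}$ of $M_{\Sigma_p}/\widetilde{K}^\mathrm{CM}_\infty$, and remembering that $\Delta$ acts on $\mathcal{A}^\mathrm{CM}_\eta$ by $\psi(\delta)\delta|_{F(\mu_p)}$ so that this invariant Hom factors through $X_{\Sigma_p,(\psi)}$, I obtain the identification
\[
\ker(\phi_{F_S/\widetilde{K}^\mathrm{CM}_\infty})\;\cong\;\mathrm{Hom}_{\Lambda^\mathrm{CM}_\mathcal{O}}(X_{\Sigma_p,(\psi)},\mathcal{A}^\mathrm{CM}_\eta).
\]

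Third, I would apply Lemma~\ref{lem:twist}. Since $\mathcal{A}^\mathrm{CM}_\eta\cong\mathrm{Tw}_{\eta^\mathrm{gal}\psi^{-1}}(\Lambda^\mathrm{CM}_\mathcal{O})^\vee$ as a $\Lambda^\mathrm{CM}_\mathcal{O}$-module, parts~(1) and~(2) of Lemma~\ref{lem:twist} give
\[
\mathrm{Hom}_{\Lambda^\mathrm{CM}_\mathcal{O}}(X_{\Sigma_p,(\psi)},\mathcal{A}^\mathrm{CM}_\eta)\;\cong\;\mathrm{Tw}_{\eta^{\mathrm{gal},-1}\psi}(X_{\Sigma_p,(\psi)})^\vee.
\]
Taking Pontrjagin duals and combining with the pseudoisomorphism from the first step yields the pseudoisomorphism $(\mathrm{Sel}^\Sigma_{\mathcal{A}^\mathrm{CM}_\eta})^\vee\sim\mathrm{Tw}_{\eta^{\mathrm{gal},-1}\psi}(X_{\Sigma_p,(\psi)})$ asserted in the proposition. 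Finally, since characteristic ideals over the semilocal algebra $\Lambda^\mathrm{CM}_\mathcal{O}$ are invariant under pseudoisomorphism, the characteristic-ideal equality follows from Lemma~\ref{lem:twist}(3).

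The only real conceptual step, and thus the point deserving most care, is the second one: verifying that the various local cohomology contributions outside $\Sigma_p$ all disappear when one passes from $K$ up to $\widetilde{K}^\mathrm{CM}_\infty$, and that the remaining global cohomology reduces precisely to a $\Delta\times\widetilde{\Gamma}$-equivariant Hom from $X_{\Sigma_p}$. Everything else is essentially bookkeeping with twists, invariants, and Pontrjagin duality.
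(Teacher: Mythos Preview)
Your proposal is correct and follows essentially the same route as the paper: the proposition is stated there as the outcome of the very sequence of reductions you describe (diagram~\eqref{eq:comparison} with Lemmata~\ref{lem:cpsn} and~\ref{lem:cpsn-loc}, then Corollaries~\ref{cor:global_coh} and~\ref{cor:local_coh} via Lemma~\ref{lem:vanish-free}, then the identification with $\mathrm{Hom}_{\Lambda^\mathrm{CM}_\mathcal{O}}(X_{\Sigma_p,(\psi)},\mathcal{A}^\mathrm{CM}_\eta)$, and finally Lemma~\ref{lem:twist}). The only cosmetic difference is that the paper carries out the chain of isomorphisms in the text preceding the statement rather than in a separate proof environment.
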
 

\begin{rem} \label{rem:sel_psi}
We can define $\Lambda^\mathrm{CM}_\mathcal{O}$-modules $\mathcal{T}^\mathrm{CM}_\psi$ and 
$\mathcal{A}^\mathrm{CM}_\psi$ similarly to $\mathcal{T}^\mathrm{CM}_\eta$ and
$\mathcal{A}^\mathrm{CM}_\eta$: namely, we 
set $\mathcal{T}^\mathrm{CM}_\psi=\mathcal{O}(\psi)
\otimes_\mathcal{O} \Lambda_\mathcal{O}^{\mathrm{CM}, \sharp}$ and 
$\mathcal{A}^\mathrm{CM}_\psi=\mathcal{T}^\mathrm{CM}_\psi \otimes_{\Lambda^\mathrm{CM}_\mathcal{O}}
\Lambda_\mathcal{O}^{\mathrm{CM}, \vee}$. 
Then we readily verify that the Pontrjagin dual
 of the Selmer group $\mathrm{Sel}^\Sigma_{\mathcal{A}^\mathrm{CM}_\psi}$ associated to
 $\mathcal{A}^\mathrm{CM}_\psi$ and the maximal $\psi$-isotypic quotient
$X_{\Sigma_p, (\psi)}$ of $X_{\Sigma_p}\otimes_{\mathbb{Z}_p}
 \mathcal{O}$ are pseudoisomorphic as 
$\Lambda^\mathrm{CM}_\mathcal{O}$-modules by the same argument as above 
(and they are isomorphic when 
the cardinality of $\Delta$ is relatively prime to $p$).  
In other words, 
 $\mathrm{Sel}^\Sigma_{\mathcal{A}^\mathrm{CM}_\eta}$ is
 copseudoisomorphic to
 $\mathrm{Tw}_{\eta^{\mathrm{gal},-1}\psi}(\mathrm{Sel}^\Sigma_{\mathcal{A}_\psi^\mathrm{CM}})$
 as a $\Lambda^\mathrm{CM}_\mathcal{O}$-module.
\end{rem}

We have defined the finitely generated $\Lambda^\mathrm{CM}_\mathcal{O}$-module
$X_{\Sigma_p}\otimes_{\mathbb{Z}_p} \mathcal{O}$ 
in a similar way to the manner where  one defines 
the (classical) Iwasawa module $X_{\{p\}}$: the Galois group
$\mathrm{Gal}(M_{\{p\}}/\mathsf{K}(\mu_{p^\infty}))$ of the maximal 
abelian pro-$p$ extension $M_{\{p\}}$ of $\mathsf{K}(\mu_{p^\infty})$
which is unramified outside places lying above $p$ (here $\mathsf{K}$
denotes an algebraic number field). Proposition~\ref{prop:iw_mod}
describes the relation between Selmer groups (of Greenberg type) and 
Iwasawa modules constructed in a classical way. 
The algebraic structure of the Iwasawa module $X_{\Sigma_p, (\psi)}$ 
has been thoroughly studied by Perrin-Riou \cite{PR_CM} (for
imaginary quadratic fields) and Hida and Tilouine \cite{HT-aIMC} (for
general CM fields).

\subsection{Exact control theorem} \label{ssc:control}

 We shall prove in this subsection 
the {\em exact control theorem} (Theorem~\ref{Thm:control}) 
for Selmer groups $\mathrm{Sel}^\Sigma_{\mathcal{A}^\mathrm{CM}_\eta}$ 
associated  to a gr\"o{\ss}encharacter $\eta$ of type $(A_0)$ defined on 
a CM number field $F$ (see Definition~\ref{def:Selmer_CM}). 
Recall that the Iwasawa algebra
$\Lambda^\mathrm{CM}_\mathcal{O}=\mathcal{O}[[\mathrm{Gal}(\widetilde{F}_\infty/F)]]$
is a semilocal ring 
 each of whose components is isomorphic to 
 the ring of formal power series over $\mathcal{O}$ in 
$d+1+\delta_{F,p}$ variables. Therefore there exists a regular sequence 
$\{ x_1 , \dotsc , x_{d + \delta_{F,p}} \}$ in $\Lambda^\mathrm{CM}_\mathcal{O}$ 
such that each $x_j$ belongs to the kernel $\mathfrak{A}^\mathrm{cyc}$ of 
the augmentation map $\Lambda^\mathrm{CM}_\mathcal{O}
\twoheadrightarrow
\Lambda_\mathcal{O}^\mathrm{cyc}=\mathcal{O}[[\mathrm{Gal}(F(\mu_{p^\infty})
/F)]]$. Such a regular 
sequence $\{ x_1, \dotsc, x_{d+\delta_{F,p}}\}$ is contained in 
the Jacobson radical of $\Lambda^\mathrm{CM}_\mathcal{O}$, and thus 
its arbitrary permutation is also regular.
For each $j$ with $0\leq j \leq d+\delta_{F,p}$, 
let $\mathfrak{A}_{j}$ denote the ideal of $\Lambda^\mathrm{CM}_\mathcal{O}$ generated by $x_1,\dotsc,x_j$:
\begin{equation}\label{equation:x_1tox_j} 
\mathfrak{A}_{j} = (x_1 ,\ldots ,x_j ).
\end{equation}
As the convention we use the symbol $\mathfrak{A}_{0}$ for the zero ideal of $\Lambda^\mathrm{CM}_\mathcal{O}$. 
Let $\mathcal{A}^\mathrm{CM}_\eta[\mathfrak{A}_{j}]$ denote the
$\Lambda^\mathrm{CM}_\mathcal{O}$-submodule of
$\mathcal{A}^\mathrm{CM}_\eta$ consisting of all the
elements annihilated by $\mathfrak{A}_{j}$.

We here introduce the following nontriviality condition (ntr)$_{\mathfrak{P}}$ 
on $\mathcal{A}^\mathrm{CM}_\eta$ 
for each place $\mathfrak{P}$ of $F$ lying above $p$:
\begin{description}
\item[{(ntr)$_{\mathfrak{P}}$}] for each maximal ideal $\mathfrak{M}$ 
	   of the semilocal Iwasawa algebra
	   $\Lambda^\mathrm{CM}_\mathcal{O}$,
	   the maximal $\mathfrak{M}$-torsion submodule 
	   $\mathcal{A}^\mathrm{CM}_\eta[\mathfrak{M}]$ of
	   $\mathcal{A}^\mathrm{CM}_\eta$ is not trivial as a
	   $D_{\mathfrak{P}}$-module.
\end{description}

 \begin{thm}[Exact control theorem, {Theorem~\ref{Thm:control}}] \label{thm:control}
Assume that the nontriviality condition {\upshape
  (ntr)$_{\mathfrak{P}}$} is fulfilled 
for every place $\mathfrak{P}$ contained in $\Sigma_p^c$.
Let $\{ x_1, \dotsc, x_{d+\delta_{F,p}}\}$ be an arbitrary regular
 sequence in $\Lambda^\mathrm{CM}_\mathcal{O}$ all of whose elements
  belong to $\mathfrak{A}^\mathrm{cyc}$, and let us define $\mathfrak{A}_j$ as
  the ideal of $\Lambda^\mathrm{CM}_\mathcal{O}$ generated by $x_1, x_2, \dotsc, x_j$. Then the natural map  
 \begin{align*}
 \mathrm{Sel}^\Sigma_{\mathcal{A}^\mathrm{CM}_\eta[\mathfrak{A}_{j}]} 
\longrightarrow 
\mathrm{Sel}^\Sigma_{\mathcal{A}^\mathrm{CM}_\eta}[\mathfrak{A}_{j}]
 \end{align*} 
induced by the natural inclusion $\mathcal{A}^\mathrm{CM}_\eta[\mathfrak{A}_j] \hookrightarrow \mathcal{A}^\mathrm{CM}_\eta$ is an isomorphism for each $j$ with $0\leq j\leq d+\delta_{F,p}$.  
\end{thm}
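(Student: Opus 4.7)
The plan is to proceed by induction on $j$, the base case $j=0$ being tautological since $\mathfrak{A}_{0}=(0)$ forces both sides to equal $\mathrm{Sel}^\Sigma_{\mathcal{A}^\mathrm{CM}_\eta}$. For the inductive step, set $\mathcal{B}_i := \mathcal{A}^\mathrm{CM}_\eta[\mathfrak{A}_{i}]$. The regularity of $\{x_1,\ldots,x_{d+\delta_{F,p}}\}$ makes $x_j$ a non-zero-divisor on $\Lambda^\mathrm{CM}_\mathcal{O}/\mathfrak{A}_{j-1}$; dualising, multiplication by $x_j$ is surjective on the cofree discrete $\Lambda^\mathrm{CM}_\mathcal{O}/\mathfrak{A}_{j-1}$-module $\mathcal{B}_{j-1}$, so there is a short exact sequence of discrete $G_F$-modules
\begin{equation*}
0 \longrightarrow \mathcal{B}_j \longrightarrow \mathcal{B}_{j-1} \xrightarrow{\ x_j \ } \mathcal{B}_{j-1} \longrightarrow 0.
\end{equation*}

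Taking the long exact sequence in $\mathrm{Gal}(F_S/F)$-cohomology, together with the parallel local long exact sequences at each place appearing in the definition of $\mathrm{Sel}^\Sigma$ (inertia at tame bad places $\lambda \nmid p\infty$, inertia at each $\mathfrak{P}\in\Sigma_p^c$, and no condition at $\mathfrak{P}\in\Sigma_p$ since $\mathrm{Fil}^+\mathcal{A}^\mathrm{CM}_\eta = \mathcal{A}^\mathrm{CM}_\eta$ there), and comparing with the $\mathfrak{A}_j$-torsion of the analogous sequences for $\mathcal{A}^\mathrm{CM}_\eta$ itself, I will assemble the commutative diagram with exact rows
\begin{equation*}
\xymatrix@C=.7em{
0 \ar[r] & \mathrm{Sel}^\Sigma_{\mathcal{B}_j} \ar[r] \ar[d] & \mathrm{Sel}^\Sigma_{\mathcal{B}_{j-1}} \ar[r]^{\cdot x_j} \ar[d]_{\cong} & \mathrm{Sel}^\Sigma_{\mathcal{B}_{j-1}} \ar[d]_{\cong} \\
0 \ar[r] & \mathrm{Sel}^\Sigma_{\mathcal{A}^\mathrm{CM}_\eta}[\mathfrak{A}_j] \ar[r] & \mathrm{Sel}^\Sigma_{\mathcal{A}^\mathrm{CM}_\eta}[\mathfrak{A}_{j-1}] \ar[r]^{\cdot x_j} & \mathrm{Sel}^\Sigma_{\mathcal{A}^\mathrm{CM}_\eta}[\mathfrak{A}_{j-1}]
}
\end{equation*}
in which the two rightmost vertical morphisms are the isomorphisms supplied by the inductive hypothesis. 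A five-lemma argument then forces the leftmost vertical arrow to be an isomorphism, completing the step — provided the top row is genuinely left-exact.

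The main obstacle, and the only place where the hypothesis $(\mathrm{ntr})_\mathfrak{P}$ is invoked, is this exactness of the top row, which reduces to the vanishing of the global $H^0$-term $\mathcal{B}_{j-1}^{G_F}$ and of the local $H^0$-terms $\mathcal{B}_{j-1}^{D_\mathfrak{P}}$ for every $\mathfrak{P}\in\Sigma_p^c$ appearing as obstructions in the connecting morphisms. Filtering $\mathcal{B}_{j-1}$ along its socle series reduces this vanishing to the simple residual representations $\mathcal{A}^\mathrm{CM}_\eta[\mathfrak{M}]$ for maximal ideals $\mathfrak{M}\supset\mathfrak{A}_{j-1}$; since each such $\mathcal{A}^\mathrm{CM}_\eta[\mathfrak{M}]$ is one-dimensional over the residue field $\Lambda^\mathrm{CM}_\mathcal{O}/\mathfrak{M}$, nontriviality of the $D_\mathfrak{P}$-action is equivalent to the vanishing of the $D_\mathfrak{P}$-invariants (and \emph{a fortiori} of the $G_F$-invariants), which is precisely $(\mathrm{ntr})_\mathfrak{P}$. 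The contributions from tame bad places $\lambda\nmid p\infty$ cause no difficulty because $\widetilde{F}_\infty/F$ is unramified at such $\lambda$ and $\eta^\mathrm{gal}|_{I_\lambda}$ has finite order, so $I_\lambda$ acts on $\mathcal{B}_{j-1}$ through a finite quotient and the resulting local cohomology comparison is routine. Assembling these vanishings delivers the top-row exactness and completes the induction.
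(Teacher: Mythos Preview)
Your inductive framework is sound and close to the paper's, but there is a genuine gap at the places $\mathfrak{P}\in\Sigma_p^c$. The Selmer group $\mathrm{Sel}^\Sigma$ imposes its local condition at such $\mathfrak{P}$ via the \emph{inertia} group: the global-to-local map lands in $H^1(I_\mathfrak{P},\mathcal{B}_{j-1})$. Consequently the obstruction to surjectivity of the control map is $H^0(I_\mathfrak{P},\mathcal{B}_{j-1})/x_j H^0(I_\mathfrak{P},\mathcal{B}_{j-1})$, not $\mathcal{B}_{j-1}^{D_\mathfrak{P}}$ as you write. The hypothesis $(\mathrm{ntr})_\mathfrak{P}$ asserts only that the \emph{decomposition} group $D_\mathfrak{P}$ acts nontrivially on each residual piece $\mathcal{A}^\mathrm{CM}_\eta[\mathfrak{M}]$; this kills $D_\mathfrak{P}$-invariants by your socle-filtration argument, but says nothing directly about $I_\mathfrak{P}$-invariants, and $\mathcal{B}_{j-1}^{I_\mathfrak{P}}/x_j$ need not vanish under $(\mathrm{ntr})_\mathfrak{P}$ alone.

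The paper resolves this by first passing to the \emph{strict} Selmer group $\mathrm{Sel}^{\Sigma,\mathrm{str}}$ (Definition~\ref{def:str_Selmer}), in which the local condition at $\mathfrak{P}\in\Sigma_p^c$ is taken in the full $H^1(F_\mathfrak{P},-)$ rather than $H^1(I_\mathfrak{P},-)$, and showing (Lemma~\ref{lem:replace}) that under $(\mathrm{ntr})_\mathfrak{P}$ the natural inclusion $\mathrm{Sel}^{\Sigma,\mathrm{str}}_{\mathcal{B}_j}\hookrightarrow\mathrm{Sel}^\Sigma_{\mathcal{B}_j}$ is an isomorphism for every $j$: the cokernel sits inside $\prod_{\mathfrak{P}\in\Sigma_p^c} H^1(D_\mathfrak{P}/I_\mathfrak{P},\mathcal{B}_j^{I_\mathfrak{P}})$, and $(\mathrm{ntr})_\mathfrak{P}$ forces Frobenius minus one to act invertibly there. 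Once this replacement is made, your argument goes through essentially verbatim for $\mathrm{Sel}^{\Sigma,\mathrm{str}}$: the local obstruction at $\mathfrak{P}\in\Sigma_p^c$ becomes $H^0(D_\mathfrak{P},\mathcal{B}_{j-1})/x_j$, and your socle argument now correctly gives $\mathcal{B}_{j-1}^{D_\mathfrak{P}}=0$. So the missing ingredient is precisely this preliminary passage to the strict Selmer group.
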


For the proof of Theorem~\ref{thm:control}, 
we first  replace our Selmer group 
$\mathrm{Sel}^\Sigma_{\mathcal{A}^\mathrm{CM}_\eta}$ 
by the {\em strict Selmer group} $\mathrm{Sel}_{\mathcal{A}_\eta^\mathrm{CM}}^{\Sigma,\mathrm{str}}$ defined below. We then control the strict Selmer group under 
specialisation with respect to the regular sequence  
$\{ x_1 , \dotsc, x_j \}$.

\begin{defn} \label{def:str_Selmer}
For each $j$ with $0\leq j\leq d+\delta_{F,p}$, 
we define the {\em strict Selmer group}
$\mathrm{Sel}^{\Sigma, \mathrm{str}}_{\mathcal{A}^\mathrm{CM}_\eta[\mathfrak{A}_{j}]}$ 
as the kernel of the global-to-local map
\begin{align*}
 H^1 (F_S /F ,\mathcal{A}^\mathrm{CM}_\eta[\mathfrak{A}_{j}]) \longrightarrow 
 \underset{\substack{\lambda \in S \\ \lambda \nmid p\infty}}{\prod} 
 H^1 (I_{\lambda} ,\mathcal{A}^\mathrm{CM}_\eta[\mathfrak{A}_{j}])
 \times \underset{ \mathfrak{P}\in \Sigma^c_p}{\prod} 
 H^1 (F_{\mathfrak{P}} ,\mathcal{A}^\mathrm{CM}_\eta[\mathfrak{A}_{j}] ).
 \end{align*} 
\end{defn}

Since, for the strict Selmer group $\mathrm{Sel}_{\mathcal{A}^\mathrm{CM}_\eta[\mathfrak{A}_{j}]}^{\Sigma,\mathrm{str}}$,
only the local conditions at places contained in $\Sigma_p^c$  
are modified compared to the definition of the usual Selmer group
$\mathrm{Sel}_{\mathcal{A}^\mathrm{CM}_\eta[\mathfrak{A}_{j}]}^\Sigma$
in Definition~\ref{def:Selmer_CM}, we have the following comparison result:

 \begin{lem} \label{lem:replace}
 Let the notation be as in Theorem~$\ref{thm:control}$ and 
  assume that the condition {\upshape (ntr)$_{\mathfrak{P}}$} is fulfilled
  for each place $\mathfrak{P}$ contained in $\Sigma_p^c$.
Then for every $0\leq j \leq d+\delta_{F,p}$ the natural injection $\mathrm{Sel}^{\Sigma, \mathrm{str}}_{\mathcal{A}^\mathrm{CM}_\eta[\mathfrak{A}_{j}]} \hookrightarrow \mathrm{Sel}^\Sigma_{\mathcal{A}^\mathrm{CM}_\eta[\mathfrak{A}_{j}]}$ induces an isomorphism.
 \end{lem}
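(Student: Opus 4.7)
The strategy is to bound the cokernel of the evident inclusion by examining only the local conditions at places in $\Sigma_p^c$, and then to use the hypothesis $(\mathrm{ntr})_{\mathfrak{P}}$ to force the vanishing of certain unramified local cohomology groups that parametrise the difference. First, because $\mathrm{Fil}_\mathfrak{P}^+ \mathcal{T}^\mathrm{CM}_\eta = 0$ precisely for $\mathfrak{P} \in \Sigma_p^c$ by \eqref{definition:f_pT_eta}, the local conditions used to define $\mathrm{Sel}^\Sigma_{\mathcal{A}^\mathrm{CM}_\eta[\mathfrak{A}_j]}$ and $\mathrm{Sel}^{\Sigma,\mathrm{str}}_{\mathcal{A}^\mathrm{CM}_\eta[\mathfrak{A}_j]}$ agree at every place except those in $\Sigma_p^c$, at which the former requires vanishing in $H^1(I_\mathfrak{P}, \mathcal{A}^\mathrm{CM}_\eta[\mathfrak{A}_j])$ and the latter the stronger vanishing in $H^1(F_\mathfrak{P}, \mathcal{A}^\mathrm{CM}_\eta[\mathfrak{A}_j])$. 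Hence the natural inclusion is automatic, and a direct diagram chase shows that its cokernel embeds into
\begin{align*}
\bigoplus_{\mathfrak{P} \in \Sigma_p^c} H^1_\mathrm{unr}\bigl(F_\mathfrak{P},\, \mathcal{A}^\mathrm{CM}_\eta[\mathfrak{A}_j]\bigr), \quad \text{where } H^1_\mathrm{unr}(F_\mathfrak{P}, -) := \ker\!\bigl(H^1(F_\mathfrak{P}, -) \to H^1(I_\mathfrak{P}, -)\bigr).
\end{align*}

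The problem thus reduces to showing $H^1_\mathrm{unr}(F_\mathfrak{P}, \mathcal{A}^\mathrm{CM}_\eta[\mathfrak{A}_j]) = 0$ for each $\mathfrak{P} \in \Sigma_p^c$. By inflation-restriction applied to $I_\mathfrak{P} \triangleleft D_\mathfrak{P}$ (together with the fact that $D_\mathfrak{P}/I_\mathfrak{P}$ is topologically procyclic, generated by $\mathrm{Frob}_\mathfrak{P}$), this unramified cohomology is identified with the coinvariant quotient $M_\mathfrak{P}/(\mathrm{Frob}_\mathfrak{P}-1)M_\mathfrak{P}$, where $M_\mathfrak{P} := \mathcal{A}^\mathrm{CM}_\eta[\mathfrak{A}_j]^{I_\mathfrak{P}}$. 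It therefore suffices to show that the endomorphism $\mathrm{Frob}_\mathfrak{P}-1$ acts surjectively on $M_\mathfrak{P}$. Writing $\zeta_\mathfrak{P} := \eta^\mathrm{gal}(\widetilde{\mathrm{Frob}}_\mathfrak{P})\,\widetilde{\mathrm{Frob}}_\mathfrak{P}|_{\widetilde{F}_\infty} \in \Lambda^\mathrm{CM}_\mathcal{O}$ for any lift $\widetilde{\mathrm{Frob}}_\mathfrak{P}$ of Frobenius, one sees that $\mathrm{Frob}_\mathfrak{P}-1$ acts on $\mathcal{A}^\mathrm{CM}_\eta$ as scalar multiplication by $\zeta_\mathfrak{P}-1$; consequently it will act invertibly on $\mathcal{A}^\mathrm{CM}_\eta$, and a fortiori surjectively on $M_\mathfrak{P}$, as soon as $\zeta_\mathfrak{P}-1$ is a unit of $\Lambda^\mathrm{CM}_\mathcal{O}$. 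Unit-ness in the semilocal ring $\Lambda^\mathrm{CM}_\mathcal{O}$ is tested componentwise modulo each maximal ideal $\mathfrak{M}$, and the hypothesis $(\mathrm{ntr})_\mathfrak{P}$ supplies exactly this: if $I_\mathfrak{P}$ acts nontrivially on $\mathcal{A}^\mathrm{CM}_\eta[\mathfrak{M}]$ then the residual coinvariant vanishes trivially, whereas if $I_\mathfrak{P}$ acts trivially then nontriviality of the $D_\mathfrak{P}$-action forces $\mathrm{Frob}_\mathfrak{P}$ itself to act by a residual scalar distinct from $1$, so that $\zeta_\mathfrak{P}-1 \not\equiv 0 \pmod{\mathfrak{M}}$.

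The principal obstacle is the last step: besides the routine residual dichotomy sketched above, one must carefully check that the invertibility of $\zeta_\mathfrak{P}-1$ (established on the ambient divisible module $\mathcal{A}^\mathrm{CM}_\eta$) descends to a surjective self-map of the proper submodule $M_\mathfrak{P} = \mathcal{A}^\mathrm{CM}_\eta[\mathfrak{A}_j]^{I_\mathfrak{P}}$. This uses that $\mathrm{Frob}_\mathfrak{P}$ normalises $I_\mathfrak{P}$ and that the $\Lambda^\mathrm{CM}_\mathcal{O}$-action commutes with the Galois action, so that multiplication by $\zeta_\mathfrak{P}-1$ preserves both the subspace of $I_\mathfrak{P}$-invariants and the annihilator of $\mathfrak{A}_j$; a supplementary Nakayama-style argument then propagates surjectivity from the residue level to $M_\mathfrak{P}$ using the divisibility of $\mathcal{A}^\mathrm{CM}_\eta$ over $\Lambda^\mathrm{CM}_\mathcal{O}$.
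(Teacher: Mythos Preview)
Your overall strategy matches the paper's: the cokernel of the inclusion embeds into $\prod_{\mathfrak{P}\in\Sigma_p^c} H^1(D_\mathfrak{P}/I_\mathfrak{P},\mathcal{A}^\mathrm{CM}_\eta[\mathfrak{A}_j]^{I_\mathfrak{P}})$ via inflation--restriction, and one must show each factor vanishes. The gap is in how you carry out that vanishing.

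You assert that it suffices to prove $\zeta_\mathfrak{P}-1$ is a unit in the \emph{full} ring $\Lambda^\mathrm{CM}_\mathcal{O}$, and then try to verify this modulo every maximal ideal $\mathfrak{M}$. But in your Case~1 (where $I_\mathfrak{P}$ acts nontrivially on $\mathcal{A}^\mathrm{CM}_\eta[\mathfrak{M}]$) you do \emph{not} show $\zeta_\mathfrak{P}-1\not\equiv 0\pmod{\mathfrak{M}}$; you instead observe that the residual $I_\mathfrak{P}$-invariants vanish. That is a different statement, and indeed the assertion ``$\zeta_\mathfrak{P}-1$ is a unit in $\Lambda^\mathrm{CM}_\mathcal{O}$'' is \emph{false} in general under $(\mathrm{ntr})_\mathfrak{P}$: nothing prevents a chosen Frobenius lift from acting trivially modulo some $\mathfrak{M}$ while inertia carries the nontriviality. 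So the logical thread---``sufficient condition $\Rightarrow$ check mod each $\mathfrak{M}$''---breaks at Case~1, and the vague ``Nakayama-style argument'' you invoke at the end does not repair it, because you have not established the sufficient condition you set out to prove.

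The paper closes this cleanly by dualising first. One writes $M_\mathfrak{P}^\vee\cong\Lambda^\mathrm{CM}_\mathcal{O}/J_\mathfrak{P}$ for the annihilator ideal $J_\mathfrak{P}$; the Frobenius action is then a well-defined element $f_\mathfrak{P}\in\Lambda^\mathrm{CM}_\mathcal{O}/J_\mathfrak{P}$, and one only needs $f_\mathfrak{P}-1$ to be a unit in this \emph{quotient} ring, i.e.\ modulo those $\mathfrak{M}$ containing $J_\mathfrak{P}$. But $\mathfrak{M}\supset J_\mathfrak{P}$ forces $I_\mathfrak{P}$ to act trivially on $\mathcal{A}^\mathrm{CM}_\eta[\mathfrak{M}]$, so one is automatically in your Case~2, where $(\mathrm{ntr})_\mathfrak{P}$ gives $f_\mathfrak{P}\not\equiv 1\pmod{\mathfrak{M}}$ directly. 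In effect your Case~1 is precisely the statement that such $\mathfrak{M}$ do not contain $J_\mathfrak{P}$ and are therefore irrelevant---but you need to introduce $J_\mathfrak{P}$ to say this.
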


 \begin{proof}[Proof of {Lemma~$\ref{lem:replace}$}] 
We consider the following commutative diagram:
 \begin{align} \label{eq:sel_vs_strsel} 
\xymatrix{
 0  \ar[r] &
  \mathrm{Sel}^{\Sigma, \mathrm{str}}_{\mathcal{A}^\mathrm{CM}_\eta[\mathfrak{A}_{j}]} \ar[r]
  \ar[d] &
 H^1(F_{S}/F,\mathcal{A}^\mathrm{CM}_\eta[\mathfrak{A}_{j}]) \ar[r] \ar@{=}[d] & 
R(F, \mathcal{A}^\mathrm{CM}_\eta[\mathfrak{A}_{j}]) \ar[d]^{\alpha_j} \\ 
 0 \ar[r] & \mathrm{Sel}^\Sigma_{\mathcal{A}^\mathrm{CM}_\eta[\mathfrak{A}_{j}]} \ar[r] & H^1
  (F_S /F , \mathcal{A}^\mathrm{CM}_\eta[\mathfrak{A}_{j}])  \ar[r] & R_\mathrm{str}(F, \mathcal{A}^\mathrm{CM}_\eta[\mathfrak{A}_{j}]),
}
  \end{align}
where we use the following
  abbreviation on the direct products of local cohomology groups:
\begin{align*}
R(F, \mathcal{A}^\mathrm{CM}_\eta[\mathfrak{A}_{j}]) &= \underset{\substack{\lambda \in S \\ \lambda \nmid p\infty}}{\prod} 
 {H^1 (I_{\lambda} ,\mathcal{A}^\mathrm{CM}_\eta[\mathfrak{A}_{j}])}
\times
 \underset{\mathfrak{P}\in \Sigma^c_p}{\prod}{H^1 (I_{\mathfrak{P}} ,
 \mathcal{A}^\mathrm{CM}_\eta[\mathfrak{A}_{j}])} , \\
R_\mathrm{str}(F, \mathcal{A}^\mathrm{CM}_\eta[\mathfrak{A}_{j}]) &=\underset{\substack{\lambda \in S \\ \lambda \nmid p\infty}}{\prod} 
 {H^1 (I_{\lambda},\mathcal{A}^\mathrm{CM}_\eta[\mathfrak{A}_{j}])}
\times  
 \underset{\mathfrak{P} \in \Sigma^c_p}{\prod} {H^1 (F_{\mathfrak{P}}
 ,\mathcal{A}^\mathrm{CM}_\eta[\mathfrak{A}_{j}])}.
\end{align*}
  The right vertical map $\alpha_j$ of the diagram \eqref{eq:sel_vs_strsel} is defined as the usual restriction
  maps on the $\Sigma_p^c$-components and the identity maps on the other
  components. 
The snake lemma implies that the natural map $\mathrm{Sel}^{\Sigma, \mathrm{str}}_{\mathcal{A}^\mathrm{CM}_\eta[\mathfrak{A}_{j}]}
  \rightarrow 
  \mathrm{Sel}^\Sigma_{\mathcal{A}^\mathrm{CM}_\eta[\mathfrak{A}_{j}]}$ is an injection 
  whose cokernel is isomorphic to 
  a certain submodule of $\mathrm{Ker} (\alpha_j )$.   
On the other hand, $\mathrm{Ker} (\alpha_j )$ is isomorphic to 
$\underset{\mathfrak{P}\in \Sigma^c_p}{\prod} H^1
  (D_{\mathfrak{P}}/I_{\mathfrak{P}},\mathcal{A}^\mathrm{CM}_\eta[\mathfrak{A}_{j}]^{I_{\mathfrak{P}}})$
  due to the inflation-restriction sequence.

  We now verify that each component $H^1
  (D_{\mathfrak{P}}/I_{\mathfrak{P}},\mathcal{A}^\mathrm{CM}_\eta[\mathfrak{A}_{j}]^{I_{\mathfrak{P}}})$ 
  of the direct product above is trivial. 
Since the the Pontrjagin dual $(\mathcal{A}^\mathrm{CM}_\eta )^\vee$ of $\mathcal{A}^\mathrm{CM}_\eta$ is 
isomorphic to $\Lambda^\mathrm{CM}_\mathcal{O}$ as a $\Lambda^\mathrm{CM}_\mathcal{O}$-module, 
we have $(\mathcal{A}^\mathrm{CM}_\eta[\mathfrak{A}_{j}]^{I_{\mathfrak{P}}})^\vee \cong \Lambda_\mathcal{O}/J_{\mathfrak{P}}$ 
with the annihilator ideal $J_{\mathfrak{P}}$ of $(\mathcal{A}^\mathrm{CM}_\eta[\mathfrak{A}_{j}]^{I_{\mathfrak{P}}})^\vee$. 
 We denote by $f_{\mathfrak{P}}$ the value of the 
 Frobenius element at $\mathfrak{P}$ acting on
  $\mathcal{A}^\mathrm{CM}_\eta[\mathfrak{A}_{j}]^{I_{\mathfrak{P}}}$. Then
  the unramified cohomology group 
$H^1 (D_{\mathfrak{P}}/I_{\mathfrak{P}},\mathcal{A}^\mathrm{CM}_\eta[\mathfrak{A}_{j}]^{
  I_{\mathfrak{P}}})$ is described as the quotient $\mathcal{A}^\mathrm{CM}_\eta[\mathfrak{A}_{j}]^{
  I_{\mathfrak{P}}}/(f_{\mathfrak{P}}-1)
  \mathcal{A}^\mathrm{CM}_\eta[\mathfrak{A}_{j}]^{
  I_{\mathfrak{P}}}$, and hence 
  it is trivial if and only if the multiplication by  
 $f_{\mathfrak{P}}-1$ on
  $\mathcal{A}^\mathrm{CM}_\eta[\mathfrak{A}_{j}]^{I_{\mathfrak{P}}}$
  is surjective; in other words, it is trivial  
  if and only if the multiplication by
  $f_{\mathfrak{P}}-1$ induces 
  an injective endomorphism on $\Lambda^\mathrm{CM}_\mathcal{O}/J_{\mathfrak{P}}$. 
  The latter condition is obviously fulfilled when
  $f_{\mathfrak{P}}-1$ is invertible in
  $\Lambda^\mathrm{CM}_\mathcal{O}/J_{\mathfrak{P}}$, or equivalently, 
  when it is invertible in 
  $\Lambda^\mathrm{CM}_\mathcal{O}/\mathfrak{M}$ for each maximal
  ideal of $\Lambda^\mathrm{CM}_\mathcal{O}$ containing
  $J_{\mathfrak{P}}$. The condition (ntr)$_{\mathfrak{P}}$ thus implies the
  triviality of 
  each cohomology group $H^1(D_{\mathfrak{P}}/I_{\mathfrak{P}},
  \mathcal{A}^\mathrm{CM}_\eta[\mathfrak{A}_{j}]^{I_{\mathfrak{P}}})$
  since the condition (ntr)$_{\mathfrak{P}}$ asserts that the value of $f_\mathfrak{P}$ does not equal $1$.
  Consequently, the strict Selmer
  group $\mathrm{Sel}^{\Sigma, \mathrm{str}}_{\mathcal{A}^\mathrm{CM}_\eta[\mathfrak{A}_{j}]}$ is isomorphic to 
$\mathrm{Sel}^\Sigma_{\mathcal{A}^\mathrm{CM}_\eta[\mathfrak{A}_{j}]}$ for every 
 $j$ with $0 \leq j \leq d+\delta_{F,p}$ when we assume the condition {\upshape (ntr)$_{\mathfrak{P}}$} for each 
place $\mathfrak{P}$ contained in $\Sigma_p^c$. 
 \end{proof}

Let us return to the proof of Theorem \ref{thm:control}. 

 \begin{proof}[Proof of Theorem~$\ref{thm:control}$]
In order to control the strict Selmer group $\mathrm{Sel}^{\Sigma,
  \mathrm{str}}_{\mathcal{A}^\mathrm{CM}_\eta[\mathfrak{A}_{j}]}$,
  we consider the following diagram for each $j$: 
{
 \begin{align}\label{equation:control_strict}
\xymatrix{
 0 \ar[r] & \mathrm{Sel}^{\Sigma,
  \mathrm{str}}_{\mathcal{A}^\mathrm{CM}_\eta[\mathfrak{A}_{j+1}]}
  \ar[r] \ar[d] & H^1 (F_{S}/F , 
\mathcal{A}^\mathrm{CM}_\eta[\mathfrak{A}_{j+1}]) 
\ar[r] \ar[d]_{\alpha_j} & 
R_\mathrm{str}(F, \mathcal{A}^\mathrm{CM}_\eta[\mathfrak{A}_{j+1}])  \ar[d]_{\beta_j} \\ 
 0 \ar[r] & \mathrm{Sel}^{\Sigma,
  \mathrm{str}}_{\mathcal{A}^\mathrm{CM}_\eta[\mathfrak{A}_{j}]}[x_{j+1}]
  \ar[r] &
 H^1 (F_S /F , \mathcal{A}^\mathrm{CM}_\eta[\mathfrak{A}_{j}])[x_{j+1}] 
\ar[r] & R_\mathrm{str}(F, \mathcal{A}^\mathrm{CM}_\eta[\mathfrak{A}_{j}])[x_{j+1}],
}
  \end{align}}
where the symbol $R_\mathrm{str}(F,
  \mathcal{A}^\mathrm{CM}_\eta[\mathfrak{A}_{j}])$ denotes the groups introduced in the diagram (\ref{eq:sel_vs_strsel}).
The middle and right vertical maps $\alpha_j ,\beta_j$ are induced from the long exact
  sequence in Galois cohomology associated to the following short exact
  sequence of $\mathrm{Gal}(F_S/F)$-modules:
\begin{align} \label{ses:control}
0 \rightarrow \mathcal{A}^\mathrm{CM}_\eta[\mathfrak{A}_{j+1}] \rightarrow
 \mathcal{A}^\mathrm{CM}_\eta[\mathfrak{A}_{j}] \xrightarrow{\times x_{j+1}}
 \mathcal{A}^\mathrm{CM}_\eta[\mathfrak{A}_{j}] \rightarrow 0.
\end{align}
In particular $\alpha_j$ is surjective by construction and it suffices 
  to verify that the map $\alpha_j$ (resp.\ $\beta_j$) is injective  
  in order to prove that the map $\mathrm{Sel}^\Sigma_{\mathcal{A}^\mathrm{CM}_\eta[\mathfrak{A}_{j+1}]}\rightarrow
  \mathrm{Sel}^\Sigma_{\mathcal{A}^\mathrm{CM}_\eta[\mathfrak{A}_{j}]}[x_{j+1}]$
  in consideration is injective (resp.\ surjective) by the snake lemma 
applied on (\ref{equation:control_strict}).

As for the kernel of $\alpha_j$, we first observe that it is isomorphic to the quotient module $H^0(F_S/F, \mathcal{A}^\mathrm{CM}_\eta[\mathfrak{A}_{j}])/x_{j+1} 
H^0(F_S/F, \mathcal{A}^\mathrm{CM}_\eta[\mathfrak{A}_{j}])$ by the long exact sequence of cohomology of $\mathrm{Gal}(F_S/F)$ obtained 
  from (\ref{ses:control}). Obviously the global zeroth cohomology group
  $H^0 (F_S/F,\mathcal{A}^\mathrm{CM}_\eta[\mathfrak{A}_{j}])$ is a submodule of the local zeroth cohomology group $H^0(F_{\mathfrak{P}},
  \mathcal{A}^\mathrm{CM}_\eta[\mathfrak{A}_{j}])$ for an arbitrary place $\mathfrak{P}$ in $\Sigma_p^c$. As we shall see in the next paragraph, the latter cohomology group $H^0(F_{\mathfrak{P}},
  \mathcal{A}^\mathrm{CM}_\eta[\mathfrak{A}_{j}])$ is trivial 
  under the condition (ntr)$_{\mathfrak{P}}$. This implies that $\mathrm{Ker} (\alpha_j)$ is trivial.

Next we verify the triviality of the kernel of $\beta_j$. 
For each place $w$ in $S$, let $\beta_{j,w}$ denote the map induced by
  $\beta_j$ on the $w$-component of $R_\mathrm{str}(F,\mathcal{A}_\eta^\mathrm{CM}[\mathfrak{A}_{j+1}])$. Then by the long exact sequence of group cohomology for $D_w$- or $I_w$-modules associated to the short exact sequence (\ref{ses:control}), we have 
\begin{align*}
\ker(\beta_{j,w})\cong \begin{cases}
 H^0 (F_w ,\mathcal{A}^\mathrm{CM}_\eta[\mathfrak{A}_{j}])/x_{j+1} 
 H^0 (F_w
		,\mathcal{A}^\mathrm{CM}_\eta[\mathfrak{A}_{j}])
		& \text{for $w\in \Sigma_p^c$}, \\
H^0 (I_w ,\mathcal{A}^\mathrm{CM}_\eta[\mathfrak{A}_{j}])/x_{j+1} 
 H^0 (I_w
		,\mathcal{A}^\mathrm{CM}_\eta[\mathfrak{A}_{j}])
		& \text{for $w \in S\setminus (\Sigma_p\cup \Sigma^c_p \cup \Sigma_\infty )$}. \\
\end{cases}
\end{align*} 
For a place $w$ in $\Sigma^c_p$, the cohomology group 
 $H^0 (F_w
  ,\mathcal{A}^\mathrm{CM}_\eta[\mathfrak{A}_{j}])$ itself
  is trivial under the condition (ntr)$_{w}$.  
 In fact, it is easy to redescribe $H^0 (F_w
  ,\mathcal{A}^\mathrm{CM}_\eta[\mathfrak{A}_{j}])$ as
  $H^0 (D_w/I_w,\mathcal{A}^\mathrm{CM}_\eta[\mathfrak{A}_{j}]^{
  I_w})$ by definition. We denote by
  $f_w$ the value of the Frobenius element
  at $w$ acting on
  $\mathcal{A}^\mathrm{CM}_\eta[\mathfrak{A}_{j}]^{ I_{w}}$. 
  Then, as in the proof of Lemma~$\ref{lem:replace}$, we readily see that
  the Pontrjagin dual of $H^0 (D_w/I_w,\mathcal{A}^\mathrm{CM}_\eta[\mathfrak{A}_{j}]^{
  I_w})$ is isomorphic to the cokernel of the multiplication of $f_w-1$ on
  $\Lambda^\mathrm{CM}_\mathcal{O}/J_w$, where $J_w$ is the annihilator ideal of $(\mathcal{A}_\eta^\mathrm{CM}[\mathfrak{A}_j]^{I_w})^\vee$. The element $f_w-1$ is, however,
  a unit of $\Lambda^\mathrm{CM}_\mathcal{O}/J_w$ thanks to the condition (ntr)$_w$
  as discussed in the proof of Lemma~\ref{lem:replace}, and in particular the
  cokernel of the multiplication of $f_w-1$ is trivial.
  This completes the proof of the triviality of the kernel of
  $\beta_{j,w}$.

For a place $w$ in $S\setminus (\Sigma_p\cup \Sigma^c_p \cup \Sigma_\infty )$, 
  the inertia subgroup 
$I_w$ acts on $\mathcal{O}(\eta)$ through a finite
  quotient and acts on $\Lambda^{\mathrm{CM},\sharp}_\mathcal{O}$
  trivially. 
Let $E_w$ denote the (finite) image of $I_w$ under the Galois
  character $\eta^\mathrm{gal}$ 
  and let $\varpi^{n_w} \mathcal{O}$ denote the ideal of 
$\mathcal{O}$ generated by every element of the form $\zeta-1$ with 
$\zeta$ belonging to $E_w$ (here $\varpi$ denotes a uniformiser of
  $\mathcal{O}$). Then one readily sees that the cohomology group $H^0(I_w,
  \mathcal{A}^\mathrm{CM}_\eta[\mathfrak{A}_{j}])$ is 
none other than the maximal $\varpi^{n_w}$-torsion submodule 
of $\mathcal{A}^\mathrm{CM}_\eta[\mathfrak{A}_{j}]$.
The Pontrjagin dual of $\ker(\beta_{j,w} )$ is thus isomorphic to 
 the kernel of the multiplication of $x_{j+1}$ on
  $\Lambda^\mathrm{CM}_\mathcal{O}/(\varpi^{n_w}, x_1, \dotsc,
  x_j)$. 
The sequence $ x_1, \dotsc, x_{j+1}, \varpi^{n_w} $ is, however, 
  a regular sequence and contained in the Jacobson radical of
  $\Lambda^\mathrm{CM}_\mathcal{O}$, and thus its permutation 
  $\varpi^{n_w}, x_1, \dotsc, x_{j+1}$ is also a regular sequence in
  $\Lambda_{\mathcal{O}}^{\mathrm{CM}}$. We therefore see that the kernel of
  $\beta_{j,w}$ is trivial.
 \end{proof}

We finally remark that, by applying the same arguments as the proof of
Lemma~\ref{lem:replace} to the discrete $\Lambda^\mathrm{CM}_\mathcal{O}$-module
$\mathcal{A}^\mathrm{CM}_\psi$, we obtain the following result.

\begin{lem} \label{lem:replace_psi}
The natural injection
 $\mathrm{Sel}^{\Sigma,\mathrm{str}}_{\mathcal{A}^\mathrm{CM}_\psi}\hookrightarrow
 \mathrm{Sel}^\Sigma_{\mathcal{A}^\mathrm{CM}_\psi}$ is a copseudoisomorphism of
 discrete $\Lambda^\mathrm{CM}_\mathcal{O}$-modules. Furthermore,  
if the following condition {\upshape(ntr)}$_{\psi,\mathfrak{P}}$
 on $\mathcal{A}^\mathrm{CM}_\psi$ is fulfilled for each place
 $\mathfrak{P} \in \Sigma^c_p$, the injection above is an isomorphism. 
\begin{description}
\item[(ntr)$_{\psi, \mathfrak{P}}$] For each maximal ideal $\mathfrak{M}$ of
	   $\Lambda^\mathrm{CM}_\mathcal{O}$, 
	   the maximal $\mathfrak{M}$-torsion submodule
	   $\mathcal{A}^\mathrm{CM}_\psi[\mathfrak{M}]$ of $\mathcal{A}_\psi^\mathrm{CM}$ is not trivial as
	   a $D_{\mathfrak{P}}$-module. 
\end{description}
\end{lem}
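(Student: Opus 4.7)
The plan is to reduce this to the already-established arguments of Lemma~\ref{lem:replace} (for the isomorphism statement under (ntr)$_{\psi,\mathfrak{P}}$) and of Lemma~\ref{lem:cpsn-loc} (for the copseudoisomorphism statement in general), since the only change from those settings is that the gr\"o{\ss}encharacter $\eta^{\mathrm{gal}}$ is replaced by the finite-order character $\psi$, and the finite-order twist is not essential to any step of those arguments.

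First, I will set up the analogue of diagram~\eqref{eq:sel_vs_strsel} for the module $\mathcal{A}^\mathrm{CM}_\psi$, comparing $\mathrm{Sel}^{\Sigma,\mathrm{str}}_{\mathcal{A}^\mathrm{CM}_\psi}$ and $\mathrm{Sel}^\Sigma_{\mathcal{A}^\mathrm{CM}_\psi}$. The snake lemma combined with inflation--restriction identifies the cokernel of the inclusion with a submodule of
\[
\prod_{\mathfrak{P}\in \Sigma_p^c} H^1(D_\mathfrak{P}/I_\mathfrak{P},\,(\mathcal{A}^\mathrm{CM}_\psi)^{I_\mathfrak{P}}),
\]
because the local conditions at nonarchimedean places outside $\Sigma_p$ coincide for the two Selmer groups by construction.

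For the copseudoisomorphism statement, I would show that each local factor above is copseudonull over $\Lambda^\mathrm{CM}_\mathcal{O}$. Since $D_\mathfrak{P}/I_\mathfrak{P}$ is procyclic, generated by the Frobenius element, this cohomology group is a quotient of $(\mathcal{A}^\mathrm{CM}_\psi)^{I_\mathfrak{P}}$; so it suffices to prove copseudonullity of the latter. Here the argument of Lemma~\ref{lem:cpsn-loc} applies verbatim: for $\mathfrak{P} \in \Sigma_p^c$, the image of $I_\mathfrak{P}$ in $\mathrm{Gal}(\widetilde{F}_\infty/F)$ spans a $\mathbb{Z}_p$-submodule of rank at least two, coming from the ramification in both the cyclotomic $\mathbb{Z}_p$-extension and an independent $\mathbb{Z}_p$-extension unramified outside $\mathfrak{P}$. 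The finite-order twist by $\psi$ contributes only a finite-index correction, so the Pontrjagin dual of $(\mathcal{A}^\mathrm{CM}_\psi)^{I_\mathfrak{P}}$ remains annihilated by an ideal of height at least two in $\Lambda^\mathrm{CM}_\mathcal{O}$, which gives the desired copseudonullity.

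For the isomorphism statement under the hypothesis (ntr)$_{\psi,\mathfrak{P}}$, I will copy the final paragraph of the proof of Lemma~\ref{lem:replace}: write $((\mathcal{A}^\mathrm{CM}_\psi)^{I_\mathfrak{P}})^\vee \cong \Lambda^\mathrm{CM}_\mathcal{O}/J_\mathfrak{P}$, observe that the quotient $(\mathcal{A}^\mathrm{CM}_\psi)^{I_\mathfrak{P}}/(f_\mathfrak{P}-1)(\mathcal{A}^\mathrm{CM}_\psi)^{I_\mathfrak{P}}$ vanishes if and only if $f_\mathfrak{P}-1$ is a unit in $\Lambda^\mathrm{CM}_\mathcal{O}/\mathfrak{M}$ for every maximal ideal $\mathfrak{M}$ containing $J_\mathfrak{P}$, and note that this is precisely the translation of (ntr)$_{\psi,\mathfrak{P}}$ into a condition on the Frobenius action on $\mathcal{A}^\mathrm{CM}_\psi[\mathfrak{M}]$. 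Since there is no serious new ingredient beyond those two earlier lemmas, I do not anticipate a main obstacle; the only point requiring care is verifying that the finite order of $\psi$ does not disturb the rank-two argument for copseudonullity, which is immediate because the $\psi$-twist factors through a finite quotient while the argument rests on the $\mathbb{Z}_p$-rank of the image of $I_\mathfrak{P}$ in $\mathrm{Gal}(\widetilde{F}_\infty/F)$.
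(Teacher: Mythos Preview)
Your proposal is correct and follows essentially the same route as the paper: the paper also reduces the copseudoisomorphism statement to the copseudonullity of $(\mathcal{A}^\mathrm{CM}_\psi)^{I_{\mathfrak{P}}}$ via the argument of Lemma~\ref{lem:cpsn-loc}, and handles the isomorphism under (ntr)$_{\psi,\mathfrak{P}}$ by referring back to the proof of Lemma~\ref{lem:replace}. Your explicit remark that the finite-order twist by $\psi$ does not affect the rank-two inertia argument is the only point you spell out that the paper leaves implicit.
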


\begin{proof}
We may verify that
 $(\mathcal{A}^\mathrm{CM}_\psi)^{I_{\mathfrak{P}}}$ is a
 copseudonull $\Lambda^\mathrm{CM}_\mathcal{O}$-module for each place
 $\mathfrak{P}$ of $F$ in $\Sigma_p^c$ by arguments 
 similar to the proof of 
 the $\Lambda^\mathrm{CM}_\mathcal{O}$-copseudonullity of
 $(\mathcal{A}^\mathrm{CM}_\eta)^{I_{\mathfrak{P}}}$; see the proof of
 Lemma~\ref{lem:cpsn-loc}. Therefore the first half of the claim is true 
because the cokernel of the natural injection 
 $\mathrm{Sel}^{\Sigma,\mathrm{str}}_{\mathcal{A}^\mathrm{CM}_\psi}\hookrightarrow
 \mathrm{Sel}^\Sigma_{\mathcal{A}^\mathrm{CM}_\psi}$ is a submodule of 
 the direct product of unramified cohomology groups
 $H^1(D_{\mathfrak{P}}/I_{\mathfrak{P}},
 (\mathcal{A}^\mathrm{CM}_\psi)^{I_{\mathfrak{P}}})$ for places 
 $\mathfrak{P}$ in $\Sigma_p^c$.

The second half of the claim is verified by the same argument as the
 proof of Lemma~\ref{lem:replace}.
\end{proof}

%
\subsection{Greenberg's criterion for almost divisibility} \label{ssc:greenberg}
%

 As is well known, the characteristic ideal 
 of a finitely generated torsion module over 
 a complete noetherian regular local domain 
is {\em not necessarily preserved} under basechange 
(or specialisation) procedures. 
Indeed the existence of a {\em nontrivial pseudonull 
submodule} causes peculiar behaviour of the characteristic ideal
under specialisation.  
Therefore, when we discuss specialisation 
 of the multi-variable Iwasawa main conjecture (or,
 in particular, specialisation of multi-variable 
Selmer groups), 
it is crucial to check whether the Pontrjagin dual 
 of the Selmer group contains 
 nontrivial pseudonull submodules or not. 
Greenberg has recently presented in \cite{gr-sel} certain sufficient conditions 
for the pseudonull submodule of the Pontrjagin dual
 of the Selmer group to be trivial, which is applicable to quite 
general situations. In this subsection we introduce various hypotheses which 
 are necessary to state Greenberg's criterion, and then 
we briefly review the main results of \cite{gr-sel}.

 \subsubsection{Algebraic settings} \label{sssc:setting}
%
  
Let $\Lambda_0$ be the ring $\mathcal{O}[[T_1, \dotsc, T_m]]$
of formal power series over the ring of integers $\mathcal{O}$
of a finite extension  of $\mathbb{Q}_p$, and let 
 $\mathcal{R}$ be a $\Lambda_0$-algebra
 which is isomorphic to the direct product of a finite number
 of copies of $\Lambda_0$. For each cofinitely generated
 discrete $R$-module $\mathcal{A}$, we define the {\em $\mathcal{R}$-corank $\mathrm{corank}_\mathcal{R}(\mathcal{A})$ of $\mathcal{A}$} as a {\em finite set} $(\mathrm{corank}_{\mathcal{R}_i}(e_i\mathcal{A}))_{i\in I}$ of nonnegative integers, where each $\mathcal{R}_i$ denotes a local component of 
 the semilocal ring $\mathcal{R}$ cut out by an idempotent $e_i$.
We endow the set of the coranks of cofinitely generated discrete $\mathcal{R}$-modules with the {\em componentwise partial order}; namely, the notation $\mathrm{corank}_\mathcal{R}(\mathcal{A}_1)\leq\mathrm{corank}_\mathcal{R}(\mathcal{A}_2)$ means that  $\mathrm{corank}_{\mathcal{R}_i}(e_i\mathcal{A}_1)\leq \mathrm{corank}_{\mathcal{R}_i}(e_i\mathcal{A}_2)$ holds for every $i$ in $I$. The {\em characteristic ideal} of a finitely generated torsion $\mathcal{R}$-module for the semilocal ring $\mathcal{R}$ is also defined componentwisely as follows.
   
   \begin{defn}[Characteristic ideal of $\mathcal{R}$-modules] \label{def:char}
    Let $M$ be a finitely generated torsion $\mathcal{R}$-module. Then we define the {\em $(\mathcal{R}$-$)$characteristic ideal $\mathrm{Char}_\mathcal{R}(M)$ of $M$} as the ideal of $\mathcal{R}$ corresponding to $\prod_{i\in I}\mathrm{Char}_{\mathcal{R}_i}(e_iM)$ under the indecomposable decomposition $\mathcal{R}=\prod_{i\in I}\mathcal{R}_i$ of $\mathcal{R}$. Here $\mathrm{Char}_{\mathcal{R}_i}(e_iM)$ denotes the characteristic ideal of the finitely generated torsion module $e_iM$ over the regular local ring $\mathcal{R}_i$, which is defined in the usual manner.
   \end{defn}
   
   By definition the characteristic ideal $\mathrm{Char}_\mathcal{R}(M)$ is a principal ideal of $\mathcal{R}$.
   
  \begin{rem}
 In \cite{gr-surj} and \cite{gr-sel}, Greenberg assumes that the coefficient ring $\mathcal{R}$ is a {\em local} ring equipped with several good properties (which Greenberg calls a {\em reflexive ring}). In our setting the coefficient ring $\mathcal{R}$ is a {\em semilocal} ring and is no longer local. However each local component of $\mathcal{R}$ is a regular local ring isomorphic to $\Lambda_0$, which is compatible with Greenberg's setting. Indeed it is not difficult at all to extend Greenberg's results of \cite{gr-surj} and \cite{gr-sel} to our semilocal coefficient case  by using the componentwise decomposition of the coefficient ring $\mathcal{R}$ and $\mathcal{R}$-modules. In Section~\ref{sssc:criterion} we shall introduce statements of Greenberg's results  of \cite{gr-surj} and \cite{gr-sel} extended to the semilocal coefficient case.
  \end{rem}
  
  Now let $\mathsf{K}$, $S$ and $\mathcal{T}$ be as in Section~\ref{sssc:general_Selmer}. Since the semilocal ring $\mathcal{R}=\prod_{i\in I}\mathcal{R}_i$ satisfies all the conditions introduced in Section~\ref{sssc:general_Selmer}, we can consider the Selmer group of $\mathcal{R}$-linear $G_\mathsf{K}$-representations. 
For the discrete $\mathcal{R}$-module 
 $\mathcal{A}=\mathcal{T}\otimes_{\mathcal{R}} \mathcal{R}^{\vee}$ associated to $\mathcal{T}$,
 we specify an $\mathcal{R}$-submodule $L(\mathsf{K}_v, \mathcal{A})$ of 
 the local Galois cohomology group $H^1(\mathsf{K}_v, \mathcal{A})$ for
 each $v$ in $S$, which we call a {\em local condition at $v$}. 
 We denote such a specification of local conditions by $\mathcal{L}$ for brevity. 
Set $Q_{\mathcal{L}}(\mathsf{K}_v, \mathcal{A})$ as 
 the quotient $H^1(\mathsf{K}_v, \mathcal{A})/L(\mathsf{K}_v, \mathcal{A})$ 
 for each $v$ in $S$.  We define $L(\mathsf{K}, \mathcal{A})$ as the product of the $\mathcal{R}$-submodules $L(\mathsf{K}_v, \mathcal{A})$ for all places $v$ in $S$, and similarly we introduce notation on products of local cohomology groups as follows:
 \begin{align*}
 P(\mathsf{K}, \mathcal{A}) = \prod_{v\in S} H^1(\mathsf{K}_v, \mathcal{A}), \quad Q_{\mathcal{L}}(\mathsf{K}, \mathcal{A}) = \prod_{v \in S} Q_{\mathcal{L}}(\mathsf{K}_v, \mathcal{A}). 
 \end{align*}
 The {\em $\mathcal{L}$-Selmer group $\mathrm{Sel}_{\mathcal{L}}(\mathsf{K}, \mathcal{A})$ 
 associated to $\mathcal{A}$} is defined 
 to be the kernel of the natural global-to-local homomorphism 
 \begin{align} \label{map:global-to-local}
 \phi_{\mathcal{L}} \colon H^1(\mathsf{K}_S/\mathsf{K}, \mathcal{A}) \rightarrow Q_{\mathcal{L}}(\mathsf{K},\mathcal{A})
 \end{align}
 induced by restriction morphisms of Galois cohomologies. By definition, $\mathrm{Sel}_{\mathcal{L}}(\mathsf{K},\mathcal{A})$ is an $\mathcal{R}$-submodule of the 
 first cohomology group $H^1(\mathsf{K}_S/\mathsf{K}, \mathcal{A})$ of 
 the Galois group $\mathrm{Gal}(\mathsf{K}_S/\mathsf{K})$ with coefficients 
 in $\mathcal{A}$. When we take the {\em trivial} specification
 $\mathcal{L}_{\mathrm{triv}}$, or in other words, when we impose the 
 minimal local condition $L(\mathsf{K}_v, \mathcal{A})=0$ on each place $v$ in
 $S$, the $\mathcal{L}_{\mathrm{triv}}$-Selmer group 
 is denoted by $\mathcyr{Sh}^1(\mathsf{K}, S,
 \mathcal{A})$ in \cite{gr-coh, gr-sel} and referred as 
 the {\em fine $S$-Selmer group} associated to
 $\mathcal{A}$ after Coates and Sujatha \cite{CS}.

Concerning the algebraic structure of the $\mathcal{L}$-Selmer groups, the following two statements are known to be equivalent (see \cite[Proposition~2.4]{gr-coh} for the proof):
 \begin{itemize}
 \item[-] the Pontrjagin dual $\mathrm{Sel}^{\vee}_{\mathcal{L}}(\mathsf{K},\mathcal{A})$ of the $\mathcal{L}$-Selmer group 
 $\mathrm{Sel}_{\mathcal{L}}(\mathsf{K},\mathcal{A})$ does not contain nontrivial $\Lambda_0$-pseudonull submodules; 
 \item[-] the $\mathcal{L}$-Selmer group $\mathrm{Sel}_{\mathcal{L}}(\mathsf{K},\mathcal{A})$ is {\em almost divisible} as a discrete $\Lambda_0$-module; that is, 
 the equality
          $\mathfrak{P}\mathrm{Sel}_{\mathcal{L}}(\mathsf{K},\mathcal{A})=\mathrm{Sel}_{\mathcal{L}}(\mathsf{K},\mathcal{A})$
          holds for all but finitely many prime ideals $\mathfrak{P}$ of
	  height one of $\Lambda_0$.
 \end{itemize}
 
 %
 \subsubsection{Various hypotheses} \label{sssc:hyp}
 %

Greenberg has thoroughly studied almost $\Lambda_0$-divisibility
of the $\mathcal{L}$-Selmer group and established certain useful
criteria for almost $\Lambda_0$-divisibility in \cite{gr-coh, gr-surj, gr-sel}. Now let us introduce 
various hypotheses which are necessary 
 to state Greenberg's criteria (see also \cite[Section~2.1]{gr-sel}).

 The first two hypotheses concern the Kummer (or Cartier) dual 
 $\mathcal{T}^*=\mathrm{Hom}_{\mathrm{cts}}(\mathcal{A},
 \mu_{p^\infty})$ of $\mathcal{A}$. For a place $v$ contained in $S$, we 
consider the following two statements:
 \begin{description}
 \item[(LOC$^{(1)}_{\mathcal{A},v}$)] the local Galois invariant submodule $H^0_{\mathrm{cts}}(\mathsf{K}_v, \mathcal{T}^*)$ of $\mathcal{T}^*$ is trivial;
 \item[(LOC$^{(2)}_{\mathcal{A},v}$)] the quotient module 
 $\mathcal{T}^*/H^0_{\mathrm{cts}}(\mathsf{K}_v, \mathcal{T}^*)$ is 
reflexive as an $\mathcal{R}$-module.
 \end{description}

The next hypothesis concerns the {\em generalised second
 Tate-\v{S}afarevi\v{c} group} $\mathcyr{Sh}^2(\mathsf{K},S, \mathcal{A})$ defined as the kernel of the global-to-local morphism
 \begin{align*}
  H^2(\mathsf{K}_S/\mathsf{K}, \mathcal{A}) \rightarrow \prod_{v\in S} H^2(\mathsf{K}_v, \mathcal{A})
 \end{align*}
 induced by usual restriction morphisms. Then we consider:
 \begin{description}
 \item[(LEO$_{\mathcal{A}}$)] the generalised second Tate-\v{S}afarevi\v{c} 
 group $\mathcyr{Sh}^2(\mathsf{K},S,\mathcal{A})$ is {\em cotorsion} as an $\mathcal{R}$\nobreakdash-module.
 \end{description}
 One of the significant features concerning the hypothesis
 (LEO$_{\mathcal{A}}$) is that it behaves well under specialisation procedures
 with respect to height-one prime ideals; namely, 
the condition (LEO$_\mathcal{A}$) holds 
 if and only if the condition (LEO$_{\mathcal{A}[\varPi]}$) holds
 (as a condition on the $\Lambda_0 /(\varPi)$-module $\mathcal{A}[\varPi]$) 
 for all but finitely many prime ideals $(\varPi)$ of height one of
 $\Lambda_0$ (generated by a prime element $\varPi$). Here we denote 
 by $\mathcal{A}[\varPi]$ the maximal $\varPi$-torsion submodule of $\mathcal{A}$. 
We refer to \cite[Lemma~4.4.1 and Remark~2.1.3]{gr-coh} for the proof of this property.

 Finally we introduce the hypothesis on the global-to-local morphism $\phi_{\mathcal{L}}$:
 \begin{description}
 \item[(SUR$_{\mathcal{A,L}}$)] the global-to-local morphism $\phi_{\mathcal{L}}$ is surjective.
 \end{description}

 \subsubsection{Greenberg's criterion} \label{sssc:criterion}

The following criterion for the almost divisibility of
$\mathrm{Sel}_\mathcal{L}(\mathsf{K}, \mathcal{A})$ is due to 
Greenberg \cite{gr-sel}. We state it for modules over the {\em semilocal} ring
$\mathcal{R}=\prod_{i\in I}\mathcal{R}_i$, contrary to the settings
in \cite{gr-sel}.

 \begin{thm}[{\cite[Proposition~4.1.1]{gr-sel}}] \label{thm:adiv}
Let $\mathcal{R}$ be a finite $\Lambda_0$-algebra which is isomorphic to the direct product of finitely many copies of $\Lambda_0$, and let $\mathsf{K}$, $S$, $\mathcal{A}$ be as above.
 Assume that the local condition $L(\mathsf{K},\mathcal{A}) (\subset P(\mathsf{K},\mathcal{A}))$ 
 is almost $\Lambda_0$-divisible. Suppose also that all the conditions 
 {\upshape (LOC$^{(1)}_{\mathcal{A},v_0}$)} $($for a certain nonarchimedean place $v_0$ in $S)$, 
 {\upshape (LOC$^{(2)}_{\mathcal{A},v}$)} $($for every place $v$ in $S)$, {\upshape (LEO$_{\mathcal{A}}$)} and 
 {\upshape (SUR$_{\mathcal{A,L}}$)} are fulfilled. 
 
 Then the $\mathcal{L}$-Selmer group $\mathrm{Sel}_{\mathcal{L}}(\mathsf{K},\mathcal{A})$ is almost divisible as an $\Lambda_0$-module.
 \end{thm}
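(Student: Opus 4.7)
The plan is to reduce the theorem to its local counterpart established by Greenberg in \cite[Proposition~4.1.1]{gr-sel} by a componentwise argument along the indecomposable decomposition $\mathcal{R}=\prod_{i\in I}\mathcal{R}_i$ of the semilocal coefficient ring. Writing $\{e_i\}_{i\in I}$ for the associated orthogonal idempotents, I would first observe that the splitting $\mathcal{A}=\bigoplus_{i\in I}e_i\mathcal{A}$ is $G_\mathsf{K}$-equivariant, because each $e_i$ lies in the coefficient ring and thus commutes with the Galois action, and that each summand $e_i\mathcal{A}$ is a cofinitely generated discrete $\mathcal{R}_i$-module. Consequently the global cohomology $H^1(\mathsf{K}_S/\mathsf{K},\mathcal{A})$, the products $P(\mathsf{K},\mathcal{A})$ and $Q_\mathcal{L}(\mathsf{K},\mathcal{A})$, the specified local condition $L(\mathsf{K},\mathcal{A})$, the generalised second Tate--\v{S}afarevi\v{c} group $\mathcyr{Sh}^2(\mathsf{K},S,\mathcal{A})$ and the Selmer group $\mathrm{Sel}_\mathcal{L}(\mathsf{K},\mathcal{A})$ all split as direct sums indexed by $i\in I$. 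In particular the hypotheses (LOC$^{(1)}_{\mathcal{A},v_0}$), (LOC$^{(2)}_{\mathcal{A},v}$), (LEO$_\mathcal{A}$), (SUR$_{\mathcal{A,L}}$) and the almost $\Lambda_0$-divisibility of $L(\mathsf{K},\mathcal{A})$ transfer \emph{verbatim} to the corresponding hypotheses for $e_i\mathcal{A}$ with local condition $e_iL(\mathsf{K},\mathcal{A})$ over $\mathcal{R}_i\cong\Lambda_0$, for every $i\in I$.

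Next I would invoke the equivalence recalled in Section~\ref{sssc:setting}: a cofinitely generated discrete $\Lambda_0$-module $N$ is almost $\Lambda_0$-divisible if and only if its Pontrjagin dual contains no nonzero pseudonull $\Lambda_0$-submodule. Any $\Lambda_0$-submodule of a finite direct sum decomposes under the orthogonal idempotents, so the dual of $\mathrm{Sel}_\mathcal{L}(\mathsf{K},\mathcal{A})$ is free of nontrivial pseudonull $\Lambda_0$-submodules precisely when each of its components $(e_i\mathrm{Sel}_\mathcal{L}(\mathsf{K},\mathcal{A}))^\vee$ is. This reduces the claim to the single-component case, which is exactly \cite[Proposition~4.1.1]{gr-sel} applied to each $\mathcal{R}_i\cong\Lambda_0$.

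For orientation I would briefly recall Greenberg's argument in the local case: from (SUR$_{\mathcal{A,L}}$) one extracts the short exact sequence
\[
0\longrightarrow \mathrm{Sel}_\mathcal{L}(\mathsf{K},\mathcal{A})\longrightarrow H^1(\mathsf{K}_S/\mathsf{K},\mathcal{A})\longrightarrow Q_\mathcal{L}(\mathsf{K},\mathcal{A})\longrightarrow 0
\]
and studies its Pontrjagin dual against the Poitou--Tate global duality sequence. The condition (LOC$^{(2)}_{\mathcal{A},v}$) guarantees that each local quotient $Q_\mathcal{L}(\mathsf{K}_v,\mathcal{A})^\vee$ is reflexive, and hence has no pseudonull $\Lambda_0$-submodule; (LEO$_\mathcal{A}$) controls the global $H^2$-contribution coming from Poitou--Tate; and (LOC$^{(1)}_{\mathcal{A},v_0}$) at a single auxiliary nonarchimedean place rules out extra pseudonull pieces coming from $H^0$-terms of the Kummer dual $\mathcal{T}^*$. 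The technical heart, and the step I expect to be the main obstacle were one to rewrite the proof from scratch, is the careful bookkeeping of these reflexivity and pseudonullity properties through the snake lemma applied to the dualised diagram, so as to force the Pontrjagin dual of $\mathrm{Sel}_\mathcal{L}(\mathsf{K},\mathcal{A})$ to inherit the absence of pseudonull submodules from its flanking terms. Since Greenberg carries this analysis out in \cite[Section~4]{gr-sel} over a reflexive local coefficient ring, his argument applies verbatim to each component $\mathcal{R}_i\cong\Lambda_0$, and no new work beyond the componentwise reduction above is required to conclude.
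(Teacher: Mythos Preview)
Your proposal is correct and matches the paper's approach exactly: the paper does not give a proof of Theorem~\ref{thm:adiv} but simply cites \cite[Proposition~4.1.1]{gr-sel} for the local case and remarks (in Section~\ref{sssc:setting}) that the extension to the semilocal coefficient ring $\mathcal{R}=\prod_{i\in I}\mathcal{R}_i$ is obtained by the componentwise decomposition you describe. Your sketch of Greenberg's underlying argument is a helpful supplement but goes beyond what the paper itself provides.
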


\begin{rem} 
Among various assumptions of Theorem~\ref{thm:adiv}, the almost $\Lambda_0$-divisibility of $L(\mathsf{K},\mathcal{A})$ and the latter two hypotheses (LEO$_\mathcal{A}$), (SUR$_{\mathcal{A,L}}$) are 
 rather nontrivial and not so easy to verify. 
 In fact, the hypothesis 
 (LEO$_{\mathcal{A}}$) is closely related to 
 the {\em weak Leopoldt conjecture} in classical settings and is quite nontrivial 
 (see \cite[Introduction and
 Section~6.D]{gr-coh} for further discussion and for several examples 
 where the hypothesis (LEO$_{\mathcal{A}}$) is not valid). 
 The surjectivity condition (SUR$_{\mathcal{A},\mathcal{L}}$) of
 the global-to-local morphism $\phi_\mathcal{L}$ is closely related 
 to the triviality of the {\em dual Selmer group},
 as discussed later in Section~\ref{sssc:dSel}. 
 Finally, the local condition 
$L(\mathsf{K}, \mathcal{A})$ often tends to be {\em not} almost
 divisible; even in our CM setting, the unramified cohomology group 
 $H^1_\mathrm{ur}(F_{\mathfrak{P}^c}, \mathcal{A}^\mathrm{CM}_\eta)$
 at a place $\mathfrak{P}^c$ in $\Sigma_p^c$  might not 
 be almost divisible in general, and we cannot directly apply Greenberg's 
 criterion to $\mathrm{Sel}^\Sigma_{\mathcal{A}^\mathrm{CM}_\eta}$.
 This is one of the reasons why 
 we replace our Selmer group
 $\mathrm{Sel}^\Sigma_{\mathcal{A}^\mathrm{CM}_\eta}$ with 
 the {\em strict 
 Selmer group} $\mathrm{Sel}^{\Sigma,
 \mathrm{str}}_{\mathcal{A}^\mathrm{CM}_\eta}$ in Section~\ref{ssc:specialisation}.
\end{rem}

\subsubsection{Dual Selmer groups and the surjectivity hypothesis} \label{sssc:dSel}

 By virtue of Poitou and Tate's long exact sequence on Galois cohomology
 groups \cite[(8.6.10)]{NSW}, the cokernel of the global-to-local homomorphism 
 $\phi_{\mathcal{L}}$ is represented in terms of the {\em dual Selmer group} 
 $\mathrm{Sel}_{\mathcal{L}^*}(\mathsf{K}, \mathcal{T}^*)$ 
 of $\mathrm{Sel}_{\mathcal{L}}(\mathsf{K},\mathcal{A})$, which
 enables us to check the hypothesis (SUR$_{\mathcal{A,L}}$) by investigating the
 triviality of $\mathrm{Sel}_{\mathcal{L}^*}(\mathsf{K},
 \mathcal{T}^*)$. We here define the dual Selmer
 group $\mathrm{Sel}_{\mathcal{L}^*}(\mathsf{K}, \mathcal{T}^*)$ 
and introduce a criterion for its triviality, which is also due to
 Greenberg \cite{gr-surj}. In the following paragraphs the subscript ``cts'' denotes 
the Galois cohomology groups of {\em continuous} cocycles.

Let $\mathcal{T}^*=\mathrm{Hom}_{\mathrm{cts}}(\mathcal{A},
\mu_{p^\infty})$ 
denote the Kummer dual of $\mathcal{A}$. Then 
the natural pairing $\mathcal{A}\times \mathcal{T}^*\rightarrow
\mu_{p^\infty}$ combined with the cup product of the Galois cohomology 
induces the local Tate pairing
 \begin{align} \label{eq:tate}
 H^1(\mathsf{K}_v, \mathcal{A})\times H^1_{\mathrm{cts}}(\mathsf{K}_v, \mathcal{T}^*)\rightarrow
 \mathbb{Q}_p/\mathbb{Z}_p
\end{align}
for each $v$ in $S$, which is a perfect pairing as is well known. We 
 specify a subgroup $L^*(\mathsf{K}_v, \mathcal{T}^*)$ of
 $H^1_{\mathrm{cts}}(\mathsf{K}_v, \mathcal{T}^*)$ as the orthogonal complement 
 of $L(\mathsf{K}_v, \mathcal{A})$ under the local Tate pairing (\ref{eq:tate}).
 We denote 
 such specifications of submodules of the local Galois cohomology groups $H^1_\mathrm{cts}(\mathsf{K}_v,\mathcal{T}^*)$ by $\mathcal{L}^*$. The dual Selmer group $\mathrm{Sel}_{\mathcal{L}^*}(\mathsf{K}, \mathcal{T}^*)$
 for $\mathcal{T}^*$ is then defined as the kernel of 
the global-to-local homomorphism
\begin{align*}
\phi_{\mathcal{L}^*} \colon H^1_{\mathrm{cts}}(\mathsf{K}_S/\mathsf{K},
 \mathcal{T}^*) \rightarrow Q_{\mathcal{L}^*}(\mathsf{K}, \mathcal{T}^*),
\end{align*}
where $Q_{\mathcal{L}^*}(\mathsf{K}, \mathcal{T}^*)$ is defined as the direct
product $\prod_{v\in S}
 H^1_{\mathrm{cts}}(\mathsf{K}_v, \mathcal{T}^*)/L^*(\mathsf{K}_v, \mathcal{T}^*)$.
Meanwhile the {\em  fine $S$-Selmer group
 $\mathcyr{Sh}^1(\mathsf{K},S,\mathcal{T}^*)$ associated to
 $\mathcal{T}^*$} is defined as the local-to-global map
 $H^1_\mathrm{cts}(\mathsf{K}_S/\mathsf{K}, \mathcal{T}^*) \rightarrow \prod_{v\in
 S} H^1_\mathrm{cts}(\mathsf{K}_v, \mathcal{T}^*)$. Then one readily verifies 
that the Pontrjagin dual of the cokernel of $\phi_{\mathcal{L}}$ is 
 isomorphic to the quotient $\mathrm{Sel}_{\mathcal{L}^*}(\mathsf{K},
 \mathcal{T}^*)/\mathcyr{Sh}^1(\mathsf{K},S,\mathcal{T}^*)$, and the
 Pontrjagin dual of the cokernel of $\phi_{\mathcal{L}^*}$ is isomorphic to
 $\mathrm{Sel}_\mathcal{L}(\mathsf{K},
 \mathcal{A})/\mathcyr{Sh}^1(\mathsf{K}, S, \mathcal{A})$ \cite[Proposition~3.1.1]{gr-surj}. 
In particular, the triviality of the 
 dual Selmer group $\mathrm{Sel}_{\mathcal{L}^*}(\mathsf{K}, \mathcal{T}^*)$
 implies the validity of the hypothesis (SUR$_{\mathcal{A}, \mathcal{L}}$).

Greenberg himself has given a sufficient condition for the dual Selmer
 group $\mathrm{Sel}_{\mathcal{L}^*}(\mathsf{K}, \mathcal{T}^*)$ to vanish. 
 In order to state it, we here introduce another hypothesis:
 \begin{description}
 \item[(CRK$_{\mathcal{A,L}}$)] the following equality among $\mathcal{R}$-coranks holds (recall the definition and conventions on the $\mathcal{R}$-corank in Section~\ref{sssc:setting}):
 \begin{align*}
\mathrm{corank}_{\mathcal{R}}H^1(\mathsf{K}_S/\mathsf{K}, \mathcal{A})=\mathrm{corank}_{\mathcal{R}}\mathrm{Sel}_{\mathcal{L}}(\mathsf{K},\mathcal{A})+
 \mathrm{corank}_{\mathcal{R}}Q_{\mathcal{L}}(\mathsf{K},\mathcal{A}).
 \end{align*}
 \end{description}
Recall that we have the following equality on $\mathcal{R}$-coranks by the definition of the $\mathcal{L}$-Selmer group 
as the kernel of the global-to-local morphism (\ref{map:global-to-local}): 
$$ 
\mathrm{corank}_{\mathcal{R}} H^1(\mathsf{K}_S/\mathsf{K}, \mathcal{A})
 =\mathrm{corank}_{\mathcal{R}} \mathrm{Sel}_{\mathcal{L}}(\mathsf{K},\mathcal{A})+
 \mathrm{corank}_{\mathcal{R}}
 Q_{\mathcal{L}}(\mathsf{K},\mathcal{A})-\mathrm{corank}_{\mathcal{R}}
 \mathrm{Coker}(\phi_\mathcal{L}). 
$$ 
Hence, it is obvious 
that the following inequality always holds:
\begin{align*}
\mathrm{corank}_{\mathcal{R}} H^1(\mathsf{K}_S/\mathsf{K}, \mathcal{A})
\leq \mathrm{corank}_{\mathcal{R}} \mathrm{Sel}_{\mathcal{L}}(\mathsf{K},\mathcal{A})+
 \mathrm{corank}_{\mathcal{R}}
 Q_{\mathcal{L}}(\mathsf{K},\mathcal{A}).
\end{align*}
It is also obvious that 
the condition (CRK$_{\mathcal{A,L}}$) is valid if and only if 
{\em the cokernel of the global-to-local morphism $\phi_\mathcal{L}$ is 
cotorsion as an $\Lambda_0$-module}.

 \begin{pro}[{\cite[Proposition~3.2.1]{gr-surj}}]  \label{prop:surj}
 Suppose that $\mathcal{A}$ is a divisible $\mathcal{R}$-module and that the
  condition {\upshape (CRK$_{\mathcal{A,L}}$)} holds for $\mathcal{A}$
  and a specification $\mathcal{L}$. 
 Furthermore assume that at least one of the following conditions is fulfilled{\upshape :}
 \begin{enumerate}[label={\upshape (\alph*)}]
 \item for each maximal ideal $\mathfrak{M}$ of $\mathcal{R}$, the maximal $\mathfrak{M}$-torsion submodule  $\mathcal{A}[\mathfrak{M}]$
 has no subquotient isomorphic to $\mu_p$ 
 as a Galois representation of $\mathrm{Gal}(\overline{\mathsf{K}}/\mathsf{K})$ over $\mathbb{F}_p${\upshape ;}
 \item the discrete module $\mathcal{A}$ is cofree as an $\Lambda_0$-module and, for each maximal ideal $\mathfrak{M}$ of $\mathcal{R}$, the maximal $\mathfrak{M}$-torsion submodule  $\mathcal{A}[\mathfrak{M}]$ has 
 no quotient isomorphic to $\mu_p$ as a Galois representation of $\mathrm{Gal}(\overline{\mathsf{K}}/\mathsf{K})$ over $\mathbb{F}_p${\upshape ;}
 \item there exists a place $v_0$ contained in $S$ such that $H^0_\mathrm{cts}(\mathsf{K}_{v_0}, \mathcal{T}^*)$ is trivial and 
 that $Q_{\mathcal{L}}(\mathsf{K}_{v_0}, \mathcal{A})$ is divisible as an $\Lambda_0$-module.
 \end{enumerate}
 Then the dual Selmer group $\mathrm{Sel}_{\mathcal{L}^*}(\mathsf{K},
  \mathcal{T}^*)$ is trivial{\upshape ;} in particular, the surjectivity condition
  {\em (SUR$_{\mathcal{A,L}}$)} holds for $\mathcal{A}$ and $\mathcal{L}$.
 \end{pro}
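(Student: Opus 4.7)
The plan is to show that $\mathrm{Sel}_{\mathcal{L}^*}(\mathsf{K}, \mathcal{T}^*)$ is simultaneously $\Lambda_0$-torsion and $\Lambda_0$-torsion-free, so that it must vanish; the surjectivity (SUR$_{\mathcal{A},\mathcal{L}}$) then follows, since the discussion preceding the proposition identifies the Pontrjagin dual of $\mathrm{Coker}(\phi_\mathcal{L})$ with the quotient $\mathrm{Sel}_{\mathcal{L}^*}(\mathsf{K}, \mathcal{T}^*)/\mathcyr{Sh}^1(\mathsf{K}, S, \mathcal{T}^*)$. Throughout, I would work componentwise on the semilocal decomposition $\mathcal{R}=\prod_{i\in I}\mathcal{R}_i$, so that each factor is a regular local ring isomorphic to $\Lambda_0$; this reduces all arguments to the local setting considered in \cite{gr-surj}.

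For the torsion side, the condition (CRK$_{\mathcal{A},\mathcal{L}}$) is, as observed just below the statement of the hypothesis, equivalent to the $\Lambda_0$-cotorsion-ness of $\mathrm{Coker}(\phi_\mathcal{L})$. Taking Pontrjagin duals, the quotient $\mathrm{Sel}_{\mathcal{L}^*}/\mathcyr{Sh}^1$ is then $\Lambda_0$-torsion. A separate Poitou--Tate argument, comparing local and global Euler--Poincar\'e characteristics, also gives $\mathcyr{Sh}^1(\mathsf{K},S,\mathcal{T}^*)$ as $\Lambda_0$-torsion; hence $\mathrm{Sel}_{\mathcal{L}^*}(\mathsf{K},\mathcal{T}^*)$ itself is $\Lambda_0$-torsion.

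For torsion-freeness, the divisibility of $\mathcal{A}$ implies that $\mathcal{T}^*=\mathrm{Hom}_{\mathrm{cts}}(\mathcal{A},\mu_{p^\infty})$ is a reflexive (componentwise free over $\Lambda_0$) $\mathcal{R}$-module. For a height-one prime $\varPi$ of $\Lambda_0$, the short exact sequence $0\to \mathcal{T}^*\xrightarrow{\varPi}\mathcal{T}^*\to \mathcal{T}^*/\varPi\mathcal{T}^*\to 0$ gives an identification of $H^1_\mathrm{cts}(\mathsf{K}_S/\mathsf{K},\mathcal{T}^*)[\varPi]$ with a quotient of $H^0_\mathrm{cts}(\mathsf{K}_S/\mathsf{K},\mathcal{T}^*/\varPi\mathcal{T}^*)$, and an analogous statement holds for the local cohomologies appearing in the definition of $\mathrm{Sel}_{\mathcal{L}^*}$. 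Under (a), dualising the ``no $\mu_p$-subquotient'' assumption on $\mathcal{A}[\mathfrak{M}]$ yields the vanishing of the residual invariants $(\mathcal{T}^*/\mathfrak{M}\mathcal{T}^*)^{G_\mathsf{K}}$, and Nakayama's lemma forces $H^0_\mathrm{cts}(\mathsf{K}_S/\mathsf{K},\mathcal{T}^*/\varPi\mathcal{T}^*)=0$ for all but finitely many $\varPi$; combined with the cofinite generation of $\mathcal{T}^*$, this kills the torsion in $\mathrm{Sel}_{\mathcal{L}^*}$. Case (c) is even more direct: the local vanishing $H^0_\mathrm{cts}(\mathsf{K}_{v_0},\mathcal{T}^*)=0$ propagates globally to $H^0_\mathrm{cts}(\mathsf{K}_S/\mathsf{K},\mathcal{T}^*)=0$, while the divisibility of $Q_\mathcal{L}(\mathsf{K}_{v_0},\mathcal{A})$ dualises (via the local Tate pairing \eqref{eq:tate}) to the reflexivity of $L^*(\mathsf{K}_{v_0},\mathcal{T}^*)$, which handles the local contribution to $\mathrm{Sel}_{\mathcal{L}^*}[\varPi]$ at $v_0$.

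Combining the two sides gives $\mathrm{Sel}_{\mathcal{L}^*}(\mathsf{K},\mathcal{T}^*)=0$, whence (SUR$_{\mathcal{A},\mathcal{L}}$). The main obstacle I expect is case (b), where $\mathcal{A}$ is only $\Lambda_0$-cofree and the residual condition is the weaker ``no $\mu_p$-quotient.'' The dualisation argument of case (a) does not apply verbatim, and one must exploit cofreeness to ensure that $\mathcal{T}^*/\varPi \mathcal{T}^*$ embeds into the residual representation rather than merely mapping to it, and then run a Nakayama-type argument on the dual side; care is also required in the semilocal setting to ensure that the absence of $\mu_p$-quotients holds uniformly across the indecomposable components $\mathcal{R}_i$ of $\mathcal{R}$.
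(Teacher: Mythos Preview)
The paper does not prove this proposition: it is quoted verbatim from Greenberg's article \cite{gr-surj} as Proposition~3.2.1 there, and the authors give no argument beyond the citation. So there is no ``paper's own proof'' to compare against; your sketch is essentially a reconstruction of Greenberg's original reasoning.

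That said, your outline follows the correct architecture (torsion plus torsion-free implies zero), and your treatment of the torsion half via (CRK$_{\mathcal{A},\mathcal{L}}$) and Poitou--Tate is in line with \cite{gr-surj}. One point to tighten: your handling of cases (a) and (b) is slightly inverted. The condition ``$\mathcal{A}[\mathfrak{M}]$ has no quotient isomorphic to $\mu_p$'' is exactly what is needed to force $H^0_\mathrm{cts}(\mathsf{K},\mathcal{T}^*/\mathfrak{M}\mathcal{T}^*)=0$, since a $G_\mathsf{K}$-fixed vector in $\mathrm{Hom}(\mathcal{A}[\mathfrak{M}],\mu_p)$ is the same thing as a $G_\mathsf{K}$-equivariant surjection $\mathcal{A}[\mathfrak{M}]\twoheadrightarrow\mu_p$. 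So case (b), which you flag as the obstacle, is in fact the cleaner one once cofreeness guarantees that $\mathcal{T}^*$ is free and the sequence $0\to\mathcal{T}^*\xrightarrow{\varPi}\mathcal{T}^*\to\mathcal{T}^*/\varPi\mathcal{T}^*\to 0$ is exact for every height-one $\varPi$. Case (a) is the one requiring extra work: without cofreeness, $\mathcal{A}$ divisible only gives that $\mathcal{T}^*$ is reflexive, not free, so multiplication by $\varPi$ can fail to be injective at finitely many primes, and the stronger ``no $\mu_p$-\emph{subquotient}'' hypothesis is used to control both $H^0$ of the quotient and the possible torsion in $\mathcal{T}^*$ itself. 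Your remark that (c) is the most direct case is correct.
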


%
 \subsection{Inductive specialisation of the characteristic ideals} \label{ssc:specialisation}
%

 In the rest of this section we shall state the main result of
 the algebraic side of this article (Theorem~\ref{thm:specialisation}) and prove it. 
As in Section~\ref{sssc:cycSel}, we denote by $F^+$ a totally real number
field of degree $d$ which satisfies the condition (unr$_{F^+}$). 
Let $f$ be a $p$-ordinary
$p$-stabilised newform of cohomological weight $\kappa$, level
$\mathfrak{N}$ and nebentypus $\underline{\varepsilon}$ defined on
$F^+$, and suppose that $f$ has complex multiplication. 
We denote by $\eta$ the gr\"o{\ss}encharacter of type $(A_0)$ defined on 
a totally imaginary quadratic extension $F$ over $F^+$ satisfying the
ordinarity condition (ord$_{F/F^+}$), to which the cuspform $f=\vartheta(\eta)$ 
is associated. As we have already mentioned, it is always possible to 
assume that $\eta$ is ordinary with respect to an 
appropriate $p$-ordinary CM type $\Sigma$ of $F$.
We choose a branch character $\psi$ associated to $\eta$ and fix it
(see Definition~\ref{def:branch}). 

Recall the $\psi$-branch $\mathcal{L}^\Sigma_p(\psi)$ of Katz, Hida and
Tilouine's $p$-adic $L$-function
$\mathcal{L}_{p,\Sigma}^\mathrm{KHT}(F)$ defined in Introduction; namely
$\mathcal{L}_p^\Sigma(\psi)$ is the image of
$\mathcal{L}_{p,\Sigma}^\mathrm{KHT}(F)$ under the $\psi$-twisting map 
\begin{align*}
\widehat{\mathcal{O}}^\mathrm{ur}[[\mathrm{Gal}(F_{\mathfrak{C}p^\infty}/F)]]\rightarrow
 \widehat{\mathcal{O}}^\mathrm{ur}[[\mathrm{Gal}(\widetilde{F}_\infty/F)]]\
 ; \ g\mapsto \psi(g) g\vert_{\widetilde{F}_\infty}.
\end{align*}
\begin{thm} \label{thm:specialisation}
Let the notation be as above. Furthermore assume the following three conditions{\upshape ;}
\begin{itemize}
\item[-] the nontriviality condition {\upshape \bfseries (ntr)$_{\mathfrak{P}}$} for
	 every place $\mathfrak{P}$ of $F$ contained in  $\Sigma_p^c${\upshape ;}
\item[-] {\upshape \bfseries (IMC$_{F,\psi}$)} \ the $((d+1+\delta_{F,p})$-variable$)$ 
Ottawa's main conjecture
\begin{align*}
\mathrm{Char}_{\Lambda^\mathrm{CM}_\mathcal{O}} (X_{\Sigma_p, (\psi)})
 =(\mathcal{L}_p^\Sigma(\psi))
\end{align*}
holds as an equality of ideals in $\Lambda^\mathrm{CM}_\mathcal{O}\hat{\otimes}_{\mathcal{O}} \, \widehat{\mathcal{O}}^\mathrm{ur}$ for the CM number field $F$ and the branch character
	 $\psi${\upshape ;}
\item[-] {\upshape \bfseries
	 (NV$_{\mathcal{L}^\mathrm{cyc}_p(\vartheta(\eta))}$)} \ the
	 cyclotomic $p$-adic $L$-function
	 $\mathcal{L}^\mathrm{cyc}_p(\vartheta(\eta))$
	 {\em does not vanish} in the sense that each component of
	 $\mathcal{L}^\mathrm{cyc}_p(\vartheta(\eta))$ in the
	 indecomposable decomposition of $\Lambda^\mathrm{cyc}_\mathcal{O}\hat{\otimes}_\mathcal{O} \widehat{\mathcal{O}}^\mathrm{ur}$ does not equal zero.
\end{itemize}
Then we have the following equality of ideals of $\Lambda^\mathrm{cyc}_{\mathcal{O}}$
\begin{align} \label{eq:specialisation}
 (\mathrm{Char}_{\Lambda_\mathcal{O}^\mathrm{CM}}(\mathrm{Sel}^\Sigma_{\mathcal{A}^\mathrm{CM}_\eta})^\vee)
  \otimes_{\Lambda_\mathcal{O}^\mathrm{CM}}
  \Lambda_\mathcal{O}^\mathrm{cyc} =\mathrm{Char}_{\Lambda^\mathrm{cyc}_{\mathcal{O}}}
  (\mathrm{Sel}_{\mathcal{A}_{\vartheta(\eta)}^\mathrm{cyc}})^\vee 
\end{align}
 where the tensor product in the left hand side is taken with respect to the canonical quotient map $\Lambda_\mathcal{O}^\mathrm{CM}\twoheadrightarrow \Lambda_\mathcal{O}^\mathrm{cyc}$.
\end{thm}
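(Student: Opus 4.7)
The plan is to reduce both sides of the desired equality to a common expression through the chain of identifications already built up in the paper, and then to establish basechange compatibility of characteristic ideals via an inductive specialisation argument along a regular sequence in $\mathfrak{A}^\mathrm{cyc}$. By Lemma~\ref{lem:Shapiro}, the right-hand side of \eqref{eq:specialisation} coincides with $\mathrm{Char}_{\Lambda^\mathrm{cyc}_\mathcal{O}}(\mathrm{Sel}^{\Sigma,\vee}_{\mathcal{A}^\mathrm{cyc}_\eta})$, and since $\mathcal{A}_\eta^\mathrm{cyc} = \mathcal{A}_\eta^\mathrm{CM}[\mathfrak{A}^\mathrm{cyc}]$, the exact control theorem (Theorem~\ref{thm:control}) under the assumption $(\mathrm{ntr})_{\mathfrak{P}}$ yields
$$
\mathrm{Sel}^\Sigma_{\mathcal{A}^\mathrm{cyc}_\eta} \cong \mathrm{Sel}^\Sigma_{\mathcal{A}^\mathrm{CM}_\eta}[\mathfrak{A}^\mathrm{cyc}].
$$
Meanwhile, Proposition~\ref{prop:iw_mod} combined with $(\mathrm{IMC}_{F,\psi})$ identifies $\mathrm{Char}_{\Lambda^\mathrm{CM}_\mathcal{O}}(\mathrm{Sel}^{\Sigma,\vee}_{\mathcal{A}^\mathrm{CM}_\eta})$ with $\mathrm{Tw}_{\eta^\mathrm{gal}\psi^{-1}}(\mathcal{L}_p^\Sigma(\psi))$. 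The claim is thus reduced to showing that the formation of the characteristic ideal commutes with the cyclotomic specialisation of $\mathrm{Sel}^{\Sigma,\vee}_{\mathcal{A}^\mathrm{CM}_\eta}$.

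To verify this compatibility, I would choose a regular sequence $\{x_1,\dotsc,x_{d+\delta_{F,p}}\}$ in $\mathfrak{A}^\mathrm{cyc}$ with $\mathfrak{A}_j=(x_1,\dotsc,x_j)$ as in \eqref{equation:x_1tox_j}, and iterate the exact control theorem to obtain $\mathrm{Sel}^{\Sigma,\vee}_{\mathcal{A}^\mathrm{CM}_\eta[\mathfrak{A}_{j+1}]} \cong \mathrm{Sel}^{\Sigma,\vee}_{\mathcal{A}^\mathrm{CM}_\eta[\mathfrak{A}_j]}/x_{j+1}$ after dualisation. The guiding principle is the standard fact that if $N$ is a finitely generated torsion $\Lambda^\mathrm{CM}_\mathcal{O}/\mathfrak{A}_j$-module with no nontrivial pseudonull submodule on which $x_{j+1}$ acts as a non-zero-divisor, then $\mathrm{Char}(N/x_{j+1}N)$ equals the image of $\mathrm{Char}(N)$ modulo $(x_{j+1})$. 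Therefore, if one can inductively show that for each $0\le j\le d+\delta_{F,p}$ the Pontrjagin dual $\mathrm{Sel}^{\Sigma,\vee}_{\mathcal{A}^\mathrm{CM}_\eta[\mathfrak{A}_j]}$ is a torsion module over $\Lambda^\mathrm{CM}_\mathcal{O}/\mathfrak{A}_j$ with no nontrivial pseudonull submodule, then the characteristic ideals specialise correctly step by step, and \eqref{eq:specialisation} follows at the final stage.

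The base case $j=0$ is precisely Theorem~\ref{Thm:psnull}: combined with Proposition~\ref{prop:iw_mod} and Lemma~\ref{lem:twist}, it asserts that $\mathrm{Sel}^{\Sigma,\vee}_{\mathcal{A}^\mathrm{CM}_\eta}$ has no nontrivial pseudonull $\Lambda^\mathrm{CM}_\mathcal{O}$-submodule. For the inductive step, the triviality of pseudonull submodules is equivalent (by the reformulation recalled in Section~\ref{sssc:setting}) to the almost-divisibility of the corresponding discrete Selmer group, which I would verify by applying Greenberg's criterion (Theorem~\ref{thm:adiv}) to the strict Selmer group $\mathrm{Sel}^{\Sigma,\mathrm{str}}_{\mathcal{A}^\mathrm{CM}_\eta[\mathfrak{A}_{j+1}]}$; under $(\mathrm{ntr})$, Lemma~\ref{lem:replace} makes this reduction lossless. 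The main obstacle will be the inductive verification of Greenberg's hypotheses $(\mathrm{LEO})$, $(\mathrm{LOC})$ and $(\mathrm{SUR})$ for the specialised modules $\mathcal{A}^\mathrm{CM}_\eta[\mathfrak{A}_j]$, propagated along the short exact sequence \eqref{ses:control} and its associated cohomology long exact sequence, with $(\mathrm{SUR})$ handled through Proposition~\ref{prop:surj} and the triviality of the relevant dual Selmer group. At the final stage $j=d+\delta_{F,p}$, the nonvanishing assumption $(\mathrm{NV}_{\mathcal{L}_p^\mathrm{cyc}(\vartheta(\eta))})$ together with Corollary~\ref{cor:compare_p-adic_L} ensures that $\mathrm{Sel}^{\Sigma,\vee}_{\mathcal{A}^\mathrm{CM}_\eta[\mathfrak{A}^\mathrm{cyc}]}$ remains $\Lambda^\mathrm{cyc}_\mathcal{O}$-torsion, so that the characteristic ideal on the right-hand side of \eqref{eq:specialisation} is well defined and the induction closes.
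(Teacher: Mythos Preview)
Your outline follows the same inductive specialisation strategy as the paper, but there are two places where the sketch is too optimistic and would not close as written.

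First, the base case. Theorem~\ref{Thm:psnull} (which is Corollary~\ref{cor:adiv_psi}) requires that the branch character $\psi$ satisfy $(\mathrm{ntr})_{\psi,\mathfrak{P}}$ and have order prime to $p$; neither is among the hypotheses of Theorem~\ref{thm:specialisation}. Moreover, Proposition~\ref{prop:iw_mod} only gives a \emph{pseudoisomorphism} between $(\mathrm{Sel}^\Sigma_{\mathcal{A}^\mathrm{CM}_\eta})^\vee$ and $\mathrm{Tw}_{\eta^{\mathrm{gal},-1}\psi}(X_{\Sigma_p,(\psi)})$, so absence of pseudonull submodules does not transfer automatically. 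The paper avoids this by working with the strict Selmer group throughout and proving almost divisibility of $\mathrm{Sel}^{\Sigma,\mathrm{str}}_{\mathcal{A}^\mathrm{CM}_\eta}$ directly (Proposition~\ref{prop:ini_adiv}), via the equivalence (Lemma~\ref{lem:tors}) between cotorsionness and the pair $(\mathrm{CRK})+(\mathrm{LEO})$, together with the $\Sigma$-Leopoldt condition of Hida--Tilouine.

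Second, and more seriously, the inductive verification of $(\mathrm{SUR})$. You propose to handle it ``through Proposition~\ref{prop:surj} and the triviality of the relevant dual Selmer group'', but Proposition~\ref{prop:surj} needs $(\mathrm{CRK})$, and there is no mechanism for propagating $(\mathrm{CRK})$ along the specialisation: the corank equality at level $j-1$ says nothing a priori about level $j$. The paper bypasses this by proving the triviality of the dual Selmer group $\mathrm{Sel}_{\mathcal{L}^*_\mathrm{str}}(F,(\mathcal{T}^\mathrm{CM}_\eta/\mathfrak{A}_j\mathcal{T}^\mathrm{CM}_\eta)^*)$ directly (Proposition~\ref{prop:dSelvan}), via a diagram chase comparing two connecting homomorphisms: one coming from the snake lemma applied to the global-to-local sequences, the other from the Poitou--Tate pairing. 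That these agree (Proposition~\ref{prop:commutes}) is the technical heart of the argument and occupies all of Appendix~\ref{app:duality}. Relatedly, the regular sequence cannot be chosen in advance: at each step one must pick $\gamma_j$ so that $x_j=\gamma_j-1$ avoids the finitely many height-one primes supporting the characteristic ideal and the exceptional primes for $(\mathrm{LEO})$ (Proposition~\ref{prop:induction}); at the final step $j=d+\delta_{F,p}$ no such freedom remains, and this is precisely where $(\mathrm{IMC}_{F,\psi})$ and $(\mathrm{NV})$ are used together to rule out divisibility by $x_{d+\delta_{F,p}}$.
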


The rest of this section is devoted to the proof of
Theorem~\ref{thm:specialisation}. 
We shall verify Theorem~\ref{thm:specialisation} by 
 induction on the Krull dimension of the coefficient ring $\Lambda_\mathcal{O}^{\mathrm{cyc}}$, 
applying repeatedly the {\em specialisation lemma} introduced below in 
 Section~\ref{sssc:sp_lemma} (Lemma~\ref{lem:specialisation}), Greenberg's 
 criterion for almost divisibility (Theorem~\ref{thm:adiv}) and the
 exact control theorem (Theorem~\ref{thm:control}).

\begin{rem}
 The conditions (IMC$_{F,\psi}$) and 
 (NV$_{\mathcal{L}^\mathrm{cyc}_p(\vartheta(\eta))}$) imply an important algebraic property of the Selmer group:  
\begin{itemize}
\item[-] {\bfseries
      (COT$_{\mathcal{A}^\mathrm{cyc}_{\vartheta(\eta)}}$}) \ the
      Selmer group $\mathrm{Sel}_{\mathcal{A}^\mathrm{cyc}_{\vartheta(\eta)}}$ is
      a cotorsion $\Lambda^\mathrm{cyc}_\mathcal{O}$-module. 
\end{itemize}

In order to deduce the conclusion of Theorem \ref{thm:specialisation}, we may replace the 
analytic condition (NV$_{\mathcal{L}^\mathrm{cyc}_p(\vartheta(\eta))}$) 
by the algebraic condition (COT$_{\mathcal{A}^\mathrm{cyc}_{\vartheta(\eta)}}$). 
We also remark that we use the analytic condition (NV$_{\mathcal{L}^\mathrm{cyc}_p(\vartheta(\eta))}$) 
only at the final step of our inductive argument (see Section~\ref{sssc:final_step}).
\end{rem}

\subsubsection{The specialisation lemma} \label{sssc:sp_lemma}

 Firstly, we recall the following elementary lemma (which we shall refer as the {\em specialisation lemma} later), describing  the behaviour of characteristic ideals 
 under specialisation procedures. 

  \begin{lem}\label{lem:specialisation}
   Let $\mathcal{R}$ be a finite $\Lambda_0$-algebra which is isomorphic to the direct product of finitely many copies of $\Lambda_0$, and let $M$ be an $\mathcal{R}$-module which is finitely generated and torsion. Assume that $M$ contains no nontrivial pseudonull $\Lambda_0$-submodules. 
 Let $(\varPi)$ denote a prime ideal of height one of $\Lambda_0$
  and assume that $(\varPi)$ does not divide the 
 characteristic ideal $\mathrm{Char}_{\mathcal{R}}(M)$ of $M$. 
 Then the quotient module $M/\varPi M$ is finitely generated and 
  torsion as an $\mathcal{R}/\varPi \mathcal{R}$-module, and the basechange $\mathrm{Char}_{\mathcal{R}}(M)\otimes_{\mathcal{R}} \mathcal{R}/\varPi\mathcal{R}$ of the 
 characteristic ideal $\mathrm{Char}_{\mathcal{R}}(M)$ coincides with 
 the characteristic ideal $\mathrm{Char}_{\mathcal{R}/\varPi \mathcal{R}}(M/\varPi M)$ of 
 the $\mathcal{R}/\varPi \mathcal{R}$-module $M/\varPi M$ as an ideal of the quotient ring $\mathcal{R}/\varPi \mathcal{R}$.
 \end{lem}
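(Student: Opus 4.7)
The plan is to exploit the componentwise structure of characteristic ideals over $\mathcal{R}=\prod_{i}\mathcal{R}_i$ to reduce the assertion to the case where $\mathcal{R}=\Lambda_0$, and then to compare $M$ with a suitable elementary module via a pseudo-isomorphism. First I would decompose $\mathcal{R}$, $M$, and the characteristic ideal $\mathrm{Char}_\mathcal{R}(M)$ componentwise; since each direct summand of $M$ remains a finitely generated torsion $\Lambda_0$-module with no nontrivial pseudonull $\Lambda_0$-submodules, and $(\varPi)$ still fails to divide the characteristic ideal of each summand, it suffices to prove the lemma under the assumption $\mathcal{R}=\Lambda_0$.

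Under this reduction, the structure theorem for finitely generated torsion $\Lambda_0$-modules furnishes a pseudo-isomorphism $\iota\colon M \to E$ into an elementary module $E = \bigoplus_{j}\Lambda_0/(p_j^{e_j})$ such that $\mathrm{Char}_{\Lambda_0}(M)$ is generated by $\prod_j p_j^{e_j}$. The kernel of $\iota$ is a pseudonull submodule of $M$ and hence vanishes by hypothesis, so we obtain a short exact sequence
\begin{align*}
0 \longrightarrow M \stackrel{\iota}{\longrightarrow} E \longrightarrow C \longrightarrow 0
\end{align*}
of $\Lambda_0$-modules in which $C$ is pseudonull.

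Next I would apply the snake lemma to multiplication by $\varPi$ on this short exact sequence. Since $\Lambda_0$ is a unique factorisation domain and $\varPi$ is coprime to each $p_j^{e_j}$ (the hypothesis $(\varPi)\nmid \mathrm{Char}_{\Lambda_0}(M)$ means that $\varPi$ does not divide $\prod_j p_j^{e_j}$), we have $E[\varPi]=0$, whence $M[\varPi]=0$ as well, and the snake lemma yields the four-term exact sequence
\begin{align*}
0 \longrightarrow C[\varPi] \longrightarrow M/\varPi M \longrightarrow E/\varPi E \longrightarrow C/\varPi C \longrightarrow 0
\end{align*}
of finitely generated torsion $\Lambda_0/(\varPi)$-modules. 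The middle term $E/\varPi E = \bigoplus_j \Lambda_0/(p_j^{e_j},\varPi)$ has $\Lambda_0/(\varPi)$-characteristic ideal generated by $\prod_j \bar p_j^{e_j}$, which is precisely the image of $\mathrm{Char}_{\Lambda_0}(M)$ under the canonical surjection $\Lambda_0 \twoheadrightarrow \Lambda_0/(\varPi)$. Applying multiplicativity of characteristic ideals along the four-term exact sequence then reduces the lemma to the identity
\begin{align*}
\mathrm{Char}_{\Lambda_0/(\varPi)}(C[\varPi]) = \mathrm{Char}_{\Lambda_0/(\varPi)}(C/\varPi C).
\end{align*}

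The hard part will be precisely this last identity, because the pseudonullity of $C$ over $\Lambda_0$ does not translate into pseudonullity of $C[\varPi]$ or $C/\varPi C$ over $\Lambda_0/(\varPi)$. To settle it, I would verify the identity after localisation at each height-one prime $\bar{\mathfrak{q}}$ of $\Lambda_0/(\varPi)$. Any such prime corresponds to a height-two prime $\mathfrak{q}$ of $\Lambda_0$ containing $\varPi$; since $C$ is pseudonull, its localisation $C_\mathfrak{q}$ is of finite length over the two-dimensional regular local ring $(\Lambda_0)_\mathfrak{q}$, and the elementary identity $\mathrm{length}(N[\varPi]) = \mathrm{length}(N/\varPi N)$ valid for any finite-length module $N$ under multiplication by any element immediately yields equality of lengths at $\bar{\mathfrak{q}}$ of $(C[\varPi])_{\bar{\mathfrak{q}}}$ and $(C/\varPi C)_{\bar{\mathfrak{q}}}$. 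Letting $\bar{\mathfrak{q}}$ range over the height-one primes of $\Lambda_0/(\varPi)$ then concludes the proof.
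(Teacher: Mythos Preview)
Your argument is correct and follows the standard route that the paper has in mind: the paper itself omits the proof and refers to \cite[Lemma~3.1]{och_euler}, stressing only that the absence of nontrivial pseudonull submodules in $M$ is the crucial ingredient, which is precisely what you use to make the pseudo-isomorphism $M\to E$ injective. Your reduction to the componentwise case, snake-lemma computation, and length comparison at height-two primes to handle the pseudonull cokernel $C$ constitute exactly this standard specialisation argument.
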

 
Here we remark that (commutative) noetherian regular local rings are
unique factorisation domains, and hence every height-one prime ideal of $\Lambda_0$  is a principal ideal.

We readily verify the lemma above in essentially the same way as the proof of
\cite[Lemma~3.1]{och_euler}. We thus omit the proof here, just emphasising
  that the triviality of the pseudonull $\Lambda_0$-submodule of $M$ plays
  a crucial role in the verification of Lemma~\ref{lem:specialisation}.

\subsubsection{Settings on regular sequences} \label{sssc:regular}

We now apply the results of Sections~\ref{ssc:greenberg} and \ref{ssc:specialisation} to the case where $\mathcal{R}$ is the {\em semilocal} Iwasawa algebra $\Lambda_\mathcal{O}^\mathrm{CM}=\mathcal{O}[[\mathrm{Gal}(\widetilde{F}_\infty/F)]]$ and $\Lambda_0$ is the {\em local} Iwasawa algebra $\mathcal{O}[[\mathrm{Gal}(\widetilde{F}/F)]]$, which is isomorphic to the ring of formal power series in $(d+1+\delta_{F,p})$-variables. Here we henceforth choose and fix a splitting of the Group extension
\begin{align*}
 \xymatrix{
1  \ar[r] & \mathrm{Gal}(\widetilde{F}_\infty/\widetilde{F})\cong \mathrm{Gal}(F(\mu_p)/F) \ar[r] & \mathrm{Gal}(\widetilde{F}_\infty/F) \ar[r] & \mathrm{Gal}(\widetilde{F}/F) \ar[r] & 1}
\end{align*}
and regard $\mathcal{O}[[\mathrm{Gal}(\widetilde{F}/F)]]$ as a subring of $\Lambda^\mathrm{CM}_\mathcal{O}$ by using this splitting. This identification endows the semilocal algebra $\Lambda_\mathcal{O}^\mathrm{CM}$ with the $\Lambda_0$-module structure.
In the following arguments, we inductively find  
elements $ \gamma_1, \dotsc, \gamma_{d+\delta_{F,p}} $ 
of $\mathrm{Gal}(\widetilde{F}/F)$ so that 
$\gamma_1-1, \dotsc, \gamma_{d+\delta_{F,p}}-1$ is 
a regular sequence of $\Lambda_0=\mathcal{O}[[\mathrm{Gal}(\widetilde{F}/F)]]$ 
contained in $\mathfrak{A}^{\mathrm{cyc}}$ and they satisfy 
certain ``nice'' properties. We here prepare 
notation on regular sequences. 
Let $j$ be a natural number with $1\leq j\leq d+\delta_{F,p}$ 
and suppose that we have already chosen elements 
$\gamma_1, \dotsc, \gamma_j$ of $\mathrm{Gal}(\widetilde{F}/F)$
such that $\gamma_1-1, \dotsc, \gamma_j-1$ is a regular sequence 
of $\Lambda_0$ contained in
$\mathfrak{A}^{\mathrm{cyc}}$. 
We set $x_k=\gamma_k-1$ for each $k$ with $1\leq k \leq j$ and 
let $\mathfrak{A}_j$ denote the ideal of
$\Lambda_\mathcal{O}^\mathrm{CM}$ generated by $x_1, \dotsc, x_j$ with 
the convention that $\mathfrak{A}_0$ denotes the zero ideal in
$\Lambda^{\mathrm{CM}}_\mathcal{O}$. 
Then the notation introduced here is compatible with that introduced
in Section~\ref{ssc:control}. We also define $\Lambda_0^{(j)}$ to be
the quotient ring $\Lambda_0/(x_1,x_2,\dotsc, x_j)$, which is a regular local ring isomorphic to the ring of formal power series in $(d+1+\delta_{F,p}-j)$-variables.

 Now we introduce the local condition $\mathcal{L}_{\mathrm{str}}$ 
 corresponding to the strict Selmer group; namely we set for each
 nonarchimedean place

$v$ in $S$
 \begin{align*}
 L_{\mathrm{str}}(F_v, \mathcal{A}^{\mathrm{CM}}_\eta[\mathfrak{A}_{j}]) = \begin{cases} 
 H^1_{\mathrm{ur}}(F_v, \mathcal{A}^{\mathrm{CM}}_\eta[\mathfrak{A}_{j}]) & \text{for $v$ in $S\setminus (\Sigma_p\cup \Sigma_p^c)$}, \\
 H^1(F_v, \mathcal{A}^{\mathrm{CM}}_\eta[\mathfrak{A}_{j}]) & \text{for $v \in \Sigma_p$}, \\
 0 & \text{for $v \in \Sigma_p^c$}, 
 \end{cases}
 \end{align*}
 where $H^1_{\mathrm{ur}}(F_\lambda, \mathcal{A}^{\mathrm{CM}}_\eta[\mathfrak{A}_{j}])$
 denotes {\em the unramified cohomology group} which is defined to be  
 $H^1(D_\lambda/I_\lambda ,\mathcal{A}^{\mathrm{CM}}_\eta[\mathfrak{A}_{j}]^{I_\lambda})$. 

We also introduce the local condition $L_\mathrm{str}(F, \mathcal{A}_\psi^{\mathrm{CM}})$ for the discrete $\Lambda^\mathrm{CM}_\mathcal{O}$-module 
$\mathcal{A}_\psi^{\mathrm{CM}}$ in the same manner.

The discrete $\Lambda^{\mathrm{CM}}_\mathcal{O}/\mathfrak{A}_{j}$-module
$\mathcal{A}^{\mathrm{CM}}_\eta[\mathfrak{A}_{j}]$ corresponds to
the continuous Galois representation $\mathcal{T}^{\mathrm{CM}}_\eta/\mathfrak{A}_{j}\mathcal{T}^{\mathrm{CM}}_\eta$;
that is, the Pontrjagin duality induces an isomorphism of $\Lambda^{\mathrm{CM}}_{\mathcal{O}}/\mathfrak{A}_{j}$-linear
$\mathrm{Gal}(F_S/F)$-representations
\begin{align*} 
\mathcal{A}^{\mathrm{CM}}_\eta[\mathfrak{A}_{j}] \cong 
(\mathcal{T}^{\mathrm{CM}}_\eta/\mathfrak{A}_{j}\mathcal{T}^{\mathrm{CM}}_\eta)
\otimes_{\Lambda^{\mathrm{CM}}_\mathcal{O}/\mathfrak{A}_{j}}
(\Lambda^{\mathrm{CM}}_{\mathcal{O}}/\mathfrak{A}_{j})^\vee.
\end{align*} 
We thus denote the Kummer dual of
$\mathcal{A}^{\mathrm{CM}}_\eta[\mathfrak{A}_{j}]$ by
$(\mathcal{T}^{\mathrm{CM}}_\eta/\mathfrak{A}_{j}\mathcal{T}^{\mathrm{CM}}_\eta)^*$, following Greenberg's notation introduced in
Section~\ref{sssc:dSel}.
Then the local condition $\mathcal{L}^*_\mathrm{str}$
for the dual strict Selmer group
$\mathrm{Sel}_{\mathcal{L}^*_\mathrm{str}}(F,
(\mathcal{T}^{\mathrm{CM}}_\eta/\mathfrak{A}_{j}\mathcal{T}^{\mathrm{CM}}_\eta)^*)$
is calculated as
 \begin{multline} \label{eq:dSel_eta}
 L^*_{\mathrm{str}}(F_v,
  (\mathcal{T}^{\mathrm{CM}}_\eta/\mathfrak{A}_{j}\mathcal{T}^{\mathrm{CM}}_\eta)^*)
 = \begin{cases} 
 H^1_\mathrm{ur}(F_v , (\mathcal{T}^{\mathrm{CM}}_\eta/\mathfrak{A}_{j}\mathcal{T}^{\mathrm{CM}}_\eta)^*) 
 & \text{for $v  \in S\setminus (\Sigma_p\cup \Sigma_p^c)$}, \\
0 & \text{for $v\in \Sigma_p$}, \\
  H^1_\mathrm{cts}(F_v, (\mathcal{T}^{\mathrm{CM}}_\eta/\mathfrak{A}_{j}\mathcal{T}^{\mathrm{CM}}_\eta)^*) & \text{for $v\in \Sigma_p^c$},
 \end{cases}
 \end{multline}
where the unramified cohomology $H^1_\mathrm{ur}(F_\lambda,
(\mathcal{T}^{\mathrm{CM}}_\eta/\mathfrak{A}_{j}\mathcal{T}^{\mathrm{CM}}_\eta)^*)$
is defined in the usual manner as $H^1_\mathrm{cts}(D_\lambda/I_\lambda,
(\mathcal{T}^{\mathrm{CM}}_\eta/\mathfrak{A}_{j}\mathcal{T}^{\mathrm{CM}}_\eta)^{*,
I_\lambda})$. Indeed (\ref{eq:dSel_eta}) follows for places of $F$
lying above $p$ directly from the definition of the local condition of the dual
Selmer groups based upon the local Tate duality (\ref{eq:tate}). 
Now we temporary abbreviate
$\mathcal{A}^{\mathrm{CM}}_\eta[\mathfrak{A}_{j}]$
as $\mathcal{A}$ and 
$(\mathcal{T}^{\mathrm{CM}}_\eta/\mathfrak{A}_{j}\mathcal{T}^{\mathrm{CM}}_\eta)^*$
as $\mathcal{T}^*$ for brevity. 
Then for a place of $F$ contained in $S$ but not lying above $p$, it is well
known that the unramified cohomology groups are the orthogonal
complements of each other under the local Tate pairing (\ref{eq:tate}) 
for {\em finite} Galois modules $\mathcal{A}[\mathfrak{J}^n]$ and
$\mathcal{T}^*/\mathfrak{J}^n\mathcal{T}^*$. Here $\mathfrak{J}$ 
is the Jacobian radical of 
$\Lambda^{\mathrm{CM}}_{\mathcal{O}}$ and $n$ is an arbitrary
natural number. Since $\mathcal{T}^*$ is obviously
complete with respect to the $\mathfrak{J}$-adic topology, 
we readily obtain (\ref{eq:dSel_eta})
for such a place by employing standard limit arguments based upon Tate's
theorem (refer to \cite[Corollary~2.3.5]{NSW} and
\cite[Remark~3.5.1]{gr-coh} for example).

\medskip
 \subsubsection{Preliminary step$:$ verification of several hypotheses} \label{sssc:pre_step}

 As the preliminary step for our inductive arguments, we verify the local hypotheses
 (LOC$^{(1)}_{\mathcal{A}^{\mathrm{CM}}_\eta[\mathfrak{A}_{j}],v}$),
 (LOC$^{(2)}_{\mathcal{A}^{\mathrm{CM}}_\eta[\mathfrak{A}_{j}],v}$) and the
 almost $\Lambda_0^{(j)}$-divisibility of 
 $L_{\mathrm{str}}(F, \mathcal{A}^{\mathrm{CM}}_\eta[\mathfrak{A}_{j}])$ for every $j$.   
 Firstly, we  readily observe that the local hypothesis
 (LOC$^{(1)}_{\mathcal{A}^{\mathrm{CM}}_\eta[\mathfrak{A}_{j}], v}$) is valid 
 for {\em every} nonarchimedean place $v$ in $S$ (and hence the local
 hypothesis (LOC$^{(2)}_{\mathcal{A}^{\mathrm{CM}}_\eta[\mathfrak{A}_{j}], v}$) is
 automatically satisfied for every place $v$ in $S$). In fact 
 the Kummer dual of $\mathcal{A}^{\mathrm{CM}}_\eta[\mathfrak{A}_{j}]$
 is a free
 $\Lambda^{\mathrm{CM}}_\mathcal{O}/\mathfrak{A}_{j}$-module of rank one 
 on which every element $g$ of $\mathrm{Gal}(F_S/F)$ acts by the
 multiplication of 
 $\chi_{p,\mathrm{cyc}}^{-1}\eta^\mathrm{gal}(g^{-1})g^{-1}\vert_{\widetilde{F}_\infty}$. 
 Now let $v$ be a nonarchimedean place $v$ in $S$. 
 Since every nonarchimedean place of $F$ does not split completely in
 the cyclotomic $\mathbb{Z}_p$-extension $F^{\mathrm{cyc}}_\infty$ of $F$, 
 the image of the decomposition subgroup $D_v$ 
in $\mathrm{Gal}(F^\mathrm{cyc}_\infty/F)$ is not trivial. 
In particular there exists an element $g_0$ of $D_v$ such that the image
 of $\xi_{g_0}:=\chi_{p,\mathrm{cyc}}^{-1}\eta^\mathrm{gal}(g_0^{-1})g_0^{-1}-1$
 in
 $\Lambda_\mathcal{O}^{\mathrm{CM}}/\mathfrak{A}_{j}$ does not vanish
 (because it is nonzero in the quotient $\Lambda_\mathcal{O}^{\mathrm{cyc}}$ of
 $\Lambda_\mathcal{O}^{\mathrm{CM}}/\mathfrak{A}_{j}$). The definition of $\xi_{g_0}$ implies that, under the indecomposable decomposition of $\Lambda_\mathcal{O}^\mathrm{CM}$, no components of $\xi_0$ equal zero.
 Since every indecomposable component of $\Lambda^{\mathrm{CM}}_\mathcal{O}/\mathfrak{A}_{j}$ is a domain, we readily see that the $\xi_0$-torsion submodule of the free $\Lambda^\mathrm{CM}_\mathcal{O}$-module $\mathcal{A}_\eta^\mathrm{CM}[\mathfrak{A}_j]$ is trivial. This implies
 that the $D_v$-invariant of the Kummer dual of 
 $\mathcal{A}^{\mathrm{CM}}_\eta[\mathfrak{A}_{j}]$ equals zero,
 and thus  (LOC$^{(1)}_{\mathcal{A}^{\mathrm{CM}}_\eta[\mathfrak{A}_{j}],v}$) holds for every $j$ and for every nonarchimedean $v$ in $S$.

 We now verify the almost divisibility of the local condition $L_\mathrm{str}(F_v,\mathcal{A}^{\mathrm{CM}}_\eta[\mathfrak{A}_{j}])$. 
 There is nothing to prove for a place in $\Sigma^c_p$ since the local condition is trivial at such a place. For a place $\mathfrak{P}$ in $\Sigma_p$, 
 the local condition $L_\mathrm{str}(F_\mathfrak{P},\mathcal{A}^{\mathrm{CM}}_\eta[\mathfrak{A}_{j}])$ coincides with the whole cohomology group
$H^1(F_{\mathfrak{P}},\mathcal{A}^{\mathrm{CM}}_\eta[\mathfrak{A}_{j}])$, and its almost $\Lambda_0^{(j)}$-divisibility follows from the
 hypothesis (LOC$^{(2)}_{\mathcal{A}^{\mathrm{CM}}_\eta[\mathfrak{A}_{j}], \mathfrak{P}}$) and \cite[Proposition~5.4]{gr-coh}. 
Finally take a place $\lambda$ from $S\setminus (\Sigma_p\cup \Sigma_p^c)$. By the same argument as the proof of Theorem~\ref{thm:control}, 
 the image of the inertia subgroup $I_\lambda$ under the
 Galois character $\eta^\mathrm{gal}$
 is finite and contained in the set of roots of unity in $\mathcal{O}$. 
 Thus $\mathcal{A}^{\mathrm{CM}}_\eta[\mathfrak{A}_{j}]^{I_\lambda}$ is isomorphic to 
 $\mathcal{A}^{\mathrm{CM}}_\eta[\mathfrak{A}_{j}, \varpi^n\Lambda_\mathcal{O}^{\mathrm{CM}}]$ 
 for some nonnegative integer $n$, which is almost $\Lambda_0^{(j)}$-divisible (observe that $\mathcal{A}^\mathrm{CM}_\eta[\mathfrak{A}_j,\varpi^n \Lambda^\mathrm{CM}_\mathcal{O}]$ is divisible for every height-one prime ideal of $\Lambda_j^{(0)}$ relatively prime to $\varpi\Lambda_0^{(j)}$). 
 Since $H^1_\mathrm{ur}(F_\lambda, \mathcal{A}^{\mathrm{CM}}_\eta[\mathfrak{A}_{j}])$
 is isomorphic to a quotient of $\mathcal{A}^{\mathrm{CM}}_\eta[\mathfrak{A}_{j}]^{I_\lambda}$, 
  $H^1_\mathrm{ur}(F_\lambda, \mathcal{A}^{\mathrm{CM}}_\eta[\mathfrak{A}_{j}])$ must also be 
 almost $\Lambda$-divisible.  
 
\begin{rem}\label{rem:hyp_psi}
It is also possible to  verify the hypotheses
 (LOC$^{(1)}_{\mathcal{A}_\psi^{\mathrm{CM}},v}$) and
 (LOC$^{(2)}_{\mathcal{A}_\psi^{\mathrm{CM}},v}$) for each nonarchimedean
 place contained in $S$ and the almost $\Lambda_0$-divisibility
 of the local condition $L_\mathrm{str}(F, \mathcal{A}_\psi^{\mathrm{CM}})$
 by exactly the same arguments as above.
\end{rem}

\medskip
 \subsubsection{First step$:$ almost divisibility of the initial strict
   Selmer group}  \label{sssc:first_step}

We shall verify the almost
$\Lambda_0$-divisibility  
of the initial strict Selmer group 
$\mathrm{Sel}^{\Sigma,\mathrm{str}}_{\mathcal{A}^{\mathrm{CM}}_\eta}$. 
Identifying  $\mathrm{Sel}^{\Sigma,\mathrm{str}}_{\mathcal{A}^{\mathrm{CM}}_\eta}$ with $\mathrm{Tw}_{\eta^{\mathrm{gal},-1}\psi}(\mathrm{Sel}^{\Sigma,\mathrm{str}}_{\mathcal{A}_\psi^{\mathrm{CM}}})$ by exactly the same arguments as  in Remark~\ref{rem:sel_psi}, we readily see that the almost $\Lambda_0$-divisibility
of $\mathrm{Sel}^{\Sigma,\mathrm{str}}_{\mathcal{A}^{\mathrm{CM}}_\eta}$ is
equivalent to that of
$\mathrm{Sel}^{\Sigma,\mathrm{str}}_{\mathcal{A}_\psi^{\mathrm{CM}}}$.
Concerning the verification of the almost $\Lambda_0$-divisibility of $\mathrm{Sel}^{\Sigma,\mathrm{str}}_{\mathcal{A}_\psi^\mathrm{CM}}$, we first observe that an arbitrary place $\mathfrak{P}$ in
$\Sigma_p$ satisfies the extra condition (c) proposed in Proposition~\ref{prop:surj}. It is obvious because $Q_{\mathcal{L}_\mathrm{str}}(F_\mathfrak{P},
\mathcal{A}^{\mathrm{CM}}_\psi)$ is trivial for each $\mathfrak{P}$ in $\Sigma_p$. 
Thus, in order to deduce the almost
$\Lambda_0$\nobreakdash-divisibility of
$\mathrm{Sel}^{\Sigma,\mathrm{str}}_{\mathcal{A}^{\mathrm{CM}}_\eta}$ 
from Theorem~\ref{thm:adiv} and Proposition~\ref{prop:surj},
it suffices to verify the remaining two hypotheses (CRK$_{\mathcal{A}_\psi^{\mathrm{CM}},
\mathcal{L}_{\mathrm{str}}}$) and (LEO$_{\mathcal{A}_\psi^{\mathrm{CM}}}$); recall that we have already verified in the preceding step (Section~\ref{sssc:pre_step}) all the other hypotheses required in Theorem~\ref{thm:adiv}.

We have observed in Remark~\ref{rem:sel_psi} and Lemma~\ref{lem:replace_psi} that the Iwasawa module $X_{\Sigma_p,
(\psi)}$ and the Pontrjagin dual of the
strict Selmer group
$\mathrm{Sel}^{\Sigma,\mathrm{str}}_{\mathcal{A}_\psi}$ are
pseudoisomorphic to each other as $\Lambda_\mathcal{O}^{\mathrm{CM}}$-modules.

Now we introduce the following hypothesis concerning the algebraic structure of 
the Iwasawa module $X_{\Sigma_p, (\psi)}$:
\begin{description} 
 \item[(TOR$_{X_{\Sigma_p, (\psi)}}$)] the
	    $\Lambda_\mathcal{O}^{\mathrm{CM}}$-module $X_{\Sigma_p, (\psi)}$ is a torsion module.
\end{description}

  Note that the condition (TOR$_{X_{\Sigma_p, (\psi)}}$) is 
  equivalent to the following condition on the algebraic structure of the strict
  Selmer group $\mathrm{Sel}^{\Sigma,\mathrm{str}}_{\mathcal{A}_\psi^\mathrm{CM}}$ by the discussion above: 
 \begin{description}
\item[(COT$_{\mathcal{A}_\psi^{\mathrm{CM}}}$)]  the strict Selmer group
$\mathrm{Sel}^{\Sigma, \mathrm{str}}_{\mathcal{A}_\psi^{\mathrm{CM}}}$ is cotorsion as a $\Lambda_\mathcal{O}^{\mathrm{CM}}$-module.
 \end{description}

\begin{lem} \label{lem:tors}
The following two statements are equivalent for the discrete
 $\Lambda^{\mathrm{CM}}_{\mathcal{O}}$-module $\mathcal{A}^{\mathrm{CM}}_\psi${\upshape ;}
\begin{enumerate}[label=$(\arabic*)$]
\item both the conditions {\em (CRK$_{\mathcal{A}_\psi^{\mathrm{CM}}, \mathcal{L}_{\mathrm{str}}}$)} and 
{\em (LEO$_{\mathcal{A}_\psi^{\mathrm{CM}}}$)} hold{\upshape ;}
 \item the condition {\em (COT$_{\mathcal{A}_\psi^{\mathrm{CM}}}$)} $($or, equivalently, the condition {\em (TOR$_{X_{\Sigma_p,
       (\psi)}}$)}$)$ holds.
\end{enumerate}
\end{lem}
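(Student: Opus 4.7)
The plan is to derive the equivalence from a single master identity on $\Lambda^\mathrm{CM}_\mathcal{O}$-coranks, obtained by combining Poitou--Tate duality for $\mathcal{A}^\mathrm{CM}_\psi$ with the global Euler--Poincar\'e characteristic formula applied to this cofree $\Lambda^\mathrm{CM}_\mathcal{O}$-module and its Kummer dual $\mathcal{T}^\ast=\mathrm{Hom}_\mathrm{cts}(\mathcal{A}^\mathrm{CM}_\psi,\mu_{p^\infty})$. The key point is that all three conditions (CRK), (LEO), and (COT) are cotorsionness statements, and the master identity will force a decomposition into a sum of coranks.

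First I would exploit Greenberg's description \cite[Proposition~3.1.1]{gr-surj}, which identifies the Pontrjagin dual of $\mathrm{coker}(\phi_{\mathcal{L}_\mathrm{str}})$ with $\mathrm{Sel}_{\mathcal{L}^\ast_\mathrm{str}}(F,\mathcal{T}^\ast)/\mathcyr{Sh}^1(F,S,\mathcal{T}^\ast)$, so that (CRK$_{\mathcal{A}^\mathrm{CM}_\psi,\mathcal{L}_\mathrm{str}}$) becomes a cotorsionness statement for this quotient. The auxiliary modules $\mathcyr{Sh}^1(F,S,\mathcal{T}^\ast)$ and $H^0_\mathrm{cts}(F_S/F,\mathcal{T}^\ast)$ are $\Lambda^\mathrm{CM}_\mathcal{O}$-torsion (indeed copseudonull after Pontrjagin dualising) by essentially the same argument as in the proof of Lemma~\ref{lem:cpsn}, since the $G_{\widetilde{F}_\infty}$-coinvariants of $\mathcal{T}^\ast$ are cut out by a regular sequence of positive length in $\Lambda^\mathrm{CM}_\mathcal{O}$. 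A componentwise application of the Poitou--Tate nine-term exact sequence, together with the global Euler--Poincar\'e characteristic (whose archimedean contribution vanishes because $F$ is totally imaginary and $p$ is odd, while the local contributions at places in $S\setminus(\Sigma_p\cup\Sigma_p^c\cup S_\infty)$ cancel out against the unramified quotients appearing in $Q_{\mathcal{L}_\mathrm{str}}$), then yields the master identity
\[
\mathrm{corank}_{\Lambda^\mathrm{CM}_\mathcal{O}}\mathrm{Sel}^{\Sigma,\mathrm{str}}_{\mathcal{A}^\mathrm{CM}_\psi}=\mathrm{corank}_{\Lambda^\mathrm{CM}_\mathcal{O}}\mathrm{coker}(\phi_{\mathcal{L}_\mathrm{str}})+\mathrm{corank}_{\Lambda^\mathrm{CM}_\mathcal{O}}\mathcyr{Sh}^2(F,S,\mathcal{A}^\mathrm{CM}_\psi).
\]

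From the master identity the equivalence is immediate, since a sum of nonnegative integers vanishes if and only if each summand vanishes. Indeed, (CRK$_{\mathcal{A}^\mathrm{CM}_\psi,\mathcal{L}_\mathrm{str}}$) is equivalent to the cotorsionness of the first summand on the right-hand side, (LEO$_{\mathcal{A}^\mathrm{CM}_\psi}$) is by definition the cotorsionness of the second summand, and (COT$_{\mathcal{A}^\mathrm{CM}_\psi}$) asserts the cotorsionness of the left-hand side (in view of the pseudoisomorphism between $\mathrm{Sel}^{\Sigma,\mathrm{str},\vee}_{\mathcal{A}^\mathrm{CM}_\psi}$ and $X_{\Sigma_p,(\psi)}$ supplied by Remark~\ref{rem:sel_psi} combined with Lemma~\ref{lem:replace_psi}). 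Hence the vanishing of the left-hand side is equivalent to the simultaneous vanishing of the two terms on the right.

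The main obstacle will be the rigorous bookkeeping required to derive the master identity over the \emph{semilocal} Iwasawa algebra $\Lambda^\mathrm{CM}_\mathcal{O}$. Each step of the Poitou--Tate analysis must be interpreted componentwise via the indecomposable decomposition of $\Lambda^\mathrm{CM}_\mathcal{O}$, and one must verify that the contributions from the archimedean and tame local factors in the Euler--Poincar\'e formula cancel exactly, rather than merely up to some pseudonull correction that could spoil the corank identity. Notably the nontriviality hypothesis (ntr)$_{\psi,\mathfrak{P}}$ plays no role in this lemma: the strict local condition at places in $\Sigma_p^c$ is trivial by definition, so the subtlety encountered in the proof of Lemma~\ref{lem:replace_psi} does not arise.
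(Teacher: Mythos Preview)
Your proposal is correct and follows essentially the same approach as the paper: both establish the master identity
\[
\mathrm{corank}_{\Lambda^\mathrm{CM}_\mathcal{O}}\mathrm{Sel}^{\Sigma,\mathrm{str}}_{\mathcal{A}^\mathrm{CM}_\psi}=\mathrm{corank}_{\Lambda^\mathrm{CM}_\mathcal{O}}\mathrm{Coker}(\phi_{\mathcal{L}_\mathrm{str}})+\mathrm{corank}_{\Lambda^\mathrm{CM}_\mathcal{O}}\mathcyr{Sh}^2(F,S,\mathcal{A}^\mathrm{CM}_\psi)
\]
via Euler--Poincar\'e characteristics and Poitou--Tate duality, and then read off the equivalence. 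The only cosmetic difference is that the paper works entirely on the $\mathcal{A}^\mathrm{CM}_\psi$-side, starting from the four-term exact sequence defining $\mathrm{Sel}^{\Sigma,\mathrm{str}}_{\mathcal{A}^\mathrm{CM}_\psi}$ and computing the coranks of $H^1(F_S/F,\mathcal{A}^\mathrm{CM}_\psi)$ and $Q_{\mathcal{L}_\mathrm{str}}(F,\mathcal{A}^\mathrm{CM}_\psi)$ directly (using the verified hypotheses (LOC$^{(1)}_{\mathcal{A}^\mathrm{CM}_\psi,v}$) to get vanishing of the local $H^2$'s and hence $H^2(F_S/F,\mathcal{A}^\mathrm{CM}_\psi)\cong\mathcyr{Sh}^2$), whereas you route part of the argument through the dual Selmer group via \cite[Proposition~3.1.1]{gr-surj}; these are two packagings of the same computation.
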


\begin{proof}
 We abbreviate the
 $\Lambda^{\mathrm{CM}}_{\mathcal{O}}$-corank of a cofinitely generated
 $\Lambda^{\mathrm{CM}}_{\mathcal{O}}$-module $M$
 just as ``$\mathrm{corank}~M$'' in the proof of Lemma~\ref{lem:tors} so
 as to simplify the notation.
We shall henceforth 
 verify that the following equation among $\Lambda_\mathcal{O}^{\mathrm{CM}}$-coranks holds:
 \begin{align} \label{eq:key_corank_eq}
 \mathrm{corank}~
  \mathrm{Sel}^{\Sigma, \mathrm{str}}_{\mathcal{A}_\psi^{\mathrm{CM}}}=\mathrm{corank}~ \mathcyr{Sh}^2(F, S, \mathcal{A}_\psi^{\mathrm{CM}})+\mathrm{corank}~ \mathrm{Coker}(\phi_{\mathcal{L}_\mathrm{str}}).
 \end{align}
Once the equation (\ref{eq:key_corank_eq}) is proved, 
the equivalence between the assertions (1) and (2) 
immediately follows; for the assertion (1) is 
fulfilled if and only if the right hand side of the equation
 (\ref{eq:key_corank_eq}) equals zero, whereas the assertion (2) is
 fulfilled if and only if the left hand side of (\ref{eq:key_corank_eq})
 equals zero (recall that the validity of the corank condition
 (CRK$_{\mathcal{A}^{\mathrm{CM}}_\psi,\mathcal{L}_\mathrm{str}}$)
 is equivalent to the $\Lambda^{\mathrm{CM}}_{\mathcal{O}}$-cotorsionness of
 the cokernel of $\phi_{\mathcal{L}_\mathrm{str}}$). 
 
In order to deduce 
 the equation (\ref{eq:key_corank_eq}), we consider the equation
\begin{equation} \label{eq:ingr_corank}
\begin{aligned} 
\mathrm{corank}~ \mathrm{Sel}^{\Sigma,
 \mathrm{str}}_{\mathcal{A}^{\mathrm{CM}}_\psi} 
 =\mathrm{corank}~& H^1(F_S/F, \mathcal{A}^{\mathrm{CM}}_\psi) \\
&-
 \mathrm{corank}~
 Q_{\mathcal{L}_\mathrm{str}}(F,\mathcal{A}^{\mathrm{CM}}_\psi)+
\mathrm{corank}~ \mathrm{Coker}(\phi_{\mathcal{L}_\mathrm{str}}),
\end{aligned}
\end{equation}
which is deduced from the exact sequence
{\small 
\begin{align*}
\xymatrix{
0 \ar[r] & \mathrm{Sel}^{\Sigma,
 \mathrm{str}}_{\mathcal{A}^{\mathrm{CM}}_\psi} \ar[r] &
 H^1(F_S/F, \mathcal{A}^{\mathrm{CM}}_\psi) \ar[r]^{\phi_{\mathcal{L}_\mathrm{str}}} &
 Q_{\mathcal{L}_\mathrm{str}}(F,
 \mathcal{A}^{\mathrm{CM}}_\psi) \ar[r] &
 \mathrm{Coker}(\phi_{\mathcal{L}_\mathrm{str}}) \ar[r] & 0
}
\end{align*}
}defining the strict Selmer group $\mathrm{Sel}^{\Sigma,
 \mathrm{str}}_{\mathcal{A}^{\mathrm{CM}}_\psi}$, 
and calculate the right hand side of (\ref{eq:ingr_corank}) 
by utilising Euler-Poincar\'e characteristic formulae studied in \cite{gr-coh}.

We first calculate the $\Lambda^{\mathrm{CM}}_{\mathcal{O}}$-corank of the
 global cohomology group $H^1(F_S/F,
 \mathcal{A}^{\mathrm{CM}}_\psi)$. 
By the global Euler-Poincar\'e characteristic formula \cite[Proposition~4.1]{gr-coh}, we have  
 the equality:
 \begin{multline}\label{equation:semiglobal_epc}
\mathrm{corank}~  H^1(F_S/F, \mathcal{A}_\psi^{\mathrm{CM}})
\\ 
=  \mathrm{corank}~H^0(F_S/F, \mathcal{A}_\psi^{\mathrm{CM}})+
\mathrm{corank}~H^2(F_S/F,
 \mathcal{A}_\psi^{\mathrm{CM}})+d\, \mathrm{corank}~(\mathcal{A}_\psi^{\mathrm{CM}}).
\end{multline}
Recall that $d$ denotes the extension degree $[F^+:\mathbb{Q}]$ 
of the totally real field $F^+$ over $\mathbb{Q}$. 
 We claim that $H^0(F_S/F, \mathcal{A}^{\mathrm{CM}}_\psi)$ is
 a cotorsion $\Lambda^{\mathrm{CM}}_{\mathcal{O}}$-module.       
Indeed $\mathcal{A}^{\mathrm{CM}}_{\psi}$ is naturally 
identified with the Pontrjagin dual of
 $\Lambda^{\mathrm{CM}}_\mathcal{O}$, on which an element $g$
 of $\mathrm{Gal}(F_S/F)$ acts by the multiplication of $\psi(g)g\vert_{\widetilde{F}_\infty}$. Let us take an arbitrary 
element $g_0$ of $\mathrm{Gal}(F_S/F)$ whose image in
 $\mathrm{Gal}(\widetilde{F}/F)$ under the natural surjection $\mathrm{Gal}(F_S/F)\twoheadrightarrow
 \mathrm{Gal}(\widetilde{F}/F)$ is nontrivial. Then we readily see that 
the $g_0$-invariant of $\mathcal{A}^{\mathrm{CM}}_\psi$ is
 isomorphic to a $\Lambda^{\mathrm{CM}}_\mathcal{O}$-module 
$(\Lambda^{\mathrm{CM}}_\mathcal{O}/(\psi(g_0)g_0\vert_{\widetilde{F}_\infty}-1) \Lambda^{\mathrm{CM}}_\mathcal{O})^\vee$, which is obviously a cotorsion
 $\Lambda^{\mathrm{CM}}_{\mathcal{O}}$-module. The zeroth cohomology $H^0(F_S/F, \mathcal{A}^\mathrm{CM}_\psi)$ is obviously a subgroup of the $g_0$-invariant of $\mathcal{A}_\psi^\mathrm{CM}$, and thus it is also a cotorsion $\Lambda_\mathcal{O}^\mathrm{CM}$:
\begin{equation}\label{equation:cotorsion_h0}
\mathrm{corank}~ H^0(F_S/F, \mathcal{A}^{\mathrm{CM}}_\psi) =0. 
\end{equation}
We next investigate the $\Lambda^{\mathrm{CM}}_{\mathcal{O}}$-corank of the second  cohomology group $H^2(F_S/F, \mathcal{A}_\psi^{\mathrm{CM}})$. 
 The nine-term exact sequence due to Poitou and
 Tate (see \cite[{(8.6.3) i)}]{NSW} and \cite[Section~4.B]{gr-coh}) 
implies that the cokernel of the 
 global-to-local homomorphism
 \begin{align*}
 \phi^{(2)} \colon H^2(F_S/F, \mathcal{A}_\psi^{\mathrm{CM}}) \rightarrow \prod_{v\in S \setminus \Sigma_\infty } H^2(F_v, \mathcal{A}_\psi^{\mathrm{CM}})
 \end{align*}
 is isomorphic to the Pontrjagin dual of $H^0_\mathrm{cts}(F_S/F, (\mathcal{T}_\psi^\mathrm{CM})^*)$, which is trivial because the hypothesis 
 (LOC$^{(1)}_{\mathcal{A}_\psi^{\mathrm{CM}},v}$) holds 
for every $v$ in $S \setminus \Sigma_\infty$. 
The kernel of $\phi^{(2)}$ is 
 $\mathcyr{Sh}^2(F,S,\mathcal{A}_\psi^{\mathrm{CM}})$ by definition. 
 Furthermore, by virtue of the local Tate duality, the Pontrjagin dual of $H^2(F_v, \mathcal{A}_\psi^{\mathrm{CM}})$ 
 is isomorphic to $H^0_\mathrm{cts}(F_v,
 (\mathcal{T}_\psi^{\mathrm{CM}})^*)$ for each $v$ in $S \setminus \Sigma_\infty$,
 and hence it equals zero by the hypothesis
 (LOC$^{(1)}_{\mathcal{A}_\psi^{\mathrm{CM}},v}$) again. 
Combining these calculations, we conclude that the corank of the whole second cohomology group $H^2(F_S/F, \mathcal{A}^\mathrm{CM}_\psi)$ equals that of the  generalised second Tate-\v{S}afarevi\v{c} group $\mathcyr{Sh}^2(F,S,\mathcal{A}_\psi^\mathrm{CM})$:
\begin{equation}\label{equation:h2andsha2}
\mathrm{corank}~ H^2(F_S/F,
 \mathcal{A}_\psi^{\mathrm{CM}})
=\mathrm{corank}~ \mathcyr{Sh}^2(F,S,\mathcal{A}_\psi^{\mathrm{CM}}).
\end{equation}
By 
$(\ref{equation:semiglobal_epc})$, $(\ref{equation:cotorsion_h0})$ and 
$(\ref{equation:h2andsha2})$, we have the following formula:
\begin{align} \label{eq:gl_formula}
\mathrm{corank}~ H^1(F_S/F, \mathcal{A}^{\mathrm{CM}}_\psi) =
 \mathrm{corank}~ \mathcyr{Sh}^2(F,S,\mathcal{A}_\psi^{\mathrm{CM}})+d\, \mathrm{corank}~\mathcal{A}^{\mathrm{CM}}_\psi.
\end{align}

 We now study the $\Lambda^{\mathrm{CM}}_{\mathcal{O}}$-corank of
 $Q_{\mathcal{L}_\mathrm{str}}(F, \mathcal{A}_\psi^{\mathrm{CM}})$. 

First, let us take a place $\mathfrak{P}^c$ in $\Sigma_p^c$.  
Applying the local Euler-Poincar\'e characteristic formula
 \cite[Proposition~4.2]{gr-coh} to the local cohomology group 
$H^1(F_{\mathfrak{P}^c},
 \mathcal{A}_\psi^{\mathrm{CM}})$, we obtain
\begin{align*}
& \mathrm{corank} ~  Q_{\mathcal{L}_{\mathrm{str}}}(F_{\mathfrak{P}^c},
 \mathcal{A}_\psi^{\mathrm{CM}}) =\mathrm{corank}~ H^1(F_{\mathfrak{P}^c}, \mathcal{A}_\psi^{\mathrm{CM}}) \\
& = \mathrm{corank}~ H^0(F_{\mathfrak{P}^c},
 \mathcal{A}_\psi^{\mathrm{CM}})+\mathrm{corank}~ H^2(F_{\mathfrak{P}^c},
 \mathcal{A}_\psi^{\mathrm{CM}})+[F_{\mathfrak{P}^c}\colon
 \mathbb{Q}_p]\, \mathrm{corank}~ \mathcal{A}_\psi^{\mathrm{CM}}.
 \end{align*}
We claim that the $\Lambda^{\mathrm{CM}}_{\mathcal{O}}$-coranks of
 $H^0(F_{\mathfrak{P}^c},
 \mathcal{A}^{\mathrm{CM}}_\psi)$ and $H^2(F_{\mathfrak{P}^c},
 \mathcal{A}^{\mathrm{CM}}_\psi)$ are both equal to zero.
Indeed, since each place of $F$ above $p$ does not split completely in
 $\widetilde{F}/F$,  
the image of $D_{\mathfrak{P}_c}$ in
 $\mathrm{Gal}(\widetilde{F}/F)$  contains a nontrivial
 element. We thus apply the same arguments as those we made for
 $H^0(F_S/F, \mathcal{A}_\psi^{\mathrm{CM}})$ and conclude
 that $H^0(F_{\mathfrak{P}^c},
 \mathcal{A}^{\mathrm{CM}}_\psi)$ is cotorsion as a $\Lambda^{\mathrm{CM}}_{\mathcal{O}}$-module. 
 On the other hand, as we have already checked in the computation of
 the $\Lambda^{\mathrm{CM}}_{\mathcal{O}}$-corank of $H^2(F_S/F,
 \mathcal{A}^{\mathrm{CM}}_\psi)$, the second local cohomology group 
 $H^2(F_{\mathfrak{P}^c}, \mathcal{A}_\psi^{\mathrm{CM}})$ is
 trivial 
 due to the hypothesis (LOC$^{(1)}_{\mathcal{A}_\psi^{\mathrm{CM}},
 \mathfrak{P}^c}$). Therefore we obtain the following formula for each
 place $\mathfrak{P}^c$ in $\Sigma_p^c$:
\begin{align} \label{eq:p_formula}
\mathrm{corank}~Q_{\mathcal{L}_\mathrm{str}}(F_{\mathfrak{P}^c},
 \mathcal{A}^{\mathrm{CM}}_\psi)
 =[F_{\mathfrak{P}^c}:\mathbb{Q}_p]
 \mathrm{corank}~\mathcal{A}^{\mathrm{CM}}_\psi.
\end{align}
Next, let $\lambda$  be a place  in $S \setminus \Sigma_p$. 
We  also calculate the corank of 
$Q_{\mathcal{L}_{\mathrm{str}}}(F_\lambda, \mathcal{A}_\psi^{\mathrm{CM}})$ 
by applying the local Euler-Poincar\'e characteristics
 formula \cite[Proposition~4.2]{gr-coh} to the local cohomology group
 $H^1(F_\lambda, \mathcal{A}_\psi^{\mathrm{CM}})$ as follows:
 \begin{align*} 
&  \mathrm{corank}~ Q_{\mathcal{L}_{\mathrm{str}}}(F_\lambda,
 \mathcal{A}_\psi^{\mathrm{CM}}) \\
&=\mathrm{corank}~ H^1(F_\lambda,
 \mathcal{A}_\psi^{\mathrm{CM}})-\mathrm{corank}~
 H^1_\mathrm{ur}(F_\lambda, \mathcal{A}_\psi^{\mathrm{CM}}) \\
 &= \mathrm{corank}~ H^0(F_\lambda,
 \mathcal{A}_\psi^{\mathrm{CM}})+\mathrm{corank}~ H^2(F_\lambda,
 \mathcal{A}_\psi^{\mathrm{CM}}) -\mathrm{corank}~
 H^1_\mathrm{ur}(F_\lambda, \mathcal{A}_\psi^{\mathrm{CM}}).
\end{align*} 
We claim that the right hand side of this equality is zero, that is, 
\begin{align} \label{eq:tame_formula}
\mathrm{corank}~ Q_{\mathcal{L}_\mathrm{str}}(F_\lambda,
 \mathcal{A}^{\mathrm{CM}}_\psi) =0
\end{align}
holds. Indeed the second cohomology group $H^2(F_\lambda,
 \mathcal{A}^{\mathrm{CM}}_\psi)$ is trivial by the hypothesis
 (LOC$^{(1)}_{\mathcal{A}_\psi^{\mathrm{CM}}, \lambda}$) 
as we have already mentioned, and the $\Lambda^{\mathrm{CM}}_{\mathcal{O}}$-corank of
 the unramified cohomology 
$H^1_\mathrm{ur}(F_\lambda, \mathcal{A}_\psi^{\mathrm{CM}})$
 equals  that of $H^0(F_\lambda, \mathcal{A}_\psi^{\mathrm{CM}})$ 
since the residue characteristic at $\lambda$ does not equal $p$; 
this fact is well known for finite Galois modules and 
we readily generalise it  by using a specialisation trick 
similar to one used in the proof of \cite[Proposition~4.1]{gr-coh}.

Substituting (\ref{eq:gl_formula}), (\ref{eq:p_formula}) and
 (\ref{eq:tame_formula}) for the equation (\ref{eq:ingr_corank}), we obtain
 the desired equation (\ref{eq:key_corank_eq}) as 
{\small 
\begin{align*}
&\mathrm{corank}~\mathrm{Sel}^{\Sigma,
 \mathrm{str}}_{\mathcal{A}^{\mathrm{CM}}_\psi} \\
&=
 \mathrm{corank}~\mathcyr{Sh}^2(F,S,\mathcal{A}_\psi^{\mathrm{CM}})+d\, \mathrm{corank}~\mathcal{A}^{\mathrm{CM}}_\psi
 \\
&\text{\phantom{$\mathrm{corank}~\mathcyr{Sh}^2(F,S,\mathcal{A}_\psi^{\mathrm{CM}})$}} -\sum_{\mathfrak{P}^c
 \in \Sigma_p^c} [F_{\mathfrak{P}^c}\colon \mathbb{Q}_p]
\, \mathrm{corank}~\mathcal{A}^{\mathrm{CM}}_\psi+\mathrm{corank}~
 \mathrm{Coker}(\phi_{\mathcal{L}_\mathrm{str}}) \\
&=\mathrm{corank}~\mathcyr{Sh}^2(F,S,\mathcal{A}_\psi^{\mathrm{CM}})+\mathrm{corank}~
 \mathrm{Coker}(\phi_{\mathcal{L}_\mathrm{str}})+\left(
 d-\sum_{\mathfrak{p}\mid p\mathfrak{r}_{F^+}} [F^+_\mathfrak{p} :
 \mathbb{Q}_p ]\right)
 \mathrm{corank}~\mathcal{A}^{\mathrm{CM}}_\psi \\
&=\mathrm{corank}~\mathcyr{Sh}^2(F,S,\mathcal{A}_\psi^{\mathrm{CM}})+\mathrm{corank}~
 \mathrm{Coker}(\phi_{\mathcal{L}_\mathrm{str}}).
\end{align*}}
\end{proof}

The assumption (TOR$_{X_{\Sigma_p, (\psi)}}$) clearly holds 
if the Galois group
$X_{\Sigma_p}=\mathrm{Gal}(M_{\Sigma_p}/\widetilde{K}^\mathrm{CM}_\infty)$ itself is torsion as an 
$\mathcal{O}[[\mathrm{Gal}(\widetilde{K}^\mathrm{CM}_\infty/F)]]$-module.  
As a part of their research 
on the anti-cyclotomic Iwasawa main conjecture for CM
number fields, Hida and Tilouine have thoroughly studied 
the torsionness property of $X_{\Sigma_p}$ in
\cite[Section~1.2]{HT-aIMC}, and showed that $X_{\Sigma_p}$ is torsion 
over $\mathcal{O}[[\mathrm{Gal}(\widetilde{K}^\mathrm{CM}_\infty/F)]]$
if {\em the $\Sigma$-Leopoldt condition} (the condition
$(\mathscr{L}_{\widetilde{K}^\mathrm{CM}_\infty, \Sigma})$ in the
terminology of \cite{HT-aIMC}) is valid
for the extension $\widetilde{K}^\mathrm{CM}_\infty/F$. See \cite[Theorem~1.2.2~(ii)]{HT-aIMC}
for details of the discussion (note that our extension
$\widetilde{K}^\mathrm{CM}_\infty$ is always regarded as a subfield of
the ray class field $F_{\mathfrak{C}p^\infty}$ modulo
$\mathfrak{C}p^\infty$ over $F$ for an appropriate integral ideal
$\mathfrak{C}$ of $F$). Since the extension
$\widetilde{K}^\mathrm{CM}_\infty/F$  contains the {\em cyclotomic
$\mathbb{Z}_p$\nobreakdash-extension}, we observe that 
the $\Sigma$-Leopoldt condition is valid for the extension
$\widetilde{K}^\mathrm{CM}_\infty/F$ by reducing to the validity of the weak
Leopold conjecture for the cyclotomic $\mathbb{Z}_p$-extension of an
arbitrary number field due to the classical works of Iwasawa and
Greenberg \cite[Proof of Theorem~3]{gr-iw} (or by directly applying
\cite[Theorem~1.2.2~(iii)]{HT-aIMC}). 
Consequently the assumption (TOR$_{X_{\Sigma_p, (\psi)}}$) is
fulfilled and we have verified the following proposition:

\begin{pro} \label{prop:ini_adiv}
The strict Selmer group
 $\mathrm{Sel}^{\Sigma,\mathrm{str}}_{\mathcal{A}_\psi^{\mathrm{CM}}}$
 of $\mathcal{A}^{\mathrm{CM}}_\psi$ is cotorsion over $\Lambda_\mathcal{O}^\mathrm{CM}$ and almost divisible over $\Lambda_0$. The same claim holds for the strict Selmer group 
 $\mathrm{Sel}^{\Sigma,\mathrm{str}}_{\mathcal{A}^{\mathrm{CM}}_\eta}$
 of $\mathcal{A}^{\mathrm{CM}}_\eta$.
\end{pro}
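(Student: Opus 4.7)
The plan is to deduce Proposition~\ref{prop:ini_adiv} by applying Greenberg's almost divisibility criterion (Theorem~\ref{thm:adiv}) to the strict Selmer group $\mathrm{Sel}^{\Sigma,\mathrm{str}}_{\mathcal{A}^{\mathrm{CM}}_\psi}$ with the specification $\mathcal{L}_\mathrm{str}$ introduced in Section~\ref{sssc:regular}. The preliminary step (together with Remark~\ref{rem:hyp_psi}) has already secured the almost $\Lambda_0$-divisibility of each local piece $L_\mathrm{str}(F_v,\mathcal{A}^{\mathrm{CM}}_\psi)$, the hypothesis (LOC$^{(1)}_{\mathcal{A}^{\mathrm{CM}}_\psi,v_0}$) at a nonarchimedean $v_0\in S$, and (LOC$^{(2)}_{\mathcal{A}^{\mathrm{CM}}_\psi,v}$) at every $v\in S$. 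Only two remaining hypotheses must be verified: the surjectivity statement (SUR$_{\mathcal{A}^{\mathrm{CM}}_\psi,\mathcal{L}_\mathrm{str}}$) and the weak-Leopoldt-type condition (LEO$_{\mathcal{A}^{\mathrm{CM}}_\psi}$).

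For the surjectivity, I intend to invoke Proposition~\ref{prop:surj}, using the extra hypothesis (c): for any $\mathfrak{P}\in\Sigma_p$ the quotient $Q_{\mathcal{L}_\mathrm{str}}(F_\mathfrak{P},\mathcal{A}^{\mathrm{CM}}_\psi)$ is zero (hence trivially $\Lambda_0$-divisible), while (LOC$^{(1)}_{\mathcal{A}^{\mathrm{CM}}_\psi,\mathfrak{P}}$) provides the vanishing of $H^0_\mathrm{cts}(F_\mathfrak{P},(\mathcal{T}^{\mathrm{CM}}_\psi)^*)$; note that $\mathcal{A}^{\mathrm{CM}}_\psi\cong (\Lambda^\mathrm{CM}_\mathcal{O})^{\vee}$ is divisible as a $\Lambda^\mathrm{CM}_\mathcal{O}$-module. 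Proposition~\ref{prop:surj} then yields the triviality of the dual Selmer group, and hence (SUR$_{\mathcal{A}^{\mathrm{CM}}_\psi,\mathcal{L}_\mathrm{str}}$), provided the corank equality (CRK$_{\mathcal{A}^{\mathrm{CM}}_\psi,\mathcal{L}_\mathrm{str}}$) is at our disposal.

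The key step, and the one I expect to do the real work, is to establish (CRK$_{\mathcal{A}^{\mathrm{CM}}_\psi,\mathcal{L}_\mathrm{str}}$) and (LEO$_{\mathcal{A}^{\mathrm{CM}}_\psi}$) simultaneously. By Lemma~\ref{lem:tors}, this pair is equivalent to the single cotorsion condition (COT$_{\mathcal{A}^{\mathrm{CM}}_\psi}$), which via the pseudoisomorphism of Remark~\ref{rem:sel_psi} and Lemma~\ref{lem:replace_psi} is in turn equivalent to (TOR$_{X_{\Sigma_p,(\psi)}}$). I would deduce the latter from Hida--Tilouine's criterion \cite[Theorem~1.2.2~(ii)]{HT-aIMC}, which reduces the torsionness of $X_{\Sigma_p}$ over $\mathcal{O}[[\mathrm{Gal}(\widetilde{K}^{\mathrm{CM}}_\infty/F)]]$ to the $\Sigma$-Leopoldt condition $(\mathscr{L}_{\widetilde{K}^{\mathrm{CM}}_\infty,\Sigma})$. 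Since $\widetilde{K}^{\mathrm{CM}}_\infty$ contains the cyclotomic $\mathbb{Z}_p$-extension of $F$, the classical weak Leopoldt conjecture along the cyclotomic tower (due to Iwasawa and Greenberg, see \cite[Proof of Theorem~3]{gr-iw}) forces $(\mathscr{L}_{\widetilde{K}^{\mathrm{CM}}_\infty,\Sigma})$ to hold; alternatively one can appeal directly to \cite[Theorem~1.2.2~(iii)]{HT-aIMC}. Taking the $\psi$-isotypic quotient preserves torsionness, so (TOR$_{X_{\Sigma_p,(\psi)}}$) follows; feeding this back through Lemma~\ref{lem:tors} and Theorem~\ref{thm:adiv} yields both the $\Lambda^\mathrm{CM}_\mathcal{O}$-cotorsionness and the almost $\Lambda_0$-divisibility of $\mathrm{Sel}^{\Sigma,\mathrm{str}}_{\mathcal{A}^{\mathrm{CM}}_\psi}$.

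For the final assertion concerning $\mathcal{A}^{\mathrm{CM}}_\eta$, I would transport the two properties through the twist isomorphism
\[
\mathrm{Sel}^{\Sigma,\mathrm{str}}_{\mathcal{A}^{\mathrm{CM}}_\eta}\;\cong\;\mathrm{Tw}_{\eta^{\mathrm{gal},-1}\psi}\!\bigl(\mathrm{Sel}^{\Sigma,\mathrm{str}}_{\mathcal{A}^{\mathrm{CM}}_\psi}\bigr)
\]
recorded at the beginning of Section~\ref{sssc:first_step}. Lemma~\ref{lem:twist} shows that twisting by a continuous character of $\mathrm{Gal}(\widetilde{F}_\infty/F)$ commutes with Pontrjagin duality, preserves both torsionness and pseudonullity, and hence preserves both $\Lambda^\mathrm{CM}_\mathcal{O}$-cotorsionness and almost $\Lambda_0$-divisibility; applying this to the conclusions obtained for $\mathcal{A}^{\mathrm{CM}}_\psi$ finishes the proof. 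The main obstacle I anticipate is purely of a bookkeeping nature: making the reduction of $(\mathscr{L}_{\widetilde{K}^{\mathrm{CM}}_\infty,\Sigma})$ to the cyclotomic weak Leopoldt statement entirely rigorous, especially since Hida--Tilouine work with CM-fields rather than the slightly larger extension $\widetilde{K}^{\mathrm{CM}}_\infty/F$ we consider here.
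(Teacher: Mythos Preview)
Your proposal is correct and follows essentially the same approach as the paper: verify (CRK$_{\mathcal{A}^{\mathrm{CM}}_\psi,\mathcal{L}_\mathrm{str}}$) and (LEO$_{\mathcal{A}^{\mathrm{CM}}_\psi}$) via Lemma~\ref{lem:tors} by establishing (TOR$_{X_{\Sigma_p,(\psi)}}$) through Hida--Tilouine's $\Sigma$-Leopoldt criterion and the cyclotomic weak Leopoldt theorem of Iwasawa--Greenberg, then feed everything into Theorem~\ref{thm:adiv} using condition~(c) of Proposition~\ref{prop:surj} at a place in $\Sigma_p$, and finally transport to $\mathcal{A}^{\mathrm{CM}}_\eta$ by the twist identification. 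The paper handles your anticipated ``bookkeeping obstacle'' exactly as you suggest, citing \cite[Theorem~1.2.2~(iii)]{HT-aIMC} as the direct alternative.
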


As a corollary of our computation so far, we deduce 
a consequence on the algebraic structure of the Iwasawa module $X_{\Sigma_p,
(\psi)}$ when the Pontrjagin dual of
$\mathrm{Sel}^{\Sigma,\mathrm{str}}_{\mathcal{A}_\psi^{\mathrm{CM}}}$ is exactly
isomorphic to $X_{\Sigma_p, (\psi)}$. Namely, 

\begin{cor}[Theorem~\ref{Thm:psnull}] \label{cor:adiv_psi}
Assume that the nontriviality condition
 {\upshape (ntr)$_{\psi,\mathfrak{P}^c}$} 
is valid for each place $\mathfrak{P}^c$ of $F$ belonging to
 $\Sigma_p^c$ and that the order of the branch character $\psi$ is relatively
 prime to $p$ $($in
 particular $\psi$ is not trivial$)$.
Then the Iwasawa module $X_{\Sigma_p, (\psi)}$ does not
 contain nontrivial pseudonull $\Lambda_0$-submodules.
\end{cor}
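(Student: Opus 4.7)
The plan is to chain together three ingredients that have already been established earlier in the paper: the almost $\Lambda_0$-divisibility of the strict Selmer group, the identification of the strict Selmer group with the ordinary Selmer group, and the identification of the Pontrjagin dual of the Selmer group with $X_{\Sigma_p,(\psi)}$. I would begin by invoking the criterion recalled in Section~\ref{sssc:setting}: for a discrete $\Lambda^\mathrm{CM}_\mathcal{O}$-module $\mathcal{A}$, almost $\Lambda_0$-divisibility of $\mathcal{A}$ is equivalent to the Pontrjagin dual $\mathcal{A}^\vee$ containing no nontrivial pseudonull $\Lambda_0$-submodules. Hence it will suffice to produce a discrete $\Lambda^\mathrm{CM}_\mathcal{O}$-module whose Pontrjagin dual is $X_{\Sigma_p,(\psi)}$ and which is almost $\Lambda_0$-divisible.

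Next, I would combine the two nontriviality hypotheses of the corollary. Since the order of $\psi$ is prime to $p$ and $[K:K_\psi]$ divides $p-1$, the cardinality of $\Delta = \mathrm{Gal}(K/F)$ is relatively prime to $p$. In this situation, the comparison diagram~\eqref{eq:comparison} applied to the character $\psi$ (as indicated in Remarks~\ref{rem:vanish} and \ref{rem:sel_psi}) degenerates: all the inflation and local restriction kernels/cokernels are $p$-torsion modules killed by $|\Delta|$, hence vanish. The analysis of Section~\ref{sssc:comparison}, carried out for $\psi$ in place of $\eta$, therefore yields a genuine isomorphism
\[
\bigl(\mathrm{Sel}^\Sigma_{\mathcal{A}^\mathrm{CM}_\psi}\bigr)^\vee \;\cong\; X_{\Sigma_p,(\psi)}
\]
of $\Lambda^\mathrm{CM}_\mathcal{O}$-modules (not merely a pseudoisomorphism). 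On the other hand, by Lemma~\ref{lem:replace_psi}, the validity of $(\mathrm{ntr})_{\psi,\mathfrak{P}^c}$ at every place $\mathfrak{P}^c \in \Sigma_p^c$ ensures that the natural injection $\mathrm{Sel}^{\Sigma,\mathrm{str}}_{\mathcal{A}^\mathrm{CM}_\psi} \hookrightarrow \mathrm{Sel}^\Sigma_{\mathcal{A}^\mathrm{CM}_\psi}$ is in fact an isomorphism. Combining these two identifications gives $(\mathrm{Sel}^{\Sigma,\mathrm{str}}_{\mathcal{A}^\mathrm{CM}_\psi})^\vee \cong X_{\Sigma_p,(\psi)}$ as $\Lambda^\mathrm{CM}_\mathcal{O}$-modules.

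Finally, I would appeal to Proposition~\ref{prop:ini_adiv}, which asserts that $\mathrm{Sel}^{\Sigma,\mathrm{str}}_{\mathcal{A}^\mathrm{CM}_\psi}$ is almost divisible over $\Lambda_0$. Through the equivalence recalled in the first paragraph, this translates directly into the conclusion that its Pontrjagin dual, namely $X_{\Sigma_p,(\psi)}$, contains no nontrivial pseudonull $\Lambda_0$-submodules, which is precisely the statement of the corollary. The real work has already been done in proving Proposition~\ref{prop:ini_adiv} (via Greenberg's criterion, Theorem~\ref{thm:adiv}, together with the torsionness input from Hida--Tilouine); here the only care needed is to keep track of the reductions in the previous two paragraphs, since all of them genuinely require that $|\Delta|$ be prime to $p$ so that the $\psi$-isotypic projection of the cohomological data is exact, and that $(\mathrm{ntr})_{\psi,\mathfrak{P}^c}$ holds so that the replacement of the Selmer group by the strict Selmer group is an equality rather than merely a pseudoisomorphism.
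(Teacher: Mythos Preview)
Your proof is correct and follows essentially the same route as the paper: you combine Remark~\ref{rem:vanish} and Remark~\ref{rem:sel_psi} (prime-to-$p$ order of $\Delta$) with Lemma~\ref{lem:replace_psi} (the condition $(\mathrm{ntr})_{\psi,\mathfrak{P}^c}$) to obtain an exact isomorphism $(\mathrm{Sel}^{\Sigma,\mathrm{str}}_{\mathcal{A}^\mathrm{CM}_\psi})^\vee \cong X_{\Sigma_p,(\psi)}$, and then invoke Proposition~\ref{prop:ini_adiv}. The paper's proof is simply a terser version of the same argument.
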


\begin{proof}
Under the assumptions, the Pontrjagin dual of
 $\mathrm{Sel}^{\Sigma,\mathrm{str}}_{\mathcal{A}_\psi^{\mathrm{CM}}}$ is exactly
 isomorphic to $X_{\Sigma_p, (\psi)}$ by
 Remark~\ref{rem:vanish} and Lemma~\ref{lem:replace_psi}. The statement
 then follows from Proposition~\ref{prop:ini_adiv}.
\end{proof}  

\begin{rem}
The triviality of pseudonull submodules of the Iwasawa module 
$X_{\Sigma_p, (\psi)}$ has been already studied by Perrin-Riou in
\cite[Th\'eor\`eme~2.4]{PR_CM} only when $F$ is an imaginary quadratic field and
 $\psi$ is a gr\"o{\ss}encharacter associated to an elliptic curve with complex
 multiplication. Her method essentially utilises  
Wintenberger's structure theorem on projective limits of local unit groups \cite{Wintenberger} 
combined with Greenberg's classical result on 
the triviality of the pseudonull submodules contained in the 
``$p$-ramified Iwasawa module'' \cite[Proposition~5]{Gr_Galois}, which 
 is rather different from ours. It was not clear to us if the method
 used in \cite{PR_CM} could be extended to general CM number fields or not.
\end{rem} 

\medskip
 \subsubsection{Intermediate steps$:$ inductive specialisation of the Selmer
   group} \label{sssc:int_step}

Next we shall inductively specialise the strict Selmer group
$\mathrm{Sel}^{\Sigma,\mathrm{str}}_{\mathcal{A}^{\mathrm{CM}}_\eta}$ so that 
the characteristic ideal of its Pontrjagin dual behaves compatibly with
respect to each specialisation procedure. 

We use the same notation as in Section~\ref{sssc:regular}; in particular $\mathfrak{A}_j$ denotes the ideal of $\Lambda_\mathcal{O}^\mathrm{CM}$ generated by a regular sequence $x_1,\dotsc, x_j$ with $x_k=\gamma_k-1$.
We also recall that the ideal $\mathfrak{A}^\mathrm{cyc}$ of $\Lambda_\mathcal{O}^\mathrm{CM}$ is defined as the
kernel of the natural surjection $\Lambda_\mathcal{O}^\mathrm{CM}\twoheadrightarrow \Lambda_\mathcal{O}^\mathrm{cyc}$. Now let us consider the following three conditions $(\Gamma0)_j$, $(\Gamma1)_j$
and $(\Gamma2)_j$ on the fixed elements $\gamma_1,\dotsc, \gamma_j$ of $\mathrm{Gal}(\widetilde{F}/F)$:

\begin{enumerate}[label=$(\Gamma\arabic*)_j$]
\setcounter{enumi}{-1}
 \item the sequence $x_1, \dotsc, x_j$ is a regular sequence of
       $\Lambda^{\mathrm{CM}}_\mathcal{O}$ contained in
       $\mathfrak{A}^{\mathrm{cyc}}${\upshape ;}
 \item the hypothesis {\upshape
       (LEO$_{\mathcal{A}^{\mathrm{CM}}_\eta[\mathfrak{A}_{j}]}$)}
       is fulfilled for $\mathcal{A}_\eta^\mathrm{CM}[\mathfrak{A}_j]${\upshape ;}
\item the dual Selmer group $\mathrm{Sel}_{\mathcal{L}^*_{\mathrm{str}}}(F,
 (\mathcal{T}_\eta^{\mathrm{CM}}/\mathfrak{A}_{j}\mathcal{T}_\eta^{\mathrm{CM}})^*)$
      is trivial.
\end{enumerate}
Here we regard the condition $(\Gamma0)_0$ as the empty condition. The following proposition is the key of our specialisation arguments. 

 \begin{pro} \label{prop:induction}
  Let the notation be as above and assume that the condition 
  {\upshape (ntr)$_{\mathfrak{P}^c}$} is fulfilled
  for every place $\mathfrak{P}^c$ of $F$ contained in $\Sigma_p^c$.
  Suppose that, for a natural number $j$ with $1\leq j\leq d+\delta_{F,p}-1$,
  all the conditions $(\Gamma0)_{j-1}$, $(\Gamma1)_{j-1}$ and
  $(\Gamma2)_{j-1}$  are fulfilled on a set of elements  $\gamma_1, \dotsc, \gamma_{j-1}$ of  $\mathrm{Gal}(\widetilde{F}/F^\mathrm{cyc}_\infty)$.
  Let $\mathrm{Sel}^{\Sigma,\mathrm{str}}_{\mathcal{A}^{\mathrm{CM}}_\eta[\mathfrak{A}_{j-1}]}$ be the strict Selmer group of $\mathcal{A}_\eta^\mathrm{CM}[\mathfrak{A}_{j-1}]$ and $(\mathrm{Sel}^{\Sigma,\mathrm{str}}_{\mathcal{A}^{\mathrm{CM}}_\eta[\mathfrak{A}_{j-1}]})^\vee$ its Pontrjagin dual. 
Then there exists an element $\gamma_j$ of
  $\mathrm{Gal}(\widetilde{F}/F^\mathrm{cyc}_\infty)$ such that
  the element $x_j=\gamma_j-1$ of $\Lambda_\mathcal{O}^{\mathrm{CM}}$ does not
  divide in $\Lambda^{\mathrm{CM}}_{\mathcal{O}}/\mathfrak{A}_{j-1}$ the
  characteristic ideal of the torsion part of
  $(\mathrm{Sel}^{\Sigma,\mathrm{str}}_{\mathcal{A}^{\mathrm{CM}}_\eta[\mathfrak{A}_{j-1}]})^\vee$, and all the conditions $(\Gamma0)_j, (\Gamma1)_j$ and $(\Gamma2)_j$ are fulfilled on the tuples $ \gamma_1, \dotsc, \gamma_{j-1}$ and $\gamma_j$.
 \end{pro}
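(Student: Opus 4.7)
The strategy is an avoidance argument: each of the required conditions $(\Gamma 0)_j$, $(\Gamma 1)_j$, $(\Gamma 2)_j$, together with the non-divisibility of the torsion characteristic ideal by $x_j$, can be violated only when $\gamma_j - 1$ (reduced modulo $\mathfrak{A}_{j-1}$) lies in a finite collection of height-one prime ideals of $\Lambda^{\mathrm{CM}}_\mathcal{O}/\mathfrak{A}_{j-1}$. Since the rank of the free $\mathbb{Z}_p$-module $\mathrm{Gal}(\widetilde{F}/F^{\mathrm{cyc}}_\infty)$ is $d+\delta_{F,p} \geq j$ (where $j\leq d+\delta_{F,p}-1$), the image of $\mathrm{Gal}(\widetilde{F}/F^{\mathrm{cyc}}_\infty)$ in $\mathfrak{A}^{\mathrm{cyc}}/(\mathfrak{A}_{j-1},\mathfrak{M}^2)$ spans a $\mathcal{O}/\mathfrak{M}$-subspace of dimension at least $d+\delta_{F,p}-(j-1)\geq 2$ in each component (for each maximal ideal $\mathfrak{M}$ of $\Lambda^{\mathrm{CM}}_\mathcal{O}$). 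Hence outside the union of finitely many hyperplanes we may still find a generator of a ``generic'' height-one prime. This will furnish the desired $\gamma_j$ once the four excluded sets are shown to be finite.

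First I would handle $(\Gamma 0)_j$ and the non-divisibility of the characteristic ideal. By the induction hypothesis $(\Gamma 0)_{j-1}$, the ring $\Lambda^{\mathrm{CM}}_\mathcal{O}/\mathfrak{A}_{j-1}$ is Cohen--Macaulay and the associated primes of its zero module consist of finitely many height-zero primes. Avoiding these together with the finitely many height-one prime divisors of $\mathrm{Char}_{\Lambda^{\mathrm{CM}}_\mathcal{O}/\mathfrak{A}_{j-1}}((\mathrm{Sel}^{\Sigma,\mathrm{str}}_{\mathcal{A}^{\mathrm{CM}}_\eta[\mathfrak{A}_{j-1}]})^\vee_{\mathrm{tors}})$ excludes only finitely many height-one primes. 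Since $\gamma_j\in\mathrm{Gal}(\widetilde{F}/F^{\mathrm{cyc}}_\infty)$ automatically forces $x_j = \gamma_j-1 \in \mathfrak{A}^{\mathrm{cyc}}$, the regularity condition $(\Gamma 0)_j$ together with non-divisibility is generically satisfied.

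Next I would deal with $(\Gamma 1)_j$. By \cite[Lemma 4.4.1]{gr-coh}, the hypothesis (LEO$_{\mathcal{A}^{\mathrm{CM}}_\eta[\mathfrak{A}_{j-1}]}$) in $(\Gamma 1)_{j-1}$ propagates to (LEO$_{\mathcal{A}^{\mathrm{CM}}_\eta[\mathfrak{A}_{j-1},\varpi]}$) for all but finitely many prime elements $\varpi\in\Lambda^{\mathrm{CM}}_\mathcal{O}/\mathfrak{A}_{j-1}$, where $\mathcal{A}^{\mathrm{CM}}_\eta[\mathfrak{A}_{j-1},\varpi]$ denotes the $\varpi$-torsion submodule. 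Taking $\varpi=x_j\bmod \mathfrak{A}_{j-1}$ yields $\mathcal{A}^{\mathrm{CM}}_\eta[\mathfrak{A}_j]$, so $(\Gamma 1)_j$ excludes another finite set of height-one primes.

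The main obstacle, and the heart of the argument, is $(\Gamma 2)_j$. I would invoke Proposition~\ref{prop:surj} applied to $\mathcal{A}^{\mathrm{CM}}_\eta[\mathfrak{A}_j]$ with the local condition $\mathcal{L}_\mathrm{str}$. The extra condition (c) of that proposition is fulfilled at every $\mathfrak{P}\in\Sigma_p$ because $Q_{\mathcal{L}_\mathrm{str}}(F_\mathfrak{P}, \mathcal{A}^{\mathrm{CM}}_\eta[\mathfrak{A}_j])=0$ and because the assumption (ntr)$_\mathfrak{P^c}$ (together with the local computations of the preliminary step in Section~\ref{sssc:pre_step}) yields the triviality of $H^0_{\mathrm{cts}}(F_\mathfrak{P},(\mathcal{T}^{\mathrm{CM}}_\eta/\mathfrak{A}_j\mathcal{T}^{\mathrm{CM}}_\eta)^*)$. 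It therefore suffices to verify the corank condition (CRK$_{\mathcal{A}^{\mathrm{CM}}_\eta[\mathfrak{A}_j],\mathcal{L}_\mathrm{str}}$). Equivalently, the cokernel of the global-to-local map $\phi_{\mathcal{L}_\mathrm{str}}$ for $\mathcal{A}^{\mathrm{CM}}_\eta[\mathfrak{A}_j]$ must be cotorsion over $\Lambda^{\mathrm{CM}}_\mathcal{O}/\mathfrak{A}_j$. The plan is to deduce this from the triviality of the dual Selmer group at level $j-1$ (condition $(\Gamma 2)_{j-1}$) by comparing the short exact sequence
\begin{equation*}
 0\rightarrow \mathcal{A}^{\mathrm{CM}}_\eta[\mathfrak{A}_j] \rightarrow \mathcal{A}^{\mathrm{CM}}_\eta[\mathfrak{A}_{j-1}] \xrightarrow{\times x_j} \mathcal{A}^{\mathrm{CM}}_\eta[\mathfrak{A}_{j-1}] \rightarrow 0
\end{equation*}
on the level of global and local cohomology, and chasing the resulting commutative diagram. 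The cokernel of $\phi_{\mathcal{L}_{\mathrm{str}}}$ at level $j$ is controlled by that at level $j-1$ (which is cotorsion by Lemma~\ref{lem:tors} combined with $(\Gamma 2)_{j-1}$) plus a kernel arising from an $x_j$-torsion term that can be made cotorsion by avoiding yet one more finite set of height-one primes. Combining with Proposition~\ref{prop:surj} then gives $(\Gamma 2)_j$.

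Finally, since each of the four conditions rules out only finitely many height-one primes of the Cohen--Macaulay ring $\Lambda^{\mathrm{CM}}_\mathcal{O}/\mathfrak{A}_{j-1}$, and since $\mathrm{Gal}(\widetilde{F}/F^{\mathrm{cyc}}_\infty)$ has large enough $\mathbb{Z}_p$-rank to surject onto $(\mathfrak{A}^{\mathrm{cyc}}/(\mathfrak{A}_{j-1},\mathfrak{A}^{\mathrm{cyc}}\mathfrak{M}))$ in every component of rank $\geq 2$, we can choose $\gamma_j$ so that $x_j = \gamma_j - 1$ avoids all forbidden primes. This completes the proof.
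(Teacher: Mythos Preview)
Your avoidance argument for $(\Gamma 0)_j$, $(\Gamma 1)_j$ and the non-divisibility of the torsion characteristic ideal is correct and matches the paper exactly: these conditions exclude only finitely many height-one primes of $\Lambda_0^{(j-1)}$, and the set $\{(\bar\gamma-1)\Lambda_0^{(j-1)}\mid \bar\gamma\in\overline{\Gamma}_{j-1}\setminus\overline{\Gamma}_{j-1}^{\,p}\}$ is infinite, so a suitable $\gamma_j$ exists.

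The gap is in your treatment of $(\Gamma 2)_j$. You propose to invoke Proposition~\ref{prop:surj}, which needs (CRK$_{\mathcal{A}^{\mathrm{CM}}_\eta[\mathfrak{A}_j],\mathcal{L}_\mathrm{str}}$), and you claim that the cokernel of $\phi_{\mathcal{L}_\mathrm{str}}$ at level $j$ is ``controlled by that at level $j-1$ plus an $x_j$-torsion term that can be made cotorsion by avoiding one more finite set''. This is not a proof: the relation between $\mathrm{Coker}(\phi_{\mathcal{L}_\mathrm{str}}^{(j)})$ and $\mathrm{Coker}(\phi_{\mathcal{L}_\mathrm{str}}^{(j-1)})=0$ through the multiplication-by-$x_j$ sequence is precisely the delicate point, and no further finite avoidance will produce it. The paper does \emph{not} use Proposition~\ref{prop:surj} here; instead it proves $(\Gamma 2)_j$ directly as Proposition~\ref{prop:dSelvan}, by an explicit comparison of the Poitou--Tate pairing with the local Tate pairing (the diagrams \eqref{dg:big}--\eqref{dg:right}, Proposition~\ref{prop:commutes}, and the whole of Appendix~\ref{app:duality}). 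That argument shows the dual Selmer group at level $j$ injects into a quotient that is forced to vanish by $(\Gamma 2)_{j-1}$, and it works without knowing (CRK)$_j$ or cotorsionness at level $j$ in advance.

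There is a genuine alternative along the lines you suggest, but it is not the one you wrote. If one \emph{adds} cotorsionness of $\mathrm{Sel}^{\Sigma,\mathrm{str}}_{\mathcal{A}^{\mathrm{CM}}_\eta[\mathfrak{A}_{j-1}]}$ to the induction hypothesis (the surrounding induction in \S\ref{sssc:int_step} carries it anyway), then once $\gamma_j$ is chosen for $(\Gamma 0)_j$, $(\Gamma 1)_j$ and non-divisibility, the Control Theorem and Specialisation Lemma give cotorsionness at level $j$. The Euler--Poincar\'e computation behind Lemma~\ref{lem:tors} then goes through verbatim at level $j$ --- (LOC$^{(1)}$) holds for every $j$ by \S\ref{sssc:pre_step}, and the local $H^0$'s are cotorsion because the cyclotomic direction survives the quotient by $\mathfrak{A}_j$ --- yielding $\mathrm{corank}\,\mathrm{Sel}^{(j)}=\mathrm{corank}\,\mathcyr{Sh}^{2,(j)}+\mathrm{corank}\,\mathrm{Coker}(\phi^{(j)})$. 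With (COT)$_j$ and $(\Gamma 1)_j$ this gives (CRK)$_j$, and then Proposition~\ref{prop:surj} with condition (c) at any $\mathfrak{P}\in\Sigma_p$ furnishes $(\Gamma 2)_j$. This route bypasses Appendix~\ref{app:duality} entirely, at the cost of proving a slightly weaker proposition (one that assumes cotorsionness at level $j-1$). Your sketch gestures in this direction but does not carry it out; in particular, no ``extra finite set of primes'' is involved --- the missing ingredient is the cotorsionness hypothesis, not a further avoidance step.
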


We postpone the proof of Proposition~\ref{prop:induction} until the end
of this paragraph, and we here deduce the basechange compatibility of 
the characteristic ideal of $(\mathrm{Sel}_{\mathcal{A}_\eta^\mathrm{CM}}^{\Sigma,\mathrm{str}})^\vee$ under intermediate specialisation procedures from
Proposition~\ref{prop:induction}.   Firstly we note that, when $j$ equals zero, all the conditions $(\Gamma0)_0$, $(\Gamma1)_0$ and $(\Gamma2)_0$ are fulfilled; indeed the condition $(\Gamma0)_0$ is empty by convention,
and the condition $(\Gamma1)_0$ follows from Lemma~\ref{lem:tors} and the cotorsionness of the strict Selmer group $\mathrm{Sel}_{\mathcal{A}_\eta^\mathrm{CM}}^{\Sigma,\mathrm{str}}$ (refer to the argument in Section~\ref{sssc:first_step}). The triviality of the dual Selmer group $\mathrm{Sel}_{\mathcal{L}^*_{\mathrm{str}}}(F, \mathcal{T}_\eta^*)$, namely the condition $(\Gamma2)_0$,  
 also follows from Lemma~\ref{lem:tors} and the cotorsionness of the strict Selmer group $\mathrm{Sel}_{\mathcal{A}_\eta^\mathrm{CM}}^{\Sigma,\mathrm{str}}$, combined with
 Proposition~\ref{prop:surj}; observe that an arbitrary place
 $\mathfrak{P}^c$ contained in $\Sigma_p^c$ satisfies the extra condition (c) 
proposed in Proposition~\ref{prop:surj}.

Now let $j$ be a natural number with $1\leq j\leq d+\delta_{F,p}-1$, and 
assume that we have already chosen elements $ \gamma_1, \dotsc,\gamma_{j-1}$ 
 of $\mathrm{Gal}(\widetilde{F}/F^\mathrm{cyc}_\infty)$ so that all the conditions 
 $(\Gamma0)_{j-1}, (\Gamma1)_{j-1}$ and $(\Gamma2)_{j-1}$ are fulfilled for them.
As an induction hypothesis, we further assume that
{\em $\mathrm{Sel}^{\Sigma,\mathrm{str}}_{\mathcal{A}^{\mathrm{CM}}_\eta[\mathfrak{A}_{j-1}]}$
is cotorsion} as a $\Lambda_\mathcal{O}^{\mathrm{CM}}/\mathfrak{A}_{j-1}$-module. 
Theorem~\ref{thm:adiv} then enables us to conclude that the strict Selmer group 
$\mathrm{Sel}^{\Sigma, \mathrm{str}}_{\mathcal{A}^{\mathrm{CM}}_\eta[\mathfrak{A}_{j-1}]}$
is almost $\Lambda_0^{(j-1)}$-divisible.
 Let
 $(\mathrm{Sel}^{\Sigma, \mathrm{str}}_{\mathcal{A}^{\mathrm{CM}}_\eta[\mathfrak{A}_{j-1}]})^\vee$
 denote the Pontrjagin dual of the strict Selmer group 
$\mathrm{Sel}^{\Sigma, \mathrm{str}}_{\mathcal{A}^{\mathrm{CM}}_\eta[\mathfrak{A}_{j-1}]}$. We
apply Proposition~\ref{prop:induction} and 
find an element $\gamma_j$ of $\mathrm{Gal}(\widetilde{F}/F^\mathrm{cyc}_\infty)$ 
so that $x_j=\gamma_j-1$ does not divide the characteristic ideal
of
$(\mathrm{Sel}^{\Sigma, \mathrm{str}}_{\mathcal{A}^{\mathrm{CM}}_\eta[\mathfrak{A}_{j-1}]})^\vee$
and all the conditions $(\Gamma0)_j, (\Gamma1)_j$ and $(\Gamma2)_j$ are fulfilled
for the tuples $\gamma_1, \dotsc ,\gamma_{j-1}$ and $\gamma_j$. 
Then, from the Exact Control Theorem~\ref{thm:control} and the Specialisation Lemma~\ref{lem:specialisation}, we readily deduce the
cotorsionness of the strict Selmer group $\mathrm{Sel}^{\Sigma,\mathrm{str}}_{\mathcal{A}^{\mathrm{CM}}_\eta[\mathfrak{A}_{j}]}$ as a $\Lambda_\mathcal{O}^{\mathrm{CM}}/\mathfrak{A}_{j}$-module,
and obtain the equality

 \begin{multline*}
\left(
\mathrm{Char}_{\Lambda_\mathcal{O}^{\mathrm{CM}}/\mathfrak{A}_{
  j-1}}
 \left(\mathrm{Sel}^{\Sigma,
  \mathrm{str}}_{\mathcal{A}^{\mathrm{CM}}_\eta[\mathfrak{A}_{
  j-1}]}\right)^\vee
  \right)
  \otimes_{\Lambda_\mathcal{O}^{\mathrm{CM}}/\mathfrak{A}_{
  j-1}} 
  \Lambda_\mathcal{O}^{\mathrm{CM}}/\mathfrak{A}_{j} = \mathrm{Char}_{\Lambda_\mathcal{O}^{\mathrm{CM}}/\mathfrak{A}_{j}}\left(\mathrm{Sel}^{\Sigma,
  \mathrm{str}}_{\mathcal{A}^{\mathrm{CM}}_\eta[\mathfrak{A}_{j}]}\right)^\vee.
\end{multline*}

The induction proceeds until $j$ achieves $d+\delta_{F,p}-1$, and
consequently we see that the strict Selmer group
$\mathrm{Sel}^{\Sigma,\mathrm{str}}_{\mathcal{A}^{\mathrm{CM}}_\eta[\mathfrak{A}_{
d+\delta_{F,p}-1}]}$
is cotorsion as a $\Lambda_\mathcal{O}^{\mathrm{CM}}/\mathfrak{A}_{d+\delta_{F,p}-1}$-module and the following equality among ideals of $\Lambda_\mathcal{O}^\mathrm{CM}/\mathfrak{A}_{d+\delta_{F,p}-1}$ holds:

\begin{multline}  \label{eq:spec_ind}
 \left( 
 \mathrm{Char}_{\Lambda_\mathcal{O}^{\mathrm{CM}}}
 \left(\mathrm{Sel}^{\Sigma,
  \mathrm{str}}_{\mathcal{A}^{\mathrm{CM}}_\eta}\right)^\vee
  \right)
  \otimes_{\Lambda_\mathcal{O}^{\mathrm{CM}}}
  \Lambda_\mathcal{O}^{\mathrm{CM}}/\mathfrak{A}_{
  d+\delta_{F,p}-1} \\
= \mathrm{Char}_{\Lambda_\mathcal{O}^{\mathrm{CM}}/\mathfrak{A}_{
  d+\delta_{F,p}-1}}\left(\mathrm{Sel}^{\Sigma,
  \mathrm{str}}_{\mathcal{A}^{\mathrm{CM}}_\eta[\mathfrak{A}_{d+\delta_{F,p}-1}]}\right)^\vee.
\end{multline}

\begin{rem}
 We warn that we here exclude the case {\em where $j$ equals $d+\delta_{F,p}$} due to the constraint imposed on $j$ in Proposition~\ref{prop:induction};  
the case where $j$ equals $d+\delta_{F,p}$ shall be dealt with later in
Section~\ref{sssc:final_step} as the final specialisation procedure. 
\end{rem}

Let us return to the proof of Proposition~\ref{prop:induction}. 
We first observe that  
the condition $(\Gamma2)_j$ ---the triviality of the dual Selmer group---
is {\em automatically} fulfilled for an appropriate choice of $\gamma_j$.

 \begin{pro} \label{prop:dSelvan}
 Let $j$ be as above and let $\gamma_1, \dotsc, \gamma_{j-1}$ be 
  elements of
  $\mathrm{Gal}(\widetilde{F}/F^\mathrm{cyc}_\infty)$ for which all the conditions
  $(\Gamma0)_{j-1}$, $(\Gamma1)_{j-1}$ and $(\Gamma2)_{j-1}$ stated before
  Proposition~$\ref{prop:induction}$ are fulfilled. Assume further that the 
  condition {\upshape (ntr)$_{\mathfrak{P}^c}$}  is fulfilled for every place
  $\mathfrak{P}^c$ of $F$ contained in $\Sigma_p^c$. 
  Let $\gamma_j$ be an element of
  $\mathrm{Gal}(\widetilde{F}/F^\mathrm{cyc}_\infty)$ such that the tuples $\gamma_1, \dotsc,
  \gamma_{j-1}, \gamma_j$ satisfy the condition $(\Gamma0)_j$
  and that $x_j=\gamma_j-1$ does not divide the characteristic ideal of 
  the torsion part of
  $(\mathrm{Sel}^{\Sigma,\mathrm{str}}_{\mathcal{A}^{\mathrm{CM}}_\eta[\mathfrak{A}_{j-1}]})^\vee$
  in
  $\Lambda_\mathcal{O}^{\mathrm{CM}}/\mathfrak{A}_{
  j-1}$.
  Then the dual strict Selmer group 
  $\mathrm{Sel}_{\mathcal{L}^*_\mathrm{str}}(F,
  (\mathcal{T}^{\mathrm{CM}}_\eta/\mathfrak{A}_{j}\mathcal{T}^{\mathrm{CM}}_\eta)^*)$ of $(\mathcal{T}^{\mathrm{CM}}_\eta/\mathfrak{A}_{j}\mathcal{T}^{\mathrm{CM}}_\eta)^*$ is trivial.
 \end{pro}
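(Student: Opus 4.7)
My plan is to deduce the triviality of the dual strict Selmer group $\mathrm{Sel}_{\mathcal{L}^*_\mathrm{str}}(F,(\mathcal{T}^\mathrm{CM}_\eta/\mathfrak{A}_j\mathcal{T}^\mathrm{CM}_\eta)^*)$ by applying Greenberg's criterion (Proposition~\ref{prop:surj}) to the discrete $\Lambda_\mathcal{O}^\mathrm{CM}/\mathfrak{A}_j$-module $\mathcal{A}^\mathrm{CM}_\eta[\mathfrak{A}_j]$ equipped with the strict local specification $\mathcal{L}_\mathrm{str}$. This reduces to two principal tasks: establishing the cotorsionness of $\mathrm{Sel}^{\Sigma,\mathrm{str}}_{\mathcal{A}^\mathrm{CM}_\eta[\mathfrak{A}_j]}$ over $\Lambda_\mathcal{O}^\mathrm{CM}/\mathfrak{A}_j$, which yields both $(\mathrm{LEO}_{\mathcal{A}^\mathrm{CM}_\eta[\mathfrak{A}_j]})$ and $(\mathrm{CRK}_{\mathcal{A}^\mathrm{CM}_\eta[\mathfrak{A}_j],\mathcal{L}_\mathrm{str}})$ via the Euler--Poincar\'e characteristic computation of Lemma~\ref{lem:tors}; and verifying the remaining additional hypothesis of Proposition~\ref{prop:surj} at a suitable place of $\Sigma_p^c$.

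\medskip

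To obtain the cotorsionness at level $j$, I will run a single inductive specialisation step. The hypothesis $(\Gamma2)_{j-1}$ combined with the exact sequence of \cite[Proposition~3.1.1]{gr-surj} forces $\mathrm{Coker}(\phi_{\mathcal{L}_\mathrm{str}})$ to vanish at level $j-1$, which together with $(\Gamma1)_{j-1}=(\mathrm{LEO})$ and the corank identity from the proof of Lemma~\ref{lem:tors} yields the cotorsionness of $\mathrm{Sel}^{\Sigma,\mathrm{str}}_{\mathcal{A}^\mathrm{CM}_\eta[\mathfrak{A}_{j-1}]}$. Combined with the local hypotheses and almost divisibility of the local conditions verified in Section~\ref{sssc:pre_step}, Theorem~\ref{thm:adiv} then produces the almost $\Lambda_0^{(j-1)}$-divisibility of this Selmer group; equivalently, its Pontryagin dual contains no nontrivial pseudonull $\Lambda_0^{(j-1)}$-submodule. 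The hypothesis that $x_j$ does not divide the characteristic ideal of the torsion part now places us in the range of the Specialisation Lemma~\ref{lem:specialisation}; combining its conclusion with the Exact Control Theorem~\ref{thm:control} (applied to the strict Selmer group through Lemma~\ref{lem:replace}, which is available thanks to $(\mathrm{ntr})_{\mathfrak{P}^c}$) transfers the comparison to level $j$ and produces the sought cotorsionness of $\mathrm{Sel}^{\Sigma,\mathrm{str}}_{\mathcal{A}^\mathrm{CM}_\eta[\mathfrak{A}_j]}$.

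\medskip

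With cotorsionness at level $j$ in hand, the hypotheses $(\mathrm{LEO})$ and $(\mathrm{CRK})$ follow directly, and $\mathcal{A}^\mathrm{CM}_\eta[\mathfrak{A}_j]$ is cofree of rank one over $\Lambda_\mathcal{O}^\mathrm{CM}/\mathfrak{A}_j$, hence divisible. I complete the application of Proposition~\ref{prop:surj} through its extra condition~(c) at any chosen place $v_0=\mathfrak{P}^c$ in $\Sigma_p^c$: the triviality of $H^0_\mathrm{cts}(F_{\mathfrak{P}^c},(\mathcal{T}^\mathrm{CM}_\eta/\mathfrak{A}_j\mathcal{T}^\mathrm{CM}_\eta)^*)$ is precisely $(\mathrm{LOC}^{(1)}_{\mathcal{A}^\mathrm{CM}_\eta[\mathfrak{A}_j],\mathfrak{P}^c})$ of Section~\ref{sssc:pre_step}, whereas the local quotient $Q_{\mathcal{L}_\mathrm{str}}(F_{\mathfrak{P}^c},\mathcal{A}^\mathrm{CM}_\eta[\mathfrak{A}_j])$ coincides with the full $H^1(F_{\mathfrak{P}^c},\mathcal{A}^\mathrm{CM}_\eta[\mathfrak{A}_j])$; its $\Lambda_0^{(j)}$-divisibility will be extracted from the vanishing of the local $H^2$ (Pontryagin dual, by local Tate duality, to the trivial $H^0$ of the Kummer dual) via the long exact sequence of local cohomology associated to multiplication by a uniformiser of any height-one prime of $\Lambda_0^{(j)}$.

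\medskip

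The principal obstacle will be upgrading the almost divisibility of $H^1(F_{\mathfrak{P}^c},\mathcal{A}^\mathrm{CM}_\eta[\mathfrak{A}_j])$ already supplied by $(\mathrm{LOC}^{(2)})$ to the full $\Lambda_0^{(j)}$-divisibility demanded by condition~(c). For each height-one prime $\pi$ of $\Lambda_0^{(j)}$, the $\pi$-cokernel of $H^1$ embeds into $H^2(F_{\mathfrak{P}^c},\mathcal{A}^\mathrm{CM}_\eta[\mathfrak{A}_j,\pi])$; by local Tate duality the vanishing of the latter amounts to a uniform nondegeneracy of the reduction of $\chi_{p,\mathrm{cyc}}\eta^{\mathrm{gal},-1}$ on $D_{\mathfrak{P}^c}$ modulo every maximal ideal of $\Lambda_\mathcal{O}^\mathrm{CM}/(\mathfrak{A}_j,\pi)$. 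This is precisely the content, stable under further specialisation, of the nontriviality hypothesis $(\mathrm{ntr})_{\mathfrak{P}^c}$, and it is at this point that the assumption enters essentially.
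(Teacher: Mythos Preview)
Your overall strategy---bootstrap cotorsionness from level $j-1$ to level $j$ via almost divisibility (Theorem~\ref{thm:adiv}), the Specialisation Lemma, and the Exact Control Theorem, then extend the corank identity of Lemma~\ref{lem:tors} to level $j$ to obtain $(\mathrm{CRK})$ and $(\mathrm{LEO})$, and finally invoke Proposition~\ref{prop:surj}---is sound and is genuinely different from the paper's argument. There is, however, an error in your verification of condition~(c). The hypothesis $(\mathrm{ntr})_{\mathfrak{P}^c}$ asserts nontriviality of the residual character $g\mapsto \eta^{\mathrm{gal}}(g)\,g|_{\widetilde{F}_\infty}\bmod\mathfrak{M}$ on $D_{\mathfrak{P}^c}$, whereas the vanishing of $H^2(F_{\mathfrak{P}^c},\mathcal{A}^{\mathrm{CM}}_\eta[\mathfrak{A}_j,\pi])$ requires nontriviality of the \emph{Kummer-dual} character $g\mapsto\chi_{p,\mathrm{cyc}}(g)\,\eta^{\mathrm{gal}}(g)^{-1}\,g^{-1}|_{\widetilde{F}_\infty}$; these do not coincide, and $(\mathrm{ntr})_{\mathfrak{P}^c}$ does not preclude the former from equalling $\chi_{p,\mathrm{cyc}}$ on $D_{\mathfrak{P}^c}$. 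The fix is immediate: take $v_0=\mathfrak{P}\in\Sigma_p$ rather than $\mathfrak{P}^c\in\Sigma_p^c$. Then $Q_{\mathcal{L}_\mathrm{str}}(F_\mathfrak{P},\mathcal{A}^{\mathrm{CM}}_\eta[\mathfrak{A}_j])=0$ is trivially $\Lambda_0^{(j)}$-divisible, and $(\mathrm{LOC}^{(1)}_{\mathcal{A}^{\mathrm{CM}}_\eta[\mathfrak{A}_j],\mathfrak{P}})$ has already been verified in Section~\ref{sssc:pre_step}. This is exactly the choice the paper makes at the initial step $j=0$ in Section~\ref{sssc:first_step}; your observation is that the same choice works at every level once cotorsionness has been propagated. (The hypothesis $(\mathrm{ntr})_{\mathfrak{P}^c}$ is still needed, but only where you first invoke it, for Lemma~\ref{lem:replace} and Theorem~\ref{thm:control}.)

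With this correction your route is considerably shorter than the paper's. The paper does \emph{not} re-apply Proposition~\ref{prop:surj} at level~$j$. Instead it constructs a chain of diagrams (\ref{dg:big}), (\ref{dg:left}), (\ref{dg:right}) that identify $\mathrm{Sel}_{\mathcal{L}^*_\mathrm{str}}(F,(\mathcal{T}^{\mathrm{CM}}_\eta/\mathfrak{A}_j\mathcal{T}^{\mathrm{CM}}_\eta)^*)$ with the kernel of a projection $\mathrm{pr}_{H^2}\colon H^{2,*}_{(j-1)}[x_j]\to\overline{\mathcal{H}}^{2,*}_j$, and then shows this kernel vanishes by proving that two maps out of $(\mathcyr{Sh}^1_{\mathcal{A}_{\eta,(j-1)}})^\vee[x_j]$---one arising from the Poitou--Tate pairing, the other a composite of snake-lemma connecting homomorphisms with local Tate duality---agree after $\mathrm{pr}_{H^2}$. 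This is Proposition~\ref{prop:commutes}, whose proof occupies all of Appendix~\ref{app:duality} and requires an explicit cocycle-level comparison of the global and local duality pairings. Your argument bypasses Appendix~\ref{app:duality} entirely by recycling the machinery of the initial step; what the paper's method offers is an intrinsic comparison of duality maps, but for the proposition as stated it is not required.
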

 
 Let us verify Proposition~\ref{prop:induction} admitting Proposition~\ref{prop:dSelvan}.

 \begin{proof}[Proof of Proposition~$\ref{prop:induction}$] 
Let $\gamma_1, \dotsc, \gamma_{j-1}$ be elements of
  $\mathrm{Gal}(\widetilde{F}/F^\mathrm{cyc}_\infty)$ as in the statement of Proposition~\ref{prop:induction} and $H$ the closed subgroup of $\mathrm{Gal}(\widetilde{F}/F^\mathrm{cyc}_\infty)$ topologically
  generated by $\gamma_1, \dotsc, \gamma_{j-1}$. 
Define $\mathscr{C}_H$ as the set of all the elements of 
  $\overline{\Gamma}_{j-1}:=\mathrm{Gal}(\widetilde{F}/F^\mathrm{cyc}_\infty)/H$
  with nontrivial images in $\overline{\Gamma}_{j-1}/(\overline{\Gamma}_{j-1})^p$. 
  Let us denote by $Y_{j-1}^{(1)}$ the finite set of height-one prime ideals in
  $\Lambda_0^{(j-1)}$ dividing the characteristic ideal of the torsion part of 
  $(\mathrm{Sel}^{\Sigma,
  \mathrm{str}}_{\mathcal{A}^{\mathrm{CM}}_\eta[\mathfrak{A}_{j-1}]})^\vee$,
  which is obviously a finite set. Meanwhile, 
as we have remarked in Section~\ref{sssc:hyp}, the validity of the hypothesis
  (LEO$_{\mathcal{A}^{\mathrm{CM}}_\eta[\mathfrak{A}_{
  j-1}]}$)  
implies the existence of a finite set $Y^{(2)}_{j-1}$ of
  exceptional prime ideals of height one in
  $\Lambda_0^{(j-1)}$ in the sense that the hypothesis
  (LEO$_{\mathcal{A}^{\mathrm{CM}}_\eta[\mathfrak{A}_{j-1}][\bar{\varPi}]}$)
  is true for every height-one prime ideal
  $(\bar{\varPi})$ of $\Lambda_0^{(j-1)}$ which is not contained in $Y^{(2)}_{j-1}$.
  Since the set of principal ideals in $\Lambda_0^{(j-1)}$
  defined as $\{\,  (\bar{\gamma} -1)\Lambda_0^{(j-1)} \mid  \bar{\gamma} \in \mathscr{C}_H \, \}$  
  is infinite, we can choose an element
  $\bar{\gamma}_j$ of $\mathscr{C}_H$ such that the prime ideal
  $(\bar{\gamma}_j-1)$ in $\Lambda_0^{(j-1)}$
  generated by $\bar{\gamma}_j-1$ is contained in neither
  $Y^{(1)}_{j-1}$ nor $Y^{(2)}_{j-1}$. Let us take an arbitrary lift $\gamma_j$  of $\bar{\gamma}_j$ to
  $\mathrm{Gal}(\widetilde{F}/F^\mathrm{cyc}_\infty)$.
  By construction, both the conditions $(\Gamma 0)_j$ and $(\Gamma 1)_j$ are fulfilled
  on the tuples $\gamma_1, \dotsc, \gamma_{j-1}, \gamma_j$ of
  $\mathrm{Gal}(\widetilde{F}/F^\mathrm{cyc}_\infty)$.
  We now complete the proof of Proposition~\ref{prop:induction} since the condition $(\Gamma2)_j$ is also fulfilled on them by virtue of Proposition~\ref{prop:dSelvan}.
 \end{proof}
 
 The remaining issue is the verification of Proposition~$\ref{prop:dSelvan}$.
 Let us abbreviate in the proof of Proposition~\ref{prop:dSelvan} the continuous $\mathrm{Gal}(F_S/F)$-representation
  $(\mathcal{T}^{\mathrm{CM}}_\eta/\mathfrak{A}_{k}\mathcal{T}^{\mathrm{CM}}_\eta)^*$
  and the discrete $\mathrm{Gal}(F_S/F)$-representation $\mathcal{A}_\eta^{\mathrm{CM}}[\mathfrak{A}_k]$ 
  as $\mathcal{T}^*_{\eta, (k)}$ and $\mathcal{A}_{\eta,(k)}$ respectively where $k$ equals $j-1$ or $j$. We also use the abbreviation $\mathcyr{Sh}^1_{\mathcal{A}_{\eta,(k)}}$ for the
  $S$-fine Selmer group $\mathcyr{Sh}^1(F,S,\mathcal{A}_{\eta,(k)})$ of $\mathcal{A}_{\eta,(k)}$.
  Note that 
$\mathcal{T}^*_{\eta,(k)}$ is a free 
$\Lambda^{\mathrm{CM}}_\mathcal{O}/\mathfrak{A}_{k}$-module of rank one 
on which every element $g$ of $\mathrm{Gal}(F_S/F)$ acts by the
  multiplication of 
  $\chi_{p,\mathrm{cyc}}^{-1}\eta^\mathrm{gal}(g^{-1})g\vert_{\widetilde{F}}$.
  First recall that the cokernel of the global-to-local morphism
  \begin{align*}
   \phi_{\mathcal{L}_\mathrm{str}^{*,(k)}} \colon H^1(F_S/F, \mathcal{T}_{\eta,(k)}^*)
   \rightarrow Q_{\mathcal{L}_\mathrm{str}^{*,(k)}} (F,\mathcal{T}_{\eta,(k)}^*)
  \end{align*}
  is isomorphic to the kernel of the natural surjection
  $(\mathrm{Sel}^{\Sigma,
  \mathrm{str}}_{\mathcal{A}_{\eta,(k)}})^\vee
  \rightarrow
  (\mathcyr{Sh}^1_{\mathcal{A}_{\eta,(k)}})^\vee$ for every $k$   (see Section~\ref{sssc:dSel} for details); in particular, there exists a short exact sequence
  \begin{align} \label{ses:cokernel_k}
   \xymatrix{
   0 \ar[r] & \mathrm{Coker}(\phi_{\mathcal{L}^{*,(k)}_\mathrm{str}}) \ar[r] & (\mathrm{Sel}^{\Sigma,\mathrm{str}}_{\mathcal{A}_{\eta, (k)}})^\vee \ar[r] & (\mathcyr{Sh}^1_{\mathcal{A}_{\eta,(k)}})^\vee \ar[r] & 0
   }  
  \end{align}
 for $k=j-1$ or $j$. We consider the commutative diagram 
  \begin{align} \label{ses:cokernel}
   \xymatrix{
   0 \ar[r] & \mathrm{Coker}(\phi_{\mathcal{L}^{*,(j-1)}_\mathrm{str}}) \ar[r] \ar[d]_{\times x_{j}} & (\mathrm{Sel}^{\Sigma,\mathrm{str}}_{\mathcal{A}_{\eta,(j-1)}})^\vee \ar[r] \ar[d]_{\times x_{j}} &
   (\mathcyr{Sh}^1_{\mathcal{A}_{\eta,(j-1)}})^\vee \ar[r] \ar[d]_{\times x_{j}} & 0 \\
    0 \ar[r] & \mathrm{Coker}(\phi_{\mathcal{L}^{*,(j-1)}_\mathrm{str}}) \ar[r]  & (\mathrm{Sel}^{\Sigma,\mathrm{str}}_{\mathcal{A}_{\eta,(j-1)}})^\vee \ar[r] &
   (\mathcyr{Sh}^1_{\mathcal{A}_{\eta,(j-1)}})^\vee \ar[r]
   & 0,}
  \end{align}
whose rows are the exact sequences (\ref{ses:cokernel_k}) for $k=j-1$ and vertical maps are multiplication by $x_j$.
Note that the Pontrjagin dual of the strict Selmer group
$\mathrm{Sel}^{\Sigma, \mathrm{str}}_{\mathcal{A}_{\eta,(j-1)}}$
  does not contain nontrivial pseudonull
  $\Lambda_0^{(j-1)}$\nobreakdash-submodules
due to the assumptions $(\Gamma0)_{j-1}$, $(\Gamma1)_{j-1}$ and
  $(\Gamma2)_{j-1}$ combined with Theorem~\ref{thm:adiv}. 
Since $x_j$ does not divide the characteristic ideal of
  $(\mathrm{Sel}^{\Sigma,\mathrm{str}}_{\mathcal{A}_{\eta,(j-1)}})^\vee$
  by assumption, the triviality of the pseudonull submodules of
  $(\mathrm{Sel}^{\Sigma,\mathrm{str}}_{\mathcal{A}_{\eta,(j-1)}})^\vee$
  implies that the middle vertical arrow of (\ref{ses:cokernel}) is
  injective (and so is the left vertical arrow).
  Thus, applying the snake lemma to the diagram (\ref{ses:cokernel}), we obtain a four-term exact sequence
  \begin{align} \label{ses:snake_cokernel}
   \xymatrix{
   0 \ar[r] &  (\mathcyr{Sh}^1_{\mathcal{A}_{\eta,(j-1)}})^\vee [x_j] \ar[r]^(0.65){\tilde{\delta}_1^\vee} & \mathcal{C}^{(j-1)} \ar[r]  & (\mathrm{Sel}^{\Sigma,\mathrm{str}}_{\mathcal{A}_{\eta,(j-1)}}[x_j])^\vee \ar[r]  &
   (\mathcyr{Sh}^1_{\mathcal{A}_{\eta,(j-1)}}[x_j])^\vee \ar[r] & 0}
  \end{align}
  where we denote by $\tilde{\delta}^\vee_1$ the connecting homomorphism and 
  define $\mathcal{C}^{(j-1)}$ as follows: 
  \begin{equation}\label{equation:definition_C}
  \mathcal{C}^{(j-1)}:= \mathrm{Coker} \left[
  \mathrm{Coker}(\phi_{\mathcal{L}^{*,(j-1)}_\mathrm{str}}) 
  \xrightarrow{\times x_j }
  \mathrm{Coker}(\phi_{\mathcal{L}^{*,(j-1)}_\mathrm{str}})\right].
  \end{equation} 
  We also remark that there exists a natural commutative diagram
  \begin{align} \label{dg:square}
   \xymatrix{
   \mathrm{Sel}^{\Sigma,\mathrm{str}}_{\mathcal{A}_{\eta,(j-1)}}[x_j] & \mathcyr{Sh}^1_{\mathcal{A}_{\eta,(j-1)}}[x_j] \ar@{_(->}[l] \\
   \mathrm{Sel}^{\Sigma,\mathrm{str}}_{\mathcal{A}_{\eta,(j)}} \ar[u]^{\iota^{\mathrm{Sel}}_j}_{\rotatebox{90}{$\sim$}} & \mathcyr{Sh}^1_{\mathcal{A}_{\eta,(j)}} \ar[u]_{\iota^{\mathcyr{Sh}^1}_j} \ar@{_(->}[l] 
   }
  \end{align}
 both of whose vertical morphisms $\iota^{\mathrm{Sel}}_j$ and $\iota^{\mathcyr{Sh}^1}_j$ are induced from the natural inclusion $\iota^\mathcal{A}_j\colon \mathcal{A}_{\eta,(j)}=\mathcal{A}_{\eta,(j-1)}[x_j]\hookrightarrow \mathcal{A}_{\eta,(j-1)}$ 
  of $\mathrm{Gal}(F_S/F)$-modules. Note that the left vertical map $\iota^{\mathrm{Sel}}_j$ is an isomorphism due to the Exact Control Theorem~\ref{thm:control}. 
Combining the Pontrjagin dual of the commutative diagram (\ref{dg:square}) with the exact sequence (\ref{ses:snake_cokernel}), we obtain the following diagram with exact rows:
  \begin{align} \label{ses:comp_coker}
   \text{\footnotesize $\xymatrix{
   0 \ar[r] & (\mathcyr{Sh}^1_{\mathcal{A}_{\eta,(j-1)}})^\vee [x_j] \ar[r]^(0.6){\tilde{\delta}_1^\vee}  & \mathcal{C}^{(j-1)} \ar[r] \ar@{.>}[d]_{r^{\mathcal{C}}_j} & (\mathrm{Sel}^{\Sigma,\mathrm{str}}_{\mathcal{A}_{\eta,(j-1)}}[x_j])^\vee \ar[r] \ar[d]^{\rotatebox{90}{$\sim$}}_{(\iota^{\mathrm{Sel}}_j)^\vee} \ar@{}[rd]|\circlearrowright 
   &
   (\mathcyr{Sh}^1_{\mathcal{A}_{\eta,(j-1)}}[x_j])^\vee \ar[r] \ar[d]^{(\iota^{\mathcyr{Sh}^1}_j)^\vee} & 0 \\
    &  0 \ar[r] & \mathrm{Coker}(\phi_{\mathcal{L}^{*,(j)}_\mathrm{str}}) \ar[r]  & (\mathrm{Sel}^{\Sigma,\mathrm{str}}_{\mathcal{A}_{\eta,(j)}})^\vee \ar[r] &
   (\mathcyr{Sh}^1_{\mathcal{A}_{\eta,(j)}})^\vee \ar[r] & 0.
   }$}   
  \end{align}
Thanks to the commutativity of (\ref{dg:square}), a homomorphism $r^{\mathcal{C}}_j\colon \mathcal{C}^{(j-1)}\rightarrow \mathrm{Coker}(\phi_{\mathcal{L}^{*,(j)}_{\mathrm{str}}})$ (the dotted vertical arrow in the diagram (\ref{ses:comp_coker})) is induced.
An easy diagram chase on (\ref{ses:comp_coker}) enables us to verify that the kernel of the induced homomorphism $r^{\mathcal{C}}_j$ coincides with the image of $\tilde{\delta}^\vee_1$, and therefore we obtain an exact sequence
  \begin{align} \label{ses:Sha^1}
   \xymatrix{ 0\ar[r] & (\mathcyr{Sh}^1_{\mathcal{A}_{\eta,(j-1)}})^\vee[x_j] \ar[r]^(0.65){\tilde{\delta}_1^\vee} & \mathcal{C}^{(j-1)} \ar[r] & \mathrm{Coker}(\phi_{\mathcal{L}_\mathrm{str}^{*,(j)}})}.
  \end{align}

  Next recall that the dual Selmer group of $\mathcal{T}_{\eta,(k)}^*$, which we denote by $\mathrm{Sel}^{*,\mathrm{str}}_{\mathcal{T}^*_{\eta,(k)}}$ for brevity, is defined in terms of the exact sequence
  \begin{align} \label{ses:dualSel_def}
\text{\small    $\xymatrix{
0\ar[r] & \mathrm{Sel}^{*,\mathrm{str}}_{\mathcal{T}^*_{\eta,(k)}} \ar[r] &
H^1_\mathrm{cts}(F_S/F,
 \mathcal{T}^*_{\eta, (k)})  \ar[r]^{\phi_{\mathcal{L}^{*,(k)}_\mathrm{str}}} &
 Q_{\mathcal{L}_\mathrm{str}^{*,(k)}}(F, \mathcal{T}^*_{\eta, (k)})
 \ar[r]  &
 \mathrm{Coker}(\phi_{\mathcal{L}^{*,(k)}_\mathrm{str}}) \ar[r] 
 & 0
   }$}
  \end{align}
  for $k=j-1$ or $j$. When $k$ equals $j-1$, the triviality assumption $(\Gamma2)_{j-1}$ on $\mathrm{Sel}^{*,\mathrm{str}}_{\mathcal{T}^*_{\eta,(j-1)}}$ suggests that the global-to-local morphism $\phi_{\mathcal{L}^{*,(j-1)}_{\mathrm{str}}}$ in (\ref{ses:dualSel_def}) is injective. We thus consider the commutative diagram
\begin{align} \label{dg:dSel_phi}
\xymatrix{
0 \ar[r] & H^1_\mathrm{cts}(F_S/F,
 \mathcal{T}^*_{\eta, (j-1)})
 \ar[r]^{\phi_{\mathcal{L}^{*,(j-1)}_\mathrm{str}} } \ar[d]_{\times x_j}
 &
 Q_{\mathcal{L}_\mathrm{str}^{*,(j-1)}}(F, \mathcal{T}^*_{\eta, (j-1)})
 \ar[r] \ar[d]_{\times x_j} &
 \mathrm{Coker}(\phi_{\mathcal{L}^{*,(j-1)}_\mathrm{str}}) \ar[r] \ar@{^(->}[d]^{\times x_j}
 & 0 \\
0 \ar[r] & H^1_\mathrm{cts}(F_S/F, \mathcal{T}^*_{\eta, (j-1)})
 \ar[r]^{\phi_{\mathcal{L}^{*,(j-1)}_\mathrm{str}} }  &
 Q_{\mathcal{L}_\mathrm{str}^{*,(j-1)}}(F,\mathcal{T}^*_{\eta, (j-1)})
 \ar[r]  & \mathrm{Coker}(\phi_{\mathcal{L}^{*,(j-1)}_\mathrm{str}}) \ar[r]  & 0
.}
\end{align}
whose horizontal rows are short exact sequences obtained by ($\ref{ses:dualSel_def}$) for $k=j-1$ 
and all of whose vertical maps are 
  multiplication by $x_j$. Note that the right vertical arrow of (\ref{dg:dSel_phi}) is injective since it is the same as the left vertical arrow
  of (\ref{ses:cokernel}). 
  The snake lemma applied to the diagram \eqref{dg:dSel_phi} then suggests that the cokernels of its
  vertical morphisms  form a  short exact sequence
\begin{align} \label{ses:cokernel_xj}
 \xymatrix{
0 \ar[r] & H^{1,*}_{(j-1)}/x_jH^{1,*}_{(j-1)} \ar[r] & Q^*_{(j-1)}/x_jQ^*_{(j-1)} \ar[r] & \mathcal{C}^{(j-1)} \ar[r] & 0
 }
\end{align}
with abbreviated notation 
\begin{equation}\label{equation:definition_HQ}
 \begin{aligned}
H^{i,*}_{(j)}&:=H^i_\mathrm{cts}(F_S/F, \mathcal{T}^*_{\eta, (j)}), &
  Q^*_{(j-1)}&:= Q_{\mathcal{L}_\mathrm{str}^{*,(j-1)}}(F,\mathcal{T}^*_{\eta, (j-1)}).
 \end{aligned}
\end{equation}
Next we shall relate the short exact sequence  (\ref{ses:cokernel_xj}) with the sequence (\ref{ses:dualSel_def}) for $k=j$, which concerns the dual Selmer group $\mathrm{Sel}^{*,\mathrm{str}}_{\mathcal{T}^*_{\eta,(j)}}$. To this end, we first observe that the short exact sequence
\begin{align} \label{equation:T}
\xymatrix{0 \ar[r] &
 \mathcal{T}^*_{\eta, (j-1)} \ar[r]^{\times
 x_j} & 
 \mathcal{T}^*_{\eta, (j-1)} \ar[r]^{\pi_j^\mathcal{T}} & 
 \mathcal{T}^*_{\eta, (j)} \ar[r] & 0}
\end{align} 
of $\mathrm{Gal}(F_S/F)$-modules induces the following exact sequence: 
\begin{equation}\label{equation:definition_pi_j_h}
0 \longrightarrow  H^{1,*}_{(j-1)}/x_jH^{1,*}_{(j-1)}  \xrightarrow{\pi_j^H } H^{1,*}_{(j)}  \xrightarrow{\text{\sf \textgreek{d}}^1_{(j)}} H^{2,*}_{(j-1)} [x_j] 
\longrightarrow 0. 
\end{equation}
We shall construct a homomorphism
connecting $Q^*_{(j-1)}/x_jQ^*_{(j-1)}$ with $Q^*_{(j)}$ in a way similar to
the construction of $\pi_j^H$. Let $v$ be a place of $F$ contained in $S$ and let us denote by $\pi^H_{j,v}$ 
the injection on the local cohomology groups
\begin{align*}
\pi_{j,v}^H \colon H^1_\mathrm{cts}(F_v, \mathcal{T}^*_{\eta,
 (j-1)})/x_jH^1_\mathrm{cts}(F_v, \mathcal{T}^*_{\eta, (j-1)})
 \rightarrow H^1_\mathrm{cts}(F_v, \mathcal{T}^*_{\eta,(j)})
\end{align*}
induced by the cohomological long exact sequence associated to the sequence $(\ref{equation:T}$). 

\begin{lem} \label{lem:local_ses}
For each $v$ in $S$, the map $\pi_{j,v}^H$ induces an injection 
\begin{align*}
\pi^Q_{j,v} \colon Q_{\mathcal{L}^{*,(j-1)}_\mathrm{str}}(F_v,
 \mathcal{T}^*_{\eta,(j-1)})/x_jQ_{\mathcal{L}^{*,(j-1)}_\mathrm{str}}(F_v,
 \mathcal{T}^*_{\eta, (j-1)}) \longrightarrow
 Q_{\mathcal{L}^{*,(j)}_\mathrm{str}}(F_v,
 \mathcal{T}^*_{\eta, (j)})
\end{align*}
with a cokernel isomorphic to $H^2_\mathrm{cts}(F_v, \mathcal{T}^*_{\eta,
 (j-1)})[x_j]$. 
\end{lem}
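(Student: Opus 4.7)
The proof splits naturally into cases according to the type of place $v$, following the piecewise definition of $L^*_\mathrm{str}$ in $(\ref{eq:dSel_eta})$. The archimedean and the $\Sigma_p^c$ cases are dealt with immediately: for archimedean $v$ both cohomology groups vanish since $p$ is odd, while for $v \in \Sigma_p^c$ the local condition $L^*_\mathrm{str}$ equals the full cohomology group, so $Q^*_\mathrm{str} = 0$ on both sides and the vanishing cokernel agrees with $H^2_\mathrm{cts}(F_v, \mathcal{T}^*_{\eta,(j-1)})[x_j] = 0$, where the last equality is provided by local Tate duality together with the vanishing of $H^0(F_v, \mathcal{A}^\mathrm{CM}_\eta[\mathfrak{A}_{j-1}])$ already established under $(\mathrm{ntr})_{\mathfrak{P}^c}$ in the proof of Theorem~\ref{thm:control}. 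For $v \in \Sigma_p$ one has $L^*_\mathrm{str} = 0$, so $\pi^Q_{j,v} = \pi^H_{j,v}$ and the assertion reduces to the localised form of $(\ref{equation:definition_pi_j_h})$ applied at $F_v$.

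The substantive case is a finite place $v \in S$ whose residue characteristic $\ell$ differs from $p$. Here $L^*_\mathrm{str}(F_v, \mathcal{T}^*_{\eta,(k)}) = H^1_\mathrm{ur}(F_v, \mathcal{T}^*_{\eta,(k)})$. Since $I_v$ and $D_v/I_v$ both have $p$-cohomological dimension one, the Hochschild--Serre spectral sequence $H^p(D_v/I_v, H^q(I_v, -)) \Rightarrow H^{p+q}_\mathrm{cts}(F_v, -)$ degenerates into the short exact sequence
\begin{align*}
0 \to H^1(D_v/I_v, (\mathcal{T}^*_{\eta,(k)})^{I_v}) \to H^1_\mathrm{cts}(F_v, \mathcal{T}^*_{\eta,(k)}) \to H^1_\mathrm{cts}(I_v, \mathcal{T}^*_{\eta,(k)})^{D_v/I_v} \to 0,
\end{align*}
which identifies $L^*_\mathrm{str}$ and $Q^*_\mathrm{str}$ with the outer terms, and it also furnishes an isomorphism $H^2_\mathrm{cts}(F_v, \mathcal{T}^*_{\eta,(j-1)}) \cong H^1(D_v/I_v, H^1_\mathrm{cts}(I_v, \mathcal{T}^*_{\eta,(j-1)}))$.

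From the $I_v$-cohomology long exact sequence applied to $(\ref{equation:T})$ together with the vanishing of $H^2(I_v, \mathcal{T}^*_{\eta,(j-1)})$, I obtain a presentation of $H^1_\mathrm{cts}(I_v, \mathcal{T}^*_{\eta,(j)})$ as the cokernel of multiplication by $x_j$ on $H^1_\mathrm{cts}(I_v, \mathcal{T}^*_{\eta,(j-1)})$; comparing $D_v/I_v$-invariants via the resulting four-term exact sequence of $D_v/I_v$-cohomology delivers both the injectivity of $\pi^Q_{j,v}$ and the identification of its cokernel with $H^1(D_v/I_v, H^1_\mathrm{cts}(I_v, \mathcal{T}^*_{\eta,(j-1)}))[x_j] = H^2_\mathrm{cts}(F_v, \mathcal{T}^*_{\eta,(j-1)})[x_j]$, which is the asserted outcome.

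The main technical obstacle is the careful bookkeeping required to match the map $\pi^Q_{j,v}$ produced by this Hochschild--Serre route with the map actually induced by $\pi^H_{j,v}$ on the quotients by $L^*_\mathrm{str}$: this amounts to verifying commutativity between the two connecting homomorphisms attached to $(\ref{equation:T})$ in the respective filtrations, and to controlling the extraneous contributions coming from $H^1_\mathrm{cts}(I_v, \mathcal{T}^*_{\eta,(j-1)})[x_j]$ so that only the piece $H^2_\mathrm{cts}(F_v, \mathcal{T}^*_{\eta,(j-1)})[x_j]$ survives in the cokernel. This is a purely formal diagram chase, but must be executed with care; it is resolved uniformly by the vanishing of $H^2(I_v, -)$ and $H^2(D_v/I_v, -)$ on $p$-primary coefficients.
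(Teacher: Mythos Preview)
Your approach is correct in substance but differs from the paper's and contains a small gap in the justification of the key step.

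The paper handles the places $\lambda\nmid p$ by an explicit case split on whether $I_\lambda$ acts trivially on $\mathcal{T}^*_{\eta,(k)}$. If the action is nontrivial, the $I_\lambda$-invariants vanish (the module is torsion-free over each local component and the action is by a nontrivial root of unity), so $H^1_{\mathrm{ur}}=0$ and $\pi^Q_{j,\lambda}=\pi^H_{j,\lambda}$. If the action is trivial, the paper takes the long exact sequence in $D_\lambda/I_\lambda$-cohomology of \eqref{equation:T} directly on the full modules, observes that $\pi^{\mathrm{ur}}_{j,\lambda}$ is an \emph{isomorphism} since $H^2(D_\lambda/I_\lambda,-)=0$, and then reads off the injectivity of $\pi^Q_{j,\lambda}$ and the equality $\mathrm{coker}(\pi^Q_{j,\lambda})\cong\mathrm{coker}(\pi^H_{j,\lambda})$ from the commutative diagram \eqref{diag:beta}.

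Your uniform Hochschild--Serre route is a legitimate alternative, but the ``four-term exact sequence of $D_v/I_v$-cohomology'' you invoke presupposes that multiplication by $x_j$ is \emph{injective} on $M:=H^1_{\mathrm{cts}}(I_v,\mathcal{T}^*_{\eta,(j-1)})$, i.e.\ that $M[x_j]=0$; only then does $0\to M\to M\to M_j\to 0$ become a short exact sequence whose $G$-cohomology yields $M^G/x_jM^G\hookrightarrow M_j^G$ with cokernel $H^1(G,M)[x_j]$. This vanishing is \emph{not} a consequence of the cohomological-dimension bounds $H^2(I_v,-)=H^2(D_v/I_v,-)=0$ that you cite. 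From the $I_v$-long exact sequence one has $M[x_j]\cong\mathrm{coker}\bigl((\mathcal{T}^*_{\eta,(j-1)})^{I_v}\to(\mathcal{T}^*_{\eta,(j)})^{I_v}\bigr)$, and this cokernel is zero precisely because $I_v$ acts through a single scalar character: the invariants are either the whole module (trivial action, so the map is the surjection $\mathcal{T}^*_{\eta,(j-1)}\twoheadrightarrow\mathcal{T}^*_{\eta,(j)}$) or zero (nontrivial action on a module torsion-free over each component). That is exactly the dichotomy the paper makes explicit; your ``uniform'' argument needs it too, just at a later stage. Once you insert this observation, your proof is complete. Without it, if $M[x_j]$ were nonzero the obstruction $H^1(D_v/I_v,M[x_j])$ would simultaneously spoil injectivity of $\pi^Q_{j,v}$ and the identification of the cokernel, and no amount of $H^2$-vanishing repairs that.
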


\begin{proof}
In this proof, $k$ denotes either $j-1$ or $j$. 
There is nothing to prove 
for a place $\mathfrak{P}^c$ in $\Sigma_p^c$ since the local quotient
$Q_{\mathcal{L}^{*,(k)}_{\mathrm{str}}}(F_{\mathfrak{P}^c},
 \mathcal{T}^{*}_{\eta, (k)})$ itself is trivial. 
For a place $\mathfrak{P}$ in $\Sigma_p$, the local quotient 
$Q_{\mathcal{L}^{*,(k)}_\mathrm{str}}(F_\mathfrak{P},
  \mathcal{T}^{*}_{\eta, (k)})$ coincides with 
the whole local cohomology group $H^1_\mathrm{cts}(F_\mathfrak{P},
 \mathcal{T}^{*}_{\eta, (k)})$ by definition, and thus we define
 the desired injection $\pi^Q_{j, \mathfrak{P}}$ to be $\pi^H_{j,\mathfrak{P}}$ itself. 
Now let us consider a place $\lambda$ in $S$ which does not divide $p$. 
If the inertia subgroup $I_\lambda$ acts nontrivially on
 $\mathcal{T}^{*}_{\eta, (k)}$, or in other words, if
 $\eta^\mathrm{gal}$ is ramified at $\lambda$, the
 $I_\lambda$-invariant submodule of $\mathcal{T}^{*}_{\eta, (k)}$ is
 trivial; it is because every element $g$ of $I_\lambda$ acts as multiplication of the nontrivial element $\eta^\mathrm{gal}(g^{-1})$ on each component of the rank-one free $\Lambda_\mathcal{O}^\mathrm{CM}/\mathfrak{A}_k$-module $\mathcal{T}_{\eta,(k)}^*$, which is torsionfree as a $\Lambda_0^{(k)}$-module. This observation implies the triviality of the unramified cohomology 
 group $H^1_\mathrm{ur}(F_\lambda, \mathcal{T}^{*}_{\eta, (k)})$,
 and thus the desired map
  $\pi^Q_{j,\lambda}$ should be defined as $\pi^H_{j,\lambda }$. Finally
 assume that $I_\lambda$ acts trivially on $\mathcal{T}^*_{\eta,(k)}$, or in other words, assume that  
 $\eta^\mathrm{gal}$ is unramified at $\lambda$. Then the unramified
 cohomology group $H^1_\mathrm{ur}(F_\lambda, \mathcal{T}^{*}_{\eta,(k)})$ is just the first continuous cohomology group $H^1_\mathrm{cts}(D_\lambda/I_\lambda, \mathcal{T}^{*}_{\eta,(k)})$ of the procyclic group $D_\lambda/I_\lambda$, and thus the surjection $\pi_j^\mathcal{T}\colon \mathcal{T}_{\eta,(j-1)}^*\rightarrow \mathcal{T}_{\eta,(j)}^*$ induces an injection
 \begin{align*}
  \pi^\mathrm{ur}_{j,\lambda} \colon H^1_\mathrm{ur}(F_\lambda, \mathcal{T}^{*}_{\eta,(j-1)})/x_jH^1_\mathrm{ur}(F_\lambda, \mathcal{T}^{*}_{\eta,(j-1)}) \longrightarrow H^1_\mathrm{ur}(F_\lambda, \mathcal{T}^{*}_{\eta,(j)})
 \end{align*}
 with a cokernel isomorphic to $H^2_\mathrm{cts}(D_\lambda/I_\lambda, \mathcal{T}^{*}_{\eta,(j-1)})[x_j]$ due to the cohomological long exact sequence associated to (\ref{equation:T}) regarded as the short exact sequences of continuous $D_\lambda/I_\lambda$\nobreakdash-modules. However the procyclic group $D_\lambda/I_\lambda\cong \hat{\mathbb{Z}}$ has cohomological dimension one, and hence the the cokernel of $\pi^\mathrm{ur}_{j,\lambda}$ should vanish; in other words the injection $\pi^\mathrm{ur}_{j,v}$ is indeed an isomorphism. 
 We therefore obtain a commutative diagram with exact rows
\begin{align} \label{diag:beta}
\xymatrix{
 & H^{1,\mathrm{ur},*}_{\lambda,(j-1)}/x_jH^{1,\mathrm{ur},*}_{\lambda,(j-1)} \ar[r] \ar[d]^{\pi^\mathrm{ur}_{j,\lambda}}_{\rotatebox{90}{$\sim$}} & H^{1,*}_{\lambda, (j-1)}/x_jH^{1,*}_{\lambda,(j-1)} \ar[r] \ar[d]^{\pi^H_{j,\lambda}} & Q^*_{\lambda,(j-1)}/x_jQ^*_{\lambda,(j-1)} \ar[r] \ar@{.>}[d]^{\pi^Q_{j,\lambda}} & 0 \\
 0 \ar[r] & H^{1,\mathrm{ur},*}_{\lambda,(j)} \ar[r] & H^{1,*}_{\lambda,(j)} \ar[r] & Q^*_{\lambda,(j)}  \ar[r] & 0. 
}
\end{align}
 Here $H^1_\mathrm{cts}(F_\lambda, \mathcal{T}^*_{\eta,(k)})$, $H^1_\mathrm{cts}(D_\lambda/I_\lambda, \mathcal{T}^*_{\eta,(k)})$ and $Q_{\mathcal{L}^{*,(k)}_\mathrm{str}}(F_\lambda, \mathcal{T}^{*}_{\eta,(k)})$ are abbreviated as $H^{1,*}_{\lambda,(k)}$,  $H^{1,\mathrm{ur},*}_{\lambda,(k)}$ and $Q^*_{\lambda,(k)}$ respectively.
The commutativity of the left square in (\ref{diag:beta})is due to the functoriality of the inflation map $H^{1,\mathrm{ur},*}_{\lambda,(k)} \rightarrow H^{1,*}_{\lambda,(k)}$, by virtue of which $\pi^H_{j,\lambda}$ induces a map $\pi^Q_{j,\lambda}$ on 
 the quotient modules (the dotted arrow in (\ref{diag:beta})). Moreover
 the cokernel of $\pi^Q_{j,\lambda}$ is isomorphic to that of $\pi^H_{j,\lambda}$ due to the isomorphy of the left vertical map $\pi^\mathrm{ur}_{j,\lambda}$.
 This is the end of the proof since the cokernel of $\pi^H_{j,\lambda}$ is isomorphic to $H^2(F_\lambda, \mathcal{T}^{*}_{\eta,(j-1)})[x_j]$ due to the cohomological long exact sequence associated to (\ref{equation:T}).
 \end{proof}  
 
 Set $\pi^Q_j$ as $\pi^Q_j =(\pi^Q_{j,v})_{v\in S}$ and consider the diagram

\begin{align} \label{dg:big}
 \text{\footnotesize $\xymatrix{
  & & & & 0\ar[d] \\
 & & 0\ar[d] & 0\ar[d] &
  \mathrm{Im}\, (\tilde{\delta}_1^\vee)
 \ar[d] \\ 
  &0 \ar[r] & H^{1,*}_{(j-1)}/x_jH^{1,*}_{(j-1)} \ar[r] \ar[d]_{\pi^H_j} \ar@{}[dr]|\circlearrowright
 & Q^*_{(j-1)}/x_jQ^*_{(j-1)}
 \ar[r] \ar[d]^{\pi^Q_j} &
 \mathcal{C}^{(j-1)} \ar[r] \ar[d]^{r^\mathcal{C}_j}
 & 0 \\
0\ar[r] & \mathrm{Sel}^{*,\mathrm{str}}_{\mathcal{T}^*_{\eta,(j)}} \ar[r] & H^{1,*}_{(j)}
 \ar[r]_{\phi_{\mathcal{L}^{*,(j)}_\mathrm{str}}} \ar[r]
 \ar[d]_{\text{\sf \textgreek{d}}^1_{(j)}}
 &
 Q^*_{(j)}
 \ar[r] \ar[d] &
 \mathrm{Coker}(\phi_{\mathcal{L}^{*,(j)}_\mathrm{str}}) \ar[r] 
 & 0 \\
  & &
  H^{2,*}_{(j-1)}[x_j]
  \ar[d] &\displaystyle \prod_{v\in S}
 H^{2,*}_{v,(j-1)}[x_j] \ar[d] & & 
 \\
  & & 0 & 0 & &  }$}
\end{align}
where the left vertical sequence is the one obtained at ($\ref{ses:cokernel_xj}$) 
and we use the following abbreviations for each $v$ in $S$,
$i=1,2$ and $k=j-1,j$:
\begin{equation}
 \mathrm{Sel}^{*,\mathrm{str}}_{\mathcal{T}^*_{\eta,(j)}} :=
 \mathrm{Sel}_{\mathcal{L}^{*,(j)}_\mathrm{str}}(F,\mathcal{T}^*_{\eta,(j)}), \ \ \  
 H^{i,*}_{v,(k)}:=H^i_\mathrm{cts}(F_v,
 \mathcal{T}^*_{\eta,(k)}). 
\end{equation}
in addition to the abbreviations ($\ref{equation:definition_C}$) and ($\ref{equation:definition_HQ}$). 
The top and bottom rows of (\ref{dg:big}) are the sequences (\ref{ses:cokernel_xj}) and (\ref{ses:dualSel_def}) for $k=j$ respectively, which are hence exact.
The right column is the exact sequence (\ref{ses:Sha^1}). Here we replace $(\mathcyr{Sh}^1_{\mathcal{A}_{\eta,(j-1)}})^\vee[x_j]$ in (\ref{ses:Sha^1}) by $\mathrm{Im}(\tilde{\delta}^\vee_1)$ for later convenience in Appendix~\ref{app:duality}. The other columns are also exact due to the cohomological long exact sequence and Lemma~\ref{lem:local_ses}. Since both the maps $\pi_j^H$ and $\pi_j^Q$ are induced from the canonical surjection $\pi_j^\mathcal{T}\colon \mathcal{T}_{\eta,(j-1)}^*\rightarrow \mathcal{T}_{\eta,(j)}^*$ of the continuous Galois modules, the functoriality of the restriction maps guarantees the commutativity of the left square. We here verify the commutativity of the right square in (\ref{dg:big}).
 
 \begin{lem} \label{lem:commutes}
  The right square in the diagram $(\ref{dg:big})$ commutes.
 \end{lem}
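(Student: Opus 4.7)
The plan is to verify the commutativity of the right square in (\ref{dg:big}) by reducing it to the naturality of the snake lemma applied to the short exact sequence (\ref{equation:T}) of continuous $\mathrm{Gal}(F_S/F)$-modules, combined with the identification of $r^\mathcal{C}_j$ provided by (\ref{ses:comp_coker}) and the Exact Control Theorem. First I would make each of the four maps in the right square explicit on representatives: the top horizontal map sends the class of $q \in Q^*_{(j-1)}$ to the class in $\mathcal{C}^{(j-1)}$ represented by the image of $q$ in $\mathrm{Coker}(\phi_{\mathcal{L}^{*,(j-1)}_{\mathrm{str}}})$; the bottom horizontal map is simply the projection; and $\pi^Q_j$ is the componentwise local map constructed in Lemma~\ref{lem:local_ses} from the surjection $\pi_j^\mathcal{T}$ of (\ref{equation:T}).

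The technical heart of the argument is to reinterpret $r^\mathcal{C}_j$. By construction (see (\ref{ses:comp_coker})), $r^\mathcal{C}_j$ is the unique dotted arrow fitting into the five-term diagram glued from (\ref{ses:snake_cokernel}) and the Pontrjagin dual of (\ref{dg:square}). My plan is to show that $r^\mathcal{C}_j$ agrees with the map on cokernels canonically induced by $\pi^Q_j$ in the following sense: the multiplication-by-$x_j$ morphism of the defining short exact sequence (\ref{ses:dualSel_def}) for $k=j-1$, together with the surjection $\pi_j^\mathcal{T}$, fits into a morphism of $4$-term exact sequences whose cokernel map on the right-hand term is precisely the map $\mathcal{C}^{(j-1)} \to \mathrm{Coker}(\phi_{\mathcal{L}^{*,(j)}_{\mathrm{str}}})$ induced by $\pi^Q_j$. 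Once this identification is established, the right square is the canonical cokernel-functoriality square, and so it commutes.

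To carry out this identification, I would assemble a $3\times 4$ array whose three rows are, respectively, (\ref{ses:dualSel_def}) for $k=j-1$, (\ref{ses:dualSel_def}) for $k=j-1$ again, and (\ref{ses:dualSel_def}) for $k=j$, with vertical maps being multiplication by $x_j$ on the top two rows and $\pi^H_j$, $\pi^Q_j$ between the middle and bottom rows. Applying the snake lemma twice---once vertically as in (\ref{dg:dSel_phi}) to produce (\ref{ses:cokernel_xj}), and once horizontally along the bottom two rows---yields two descriptions of the same connecting homomorphism from $\mathcal{C}^{(j-1)}$ into $\mathrm{Coker}(\phi_{\mathcal{L}^{*,(j)}_{\mathrm{str}}})$. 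The uniqueness assertion in (\ref{ses:comp_coker}), combined with the injectivity of $(\iota^\mathrm{Sel}_j)^\vee$ ensured by the Exact Control Theorem~\ref{thm:control}, then forces these two descriptions to coincide with $r^\mathcal{C}_j$.

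The main obstacle will be the bookkeeping in the double snake lemma: one has to verify that the local maps $\pi_{j,v}^Q$ constructed case-by-case in Lemma~\ref{lem:local_ses} (distinguishing places in $\Sigma_p$, in $\Sigma_p^c$, and outside $p$) are compatible with both the connecting morphism $\text{\sf \textgreek{d}}^1_{(j)}$ and the middle vertical exact sequence of (\ref{dg:big}). I expect no essential difficulty in the places of $\Sigma_p$ and outside $p$, since $\pi_{j,v}^Q = \pi_{j,v}^H$ there; the delicate case is the cohomological behavior at places in $\Sigma_p^c$, where the local quotient is trivial but one still needs to track how the snake-lemma connecting map lands inside the cokernel. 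This is precisely the kind of technical duality statement that Appendix~\ref{app:duality} is designed to handle, and I would expect the verification to reduce to a careful application of local Tate duality compatible with the scalar action of $\Lambda^{\mathrm{CM}}_\mathcal{O}/\mathfrak{A}_j$.
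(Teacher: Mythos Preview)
Your reformulation is correct---the right square commutes precisely when $r^\mathcal{C}_j$ agrees with the map on cokernels induced by the commuting left square---but your proposed route to that equality is indirect, and the double snake lemma does not do the real work. To use the uniqueness characterisation of $r^\mathcal{C}_j$ from (\ref{ses:comp_coker}), you must show that the cokernel-induced map, after composition with the injection $\mathrm{Coker}(\phi_{\mathcal{L}^{*,(j)}_\mathrm{str}})\hookrightarrow(\mathrm{Sel}^{\Sigma,\mathrm{str}}_{\mathcal{A}_{\eta,(j)}})^\vee$, matches the route through $(\iota^{\mathrm{Sel}}_j)^\vee$. That injection comes from Poitou--Tate duality, and $(\iota^{\mathrm{Sel}}_j)^\vee$ lives on the $\mathcal{A}$-side, whereas $\pi^Q_j$ is constructed on the $\mathcal{T}^*$-side via $\pi_j^\mathcal{T}$. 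So the verification reduces to a compatibility between $\pi^Q_j$ and the dual of the inclusion $\iota^\mathcal{A}_j\colon\mathcal{A}_{\eta,(j)}\hookrightarrow\mathcal{A}_{\eta,(j-1)}$ under local Tate duality. Your $3\times 4$ array stays entirely on the $\mathcal{T}^*$-side and cannot by itself produce this compatibility; you acknowledge local Tate duality at the very end, but as a technical afterthought rather than as the crux. Also, the concern about places in $\Sigma_p^c$ is misplaced: there $Q_{\mathcal{L}^*_\mathrm{str}}(F_v,\mathcal{T}^*_{\eta,(k)})=0$, so they contribute nothing.

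The paper's argument is much more direct. It enlarges the right square by composing with the injection $\mathrm{Coker}(\phi_{\mathcal{L}^{*,(j)}_\mathrm{str}})\hookrightarrow(\mathrm{Sel}^{\Sigma,\mathrm{str}}_{\mathcal{A}_{\eta,(j)}})^\vee$ and then dualises everything via local Tate duality: $Q^*_{(k)}$ becomes $(L_{\mathrm{str},(k)})^\vee$ where $L_{\mathrm{str},(k)}=\prod_v L_\mathrm{str}(F_v,\mathcal{A}_{\eta,(k)})$, and the horizontal maps become duals of the global-to-local restriction $\mathrm{Sel}^{\Sigma,\mathrm{str}}_{\mathcal{A}_{\eta,(k)}}\to L_{\mathrm{str},(k)}$. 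The single nontrivial check is that $\pi^Q_j$ corresponds under this duality to the dual of $\iota^L_j\colon L_{\mathrm{str},(j)}\to L_{\mathrm{str},(j-1)}[x_j]$ induced by $\iota^\mathcal{A}_j$; this follows because $\iota^\mathcal{A}_j$ and $\pi^\mathcal{T}_j$ are Pontrjagin dual to each other and the cup-product pairing is functorial. The enlarged square is then the Pontrjagin dual of the obvious functoriality square for restriction, hence commutes.
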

 
 \begin{proof}
  Consider the diagram 
  \begin{align} \label{dg:big_commutes}
   \xymatrix{
   Q^*_{(j-1)}/x_jQ^*_{(j-1)} \ar[r] \ar[d]_{\pi^Q_j} & \mathcal{C}^{(j-1)} \ar@{^(->}[r] \ar[d]^{r^{\mathcal{C}}_j} \ar@{}[rd]|{\text{\large $\circlearrowright$}} & (\mathrm{Sel}^{\Sigma,\mathrm{str}}_{\mathcal{A}_{\eta,(j-1)}}[x_j])^\vee \ar[d]^{(\iota^{\mathrm{Sel}}_j)^\vee} \\
   Q^*_{(j)} \ar[r]  & \mathrm{Coker}(\phi_{\mathcal{L}^{*,(j)}_{\mathrm{str}}}) \ar@{^(->}[r] &  (\mathrm{Sel}^{\Sigma,\mathrm{str}}_{\mathcal{A}_{\eta,(j)}})^\vee
   }
  \end{align}
 obtained as the composition of the right square in (\ref{dg:big}) and the left commutative square in (\ref{ses:comp_coker}). It suffices to prove that the composite square (\ref{dg:big_commutes}) commutes since the morphism  $\mathrm{Coker}(\phi_{\mathcal{L}^{*,(j)}_{\mathrm{str}}}) \rightarrow (\mathrm{Sel}^{\Sigma,\mathrm{str}}_{\mathcal{A}_{\eta,(j)}})^\vee$ in the bottom row is injective. Now recall that we have canonical isomorphisms $Q^*_{(j-1)}/x_jQ^*_{(j-1)}\cong (L_{\mathrm{str},(j-1)}[x_j])^\vee$ and $Q^*_{(j)}\cong (L_{\mathrm{str},(j)})^\vee$ via the local Tate duality, where we set
  \begin{align*}
   L_{\mathrm{str},(k)} =\prod_{v\in S}L_\mathrm{str}(F_v, \mathcal{A}_{\eta,(k)})=\prod_{\substack{v\in S \\ v\notin \Sigma_p \cup \Sigma_p^c}} H^1_\mathrm{ur}(F_v, \mathcal{A}_{\eta,(k)})\times \prod_{v\in \Sigma_p} H^1(F_v, \mathcal{A}_{\eta,(k)}) 
  \end{align*}
  for $k=j-1$ and $j$. By construction both of the compositions
   \begin{align*}
    &\xymatrix{
    (L_{\mathrm{str}, (j-1)}[x_j])^\vee \cong Q^*_{(j-1)}/x_jQ^*_{(j-1)} \ar[r] & \mathcal{C}^{(j-1)} \ar[r] & (\mathrm{Sel}^{\Sigma,\mathrm{str}}_{\mathcal{A}_{\eta,(j-1)}}[x_j])^\vee}, \\
    &\xymatrix{
    (L_{\mathrm{str},(j)})^\vee \cong Q^*_{(j)} \ar[r] & \mathrm{Coker}(\phi_{\mathcal{L}^{*,(j)}_{\mathrm{str}}}) \ar[r] & (\mathrm{Sel}^{\Sigma,\mathrm{str}}_{\mathcal{A}_{\eta,(j)}})^\vee
    }
   \end{align*}
 of the rows in the diagram (\ref{dg:big_commutes}) with the local Tate duality isomorphisms are induced from the dual of the global-to-local restriction map $\phi_{\mathcal{L}_{\mathrm{str}}^{(k)}} \colon \mathrm{Sel}^{\Sigma,\mathrm{str}}_{\mathcal{A}_{\eta,(k)}} \rightarrow L_{\mathrm{str},(k)}$. Let us define
  $\iota^L_j \colon L_{\mathrm{str},(j-1)}\rightarrow L_{\mathrm{str},(j)}$ as
  the homomorphism induced from the canonical inclusion  $\iota^{\mathcal{A}}_j \colon \mathcal{A}_{\eta,(j)} \hookrightarrow \mathcal{A}_{\eta,(j-1)}$. Then by virtue of the functoriality of the restriction maps, we
  readily obtain a commutative square
  \begin{align*}
   \xymatrix{
    L_{\mathrm{str},(j-1)}[x_j]  && \ar[ll]_{\phi_{\mathcal{L}_\mathrm{str}^{(j-1)}}}  \mathrm{Sel}^{\Sigma,\mathrm{str}}_{\mathcal{A}_{\eta,(j-1)}}[x_j] \\
    L_{\mathrm{str},(j)} \ar[u]^{\iota^L_j} && \mathrm{Sel}^{\Sigma,\mathrm{str}}_{\mathcal{A}_{\eta,(j)}} \ar[u]_{\iota^{\mathrm{Sel}}_j} \ar[ll]^{\phi_{\mathcal{L}^{(j)}_{\mathrm{str}}}}
   }
  \end{align*}
  whose Pontrjagin dual coincides with (\ref{dg:big_commutes}). 
  Indeed it is straightforward, from the construction of the local Tate pairing as the usual cup product of the group cohomology, to verify that the morphism $\pi_j^Q$ corresponds to the dual 
  $(\iota^L_j)^\vee \colon (L_{\mathrm{str},(j-1)}[x_j])^\vee \rightarrow (L_{\mathrm{str},(j)})^\vee$ of $\iota^L_j$ with respect to the local Tate duality, and thus the proof is completed.  
  Note that $\iota^\mathcal{A}_j$ and $\pi^\mathcal{T}_j$ correspond to each other under the Pontrjagin duality.
 \end{proof}
 
 Now let us consider the coimage $\mathrm{Coim}(\phi_{\mathcal{L}^{*,(j)}_\mathrm{str}})=H^{1,*}_{(j)}/\mathrm{Sel}^{*,\mathrm{str}}_{\mathcal{T}^*_{\eta,(j)}}$ of the global-to-local morphism $\phi_{\mathcal{L}^{*,(j)}_\mathrm{str}}$ and split the bottom row of (\ref{dg:big}) (or in other words, the exact sequence (\ref{ses:dualSel_def}) for $k=j$) into two short exact sequences:
 \begin{align*}
  \xymatrix @R=-.5pt{
  0\ar[r] & \mathrm{Sel}^{*,\mathrm{str}}_{\mathcal{T}^*_{\eta,(j)}} \ar[r] & H^{1,*}_{(j)}
 \ar[rr]^{\phi_{\mathcal{L}^{*,(j)}_\mathrm{str}}} \ar[dr]&
 &  
 Q^*_{(j)}
 \ar[r]  &
 \mathrm{Coker}(\phi_{\mathcal{L}^{*,(j)}_\mathrm{str}}) \ar[r] 
  & 0 
\\
  &  &    & \text{\footnotesize $\mathrm{Coim}(\phi_{\mathcal{L}^{*,(j)}_\mathrm{str}})$}  \ar[ur] \ar[dr] &  & & 
\\
    &  &  0 \ar[ur]& & 0 && }.
 \end{align*}
 We define $\bar{\pi}^H_j \colon H^{1,*}_{(j-1)}/x_jH^{1,*}_{(j-1)} \rightarrow \mathrm{Coim}(\phi_{\mathcal{L}^{*,(j)}_\mathrm{str}})$ as the composition of $\pi^H_j$ with the natural surjection $H^{1,*}_{(j)} \twoheadrightarrow \mathrm{Coim}(\phi_{\mathcal{L}^{*,(j)}_\mathrm{str}})$, and define $\overline{\mathcal{H}}^{2,*}_j$ as the cokernel of $\bar{\pi}^H_{j}$. Then diagram (\ref{dg:big}) splits into the two diagrams
 \begin{align} \label{dg:left}
  \xymatrix{
  &  &  0 \ar[d] &  0 \ar[d]  &  \\
  & 0\ar[r] & H^{1,*}_{(j-1)}/x_jH^{1,*}_{(j-1)} \ar[d]_{\pi^H_j} \ar[r]^= & H^{1,*}_{(j-1)}/x_jH^{1,*}_{(j-1)} \ar[r] \ar[d]^{\bar{\pi}^H_j} & 0 \\
  0 \ar[r] & \mathrm{Sel}^{*,\mathrm{str}}_{\mathcal{T}^*_{\eta,(j)}} \ar[r] &  H^{1,*}_{(j)} \ar[r] \ar[d]_{\text{\sf \textgreek{d}}^1_{(j)}} & \mathrm{Coim}(\phi_{\mathcal{L}^{*,(j)}_\mathrm{str}}) \ar[r]  \ar[d] & 0 \\  
  &  &  H^{2,*}_{(j-1)}[x_j] \ar[r] \ar[d] & \overline{\mathcal{H}}^{2,*}_j \ar[d] \ar[r] & 0 \\
  &  & 0  & 0 & 
  }
 \end{align}
 and
 \begin{align} \label{dg:right}
  \xymatrix{
 &  &  &  0 \ar[d] &  \\ 
  & 0 \ar[d] & 0\ar[d]  &  \mathrm{Im}(\tilde{\delta}^\vee_1) \ar[d] & \\
  0 \ar[r ]& H^{1,*}_{(j-1)}/x_j H^{1,*}_{(j-1)} \ar[r] \ar[d]_{\bar{\pi}^H_j} & Q^*_{(j-1)}/x_j Q^*_{(j-1)} \ar[d]^{\pi^Q_j} \ar[r] & \mathcal{C}^{(j-1)}\ar[d]^{r^\mathcal{C}_j} \ar[r] & 0 \\
  0 \ar[r] & \mathrm{Coim}(\phi_{\mathcal{L}^{*,(j)}_\mathrm{str}}) \ar[d] \ar[r]^(0.56){\phi_{\mathcal{L}^{*,(j)}_\mathrm{str}}} & Q^*_{(j)} \ar[r] \ar[d] & \mathrm{Coker}(\phi_{\mathcal{L}^{*,(j)}_\mathrm{str}}) \ar[r] & 0 \\
  & \overline{\mathcal{H}}^{2,*}_j \ar[r] \ar[d] & \displaystyle \prod_{v\in S} H^{2,*}_{v,(j-1)} \ar[d] & & \\
  & 0 & 0 &  &
  }
 \end{align}
 with exact rows and columns. By applying the snake lemma to both the diagrams (\ref{dg:left}) and (\ref{dg:right}), we obtain two exact sequences
 \begin{align} \label{ses:dualSelvsH^2}
  \xymatrix{0\ar[r] &
  \mathrm{Sel}_{\mathcal{L}^{*,(j)}_\mathrm{str}}(F,\mathcal{T}^*_{\eta,(j)})
  \ar[r] & H^{2,*}_{(j-1)}[x_j] \ar[r]^(0.64){\mathrm{pr}_{H^2}} & \overline{\mathcal{H}}^{2,*}_j \ar[r]& 0}
 \end{align}
 and
 \begin{align} \label{ses:snake} 
  \xymatrix{0\ar[r] & \mathrm{Im}\, (\tilde{\delta}^\vee_1) \ar[r]^{\delta_2} & \overline{\mathcal{H}}^{2,*}_j\ar[r] & \displaystyle \prod_{v\in S} H^2_\mathrm{cts}(F_v, \mathcal{T}^*_{\eta,(j-1)})[x_j]  \\ 
 &
   (\mathcyr{Sh}^1_{\mathcal{A}_{\eta, (j-1)}})^\vee [x_j]
  \ar[u]^{\tilde{\delta}^\vee_1}_{\rotatebox{90}{$\sim$}}
  \ar[ur]_{\boldsymbol{\delta}_{\mathrm{snake}}} & & }. 
 \end{align}
 Here we use the symbol ``$\boldsymbol{\delta}_\mathrm{snake}$'' for
 the composition $\delta_2\circ \tilde{\delta}^\vee_1$ to indicate
 that it is constructed as the composition of the connecting
 homomorphisms derived from the snake lemma. 
 Meanwhile
 the Poitou-Tate nine-term exact sequence (see \cite[(8.6.10)]{NSW}
 for example) provides the exact sequence
  \begin{equation*} 
 \xymatrix{ \displaystyle \prod_{v\in S} H^0_\mathrm{cts}(F_v, \mathcal{T}^*_{\eta,(j-1)})^\vee \longrightarrow  
 (H^{2,*}_{(j-1)})^\vee \xrightarrow{\Phi_\mathrm{PT}^\vee}  \mathcyr{Sh}^1_{\mathcal{A}_{\eta,(j-1)}} \longrightarrow  0}.
  \end{equation*}
  
  By taking the Pontrjagin duals and the $x_j$-torsion parts,
  we obtain the exact sequence
  \begin{equation} \label{ses:PT}
   0 \longrightarrow  
   (\mathcyr{Sh}^1_{\mathcal{A}_{\eta,(j-1)}})^\vee[x_j]
   \xrightarrow{\Phi_\mathrm{PT}} 
   H^{2,*}_{(j-1)}[x_j]
   \xrightarrow{\mathrm{Res}^{2,*}_{(j-1)}}  \displaystyle \prod_{v\in S} H^{2,*}_{(j-1)}[x_j]
  \end{equation}
  where $\mathrm{Res}^{2,*}_{(j-1)}$ denotes the usual localisation morphism.
  
  Recall that the homomorphism $\Phi_\mathrm{PT}$, which is often
  called the {\em Poitou-Tate morphism}, is the one induced from the Poitou-Tate
  pairing. See also (\ref{eq:dual_hom}) in Appendix~\ref{app:duality}
  for the definition of $\Phi_{PT}$. 
  The following proposition, whose proof is slightly lengthy and is postponed to Appendix~\ref{app:duality}, 
  is a technical heart of the proof of Proposition~\ref{prop:dSelvan}. 
  
  \begin{pro} \label{prop:commutes}
   Consider the diagram 
   \begin{equation} \label{dg:PTvsSnake}
    \begin{CD} 
    0 @>>> (\mathcyr{Sh}^1_{\mathcal{A}_{\eta,(j-1)}})^\vee[x_j] @>{\Phi_{\mathrm{PT}}}>>  
    H^{2,*}_{(j-1)} [x_j] @>{\mathrm{Res}^{2,*}_{(j-1)}}>>  \displaystyle \prod_{v\in S} 
    H^2_\mathrm{cts}(F_v, \mathcal{T}^*_{\eta,(j-1)})[x_j] \\
    @. @| @VV{\mathrm{pr}_{H^2}}V @| \\ 
    0 @>>> (\mathcyr{Sh}^1_{\mathcal{A}_{\eta,(j-1)}})^\vee[x_j] @>>{\boldsymbol{\delta}_\mathrm{snake}}> \overline{\mathcal{H}}^{2,*}_j  @>>>  
    \displaystyle \prod_{v\in S} H^2_\mathrm{cts}(F_v, \mathcal{T}^*_{\eta,(j-1)})[x_j] 
   \end{CD}
   \end{equation}
   where the top and bottom rows are the exact sequences
   $(\ref{ses:PT})$ and $(\ref{ses:snake})$ respectively, and the middle vertical morphism
   $\mathrm{pr}_{H^2}$ is the quotient map defined in the short exact sequence $(\ref{ses:dualSelvsH^2})$. Then the diagram $(\ref{dg:PTvsSnake})$ commutes.
  \end{pro}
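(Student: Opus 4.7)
The plan is to deduce the commutativity of the diagram (\ref{dg:PTvsSnake}) by tracking explicit cocycle representatives through both composites. The right square commutes essentially by construction, since both horizontal maps into $\prod_{v\in S}H^2_\mathrm{cts}(F_v,\mathcal{T}^*_{\eta,(j-1)})[x_j]$ are realised by componentwise localisation combined with the local version of the Bockstein morphism $\text{\sf \textgreek{d}}^1_{(j)}$ coming from (\ref{equation:T}), and $\mathrm{pr}_{H^2}$ is defined so as to descend compatibly with this localisation under the snake lemma on (\ref{dg:left}). Therefore the entire difficulty lies in the commutativity of the left square, and the task reduces to matching two a priori different recipes for producing a class in $\overline{\mathcal{H}}^{2,*}_j$ from a given $c\in(\mathcyr{Sh}^1_{\mathcal{A}_{\eta,(j-1)}})^\vee[x_j]$.

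To unpack $\boldsymbol{\delta}_\mathrm{snake}(c)$, I would first realise $\tilde\delta_1^\vee(c)\in\mathcal{C}^{(j-1)}$ via the snake lemma applied to (\ref{ses:cokernel}): choose any lift $\tilde c\in(\mathrm{Sel}^{\Sigma,\mathrm{str}}_{\mathcal{A}_{\eta,(j-1)}})^\vee$ of $c$; then $x_j\tilde c$ lies in $\mathrm{Coker}(\phi_{\mathcal{L}^{*,(j-1)}_\mathrm{str}})$ and $\tilde\delta_1^\vee(c)=[x_j\tilde c]$. Next I would apply the snake recipe to (\ref{dg:right}): lift $[x_j\tilde c]$ through (\ref{ses:cokernel_xj}) to $\beta\in Q^*_{(j-1)}/x_jQ^*_{(j-1)}$, apply $\pi^Q_j$ to land in $Q^*_{(j)}$, observe that the image is in $\phi_{\mathcal{L}^{*,(j)}_\mathrm{str}}(\mathrm{Coim})$, lift back to $\gamma\in\mathrm{Coim}(\phi_{\mathcal{L}^{*,(j)}_\mathrm{str}})$, and read off $[\gamma]\in\overline{\mathcal{H}}^{2,*}_j$. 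On the other hand, $\mathrm{pr}_{H^2}\circ\Phi_\mathrm{PT}(c)$ is computed by producing a $2$-cocycle representative of $\Phi_\mathrm{PT}(c)\in H^{2,*}_{(j-1)}[x_j]$ from the Pontryagin dual of the truncated Poitou--Tate sequence and then lifting through the Bockstein $\text{\sf \textgreek{d}}^1_{(j)}$ to $\beta'\in H^{1,*}_{(j)}$ before projecting to $\overline{\mathcal{H}}^{2,*}_j$ via the middle column of (\ref{dg:left}).

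The equality of the two classes should follow from the standard compatibility of Poitou--Tate cup-product pairings with connecting homomorphisms: the long exact cohomology sequence associated to (\ref{equation:T}) is dual, via Poitou--Tate, to the long exact sequence arising from $0\to\mathcal{A}_{\eta,(j)}\to\mathcal{A}_{\eta,(j-1)}\xrightarrow{\times x_j}\mathcal{A}_{\eta,(j-1)}$ on the discrete side, and the two pairings in cohomological bidegrees $(1,1)$ and $(2,0)$ are intertwined by the Bockstein up to a universal sign. Concretely, this compatibility forces the global $1$-cocycle $\beta'$ obtained in the $\mathrm{pr}_{H^2}\circ\Phi_\mathrm{PT}$ recipe to pair with test elements of $\mathrm{Sel}^{\Sigma,\mathrm{str}}_{\mathcal{A}_{\eta,(j)}}$ in the same way that $\gamma$ does in the snake-lemma recipe, modulo the ambiguity encoded by $\bar\pi_j^H(H^{1,*}_{(j-1)}/x_j)$; thus their images in $\overline{\mathcal{H}}^{2,*}_j$ coincide.

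I expect the main obstacle to be the careful matching of the local conditions produced by the snake lemma on (\ref{dg:right}) with those implicit in the dual strict Selmer group, together with sign bookkeeping for the Bockstein and for Poitou--Tate duality. A clean way to organise the bookkeeping is to assemble into one large commutative megadiagram all the relevant exact sequences -- the cohomological long exact sequence for (\ref{equation:T}) (globally and at each $v\in S$), the Poitou--Tate nine-term sequences for $\mathcal{A}_{\eta,(k)}$ with $k=j-1,j$, the defining sequences of the (dual) strict Selmer groups at both levels, and their Pontryagin duals -- so that both $\mathrm{pr}_{H^2}\circ\Phi_\mathrm{PT}$ and $\boldsymbol{\delta}_\mathrm{snake}$ appear as two traversals between the same source and target whose coincidence becomes a formal consequence of the functoriality of Poitou--Tate duality with respect to the surjection $\pi^{\mathcal{T}}_j\colon\mathcal{T}^*_{\eta,(j-1)}\twoheadrightarrow\mathcal{T}^*_{\eta,(j)}$. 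This is the reason why the verification is deferred to Appendix~\ref{app:duality} rather than being carried out inline.
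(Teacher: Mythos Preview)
Your overall plan is reasonable in outline---the right square is indeed immediate, and the real content is the left square---but the heart of your argument is a gap rather than a proof. You write that the equality of the two classes ``should follow from the standard compatibility of Poitou--Tate cup-product pairings with connecting homomorphisms'' and then propose to organise this into a ``megadiagram'' whose commutativity would be a formal consequence of functoriality. This is precisely the step that the paper isolates as Proposition~\ref{prop:coupling} and works out in Appendix~\ref{app:duality}; it is not a formal functoriality statement but a genuine cocycle-level computation. The map $\Phi_{\mathrm{PT}}$ is defined via the Poitou--Tate pairing, whereas $\boldsymbol{\delta}_{\mathrm{snake}}=\delta_2\circ\Phi_{\mathrm{local}}\circ\delta_1^\vee$ is built out of local Tate duality and two snake-lemma connecting maps; there is no single module map in the Poitou--Tate sequence whose naturality would directly equate them. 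Your megadiagram would at best reduce the problem to the compatibility of global and local duality pairings under Bockstein, but that compatibility itself is not standard in the required form and is exactly what remains to be proven.

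The paper's route is also structurally different from yours. Rather than comparing the two maps $\mathrm{pr}_{H^2}\circ\Phi_{\mathrm{PT}}$ and $\boldsymbol{\delta}_{\mathrm{snake}}$ directly, the paper dualises and proves an identity of \emph{pairings}: for $(s_v)\in\mathrm{Coim}(\delta_1)$ and $(t_v)\in\mathrm{Im}(\tilde\delta_1^\vee)$ one has $\langle(s_v),(t_v)\rangle_{\mathrm{local}}=\langle\delta_1((s_v)),\delta_2((t_v))^\sim\rangle_{\mathrm{PT}}$ (Proposition~\ref{prop:coupling}). This is established by explicit cochain manipulation using Tate's formula~(\ref{eq:explicit_PT}) for the Poitou--Tate pairing, together with two auxiliary lemmata (Lemma~\ref{lem:relation_w_v} relating the various lifts $\hat w_v$, $w_v$, $\ell_v$, and Lemma~\ref{lem:surj_local} showing that local conditions lift through $\mathcal{T}^*_{\eta,(j-1)}\twoheadrightarrow\mathcal{T}^*_{\eta,(j)}$). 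The commutativity of the left square of~(\ref{dg:PTvsSnake}) is then deduced from the perfectness of $\langle\,,\,\rangle_{\mathrm{PT}}$. If you want to salvage your approach, you would need to either carry out an equivalent cup-product calculation or locate in the literature the precise Bockstein-compatibility of Poitou--Tate duality in the form you need; neither is supplied in your proposal.
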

  
  The commutativity of the right square in the diagram (\ref{dg:PTvsSnake}) is obvious since both of the horizontal morphisms are induced from the
  global-to-local morphism
  \begin{align*}
 \mathrm{Res}^{2,*}_{(j-1)} \colon   H^{2,*}_{(j-1)} \rightarrow \prod_{v\in S} H^2_\mathrm{cts}(F_v, \mathcal{T}^*_{\eta,(j-1)}).
  \end{align*}
  Concerning the left square in (\ref{dg:PTvsSnake}), the top horizontal
  morphism $\Phi_{\mathrm{PT}}$ is the one induced
  from the Poitou-Tate pairing while the bottom horizontal morphism $\boldsymbol{\delta}_{\mathrm{snake}}$ is
  the composition of connecting morphisms derived from the snake lemma
  and the local Tate duality. Therefore  we must carefully study
  the relation between the global Poitou-Tate duality and the local Tate duality
  to verify the commutativity of the left square.
  As we shall show below, Proposition~$\ref{prop:dSelvan}$ follows immediately from Proposition~\ref{prop:commutes}.

\begin{proof}[Proof of Proposition~$\ref{prop:dSelvan}$ $($admitting Proposition~$\ref{prop:commutes})$]
  We observe  from the short exact sequence
 (\ref{ses:dualSelvsH^2}) that the kernel of the middle vertical morphism $\mathrm{pr}_{H^2}$ in
 (\ref{dg:PTvsSnake}) is isomorphic to the dual Selmer group
  $\mathrm{Sel}_{\mathcal{L}^{*,(j)}_{\mathrm{str}}}(F,\mathcal{T}^*_{\eta,(j)})$.
  We now readily verify
  its triviality by an easy diagram chase on (\ref{dg:PTvsSnake}).
 \end{proof}

\subsubsection{Final step$:$ specialisation from two variables to one
   variable} \label{sssc:final_step}

Suppose that we have already chosen elements $\gamma_1, \dotsc,
\gamma_{d+\delta_{F,p}-1}$ of $\mathrm{Gal}(F_S/F^\mathrm{cyc}_\infty)$ which satisfy 
all the conditions $(\Gamma0)_{d+\delta_{F,p}-1}$,
$(\Gamma1)_{d+\delta_{F,p}-1}$ and $(\Gamma2)_{d+\delta_{F,p}-1}$
proposed in Section~\ref{sssc:int_step} (the existence of such
tuples $\gamma_1, \dotsc, \gamma_{d+\delta_{F,p}-1}$ is justified in
Proposition~\ref{prop:induction}).
As the final step of the proof of Theorem~\ref{thm:specialisation}, 
we discuss the cyclotomic specialisation of the 
strict Selmer group
$\mathrm{Sel}^{\Sigma,\mathrm{str}}_{\mathcal{A}^{\mathrm{CM}}_\eta[\mathfrak{A}_{d+\delta_{F,p}-1}]}$. We
have already verified in Section~\ref{sssc:int_step} that $\mathrm{Sel}^{\Sigma,\mathrm{str}}_{\mathcal{A}^{\mathrm{CM}}_\eta[\mathfrak{A}_{d+\delta_{F,p}-1}]}$
is cotorsion and almost divisible as a
$\Lambda_0^{(d+\delta_{F,p}-1)}$-module
and the equality (\ref{eq:spec_ind}) holds.
 In this situation the kernel of the natural surjection
$\Lambda^{\mathrm{CM}}_{\mathcal{O}}/\mathfrak{A}_{d+\delta_{F,p}-1}
\twoheadrightarrow \Lambda^{\mathrm{cyc}}_{\mathcal{O}}$ is a principal
ideal of
$\Lambda^{\mathrm{CM}}_{\mathcal{O}}/\mathfrak{A}_{d+\delta_{F,p}-1}$
generated by $x_{d+\delta_{F,p}}=\gamma_{d+\delta_{F,p}}-1$, where
$\gamma_{d+\delta_{F,p}}$ is a lift of a topological generator of
$\mathrm{Gal}(\widetilde{F}/F^\mathrm{cyc}_\infty)/\langle \gamma_1,
\dotsc, \gamma_{d+\delta_{F,p}-1}\rangle \cong \mathbb{Z}_p$ to 
$\mathrm{Gal}(\widetilde{F}/F^\mathrm{cyc}_\infty)$.

We henceforth denote
$\Lambda^{\mathrm{CM}}_{\mathcal{O}}/\mathfrak{A}_{d+\delta_{F,p}-1}$ by 
$\Lambda_\mathcal{O}^{(d+\delta_{F,p}-1)}$ to simplify the notation. 
In order to apply the Specialisation Lemma~\ref{lem:specialisation} to 
the finitely generated torsion $\Lambda_\mathcal{O}^{(d+\delta_{F,p}-1)}$-module $(\mathrm{Sel}^{\Sigma,
\mathrm{str}}_{\mathcal{A}^{\mathrm{CM}}_\eta[\mathfrak{A}_{d+\delta_{F,p}-1}]})^\vee$ and the prime ideal $(x_{d+\delta_{F,p}})$
of $\Lambda_0^{(d+\delta_{F,p}-1)}$ of
height one, we must verify that $x_{d+\delta_{F,p}}$ does not divide 
the characteristic ideal 
of $(\mathrm{Sel}^{\Sigma, \mathrm{str}}_{\mathcal{A}^{\mathrm{CM}}_\eta[\mathfrak{A}_{d+\delta_{F,p}-1}]})^\vee$. 
Until the previous steps we could choose a specialising element 
$x_j=\gamma_j-1$ avoiding the prime divisors of the
characteristic power series of the Pontrjagin dual of 
the strict Selmer group. 
At this final step, however, the specialising element $x_{d+\delta_{F,p}}$ 
(or more precisely, the principal ideal
$x_{d+\delta_{F,p}}\Lambda_0^{(d+\delta_{F,p}-1)}$ which it generates) is uniquely determined, and we
are not allowed to choose it freely. 
We still require the following claim:

\begin{claim}
Let the notation be as before and let us assume the conditions (IMC$_{F,\psi}$) 
and (NV$_{\mathcal{L}^\mathrm{cyc}_p(\vartheta(\eta))}$) (see Theorem~\ref{thm:specialisation}
 for details on these conditions). Then the specialising element $x_{d+\delta_{F,p}}$ is relatively prime 
to the characteristic ideal of the Pontrjagin dual of the $\Lambda_{\mathcal{O}}^{(d+\delta_{F,p}-1)}$-module 
 $\mathrm{Sel}^{\Sigma,
 \mathrm{str}}_{\mathcal{A}^{\mathrm{CM}}_\eta[\mathfrak{A}_{d+\delta_{F,p}-1}]}$. 
\end{claim}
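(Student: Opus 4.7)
The plan is to trace the characteristic ideal of $(\mathrm{Sel}^{\Sigma,\mathrm{str}}_{\mathcal{A}^{\mathrm{CM}}_\eta[\mathfrak{A}_{d+\delta_{F,p}-1}]})^\vee$ through the chain of algebraic and analytic identifications down to the cyclotomic level, where the hypothesis (NV$_{\mathcal{L}_p^{\mathrm{cyc}}(\vartheta(\eta))}$) prevents its vanishing. First, by Lemma~\ref{lem:replace}, the nontriviality condition (ntr)$_{\mathfrak{P}^c}$ at every place $\mathfrak{P}^c$ in $\Sigma_p^c$ identifies the strict Selmer group $\mathrm{Sel}^{\Sigma,\mathrm{str}}_{\mathcal{A}^{\mathrm{CM}}_\eta}$ with $\mathrm{Sel}^{\Sigma}_{\mathcal{A}^{\mathrm{CM}}_\eta}$. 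Combining this with Proposition~\ref{prop:iw_mod} and Lemma~\ref{lem:twist}, we see that the characteristic ideal of $(\mathrm{Sel}^{\Sigma,\mathrm{str}}_{\mathcal{A}^{\mathrm{CM}}_\eta})^\vee$ equals $\mathrm{Tw}_{\eta^{\mathrm{gal}}\psi^{-1}}(\mathrm{Char}_{\Lambda^{\mathrm{CM}}_{\mathcal{O}}}(X_{\Sigma_p,(\psi)}))$; under the hypothesis (IMC$_{F,\psi}$) this coincides with the twisted $p$-adic $L$-function $\mathrm{Tw}_{\eta^{\mathrm{gal}}\psi^{-1}}(\mathcal{L}_p^{\Sigma}(\psi))$ as an ideal of $\Lambda^{\mathrm{CM}}_{\mathcal{O}}\hat{\otimes}_{\mathcal{O}}\widehat{\mathcal{O}}^{\mathrm{ur}}$.

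Next, the inductive equality \eqref{eq:spec_ind} obtained in Section~\ref{sssc:int_step} shows that the characteristic ideal of $(\mathrm{Sel}^{\Sigma,\mathrm{str}}_{\mathcal{A}^{\mathrm{CM}}_\eta[\mathfrak{A}_{d+\delta_{F,p}-1}]})^\vee$ in $\Lambda_{\mathcal{O}}^{(d+\delta_{F,p}-1)}$ is precisely the image of $\mathrm{Tw}_{\eta^{\mathrm{gal}}\psi^{-1}}(\mathcal{L}_p^{\Sigma}(\psi))$ under the natural surjection $\Lambda^{\mathrm{CM}}_{\mathcal{O}}\twoheadrightarrow \Lambda_{\mathcal{O}}^{(d+\delta_{F,p}-1)}$. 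By the construction of $x_{d+\delta_{F,p}}=\gamma_{d+\delta_{F,p}}-1$ as a lift of a topological generator of $\mathrm{Gal}(\widetilde{F}/F_{\infty}^{\mathrm{cyc}})/\langle \gamma_1,\ldots,\gamma_{d+\delta_{F,p}-1}\rangle$, the composition $\Lambda^{\mathrm{CM}}_{\mathcal{O}}\twoheadrightarrow \Lambda_{\mathcal{O}}^{(d+\delta_{F,p}-1)}\twoheadrightarrow \Lambda_{\mathcal{O}}^{(d+\delta_{F,p}-1)}/(x_{d+\delta_{F,p}})$ coincides with the cyclotomic specialisation map $\Lambda^{\mathrm{CM}}_{\mathcal{O}}\twoheadrightarrow \Lambda^{\mathrm{cyc}}_{\mathcal{O}}$. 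Hence the statement to prove reduces to the assertion that the image of $\mathrm{Tw}_{\eta^{\mathrm{gal}}\psi^{-1}}(\mathcal{L}_p^{\Sigma}(\psi))$ under the cyclotomic specialisation is nonzero in every component of $\Lambda^{\mathrm{cyc}}_{\mathcal{O}}\hat{\otimes}_{\mathcal{O}}\widehat{\mathcal{O}}^{\mathrm{ur}}$.

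This final step is exactly what Corollary~\ref{cor:compare_p-adic_L} (equivalently Theorem~\ref{Thm:L-func}) provides: the cyclotomic specialisation of $\mathrm{Tw}_{\eta^{\mathrm{gal}}\psi^{-1}}(\mathcal{L}_p^{\Sigma}(\psi))$ coincides componentwise, up to a nonzero constant multiple, with the cyclotomic $p$-adic $L$-function $\mathcal{L}_p^{\mathrm{cyc}}(\vartheta(\eta))$ of the theta lift. Invoking (NV$_{\mathcal{L}_p^{\mathrm{cyc}}(\vartheta(\eta))}$), which asserts that $\mathcal{L}_p^{\mathrm{cyc}}(\vartheta(\eta))$ does not vanish in any component of the semilocal decomposition, we conclude that the image of the characteristic ideal under $x_{d+\delta_{F,p}}$-specialisation is nonzero in every component; equivalently, $x_{d+\delta_{F,p}}$ is coprime to $\mathrm{Char}_{\Lambda_\mathcal{O}^{(d+\delta_{F,p}-1)}}(\mathrm{Sel}^{\Sigma,\mathrm{str}}_{\mathcal{A}^{\mathrm{CM}}_\eta[\mathfrak{A}_{d+\delta_{F,p}-1}]})^\vee$ in every component of $\Lambda_{\mathcal{O}}^{(d+\delta_{F,p}-1)}$.

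The only real subtlety in this argument is careful bookkeeping between the semilocal ring $\Lambda^{\mathrm{CM}}_{\mathcal{O}}$ and its scalar extension by $\widehat{\mathbb{Z}}_p^{\mathrm{ur}}$ (where $\mathcal{L}_p^{\Sigma}(\psi)$ naturally lives) together with the componentwise compatibility of twisting, characteristic ideals, and the cyclotomic specialisation map. This has already been arranged by Lemma~\ref{lem:twist} and the indecomposable decomposition appearing in the statement of (NV$_{\mathcal{L}_p^{\mathrm{cyc}}(\vartheta(\eta))}$), so no additional input is required beyond assembling the pieces listed above.
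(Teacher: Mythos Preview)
Your argument is correct and follows essentially the same route as the paper's proof: both combine the identification $\mathrm{Char}_{\Lambda^{\mathrm{CM}}_{\mathcal{O}}}(\mathrm{Sel}^{\Sigma,\mathrm{str}}_{\mathcal{A}^{\mathrm{CM}}_\eta})^\vee=(\mathrm{Tw}_{\eta^{\mathrm{gal}}\psi^{-1}}(\mathcal{L}^\Sigma_p(\psi)))$ coming from (IMC$_{F,\psi}$), the inductive equality~\eqref{eq:spec_ind}, and Corollary~\ref{cor:compare_p-adic_L} together with (NV$_{\mathcal{L}^{\mathrm{cyc}}_p(\vartheta(\eta))}$) to conclude that the cyclotomic image of the characteristic ideal is nonzero in every component. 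The only cosmetic differences are that the paper phrases the argument by contradiction and routes the first identification through the $\psi$-Selmer group (Remark~\ref{rem:sel_psi}) rather than Proposition~\ref{prop:iw_mod} with Lemma~\ref{lem:replace}; these yield the same equality of characteristic ideals.
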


\begin{proof}[Proof of the Claim]
In order to prove the claim,
 we assume that $x_{d+\delta_{F,p}}$ {\em does divide} the characteristic
 ideal of $(\mathrm{Sel}^{\Sigma, \mathrm{str}}_{\mathcal{A}^{\mathrm{CM}}_\eta[\mathfrak{A}_{d+\delta_{F,p}-1}]})^\vee$
 and deduce contradiction. The
 cyclotomic specialisation of the characteristic ideal
{\small 
\begin{multline} \label{eq:contrapositive}
(\mathrm{Char}_{\Lambda_\mathcal{O}^{\mathrm{CM}}}(\mathrm{Sel}^{\Sigma,
 \mathrm{str}}_{\mathcal{A}^{\mathrm{CM}}_\eta})^\vee)
 \otimes_{\Lambda_\mathcal{O}^{\mathrm{CM}}}
 \Lambda_\mathcal{O}^{\mathrm{cyc}}  \\
=  (\mathrm{Char}_{\Lambda_{\mathcal{O}}^{(d+\delta_{F,p}-1)}}
  (\mathrm{Sel}^{\Sigma,
 \mathrm{str}}_{\mathcal{A}^{\mathrm{CM}}_\eta[\mathfrak{A}_{
 d+\delta_{F,p}-1}]})^\vee)
 \otimes_{\Lambda_{\mathcal{O}}^{(d+\delta_{F,p}-1)}}
 \Lambda_{\mathcal{O}}^{(d+\delta_{F,p}-1)}/x_{d+\delta_{F,p}}\Lambda_{\mathcal{O}}^{(d+\delta_{F,p}-1)}
\end{multline}}
is then trivial. 
 On the other hand, we obtain the equality of ideals of $\widehat{\mathcal{O}}^\mathrm{ur}[[\mathrm{Gal}(\widetilde{F}/F)]]$
\begin{align} \label{eq:IMC_eta}
\mathrm{Char}_{\Lambda^{\mathrm{CM}}_\mathcal{O}}(\mathrm{Sel}^{\Sigma,\mathrm{str}}_{\mathcal{A}^{\mathrm{CM}}_\eta})^\vee=\mathrm{Tw}_{\eta^\mathrm{gal}\psi^{-1}}(\mathrm{Char}_{\Lambda^{\mathrm{CM}}_\mathcal{O}}(\mathrm{Sel}^{\Sigma,\mathrm{str}}_{\mathcal{A}^{\mathrm{CM}}_\psi})^\vee)=(\mathrm{Tw}_{\eta^\mathrm{gal}\psi^{-1}}(\mathcal{L}^\Sigma_p(\psi)))
\end{align}
by the Iwasawa main conjecture (IMC$_{F,\psi}$) for $F$
 and $\psi$. We thus see from the equations (\ref{eq:contrapositive})
 and (\ref{eq:IMC_eta}) that 
 the image $\mathcal{L}^\mathrm{cyc}_{p,\mathrm{CM}}(\eta)$ of
 $\mathrm{Tw}_{\eta^\mathrm{gal}\psi^{-1}}(\mathcal{L}_p^\Sigma(\psi))$
 in the Iwasawa algebra  $\widehat{\mathcal{O}}^\mathrm{ur}[[\mathrm{Gal}(F^+(\mu_{p^\infty})/F^+)]]$ is trivial. On the other hand, we 
 have already observed in Corollary~\ref{cor:compare_p-adic_L} that the element $\mathcal{L}^\mathrm{cyc}_{p,\mathrm{CM}}(\eta)$ is a nonzero multiple of the cyclotomic 
 $p$\nobreakdash-adic $L$-function $\mathcal{L}_p^\mathrm{cyc}(\vartheta(\eta))$
 of $\vartheta(\eta)$ in each component of the semilocal Iwasawa algebra $\widehat{\mathcal{O}}^\mathrm{ur}[[\mathrm{Gal}(F^+(\mu_{p^\infty})/F^+)]]\otimes_{\mathbb{Z}_p}\mathbb{Q}_p$. The nontriviality assumption 
 (NV$_{\mathcal{L}^\mathrm{cyc}_p(\vartheta(\eta))}$) thus leads us to contradiction,
 which completes the proof of the claim.
\end{proof}

 Due to the claim, we can apply the Specialisation Lemma~\ref{lem:specialisation} to $(\mathrm{Sel}^{\Sigma, \mathrm{str}}_{\mathcal{A}^{\mathrm{CM}}_\eta[\mathfrak{A}_{d+\delta_{F,p}-1}]})^\vee$ 
 and $x_{d+\delta_{F,p}}\Lambda_0^{(d+\delta_{F,p}-1)}$. We thus observe that
 the strict Selmer group $\mathrm{Sel}^{\Sigma,\mathrm{str}}_{\mathcal{A}^{\mathrm{cyc}}_\eta}$ of the cyclotomic deformation $\mathcal{A}_\eta^\mathrm{cyc}$ of $\eta$ is cotorsion as a $\Lambda^{\mathrm{cyc}}_\mathcal{O}$-module, and
 and obtain the basechange compatibility of 
the characteristic ideal with respect to the cyclotomic specialisation: 
\begin{align*}
 (\mathrm{Char}_{\Lambda_\mathcal{O}^{\mathrm{CM}}}(\mathrm{Sel}^{\Sigma, \mathrm{str}}_{\mathcal{A}^{\mathrm{CM}}_\eta})^\vee)
  \otimes_{\Lambda_\mathcal{O}^{\mathrm{CM}}}
  \Lambda_\mathcal{O}^{\mathrm{cyc}} &=
  \mathrm{Char}_{\Lambda_\mathcal{O}^{\mathrm{cyc}}}
  (\mathrm{Sel}^{\Sigma,\mathrm{str}}_{\mathcal{A}^{\mathrm{cyc}}_\eta})^\vee  .
\end{align*}
This is the end of the proof of Theorem~\ref{thm:specialisation}.
 
%
\subsection{Application to the Iwasawa Main Conjecture} \label{sc:specialisation_IMC}
%

We shall prove the main theorem of this
article (Theorem~\ref{thm:IMC}). Firstly 
let us recall the notation and the settings.
As in Section~\ref{sc:Introduction}, let $p$ be an odd prime number 
and let $F^+$ be
a totally real number field of degree $d$ 
satisfying the unramifiedness condition (unr$_{F^+}$). 
Consider a nearly $p$-ordinary $p$-stabilised newform $f=\vartheta(\eta)$ with complex
multiplication defined on $F^+$, 
which is associated to a gr\"o{\ss}encharacter $\eta$ of
type ($A_0$) on a totally imaginary quadratic extension $F$ of $F^+$
satisfying the $p$-ordinarity condition (ord$_{F/F^+}$). We may assume that 
$\eta$ is ordinary with respect to an appropriate $p$-ordinary CM type
$\Sigma$ of $F$, which we henceforth fix. 
Finally we choose and fix a branch character $\psi$ associated to $\eta$ 
(see Definition~\ref{def:branch}).

 \begin{thm}[{Theorem~\ref{Thm:IMC}}] \label{thm:IMC}
Let the notation be as above and assume that all the following conditions
  are fulfilled$:$
\begin{itemize}
\item[-] the nontriviality condition {\upshape (ntr)$_{\mathfrak{P}}$}
      for every place $\mathfrak{P}$ of $F$ contained in $\Sigma_p^c;$
\item[-] the $(d+\delta_{F,p}+1)$-variable Iwasawa main conjecture
      {\upshape (IMC$_{F,\psi}$)} for the CM field $F$ and the branch
      character $\psi;$
\item[-] the nontriviality condition {\upshape
      (NV$_{\mathcal{L}^\mathrm{CM}_p(\vartheta(\eta))}$)} for the cyclotomic
      $p$-adic $L$-function associated to $\vartheta(\eta)$.
\end{itemize}
Then the cyclotomic Iwasawa main conjecture for the Hilbert cuspform
  $\vartheta(\eta)$ is true up to $\mu$-invariants$;$ that is, the equality
\begin{align} \label{eq:IMC_cyc_Hilb}
(\mathcal{L}^\mathrm{cyc}_p(\vartheta(\eta)))=\mathrm{Char}_{\Lambda^\mathrm{cyc}_{\mathcal{O}}}
 (\mathrm{Sel}^\vee_{\mathcal{A}^\mathrm{cyc}_{\vartheta(\eta)}})
\end{align}
holds as an equation of ideals of
  $\widehat{\mathcal{O}}^\mathrm{ur}[[\mathrm{Gal}(F^+(\mu_{p^\infty})/F^+)]]
  \otimes_{\mathbb{Z}_p} \mathbb{Q}_p$. Furthermore the equality $(\ref{eq:IMC_cyc_Hilb})$ holds as an equation
  of ideals of $\Lambda^\mathrm{cyc}_{\mathcal{O}}$ if
  Conjecture~$\ref{conj:periods}$ is true.
\end{thm}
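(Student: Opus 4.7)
The plan is to combine the analytic comparison (Corollary~\ref{cor:compare_p-adic_L}) with the algebraic specialisation result (Theorem~\ref{thm:specialisation}), using the assumed multivariable Iwasawa main conjecture (IMC$_{F,\psi}$) for the CM field $F$ as the bridge between them. Since the bulk of the technical work has already been done in Sections~\ref{sc:analytic_side} and~\ref{sc:algebraic_side}, what remains is essentially to chain the relevant identities carefully, tracking twists and unit ambiguities across the various Iwasawa algebras.

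First I would translate the statement of the cyclotomic main conjecture for $\vartheta(\eta)$ into one about $\eta$. By Shapiro's lemma (Lemma~\ref{lem:Shapiro}) we have an isomorphism
\[
\mathrm{Sel}_{\mathcal{A}^\mathrm{cyc}_{\vartheta(\eta)}} \;\cong\; \mathrm{Sel}^\Sigma_{\mathcal{A}^\mathrm{cyc}_\eta}
\]
of $\Lambda^\mathrm{cyc}_\mathcal{O}$-modules, so it is enough to identify the characteristic ideal of the Pontrjagin dual of the right-hand side. On the algebraic side, Proposition~\ref{prop:iw_mod} identifies the characteristic ideal of $\mathrm{Sel}^{\Sigma,\vee}_{\mathcal{A}^\mathrm{CM}_\eta}$ with the twist $\mathrm{Tw}_{\eta^\mathrm{gal}\psi^{-1}}(\mathrm{Char}_{\Lambda^\mathrm{CM}_\mathcal{O}}(X_{\Sigma_p,(\psi)}))$; the hypothesis (IMC$_{F,\psi}$) then yields
\[
\mathrm{Char}_{\Lambda^\mathrm{CM}_\mathcal{O}}\bigl(\mathrm{Sel}^{\Sigma,\vee}_{\mathcal{A}^\mathrm{CM}_\eta}\bigr) \;=\; \bigl(\mathrm{Tw}_{\eta^\mathrm{gal}\psi^{-1}}(\mathcal{L}_p^\Sigma(\psi))\bigr).
\]
Since the three hypotheses of Theorem~\ref{thm:IMC} are exactly the ones assumed in Theorem~\ref{thm:specialisation}, I may apply the latter to descend the above equality of ideals to the cyclotomic line: writing $\bar{L}$ for the image of $\mathrm{Tw}_{\eta^\mathrm{gal}\psi^{-1}}(\mathcal{L}_p^\Sigma(\psi))$ in $\widehat{\mathcal{O}}^\mathrm{ur}[[\mathrm{Gal}(F^+(\mu_{p^\infty})/F^+)]]$ under cyclotomic specialisation, I obtain
\[
\mathrm{Char}_{\Lambda^\mathrm{cyc}_\mathcal{O}}\bigl(\mathrm{Sel}^{\Sigma,\vee}_{\mathcal{A}^\mathrm{cyc}_\eta}\bigr) \;=\; (\bar{L}).
\]

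For the analytic side, I recall from the construction of $\mathcal{L}^\mathrm{cyc}_{p,\mathrm{CM}}(\eta)$ preceding Proposition~\ref{prop:sp} that $\bar{L}$ coincides with $\mathcal{L}^\mathrm{cyc}_{p,\mathrm{CM}}(\eta)$ up to the extra factor $\mathrm{Ex}(F,\delta)$, which is a $p$-adic unit by Lemmata~\ref{lem:unit_index} and~\ref{lem:disc} together with the unramifiedness assumption (unr$_{F^+}$). Invoking Corollary~\ref{cor:compare_p-adic_L}, $\mathcal{L}^\mathrm{cyc}_{p,\mathrm{CM}}(\eta)$ and $\mathcal{L}^\mathrm{cyc}_p(\vartheta(\eta))$ generate the same ideal of $\widehat{\mathcal{O}}^\mathrm{ur}[[\mathrm{Gal}(F^+(\mu_{p^\infty})/F^+)]] \otimes_{\mathbb{Z}_p} \mathbb{Q}_p$ in each component, and they differ by a $p$-adic unit once Conjecture~\ref{conj:periods} is granted. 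Combining these with the algebraic identity above produces
\[
\bigl(\mathcal{L}^\mathrm{cyc}_p(\vartheta(\eta))\bigr) \;=\; \mathrm{Char}_{\Lambda^\mathrm{cyc}_\mathcal{O}}\bigl(\mathrm{Sel}^\vee_{\mathcal{A}^\mathrm{cyc}_{\vartheta(\eta)}}\bigr)
\]
in $\widehat{\mathcal{O}}^\mathrm{ur}[[\mathrm{Gal}(F^+(\mu_{p^\infty})/F^+)]] \otimes_{\mathbb{Z}_p}\mathbb{Q}_p$, with integral equality under Conjecture~\ref{conj:periods}, and the proof is complete.

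In short, no genuinely new obstacle is expected at this stage: the analytic comparison carries out the only transcendental comparison, the algebraic specialisation result handles descent of characteristic ideals from the full multivariable Iwasawa algebra to the cyclotomic quotient, and the main conjecture for $F$ feeds these together. The sole delicate point is bookkeeping of twists: one must verify that the $\mathrm{Tw}_{\eta^\mathrm{gal}\psi^{-1}}$ appearing on the algebraic side matches the $\psi$-twisting followed by $\eta^\mathrm{gal}\psi^{-1}$-twisting used to define $\mathcal{L}_p^\Sigma(\psi)$ and then $\mathcal{L}^\mathrm{cyc}_{p,\mathrm{CM}}(\eta)$ on the analytic side, which is formal but must be done in the right order.
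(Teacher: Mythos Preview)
Your proposal is correct and follows essentially the same approach as the paper, whose proof is the one-liner ``The claim follows directly from Corollary~\ref{cor:compare_p-adic_L} and Theorem~\ref{thm:specialisation}''; you have simply unpacked what this means. One minor imprecision: when comparing $\bar{L}$ with $\mathcal{L}^\mathrm{cyc}_{p,\mathrm{CM}}(\eta)$ you mention only the extra factor $\mathrm{Ex}(F,\delta)$, but the construction preceding Proposition~\ref{prop:sp} also inserts the $p$-adic CM period power $\Omega_{\mathrm{CM},p}^{-(\kappa_{\mu,2}-\kappa_{\mu,1})}$; since this lies in $(\widehat{\mathcal{O}}^{\mathrm{ur},\times})^\Sigma$ it is again a unit and the ideal equality is unaffected.
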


\begin{proof}
The claim follows directly from Corollary~\ref{cor:compare_p-adic_L} and
 Theorem~\ref{thm:specialisation}.
\end{proof}

\begin{rem}[On the condition $($IMC$_{F,\psi})$] \label{rem:IMC}
 Concerning the multi-variable Iwasawa main conjecture for CM number fields
 (IMC$_{F,\psi}$), Ming-Lun Hsieh \cite{hsi-IMC} has recently obtained several results over the $(d+1)$-variable Iwasawa algebra associated to the Galois group of the compositum of the anticyclotomic $\mathbb{Z}_p^d$-extension ($d$-variable) and the cyclotomic $\mathbb{Z}_p$-extension ($1$-variable). If we assume the Leopoldt conjecture, which claim that the Leopoldt defect $\delta_{F,p}$ would equal zero, the Iwasawa algebra above coincides with $\mathcal{O}[[\mathrm{Gal}(\widetilde{F}/F)]]$ (which is isomorphic to each component of $\Lambda_\mathcal{O}^\mathrm{CM}$). Thus Hsieh's result \cite[Theorem~8.16]{hsi-IMC} combined with Leopoldt conjecture implies a one-sided divisibility relation
\begin{align*}
\mathcal{L}^\mathrm{CM}_p(\psi) \mid \mathrm{Char}_{\Lambda^{\mathrm{CM}}_\mathcal{O}}
 (\mathrm{Sel}^\vee_{\mathcal{A}_\eta^{\mathrm{CM}}}) 
\end{align*}
 in our cases under certain technical assumptions. Also \cite[Theorem~8.17, Theorem~8.18]{hsi-IMC} combined with Leopoldt conjecture implies the whole equality
 \begin{align*}
  \mathcal{L}^\mathrm{CM}_p(\psi) = \mathrm{Char}_{\Lambda^{\mathrm{CM}}_\mathcal{O}}
  (\mathrm{Sel}^\vee_{\mathcal{A}_\eta^{\mathrm{CM}}})
 \end{align*}
 holds in our cases under certain technical assumptions.
\end{rem} 

 \begin{rem}[On the nontriviality of the cyclotomic $p$-adic $L$-functions]
  We here discuss the validity of the nontriviality condition (NV$_f$) for general Hilbert cuspforms. 
 As in Theorem~\ref{thm:Hilb_L-func}, let $f$ be a normalised nearly $p$-ordinary eigencuspform in $S_\kappa (\mathfrak{N}, \underline{\varepsilon}; \overline{\mathbb{Q}})$ which is stabilised at $p$. If the region of the convergence of the (twisted) Dirichlet series
 \begin{align} \label{eq:Dirichlet_Series}
  L(f,\phi^{-1},s)=\sum_{\mathfrak{a}\subset \mathfrak{r}_{F^+}} \dfrac{C(\mathfrak{a};f)\phi^{-1}(\mathfrak{a})}{\mathcal{N}\mathfrak{a}^s}
 \end{align}
 contains at least one of  $\kappa_1^\mathrm{max}+1, \kappa_1^\mathrm{max}+2,\dotsc, \kappa_{2,\mathrm{min}}$, the cyclotomic $p$-adic $L$-function $\mathcal{L}_p^\mathrm{cyc}(f)$ associated to $f$ is obviously nontrivial. Indeed the  value $L(f,\phi,j)$ of the (complex) $L$-function at such a point never equals zero, and hence the nontriviality of $\mathcal{L}_p^\mathrm{cyc}(f)$ immediately follows from the interpolation formula \eqref{eq:interp_cusp}. The Ramanujan-Petersson conjecture for Hilbert modular forms, which was verified by Brylinski, Labesse \cite[Th\'eor\`eme~3.4.5]{BL} and Blasius \cite[Theorem~1]{Blasius}, suggests that the Dirichlet series \eqref{eq:Dirichlet_Series} absolutely converges in the region $\mathrm{Re}(s)>\dfrac{[\kappa]}{2}+1$, and thus the nontriviality of the cyclotomic $p$-adic $L$-function of $\mathcal{L}_p^\mathrm{cyc}(f)$ is automatically deduced when the inequality
 \begin{align*} 
\kappa_{2,\mathrm{min}} > \dfrac{[\kappa]}{2}+1
 \end{align*}
  holds. In contrast, it is very hard to verify that the critical values $L(f,\phi,j)$ of the (complex) $L$-function do not vanish when none of the critical points are contained in the region of convergence. For elliptic modular forms, Rohrlich has verified the nonvanishing of such critical values in general situations \cite{Rohrlich1, Rohrlich2}, and thus the condition (NV$_f$) is fulfilled for elliptic modular forms. For Hilbert modular forms, however, there have not been enough results yet to verify the condition (NV$_f$) for general $f$.
 \end{rem}

\appendix
\section{Complex multiplication of Hilbert modular cuspforms} \label{app:cm}
%

 Let $F^+$ be a totally real number field and consider 
 a Hilbert eigencuspform $f$ defined over $GL(2)_{F^+}$
 of cohomological weight $\kappa$, level $\mathfrak{N}$
 and nebentypus $\underline{\varepsilon}$, and suppose that $[\kappa]$ is strictly greater than zero. 
 We further assume that $f$ is a primitive form 
 in the sense of Miyake \cite{miyake}, and denote by $\mathbb{Q}_f$   
 the Hecke field associated to $f$. 
 Then due to results of 
 many people including Ohta \cite{ohta}, Carayol \cite{carayol},
 Wiles \cite{wiles}, Taylor \cite{taylor1} and Blasius
 and Rogawski \cite{BR}, 
 we can canonically attach to $f$ a strictly compatible system
 $(\rho_{f,\lambda})_\lambda$ of 2\nobreakdash-dimensional
 $\lambda$\nobreakdash-adic representations of the absolute Galois
 group $G_{F^+}$ of $F^+$; namely, for each finite place $\lambda$
 of $F^+$ with residue characteristic $\ell$,
 the $2$\nobreakdash-dimensional $\lambda$\nobreakdash-adic representation
\begin{align*}
 \rho_{f, \lambda} \colon G_{F^+}\rightarrow
 \mathrm{Aut}_{\mathbb{Q}_{f,\lambda}} V_{f,\lambda}
\end{align*}
is unramified outside $\ell \mathfrak{N}$ and characterised by
the formulae
\begin{align*}
\mathrm{Tr}\, \rho_{f,
 \lambda}(\mathrm{Frob}_\mathfrak{q})=C(\mathfrak{q}; f), \qquad \det
 \rho_{f,
 \lambda}(\mathrm{Frob}_\mathfrak{q})=\chi_{\ell, \mathrm{cyc}}^{-1}
 \varepsilon_+^\mathrm{gal}(\mathrm{Frob}_\mathfrak{q})
\end{align*}
for each prime ideal $\mathfrak{q}$ of $F^+$ relatively prime to $\ell
\mathfrak{N}$ where $\chi_{\ell, \mathrm{cyc}}$
denotes the $\ell$-adic cyclotomic character.
Further, each Galois representation $\rho_{f,\lambda}$ is known to be irreducible
 (see \cite[Theorem~3.1]{taylor2}).

The following statement is widely known for elliptic modular forms due to Ribet \cite{Ribet}.
 
 \begin{pro} \label{prop:cm}
Let $f$ be a primitive Hilbert cuspform as above. Then the following three statements on $f$ are equivalent{\upshape ;} 
 \begin{enumerate}[label=$(\arabic*)$]
 \item the primitive form $f$ has complex multiplication{\upshape ;}
 \item the absolute Galois group $G_{F^+}$ contains an open subgroup $H$ of index two such that  
 the image of $H$ under the associated $\lambda$-adic Galois representation  $\rho_{f,\lambda}$ is abelian for every finite place $\lambda$ of $\mathbb{Q}_f${\upshape ;}
 \item there exist a totally imaginary quadratic extension $F$ of
       $F^+$ and a primitive gr\"o{\ss}en\-character $\eta$
       $($in the sense that the modulus of $\eta$ coincides
       with its conductor$)$
       of type $(A_0)$ on $F$ such that $f$ coincides with
       the theta lift $\vartheta(\eta)$ of $\eta$.
       Furthermore there exists a CM type $\Sigma$ of $F$ such that
       the gr\"o{\ss}encharacter $\eta$ is
       admissible with respect to $\Sigma$ and its infinity type is
       described as 
       $\sum_{\sigma \in \Sigma} \kappa_{1, \sigma|_{F^+}}\sigma+
       \sum_{\bar{\sigma}\in \Sigma^c}
       \kappa_{2,\bar{\sigma}|_{F^+}}\bar{\sigma}$.
 \end{enumerate}
In the cases above, $\rho_{f, \lambda}$ is isomorphic to the induced
  representation $\mathrm{Ind}^{F^+}_F \eta^{\mathrm{gal}}$ of
  the $1$\nobreakdash-dimensional $\lambda$-adic representation
  $\eta^\mathrm{gal} \colon G_F\rightarrow
  \mathbb{Q}_{f,\lambda}^\times$ which corresponds to 
  the $\ell$-adic avatar $\hat{\eta}_\ell$  
  of the gr\"o{\ss}encharacter $\eta$ of type $(A_0)$ introduced in
  $(3)$. Here we consider the $\ell$-adic avatar $\hat{\eta}_\ell$
  with respect to a specific embedding $\overline{\mathbb{Q}}\hookrightarrow
  \overline{\mathbb{Q}}_\ell$ which induces $\lambda$ on $\mathbb{Q}_f$.
 \end{pro}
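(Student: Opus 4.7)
The plan is to prove the equivalences cyclically, following the pattern established by Ribet \cite{Ribet} for elliptic cuspforms, and treating (3) $\Rightarrow$ (1) $\Rightarrow$ (2) $\Rightarrow$ (3). First, (3) $\Rightarrow$ (1) is essentially immediate from Proposition-Definition~\ref{prop:theta}: the Fourier coefficient $C(\mathfrak{l}; \vartheta(\eta))$ vanishes whenever $\mathfrak{l}$ is inert in $F/F^+$, so the equality $C(\mathfrak{l}; \vartheta(\eta)) = \nu_{F/F^+}^*(\mathfrak{l}) C(\mathfrak{l}; \vartheta(\eta))$ holds for every prime ideal of $F^+$ that is either split or inert in $F$, giving a set of density one. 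Thus $\vartheta(\eta)$ has complex multiplication by the nontrivial quadratic character $\nu_{F/F^+}$.

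For (1) $\Rightarrow$ (2), let $\nu$ be the quadratic character by which $f$ has CM, and let $H \subset G_{F^+}$ be the index-two subgroup cut out by $\nu^{\mathrm{gal}}$. The CM relation together with \v{C}ebotarev's density theorem forces $C(\mathfrak{l}; f) = 0$ for every prime $\mathfrak{l}$ with $\nu^*(\mathfrak{l}) = -1$, and hence $\mathrm{Tr}\,\rho_{f,\lambda}(g) = 0$ for every $g \in G_{F^+} \setminus H$. Applying this with $g$ and $g\cdot h$ for $h \in H$, together with the fact that $\rho_{f,\lambda}$ is irreducible but becomes determined by trace and determinant on the index-two subgroup, one concludes that $\rho_{f,\lambda}(H)$ lies in a maximal torus; that is, $\rho_{f,\lambda}|_H \cong \chi_1 \oplus \chi_2$ is a direct sum of two continuous characters, so abelian.

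The main work is (2) $\Rightarrow$ (3). Let $F$ be the fixed field of $H$ and $c \in \mathrm{Gal}(F/F^+)$ the nontrivial element. Conjugation by any lift of $c$ permutes $\{\chi_1,\chi_2\}$: if it fixed both, $\rho_{f,\lambda}$ would be reducible, contradicting Theorem-Definition~\ref{thmdef:Galois_rep}. Hence $\chi_2 = \chi_1^c$ and $\rho_{f,\lambda} \cong \mathrm{Ind}_{G_F}^{G_{F^+}} \chi_1$. Write $\chi = \chi_1$. One now argues that $F$ must be totally imaginary: because $\kappa$ is cohomological with $[\kappa] > 0$, the Hodge--Tate weights of $\rho_{f,\lambda}$ at places of $F^+$ above $p$ are $\{-\kappa_{1,\tau}, -\kappa_{2,\tau}\}$ with $\kappa_{1,\tau} \neq \kappa_{2,\tau}$, and the determinant $\det\rho_{f,\lambda}$ is totally odd; these forbid $F/F^+$ from being a real quadratic extension, since in that case complex conjugation at an archimedean place of $F^+$ would lie in $G_F$ and force $\chi$ to take equal values on it from both components, contradicting the odd determinant. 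With $F$ totally imaginary, one next upgrades the abstract $\lambda$-adic character $\chi$ to a gr\"o\ss encharacter $\eta$ of type $(A_0)$: varying $\lambda$ and using that the system $(\rho_{f,\lambda})_\lambda$ is strictly compatible, the characters $(\chi_\lambda)_\lambda$ form a strictly compatible system of one-dimensional $\lambda$-adic characters of $G_F$, and a theorem of Weil (strengthened by Henniart) asserts that any such system arises as the $p$-adic avatar of a unique gr\"o\ss encharacter $\eta$ of type $(A_0)$ on $F$. The Hodge--Tate weights of $\chi$ at the various places above $p$ read off the infinity type; selecting $\Sigma$ so that $\sigma \in \Sigma$ corresponds to the smaller weight $\kappa_{1,\sigma|_{F^+}}$ and $\bar{\sigma}\in \Sigma^c$ to the larger weight $\kappa_{2,\bar\sigma|_{F^+}}$ yields the infinity type claimed in (3), and the cohomological condition $\kappa_1 < \kappa_2$ translates into $\Sigma$-admissibility of $\eta$. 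Finally, the identity of $L$-functions $L(f, s) = L(\rho_{f,\lambda}, s) = L(\mathrm{Ind}_F^{F^+}\eta^{\mathrm{gal}}, s) = L(\eta, s)$ combined with the Fourier-coefficient characterization in Proposition-Definition~\ref{prop:theta} forces $f = \vartheta(\eta)$, and replacing $\eta$ by its primitive avatar makes the modulus equal to the conductor.

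The hard part will be the last step, namely showing that the abstract $\lambda$-adic character $\chi$ of $G_F$ is algebraic and attached to a gr\"o\ss encharacter of type $(A_0)$ with the precise infinity type dictated by $\kappa$. This requires combining Hodge--Tate theory at the places above $p$, the strict compatibility of $(\rho_{f,\lambda})_\lambda$ across $\lambda$, and the Weil--Henniart theorem characterizing type $(A_0)$ characters among compatible systems; the separate verification that $F$ is totally imaginary (rather than real) and the bookkeeping to match infinity types with the double-digit weight convention of Hida are the subtle points that have no analogue in Ribet's original elliptic treatment.
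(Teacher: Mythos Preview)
Your overall architecture—the cycle (3)$\Rightarrow$(1)$\Rightarrow$(2)$\Rightarrow$(3)—matches the paper's, and your treatments of (3)$\Rightarrow$(1) and (1)$\Rightarrow$(2) are close in spirit (the paper phrases (1)$\Rightarrow$(2) via an intertwiner $M$ satisfying $\rho_{f,\lambda}(g) = \nu(g)\, M \rho_{f,\lambda}(g) M^{-1}$, from which $\rho_{f,\lambda}(H)$ lies in the commutant of a semisimple nonscalar matrix, rather than via trace-vanishing; but the two arguments are interchangeable).

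The substantive divergence, and the place where your sketch has a genuine gap, is the argument inside (2)$\Rightarrow$(3) that $F$ must be totally imaginary. You claim that if $F$ were totally real then each archimedean complex conjugation $c_v$ would lie in $G_F$ and ``force $\chi$ to take equal values on it from both components'', contradicting oddness of $\det \rho_{f,\lambda}$. But this inference fails: writing $\rho_{f,\lambda}\vert_{G_F} \cong \chi \oplus \chi^c$, one has $\chi^c(c_v) = \chi(\tilde c \, c_v \, \tilde c^{-1}) = \chi(c_{v'})$ where $v'$ is the conjugate archimedean place of $F$, and there is no reason for $\chi(c_v)$ and $\chi(c_{v'})$ to agree. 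Hence $\det \rho_{f,\lambda}(c_v) = \chi(c_v)\chi(c_{v'})$ can perfectly well equal $-1$, and oddness alone does not exclude the totally real case.

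The paper takes a completely different route at this point. Having already invoked the local algebraicity theorem of Serre and Henniart (via Serre's group $\mathbb{S}_{\mathfrak m}$) to realise $\chi_1, \chi_2$ as Galois avatars of gr\"o{\ss}encharacters of type $(A_0)$, it uses that on a field which is not totally imaginary every such character is of the form $(\text{finite}) \times \chi_{\ell,\mathrm{cyc}}^{-n_i}$. Comparing determinants yields $n_1 + n_2 = [\kappa]$; the Ramanujan--Petersson bound $\lvert C(\mathfrak q;f)\rvert \le 2\,\mathcal N\mathfrak q^{[\kappa]/2}$ (Brylinski--Labesse, Blasius) then forces $n_1 = n_2 = [\kappa]/2$, so that $\rho_{f,\lambda}$ has \emph{finite} image in $PGL_2(\mathbb{Q}_{f,\lambda})$. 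This contradicts the Ribet-style fact that the projective image is infinite. Your allusion to Hodge--Tate weights could in principle be developed into an alternative once algebraicity of the $\chi_i$ is in hand (since $\chi_2=\chi_1^c$ together with Galois-invariance of the norm gives $n_1=n_2$, whence both Hodge--Tate weights of $\rho_{f,\lambda}$ coincide, contradicting $\kappa_{1,\tau}<\kappa_{2,\tau}$), but as written you invoke Hodge--Tate weights before establishing algebraicity, and you would still need to choose $\ell$ splitting suitably in $F$ to read off the weights. The paper does not rely on this shortcut and instead uses Ramanujan--Petersson uniformly in $\lambda$.
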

  
 The proof of the proposition proceeds analogously to 
 Ribet's arguments in \cite[Sections~3 and 4]{Ribet}.
 It is based upon a precise study of the Galois representation
 $\rho_{f,\lambda}$ associated to 
 the Hilbert modular cuspform $f$ with complex multiplication. 
 
\begin{rem}
Due to the lack of appropriate references, we decided to give
a proof of Proposition~\ref{prop:cm} in this appendix.
After the first redaction of the article, we learned that some of 
 the results in Proposition~\ref{prop:cm}, say the equivalence between the statement (1) and the statement (3), has been already proved
 in \cite[Proposition 6.5]{LL} with the language of automorphic representation. 
 We still leave the proof of Proposition~\ref{prop:cm} below
 believing that the proof with
 the language of Galois representation has its own value.
\end{rem}
 
 \begin{proof}[Proof of Proposition~{$\ref{prop:cm}$}] 
We first prove that the statement (3) implies the statement (1). 
Assume that $f$ is obtained as the theta lift $\vartheta(\eta)$
  of a primitive gr\"o{\ss}encharacter $\eta$ of type $(A_0)$ defined
  on a totally imaginary quadratic extension $F$ of $F^+$.
  Then one easily observes by the
  construction of the theta lift (refer to Proposition~\ref{prop:theta} for details) 
  that the Fourier coefficient $C(\mathfrak{q}, \vartheta(\eta))$ at 
a prime ideal $\mathfrak{q}$ of $F^+$ equals zero if and only if 
$\mathfrak{q}$ is inert or ramified in $F$, or equivalently, the evaluation of the quadratic character $\nu_{F/F^+}\colon \mathbb{A}_{F^+}^\times \rightarrow \mathbb{C}^\times$ associated to the quadratic extension $F/F^+$ at $\mathfrak{q}$ equals either $0$ or $-1$. 
  This is equivalent to the validity of the equation (\ref{eq:cm}) 
  when one replaces the character $\nu$ appearing in (\ref{eq:cm}) by the quadratic
  character $\nu_{F/F^+}$.
 
Next we prove that the statement (1) implies the statement (2).
Assume that $f$ has complex multiplication by a nontrivial
  gr\"o{\ss}encharacter $\nu$ on $\mathbb{A}_{F^+}^\times$: 
then $\nu$ is a quadratic character (see the arguments in
  Section~\ref{sssc:CMforms}). The equation (\ref{eq:cm})
  for each $\mathfrak{q}$ in a set of prime ideals of $F^+$ of density
  one implies that the traces $\mathrm{Tr}\,
  \rho_{f,\lambda}(\mathrm{Frob}_\mathfrak{q})$ and 
  $\mathrm{Tr} \, \rho_{f \otimes \nu
  ,\lambda}(\mathrm{Frob}_\mathfrak{q})$ coincide 
  for each $\mathfrak{q}$ in the same set of prime ideals. Since
  both $\rho_{f,\lambda}$ and $\rho_{f\otimes \nu, \lambda}$ are 
  irreducible by \cite[Theorem~3.1]{taylor2}, \v{C}ebotarev's density theorem suggests that they are isomorphic to each other
  as $\lambda$\nobreakdash-adic representations of $G_{F^+}$. In other words, 
  there exists a $(2\times 2)$\nobreakdash-matrix $M$ in
  $GL_2(\mathbb{Q}_{f,\lambda})$ such that the equality 
\begin{align} \label{eq:conj}
\rho_{f, \lambda}(g) =M\rho_{f\otimes\nu , \lambda}(g)M^{-1} =\nu(g)M 
\rho_{f,\lambda}(g)M^{-1}
\end{align}
holds for an arbitrary element $g$ of $G_{F^+}$ 
when we fix noncanonical identifications $V_{f,\lambda} \cong
  \mathbb{Q}_{f,\lambda}^{\oplus 2}$ and $V_{f\otimes \nu, \lambda} \cong
  \mathbb{Q}_{f,\lambda}^{\oplus 2}$. Let us denote by $H$ the kernel 
of the quadratic character $\nu$, which is a subgroup of index two
  of $G_{F^+}$. Take an element $g_0$ from the complement of $H$ in $G_{F^+}$
  and set $T=\rho_{f, \lambda}(g_0)$. We then obtain the equality
  $T=-MTM^{-1}$ by (\ref{eq:conj}), from which we readily observe
  that $M$ is semisimple but not a scalar matrix. The equation (\ref{eq:conj})
  also implies that the image of $H$ under $\rho_{f,\lambda}$ is
  contained in the commutant of the semisimple, nonscalar matrix $M$, and thus
  it is abelian.  The subgroup $H$ satisfies
  all the required condition in the statement (2), and hence
  the statement (2) follows from (1).
  We remark that since $H$ is defined as the kernel of
  the rational quadratic character $\nu$, it is determined independently of the
  choice of a finite place $\lambda$ of $\mathbb{Q}_f$.

We finally verify that the statement (2) implies the statement (3),
  which is a crucial part of the proof.
Suppose that $G_{F^+}$ contains an open subgroup $H$ of index two such that 
the image of $H$ under each $\lambda$-adic representation $\rho_{f, \lambda}$ is
  abelian. Let $F$ denote the subfield of $\overline{\mathbb{Q}}$
  corresponding to $H$, which is a quadratic extension of $F^+$.
  By assumption, the restriction $\rho_{f,\lambda}\vert_H$ of $\rho_{f,\lambda}$
  to $H$ is a semisimple, abelian $\lambda$-adic representation of $H$
  for every finite place $\lambda$ of $\mathbb{Q}_f$, and therefore 
  $\rho_{f, \lambda}\vert_H$ is locally algebraic due to the
  local algebraicity theorem proved by Serre \cite[Chapter~III.\ Section~3.]{serre}
  and Henniart \cite[Section~6.]{Henniart}. In other
  words, there exists a morphism of algebraic groups $r \colon
  {\mathbb{S}_\mathfrak{m}}_{/\mathbb{Q}_f} \rightarrow GL(2)_{/\mathbb{Q}_f}$ 
  giving rise to the strictly compatible system $(\rho_{f,\lambda}\vert_H)_\lambda$
  of the $\lambda$\nobreakdash-adic representations $\rho_{f,\lambda}\vert_H$ of $H$. 
  Here ${\mathbb{S}_\mathfrak{m}}_{/\mathbb{Q}}$ denotes
  Serre's algebraic group associated to the field $F$
  and an appropriate integral modulus $\mathfrak{m}$ of $F$ (called the
  modulus of definition) and ${\mathbb{S}_\mathfrak{m}}_{/\mathbb{Q}_f}$ denotes its basechange over $\mathbb{Q}_f$. Serre's algebraic group ${\mathbb{S}_\mathfrak{m}}_{/\mathbb{Q}}$
  is by construction an extension of the ray
  class group $\mathrm{Cl}(F)_\mathfrak{m}$ of $F$ modulo $\mathfrak{m}$ 
  (regarded as a constant group scheme) 
  by a certain algebraic torus $T_\mathfrak{m}$
  defined over $\mathbb{Q}$, and it is 
  in particular of multiplicative type. See \cite[Chapter~II]{serre} for details
  on properties of ${\mathbb{S}_\mathfrak{m}}_{/\mathbb{Q}}$.
  The algebraic representation $r$ is thus ($\mathbb{Q}_f$-rational and) 
  semisimple; namely there exists a $\mathbb{Q}_f$-rational pair 
  of algebraic characters $(\eta^\mathrm{alg}_1,\eta^\mathrm{alg}_2)$ 
  of ${\mathbb{S}_\mathfrak{m}}_{/\overline{\mathbb{Q}}}$ (in the sense
  that the summation $\eta^\mathrm{alg}_1+\eta^\mathrm{alg}_2$ is invariant 
  under the natural action 
  of $\mathrm{Gal}(\overline{\mathbb{Q}}/\mathbb{Q}_f)$) such
  that $r$ is isomorphic to the direct sum $\eta^\mathrm{alg}_1\oplus \eta^\mathrm{alg}_2$
  over $\overline{\mathbb{Q}}$.
  For each $i=1,2$ and each finite place
  $\lambda$ of $\mathbb{Q}_f$, let $\eta^\mathrm{gal}_{i, \lambda}$ denote
  the $1$\nobreakdash-dimensional
  $\lambda$\nobreakdash-adic representation associated to $\eta_i^\mathrm{alg}$,
  and let $\eta_i$ denote the gr\"o{\ss}encharacter of type $(A_0)$ on
  $F$ associated to $\eta_i^\mathrm{alg}$. Then by construction
  $\eta^\mathrm{gal}_{i,\lambda}$ is the Galois character of $H$ associated 
  to the $\ell$-adic avatar $\hat{\eta}_{i,\ell}$ of $\eta_i$,
  and $\rho_{f,\lambda}\vert_H$ is equivalent to the direct sum of
  $\eta^\mathrm{gal}_{1,\lambda}$ and $\eta^\mathrm{gal}_{2,\lambda}$.
  Now let $c$ denote the generator of the quotient group $G_{F^+}/H$, and let 
  us take its arbitrary lift $\tilde{c}$ to $G_{F^+}$. 
  We define the $c$-conjugation $\rho_{f,\lambda}\vert_H^c$
  of $\rho_{f,\lambda}\vert_H$ by
  $\rho_{f,\lambda}\vert_H^c(h)=\rho_{f,\lambda}\vert_H(\tilde{c}h\tilde{c}^{-1})$ for each element $h$ in $H$. 
  Note that $\rho_{f,\lambda}\vert_H^c$ is well defined independently
  of the choice of $\tilde{c}$.  
  Then, since  $\rho_{f,\lambda}$ is defined on $G_{F^+}$,
  the trace of $\rho_{f,\lambda}\vert_H^c$ obviously coincides
  with that of $\rho_{f,\lambda}\vert_H$. Furthermore both
  $\rho_{f,\lambda}\vert_H$ and $\rho_{f,\lambda}\vert_H^c$ are semisimple,
  and hence they are isomorphic to each other by \v{C}ebotarev's density theorem. Consequently either of the
  followings two cases occurs: ($\eta_{i,\lambda}^c=\eta_{i,\lambda}$ for $i=1,2$) 
or ($\eta_{1,\lambda}^c=\eta_{2,\lambda}$ and
  $\eta_{2,\lambda}^c=\eta_{1,\lambda}$). If the former case occurs, the equality
  $\rho_{f,\lambda}(\tilde{c}h)=\rho_{f,\lambda}(h\tilde{c})$ holds for every
  $h$ in $H$. This means that $\rho_{f,\lambda}$ is an abelian
  representation of $G_{F^+}$, which contradicts the irreducibility of $\rho_{f, \lambda}$. 
  Therefore the latter case does occur, and in particular
  the conjugation by $\rho_{f,\lambda}(\tilde{g})$ corresponds to the interchange of
  $\eta^\mathrm{gal}_{1,\lambda}$ and $\eta^\mathrm{gal}_{2,\lambda}$ if $\tilde{g}$ is
  an element of the complement of $H$ in $G_{F^+}$. From this fact
  we readily verify that the image $\rho_{f,\lambda}(\tilde{g})$ of
  such $\tilde{g}$ is conjugate to a matrix both of whose diagonal entries equal zero, and thus $\mathrm{Tr}\, \rho_{f,\lambda}(\mathrm{Frob}_\mathfrak{q})$
  is trivial for a prime ideal $\mathfrak{q}$ of $F^+$ which inerts in $F$.
  If a prime ideal $\mathfrak{q}$ of $F^+$ splits completely in $F$, on the contrary,
  the decomposition group at $\mathfrak{q}$ is naturally identified with $H$.
  Combining these observations, we obtain the following formula for
  each prime ideal $\mathfrak{q}$ which does not divide $\ell \mathfrak{N}$
  (note that $\eta^\mathrm{gal}_{2,\lambda}(\mathrm{Frob}_\mathfrak{Q})=\eta^\mathrm{gal}_{1,\lambda}(\tilde{c}\mathrm{Frob}_{\mathfrak{Q}}\tilde{c}^{-1})=\eta^\mathrm{gal}_{1,\lambda}(\mathrm{Frob}_{\mathfrak{Q}^c})$ holds):
\begin{align} \label{eq:trace_eta}
\mathrm{Tr}\, \rho_{f,\lambda}(\mathrm{Frob}_\mathfrak{q}) = \begin{cases} 
\eta^\mathrm{gal}_{1,\lambda}(\mathrm{Frob}_\mathfrak{Q})+\eta^\mathrm{gal}_{1,\lambda}(\mathrm{Frob}_{\mathfrak{Q}^c})
	     & \text{if $\mathfrak{q}$ splits in $F$ as
	     $\mathfrak{q}=\mathfrak{QQ}^c$}, \\
0 & \text{otherwise}.
\end{cases}
\end{align}
This calculation (combined with \v{C}ebotarev's density theorem and 
the irreducibility of $\rho_{f,\lambda}$) implies that $\rho_{f,\lambda}$ is isomorphic to the induced
  representation $\mathrm{Ind}^{G_{F^+}}_H \eta^\mathrm{gal}_{1,\lambda}$ of
  $\eta^\mathrm{gal}_{1,\lambda}$. We remark that this is canonically extended to an isomorphism
  between the strict compatible systems $(\rho_{f,\lambda})_\lambda$
  and $(\mathrm{Ind}^{G_{F^+}}_H \eta^\mathrm{gal}_{1,\lambda})_\lambda$; namely,
  $\rho_{f,\lambda}$ is isomorphic to $\mathrm{Ind}^{G_{F^+}}_H \eta^\mathrm{gal}_{1,\lambda}$ for
  {\em every} finite place $\lambda$ of $\mathbb{Q}_f$ which is not contained in either of
  the exceptional sets of the two strictly compatible systems.

 Now, we verify that the field $F$ corresponding to the subgroup $H$ of
  $G_{F^+}$ is a purely imaginary quadratic extension of $F^+$ (and thus
  $F$ is in particular a CM number field). Indeed if $F$ is not purely
  imaginary over $F^+$, every algebraic character of $\mathbb{T}_\mathfrak{m}$
  is described as an integral power of the norm character (see \cite[Chapter~II, Section~3.3]{serre}).
  In particular, each $\lambda$-adic character $\eta_{i,\lambda}^\mathrm{gal}$ is described as
\begin{align*}
\eta^\mathrm{gal}_{i,\lambda} =\chi_{\ell,\mathrm{cyc}}^{-n_i} \eta_{i,\lambda}^f
\end{align*}
  for a certain integer $n_i$ and a certain character $\eta_{i,\lambda}^f$
  of $H$ of finite order. Then the determinants of $\rho_{f,\lambda}$
  and  $\mathrm{Ind}_H^{G_{F^+}}\eta_{1,\lambda}^\mathrm{gal}$ are calculated as follows:
 \begin{align*}
  \det \rho_{f,\lambda}&=\chi_{\ell,\mathrm{cyc}}^{-1}\varepsilon_+^\mathrm{gal}=\chi_{\ell,\mathrm{cyc}}^{-[\kappa]}\varepsilon_+^f, \\
  \det \left(\mathrm{Ind}_H^{G_{F^+}} \eta_{1,\lambda}^\mathrm{gal}\right)&= \eta_{1,\lambda}^\mathrm{gal} \eta_{2,\lambda}^\mathrm{gal} =\chi_{\ell,\mathrm{cyc}}^{-n_1-n_2} (\eta_{1,\lambda}^f\eta_{2,\lambda}^f)
 \end{align*}
  where $\varepsilon_+^f$ denotes the finite part of $\varepsilon_+^\mathrm{gal}$.
  Since these two $1$\nobreakdash-dimensional $\lambda$\nobreakdash-adic representations must coincide,
  we obtain the equality $n_1+n_2=[\kappa]$. Furthermore the equation (\ref{eq:trace_eta}) implies that 
\begin{align*}
C(\mathfrak{q};f)=\mathrm{Tr}\, \rho_{f,\lambda}(\mathrm{Frob}_{\mathfrak{q}})=\eta_{1,\lambda}^f(\mathfrak{Q})
 \mathcal{N}
 \mathfrak{q}^{n_1}+\eta_{2,\lambda}^f(\mathfrak{Q})\mathcal{N}
 \mathfrak{q}^{n_2}
\end{align*}
  holds for each prime ideal $\mathfrak{q}$ of $F^+$ which splits completely in $F$ as $\mathfrak{q}=\mathfrak{QQ}^c$ (recall
  that $\chi_{\ell,\mathrm{cyc}}(\mathrm{Frob}_\mathfrak{q})$ equals $\mathcal{N}
  \mathfrak{q}^{-1}$). By virtue of the Ramanujan-Petersson conjecture 
\begin{align*}
\lvert C(\mathfrak{q};f)\rvert \leq 2\mathcal{N}\mathfrak{q}^{[\kappa]/2}
\end{align*}
  established by Brylinski, Labesse \cite[Th\'eor\`eme~3.4.5]{BR} and Blasius \cite[Theorem~1]{Blasius},
  both $n_1$ and $n_2$ must be less than or equal to $[\kappa]/2$.  
  We thus conclude that, combining this observation with the equality $n_1+n_2=[\kappa]$,
  both $n_1$ and $n_2$ equal $[\kappa]/2$. More specifically, the restriction $\rho_{f, \lambda}\vert_H$ is 
  equivalent to the $\lambda$\nobreakdash-adic representation of the form
\begin{align*}
\begin{pmatrix}\eta_{1,\lambda}^f & 0 \\ 0 & \eta_{2,\lambda}^f \end{pmatrix}
 \otimes \chi_{\ell, \mathrm{cyc}}^{-[\kappa]/2}, 
\end{align*} 
which has a finite image in $PGL_2(\mathbb{Q}_{f,\lambda})$. 
This contradicts the fact that $\rho_{f,\lambda}$ has an {\em infinite}
  image in $PGL_2(\mathbb{Q}_{f,\lambda})$ (one readily verifies this
  fact in the completely same manner as \cite[Theorem~(4.3)]{Ribet}), and thus 
  the field $F$ corresponding to $H$ is purely imaginary over $F^+$.

 Next, we verify that 
  the infinity type $\mu=\sum_{\sigma\in I_F} \mu_\sigma \sigma$
  of $\eta$ is described in terms of the weight of $f$ by setting $\eta=\eta_1$. 
  To this end, we take a finite place $\mathfrak{p}$ of $F^+$ which satisfies the following
  two properties:
  \begin{enumerate}[label=\roman*)$_\mathfrak{p}$]
   \item the prime ideal $\mathfrak{p}$ is 
	    contained in neither of the exceptional set
	    of the strictly compatible system
	    $(\rho_{f,\lambda})_\lambda$ nor
	    that of $(\mathrm{Ind}^{G_{F^+}}_H
	    \eta_\lambda^\mathrm{gal})_\lambda$;
   \item the unique prime ideal $p\mathbb{Z}$ of $\mathbb{Z}$ lying below
	    $\mathfrak{p}$ splits completely in the extension $F/\mathbb{Q}$.
  \end{enumerate}
  The existence of such a finite place $\mathfrak{p}$ is guaranteed
  by \v{C}ebotarev's density theorem. Now we fix an algebraic closure
  $\overline{\mathbb{Q}}_p$ of the
  completion $F^+_\mathfrak{p}$ of $F^+$ at
  $\mathfrak{p}$ and an embedding $\iota_p \colon
  \overline{\mathbb{Q}} \hookrightarrow \overline{\mathbb{Q}}_p$.
  For each embedding $\tau \colon F^+\hookrightarrow
  \overline{\mathbb{Q}}$ (that
  is, $\tau$ is an element of $I_{F^+}$ under the notation of
  Section~\ref{sssc:grossen}), let $\mathfrak{p}_\tau$ denote the
  finite place of $F^+$ lying above $p\mathbb{Z}$ induced by the
  composition $\iota_p\circ \tau \colon F^+ \hookrightarrow
  \overline{\mathbb{Q}}_p$. Note that, due to the condition
  ii)$_\mathfrak{p}$ on $\mathfrak{p}$, the correspondence $\tau
  \mapsto \mathfrak{p}_\tau$ induces a bijection between $I_{F^+}$ and
  the set of prime ideals of $F^+$ lying above $p\mathbb{Z}$. 
  Let $\lambda_0$ denote the finite place of $\mathbb{Q}_f$
  induced by the embedding $\mathbb{Q}_f \subset
  \overline{\mathbb{Q}}\xrightarrow{\iota_p} \overline{\mathbb{Q}}_p$
  and let us consider the isomorphism 
  \begin{align*}
   \rho_{f,\lambda_0} \cong \mathrm{Ind}^{G_{F^+}}_H
   \eta_{\lambda_0}^\mathrm{gal} \colon G_{F^+} \rightarrow GL_2(\mathbb{Q}_{f,\lambda_0})
  \end{align*}
  of the $\lambda_0$\nobreakdash-adic representations of $G_{F^+}$.
  It is obvious from their constructions that both of $\rho_{f,\lambda_0}
  \vert_{D_{\mathfrak{p}_\tau}}$ and $( \mathrm{Ind}_H^{G_{F^+}}
  \eta_{\lambda_0}^\mathrm{gal})\vert_{D_{\mathfrak{p}_\tau}}$ are
  Hodge-Tate representation of $D_{\mathfrak{p}_\tau}$ for each $\tau$ in $I_{F^+}$, and 
  we shall compare the Hodge-Tate weights of these representations.
  As we have remarked in the paragraphs preceding
  Remark~\ref{rem:neben}, the Hodge type
  at $\tau$ of the motive $M(f)_{/F^+}$ associated to $f$ is
  given by $\{\,(\kappa_{1,\tau},\kappa_{2,\tau}),
  (\kappa_{2,\tau},\kappa_{1,\tau})\,\}$, and we thus see that
  the Hodge-Tate weights of $\rho_{f,\lambda_0}\vert_{D_{\mathfrak{p}_\tau}}$
  are $\{ \, \kappa_{1,\tau}, \kappa_{2,\tau} \, \}$
  via the comparison isomorphism in $p$-adic Hodge theory.
  Now let us study the Hodge-Tate weight of
  $(\mathrm{Ind}^{G_{F^+}}_H
  \eta_{\lambda_0}^\mathrm{gal})\vert_{D_{\mathfrak{p}_\tau}}$ for
  each $\tau$ in $I_{F^+}$. Let $w_\tau$ denote the unique  (complex) place of
  $F$ lying above the real place of $F^+$ determined by $\tau$, which
  is identified with the pair $\{\, \sigma_{\tau,1}, \sigma_{\tau,2}
  \,\}$ of embeddings of $F$ into $\overline{\mathbb{Q}}$ whose
  restrictions to $F^+$ coincide with $\tau$. Then the
  $p$\nobreakdash-adic embeddings $\iota_p\circ \sigma_{\tau,1}$ and
  $\iota_p\circ \sigma_{\tau,2}$ of $F$ induce distinct prime ideals
  $\mathfrak{P}_{\tau,1}$ and $\mathfrak{P}_{\tau,2}$, which are
  interchanged by the complex conjugation. 
  Recall that the Hodge type at $w_\tau$ of the motive $M(\eta)_{/F}$
  associated to the gr\"o{\ss}encharacter $\eta$ of type $(A_0)$
  is given by $\{\,
  (\mu_{\sigma_{\tau,1}}, \mu_{\sigma_{\tau,2}}), (\mu_{\sigma_{\tau,2}},\mu_{\sigma_{\tau,1}})\,\}$ (see \cite[Proposition~(3.2.3)]{Blasius1} or \cite[Chapter~1, Section~4]{schappacher} for details). 
  Since $(\mathrm{Ind}^{G_{F^+}}_H
  \eta_{\lambda_0}^\mathrm{gal})\vert_{D_{\mathfrak{p}_\tau}}$ 
  is isomorphic to $\eta_{\lambda_0}^\mathrm{gal}\vert_{D_{\mathfrak{P}_{\tau,1}}}\oplus \eta_{\lambda_0}^\mathrm{gal}\vert_{D_{\mathfrak{P}_{\tau,2}}}$,
  we readily see that the Hodge-Tate weights of $(\mathrm{Ind}^{G_{F^+}}_H \eta_{\lambda_0}^\mathrm{gal})\vert_{D_{\mathfrak{p}_\tau}}$ 
  are $\{ \, \mu_{\sigma_{\tau,1}}, \mu_{\sigma_{\tau,2}}\, \}$.
  By comparing the Hodge-Tate weights, we can consider without loss of
  generality that
  the two integers $\kappa_{1,\tau}$ and
  $\kappa_{2,\tau}$ coincide with $\mu_{\sigma_{\tau,1}}$ and
  $\mu_{\sigma_{\tau,2}}$ respectively.
 Furthermore the inequality $\kappa_{1,\tau} < \kappa_{2,\tau}$ holds
  for each $\tau$ in $I_{F^+}$ since we have assumed that the weight
  $\kappa$ of $f$ was cohomological. Therefore
  if we set $\Sigma=\{ \, \sigma_{\tau,1} \colon F\hookrightarrow \overline{\mathbb{Q}} \mid \tau \in I_{F^+} \,\}$,
  it is straightforward to verify that $\Sigma$ is a $p$\nobreakdash-ordinary CM type of $F$ with respect to which
  the infinity type $\mu$ of the gr\"o{\ss}en\-character $\eta$ is admissible.
  Moreover we readily redescribe $\mu$ in terms of $\kappa$ and $\Sigma$ as in the statement (3). 
   
  Finally, we verify that the primitive form $f$ is described as the theta lift of $\eta$.
  Since the infinity type of $\eta$ is $\Sigma$-admissible, we have 
  the theta lift $\vartheta(\eta)$ of $\eta$ by Proposition-Definition \ref{prop:theta}. 
  The construction of $\eta$ implies that the local $L$\nobreakdash-factors of $f$
  and $\vartheta(\eta)$ coincide at every prime ideal $\mathfrak{q}$ of $F^+$ which
  splits completely in $F$, and we thus conclude that $f$ and
  $\vartheta(\eta)$ coincides up to a scalar multiple due to the 
  strong multiplicity one theorem for Hilbert modular forms.
  However both the Fourier coefficient at $\mathfrak{r}_{F^+}$ of $f$
  and that of $\vartheta(\eta)$ equal $1$, and hence
  the primitive form $f$ exactly coincides with the theta lift $\vartheta(\eta)$ of $\eta$.
\end{proof}

As an application of Proposition~\ref{prop:cm}, we can deduce conditions for the primitive cuspform $\vartheta(\eta)$ with complex multiplication to be nearly $p$-ordinary. In the following proposition we fix a $p$-adic embedding $\iota_p\colon \overline{\mathbb{Q}}\hookrightarrow \overline{\mathbb{Q}}_p$.

\begin{pro} \label{prop:cm_ord}
 Let $p$ be a  prime number and $f$ a primitive Hilbert modular cuspform as above. Then $f$ is nearly $p$-ordinary $($with respect to $\iota_p$$)$ if and only if there exist a totally imaginary quadratic extension $F/F^+$ satisfying the ordinarity condition $($ord$_{F/F^+})$ for the prime number $p$, a $p$-ordinary CM type $\Sigma$ and a $\Sigma$-admissible gr\"o{\ss}encharacter $\eta$ of type $(A_0)$ on $F$ ordinary with respect to $\Sigma$ such that $f$ is obtained as the theta lift $\vartheta(\eta)$ of $\eta$.
\end{pro}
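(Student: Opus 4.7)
The plan is to prove both directions, with all the substance residing in the forward implication.

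For $(\Leftarrow)$, the statement is essentially already recorded at the end of Section~\ref{sssc:CMforms}: when $\eta$ is ordinary with respect to a $p$-ordinary CM type $\Sigma$, then for every $\mathfrak{p}\mid p$ with $\mathfrak{p}\mathfrak{r}_F=\mathfrak{P}\mathfrak{P}^c$ and $\mathfrak{P}\in\Sigma_p$, $\eta$ is unramified at $\mathfrak{P}$ and the normalised $U_0(\mathfrak{p})$-eigenvalue of $\vartheta(\eta)$ coincides (via the $p$-adic avatar) with $\hat\eta(\varpi_\mathfrak{P})\in\mathcal{O}^\times$. I would simply make this computation explicit using the formula of Proposition-Definition~\ref{prop:theta} and cite the closing paragraphs of Section~\ref{sssc:CMforms}.

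For $(\Rightarrow)$, Proposition~\ref{prop:cm} produces a totally imaginary quadratic extension $F$ of $F^+$, a CM type $\Sigma$, and a $\Sigma$-admissible primitive gr\"o{\ss}encharacter $\eta$ with $f=\vartheta(\eta)$ and $V_f\cong\mathrm{Ind}_{G_F}^{G_{F^+}}\mathcal{K}(\eta^{\mathrm{gal}})$, and my task is to verify that $F$ satisfies $(\mathrm{ord}_{F/F^+})$ and that the pair $(\Sigma,\eta)$ supplied already satisfies the remaining two ordinarity conditions. To show every $\mathfrak{p}\mid p$ splits in $F$, I argue by contradiction through the local Galois representation. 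If $\mathfrak{p}$ is inert or ramified in $F/F^+$ and $\mathfrak{P}$ denotes the unique prime of $F$ above $\mathfrak{p}$, then $[D_\mathfrak{p}:D_\mathfrak{P}]=2$, and Mackey's formula gives $V_f\vert_{D_\mathfrak{p}}\cong\mathrm{Ind}_{D_\mathfrak{P}}^{D_\mathfrak{p}}\eta^{\mathrm{gal}}\vert_{D_\mathfrak{P}}$; this $D_\mathfrak{p}$-representation is reducible if and only if $\eta^{\mathrm{gal}}\vert_{D_\mathfrak{P}}$ is invariant under conjugation by a lift of the nontrivial element of $D_\mathfrak{p}/D_\mathfrak{P}$. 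Such invariance would force the Hodge--Tate weight of $\eta^{\mathrm{gal}}$ at an embedding $\sigma\in I_{F,\mathfrak{P}}$ to equal the weight at $\bar\sigma=\sigma\circ c$; but Proposition~\ref{prop:cm} identifies $\{\mu_\sigma,\mu_{\bar\sigma}\}$ with $\{\kappa_{1,\tau},\kappa_{2,\tau}\}$ for $\tau=\sigma\vert_{F^+}$, and these are distinct by the cohomological weight assumption $\kappa_{1,\tau}<\kappa_{2,\tau}$. Hence $V_f\vert_{D_\mathfrak{p}}$ is irreducible, contradicting the existence of the one-dimensional $D_\mathfrak{p}$-stable filtration $\mathrm{Fil}_\mathfrak{p}^+V_f$ furnished by Proposition~\ref{prop:localGalois}. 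Thus every $\mathfrak{p}\mid p$ splits in $F$.

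Having established $(\mathrm{ord}_{F/F^+})$, at each split prime $\mathfrak{p}=\mathfrak{P}\mathfrak{P}^c$ the local representation decomposes as $V_f\vert_{D_\mathfrak{p}}=\eta^{\mathrm{gal}}\vert_{D_\mathfrak{P}}\oplus\eta^{\mathrm{gal}}\vert_{D_{\mathfrak{P}^c}}$, and $\mathrm{Fil}_\mathfrak{p}^+V_f$ coincides with one of the two summands. A direct $p$-adic valuation computation--using $\hat\eta(\varpi_\mathfrak{P})\in\mathcal{O}^\times$, the identity $\iota_p(\eta^*(\mathfrak{P}))=\hat\eta(\varpi_\mathfrak{P})\cdot\varpi_\mathfrak{P}^{\mu_\mathfrak{P}}$ valid whenever $\eta$ is unramified at $\mathfrak{P}$, and the normalisation $v_p(\iota_p(\{\mathfrak{p}^{\kappa_1}\}))=\kappa_{1,\mathfrak{p}}$--shows that $\{\mathfrak{p}^{\kappa_1}\}^{-1}\eta^*(\mathfrak{P})$ is a $p$-adic unit precisely when $\eta$ is unramified at $\mathfrak{P}$ and $\mu_\mathfrak{P}=\sum_{\sigma\in I_{F,\mathfrak{P}}}\mu_\sigma$ equals $\kappa_{1,\mathfrak{p}}$, which in turn is equivalent to $I_{F,\mathfrak{P}}\subset\Sigma$. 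Near $p$-ordinarity of $f$ then forces this condition to hold at one of $\{\mathfrak{P},\mathfrak{P}^c\}$ for every $\mathfrak{p}\mid p$; the chosen primes assemble into a $p$-ordinary $\Sigma_p$, and simultaneously $\eta$ is unramified at each $\mathfrak{P}\in\Sigma_p$, which is the $\Sigma$-ordinarity. The main obstacle throughout is the bookkeeping required to align the Mackey decomposition of $V_f\vert_{D_\mathfrak{p}}$ with the Hodge--Tate weight structure of the CM character and with the valuation normalisations hidden in $\{\mathfrak{p}^{\kappa_1}\}$ and $\hat\eta$; once these identifications are properly installed, both the irreducibility argument ruling out non-split primes and the unit-root argument extracting $\Sigma$-ordinarity go through uniformly.
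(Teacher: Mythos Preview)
Your proposal is correct and follows essentially the same route as the paper's proof. Both directions match: for $(\Leftarrow)$ you invoke the $p$-adic avatar computation exactly as the paper does, and for $(\Rightarrow)$ you argue (i) non-split $\mathfrak{p}$ would give an irreducible $V_f\vert_{D_\mathfrak{p}}$ contradicting Proposition~\ref{prop:localGalois}, then (ii) a valuation comparison at each split $\mathfrak{p}=\mathfrak{P}\mathfrak{P}^c$ forces $I_{F,\mathfrak{P}}\subset\Sigma$ and $\eta$ unramified at $\mathfrak{P}$.

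The only notable difference is in presentation. For step (i) the paper simply asserts that $\mathrm{Ind}_{D_\mathfrak{P}}^{D_\mathfrak{p}}\eta^{\mathrm{gal}}\vert_{D_\mathfrak{P}}$ is ``obviously irreducible'', while you supply the Mackey criterion and the Hodge--Tate weight obstruction; your justification is the more complete one. For step (ii) the paper phrases the computation by introducing $\Sigma_\mathfrak{P}=\{\sigma_1,\dots,\sigma_s\}$ and $\Sigma_\mathfrak{P}^c=\{\bar\sigma_{s+1},\dots,\bar\sigma_{s+t}\}$, then uses Proposition~\ref{prop:localGalois} to equate $\hat\eta(\varpi_\mathfrak{P})$ with the unit root $\varpi_\mathfrak{p}^{-\kappa_{1,\mathfrak{p}}}\eta^*(\mathfrak{P})$ and deduces $\sum_{j=1}^t(\mu_{\bar\sigma_{s+j}}-\mu_{\sigma_{s+j}})=0$, whence $t=0$ by $\Sigma$-admissibility. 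Your formulation---that the unit condition at $\mathfrak{P}$ is equivalent to $\mu_\mathfrak{P}=\kappa_{1,\mathfrak{p}}$, which is in turn equivalent to $I_{F,\mathfrak{P}}\subset\Sigma$---is the same computation repackaged, and both conclude that the CM type $\Sigma$ already supplied by Proposition~\ref{prop:cm} is $p$-ordinary.
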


\begin{proof}
 We can easily verify that the condition is sufficient. Indeed, let $F/F^+$, $\Sigma$ and $\eta$ be as in the statement. By the characterisation of the Fourier coefficients of the theta lifts, we readily see that, for every prime ideal $\mathfrak{p}=\mathfrak{P}\mathfrak{P}^c$ lying above $p$, the eigenvalue $C_0(\mathfrak{p};\vartheta(\eta))$ of the normalised Hecke operator $U_0(\mathfrak{p})$ at $\vartheta(\eta)$ equals the summation of $\{\mathfrak{p}^{\kappa_{\mu,1}}\}^{-1}\eta^*(\mathfrak{P})$ and $\{ \mathfrak{p}^{\kappa_{\mu,1}}\}^{-1}\eta^*(\mathfrak{P}^c)$. Note that the value $\{\mathfrak{p}^{\kappa_{\mu,1}}\}^{-1}\eta^*(\mathfrak{P})$ does not vanish since $\eta$ is unramified at the unique place $\mathfrak{P}$ in $\Sigma_p$ lying above $\mathfrak{p}$ due to the ordinarity of $\eta$ with respect to $\Sigma$. Moreover we readily observe by construction that $\{\mathfrak{p}^{\kappa_{\mu,1}}\}^{-1}\eta^*(\mathfrak{P})$ has the same $p$-adic valuation with the evaluation $\hat{\eta}_{\mathfrak{P}}(\varpi_\mathfrak{P})$ of the $\mathfrak{P}$-component of the $p$-adic avatar $\hat{\eta}$ of $\eta$ at a uniformiser $\varpi_\mathfrak{P}$ of $F_\mathfrak{P}$. Therefore $C_0(\mathfrak{p}; \vartheta(\eta))$ is a $p$-adic unit for each $\mathfrak{p}$ and consequently $\vartheta(\eta)$ is nearly $p$-ordinary.

\medskip 
 Conversely let $f$ be a nearly $p$-ordinary Hilbert modular cuspform with complex multiplication, and let us take a totally imaginary quadratic extension $F/F^+$, a CM type $\Sigma$ and a $\Sigma$-admissible gr\"o{\ss}encharacter $\eta$ of type $(A_0)$ on $F$ as in Proposition~\ref{prop:cm}; then $f$ is obtained as the theta lift of  $\eta$. We denote by $V_f$ the Galois representation associated to $f$, which is isomorphic to the induced representation $\mathrm{Ind}^{F^+}_F\eta^\mathrm{gal}$ by Proposition~\ref{prop:cm}. First assume that there exists a place $\mathfrak{p}$ of $F^+$ lying above $p$ which does not split in $F$, and let $\mathfrak{P}$ denote the unique place of $F$ above $\mathfrak{p}$. Since  we can regard the decomposition group $D_\mathfrak{P}$ of $G_F$ at $\mathfrak{P}$ as a subgroup of the decomposition group $D_\mathfrak{p}$ at $\mathfrak{p}$ of index $2$, we readily identify the restriction of $V_f$ to $D_\mathfrak{p}$ with the induced representation $\mathrm{Ind}^{D_\mathfrak{p}}_{D_\mathfrak{P}}\eta^\mathrm{gal}\lvert_{D_\mathfrak{P}}$, which is obviously irreducible. Therefore $V_f\vert_{D_\mathfrak{p}}$ admits no one-dimensional $D_\mathfrak{p}$-subrepresentations. This contradicts Proposition~\ref{prop:localGalois}, and thus all places of $F^+$ lying above $p$ split in $F$. In other words, the quadratic extension $F/F^+$ satisfies the condition (ord$_{F/F^+}$) for $p$.

 We next prove that $\Sigma$ is a $p$-ordinary CM type. Let $\mathfrak{p}$ be a place of $F^+$ lying above $p$, which splits completely in $F$ as $\mathfrak{p}=\mathfrak{PP}^c$ by the arguments above. The quadratic equation \eqref{eq:unitroot} in Proposition~\ref{prop:localGalois} has two roots $\varpi_\mathfrak{p}^{-\kappa_{1,\mathfrak{p}}}\eta^*(\mathfrak{P})$ and $\varpi_\mathfrak{p}^{-\kappa_{1, \mathfrak{p}}}\eta^*(\mathfrak{P}^c)$, one of which is a $p$-adic unit due to the near $p$-ordinarity of $f=\vartheta(\eta)$. We can assume without loss of generality that $\varpi_\mathfrak{p}^{-\kappa_{1,\mathfrak{p}}}\eta^*(\mathfrak{P})$ is a $p$-adic unit. Define $\Sigma_\mathfrak{P}$ and $\Sigma_{\mathfrak{P}}^c$ as follows:
 \begin{align*}
  \Sigma_\mathfrak{P}&=\{\,\sigma \in \Sigma \mid \iota_p\circ \sigma \text{ induces }\mathfrak{P}\,\}=\{\, \sigma_1,\dotsc, \sigma_s \,\}, \\
  \Sigma^c_\mathfrak{P}&=\{\,\bar{\sigma} \in \Sigma^c \mid \iota_p\circ \bar{\sigma} \text{ induces }\mathfrak{P}\,\}=\{\, \bar{\sigma}_{s+1},\dotsc, \bar{\sigma}_{s+t} \,\}.
 \end{align*}
 We shall verify that $\Sigma^c_{\mathfrak{P}}$ is empty, or in other words, that $t$ equals $0$. Since $\mathfrak{p}$ splits in $F$, the decomposition group $D_\mathfrak{p}$ at $\mathfrak{p}$ is contained in $G_F$, and thus the restriction of $V_f$ to $D_\mathfrak{p}$ is isomorphic to the direct sum of $\eta^\mathrm{gal}\vert_{D_\mathfrak{P}}$ and $\eta^\mathrm{gal,c}\vert _{D_\mathfrak{P}}$. Therefore the equation \eqref{eq:unitroot} in Proposition\ref{prop:localGalois} suggests that $\eta^\mathrm{gal}(\mathrm{Frob}_{\varpi_\mathfrak{p}})=\hat{\eta}(\varpi_\mathfrak{P})$ coincides with $\varpi_\mathfrak{p}^{-\kappa_{1,\mathfrak{p}}}\eta^*(\mathfrak{P})$. Here we identify $F^+_\mathfrak{p}$ with $F_\mathfrak{P}$ and define the uniformiser $\varpi_\mathfrak{P}$ of $F_\mathfrak{P}$ as $\varpi_\mathfrak{p}$ via this identification. By the definition of the $p$-adic avatar \eqref{eq:padicavatar}, we have
 \begin{align} \label{eq:exponent1}
  \hat{\eta}(\varpi_\mathfrak{P})=\varpi_\mathfrak{P}^{-\sum_{i=1}^s \mu_{\sigma_i}-\sum_{j=1}^t \mu_{\bar{\sigma}_{s+j}}} \eta^*(\mathfrak{P})
 \end{align}
 where we denote by $\mu=\sum_{\sigma \in \Sigma}(\mu_\sigma\sigma+\mu_{\bar{\sigma}}\bar{\sigma})$ the infinity type of $\eta$.
 On the other hand, since $\kappa_1=\kappa_{\mu,1}$ equals $\sum_{\sigma \in \Sigma}\mu_\sigma \sigma\vert_{F^+}$ by the characterisation of the weight of the theta lift (see \eqref{eq:weight_cm} for details), we have
 \begin{align} \label{eq:exponent2}
  \varpi_\mathfrak{p}^{-\kappa_{1,\mathfrak{p}}}\eta^*(\mathfrak{P}) =\varpi_\mathfrak{P}^{-\sum_{i=1}^{s+t}\mu_i} \eta^*(\mathfrak{P}).
 \end{align}
 Comparing the exponent of $\varpi_\mathfrak{P}$ in the equations \eqref{eq:exponent1} and \eqref{eq:exponent2}, we obtain the equality
 \begin{align*}
\sum_{j=1}^t (\mu_{\bar{\sigma}_{s+j}}-\mu_{\sigma_{s+j}})=0.
 \end{align*}
 The $\Sigma$-admissibility of $\eta$ implies the inequality $\mu_{\bar{\sigma}_{s+j}}>\mu_{\sigma_{s+j}}$ for each $j$, which forces $t$ to be $0$. Therefore the CM type $\Sigma$ is indeed a $p$-ordinary CM type.

 Finally since $\varpi_\mathfrak{p}^{-\kappa_{1,\mathfrak{p}}}\eta^*(\mathfrak{P})$ is a $p$-unit for every place $\mathfrak{p}=\mathfrak{P}\mathfrak{P}^c$ of $F^+$ lying above $p$, the value $\eta^*(\mathfrak{P})$ does not vanish for every $\mathfrak{P}$ in $\Sigma_p$. This implies that the gr\"o{\ss}encharacter $\eta$ is ordinary with respect to $\Sigma$. 
\end{proof}

%
\section{Comparison of the global and local duality pairings}
\label{app:duality}
%

In this appendix we provide the proof of
Proposition~\ref{prop:commutes}.
We take over the same setting and use the same notation
as in Section~\ref{sssc:int_step}.
Firstly we recall that, as is explained in the preceding paragraphs of
\cite[Proposition~3.3.1]{gr-surj}, the Pontrjagin dual of
the cokernel of the global-to-local morphism $\phi_{\mathcal{L}_\mathrm{str}^{*,(j-1)}}$ is identified
with the image of the global-to-local map
\begin{align*}
 \mathrm{Res}^1_{(j-1)}\rvert_{\mathrm{Sel}^{\Sigma,\mathrm{str}}_{\mathcal{A}_{\eta,(j-1)}}} \colon
 \mathrm{Sel}^{\Sigma,\mathrm{str}}_{\mathcal{A}_{\eta,(j-1)}}
 \rightarrow L_\mathrm{str}(F,\mathcal{A}_{\eta,(j-1)})
 \qquad \left( \subset \prod_{v\in S} H^1(F_v,
 \mathcal{A}_{\eta,(j-1)}) \right) 
\end{align*}
via the local Tate duality isomorphism
\begin{align*}
 \Phi_{\mathrm{local}}=(\Phi_{\mathrm{local},v})_{v\in S} \colon \prod_{v\in S} H^1(F_v,
 \mathcal{A}_{\eta,(j-1)})^\vee \xrightarrow{\sim} \prod_{v\in S}
 H^1_\mathrm{cts}(F_v, \mathcal{T}^*_{\eta,(j-1)}).
\end{align*}
In other words, the summation of the local Tate pairing
\begin{align*}
 \langle \, , \, \rangle_\mathrm{local}=\sum_{v\in S}\langle \, , \,
 \rangle_v \colon \prod_{v\in S} H^1(F_v,\mathcal{A}_{\eta,(j-1)})\times \prod_{v\in S}
 H^1_\mathrm{cts}(F_v,\mathcal{T}^*_{\eta,(j-1)})\rightarrow \mathbb{Q}_p/\mathbb{Z}_p
\end{align*}
induces a perfect pairing
\begin{align*}
\langle \, , \, \rangle_\mathrm{local}\colon 
\mathrm{Im}
\Bigl( \mathrm{Res}^1_{(j-1)}\vert_{\mathrm{Sel}^{\Sigma,\mathrm{str}}_{\mathcal{A}_{\eta,(j-1)}}}\Bigr)
 \times
 \mathrm{Coker}(\phi_{\mathcal{L}^{*,(j-1)}_{\mathrm{str}}})\rightarrow
 \mathbb{Q}_p/\mathbb{Z}_p,
\end{align*}
for which we use the same symbol $\langle \, , \,
\rangle_{\mathrm{local}}$ to simplify the notation. Since the module
$\mathcal{C}^{(j-1)}$ introduced in the diagram (\ref{ses:comp_coker})
is defined as the cokernel of the multiplication
of $x_j$ on $\mathrm{Coker}(\phi_{\mathcal{L}^{*,(j-1)}_\mathrm{str}})$,
the perfect pairing $\langle \, , \, \rangle_\mathrm{local}$ above
also induces a perfect pairing
\begin{align*}
 \langle \, , \, \rangle_\mathrm{local} \colon
 \mathcal{K}^{(j-1)}\times
 \mathcal{C}^{(j-1)}\rightarrow \mathbb{Q}_p/\mathbb{Z}_p
\end{align*}
where $\mathcal{K}^{(j-1)}$ is defined as 
\begin{equation}\label{equation:definition_K}
  \mathcal{K}^{(j-1)}:= \mathrm{Ker} \left[
  \mathrm{Im}
\Bigl( \mathrm{Res}^1_{(j-1)}\vert_{\mathrm{Sel}^{\Sigma,\mathrm{str}}_{\mathcal{A}_{\eta,(j-1)}}}\Bigr)
  \xrightarrow{\times x_j }
  \mathrm{Im}
\Bigl( \mathrm{Res}^1_{(j-1)}\vert_{\mathrm{Sel}^{\Sigma,\mathrm{str}}_{\mathcal{A}_{\eta,(j-1)}}}\Bigr)\right].
\end{equation}

Now consider the commutative diagram
\small 
\begin{equation} \label{dg:cokernel_dual}
 \begin{CD}
 @. @. @. \mathcal{K}^{(j-1)} @.  
 \\
@. @. @.  @VVV @. 
 \\ 
 0 @>>>  
 \mathcyr{Sh}^1_{\mathcal{A}_{\eta,(j-1)}}
 @>>> 
 \mathrm{Sel}^{\Sigma,\mathrm{str}}_{\mathcal{A}_{\eta,(j-1)}}
 @>{\mathrm{Res}^1_{(j-1)}}>>  
 \mathrm{Im}
\Bigl( \mathrm{Res}^1_{(j-1)}\vert_{\mathrm{Sel}^{\Sigma,\mathrm{str}}_{\mathcal{A}_{\eta,(j-1)}}}\Bigr) 
 @>>> 
 0  
 \\
@. @VV{\times x_j}V @VV{\times x_j}V @VV{\times x_j}V @. 
 \\ 
 0 @>>>  
 \mathcyr{Sh}^1_{\mathcal{A}_{\eta,(j-1)}}
 @>>> 
 \mathrm{Sel}^{\Sigma,\mathrm{str}}_{\mathcal{A}_{\eta,(j-1)}}
 @>{\mathrm{Res}^1_{(j-1)}}>>  
 \mathrm{Im}
\Bigl( \mathrm{Res}^1_{(j-1)}\vert_{\mathrm{Sel}^{\Sigma,\mathrm{str}}_{\mathcal{A}_{\eta,(j-1)}}}\Bigr) 
 @>>> 
 0  
 \\
@. @VVV @VVV @. @. 
 \\
@.  \mathcyr{Sh}^1_{\mathcal{A}_{\eta,(j-1)}}/x_j
 \mathcyr{Sh}^1_{\mathcal{A}_{\eta,(j-1)}} @>>> 
  0 @. @.  
 \end{CD}
\end{equation}
\normalsize 
whose rows are exact by the definition of the $S$-fine Selmer group $\mathcyr{Sh}^1_{\mathcal{A}_{\eta,(j-1)}}$.
We denote by
\begin{align*}
 \delta_1 \colon 
 \mathcal{K}^{(j-1)} \twoheadrightarrow \mathcyr{Sh}^1_{\mathcal{A}_{\eta,(j-1)}}/x_j\mathcyr{Sh}^1_{\mathcal{A}_{\eta,(j-1)}}
\end{align*}
the connecting homomorphism associated to the diagram
(\ref{dg:cokernel_dual}) through the snake lemma. 

The diagram (\ref{dg:cokernel_dual}) is obtained as the Pontrjagin dual
of the diagram of (\ref{ses:cokernel}), and we thus observe that the connecting
homomorphism $\tilde{\delta}_1^\vee$ in the diagram
(\ref{ses:comp_coker}) is obtained as the composition
\begin{align} \label{dg:delta_tilde}
\xymatrix{ (\mathcyr{Sh}^1_{\mathcal{A}_{\eta,(j-1)}})^\vee[x_j]
 \ar@{^(->}[r]^(0.6){\delta_1^\vee} 
& 
(\mathcal{K}^{(j-1)})^\vee
 \ar[rr]^(0.5){\Phi_{\mathrm{local}}}_(0.5){\sim}&& \mathcal{C}^{(j-1)} 
 }
\end{align}
where $\delta_1^\vee$ denotes the dual morphism of
$\delta_1$ and $\Phi_\mathrm{local}$ is the isomorphism induced by the
perfect pairing $\langle \, , \rangle_\mathrm{local}$. By construction
the local Tate duality map $\Phi_\mathrm{local}$ isomorphically
sends the image of $\delta_1^\vee$ onto that of
$\tilde{\delta}_1^\vee$, or in other words, the perfect pairing
$\langle \, , \, \rangle_\mathrm{local}$ induces a perfect pairing
\begin{align*}
\langle \, , \, \rangle_\mathrm{local}\colon \mathrm{Coim}(\delta_1)\times
 \mathrm{Im}(\tilde{\delta}^\vee_1)\rightarrow \mathbb{Q}_p/\mathbb{Z}_p
\end{align*}
which makes the following diagram commutative:
\begin{align} \label{dg:pairing1}
 \xymatrix{
 \mathcal{K}^{(j-1)}  \ar@{->>}[d] & \times
 &\mathcal{C}^{(j-1)} \ar[rr]^(0.55){\langle\, ,
 \,\rangle_\mathrm{local}} && \mathbb{Q}_p/\mathbb{Z}_p \ar@{=}[d] \\
  \mathrm{Coim}(\delta_1) & \times
 &\mathrm{Im}(\tilde{\delta}^\vee_1) \ar[rr]_(0.55){\langle\, ,
 \,\rangle_\mathrm{local}} \ar@{^(->}[u] && \mathbb{Q}_p/\mathbb{Z}_p.
 }
\end{align}

Recall that we have defined the (injective) morphism $\delta_2 \colon \mathrm{Im}\, (\tilde{\delta}_1^\vee)
 \rightarrow \overline{\mathcal{H}}^{2,*}_j$ as the connecting
 homomorphism associated to the diagram (\ref{dg:right}) via the snake
 lemma. See the paragraph preceding the short exact sequences
 (\ref{ses:dualSelvsH^2}) and (\ref{ses:snake}) for the definition of
 the module $\overline{\mathcal{H}}^{2,*}_j$. 
 Let $\mathcyr{Sh}^2_{\mathcal{T}^*_{\eta,(j-1)}}$ denote
 the kernel of the local-to-global morphism
 \begin{align*}
\mathrm{Res}^{2,*}_{(j-1)} \colon    H^{2,*}_{(j-1)} \rightarrow
   \prod_{v\in S}H^2_\mathrm{cts}(F_v, \mathcal{T}^*_{\eta,(j-1)}).
 \end{align*}
 Combining the short exact sequence defining $\mathcyr{Sh}^2_{\mathcal{T}^*_{\eta,(j-1)}}$ and the sequence (\ref{ses:snake}) with $\mathrm{Im}(\tilde{\delta}^\vee_1)$ replaced by $\mathrm{Im}(\delta_2)$, we obtain the following commutative diagram with exact rows:
 \begin{align*}
  \xymatrix{
  0\ar[r] & \mathcyr{Sh}^2_{\mathcal{T}^*_{\eta,(j-1)}}
  \ar[r] \ar@{.>>}[d]_{\mathrm{pr}_{\mathcyr{Sh}^2}} &
   H^{2,*}_{(j-1)}
  \ar@{->>}[d]^{\mathrm{pr}_{H^2}} \ar[rr]^(0.30){\mathrm{Res}^{2,*}_{(j-1)}} && \displaystyle\prod_{v\in S}
  H^2_\mathrm{cts}(F_v,\mathcal{T}^*_{\eta,(j-1)})
  \ar@{=}[d] \ar[r] & 0 \\
  0 \ar[r] & \mathrm{Im}(\delta_2) \ar@{^(->}[r] &
  \overline{\mathcal{H}}^{2,*}_j \ar[rr] && \displaystyle \prod_{v\in
  S}H^2_\mathrm{cts}(F_v,\mathcal{T}^*_{\eta,(j-1)}) \ar[r] &0. 
  }
 \end{align*}
 We readily observe that the middle vertical morphism $\mathrm{pr}_{H^2}$, which
 is defined in (\ref{ses:dualSelvsH^2}), induces a surjection
 $\mathrm{pr}_{\mathcyr{Sh}^2}\colon
 \mathcyr{Sh}^2_{\mathcal{T}^*_{\eta,(j-1)}}\twoheadrightarrow
 \mathrm{Im}(\delta_2)$. Note
 that the right commutative square of the diagram above is
 the same as that of the diagram (\ref{dg:PTvsSnake}). 
 The rest of this appendix is devoted to the verification of the following proposition.

  \begin{pro} \label{prop:coupling}
   Let
   \begin{align*}
    \langle \, , \, \rangle_{\mathrm{PT}}\colon
    \mathcyr{Sh}^1_{\mathcal{A}_{\eta,(j-1)}}/x_j\mathcyr{Sh}^1_{\mathcal{A}_{\eta,(j-1)}}\times
    \mathcyr{Sh}^2_{\mathcal{T}^*_{\eta,(j-1)}}[x_j]\rightarrow \mathbb{Q}_p/\mathbb{Z}_p
   \end{align*}
   be the perfect pairing induced from the Poitou-Tate 
   pairing $\mathcyr{Sh}^1_{\mathcal{A}_{\eta,(j-1)}}\times
   \mathcyr{Sh}^2_{\mathcal{T}^*_{\eta,(j-1)}}\rightarrow \mathbb{Q}_p/\mathbb{Z}_p$. Then the equality
 \begin{align} \label{eq:PTvsLocal}
\langle (s_v)_{v\in S},(t_v)_{v\in S}\rangle_\mathrm{local} =\langle
  \delta_1((s_v)_{v\in S}), \delta_2((t_v)_{v\in S})^{\sim}\rangle_{\mathrm{PT}}
 \end{align}
 holds for arbitrary elements $(s_v)_{v\in S}$ and $(t_v)_{v\in S}$ in
 $\mathrm{Coim}(\delta_1)$ and
 $\mathrm{Im}(\tilde{\delta}^\vee_1)$
 respectively, where $\delta_2((t_v)_{v\in S})^{\sim}$ denotes
 an arbitrary element of 
   $\mathcyr{Sh}^2(F,S,\mathcal{T}^*_{\eta,(j-1)})[x_j]$
   which is sent to $\delta_2((t_v)_{v\in S})$ by the map $\mathrm{pr}_{\mathcyr{Sh}^2}$ 
   introduced above.
In particular the perfect pairing $\langle \, , \,
   \rangle_{\mathrm{PT}}$ induces a pairing
    \begin{align*}
     \xymatrix{
    \mathcyr{Sh}^1_{\mathcal{A}_{\eta,(j-1)}}/x_j\mathcyr{Sh}^1_{\mathcal{A}_{\eta,(j-1)}}\times
    \mathrm{Im}(\delta_2) \ar[r]& \mathbb{Q}_p/\mathbb{Z}_p}
    \end{align*}
which makes the following diagram commutative$:$
 \begin{align*}
  \xymatrix{
 \mathrm{Coim}(\delta_1)
  \ar@{->}[d]_{\delta_1}^{\rotatebox{90}{$\sim$}}
  & \times & \mathrm{Im}\, (\tilde{\delta}_1^\vee)
  \ar[d]^{\delta_2}_{\rotatebox{90}{$\sim$}} \ar[rr]^(0.55){\langle \,
  , \, \rangle_\mathrm{local}}&
  & \mathbb{Q}_p/\mathbb{Z}_p \ar@{=}[d]\\
  \mathcyr{Sh}^1_{\mathcal{A}_{\eta,(j-1)}}/x_j\mathcyr{Sh}^1_{\mathcal{A}_{\eta,(j-1)}}
  & \times  & \mathrm{Im}(\delta_2) \ar@{.>}[rr] & &
  \mathbb{Q}_p/\mathbb{Z}_p \ar@{=}[d] \\
 \mathcyr{Sh}^1_{\mathcal{A}_{\eta,(j-1)}}/x_j\mathcyr{Sh}^1_{\mathcal{A}_{\eta,(j-1)}} \ar@{=}[u]
  & \times  &
  \mathcyr{Sh}^2_{\mathcal{T}^*_{\eta,(j-1)}}[x_j]
  \ar[rr]_(0.55){\langle \, , \, \rangle_\mathrm{PT}} \ar@{->>}[u]_{\mathrm{pr}_{\mathcyr{Sh}^2}} & &
  \mathbb{Q}_p/\mathbb{Z}_p.}
 \end{align*}
 \end{pro}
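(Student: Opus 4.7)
\medskip

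The claim is a reciprocity-type identity comparing two duality pairings: the local Tate pairing on the (locally truncated) image of the global-to-local map, and the global Poitou--Tate pairing on the $\mathcyr{Sh}$-groups, after they are interrelated by the snake-lemma boundary maps $\delta_1$ and $\delta_2$. My plan is to recognise both boundary maps as Bockstein-type homomorphisms arising from the short exact sequence $0\to \mathcal{A}_{\eta,(j)}\to \mathcal{A}_{\eta,(j-1)}\xrightarrow{\times x_j}\mathcal{A}_{\eta,(j-1)}\to 0$ and its Kummer dual $0\to \mathcal{T}^*_{\eta,(j-1)}\xrightarrow{\times x_j}\mathcal{T}^*_{\eta,(j-1)}\to \mathcal{T}^*_{\eta,(j)}\to 0$, and to verify the identity $(\ref{eq:PTvsLocal})$ by an explicit cocycle-level computation, exploiting the Leibniz rule $\partial(a\smile b)=(\partial a)\smile b \pm a\smile(\partial b)$ for the cup product and coboundary operators on continuous cochains.

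Concretely, I would first describe $\delta_1$ at the level of continuous $1$-cocycles. Given $(s_v)_{v\in S}\in \mathrm{Coim}(\delta_1)\subset \mathcal{K}^{(j-1)}$, lift it to a class $\tilde s\in \mathrm{Sel}^{\Sigma,\mathrm{str}}_{\mathcal{A}_{\eta,(j-1)}}$ and represent $\tilde s$ by a continuous $1$-cocycle $s$ with values in $\mathcal{A}_{\eta,(j-1)}$. For each $v\in S$ choose a $1$-cochain $\ell_v\in L_\mathrm{str}(F_v,\mathcal{A}_{\eta,(j-1)})$ (resp.\ $\ell_v=0$ for $v\in\Sigma_p^c$) representing $s_v$ together with a $0$-cochain $c_v$ such that $s|_{G_v}-\ell_v=dc_v$. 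The hypothesis $x_j(s_v)=0$ inside $L_\mathrm{str}(F_v,\mathcal{A}_{\eta,(j-1)})$ provides $0$-cochains $b_v$ with $x_j\ell_v=db_v$. The global cocycle $x_j s$ is then locally the coboundary $d(x_j c_v+b_v)$, so it represents an element of $\mathcyr{Sh}^1_{\mathcal{A}_{\eta,(j-1)}}$ whose class modulo $x_j\mathcyr{Sh}^1_{\mathcal{A}_{\eta,(j-1)}}$ is precisely $\delta_1((s_v))$. Similarly, I would unwind $\delta_2$: given $(t_v)\in \mathrm{Im}(\tilde\delta_1^\vee)\subset \mathcal{C}^{(j-1)}$, lift it to a tuple $(q_v)\in Q^*_{(j-1)}$, represent each $q_v$ by a continuous $1$-cocycle $\tilde q_v$ on $G_v$ with values in $\mathcal{T}^*_{\eta,(j-1)}$, and observe that the triviality of $r^{\mathcal{C}}_j((t_v))$ in $\mathrm{Coker}(\phi_{\mathcal{L}^{*,(j)}_\mathrm{str}})$ allows one to adjust the $\pi_j^Q(q_v)$ so as to come from a global class. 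The same adjustment, transported along the snake-lemma diagram $(\ref{dg:right})$ and lifted via $\mathrm{pr}_{\mathcyr{Sh}^2}$, yields a representative $\delta_2((t_v))^{\sim}$ in $\mathcyr{Sh}^2_{\mathcal{T}^*_{\eta,(j-1)}}[x_j]$ represented by a continuous $2$-cocycle $\tau$ with explicit local $1$-cochain trivialisations $\tau|_{G_v}=d\tilde q'_v$, where $\tilde q'_v$ is built from $\tilde q_v$ and its $(j-1)$-to-$(j)$ comparison.

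Once both boundary elements have been described at the cocycle level, I would compute the two sides of $(\ref{eq:PTvsLocal})$ via cup products. The Poitou--Tate pairing on the classes $[x_j s]\in\mathcyr{Sh}^1_{\mathcal{A}_{\eta,(j-1)}}$ and $[\tau]\in \mathcyr{Sh}^2_{\mathcal{T}^*_{\eta,(j-1)}}$ is given, by the standard recipe recalled e.g.\ in \cite[Chapter~VIII]{NSW}, by the sum $\sum_{v\in S}\mathrm{inv}_v\bigl((x_j s)|_{G_v}\smile \tilde q'_v - d(x_j c_v+b_v)\smile \tilde q'_v\bigr)$, which, after applying the Leibniz rule and discarding the globally vanishing contribution, reduces to $\sum_{v\in S}\langle \ell_v, q_v\rangle_v$ up to a sign controlled by the commutation of $x_j$ with the cup product. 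The left-hand side $\langle(s_v),(t_v)\rangle_\mathrm{local}$ is by definition the same sum $\sum_v\langle \ell_v,q_v\rangle_v$, and the asserted equality follows. The last step also gives the factorisation through $\mathrm{Im}(\delta_2)$ asserted in the final part of the statement, which is the content of the diagram at the end.

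The main obstacle will be the careful bookkeeping of signs and of the various lifts, especially ensuring that the local cochains $\tilde q'_v$ built via the snake lemma on $(\ref{dg:right})$ agree with those that compute the Poitou--Tate pairing for $[\tau]$; this is essentially a verification that the explicit snake-lemma formulas match the cocycle formulas for global duality. A second, more conceptual, difficulty is that our coefficient ring $\Lambda^{\mathrm{CM}}_{\mathcal{O}}$ is semilocal and that the Pontrjagin dual uses an action compatible with $\mathrm{Tw}$ rather than the tautological one (as noted after Lemma~\ref{lem:twist}), so one must check that the identification $\Phi_\mathrm{local}$ and the Poitou--Tate map $\Phi_\mathrm{PT}$ are set up with the same normalisation; once this normalisation is fixed, the Leibniz rule argument above does the rest.
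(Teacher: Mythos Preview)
Your approach is essentially the one the paper takes: describe $\delta_1$ and $\delta_2$ explicitly at the level of continuous cochains, feed the resulting representatives into the explicit cup-product formula for the Poitou--Tate pairing, and use the Leibniz rule $d(z\cup \hat w)=dz\cup \hat w - z\cup d\hat w$ to collapse the expression to a sum of local Tate pairings. The paper's cocycle choices are only cosmetically different from yours (e.g.\ they lift $(t_v)$ first to $Q^*_{(j-1)}$ via cocycles $w_v$, then to a global $\hat w$ with $\delta_2((t_v))^{\sim}=[x_j^{-1}d\hat w]$, and write $\delta_1((s_v))=[x_jz]$).

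The one genuine gap in your outline is the phrase ``discarding the globally vanishing contribution, reduces to $\sum_v\langle\ell_v,q_v\rangle_v$.'' In the paper's computation the cross terms do \emph{not} vanish for global reasons; what remains after the Leibniz manipulation is an extra local sum $\sum_v\langle s_v,[\ell'_v]\rangle_v$ with $\ell'_v$ a $1$-cocycle on $G_v$ with values in $\mathcal{T}^*_{\eta,(j-1)}$. Killing this term requires knowing that $[\ell'_v]\in L^*_{\mathrm{str}}(F_v,\mathcal{T}^*_{\eta,(j-1)})+x_jH^1_{\mathrm{cts}}(F_v,\mathcal{T}^*_{\eta,(j-1)})$, which then pairs to zero with $s_v\in L_{\mathrm{str}}(F_v,\mathcal{A}_{\eta,(j-1)})[x_j]$. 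This is not automatic: the paper proves it as a separate lemma (Lemma~\ref{lem:surj_local}) asserting that any class in $L^*_{\mathrm{str}}(F_v,\mathcal{T}^*_{\eta,(j)})$ whose image under $\text{\sf \textgreek{d}}^1_{v,(j)}$ vanishes can be lifted to $L^*_{\mathrm{str}}(F_v,\mathcal{T}^*_{\eta,(j-1)})$; the proof uses that $D_v/I_v$ has cohomological dimension~$1$ at the unramified places and an explicit hypothesis at places in $\Sigma_p^c$. You should make this lifting step explicit rather than absorb it into ``globally vanishing.''
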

 
 We readily observe that Proposition~\ref{prop:commutes} is a direct consequence of
 Proposition~\ref{prop:coupling}. As we remarked after Proposition~\ref{prop:commutes}, 
 the right square of the diagram in Proposition~\ref{prop:commutes} is obvious and 
 we only need to check the commutativity of the left square of the diagram in Proposition~\ref{prop:commutes}. 
 Let $f\colon \mathcyr{Sh}^1_{\mathcal{A}_{\eta,(j-1)}}\rightarrow
 \mathbb{Q}_p/\mathbb{Z}_p$ be an arbitrary element of
 $(\mathcyr{Sh}^1_{\mathcal{A}_{\eta,(j-1)}})^\vee[x_j]$
 and $a$ an arbitrary element of
 $\mathcyr{Sh}^1_{\mathcal{A}_{\eta,(j-1)}}/x_j\mathcyr{Sh}^1_{\mathcal{A}_{\eta,(j-1)}}$. Then
 there exists a unique element $b$ of $\mathrm{Coim}(\delta_1)$
 satisfying $\delta_1(b)=a$. By definition the duality isomorphisms
 $\Phi_\mathrm{local}$ and $\Phi_\mathrm{PT}$ are characterised by
 the relations
 \begin{align} \label{eq:dual_hom}
  \delta^\vee_1(f)(b)&=\langle b, \Phi_\mathrm{local}(\delta^\vee_1(f))\rangle_\mathrm{local},
  & f(a) &=\langle a,\Phi_\mathrm{PT}(f)\rangle_\mathrm{PT}.
 \end{align}
 
Proposition~\ref{prop:coupling} thus provides the following equality:
 \begin{align*}
\langle a, \Phi_\mathrm{PT}(f)\rangle_\mathrm{PT}
  &=f(a)=f(\delta_1(b))=\delta^\vee_1(f)(b)    \\
  &=\langle b,\Phi_\mathrm{local}(\delta^\vee_1(f))\rangle_\mathrm{local}  \\
  &=\langle \delta_1(b),
  \delta_2(\Phi_\mathrm{local}(\delta^\vee_1(f)))^{\sim}\rangle_\mathrm{PT}
  \qquad (\text{here we apply Proposition~\ref{prop:coupling}})\\
  &=\langle a,\delta_2(\Phi_\mathrm{local}(\delta^\vee_1(f)))^{\sim}\rangle_\mathrm{PT}. 
 \end{align*}
The perfectness of the Poitou-Tate pairing thus implies the equality
\begin{align} \label{eq:perf}
 \Phi_\mathrm{PT}(f)=\delta_2(\Phi_\mathrm{local}(\delta^\vee_1(f)))^{\sim}.
\end{align}
 Since the connecting homomorphism $\boldsymbol{\delta}_\mathrm{snake}$ appearing in
 Proposition~\ref{prop:commutes} is decomposed
 as $\boldsymbol{\delta}_\mathrm{snake}=\delta_2\circ \Phi_\mathrm{local}\circ
 \delta^\vee_1$ due to (\ref{ses:snake}) and (\ref{dg:delta_tilde}),
 we finally deduce the desired equality
 \begin{align*}
  \mathrm{pr}_{H^2}\circ\Phi_\mathrm{PT}(f)=\delta_2(\Phi_\mathrm{local}(\delta^\vee_1(f)))=\boldsymbol{\delta}_\mathrm{snake}(f)
 \end{align*}
 by applying $\mathrm{pr}_{\mathcyr{Sh}^2}(=\mathrm{pr}_{H^2})$ to the both sides of (\ref{eq:perf}).  
 The final equality shows the commutativity of the left square of the diagram in Proposition~\ref{prop:commutes}. 
  
 \medskip
 The proof of Proposition~\ref{prop:coupling} requires a little lengthy computation. We first calculate the cohomology classes $\delta_1((s_v)_{v\in S})$ and $\delta_2((t_v)_{v\in S})$ explicitly, and then check that the value $\langle \delta_1((s_v)_{v\in S}), \delta_2((t_v)_{v\in S})^\sim \rangle_\mathrm{PT}$  coincides with $\langle (s_v)_{v\in S}, (t_v)_{v\in S}\rangle_\mathrm{local}$ by a direct computation. In order to estimate the value $\langle \delta_1((s_v)_{v\in S}), \delta_2((t_v)_{v\in S})^\sim \rangle_\mathrm{PT}$,
 we utilise the explicit formula of the Poitou-Tate pairing
 $ \langle \, , \, \rangle_{\mathrm{PT}}\colon \mathcyr{Sh}^1_{\mathcal{A}_{\eta,(j-1)}} \times \mathcyr{Sh}^2_{\mathcal{T}^*_{\eta,(j-1)}}\rightarrow \mathbb{Q}_p/\mathbb{Z}_p$, which we recall below (see \cite[Section~3]{Tate} for details\footnote{Indeed the explicit description of the Poitou-Tate pairing is proposed only for finite Galois modules in \cite{Tate} and other literatures, but it is straightforward to justify the same description under our settings by the standard limit argument based upon Tate's theorem on the inverse limits of Galois cohomology groups.}).  In the rest of the article, we denote by $C^\bullet(G,M)$ the standard cochain complex of a continuous $G$-module $M$ over a profinite group $G$, and by $d_M$ (or just $d$ if it is clear from the context) its coboundary homomorphism. Let $[f]$ be an element of $\mathcyr{Sh}^2_{\mathcal{T}^*_{\eta,(j-1)}}$ represented by a continuous 2-cocycle $f$ in $C^2(\mathrm{Gal}(F_S/F), \mathcal{T}^*_{\eta,(j-1)})$, and $[f']$ an element of $\mathcyr{Sh}^1_{\mathcal{A}_{\eta,(j-1)}}$ represented by a continuous 1\nobreakdash-cocycle $f'$ in $C^1(\mathrm{Gal}(F_S/F), \mathcal{A}_{\eta,(j-1)})$. For each $v$ in $S$, we denote by $f_v$ the restriction of $f$ to the decomposition group $D_v$ at $v$. 
Since $[f]$ is locally trivial at $v$, 
there exists a 1\nobreakdash-cochain $g_v$ in $C^1(D_v, \mathcal{T}^*_{\eta,(j-1)})$ satisfying $dg_v=f_v$ . 
Recall that $F$ has no real places, which implies the triviality of the global cohomology group $H^3(F_S/F, \mu_{p^\infty})$ (see \cite[(8.6.10) (ii)]{NSW} for example). Thus there exists a 2\nobreakdash-cochain $h$ in $C^2(\mathrm{Gal}(F_S/F), \mu^{p^\infty})$ satisfying $dh=f'\cup f$. Then the value $\langle [f'], [f]\rangle_{\mathrm{PT}}$ is explicitly calculated as
 \begin{align} \label{eq:explicit_PT}
  \langle [f'], [f] \rangle_\mathrm{PT} = \sum_{v\in S} \{- f' _v \cup g_v -h_v \}_v
 \end{align}
 where, for each $v$ in $S$, $h_v$ denotes the restriction of $h$ to the decomposition subgroup $D_v$, and $\{ \, \cdot \,\}_v \colon H^2(F_v, \mu_{p^\infty}) \xrightarrow{\sim} \mathbb{Q}_p/\mathbb{Z}_p$ is the invariant isomorphism at $v$.

 Now let us calculate $\delta_1((s_v)_{v\in S})$ and $\delta_2((t_v)_{v\in S})$. 

 \paragraph{\bf \em Calculation of $\delta_1((s_v)_{v\in S})$.} Let
  $(s_v)_{v\in S}$ be an arbitrary element of
  $\mathrm{Coim}(\delta_1)$. There exists an element $[z]$ of
  $\mathrm{Sel}^{\Sigma,\mathrm{str}}_{\mathcal{A}_{\eta,(j-1)}}$, represented by a 1-cocycle $z$ in $C^1(\mathrm{Gal}(F_S/F), \mathcal{A}_{\eta,(j-1)})$, which is sent to $(s_v)_{v\in S}$ by
  $\mathrm{Res}^1_{(j-1)}$.
  Then, following the definition of the connecting homomorphism $\delta_1$ given at the diagram (\ref{dg:cokernel_dual}), 
  the evaluation of $\delta_2$ at $(s_v)_{v\in S}$ is calculated as
  \begin{align*}
   \delta_1((s_v)_{v\in S})=[x_jz] \mod x_j\mathcyr{Sh}^1_{\mathcal{A}_{\eta,(j-1)}}.
  \end{align*}
 For each $v$ in $S$, we denote by $z_v$ the restriction of the 1-cocycle $z$ to the decomposition group $D_v$ at $v$.

\medskip

 \paragraph{\bf \em Calculation of $\delta_2((t_v)_{v\in S})$.}

  Let $(t_v)_{v\in S}$ be an arbitrary element of $\mathrm{Im}\,
  (\tilde{\delta}^\vee_1)$. We calculate
  $\delta_2((t_v)_{v\in S})$ by the diagram chasing on (\ref{dg:big}) (or on \eqref{dg:right}). For each $v$ in $S$, take a
  cocycle $w_v$ of $C^1(F_v, \mathcal{T}^*_{\eta,(j-1)})$ so
  that the element represented by the cohomology class $([w_v])_{v\in S}$ in $Q^*_{(j-1)}$ is sent to $(t_v)_{v\in S}$ under the natural surjection $Q^*_{(j-1)}/x_jQ^*_{(j-1)}  \twoheadrightarrow \mathcal{C}^{(j-1)}$. Here we use the same symbol $(t_v)_{v\in S}$ for its image in $\mathcal{C}^{(j-1)}$. 
  Denoting by $\bar{w}_v$ the image
  of  $w_v$ under the surjection $C^1(F_v,
  \mathcal{T}^*_{\eta,(j-1)})\twoheadrightarrow
  C^1(F_v,\mathcal{T}^*_{\eta,(j)})$, we have
  $\pi^\mathcal{Q}_j(([w_v])_{v\in S})=([\bar{w}_v])_{v\in S}$.
  Then the diagram \eqref{dg:right} implies that there exists a
  unique element $\varrho$ of $\mathrm{Coim}(\phi_{\mathcal{L}_\mathrm{str}^{*,(j)}})$ satisfying $\phi_{\mathcal{L}^{*,(j)}_\mathrm{str}}(\varrho)=([\bar{w}_v])_{v\in S}$.
  The image of $\varrho$ in $\overline{\mathcal{H}}^{2,*}_j$ is none other than $\delta_2((t_v)_{v\in S})$ by definition.

  We do a little more precise computation. Let us take an arbitrary lift $\tilde{\varrho}$ of $\varrho$ with respect to the canonical surjection $H^{1,*}_{(j)}\twoheadrightarrow \mathrm{Coim}(\phi_{\mathcal{L}^{*,(j)}_\mathrm{str}})$, which is represented by a 1\nobreakdash-cocycle $\tilde{w}$ in $C^1(\mathrm{Gal}(F_S/F), \mathcal{T}^*_{\eta,(j)})$. Then the diagram \eqref{dg:left} implies that the image of $\text{\sf \textgreek{d}}^1_{(j)}([\tilde{w}])$ in $\overline{\mathcal{H}}^{2,*}_j$ coincides with $\delta_2((t_v)_{v\in S})$. In other words, $\text{\sf \textgreek{d}}^1_{(j)}([\tilde{w}])$ is regarded as a lift $\delta_2((t_v)_{v\in S})^\sim$ of $\delta_2((t_v)_{v_\in S})$. The element $\text{\sf \textgreek{d}}^1_{(j)}([\tilde{w}])$ is, however, calculated in the usual manner; namely, if we take a 1-cochain (not necessarily a 1-cocycle) $\hat{w}$ of $C^1(\mathrm{Gal}(F_S/F), \mathcal{T}^*_{\eta,(j-1)})$ which is sent to $\tilde{w}$ under the natural surjection $C^1(\mathrm{Gal}(F_S/F),\mathcal{T}^*_{\eta,(j-1)})\twoheadrightarrow C^1(\mathrm{Gal}(F_S/F),\mathcal{T}^*_{\eta,(j)})$, the image $\text{\sf \textgreek{d}}^1_{(j)}([\tilde{w}])$ of $[\tilde{w}]$ coincides with $[x_j^{-1}d\hat{w}]$. Note that the lift $\delta_2((t_v)_{v\in S})^\sim$ of $\delta_2((t_v)_{v\in S})$ is determined uniquely modulo $\mathrm{Sel}^{*,\mathrm{str}}_{\mathcal{T}^*_{\eta,(j)}}$, but this ambiguity corresponds to that of the choices of lifts $\tilde{\varrho}=[\tilde{w}]$ of $\varrho$. Therefore each $\delta_2((t_v)_{v\in S})^\sim$ is obtained in this procedure, or more specifically, we obtain an explicit description of an arbitrary lift $\delta_2((t_v)_{v\in S})^\sim$ of $\delta_2((t_v)_{v\in S})$ as $[x_j^{-1}d\hat{w}]$, for an appropriate $\hat{w}$ as above.

  \medskip
   \paragraph{\bf \em Replacement of local cochains.}
  As we have obtained explicit descriptions of $\delta_1((s_v)_{v\in S})$ and $\delta_2((t_v)_{v\in S})^\sim$, we evaluate these elements 
  under the Poitou-Tate pairing:
  \begin{align} \label{eq:pairing_cocycle}
   \langle \delta_1((s_v)_{v\in S}), \delta_2((t_v)_{v\in S})^\sim \rangle_\mathrm{PT} =\langle [x_j z], [x_j^{-1}d\hat{w}]\rangle_\mathrm{PT}.
  \end{align}
  In order to apply the explicit formula \eqref{eq:explicit_PT} to \eqref{eq:pairing_cocycle}, we need to find a 1-cochain $\check{w}_v$ of $C^1(F_v, \mathcal{T}^*_{\eta,(j-1)})$ satisfying  the equality $d\check{w}_v=x_j^{-1}d\hat{w}_v$ {\em of cocycles} for each $v$ in $S$, where $\hat{w}_v$ denotes the restriction of $\hat{w}$ to the decomposition group $D_v$. In order to find such a nice 1-cochain $\check{w}_v$, we first study the relation between $\hat{w}_v$ and $w_v$, the cocycle which we first took in the computation of $\delta_2((t_v)_{v\in S})$ above.

  \begin{lem} \label{lem:relation_w_v}
   For each $v$ in $S$, there exist 1-cochains $\ell_v$ and $c_v$ in $C^1(F_v, \mathcal{T}^*_{\eta,(j-1)})$ satisfying the following two properties.
   \begin{enumerate}[label=$(\arabic*)$]
    \item the image of $\ell_v$ under the canonical surjection $C^1(F_v,\mathcal{T}^*_{\eta,(j-1)})\twoheadrightarrow C^1(F_v,\mathcal{T}^*_{\eta,(j)})$ is a $1$-cocycle representing a cohomology class contained in $L_\mathrm{str}(F_v,\mathcal{T}^*_{\eta,(j)});$
    \item the cochains $\hat{w}_v$ and $w_v+\ell_v+x_jc_v$ coincide modulo coboundaries.
   \end{enumerate}
  \end{lem}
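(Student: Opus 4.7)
The plan is to extract cochain-level data from the defining property of $\tilde w$ and then lift from $\mathcal{T}^*_{\eta,(j)}$ back to $\mathcal{T}^*_{\eta,(j-1)}$ through the short exact sequence \eqref{equation:T}. First I would unwind the construction of $\tilde w$: by hypothesis $\tilde\varrho=[\tilde w]$ lifts an element $\varrho\in\mathrm{Coim}(\phi_{\mathcal{L}^{*,(j)}_\mathrm{str}})$ satisfying $\phi_{\mathcal{L}^{*,(j)}_\mathrm{str}}(\varrho)=([\bar w_v])_{v\in S}$ in $Q^*_{(j)}$, so for each $v\in S$ the class $[\tilde w_v]-[\bar w_v]$ in $H^1_\mathrm{cts}(F_v,\mathcal{T}^*_{\eta,(j)})$ belongs to $L^*_\mathrm{str}(F_v,\mathcal{T}^*_{\eta,(j)})$. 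Consequently I can choose, for each $v\in S$, a 1-cocycle $\ell'_v\in C^1(F_v,\mathcal{T}^*_{\eta,(j)})$ whose cohomology class lies in $L^*_\mathrm{str}(F_v,\mathcal{T}^*_{\eta,(j)})$ together with a 0-cochain $u'_v\in C^0(F_v,\mathcal{T}^*_{\eta,(j)})$ that together realise the cochain identity $\tilde w_v-\bar w_v=\ell'_v+du'_v$. At places $v\in\Sigma_p$ one may simply take $\ell'_v=0$ because $L^*_\mathrm{str}(F_v,\mathcal{T}^*_{\eta,(j)})$ vanishes there, whereas at places $v\in\Sigma_p^c$ the condition $L^*_\mathrm{str}=H^1_\mathrm{cts}$ is automatic.

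Next I would lift these cochains to coefficients in $\mathcal{T}^*_{\eta,(j-1)}$. Since $\mathcal{T}^*_{\eta,(j-1)}$ is a compact pro-$p$ module, the projection $\mathcal{T}^*_{\eta,(j-1)}\twoheadrightarrow\mathcal{T}^*_{\eta,(j)}$ admits a continuous set-theoretic section, so the short exact sequence \eqref{equation:T} induces a short exact sequence of standard continuous cochain complexes. I then pick $\ell_v\in C^1(F_v,\mathcal{T}^*_{\eta,(j-1)})$ and $u_v\in C^0(F_v,\mathcal{T}^*_{\eta,(j-1)})$ that project to $\ell'_v$ and $u'_v$ respectively. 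The image of $\ell_v$ in $C^1(F_v,\mathcal{T}^*_{\eta,(j)})$ is, by construction, the cocycle $\ell'_v$ whose class sits inside $L^*_\mathrm{str}(F_v,\mathcal{T}^*_{\eta,(j)})$; this immediately gives property~(1).

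For property~(2), I would examine the 1-cochain $\hat w_v-w_v-\ell_v-du_v\in C^1(F_v,\mathcal{T}^*_{\eta,(j-1)})$. Its image in $C^1(F_v,\mathcal{T}^*_{\eta,(j)})$ equals $\tilde w_v-\bar w_v-\ell'_v-du'_v$, which vanishes by the choice made above. Because $x_j$ acts injectively on the free $\Lambda^{\mathrm{CM}}_\mathcal{O}/\mathfrak{A}_{j-1}$-module $\mathcal{T}^*_{\eta,(j-1)}$, the kernel of the surjection $C^1(F_v,\mathcal{T}^*_{\eta,(j-1)})\twoheadrightarrow C^1(F_v,\mathcal{T}^*_{\eta,(j)})$ is exactly $x_j\cdot C^1(F_v,\mathcal{T}^*_{\eta,(j-1)})$, so there exists a 1-cochain $c_v$ with $\hat w_v-w_v-\ell_v-du_v=x_jc_v$. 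Rearranging yields $\hat w_v=w_v+\ell_v+x_jc_v+du_v$, which is property~(2). I do not anticipate any real obstacle beyond careful book-keeping; the only mildly delicate point is the continuity of the chosen lifts $\ell_v,u_v$, which is handled by the pro-$p$ compactness of $\mathcal{T}^*_{\eta,(j-1)}$ and the standard fact that surjections of profinite abelian groups admit continuous set-theoretic sections.
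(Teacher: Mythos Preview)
Your proposal is correct and follows essentially the same approach as the paper: both arguments extract the cocycle $\bar\ell_v$ (your $\ell'_v$) from the equality $[\tilde w_v]=[\bar w_v]$ in $Q^*_{(j)}$, lift it arbitrarily to $\ell_v$, and then use the exact sequence on cochains (the paper phrases this as exactness of $C^1/B^1$, you handle the coboundary term explicitly via a lifted $u_v$) to produce $c_v$. The only cosmetic difference is that you separate out the coboundary lift $du_v$ before invoking $\ker(\times x_j)$, whereas the paper absorbs this into working modulo $B^1$ from the start; these are equivalent, and your additional remarks on continuity of sections and the special cases at places in $\Sigma_p$ and $\Sigma_p^c$ are correct but not strictly needed.
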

  
\begin{proof}
 Let $\tilde{w}_v$ denote the restriction of the cocycle $\tilde{w}$ to the decomposition group $D_v$. Then by construction, $\tilde{w}_v$ satisfies the equation
 \begin{align*}
  ([\tilde{w}_v])_{v\in S} (=\phi_{\mathcal{L}^{*,(j)}_\mathrm{str}}([\tilde{w}]))=([\bar{w}_v])_{v\in S}
 \end{align*}
 {\em in $Q^*_{(j)}$}, and hence there exists a 1-cocycle $\bar{\ell}_v$ of $C^1(F_v,\mathcal{T}^*_{\eta,(j)})$, representing a cohomology class contained in $L_\mathrm{str}^*(F_v, \mathcal{T}^*_{\eta,(j)})$, such that $\tilde{w}_v$ coincides with $\bar{w}_v+\bar{\ell}_v$ modulo coboundaries.
 Let $\ell_v$ be an arbitrary 1-cochain in $C^1(F_v,\mathcal{T}^*_{\eta,(j-1)})$ which is sent to $\bar{\ell}_v$ under the natural surjection $C^1(F_v,\mathcal{T}^*_{\eta,(j-1)})\twoheadrightarrow C^1(F_v,\mathcal{T}^*_{\eta,(j)})$. Again by construction, $\hat{w}_v$ and $w_v+\ell_v$ has the same image $\bar{w}_v+\bar{\ell}_v$ in $C^1(F_v,\mathcal{T}^*_{\eta,(j)})$. We now readily verify the existence of a cochain $c_v$ satisfying the claim due to the natural exact sequence
 \begin{align*}
  C^1/B^1(F_v, \mathcal{T}^*_{\eta,(j-1)}) \xrightarrow{\times x_j}   C^1/B^1(F_v, \mathcal{T}^*_{\eta,(j-1)}) \twoheadrightarrow   C^1/B^1(F_v, \mathcal{T}^*_{\eta,(j)}) \rightarrow 0.
 \end{align*}
 Here $C^1/B^1(F_v, \mathcal{T}^*_{\eta,(k)})$ denotes the quotient of $C^1(F_v, \mathcal{T}^*_{\eta,(k)})$ with respect to its submodule consisting of all coboundaries.
\end{proof}

Lemma~\ref{lem:relation_w_v} enables us to replace $\hat{w}_v$ by $w_v+\ell_v+x_jc_v$ in the computation of the Poitou-Tate pairing \eqref{eq:pairing_cocycle}.

  \medskip
   \paragraph{\bf \em Computation of the Poitou-Tate pairing}

  We are ready to calculate the evaluation of the Poitou-Tate pairing \eqref{eq:pairing_cocycle}. On the one hand, we readily observe that the local cocycle $x_j^{-1}d\hat{w}_v$ is redescribed as
  \begin{align} \label{eq:dl_v}
   x_j^{-1} d\hat{w}_v = x_j^{-1}d(w_v+\ell_v+x_jc_v)=x_j^{-1}d\ell_v+dc_v
  \end{align}
  up to coboundaries by virtue of Lemma~\ref{lem:relation_w_v}. On the other hand, the cohomology class $([x_j^{-1}d\hat{w}_v])_{v\in S}$ is the restriction of the element $\delta_2((t_v)_{v\in S})^\sim=[x_j^{-1}d\hat{w}]$, which is an element of $\mathcyr{Sh}^2_{\mathcal{T}^*_{\eta,(j-1)}}$ by construction. This observation implies that each $x_j^{-1}d\hat{w}_v=x_j^{-1}d \ell_v+dc_v$ itself is a coboundary. Let us take for every $v$ in $S$ a 1-cochain $\lambda_v$ in $C^1(F_v,\mathcal{T}^*_{\eta,(j)})$  satisfying $x_j^{-1}d\ell_v=d\lambda_v$ so that $x_j^{-1}d\hat{w}_v=d(\lambda_v+c_v)$ holds up to coboundaries. Moreover the equation
  \begin{align*}
  x_jz \cup x_j^{-1}d\hat{w}=z\cup d\hat{w}=-d(z\cup \hat{w})
  \end{align*}
  holds since $z$ is a 1-cocycle. Applying the explicit formula \eqref{eq:explicit_PT} for $f=x_j^{-1}d\hat{w}$, $f'=x_jz$, $g_v=\lambda_v+c_v$ and $h_v=-(z\cup \hat{w})_v=-z_v\cup \hat{w}_v$, we calculate the value of the Poitou-Tate pairing $\langle \delta_1((s_v)_{v\in S}), \delta_2((t_v)_{v\in S})^\sim\rangle_\mathrm{PT}$ as follows:
     \begin{align} \label{eq:calculation_PT}
    \langle \delta_1((s_v)_{v\in S}), \delta_2((t_v)_{v\in
    S})^\sim \rangle_\mathrm{PT} &= \sum_{v\in S} \left\{ -[x_jz_v]\cup [\lambda_v+c_v]-(-z_v\cup \hat{w}_v) \right\}_v \\ \nonumber
    &= \sum_{v\in S} \left\{ -[z_v]\cup x_j[\lambda_v+c_v] +[z_v]\cup [w_v+\ell_v+x_jc_v] \right\}_v  \\ \nonumber
    &= \sum_{v\in S} \{ [z_v]\cup [w_v]\}_v +\sum_{v\in S} \{ [z_v]\cup [\ell_v-x_j\lambda_v] \}_v \\ \nonumber
    &=\sum_{v\in S} \langle s_v, t_v \rangle_v +\sum_{v\in S} \langle s_v,  [\ell_v'] \rangle_v. \nonumber
     \end{align}
     Here we set $\ell_v'=\ell_v-x_j\lambda_v$, which is indeed a {\em cocycle} as one readily checks:
     \begin{align*}
      d\ell_v'=d\ell_v-x_jd\lambda_v=d\ell_v-x_j(x_j^{-1}d\ell_v)=0.
     \end{align*}
    We also note that the last equality of \eqref{eq:calculation_PT} is just the definition of the local Tate duality: $\langle [a_v],[b_v]\rangle_v=\{[a_v]\cup [b_v]\}_v$. 

    \medskip
    
 \paragraph{\bf \em Completion of the proof.}  

Let us complete the verification of \eqref{eq:PTvsLocal}. Due to the previous computation \eqref{eq:calculation_PT}, it suffices to show that $\langle s_v, [\ell_v']\rangle_v$ equals $0$ for each place $v$ in $S$. Since $s_v$ is an element of $L_\mathrm{str}(F_v, \mathcal{A}_{\eta,(j-1)})[x_j]$ and $L^*_\mathrm{str}(F_v, \mathcal{T}^*_{\eta,(j-1)})$ is defined as the orthogonal compliment of $L_\mathrm{str}(F_v, \mathcal{A}_{\eta,(j-1)})$ with respect to the local Tate pairing $\langle \, , \, \rangle_v$, the triviality of $\langle s_v, [\ell_v']\rangle_v$ is reduced to the following claim:
 \begin{claim}
  For each place $v$ in $S$, the $1$\nobreakdash-cocycle $\ell_v'$ represents a cohomology class contained in $L^*_\mathrm{str}(F_v, \mathcal{T}^*_{\eta,(j-1)})+x_jH^1_\mathrm{cts}(F_v,\mathcal{T}^*_{\eta,(j-1)})$. 
 \end{claim}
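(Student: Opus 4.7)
The plan is to verify the claim by splitting into cases according to the definition of $L^*_\mathrm{str}(F_v, -)$ at places contained in $\Sigma_p^c$, $\Sigma_p$, and the remainder. The key inputs in all three cases will be the fact that the reduction $\bar{\ell}_v$ of $\ell_v$ modulo $x_j$ represents a cohomology class lying in $L^*_\mathrm{str}(F_v, \mathcal{T}^*_{\eta,(j)})$---the defining property of $\ell_v$ established in Lemma~\ref{lem:relation_w_v}---together with the constraint $d\lambda_v = x_j^{-1} d\ell_v$ governing $\lambda_v$.

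For $v \in \Sigma_p^c$ the claim is immediate since $L^*_\mathrm{str}(F_v, \mathcal{T}^*_{\eta,(j-1)}) = H^1_\mathrm{cts}(F_v, \mathcal{T}^*_{\eta,(j-1)})$. For $v \in \Sigma_p$, I would exploit that $L^*_\mathrm{str}(F_v, \mathcal{T}^*_{\eta,(j)}) = 0$ forces $\bar{\ell}_v$ to be a coboundary $d\bar{\mu}_v$ of some $0$-cochain $\bar{\mu}_v \in C^0(F_v, \mathcal{T}^*_{\eta,(j)})$; lifting $\bar{\mu}_v$ to $\mu_v \in C^0(F_v, \mathcal{T}^*_{\eta,(j-1)})$ and using the surjectivity of $C^1(F_v, \mathcal{T}^*_{\eta,(j-1)}) \twoheadrightarrow C^1(F_v, \mathcal{T}^*_{\eta,(j)})$, I would write $\ell_v = d\mu_v + x_j \ell_v''$ for some $\ell_v'' \in C^1(F_v, \mathcal{T}^*_{\eta,(j-1)})$. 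Applying $d$ yields $x_j d\ell_v'' = d\ell_v = x_j d\lambda_v$, so $\ell_v'' - \lambda_v$ is a cocycle, and a direct substitution gives $\ell_v' = d\mu_v - x_j(\lambda_v - \ell_v'')$, placing $[\ell_v']$ in $x_j H^1_\mathrm{cts}(F_v, \mathcal{T}^*_{\eta,(j-1)})$ as required.

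For the remaining case $v \notin \Sigma_p \cup \Sigma_p^c$, my plan is to invoke the isomorphism
$$\pi^\mathrm{ur}_{j,v} \colon H^1_\mathrm{ur}(F_v, \mathcal{T}^*_{\eta,(j-1)})/x_j H^1_\mathrm{ur}(F_v, \mathcal{T}^*_{\eta,(j-1)}) \xrightarrow{\sim} H^1_\mathrm{ur}(F_v, \mathcal{T}^*_{\eta,(j)})$$
established in the proof of Lemma~\ref{lem:local_ses} (which uses that the procyclic group $D_v/I_v$ has cohomological dimension one). Since $[\bar{\ell}_v]$ lies in $H^1_\mathrm{ur}(F_v, \mathcal{T}^*_{\eta,(j)})$ by hypothesis, the surjectivity of $\pi^\mathrm{ur}_{j,v}$ would yield an unramified cocycle $\ell_v^{(0)} \in C^1(F_v, \mathcal{T}^*_{\eta,(j-1)})$ whose reduction modulo $x_j$ is cohomologous to $\bar{\ell}_v$. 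Writing $\ell_v^{(0)} = \ell_v + d\mu_v + x_j \eta_v$ for appropriate cochains $\mu_v, \eta_v$, the cocycle condition $d\ell_v^{(0)} = 0$ combined with $d\lambda_v = x_j^{-1} d\ell_v$ forces $\lambda_v + \eta_v$ to be a cocycle, and substituting into the definition of $\ell_v'$ shows $[\ell_v'] = [\ell_v^{(0)}] - x_j[\lambda_v + \eta_v] \in L^*_\mathrm{str}(F_v, \mathcal{T}^*_{\eta,(j-1)}) + x_j H^1_\mathrm{cts}(F_v, \mathcal{T}^*_{\eta,(j-1)})$.

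None of the three cases presents a serious conceptual obstacle; the argument is essentially a careful bookkeeping of cochain-level relations, and the only substantive input is the isomorphism $\pi^\mathrm{ur}_{j,v}$ in the unramified case, which was already established in the proof of Lemma~\ref{lem:local_ses}. The mild subtlety will be to remember that cohomology-class identities lift to cochain equalities only up to coboundaries and $x_j$-multiples, but this ambiguity is precisely the slack built into the target $L^*_\mathrm{str}(F_v, \mathcal{T}^*_{\eta,(j-1)}) + x_j H^1_\mathrm{cts}(F_v, \mathcal{T}^*_{\eta,(j-1)})$, so the argument closes cleanly.
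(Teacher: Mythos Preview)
Your proposal is correct and follows essentially the same strategy as the paper: in each case you produce a cocycle lift of $\bar{\ell}_v$ whose class lies in $L^*_\mathrm{str}(F_v,\mathcal{T}^*_{\eta,(j-1)})$, and then observe that $\ell_v'$ differs from it by an element of $x_jC^1$, hence $[\ell_v']$ lands in $L^*_\mathrm{str}+x_jH^1_\mathrm{cts}$. The only difference is organizational. The paper packages the lifting step into a single auxiliary lemma (Lemma~\ref{lem:surj_local}), verifies the extra hypothesis $\text{\sf \textgreek{d}}^1_{v,(j)}([\bar{\ell}_v])=0$ for places in $\Sigma_p^c$, and then applies the lemma uniformly; you instead inline the case analysis. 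Your treatment of $v\in\Sigma_p^c$ is in fact slicker---you simply note that $L^*_\mathrm{str}$ equals the whole $H^1_\mathrm{cts}$ there, so the claim is vacuous---whereas the paper's uniform route through the lemma requires checking the connecting-homomorphism condition. Conversely, the paper's packaging makes the logic at the end cleaner (one line instead of three parallel cochain computations). Both approaches rest on the same substantive input: the surjectivity of $\pi^{\mathrm{ur}}_{j,v}$ (equivalently, $\mathrm{cd}(D_v/I_v)\le 1$) for the unramified local condition. One cosmetic point: your use of $\eta_v$ for an auxiliary cochain clashes with the gr\"o{\ss}encharacter $\eta$; a different letter would avoid confusion.
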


 The following lemma is a technical key to the verification of the claim.
Let
  \begin{align*}
   \text{\sf \textgreek{d}}^1_{v,(j)} \colon H^1_\mathrm{cts}(F_v, \mathcal{T}^*_{\eta,(j)}) \rightarrow H^2_\mathrm{cts}(F_v, \mathcal{T}^*_{\eta,(j-1)})
  \end{align*}
  denote the connecting homomorphism of the cohomological long exact sequence associated to (\ref{equation:T}).
  
 \begin{lem} \label{lem:surj_local}
  Let $v$ be a finite place in $S$ and let $\bar{\xi}_v$ be an arbitrary cocycle 
 of $C^1(F_v, \mathcal{T}^*_{\eta,(j)})$ representing a cohomology class contained in $L^*_\mathrm{str}(F_v, \mathcal{T}^*_{\eta,(j)})$. Further assume that the image of the cohomology class $[\bar{\xi}_v]$ under $\text{\sf \textgreek{d}}^1_{v,(j)}$ is trivial when $v$ is a place belonging to $\Sigma^c_p$. 
 Then there exists a lift
 $\tilde{\xi}_v$ of $\xi_v$ to $C^1(F_v, \mathcal{T}^*_{\eta,(j-1)})$ such that $\tilde{\xi}_v$ is also a cocycle and it represents a cohomology class in $L_\mathrm{str}^*(F_v, \mathcal{T}^*_{\eta,(j-1)})$.
\end{lem}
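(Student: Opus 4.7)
The plan is to argue by case analysis on the nature of the finite place $v\in S$, since $L^*_\mathrm{str}(F_v,\mathcal{T}^*_{\eta,(k)})$ takes three qualitatively different forms: it vanishes for $v\in\Sigma_p$, equals the whole continuous cohomology for $v\in\Sigma_p^c$, and equals the unramified subgroup for $v\in S\setminus(\Sigma_p\cup\Sigma_p^c)$. In each case I will exhibit a cocycle lift $\tilde{\xi}_v\in C^1(F_v,\mathcal{T}^*_{\eta,(j-1)})$ of $\bar{\xi}_v$ whose cohomology class sits in the required local subgroup, by a direct chase on the short exact sequence $(\ref{equation:T})$ and its associated cohomological long exact sequence.

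For $v\in\Sigma_p$ the hypothesis $[\bar{\xi}_v]\in L^*_\mathrm{str}(F_v,\mathcal{T}^*_{\eta,(j)})=0$ forces $\bar{\xi}_v=d\bar{\eta}_v$ for some $\bar{\eta}_v\in C^0(F_v,\mathcal{T}^*_{\eta,(j)})$; lifting $\bar{\eta}_v$ to $\tilde{\eta}_v\in C^0(F_v,\mathcal{T}^*_{\eta,(j-1)})$ via the cochain-level surjection from $(\ref{equation:T})$, the coboundary $\tilde{\xi}_v:=d\tilde{\eta}_v$ is a cocycle lifting $\bar{\xi}_v$ and representing the zero class, which automatically lies in $L^*_\mathrm{str}(F_v,\mathcal{T}^*_{\eta,(j-1)})=0$. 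For $v\in\Sigma_p^c$ the local condition imposes no constraint beyond being a continuous cohomology class, so I only need a cocycle lift; the hypothesis $\text{\sf \textgreek{d}}^1_{v,(j)}([\bar{\xi}_v])=0$ together with the long exact sequence attached to $(\ref{equation:T})$ produces a class $[\tilde{\zeta}_v]\in H^1_\mathrm{cts}(F_v,\mathcal{T}^*_{\eta,(j-1)})$ mapping to $[\bar{\xi}_v]$, and after choosing a cocycle representative $\tilde{\zeta}_v$ with image $\bar{\zeta}_v$ and a cochain $\bar{\mu}_v$ satisfying $\bar{\xi}_v=\bar{\zeta}_v+d\bar{\mu}_v$, lifting $\bar{\mu}_v$ to $\tilde{\mu}_v$ and setting $\tilde{\xi}_v:=\tilde{\zeta}_v+d\tilde{\mu}_v$ gives the required cocycle lift.

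For $v\in S\setminus(\Sigma_p\cup\Sigma_p^c)$, where $L^*_\mathrm{str}(F_v,-)=H^1_\mathrm{ur}(F_v,-)$, the structural input is the surjectivity (indeed isomorphy) of $\pi^\mathrm{ur}_{j,v}$ established in the proof of Lemma~\ref{lem:local_ses}, which itself rests on the procyclicity of $D_v/I_v$ and the consequent vanishing of the obstruction $H^2_\mathrm{cts}(D_v/I_v,\mathcal{T}^*_{\eta,(j-1)})$. Using this I lift the unramified class $[\bar{\xi}_v]$ to an unramified class in $H^1_\mathrm{ur}(F_v,\mathcal{T}^*_{\eta,(j-1)})$, choose an unramified cocycle representative, and then correct it by a lifted coboundary exactly as in the $\Sigma_p^c$ case so that its image in $C^1(F_v,\mathcal{T}^*_{\eta,(j)})$ coincides on the nose with $\bar{\xi}_v$; the resulting $\tilde{\xi}_v$ is a cocycle with unramified class lifting $\bar{\xi}_v$.

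The main technical input — really the only place where a nontrivial cohomological fact is invoked — is the surjectivity of $\pi^\mathrm{ur}_{j,v}$ at places of residue characteristic different from $p$ that underlies the third case; this is already in hand from Lemma~\ref{lem:local_ses}, and the hypothesis on $\text{\sf \textgreek{d}}^1_{v,(j)}$ at $\Sigma_p^c$ is the analogous input for the second case. Once these are granted, all three cases reduce to routine cocycle bookkeeping on the exact sequence $(\ref{equation:T})$.
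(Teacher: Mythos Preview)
Your proposal is correct and follows essentially the same approach as the paper's proof: both argue by case analysis on the form of $L^*_\mathrm{str}(F_v,\mathcal{T}^*_{\eta,(k)})$, use the hypothesis $\text{\sf \textgreek{d}}^1_{v,(j)}([\bar{\xi}_v])=0$ to lift the class when $v\in\Sigma_p^c$, use the fact that $D_v/I_v$ has cohomological dimension one to lift the unramified class at places away from $p$, and then correct the chosen cocycle representative by a lifted coboundary so that it lifts $\bar{\xi}_v$ on the nose. The only cosmetic difference is that the paper further splits the tame case according to whether $I_v$ acts trivially (treating the nontrivial-action subcase together with $\Sigma_p$ as ``nothing to prove''), while you invoke the isomorphy of $\pi^\mathrm{ur}_{j,v}$ from Lemma~\ref{lem:local_ses} directly; since that isomorphism is itself proved via the same cohomological-dimension argument, the content is identical.
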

    
\begin{proof}[Proof of Lemma~$\ref{lem:surj_local}$]
 The statement is nontrivial only when $v$ is a place in $\Sigma^c_p$ or a place not lying above $p$ such that the inertia group $I_v$ acts trivially on $\mathcal{T}^*_{\eta,(k)}$;
 otherwise the local condition $L^*_\mathrm{str}(F_v,\mathcal{T}^*_{\eta,(k)})$ is trivial and there is nothing to prove.
 In the former case, the local condition $L_\mathrm{str}^*(F_v, \mathcal{T}^*_{\eta,(k)})$ coincides with the whole cohomology group $H^1_\mathrm{cts}(F_v, \mathcal{T}^*_{\eta,(k)})$. The assumption $\text{\sf \textgreek{d}}^1_{v,(j)}([\bar{\xi}_v])=0$ then guarantees the existence of a cohomology class $[\tilde{\xi}'_v]$ of $H^1_\mathrm{cts}(F_v, \mathcal{T}^*_{\eta,(j-1)})$ whose image in $H^1_\mathrm{cts}(F_v, \mathcal{T}^*_{\eta,(j)})$ coincides with $[\bar{\xi}_v]$. In the latter case, the local condition $L_\mathrm{str}^*(F_v, \mathcal{T}^*_{\eta,(k)})$ is isomorphic to $H^1_\mathrm{cts}(D_v/I_v, \mathcal{T}^*_{\eta,(k)})$. Consider the cohomological long exact sequence associated to (\ref{equation:T}):
 \begin{align*}
  \xymatrix{
  H^1_\mathrm{cts}(D_v/I_v, \mathcal{T}^*_{\eta,(j-1)}) \ar[r]^{(\star)} & H^1_\mathrm{cts}(D_v/I_v, \mathcal{T}^*_{\eta,(j)}) \ar[r]& H^2_\mathrm{cts} (D_v/I_v, \mathcal{T}^*_{\eta,(j-1)}).
  }
 \end{align*}
 Since the procyclic quotient $D_v/I_v\cong \hat{\mathbb{Z}}$ has cohomological dimension one, the second cohomology group  $H^2_\mathrm{cts}(D_v/I_v, \mathcal{T}^*_{\eta,(j-1)})$ should be trivial and hence the natural map $(\star)$ is surjective.

 We thus find in both cases a cocycle $\tilde{\xi}'_v$ of $C^1(F_v, \mathcal{T}^*_{\eta,(j-1)})$ representing a cohomology class contained in $L^*_\mathrm{str}(F_v, \mathcal{T}^*_{\eta,(j-1)})$ such that its image in $C^1(F_v, \mathcal{T}^*_{\eta,(j)})$ coincides with $\bar{\xi}_v$ up to coboundaries. However,
 the natural surjection $\mathcal{T}^*_{\eta,(j-1)}\twoheadrightarrow \mathcal{T}^*_{\eta,(j)}$ induces a surjection $C^\bullet(F_v,\mathcal{T}^*_{\eta,(j-1)})\twoheadrightarrow C^\bullet(F_v,\mathcal{T}^*_{\eta,(j)})$ on the cochain complexes, and hence we readily take a lift of an arbitrary coboundary in $C^1(F_v, \mathcal{T}^*_{\eta,(j)})$ from the submodule of coboundaries in $C^1(F_v,\mathcal{T}^*_{\eta,(j-1)})$. This enables us to construct a desired cocycle $\tilde{\xi}_v$ by modifying $\tilde{\xi}'_v$ in the same cohomology class.
\end{proof}

\begin{proof}[Proof of Proposition~$\ref{prop:coupling}$] 

 We first recall that the cocycle $\bar{\ell}_v$ satisfies $\text{\sf \textgreek{d}}^1_{v,(j)}([\bar{\ell}_v])=0$ for every place $v$ in $S$. Indeed, $\text{\sf \textgreek{d}}^1_{v,(j)}([\bar{\ell}_v])$ is calculated as $[x_j^{-1}d\ell_v]$ due to the definition of the connecting homomorphism $\text{\sf \textgreek{d}}^1_{v,(j)}$. Then, the cohomology class $[x_j^{-1}d\ell_v]$ is trivial by the equation \eqref{eq:dl_v} and the fact that $[x_j^{-1}d\hat{w}]=\delta_2((t_v)_{v\in S})$ is an element of $\mathcyr{Sh}^2_{\mathcal{T}^*_{\eta,(j-1)}}$.  Therefore, by applying Lemma~\ref{lem:surj_local}, we find a 1-cocycle $\tilde{\ell}_v$ in $C^1(F_v, \mathcal{T}^*_{\eta,(j-1)})$  which is a lift of $\bar{\ell}_v$ and represents a cohomology class contained in $L^*_\mathrm{str}(F_v, \mathcal{T}^*_{\eta,(j-1)})$. It is straightforward to see that $\ell_v'$ is also a lift of $\bar{\ell}_v$ by definition, and thus $\tilde{\ell}_v-\ell_v'$ is contained in $x_jC^(F_v,\mathcal{T}^*_{\eta,(j-1)})$ due to the exact sequence
 \begin{align*}
  0 \rightarrow C^1(F_v,\mathcal{T}^*_{\eta,(j-1)}) \xrightarrow{\times x_j} C^1(F_v,\mathcal{T}^*_{\eta,(j-1)}) \twoheadrightarrow C^1(F_v,\mathcal{T}^*_{\eta,(j)}) \rightarrow 0.
 \end{align*}
This calculation verifies the claim above, and hence the proof of Proposition~\ref{prop:coupling} is completed.
\end{proof}


\begin{thebibliography}{BlRo94}
 \bibitem[AS86]{AS} A.~Ash and G.~Stevens, {\em Modular forms in
	       characteristic $\ell$ and special values of their
	       $L$-functions}, Duke Math.\ J., \textbf{53}, no.~3,
	       849--868 (1986).

 \bibitem[Bl86]{Blasius1} D.~Blasius, {\em On the Critical Values of Hecke $L$-Series}, Ann.\ of Math.\ (2), \text{124}, no.~1, 23--63 (1986).
	       
 \bibitem[Bl06]{Blasius} D.~Blasius, {\em Hilbert modular forms and the
	       Ramanujan conjecture}, in: {\em Noncommutative geometry
	       and number theory}, Aspects Math., \textbf{E37}, Vieweg,
	       Wiesbaden, 35--56 (2006).
 \bibitem[BlRo94]{BR} D.~Blasius and J.~D.~Rogawski, {\em Motives for
	     Hilbert modular forms}, Invent.\ Math., \textbf{114}, 
	     Issue~1, 55--87 (1994).
 \bibitem[BrLa84]{BL} J.-L.~Brylinski and J.-P.~Labesse, {\em Cohomologie
	       d'intersection et Fonctions $L$ de certaines vari\'et\'es
	       Shimura}, Ann.\ Sci.\ Ec.\ Norm.\ Super.~(4) \textbf{17}, 
	       361--412 (1984).
 \bibitem[Ca86]{carayol} H.~Carayol, {\em Sur les repr\'esentations $p$-adiques associ\'ees aux formes modulaires de Hilbert}, 
 Ann.\ Sci.\ Ec.\ Norm.\ Super.\ (4) \textbf{19}, 409--468 (1986).
\bibitem[CS05]{CS} J.~Coates and R.~Sujatha, {\em Fine Selmer groups of
      elliptic curves over $p$-adic Lie extensions}, Math.\ Ann.,
      \textbf{331}, 809--839 (2005).
 \bibitem[Da94]{dabrowski} A.~Dabrowski, {\em $p$-adic $L$-functions of Hilbert modular forms}, Ann.\ Inst.\ Fourier, \textbf{44}, 1025--1041 (1994).
 \bibitem[De79]{Deligne} P.~Deligne, {\em Valeurs de
		fonctions~$L$ et p\'eriodes d'int\'egrales}, in: {\em
		Automorphic forms, representations, and $L$-functions}
		(Part 2), Proc.\ Sympos.\ Pure Math., XXXIII,
		Providence, R.I., Amer.\ Math.\ Soc., 313--346 (1979). 
 \bibitem[Di13]{dimitrov} M.~Dimitrov, {\em Automorphic symbols, $p$-adic $L$-functions and ordinary cohomology of Hilbert modular varieties},  Amer.\ J.\ Math., \textbf{135}, no.~4 (2013).
 \bibitem[Ge75]{gelbert} S.~Gelbart, {\em Automorphic forms on ad\`{e}le groups}, Annals of Mathematics Studies, No. 83. Princeton University Press, 1975. 
\bibitem[Gr76]{gr-iw} R.~Greenberg, {\em  On the Iwasawa invariants of
             totally real number fields}, Amer.\ J.\ Math., \textbf{98},
	      no.~1, 263--284 (1976).
\bibitem[Gr78]{Gr_Galois} R.~Greenberg, {\em  On the structure of
	      certain Galois Groups}, Invent.\ Math., \textbf{47},
	      85--99 (1978).
\bibitem[Gr94]{gr-mot} R.~Greenberg, {\em Iwasawa theory
         and $p$-adic deformations of motives}, in: {\em Motives} 
         (Seattle, WA, 1991) Proc.\ Sympos.\ Pure Math.,
         \textbf{55}, 193--223, Part 2, Amer.\ Math.\ Soc., 
         Providence, RI (1994).
\bibitem[Gr06]{gr-coh} R.~Greenberg, {\em On the structure 
	     of certain Galois cohomology groups},  
	     Doc.\ Math.\ Extra Vol., (4) 
	     {\em John H.\ Coates Sixtieth' Birthday}, 335--391 (2006).
 \bibitem[Gr10]{gr-surj} R.~Greenberg, {\em Surjectivity of 
	      the global-to-local map defining a Selmer group}, 
	      Kyoto J.\ of Math., Vol.~\textbf{50}, No.~4, 853--888 (2010).
 \bibitem[Gr]{gr-sel} R.~Greenberg, {\em On the structure of Selmer groups}, preprint, 
available at his webpage: {\tt
	     http://www.math.washington.edu/\~{}greenber/research.html}.
 \bibitem[Ha85]{Hasse} H.~Hasse, {\em \"Uber die Klassenzahl abselscher
		Zahlk\"orper}, Springer-Verlag (1985).
 \bibitem[He82]{Henniart} G.~Henniart, {\em Repr\'esentations
	      $\ell$-adiques ab\'eliennes}, in: {\em Seminaire de
	      theorie des nombres}, Seminaire
	      Delange-Pisot-Poitou, Paris 1980--81, Birkh\"auser,
	      107--126 (1982).  
 \bibitem[Hi88]{hecke} H.~Hida, {\em On $p$-adic Hecke algebras for
	       $GL(2)$ over totally real fields}, Ann.\ of Math.,
	       \textbf{128}, 295--384 (1988).
	       
 \bibitem[Hi89]{nearlyhecke} H.~Hida, {\em Nearly ordinary Hecke algebras and Galois representations of several variables}, in: {\em Algebraic analysis, geometry, and number theory} (Baltimore, MD, 1988), 115--134, Johns Hopkins Univ. Press, Baltimore, MD (1989). 
 \bibitem[Hi06]{HIT} H.~Hida, {\em Hilbert modular forms and Iwasawa
		theory}, Oxford Mathematical Monographs, Oxford
		University Press (2006).
\bibitem[HT93]{HT-katz} 
H.~Hida and J.~Tilouine, {\em Anticyclotomic Katz $p$-adic $L$-functions 
and congruence modules}, Ann.\ Sci.\ \'Ec.\ Norm.\ Sup\'er.\ (4)
\textbf{26}, no.~2, 189--259 (1993).
 \bibitem[HT94]{HT-aIMC} H.~Hida and J.~Tilouine, {\em On the anticyclotomic main conjecture for CM fields}, Invent.\ Math., \textbf{117}, 
 no.~1, 89--147 (1994).
 \bibitem[Hsi14]{hsi-IMC} M.-L.~Hsieh, {\em Eisenstein congruence on unitary groups and 
Iwasawa main conjecture for CM fields}, J.\ Amer.\ Math.\ Soc., \textbf{27}, no.~3, 753--862 (2014). 
 \bibitem[Kato04]{Kato} K.~Kato, {\em $p$-adic Hodge theory and
		values of zeta functions of modular forms}, in: {\em
		Cohomologies $p$-adiques et applications arithmetiques}
		III, Ast\'erisque \textbf{295}, 117--290 (2004).
 \bibitem[Katz78]{katz} 
N.\ M.~Katz, 
\textit{$p$-adic $L$-functions for CM fields}, 
Invent.\ Math., \textbf{49}, 199--297 (1978). 
 \bibitem[Ki94]{kitagawa} K.~Kitagawa, {\em On standard $p$-adic
	       $L$-functions of families of elliptic cusp forms}, in:
	       {\em $p$-adic monodromy and the Birch and Swinnerton-Dyer
	       conjecture} (Boston, MA, 1991), vol.\ 165 of Contemp.\
	       Math., Amer.\ Math.\ Soc., Providence, RI, 81--110
	       (1994).	       
 \bibitem[LL79]{LL} J.-P.~Labesse and R.~P.~Langlands, {\em $L$-indistinguishability for $SL(2)$}, Canad.\ J.\ Math., \textbf{31}, no.~4, 726--785 (1979).
 \bibitem[Man76]{Manin} J.~I.~Manin, {\em Non-Archimedean integration and $p$-adic Jacquet-Langlands $L$-functions}, Uspehi Mat.\ Nauk., \textbf{31}, 5--54 (1976).
 \bibitem[MTT86]{MTT} B.~Mazur, J.~Tate and J.~Teitelbaum, {\em On $p$-adic analogues of the conjectures of Birch and Swinnerton-Dyer}, Invent.\ Math., \textbf{84}, 1--48 (1986).
		
 \bibitem[MW84]{MazurWiles} B.~Mazur and A.~Wiles, {\em Class fields of abelian extensions of $\mathbb{Q}$}, Invent.\ Math., \textbf{76} no.~2, 179--330 (1984). 
  \bibitem[Mi71]{miyake} T.~Miyake, {\em On automorphic forms on $GL_2$ and Hecke operators}, Ann.\ of Math., Second Series, Vol.~\textbf{94}, no.~ 1, 174--189 (1971). 
  \bibitem[Mok09]{mok} C.-P.~Mok, 
	       {\em The exceptional zero conjecture for
	       Hilbert modular forms}, Compos.\ Math., \textbf{145},
	       no.~1, 1--55 (2009).
  \bibitem[NSW00]{NSW} J.~Neukirch, A.~Schmidt and K.~Wingberg, {\em
		 Cohomology of number fields}, Grundlehren der Math.,
		 Wissenschaften, \textbf{323}, Springer (2000).
 \bibitem[Och05]{och_euler} T.~Ochiai, {\em Euler system for Galois
		deformations}, Ann.\ Inst.\ Fourier (Grenoble)
		\textbf{55}, no.~1, 113--146 (2005).
 \bibitem[Och12]{ochiai} T.~Ochiai, {\em Several variable $p$-adic $L$-functions for Hida families of 
 Hilbert modular forms}, Doc.\ Math., \textbf{17}, 807--849 (2012).
 \bibitem[OP]{OP}
T.~Ochiai and K.~Prasanna, {\em Two-variable Iwasawa theory for Hida families with 
complex multiplication}, in preparation.
 \bibitem[Oh83]{ohta} M.~Ohta, {\em On the zeta function of an abelian
	      scheme over the Shimura curve}, Jpn.\ J.\ Math.,
	      \textbf{9},  1--26 (1983).
 \bibitem[Pa94]{panchishkin} A.~A.~Panchishkin, {\em Motives over totally real fields and $p$-adic $L$-functions}, Ann.\ Inst.\ Fourier, \textbf{44}, 989--1023 (1994).
 \bibitem[PR81]{PR_CM} B.~Perrin-Riou, {\em Groupe de Selmer d'une courbe elliptique \`a multiplication complexe}, Compositio Math.,
 \textbf{43}, no.~3, 387--417 (1981).
 \bibitem[Ri77]{Ribet} K.~A.~Ribet, {\em Galois representations attached
              to eigenforms with nebentypus}, in: {\em Modular functions
              of one variable}: V (Proc.\ Second Internat.\ Conf.,
              Univ.\ Bonn, Bonn, 1976), pp.~17--51, Lecture Notes in
              Math., Vol.~\textbf{601}, Springer, Berlin (1977).
\bibitem[Ro84]{Rohrlich1} D.~Rohrlich, {\em On $L$-functions of elliptic
	      curves and cyclotomic towers}, Invent.\ Math.,
	      \textbf{75}, 409--423 (1984).
\bibitem[Ro88]{Rohrlich2} D.~Rohrlich, {\em $L$-functions and division
	      towers}, Math.\ Ann.,
	      \textbf{281}, 611--632 (1988).
\bibitem[Ru91]{Rubin_CM} K.~Rubin, {\em The {\upshape ``}main
	      conjectures{\upshape''} of Iwasawa theory for imaginary
	      quadratic fields}, Invent.\ Math., \textbf{103}, no.~1,
	      25--68 (1991).
 \bibitem[Ru00]{Rubin} K.~Rubin, {\em Euler systems}, Annals of
              Mathematics Studies, vol.~147, Princeton: Princeton
              University Press (2000). 
 \bibitem[Sc86]{schappacher} N.~Schappacher, {\em Periods of Hecke characters}, Lecture Notes in Mathematics, 1301. Springer-Verlag, Berlin, 1988.
 \bibitem[Se98]{serre} J.-P.~Serre, {\em Abelian $l$-adic representations
	      and elliptic curves}, Research Notes in Mathematics,
	      \textbf{7}, A K Peters, Ltd., Wellesley, MA (1998). 
 \bibitem[Sh78]{shimura} G.~Shimura, {\em The special values of the zeta functions associated with Hilbert modular forms}, 
	       Duke Math.\ J., \textbf{45}, no.~3, 637--679  (1978).

 \bibitem[Sh94]{shimura_intro} G.~Shimura, {\em Introduction to the arithmetic theory of automorphic functions}, Reprint of the 1971 original, Publications of the Mathematical Society of Japan, \textbf{11}, Princeton University Press, Princeton, NJ (1994).
	       
 \bibitem[St85]{Stevens} G.~Stevens, {\em The cuspidal group and special
	       values of $L$-functions}, Trans.\ Amer.\ Math.\ Soc.,
	       \text{291}, no.~2,519--550 (1985).
\bibitem[SU14]{SU} C.~Skinner and E.~Urban, {\em
	      The Main Conjecture for $GL(2)$}, Invent.\ Math.,
	      \textbf{195}, Issue~1, 1--277 (2014).
 \bibitem[Tat63]{Tate} J.~Tate, {\em Duality theorems in Galois
		cohomology over number fields}, Proc.\ Internat.\ Congr.\ Mathematicians (Stockholm, 1962), Inst.\ Mittag-Leffler, Djursholm, 288--295 (1963).
 \bibitem[Tay89]{taylor1} R.~Taylor, {\em On Galois representations
	      associated to Hilbert modular forms}, Invent.\ Math.,
	      \textbf{98}, 265--280 (1989).
\bibitem[Tay95]{taylor2} R.~Taylor, {\em On Galois representations
	      associated to Hilbert modular forms}: II, in: {\em Elliptic
	      curves, modular forms and Fermat's last theorem}, Ser.\
	      Number Theory, I, Int.\ Press, Cambridge, MA, 185--191 (1995).
 \bibitem[Wa97]{Was} L.~C.~Washington, {\em Introduction to Cyclotomic
	      Fields}, 2nd edition, Graduate Texts in Mathematics,
	       \textbf{83}, Springer-Verlag (1997).
 \bibitem[Wi86]{Wiles_prep} A.~Wiles, {\em On $p$-adic representations for totally real fields}, Ann.\ Math.\ (2) \textbf{123}, 407--456 (1986).
 \bibitem[Wi88]{wiles} A.~Wiles, {\em On ordinary $\lambda$-adic representations associated to modular forms},
	       Invent.\ Math., \textbf{94}, 529--573 (1988).	       
 \bibitem[Win80]{Wintenberger} J.-P.~Wintenberger, {\em Structure galoisienne de limites projectives d'unites locales}, Comp.\ Math., \textbf{42}, no.~1, 89--103 (1980/1981).
\end{thebibliography}
\end{document}